\setlist{nolistsep}
\setlist[enumerate]{label={\upshape(\roman*)}}
\renewcommand{\volumenumberdelim}{\addnbspace}
\tikzset{
    symbol/.style={
    draw=none,
    every to/.append style={
    edge node={node [sloped, allow upside down, auto=false]{$#1$}}}
    }
}
          \string\usetikzlibrary{decorations.markings} to use arrows with markings}{}}{}%
\declaretheorem[numberwithin=subsection, name=Theorem]{theorem}
\declaretheorem[sibling=theorem, style=definition, name=Definition]{definition}
\declaretheorem[sibling=theorem, name=Lemma]{lemma}
\declaretheorem[sibling=theorem, name=Corollary]{corollary}
\declaretheorem[sibling=theorem, name=Proposition]{proposition}
\declaretheorem[sibling=theorem, name=Conjecture]{conjecture}
\declaretheorem[sibling=theorem, style=definition, name=Remark]{remark}
\declaretheorem[sibling=theorem, style=definition, name=Example]{example}
\declaretheorem[sibling=theorem, style=definition, name=Question]{question}
\declaretheorem[style=definition,numbered=no,name=Definition]{definition*}
\declaretheorem[style=theorem,numbered=no,name=Conjecture]{conjecture*}
\declaretheoremstyle[
    spaceabove=9pt, spacebelow=9pt,
    postheadspace=.5em,
    headfont=\normalfont\bfseries,
    notefont=\normalfont\bfseries\boldmath,
    notebraces={}{.},
    headpunct={},
    headformat={\NUMBER.\@\NOTE}]{para}
\declaretheorem[sibling=theorem, style=para, refname={\S,\S\S}]{para}
\numberwithin{equation}{theorem}
\titleformat{\subsection}{\large\bfseries\boldmath}{\thesubsection}{1em}{}
\tikzset{
    labl/.style={anchor=south, rotate=90, inner sep=.5mm}
}
\tikzset{
    lablh/.style={anchor=south, inner sep=.5mm}
}
\newcommand{\authorinforule}{\noindent\rule{0.38\textwidth}{0.4pt}}
\newlength{\authorwidth}
\newcommand{\authorinfo}[3]{%
    \setlength{\leftskip}{1.5em}
    \setlength{\parindent}{0em}
    \setstretch{1.1}%
    \par%
    {\small%
    \makebox[\authorwidth][l]{#1}%
    \texttt{#2}%
    \\
    #3.}
    \vspace{6pt}\par
}
\newcommand{\bQ}{{\mathbb Q}}
\newcommand{\cF}{{\mathcal F}}
\newcommand{\cH}{{\mathcal H}}
\newcommand{\cX}{{\mathcal X}}
\newcommand{\colim@}[2]{%
  \vtop{\m@th\ialign{##\cr
    \hfil$#1\operator@font colim$\hfil\cr
    \noalign{\nointerlineskip\kern1.5\ex@}#2\cr
    \noalign{\nointerlineskip\kern-\ex@}\cr}}%
}
\newcommand{\colim}{%
  \mathop{\mathpalette\colim@{\rightarrowfill@\scriptscriptstyle}}\nmlimits@
}
\renewcommand{\varprojlim}{%
  \mathop{\mathpalette\varlim@{\leftarrowfill@\scriptscriptstyle}}\nmlimits@
}
\renewcommand{\varinjlim}{%
  \mathop{\mathpalette\varlim@{\rightarrowfill@\scriptscriptstyle}}\nmlimits@
}
\newcommand{\id}{\mathrm{id}}
\DeclareMathOperator{\coker}{coker}
\DeclareMathOperator{\ev}{ev}
\DeclareMathOperator{\Hom}{Hom}
\DeclareMathOperator{\Map}{Map}
\DeclareMathOperator{\GL}{\text{GL}}
\DeclareMathOperator{\Spec}{Spec}
\DeclareMathOperator{\Filt}{Filt}
\DeclareMathOperator{\Grad}{Grad}
\let\ev\relax
\DeclareMathOperator{\ev}{ev}
\DeclareMathOperator{\gr}{gr}
\DeclareMathOperator{\rank}{rank}
\DeclareMathOperator{\rk}{rk}
\DeclareMathOperator{\Ext}{Ext}
\DeclareMathOperator{\Bun}{Bun} 
\renewcommand{\phi}{\varphi}
\DeclareMathOperator{\crk}{crk}
\DeclareMathAlphabet{\mathcaldos}{LS1}{stixscr}{m}{n}
\newcommand{\GIT}{{/\! \! /}}
\DeclareMathOperator{\vdim}{vdim}
\newcommand{\Dc}{{\mathsf{D}}_{\mathsf{c}}} 
\newcommand{\Dh}{{\mathsf{D}}_{\mathsf{H}}} 
\newcommand{\Dhc}{{\mathsf{D}}_{\mathsf{H}, \mathsf{c}}} 
\title{\textbf{Cohomology of symmetric stacks}}
\author{\clap{Chenjing Bu, Ben Davison, Andrés Ibáñez Núñez, Tasuki Kinjo, Tudor P\u{a}durariu}}
\date{February 2025}
\begin{document}

\maketitle

\begin{abstract}
    We construct decompositions of:
    \begin{enumerate}[leftmargin=3.3em]
    \item[(1)] the cohomology of smooth stacks,
    \item[(2)] the Borel--Moore homology of $0$-shifted symplectic stacks, and
    \item[(3)] the vanishing cycle cohomology of $(-1)$-shifted symplectic stacks,
    \end{enumerate}
    assuming a good moduli space exists and the tangent space has a pointwise orthogonal structure.
    These conditions are satisfied by many stacks of interest, including moduli stacks of semistable $G$-bundles and (twisted) $G$-Higgs bundles on curves, $G$-character stacks of oriented closed 2-manifolds and various 3-manifolds, and moduli stacks of semistable coherent sheaves on Calabi--Yau threefolds and K3 surfaces with generic polarization.
    As a special case, we prove a PBW-type theorem for cohomological Hall algebras of $3$-Calabi--Yau categories with commutative orientation data, a strong form of the cohomological integrality conjecture for such categories.
    
    We define the \emph{BPS cohomology} as the primary summand of the decomposition.
    When the stack is smooth, the BPS cohomology coincides with the intersection cohomology of the good moduli space, generalizing a theorem of Meinhardt--Reineke.
    Using the BPS cohomology for singular spaces, we propose a formulation of the topological mirror symmetry conjecture for the stack of $G$-Higgs bundles generalizing the work of Hausel and Thaddeus for type A groups, 
    and a version of Langlands duality for character stacks of compact oriented 3-manifolds, following Ben-Zvi--Gunningham--Jordan--Safronov.
\end{abstract}

\newpage
\tableofcontents

\newpage
\section{Introduction}

Throughout the paper, schemes and stacks are defined over the complex number field. Also, we work with sheaves and cohomology with rational coefficients if not otherwise specified.

\subsection{Motivations}

\begin{para}
    This paper concerns the cohomology $\mathrm{H}^*(\mathcal{M},\mathcal{F})$ of a wide range of moduli stacks appearing across geometry and topology, where the appropriate choice of coefficient constructible complex $\mathcal{F}$ generally depends on what kind of moduli stack $\mathcal{M}$ is.  
    A basic feature of the cohomology that we will consider is that it is infinite-dimensional, reflecting the fact that it is non-zero in infinitely many degrees.  
    Our main results state that these infinite-dimensional vector spaces can be decomposed and repackaged in terms of finite-dimensional representations of certain groups, which are analogues of Weyl groups defined intrinsically with respect to $\mathcal{M}$.  
    The underlying vector spaces of these representations are connected to intersection cohomology of singular moduli schemes, enumerative geometry of Calabi--Yau threefolds, non-abelian Hodge theory and the geometric Langlands program for $2$- and $3$-manifolds.  
    
    For simplicity, we start by concentrating on the case in which $\mathcal{M}$ is smooth, and $\mathcal{F}$ is the constant sheaf with rational coefficients. 
    The main results for singular spaces will be explained in \Cref{para-intro-cohint--1} and afterwards, especially in connection with Donaldson--Thomas theory (\Crefrange{para-BPS-invariant}{para-intro-BPS-Lie}), the geometric Langlands conjecture for $3$-manifolds (\Cref{para-intro-3mfd}), and non-abelian Hodge theory (\Cref{para-NAHT}).
\end{para}

\begin{para}[Historical background: Atiyah--Bott recursion]
    The topology of the moduli space of principal bundles on a smooth projective curve has been studied from several angles for the last fifty years.
    Among the milestones in this topic are the papers of Harder and Narasimhan \cite{_Harder_Onthecohomologygroupsofmodulispacesofvectorbundlesoncurves} and Atiyah and Bott \cite{_Atiyah_TheYangMillsEquationsoverRiemannSurfaces} published in 1975 and 1983 respectively, which present a beautiful recursive formula for the Poincar\'e polynomial of the moduli space $\mathrm{Bun}_{\mathrm{GL}_r}(C)_{d}^{\mathrm{ss}}$ of semistable vector bundles of coprime rank $r$ and degree $d$ on a smooth projective curve $C$.
    The computation, following the strategy of Atiyah and Bott, can be decomposed into the following three steps:
    \begin{enumerate}
        \item \label{itemref-atiha-bott-full} Computing the Poincar\'e power series for the moduli stack $\mathcal{B}\mathrm{un}_{\mathrm{GL}_r}(C)_d$ of vector bundles on $C$ of rank $r$ and degree $d$.
        \item \label{itemref-atiha-bott-semistable-stack} Computing the Poincar\'e power series for the moduli stack $\mathcal{B}\mathrm{un}_{\mathrm{GL}_r}(C)_d^{\mathrm{ss}}$ of semistable vector bundles on $C$ of rank $r$ and degree $d$. 
        \item \label{itemref-atiha-bott-semistable-space} Computing the Poincar\'e polynomial for the good moduli space $\mathrm{Bun}_{\mathrm{GL}_r}(C)_d^{\mathrm{ss}}$ of semistable vector bundles on $C$ of rank $r$ and degree $d$. 
    \end{enumerate}
    
    The computation \Cref{itemref-atiha-bott-full} was carried out in \cite[\S 1, \S 2]{_Atiyah_TheYangMillsEquationsoverRiemannSurfaces} by proving that the cohomology ring $\mathcal{B}\mathrm{un}_{\mathrm{GL}_r}(C)_d$ is freely generated by the tautological classes.

    The computation \Cref{itemref-atiha-bott-semistable-stack},  the main step of the recursion, is done in \cite[Theorem 10.10]{_Atiyah_TheYangMillsEquationsoverRiemannSurfaces} by proving that the Harder--Narasimhan stratification of $\mathcal{B}\mathrm{un}_{\mathrm{GL}_r}(C)_d$ splits its cohomology into a direct sum of the cohomology of the strata.

    For \Cref{itemref-atiha-bott-semistable-space}, the computation was completed in \cite[(9.3)]{_Atiyah_TheYangMillsEquationsoverRiemannSurfaces} by observing that the morphism $\mathcal{B}\mathrm{un}_{\mathrm{GL}_r}(C)_d^{\mathrm{ss}} \to \mathrm{Bun}_{\mathrm{GL}_r}(C)_d^{\mathrm{ss}}$ is a trivial $\mathbb{G}_{\mathrm{m}}$-gerbe.

    We note that the methods for the computation \Cref{itemref-atiha-bott-full} and \Cref{itemref-atiha-bott-semistable-stack} generalize to arbitrary $d$ without the assumption $\gcd(r, d) = 1$ and extend further to arbitrary connected reductive groups $G$ with any choice of degree.
    On the other hand, the argument for \Cref{itemref-atiha-bott-semistable-space} is special to the coprime case, since the map $\mathcal{B}\mathrm{un}_{G}(C)_d^{\mathrm{ss}} \to \mathrm{Bun}_{G}(C)_d^{\mathrm{ss}}$ being a gerbe is true only for such a choice. Also, it is known that a coprime choice does not exist outside groups of type A, as shown by \textcite[Proposition 7.8]{ramanathan-stable}.
\end{para}

\begin{para}

    Since the moduli space $\mathrm{Bun}_{G}(C)_d^{\mathrm{ss}}$ is not smooth in general, the intersection cohomology behaves better than the ordinary cohomology.
    This naturally leads to the following question:

\end{para}

\begin{question}\label{question-cohomology-g-bundle}
    What is the relation between the cohomology $\mathrm{H}^*(\mathcal{B}\mathrm{un}_{G}(C)_d^{\mathrm{ss}})$ and the intersection cohomology $\mathrm{IH}^*(\mathrm{Bun}_{G}(C)_d^{\mathrm{ss}})$?
\end{question}

\begin{para}
    This question, in the case of $G = \mathrm{GL}_r$, was first addressed by \textcite{kirwan1986homology} in 1986 as an application of her partial desingularization of GIT quotients (see \Cref{para-kirwan-blowup}).
    Remarkably, she determined the intersection Betti numbers of $\mathrm{Bun}_{\mathrm{GL}_2}(C)_0^{\mathrm{ss}}$, 
    and her method in principle provides a recursive formula relating $\mathrm{H}^*(\mathcal{B}\mathrm{un}_{\mathrm{GL}_r}(C)_d^{\mathrm{ss}})$ and $\mathrm{IH}^*(\mathrm{Bun}_{\mathrm{GL}_r}(C)_d^{\mathrm{ss}})$ for higher ranks.
    On the other hand, because of the inductive nature of her method, it does not (at least a priori) provide a closed formula relating these cohomology groups.
    
    \Cref{question-cohomology-g-bundle} for $G = \mathrm{GL}_r$ was revisited independently by \textcite{meinhardt2015donaldson} and by \textcite{mozgovoy2015intersection} around 2015.
    For simplicity, we assume the genus satisfies $g(C) \geq 2$. They obtained the following closed formula relating $\mathrm{H}^*(\mathcal{B}\mathrm{un}_{\mathrm{GL}_r}(C)_d^{\mathrm{ss}})$ and $\mathrm{IH}^*(\mathrm{Bun}_{\mathrm{GL}_r}(C)_d^{\mathrm{ss}})$:
    \begin{multline}\label{eq-cohint-intro-gln-bundles}
        \mathrm{H}^*(\mathcal{B}\mathrm{un}_{\mathrm{GL}_{r}}(C)_d^{\mathrm{ss}})[n_r] \\[1ex]
        \cong \bigoplus_{\substack{(r, d) = (r_1, d_1) + \cdots + (r_l, d_l) \\ d_{i} / r_{i} = d/r}} \Bigl( ( \mathrm{IH}^*(\mathrm{Bun}_{\mathrm{GL}_{r_1}}(C)_{d_1}^{\mathrm{ss}}) \otimes \mathrm{H}^*(\mathrm{B} \mathbb{G}_{\mathrm{m}})[- 1] ) \otimes \cdots  \\[-2ex]
          \cdots \otimes ( \mathrm{IH}^*(\mathrm{Bun}_{\mathrm{GL}_{r_l}}(C)_{d_l}^{\mathrm{ss}}) \otimes \mathrm{H}^*(\mathrm{B} \mathbb{G}_{\mathrm{m}})[-1] ) \Bigr)^{\mathrm{Aut}(r_1, \ldots, r_l)}.
    \end{multline}
    Here, we set $n_r \coloneqq \dim \mathcal{B}\mathrm{un}_{\mathrm{GL}_{r}}(C)_d^{\mathrm{ss}}$, the intersection complex is shifted to be perverse, and $\mathrm{Aut}(r_1, \ldots, r_l)$ denotes the group of automorphisms of the multiset $(r_1, \ldots, r_l)$.

    As an application of our main results, explained in \Cref{ssec:main_results}, we will provide an answer to  \Cref{question-cohomology-g-bundle}, thereby generalizing \Cref{eq-cohint-intro-gln-bundles} from $\mathrm{GL}_r$ to arbitrary connected reductive groups $G$.
    To explain this, we introduce the following notations.
    For a Levi subgroup $L \subset G$, we let $W_G(L) \coloneqq \mathrm{N}_G(L) / L$ denote the relative Weyl group, and $\mathrm{Z}(L)^{\circ} \subset L$ denote the neutral component of the centre.
    For a degree $d \in \pi_0(\mathcal{B}\mathrm{un}_G(C)) \cong \pi_1(G)$,
    we say that a Levi subgroup $L \subset G$ is \emph{$d$-admissible} if there exists $d_L \in \pi_1(L)$ such that the map  $\pi_1(L) \to \pi_1(G)$ sends $d_L$ to $d$, and for any character $\chi \colon L \to \mathbb{G}_{\mathrm{m}}$ with $\chi |_{\mathrm{Z}(G)^{\circ}} = 1$, the map $\pi_1(L) \xrightarrow[]{\chi} \pi_1(\mathbb{G}_{\mathrm{m}}) \cong \mathbb{Z}$ sends $d_L$  to $0$.
    Such a degree $d_L$ is uniquely determined if it exists: see \Cref{lem-unique-d-admissible}.
    In the case $G = \mathrm{GL}_r$, the admissible degrees correspond to compositions of tuples with the same slopes.

    With these preparations, we can state the result:
\end{para}

\begin{theorem}[Special case of \Cref{thm-cohint-bundles}]\label{thm-main-intro-bundle}
    Let $C$ be a smooth projective curve  with $g(C) \geq 2$. For a connected reductive group $G$ and $d \in \pi_1(G)$, there exists an isomorphism of graded vector spaces 
    \begin{multline}\label{eq-intro-cohint-g-bundles}
        \mathrm{H}^*(\mathcal{B}\mathrm{un}_{G}(C)_d^{\mathrm{ss}})[n_G] \\
        \cong \bigoplus_{\substack{L \subset G: \\ \textnormal{$d$-admissible}}} {} \Biggl(  \mathrm{IH}^*(\mathrm{Bun}_L(C)_{d_L}^{\mathrm{ss}}) \otimes \mathrm{H}^*(\mathrm{B} \mathrm{Z}(L)^{\circ})[- c_L] \otimes \mathrm{sgn}_{L}  \Biggr)^{W_G(L)} ,
    \end{multline}
    where the first direct sum runs over all conjugacy classes of $d$-admissible Levi subgroups of $G$. Here we set $n_G \coloneqq \dim \mathcal{B}\mathrm{un}_{G}(C)_d^{\mathrm{ss}}$ and $c_L \coloneqq \dim \mathrm{Z}(L)$, and $\mathrm{sgn}_{L}$ is a certain sign representation of $W_G(L)$ which will be introduced in \Cref{para-cotangent-distance}. 
\end{theorem}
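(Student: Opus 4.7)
My plan is to obtain \Cref{thm-main-intro-bundle} as a direct application of the general cohomological decomposition theorem for smooth symmetric stacks announced in the abstract: for any smooth stack $\cX$ with a good moduli space and a point-wise orthogonal tangent complex, the cohomology of $\cX$ is a direct sum, indexed by connected components of $\Grad(\cX)$, of the intersection cohomologies of their good moduli spaces, assembled via the cohomological Hall induction map. The proof of \Cref{thm-main-intro-bundle} thus reduces to two tasks: (a) checking the hypotheses for $\cX = \mathcal{B}\mathrm{un}_G(C)_d^{\mathrm{ss}}$, and (b) translating the output of the general theorem into the Levi-theoretic language of \Cref{eq-intro-cohint-g-bundles}. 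For (a), smoothness of $\mathcal{B}\mathrm{un}_G(C)$ follows from the vanishing $\mathrm{H}^2(C,\mathrm{ad}(P))=0$, existence of a good moduli space for the semistable locus is a theorem of Alper--Halpern-Leistner--Heinloth, and the required point-wise orthogonal structure on $\mathrm{H}^1(C,\mathrm{ad}(P))$ is supplied by Serre duality combined with a non-degenerate invariant form on $\mathfrak{g}$ (which exists because $G$ is reductive).

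Task (b) amounts to identifying the stack of graded points and its good moduli space. A graded point of $\cX$ is a $\mathbb{G}_{\mathrm{m}}$-equivariant semistable $G$-bundle $P$; by the reduction theory of principal bundles, such a datum is the same as a conjugacy class of cocharacters $\lambda$ of $G$ together with a reduction of $P$ to the centralizer $L = L_\lambda$, which is a Levi subgroup. Semistability of $P$ combined with the Hilbert--Mumford criterion applied to $\lambda$ forces $\langle\chi,\lambda\rangle=0$ for every character $\chi\colon L\to\mathbb{G}_{\mathrm{m}}$ trivial on $\mathrm{Z}(G)^{\circ}$, which is precisely the $d$-admissibility of $L$ and which determines the lifted degree $d_L$ via \Cref{lem-unique-d-admissible}. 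The component of $\Grad(\cX)$ corresponding to $(L,\lambda)$ is then $\mathcal{B}\mathrm{un}_L(C)_{d_L}^{\mathrm{ss}}\times \mathrm{B}\mathrm{Z}(L)^{\circ}$, with the factor $\mathrm{B}\mathrm{Z}(L)^{\circ}$ recording the cocharacter; the residual symmetry identifying cocharacters within a conjugacy class is exactly the relative Weyl group $W_G(L) = \mathrm{N}_G(L)/L$.

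Taking good moduli spaces and invoking the general decomposition theorem therefore produces, for each conjugacy class of $d$-admissible Levi $L$, a direct summand of the form $\mathrm{IH}^*(\mathrm{Bun}_L(C)_{d_L}^{\mathrm{ss}})\otimes \mathrm{H}^*(\mathrm{B}\mathrm{Z}(L)^{\circ})[-c_L]$, where the shift $[-c_L]$ records the relative dimension $c_L=\dim\mathrm{Z}(L)$ of the gerbe. The $W_G(L)$-action carries a twist by $\mathrm{sgn}_L$ because the contribution of the normal directions to the orthogonal tangent complex at the graded locus has parity measured by that sign, exactly as formalized in \Cref{para-cotangent-distance}. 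Taking $W_G(L)$-invariants and summing over conjugacy classes of Levi subgroups yields the formula. I expect the main technical obstacle to be task (b): verifying that the GIT-semistable graded objects in $\cX$ correspond exactly to polystable $L$-bundles of $d$-admissible degree, with no spurious contributions, and in particular matching the numerical Hilbert--Mumford condition on $\cX$ with the $d$-admissibility condition defined via characters trivial on $\mathrm{Z}(G)^{\circ}$. Once this Levi-theoretic translation is established and the sign twist is correctly tracked, \Cref{thm-main-intro-bundle} follows directly from the general decomposition theorem.
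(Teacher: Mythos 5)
Your overall strategy is the same as the paper's: the statement is literally a specialization of \Cref{thm-cohint-bundles}, so the work is in verifying the hypotheses of \Cref{thm-cohint-smooth} and translating faces into Levi data, exactly as you say. However, your verification of weak orthogonality has a genuine gap. Serre duality together with an invariant form on $\mathfrak{g}$ gives $R\Gamma(C,\mathrm{Ad}(E))^{\vee}\cong R\Gamma(C,\mathrm{Ad}(E)\otimes\omega_C)[1]$, and hence only the identity $[\mathbb{T}_{\mathcal{B}\mathrm{un},[E]}]+[\mathbb{T}_{\mathcal{B}\mathrm{un},[E]}]^{\vee}=(2g-2)[\mathfrak{g}]$ of virtual $\mathrm{Aut}(E)^{\circ}$-representations; it does \emph{not} give a symmetric pairing on $\mathrm{H}^1(C,\mathrm{Ad}(E))$ itself, nor the self-duality $[\mathbb{T}]=[\mathbb{T}]^{\vee}$. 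Indeed, for a non-semistable bundle the weight-$i$ and weight-$(-i)$ pieces of $\mathrm{H}^1(C,\mathrm{Ad}(E))$ under a cocharacter of $\mathrm{Aut}(E)^{\circ}$ have different dimensions, so the conclusion is false without semistability, which your argument never uses. The missing input is the equivariant Riemann--Roch computation of \Cref{lemma-equivariant-Riemann-Roch}: semistability forces each $\lambda$-weight piece of the adjoint bundle of the associated Levi reduction to have degree zero (\Cref{eq-semistable-reductin-deg-0}), whence $[\mathbb{T}]=(g-1)[\mathfrak{g}]$ and orthogonality follows from \Cref{ex-adjoint-invariant-metric} and \Cref{lemma-smooth-symplectic-symmetric}.

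Two smaller points. First, you never say where $g(C)\geq 2$ enters. The general theorem produces $\mathcal{IC}^{\circ}_{U_{\alpha}}$, which vanishes unless $\mathcal{B}\mathrm{un}_{L}(C)_{d_L}^{\mathrm{ss}}/\mathrm{B}\mathbb{G}_{\mathrm{m}}^{c_L}\to\mathrm{Bun}_{L}(C)_{d_L}^{\mathrm{ss}}$ is generically finite; for $g\geq 2$ this holds for every $L$ and every admissible degree because stable $L$-bundles exist in all topological types, and this is exactly why the formula can be written with plain $\mathrm{IH}^*$ over \emph{all} $d$-admissible Levis. Second, the component of $\Grad(\mathcal{X})$ is $\mathcal{B}\mathrm{un}_{L}(C)_{d_L}^{\mathrm{ss}}$ itself, not a product with $\mathrm{B}\mathrm{Z}(L)^{\circ}$; the factor $\mathrm{H}^*(\mathrm{B}\mathrm{Z}(L)^{\circ})$ arises from the global equivariant parameter comparing $p_{\alpha,*}\mathcal{IC}_{\mathcal{U}_{\alpha}}$ with $\mathcal{IC}_{U_{\alpha}}$, and one must also check that the special faces are exactly the pairs $(L,d_L)$ (the paper does this by a dimension count via Riemann--Roch). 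These are repairable, but as written your step (a) does not go through.
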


We note that the theorem holds true at the level of graded Hodge structures, by replacing the shift with the corresponding Lefschetz twist.
This theorem combined with \Cref{item1-intro-cohint-goodmoduli} and \Cref{item2-intro-cohint-qcgr} gives a  recursive formula for the intersection cohomology of $\mathrm{Bun}_G(C)^{\mathrm{ss}}_d$.
We note that a recent preprint due to \textcite{felisetti2025parabolic} provides a recursive formula for $\mathrm{Bun}_{\mathrm{GL}_n}(C)^{\mathrm{ss}}_0$ based on a different approach using the moduli space of parabolic bundles.

\begin{para}
     One can consider a variant of \Cref{question-cohomology-g-bundle} for other smooth moduli stacks, such as the moduli stack of twisted $G$-Higgs bundles on a Riemann surface, or more generally for arbitrary smooth stacks $\mathcal{U}$ with a good moduli space $p \colon \mathcal{U} \to U$ in the sense of \textcite[Definition 4.1]{_Alper_GoodmodulispacesforArtinstacks}.
     Recently, \textcite[Theorem 1.1]{Kinjo_Decomp} proved that the morphism $p$ satisfies the decomposition theorem \`a la \textcite[Th\'eor\`em 6.2.5]{BBDG} provided that $\mathcal{U}$ has affine diagonal.
      Motivated by this result, we pose the following question, generalizations of which, for possibly singular stacks, form the main theme of this paper:
\end{para}

\begin{question}\label{question-decomp-general}
    Can we describe the decomposition for $p_* \mathbb{Q}_{\mathcal{U}}$ explicitly?
\end{question}

\begin{para}\label{para-kirwan-blowup}
In the landmark papers of Kirwan \cite{kirwan1985partial,kirwan1986rational} published in 1985 and 1986, she addressed a similar problem in the setting of projective GIT at the level of the Betti numbers.
Assuming the existence of a stable point, she constructed a partial resolution $\hat{U} \to U$ through a procedure now known as the Kirwan blow-up, and related the cohomology $\mathrm{H}^*(\mathcal{U})$ and the intersection cohomology $\mathrm{IH}^*(U)$ to $\mathrm{H}^*(\hat{U})$.
This method, in principle, gives an answer to \Cref{question-decomp-general}. However, because of the inductive nature of the resolution procedure, it is quite difficult to express the formula explicitly, and it remains unclear why we should have a simple formula as in \Cref{thm-main-intro-bundle}.
\end{para}

\begin{para}
    Although the best possible answer to \Cref{question-decomp-general} for general smooth stacks might be provided by the Kirwan blow-up method, we will see in the next subsection that a certain symmetry assumption on the tangent space of $\mathcal{U}$, which is satisfied by many moduli stacks of interest, leads to a significantly simpler answer to the question.

    Our results, as well as the idea of the proof, are new even for affine GIT quotients, which serve as the local model for stacks with good moduli spaces.
    We employ the formalism of component lattices, recently introduced by C.B., A.I.N. and T.K. in collaboration with Halpern-Leistner, to deduce the globalized statement by reducing it to the local models.
\end{para}

\subsection{Main results}\label{ssec:main_results}

\begin{para}
    Before stating our main results in full generality, it is necessary to introduce some key concepts, starting with the above-mentioned symmetry property:
\end{para}

\begin{definition}[\Cref{def-symmetry}]\label{def-symmetry-intro}
    An algebraic stack $\mathcal{X}$, with a good moduli space morphism $p \colon \mathcal{X} \to X$, is called \emph{orthogonal} (resp.~\emph{almost orthogonal}), if for any closed point $x \in \mathcal{X}$,
    the tangent space $\mathrm{T}_{\mathcal{X}, x}$ admits a non-degenerate symmetric bilinear form, invariant under the action of the stabilizer group $G_x$ (resp.~the neutral component $G_x^{\circ}$).
\end{definition}

\begin{para}[Examples of almost orthogonal stacks]\label{para-intro-almost-orthogonal-stacks}
    Many stacks appearing in moduli theory are almost orthogonal. The following is a list of almost orthogonal stacks studied in the paper:
    \begin{itemize}
        \item Let $C$ be a smooth projective curve and $G$ a reductive group. Then the stack $\mathcal{B}\mathrm{un}_G(C)^{\mathrm{ss}}$ of semistable $G$-bundles on $C$ is almost orthogonal (\Cref{cor-bung-is-orthogonally-symmetric}).
        \item Let $C$ and $G$ be as above, and $\mathcal{L}$ be a line bundle on $C$, either the canonical bundle $\omega_C$ or any line bundle satisfying $\deg(\mathcal{L}) > 2g(C) - 2$.
              Then the moduli stack of semistable $\mathcal{L}$-twisted $G$-Higgs bundles on $C$, denoted $\mathcal{H}\mathrm{iggs}_G^{\mathcal{L}}(C)^{\mathrm{ss}}$, is almost orthogonal (\Cref{cor-higgsG-is-orthogonally-symmetric}).
        \item Let $\Sigma$ be a compact oriented $2$-manifold and $G$ a reductive group. Then the character stack $\mathcal{L}\mathrm{oc}_G(\Sigma)$ is almost orthogonal (\Cref{cor-character-stack-symmetric}).
        \item Let $M$ be a compact oriented $3$-manifold and $G$ a reductive group. Assume that either
        \begin{itemize} 
            \item $M$ is of the form $\Sigma \times S^1$ for a compact oriented $2$-manifold $\Sigma$ (or more generally, the mapping torus associated with a finite order automorphism of $\Sigma$), or  
            \item $G$ is either $\mathrm{GL}_n$ or $\mathrm{SL}_n$.
        \end{itemize}
              Then the character stack $\mathcal{L}\mathrm{oc}_G(M)$ is almost orthogonal (\Cref{cor-sigma-s1-orthogonal} and \Cref{cor-GL-character-stack-orthogonally-symmetric}).
        \item Let $\mathcal{M}$ be the moduli stack of objects in an abelian category $\mathcal{A}$ satisfying the equality $\dim \Ext^1(E, F) = \dim \Ext^1(F, E)$ for any $E, F \in \mathcal{A}$.
              Assume further that $\mathcal{M}$ admits a good moduli space. Then $\mathcal{M}$ is orthogonal (see \Cref{para-orthogonal-linear-moduli} for the precise statement).
    \end{itemize}
\end{para}

\begin{para}[The component lattice]
    To formulate an answer to \Cref{question-decomp-general} generalizing \Cref{eq-intro-cohint-g-bundles},
    we need to describe the right-hand side of \Cref{eq-intro-cohint-g-bundles} in a way that is intrinsic to the stack $\mathcal{B}\mathrm{un}_{G}(C)_d^{\mathrm{ss}}$. We do this using the formalism of component lattices for stacks developed by C.B., A.I.N. and T.K. in collaboration with Halpern-Leistner \cite{component-lattice}, 
    which are a globalized version of the cocharacter lattices for algebraic groups.
    We briefly recall some key concepts: we refer to \Cref{subsec-component-lattice,subsec-constancy} for the details. 
    \begin{itemize}
        \item For an algebraic stack $\mathcal{X}$, we define the \emph{component lattice} $\mathrm{CL}(\mathcal{X})$, a presheaf on the category of finite-dimensional $\mathbb{Z}$-lattices, by the assignment
              \[
               \mathrm{CL}(\mathcal{X})(\mathbb{Z}^n) = \pi_0(\mathrm{Map}(\mathrm{B} \mathbb{G}_{\mathrm{m}}^n, \mathcal{X})).  
              \]
              We let $\mathrm{CL}_{\mathbb{Q}}(\mathcal{X})$ be the rationalization of $\mathrm{CL}(\mathcal{X})$. See \Cref{para-component-lattice} for the precise definition.
              When $\mathcal{X} = \mathrm{B} G$, the component lattice $\mathrm{CL}(\mathcal{X})$ recovers the cocharacter lattice modulo the Weyl group action.
        \item We define a category $\mathsf{Face}(\mathcal{X})$ of \emph{faces} of $\mathcal{X}$, whose objects are pairs $(F, \alpha)$ consisting of a finite dimensional $\mathbb{Q}$-vector space $F$ and a map $\alpha \colon F \to \mathrm{CL}_{\mathbb{Q}}(\mathcal{X})$. Morphisms are defined in the natural manner.
              A face is called \emph{non-degenerate} if it does not factor through a lower-dimensional face.
        \item For a face $(F, \alpha) \in \mathsf{Face}(\mathcal{X})$, take an integral lift $\alpha_{\mathbb{Z}} \colon F_{\mathbb{Z}} \to \mathrm{CL}(\mathcal{X})$, and define the stack
              $\mathcal{X}_{\alpha}$ to be a component of $\mathrm{Map}(\mathrm{B} \mathbb{G}_{\mathrm{m}}^{\dim F}, \mathcal{X})$ corresponding to $\alpha_{\mathbb{Z}}$. It does not depend on the choice of an integral lift, and admits a morphism $\mathrm{tot}_{\alpha} \colon \mathcal{X}_{\alpha} \to \mathcal{X}$.
        \item We say that a non-degenerate face $(F, \alpha)$ is \emph{special} if, for any morphism $(F, \alpha) \to (F', \alpha')$ of non-degenerate faces, if the induced morphism $\mathcal{X}_{\alpha'} \to \mathcal{X}_\alpha$ is an isomorphism, then $(F, \alpha) \to (F', \alpha')$ is an isomorphism. We denote by $\mathsf{Face}^{\mathrm{sp}}(\mathcal{X}) \subset \mathsf{Face}(\mathcal{X})$ the full subcategory of special faces.
        \item When $\mathcal{X}$ is connected, the \emph{central rank} of $\mathcal{X}$, denoted by $\crk \mathcal{X} \in \mathbb{Z}_{\geq 0}$, is defined to be the dimension of the (unique) special face $(F_{\mathrm{ce}}, \alpha_{\mathrm{ce}})$ with the property $\mathcal{X}_{\alpha_{\mathrm{ce}}} \cong \mathcal{X}$.
              For a special face $(F, \alpha) \in \mathsf{Face}^{\mathrm{sp}}(\mathcal{X})$, we have $\crk \mathcal{X}_{\alpha} = \dim F$.
        \item We say that $\mathcal{X}$ has \emph{quasi-compact graded points} if for any face $(F, \alpha) \in \mathsf{Face}(\mathcal{X})$, the morphism $\mathrm{tot}_{\alpha} \colon \mathcal{X}_{\alpha} \to \mathcal{X}$ is quasi-compact. 
        \item For an $n$-dimensional face $(F, \alpha) \in \mathsf{Face}(\mathcal{X})$, a \emph{global equivariant parameter} for $\mathcal{X}_{\alpha}$ is a set of line bundles $\mathcal{L}_1, \ldots, \mathcal{L}_n$ on $\mathcal{X}_{\alpha}$ whose first Chern classes, when restricted to $\mathrm{H^2}(\mathrm{B} \mathbb{G}_{\mathrm{m}} ^n)$, form a basis.
    \end{itemize}

\end{para}

\begin{para}[Assumptions]\label{para-assumptions-intro-cohint}
    With these preparations, we can state the conditions for stacks for which we can provide an answer to \Cref{question-decomp-general}.
    We consider the following conditions for an algebraic stack $\mathcal{U}$:
    \begin{enumerate}
        \item $\mathcal{U}$ has affine diagonal and admits a good moduli space $p \colon \mathcal{U} \to U$. \label{item1-intro-cohint-goodmoduli}
        \item $\mathcal{U}$ has quasi-compact connected components and quasi-compact graded points.  \label{item2-intro-cohint-qcgr}
        \item $\mathcal{U}$ is almost orthogonal.  \label{item3-intro-cohint-symmetric}
        \item For each special face $(F, \alpha) \in \mathsf{Face}^{\mathrm{sp}}(\mathcal{U})$,
        there exists a global equivariant parameter for $\mathcal{U}_{\alpha}$. \label{item4-intro-cohint-gloeq}
    \end{enumerate}
    
    We remark that the conditions \Cref{item2-intro-cohint-qcgr} and \Cref{item4-intro-cohint-gloeq} are quite mild: they are satisfied for quotient stacks, as shown in \Cref{eg-grad-quotient-stack} and \Cref{lem-global-quotient-global-equiv-parameter}, and condition \Cref{item4-intro-cohint-gloeq} is satisfied for smooth stacks, see \Cref{cor-global-equiv-parameter-smooth}.
    Additionally, recall that we have seen many examples of almost orthogonal stacks in \Cref{para-intro-almost-orthogonal-stacks}.
\end{para}

\begin{para}[Cohomological integrality theorem: smooth case]
    We start by explaining the main result of this paper for smooth stacks. Let $\mathcal{U}$ be a smooth stack satisfying assumptions \Crefrange{item1-intro-cohint-goodmoduli}{item4-intro-cohint-gloeq} in \Cref{para-assumptions-intro-cohint}.
    For a face $(F, \alpha) \in \mathsf{Face}(\mathcal{U})$, consider the following commutative diagram:
\[\begin{tikzcd}
	{\mathcal{U}_{\alpha}} & {\mathcal{U}} \\
	{U_{\alpha}} & {U}
	\arrow["\mathrm{tot}_{\alpha}", from=1-1, to=1-2]
	\arrow["{p_{\alpha}}"', from=1-1, to=2-1]
	\arrow["p", from=1-2, to=2-2]
	\arrow["{g_{\alpha}}"', from=2-1, to=2-2]
\end{tikzcd}\]
    where the vertical maps are the good moduli space morphisms.
    The main result, which generalizes \Cref{eq-intro-cohint-g-bundles}, is as follows:
\end{para}

\begin{theorem}[\Cref{thm-cohint-smooth} + \Cref{rmk-cohint-source-smooth}]\label{thm-intro-cohint-smooth}
    There exists an isomorphism of monodromic mixed Hodge complexes on $U$:
    \begin{equation}\label{eq-intro-cohint-smooth}
        \bigoplus_{(F, \alpha) \in \mathsf{Face}^{\mathrm{sp}}(\mathcal{U})}  (g_{\alpha, *}  \mathcal{IC}_{{U}_{\alpha}}^{\circ} \otimes \mathrm{H}^*(\mathrm{B} \mathbb{G}_{\mathrm{m}}^{\dim F})_{\mathrm{vir}} \otimes \mathrm{sgn}_{\alpha})^{\mathrm{Aut}(\alpha)}  \cong p_* \mathcal{IC}_{\mathcal{U}}
    \end{equation}
    where the direct sum runs over all isomorphism classes of special faces. Here we use the following notations:
    \begin{itemize}
        \item $\mathcal{IC}_{U_{\alpha}}$ and $\mathcal{IC}_{\mathcal{U}}$ denote the intersection complexes, normalized as monodromic mixed Hodge modules with weight zero:
             see \Cref{MMHM-section}, and in particular \Cref{para-half-Lefshetz-twist}, for background.
        \item We set 
        \[
            \mathcal{IC}_{U_{\alpha}}^{\circ} = 
            \begin{cases}
                \mathcal{IC}_{U_{\alpha}}  & \quad \text{if the map $\mathcal{U}_{\alpha} / \mathrm{B} \mathbb{G}_{\mathrm{m}}^{\dim F} \to U_{\alpha}$ is generically quasi-finite,} \\
                0  & \quad \text{otherwise.} 
            \end{cases}
        \]
        See \Cref{para-rational-torus} for the choice of the $\mathrm{B} \mathbb{G}_{\mathrm{m}}^{\dim F}$-action on $\mathcal{U}_{\alpha}$. Here, a generically quasi-finite morphism is defined as a morphism that is quasi-finite generically on the target.
        \item We set $\mathrm{H}^*(\mathrm{B} \mathbb{G}_{\mathrm{m}}^{\dim F})_{\mathrm{vir}}  \coloneqq \mathbb{L}^{\dim F / 2} \otimes \mathrm{H}^*(\mathrm{B} \mathbb{G}_{\mathrm{m}}^{\dim F})$.
        \item The action of $\mathrm{Aut}(\alpha)$ on $g_{\alpha, *}  \mathcal{IC}_{{U}_{\alpha}}^{\circ} $ is induced by its action on $U_{\alpha}$. Its action on $\mathrm{H}^*(\mathrm{B} \mathbb{G}_{\mathrm{m}}^{\dim F})_{\mathrm{vir}}$ is induced by the action on $F$.
        \item $\mathrm{sgn}_{\alpha}$ denotes the cotangent sign representation of $\mathrm{Aut}(\alpha)$ (see \Cref{para-cotangent-distance}).
    \end{itemize}
\end{theorem}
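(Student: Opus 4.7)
The plan is to construct the isomorphism in \Cref{eq-intro-cohint-smooth} explicitly via a cohomological Hall induction morphism, and then verify it étale-locally on $U$. For each special face $(F,\alpha)\in\mathsf{Face}^{\mathrm{sp}}(\mathcal{U})$, the quasi-compact graded points hypothesis implies that $\mathrm{tot}_\alpha\colon \mathcal{U}_\alpha \to \mathcal{U}$ descends to a proper map $g_\alpha\colon U_\alpha \to U$ on good moduli spaces, so the correspondence fits into the square displayed in the theorem's set-up. This correspondence produces a candidate morphism
\[
g_{\alpha,*}\mathcal{IC}_{U_\alpha}^{\circ} \otimes \mathrm{H}^*(\mathrm{B}\mathbb{G}_{\mathrm{m}}^{\dim F})_{\mathrm{vir}} \otimes \mathrm{sgn}_\alpha \longrightarrow p_*\mathcal{IC}_{\mathcal{U}},
\]
realized by pulling back $\mathcal{IC}_{\mathcal{U}_\alpha}$ along $\mathrm{tot}_\alpha$ and pushing forward, using the global equivariant parameter from assumption \Cref{item4-intro-cohint-gloeq} to insert the $\mathrm{H}^*(\mathrm{B}\mathbb{G}_{\mathrm{m}}^{\dim F})$ factor via explicit Chern-class twists. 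The cotangent sign $\mathrm{sgn}_\alpha$ appears naturally through the $\mathrm{Aut}(\alpha)$-action on the conormal directions to $\mathcal{U}_\alpha$ inside $\mathcal{U}$, so the construction is $\mathrm{Aut}(\alpha)$-equivariant and descends to invariants.

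Second, I want to show that the sum of these maps over isomorphism classes of special faces is an isomorphism. By the decomposition theorem for good moduli space morphisms \cite{Kinjo_Decomp}, the complex $p_*\mathcal{IC}_{\mathcal{U}}$ is a semisimple pure monodromic mixed Hodge module, so it suffices to check the assertion étale-locally on $U$. By Alper's local structure theorem an étale neighbourhood of a closed point $x\in U$ has the form $[W/H]$ with $H$ the stabilizer at the fibre-closed point and $W$ a smooth affine $H$-scheme, and the weak orthogonality passes to the tangent representation of $H$ at this fixed point; so locally we may replace $W$ by its tangent space, which is an orthogonal $H$-representation.

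Third, the problem therefore reduces to the case $\mathcal{U}=[V/H]$ with $H$ reductive and $V$ an orthogonal $H$-representation. In this setting, $\mathsf{Face}^{\mathrm{sp}}(\mathcal{U})$ is governed by Weyl-group orbits of cocharacters of $H$, and the stacks $\mathcal{U}_\alpha$ are the corresponding attractor stacks. The orthogonality of $V$ pairs the positive- and negative-weight subspaces into a perfect duality; it is precisely this self-duality that forces the contribution of each $\mathcal{U}_\alpha$ to $p_*\mathcal{IC}_{\mathcal{U}}$ to be exactly $g_{\alpha,*}\mathcal{IC}_{U_\alpha}^{\circ}\otimes\mathrm{H}^*(\mathrm{B}\mathbb{G}_{\mathrm{m}}^{\dim F})_{\mathrm{vir}}$ rather than a more intricate expression. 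The generic-finiteness dichotomy defining $\mathcal{IC}_{U_\alpha}^{\circ}$ accounts for whether the face $\alpha$ contributes a genuinely new summand, or whether the attractor carries extra gerbey structure that already appears in a smaller special face.

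The main obstacle I foresee is this local computation in the orthogonal quotient case: one must identify the cotangent sign representation $\mathrm{sgn}_\alpha$ correctly on each normal-bundle determinant, and match multiplicities of simples so that every summand of $p_*\mathcal{IC}_{\mathcal{U}}$ is hit exactly once with no interference between distinct faces. The orthogonality hypothesis is essential here: it supplies the self-duality of attracting versus repelling weights that rigidifies the combinatorics and explains why the decomposition takes the closed form of \Cref{eq-intro-cohint-smooth}, rather than requiring an inductive Kirwan blow-up procedure (\Cref{para-kirwan-blowup}).
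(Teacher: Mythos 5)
Your set-up (construction of the map via cohomological Hall induction plus the global equivariant parameter, reduction by étale descent and the local structure theorem to $\mathcal{U}=V/H$ with $V$ an orthogonal representation) matches the paper. But the core of the argument — why the map is an isomorphism in the local model — is exactly the step you flag as "the main obstacle I foresee," and the heuristic you offer in its place ("the self-duality of attracting versus repelling weights rigidifies the combinatorics") is not an argument. Nothing in your third paragraph explains how the duality of weight spaces translates into a statement about perverse summands of $p_*\mathcal{IC}_{V/H}$, and indeed the identification of the candidate summands $\mathcal{IC}_{U_\alpha}^{\circ}$ (the smallness statement, \Cref{prop:virtual_small} and \Cref{cor:small_decom_thm_smooth_stack}) only uses weak \emph{symmetry} of $V$, not orthogonality — so orthogonality must enter elsewhere, and your proposal does not say where.

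The missing idea is the following two-step mechanism. First, one inducts on the maximal stabilizer dimension; after stripping off trivial summands of $V$ and passing to the connected component of $H$, the inductive hypothesis shows the cofibre of the cohomological integrality map is supported at the origin of $V\mathbin{/\mkern-6mu/}H$. Second — and this is where orthogonality is actually used — the $H$-invariant non-degenerate quadratic form on $V$ gives a function $q$ with $\mathrm{Crit}(q)=\mathrm{B}H$, and applying the vanishing cycle functor $\varphi_{\bar q}$ (which is conservative on complexes supported at the origin) kills the contributions of all faces of dimension less than $\operatorname{rank}H$ (since $\varphi_{q_\alpha}(\mathcal{IC}_{\mathcal{U}_\alpha})\cong\mathcal{IC}_{\mathrm{B}L_\alpha}$ with $\dim L_\alpha>\dim\alpha$) and reduces the whole statement to cohomological integrality for $\mathrm{B}H$ itself. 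That case is in turn reduced, by another vanishing-cycle argument, to $\mathfrak{g}/H$, where the induction map is computed explicitly by the localization formula \Cref{eq-CoHI-explicit} and identified with the classical isomorphism $\mathrm{H}^*(\mathrm{B}T)^W\cong\mathrm{H}^*(\mathrm{B}H)$. Without this reduction your local computation has no starting point, so as written the proposal has a genuine gap at its central step.
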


By taking the global sections of \Cref{eq-intro-cohint-smooth}, we obtain the following:

\begin{corollary}
    There exists an isomorphism of monodromic mixed Hodge complexes on a point:
    \begin{equation}
        \bigoplus_{(F, \alpha) \in \mathsf{Face}^{\mathrm{sp}}(\mathcal{U})}  (\mathrm{IH}^{\circ, *}(U_{\alpha}) \otimes \mathrm{H}^*(\mathrm{B} \mathbb{G}_{\mathrm{m}}^{\dim F})_{\mathrm{vir}} \otimes \mathrm{sgn}_{\alpha})^{\mathrm{Aut}(\alpha)}  \cong \mathbb{L}^{- {\dim \mathcal{U}} / 2} \otimes  \mathrm{H}^*(\mathcal{U})
    \end{equation}
    where we set $\mathrm{IH}^{\circ, *}(U_{\alpha}) \coloneqq \mathrm{H}^*(U_{\alpha}, \mathcal{IC}_{U_{\alpha}}^{\circ})$.
\end{corollary}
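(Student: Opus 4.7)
The plan is to obtain the corollary as an immediate consequence of \Cref{thm-intro-cohint-smooth} by applying the derived global sections functor $R\Gamma(U,-)$ (equivalently, pushforward along the structure morphism $U \to \pt$) to the isomorphism \Cref{eq-intro-cohint-smooth}. Since both sides live in the derived category of monodromic mixed Hodge complexes on $U$, all that remains is to identify each factor after taking global sections, and this is essentially formal.

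On the left hand side, I would invoke three elementary facts: $R\Gamma(U,-)$ commutes with finite direct sums; it commutes with the formation of invariants under the finite groups $\mathrm{Aut}(\alpha)$, since finite group invariants are exact over $\mathbb{Q}$; and the constant factors $\mathrm{H}^*(\mathrm{B}\mathbb{G}_{\mathrm{m}}^{\dim F})_{\mathrm{vir}}$ and $\mathrm{sgn}_\alpha$ pull out of $R\Gamma$. Functoriality of pushforward then gives $R\Gamma(U, g_{\alpha,*}\mathcal{IC}_{U_\alpha}^\circ) \cong R\Gamma(U_\alpha, \mathcal{IC}_{U_\alpha}^\circ)$, which is $\mathrm{IH}^{\circ,*}(U_\alpha)$ by definition. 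On the right hand side, the same functoriality yields $R\Gamma(U, p_*\mathcal{IC}_{\mathcal{U}}) \cong R\Gamma(\mathcal{U}, \mathcal{IC}_{\mathcal{U}})$, so the task reduces to identifying this with $\mathbb{L}^{-\dim \mathcal{U}/2} \otimes \mathrm{H}^*(\mathcal{U})$.

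The only step requiring attention is unpacking the normalisation convention for the intersection complex of a smooth stack. With the half-Lefschetz twist convention set up in \Cref{MMHM-section} and, in particular, \Cref{para-half-Lefshetz-twist}, the weight-zero monodromic mixed Hodge module $\mathcal{IC}_{\mathcal{U}}$ on a smooth stack of dimension $\dim \mathcal{U}$ is identified with $\mathbb{L}^{-\dim\mathcal{U}/2} \otimes \mathbb{Q}_{\mathcal{U}}[\dim\mathcal{U}]$; passing to hypercohomology absorbs the shift into the grading and gives exactly $\mathbb{L}^{-\dim\mathcal{U}/2} \otimes \mathrm{H}^*(\mathcal{U})$. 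Once this bookkeeping is in place there is no further content in the argument, and the corollary follows at once from \Cref{eq-intro-cohint-smooth}.
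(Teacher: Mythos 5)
Your proposal is correct and coincides with the paper's own (one-line) argument: the paper simply says the corollary follows "by taking the global sections of \Cref{eq-intro-cohint-smooth}", and your unpacking of that step — commuting $R\Gamma$ with the finite sum, the $\mathrm{Aut}(\alpha)$-invariants, and the constant tensor factors, plus the normalisation $\mathcal{IC}_{\mathcal{U}}\cong\mathbb{L}^{-\dim\mathcal{U}/2}\otimes\mathbb{Q}_{\mathcal{U}}[\dim\mathcal{U}]$ on the smooth stack — is exactly the intended bookkeeping.
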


\begin{para}[Enumerative meaning of cohomological integrality]\label{para-enumerative-meaning}
    A special case of \Cref{thm-intro-cohint-smooth}, after passing to the Grothendieck group of mixed Hodge modules, first appeared in the work of \textcite[Theorem 1.1]{Meinhardt_Reineke_DT_vs_intersection_cohomology} where $\mathcal{U}$ is the moduli stack of semistable quiver representations with respect to a generic stability condition.
    Their theorem can be paraphrased as stating that the intersection complex categorifies the BPS (Bogomol'nyi--Prasad--Sommerfield)  invariant for quivers.
    In particular, this result implies an integrality property of the generalized Donaldson--Thomas invariant for quivers introduced in \textcite{joyce2008configurationsIV}. Consequently, their theorem is called the \emph{cohomological integrality theorem}.
    We do not delve into the details of this perspective here and advise the reader to consult \cite[\S 6.7]{davison2015donaldson} for background.
    Following this, we refer to  \Cref{thm-intro-cohint-smooth} as the cohomological integrality theorem. The enumerative aspect of this theorem will be explained in a forthcoming paper \cite{wall-crossing} by C.B., A.I.N. and T.K.; see also \Cref{para-BPS-invariant} for further discussions.
\end{para}

\begin{para}[Cohomological Hall induction]
    As noted in the last paragraph, the special case of \Cref{thm-intro-cohint-smooth} for quiver moduli spaces was established by \textcite{Meinhardt_Reineke_DT_vs_intersection_cohomology}.
    Their proof, however, relies on a smooth model of quiver moduli spaces realized as the moduli space of stably framed quiver representations, which has no analogue for general affine GIT quotients.
    Consequently, we are required to introduce new ideas in the proof; indeed, our argument differs substantially from that of \cite{Meinhardt_Reineke_DT_vs_intersection_cohomology} and is also independent of it.

    A key ingredient of our proof is the \emph{cohomological Hall induction},  a generalization of the cohomological Hall algebra multiplication that includes operations such as parabolic induction.
    For each face $(F, \alpha) \in \mathsf{Face}(\mathcal{U})$, one can define a hyperplane arrangement on $F$ using the weights of the cotangent complex of $\mathcal{U}$.
    Consider a chamber $\sigma \subset F$ with respect to the arrangement and let $\mathcal{U}_{\sigma}^+$ denote the connected component of $\mathrm{Map}(\mathbb{A}^1 / \mathbb{G}_{\mathrm{m}}, \mathcal{U})$ corresponding to $\sigma$: see \Cref{para-notation-x-alpha} and \Cref{para-constancy-theorem} for details.
    Then we can form the following diagram:
\[\begin{tikzcd}
	& {\mathcal{U}_{\sigma}^+} \\
	{\mathcal{U}_{\alpha}} && {\mathcal{U}} \\
	{U_{\alpha}} && U.
	\arrow["{\mathrm{gr}_{\sigma}}"', from=1-2, to=2-1]
	\arrow["{\mathrm{ev}_{1, \sigma}}", from=1-2, to=2-3]
	\arrow["{p_{\alpha}}"', from=2-1, to=3-1]
	\arrow["p", from=2-3, to=3-3]
	\arrow["{g_{\alpha}}"', from=3-1, to=3-3]
\end{tikzcd}\]
     The upper roof is a generalization of the correspondence of the moduli space of objects in abelian categories, provided by the stack of filtrations: see \Cref{ssec-linear-moduli} for details.
     One easily sees that $\mathrm{gr}_{\sigma}$ is smooth and $\mathrm{ev}_{1, \sigma}$ is proper. This allows us to define a natural morphism
     \begin{equation}\label{eq-intro-cohi-smooth}
     g_{\alpha, *} p_{\alpha, *} \mathcal{IC}_{\mathcal{U}_{\alpha}} \to p_* \mathcal{IC}_{\mathcal{U}},   
     \end{equation}
     referred to as the relative cohomological Hall induction. This morphism recovers the cohomological Hall algebra multiplication when $\mathcal{U}$ is the moduli space of objects in an abelian category.
    A choice of the global equivariant parameter induces a natural map
    \begin{equation}\label{eq-global-equiv-induce}
     \mathcal{IC}_{U_{\alpha}} \otimes \mathrm{H}^*(\mathrm{B} \mathbb{G}_{\mathrm{m}}^{\dim F})_{\mathrm{vir}} \to p_{\alpha, *} \mathcal{IC}_{\mathcal{U}_{\alpha}}.    
    \end{equation}
    The maps \Cref{eq-intro-cohi-smooth} and \Cref{eq-global-equiv-induce} induce the map \Cref{eq-intro-cohint-smooth} from left to right.
    Finally, this map is proven to be an isomorphism by reducing to the case of the classifying stack, using the vanishing cycle functor with respect to a locally defined non-degenerate quadratic function induced from the almost orthogonal structure.
\end{para}

\begin{para}[Cohomological integrality theorem: \texorpdfstring{$(-1)$}{$(-1)$}-shifted symplectic case]\label{para-intro-cohint--1}
    We now turn our attention to the cohomological integrality theorem for singular spaces.
    This will allow us to prove results for moduli stacks of semistable coherent sheaves on Calabi--Yau threefolds, which are central objects in enumerative geometry, and provide the motivating class of examples for the integrality conjecture in Donaldson--Thomas theory.

    As outlined in \Cref{para-enumerative-meaning}, the cohomological integrality theorem (= \Cref{thm-intro-cohint-smooth}) was first studied in the context of quiver moduli spaces by \textcite{Meinhardt_Reineke_DT_vs_intersection_cohomology}.
    Subsequently, Davison and Meinhardt \cite[Theorem A, Theorem C]{_Davison_Meinhardt_CoDT} generalized this result to the moduli space of representations over Ginzburg dg-algebras associated with quivers with potentials, using the vanishing cycle cohomology instead of the ordinary cohomology.
    Ginzburg dg-algebras are non-commutative 3-Calabi–Yau spaces which are the simplest examples for such studies, and they provide models for certain non-compact Calabi--Yau threefolds such as the resolved conifold.

    Since the appearance of the work of \textcite{_Davison_Meinhardt_CoDT}, it has been an open problem to extend the cohomological integrality theorem for quivers with potentials to general $3$-Calabi--Yau categories, such as the category of coherent sheaves on a smooth Calabi--Yau $3$-fold.
    Our second main theorem addresses this problem by presenting a version of \Cref{thm-intro-cohint-smooth} for stacks with $(-1)$-shifted symplectic structures---a geometric structure introduced by \textcite{pantev2013shifted} that is possessed by moduli stacks of objects in $3$-Calabi--Yau categories, reflecting the trivialization of the Serre duality functor.

    Let $\mathcal{X}$ be a $(-1)$-shifted symplectic stack equipped with an orientation, i.e., a choice of the square root of the canonical bundle $\det(\mathbb{L}_{\mathcal{X}})$.
    For such stacks, \textcite[Theorem 4.8]{ben2015darboux} associated a monodromic mixed Hodge module
    \[
     \varphi_{\mathcal{X}} \in \mathsf{MMHM}(\mathcal{X})    
    \]
    which is a globalization of the vanishing cycle complex. Now, suppose that $\mathcal{X}$ satisfies assumptions \Crefrange{item1-intro-cohint-goodmoduli}{item4-intro-cohint-gloeq} in \Cref{para-assumptions-intro-cohint}.
    The version of \Cref{question-decomp-general} for $(-1)$-shifted symplectic stacks asks whether we can decompose $p_* \varphi_{\mathcal{X}}$ into simpler pieces.

    For the purpose of answering this question, we will introduce the \emph{$n$-th BPS sheaf} on the good moduli space for connected $\mathcal{X}$,
    which is defined as the $0$-th perverse cohomology sheaf
    \[
    \mathcal{BPS}^{(n)}_{X} \coloneqq {}^{\mathrm{p}} \mathcal{H}^0(\mathbb{L}^{- {n /2}} \otimes p_{ *} \varphi_{\mathcal{X}}) \in \mathsf{MMHM}(X).
    \]
    We will prove in \Cref{prop-support-lemma} that for $n < \crk \mathcal{X}$,
    the $n$-th BPS sheaf $\mathcal{BPS}^{(n)}_{X}$ vanishes  (see \Cref{para-central-rank} for the definition of the central rank $\crk \mathcal{X}$ of $\mathcal{X}$).
    We will write $\mathcal{BPS}_{X} = \mathcal{BPS}^{(\crk \mathcal{X})}_{X}$.
    We define the \emph{$n$-th BPS cohomology} to be the global sections
    \[
     \mathrm{H}^*_{\mathrm{BPS}^{(n)}} (X) \coloneqq    \mathrm{H}^* (X, \mathcal{BPS}_{X}^{(n)}). 
    \]
    Again, we will write $\mathrm{H}^*_{\mathrm{BPS}} (X)  \coloneqq \mathrm{H}^*_{\mathrm{BPS}^{(\crk \mathcal{X})}} (X)$.

    For a special face $(F, \alpha) \in \mathsf{Face}^{\mathrm{sp}}(\mathcal{X})$, consider the following diagram:
    \[\begin{tikzcd}
        {\mathcal{X}_{\alpha}} & {\mathcal{X}} \\
        {X_{\alpha}} & {X}
        \arrow["\mathrm{tot}_{\alpha}", from=1-1, to=1-2]
        \arrow["{p_{\alpha}}"', from=1-1, to=2-1]
        \arrow["p", from=1-2, to=2-2]
        \arrow["{g_{\alpha}}"', from=2-1, to=2-2]
    \end{tikzcd}\]
    where the vertical maps are the good moduli space morphisms.
    As we will see in \Cref{para-localize-shifted-symplectic-structure} and \Cref{para-localized-orientation}, $\mathcal{X}_{\alpha}$ is naturally equipped with the structure of an oriented $(-1)$-shifted symplectic stack.
    In particular, the BPS sheaf and the BPS cohomology are defined for $X_{\alpha}$.
    The following theorem, providing a decomposition of the vanishing cycle cohomology (also called critical cohomology) into simpler pieces, is the main result for $(-1)$-shifted symplectic stacks:
\end{para}

\begin{theorem}[\Cref{thm-coh-int--1-shifted-symplectic} + \Cref{rmk-cohint-source--1-shifted-symplectic}]\label{thm-intro-cohint--1-symplectic}
    There exists an isomorphism of monodromic mixed Hodge complexes on $X$:
    \begin{equation}\label{eq-intro-cohint--1-shifted-symplectic}
        \bigoplus_{(F, \alpha) \in \mathsf{Face}^{\mathrm{sp}}(\mathcal{X})}  (g_{\alpha, *}  \mathcal{BPS}_{X_{\alpha}} \otimes \mathrm{H}^*(\mathrm{B} \mathbb{G}_{\mathrm{m}}^{\dim F})_{\mathrm{vir}})^{\mathrm{Aut}(\alpha)}  \cong p_* \varphi_{\mathcal{X}}.
    \end{equation}
    In particular, we have the following isomorphism of monodromic mixed Hodge complexes on a point:
    \begin{equation}
        \bigoplus_{(F, \alpha) \in \mathsf{Face}^{\mathrm{sp}}(\mathcal{X})}  ( \mathrm{H}_{\mathrm{BPS}}^*(X_{\alpha}) \otimes \mathrm{H}^*(\mathrm{B} \mathbb{G}_{\mathrm{m}}^{\dim F})_{\mathrm{vir}})^{\mathrm{Aut}(\alpha)}  \cong \mathrm{H}^*(\mathcal{X}, \varphi_{\mathcal{X}}).
    \end{equation}

\end{theorem}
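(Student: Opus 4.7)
The proof follows the template of the smooth case (\Cref{thm-intro-cohint-smooth}): first construct a natural morphism from the left-hand side to the right-hand side of \Cref{eq-intro-cohint--1-shifted-symplectic}, then verify that it is an isomorphism by étale-local analysis on the good moduli space. In comparison to the smooth case, the key simplification is that both sides already involve the vanishing cycle functor, so the decomposition is genuinely a statement about BPS sheaves rather than intersection complexes.

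To construct the map, for each special face $(F, \alpha)$ and a choice of chamber $\sigma \subset F$ in the tangent weight arrangement, I would use the correspondence $\mathcal{X}_\alpha \xleftarrow{\mathrm{gr}_\sigma} \mathcal{X}_\sigma^+ \xrightarrow{\mathrm{ev}_{1,\sigma}} \mathcal{X}$. Since the $(-1)$-shifted symplectic structure and orientation descend to $\mathcal{X}_\alpha$ via \Cref{para-localize-shifted-symplectic-structure} and \Cref{para-localized-orientation}, and $\mathrm{gr}_\sigma$ is smooth while $\mathrm{ev}_{1,\sigma}$ is proper, a standard base-change argument for vanishing cycles on oriented $(-1)$-shifted symplectic stacks---essentially the setup of the cohomological Hall algebra of a $3$-Calabi--Yau category---yields a relative cohomological Hall induction map $g_{\alpha,*}\, p_{\alpha,*}\, \varphi_{\mathcal{X}_\alpha} \to p_{*}\, \varphi_{\mathcal{X}}$. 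The global equivariant parameter on $\mathcal{X}_\alpha$ supplies $\dim F$ line bundles whose first Chern classes act on $p_{\alpha,*}\varphi_{\mathcal{X}_\alpha}$, producing a natural map $\mathcal{BPS}_{X_\alpha} \otimes \mathrm{H}^*(\mathrm{B}\mathbb{G}_{\mathrm{m}}^{\dim F})_{\mathrm{vir}} \to p_{\alpha,*}\varphi_{\mathcal{X}_\alpha}$, and this uses crucially that $\mathcal{BPS}_{X_\alpha}$ is extracted as the degree-zero perverse piece of the pushforward. Composing, summing over special faces, and taking $\mathrm{Aut}(\alpha)$-invariants produces the desired morphism.

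To show this morphism is an isomorphism, I would argue étale-locally on $X$. The local structure theorem for good moduli spaces of Alper--Hall--Rydh reduces us to $\mathcal{X} = [\Spec(A)/G]$ for $G$ reductive, and the $(-1)$-shifted symplectic Darboux theorem of Brav--Bussi--Dupont--Joyce--Szendrői further reduces to the case where $\mathcal{X}$ is the derived critical locus of a $G$-invariant function $f$ on a smooth $G$-scheme $V$. In this local model, the weakly orthogonal hypothesis promotes the linearization of the tangent space at a closed point to a self-dual representation of the neutral component of the stabilizer, and the component lattice formalism identifies the special faces with cocharacter classes whose weight decomposition of $\mathrm{T}_V$ is symmetric.

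The main obstacle is the final identification: one must apply a Thom--Sebastiani decomposition to split the vanishing cycle sheaf across the weight decomposition of the normal directions, and then verify that this splitting identifies the cohomological Hall induction map with the BPS summand, compatibly with the orientation data and the monodromic mixed Hodge module structure (in particular with the half-Lefschetz twist $\mathbb{L}^{\bullet/2}$). The key input is the smooth cohomological integrality theorem (\Cref{thm-intro-cohint-smooth}), applied to the $\mathbb{G}_{\mathrm{m}}^{\dim F}$-fixed loci of $V$, combined with a Koszul-type computation of the vanishing cycle cohomology of a non-degenerate quadratic form on the weighted normal directions---this is exactly the computation enabled by the weakly orthogonal structure. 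Constancy along strata of $\mathsf{Face}^{\mathrm{sp}}(\mathcal{X})$ (\Cref{subsec-constancy}) then propagates the local isomorphism to a global one, and the decomposition theorem of \cite{Kinjo_Decomp} for good moduli space morphisms guarantees that the assembled map, being an isomorphism on associated graded with respect to the perverse filtration, is itself an isomorphism.
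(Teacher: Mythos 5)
Your overall skeleton matches the paper's: define a cohomological Hall induction map face by face, reduce \'etale-locally over the good moduli space via the Darboux theorem to $\mathcal{X}\cong\mathrm{Crit}(f/G)$ with $V/G$ a weakly orthogonal smooth stack and the orientation standard, and deduce the result from the smooth cohomological integrality theorem by applying the vanishing cycle functor. There are, however, two genuine gaps.

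First, the construction of the map. You assert that since $\mathrm{gr}_\sigma$ is smooth and $\mathrm{ev}_{1,\sigma}$ is proper, a ``standard base-change argument for vanishing cycles'' yields the relative induction $g_{\alpha,*}p_{\alpha,*}\varphi_{\mathcal{X}_\alpha}\to p_*\varphi_{\mathcal{X}}$. This does not work: $\varphi_{\mathcal{X}}$ is a globalized vanishing-cycle object on a $(-1)$-shifted symplectic stack, and neither $\mathrm{gr}_\sigma^*\varphi_{\mathcal{X}_\alpha}$ nor $\mathrm{ev}_{1,\sigma}^!\varphi_{\mathcal{X}}$ is related to anything on $\mathcal{X}_\sigma^+$ by formal base change (the smoothness of $\mathrm{gr}_\sigma$ is a feature of the classical smooth setting; in the $(-1)$-shifted setting one only has quasi-smoothness, and in any case $\varphi$ does not commute with these functors formally). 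The required input is the integral isomorphism $\zeta_\sigma\colon\mathbb{L}^{-\vdim\mathcal{X}_\sigma^+/2}\otimes\varphi_{\mathcal{X}_\alpha}\cong\mathrm{gr}_{\sigma,*}\mathrm{ev}_{1,\sigma}^!\varphi_{\mathcal{X}}$, i.e., the categorified integral identity of \cite[Corollary~7.19]{Kinjo_Park_Safronov_CoHA}; the paper singles this out as the essential nontrivial ingredient, and the whole induction map is defined by taking the adjoint of $\zeta_\sigma$ and pushing to $X$. Calling this standard hides the step on which the entire $(-1)$-shifted case rests.

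Second, the orientation and sign bookkeeping in the deduction from the smooth case. The smooth theorem's decomposition carries the cotangent sign representation $\mathrm{sgn}_\alpha$, while the $(-1)$-shifted statement does not; this is not an accident but a consequence of the fact that $\varphi_{\mathcal{X}_\alpha}$ is formed with respect to the localized orientation $\alpha^\star o$, whose $\mathrm{Aut}(\alpha)$-equivariant structure differs from the standard orientation of the critical chart by exactly $(-1)^{d(\sigma,\sigma')}$ (\Cref{lem-ori-localize-indep-segment-up-to-sign}). That twist is what absorbs $\mathrm{sgn}_\alpha$ when one applies $\varphi_{\bar f}$ to the smooth isomorphism via the compatibility diagram \Cref{eq-symmetrized-cohi-critical}. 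Your proposal gestures at ``compatibly with the orientation data'' but without this comparison the deduction does not type-check equivariantly. Relatedly, your final step re-runs the Thom--Sebastiani/quadratic-form reduction inside the $(-1)$-shifted proof; in the paper that argument lives entirely inside the proof of the smooth theorem, and the $(-1)$-shifted case only needs to apply $\varphi_{\bar f}$ to the already-proven smooth isomorphism, invoke the compatibility of the symmetric CoHI with vanishing cycles and with \'etale pullback, and use \Cref{lemma-global-equiv-parameter-BG} to match global equivariant parameters across the reduction.
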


\begin{para}\label{para-intro-cohint-is-induction}
    Similarly to the smooth setting in \Cref{thm-intro-cohint-smooth}, the isomorphism \Cref{eq-intro-cohint--1-shifted-symplectic} is also induced by the cohomological Hall induction.
    However, in the $(-1)$-shifted symplectic case, the existence of the cohomological Hall induction map is highly nontrivial. It relies on a recent result by Kinjo, Park and Safronov \cite[Corollary 7.19]{Kinjo_Park_Safronov_CoHA} establishing the categorified version of the integral identity \`a la \textcite[\S 7.8]{_Kontsevich_Soibelman_CoHA} within the framework of $(-1)$-shifted symplectic stacks.
\end{para}

\begin{para}[BPS invariants]\label{para-BPS-invariant}
    For a connected oriented $(-1)$-shifted symplectic stack $\mathcal{X}$ satisfying assumptions \Crefrange{item1-intro-cohint-goodmoduli}{item4-intro-cohint-gloeq} in \Cref{para-assumptions-intro-cohint},
    we define the \emph{BPS invariant} to be the Euler characteristic
    \[
     \mathrm{BPS}_{X} \coloneqq \chi(X, \mathcal{BPS}_{X}) \in \mathbb{Z}.    
    \]
    The BPS invariant (in the context of enumerative geometry via sheaves) was first introduced in the work of \textcite[Definition 6.10]{_Joyce_Song_AtheoryofgeneralizedDonaldsonThomasinvariants} for the moduli stack of semistable coherent sheaves on a projective  Calabi--Yau threefold with respect to a generic polarization, motivated by the work of \textcite{kontsevich2008stability}.
    These invariants provide a mathematical formulation of the BPS state counting in type II superstring compactifications on a Calabi--Yau threefold.
    The idea of the definition due to Joyce and Song is to use the multiple cover formula \cite[(6.14)]{_Joyce_Song_AtheoryofgeneralizedDonaldsonThomasinvariants} relating the BPS invariant and the generalized Donaldson--Thomas invariant.
    Though our definition of the BPS invariant a priori does not specialize to theirs, one can prove the multiple cover formula with our definition of the BPS invariant using \Cref{eq-intro-cohint--1-shifted-symplectic} and repeating the discussion in \cite[\S 6.7]{davison2015donaldson}.
    In particular, our definition of the BPS invariants recovers the definition by \textcite{_Joyce_Song_AtheoryofgeneralizedDonaldsonThomasinvariants} as a special case.

    In a closely related work, C.B., A.I.N. and T.K. \cite{invariants} introduce generalized Donaldson--Thomas invariants for $(-1)$-shifted symplectic stacks equipped with extra data similar to a stability condition, generalizing the work of \textcite{_Joyce_Song_AtheoryofgeneralizedDonaldsonThomasinvariants}.
    In its forthcoming sequel \cite{wall-crossing}, they will formulate and prove a generalization of the multiple cover formula relating the BPS invariants and the generalized Donaldson--Thomas invariants using the isomorphism \Cref{eq-intro-cohint--1-shifted-symplectic}.

\end{para}

\begin{para}[Cohomological integrality theorem for \texorpdfstring{$3$}{$3$}-Calabi--Yau categories]\label{para-cohint-intro-3-CY}

    We specialize the discussions in \Crefrange{para-intro-cohint--1}{para-BPS-invariant}  to the case in which the stack is the moduli stack of objects in a $3$-Calabi--Yau category.
    Let $\mathcal{C}$ be a finite type left $3$-Calabi--Yau dg-category, such as the derived category of coherent sheaves on a smooth Calabi--Yau threefold.
    Let $\mathcal{M}_{\mathcal{C}}$ be the moduli stack of objects in $\mathcal{C}$, which is $(-1)$-shifted symplectic, as shown by \textcite[Theorem 5.5]{brav2021relative}.
    Let $\mathcal{M} \subset \mathcal{M}_{\mathcal{C}}$ be a $1$-Artin open substack satisfying the following conditions:
    \begin{enumerate}
        \item $\mathcal{M}$ contains the zero object as an open and closed substack.
        \item $\mathcal{M}$ parametrizes objects in an abelian subcategory $\mathcal{A} \subset \mathcal{C}$, and for each pair of objects $E, F \in \mathcal{A}$, we have $\dim \Hom(E, F[1]) = \dim \Hom(F, E[1])$.
        \item $\mathcal{M}$ admits a good moduli space $p \colon \mathcal{M} \to M$.
        \item $\mathcal{M}$ has quasi-compact connected components.
        \item For each non-zero class $\gamma \in \pi_0(\mathcal{M})$, the associated class in the numerical Grothendieck group $K^{\mathrm{num}}(\mathcal{C})$ is non-zero.
        \item $\mathcal{M}$ admits a commutative orientation data, i.e., a choice of orientation for $\mathcal{M}$ which is compatible with the direct sum map in a certain commutative manner. See \Cref{para-COD} for the details.
    \end{enumerate}
    In this case, we see that $\mathcal{M}$ satisfies the assumptions \Crefrange{item1-intro-cohint-goodmoduli}{item4-intro-cohint-gloeq} in \Cref{para-assumptions-intro-cohint}, hence the cohomological integrality theorem (= \Cref{thm-intro-cohint--1-symplectic}) holds for $\mathcal{M}$.
    We will explicitly describe the statement.
    For each $\gamma \in \pi_0(\mathcal{M})$, we let $\mathcal{M}_{\gamma} \subset \mathcal{M}$ be the corresponding connected component, with $p_{\gamma} \colon \mathcal{M}_{\gamma} \to M_{\gamma}$ the good moduli space, and set
    \[
     \mathcal{BPS}_{M, \gamma} \coloneqq  \mathcal{BPS}_{M_{\gamma}}^{(1)}  \in \mathsf{MMHM}(M_{\gamma}).
    \]
    We let $\oplus \colon M \times M \to M$ be the direct sum map and define a symmetric monoidal product $\boxtimes_{\oplus}$ on $\mathsf{D}_{\mathsf{H}}^{\mathsf{mon}, (+)}(M)$ by
    \[
        (E, F) \mapsto \oplus_{*} (E \boxtimes F).         
    \]
    We let $\mathrm{Sym}_{\boxtimes_{\oplus}} \colon  \mathsf{D}_{\mathsf{H}}^{\mathsf{mon}, (+)}(M) \to \mathsf{D}_{\mathsf{H}}^{\mathsf{mon}, (+)}(M)$ be the associated symmetric product functor.
    Then the cohomological integrality theorem (= \Cref{thm-intro-cohint--1-symplectic}) for $\mathcal{M}$ can be written as follows:

\end{para}

\begin{theorem}[\Cref{thm-cohint-linear--1-shifted}]
    Under the above assumptions, there exists an isomorphism
    \begin{equation}\label{eq-coh-int-intro--1-linear}
        \mathrm{Sym}_{\boxtimes_{\oplus}} \left( \bigotimes_{\gamma \in \pi_0(\mathcal{M}) \setminus 0} (\mathcal{BPS}_{{M}, \gamma} \otimes \mathrm{H}^*(\mathrm{B} \mathbb{G}_\mathrm{m})_{\mathrm{vir}})   \right)  \cong p_* \varphi_{\mathcal{M}}.
    \end{equation}
\end{theorem}

\begin{para}
    As explained in \Cref{para-intro-cohint-is-induction}, the isomorphism \Cref{eq-coh-int-intro--1-linear} is induced by the cohomological Hall algebra multiplication,
    which is constructed by \textcite[Corollary 8.8]{Kinjo_Park_Safronov_CoHA} as a special case of the cohomological Hall induction.
\end{para}

\begin{para}[BPS Lie algebra]\label{para-intro-BPS-Lie}
    We adopt the notation from \Cref{para-cohint-intro-3-CY}.
    Assume further that the chosen orientation data is associative (see \cite[Definition 8.4]{Kinjo_Park_Safronov_CoHA}).
    In this case, the commutator of the cohomological Hall algebra multiplication induces a Lie algebra structure on the global sections of the BPS sheaf
    \[
    [-, -] \colon \mathrm{H}^*(M_{\gamma_1}, \mathcal{BPS}_{M, \gamma_1}) \otimes     \mathrm{H}^*(M_{\gamma_2}, \mathcal{BPS}_{M, \gamma_2}) \to \mathrm{H}^*(M_{\gamma_1 + \gamma_2}, \mathcal{BPS}_{M, \gamma_1 + \gamma_2}).
    \]
    We call it the \emph{BPS Lie algebra}. The isomorphism \Cref{eq-coh-int-intro--1-linear} can be regarded as a PBW-type theorem for the BPS Lie algebra.

    The structure of the BPS Lie algebra for the $3$-Calabi--Yau completion of a $2$-Calabi--Yau dg-category was studied in detail by \textcite{davison2023bps}.
    Roughly, they proved that the BPS Lie algebra can be described as a generalized Kac--Moody Lie algebra in this case, and recovered Nakajima's construction of the Kac--Moody Lie algebra action on the homology of quiver varieties \cite{nakajima1998quiver} as well as the Heisenberg algebra action on the homology of Hilbert schemes of points \cite{nakajima1997heisenberg}. 
    The recent result of Botta and Davison \cite{botta2023okounkov} identifies the BPS Lie algebra, for a special class of 3-Calabi--Yau completions, with the Maulik--Okounkov Lie algebra introduced in \cite{maulik2019quantum}. This Lie algebra forms a powerful bridge between quantum groups and the study of quantum cohomology of Nakajima quiver varieties. We believe that the study of the BPS Lie algebra for general 3-Calabi--Yau categories would be similarly fruitful, and that it could provide a representation-theoretic approach to problems in Donaldson--Thomas theory, such as the $\chi$-independence conjecture \cite[Conjecture 1.2]{toda_gopakumar_wall_crossing}.
\end{para}

\begin{para}[Geometric Langlands conjecture for \texorpdfstring{$3$}{$3$}-manifolds]\label{para-intro-3mfd}
    A  motivation for \Cref{thm-intro-cohint--1-symplectic} arises from the geometric Langlands conjecture for $3$-manifolds.
    As demonstrated by \textcite{kapustin2006electric}, the geometric Langlands conjecture is expected to be interpreted via a duality between $4$-dimensional TQFTs.
    In particular, by considering the state space for $3$-manifolds, we should have a certain duality phenomenon for invariants associated with $3$-manifolds.

    One example of a $3$-manifold invariant expected to satisfy the Langlands duality, which we learned from Pavel Safronov, is the vanishing cycle cohomology of character stacks.
    It is shown by \textcite[Theorem 3.45]{naef2023torsion} that a spin structure on a $3$-manifold $M$ induces an orientation for $\mathcal{L}\mathrm{oc}_G(M)$.
    Then we have the following conjecture, proposed by Safronov:
\end{para}

\begin{conjecture*}[Safronov, {\cite[Conjecture 1]{kaubrys2024cohomological}}]
    There exists a natural isomorphism
    \begin{equation}\label{eq-conj-Langlands-intro}
     \mathrm{H}^*(\mathcal{L}\mathrm{oc}_G(M), \varphi_{\mathcal{L}\mathrm{oc}_G(M)}) \cong   \mathrm{H}^*(\mathcal{L}\mathrm{oc}_{G^{\vee}}(M), \varphi_{\mathcal{L}\mathrm{oc}_{G^{\vee}}(M)}).
    \end{equation}
\end{conjecture*}

    When the character stack $\mathcal{L}\mathrm{oc}_G(M)$ is almost orthogonal, e.g.~when $M = \Sigma \times S^1$ (\Cref{cor-sigma-s1-orthogonal}) or $G = \mathrm{GL}_n, \mathrm{SL}_n$ (\Cref{cor-GL-character-stack-orthogonally-symmetric}),
    one can apply \Cref{thm-intro-cohint--1-symplectic} to $\mathcal{L}\mathrm{oc}_G(M)$, which we will explicitly describe below.
    For each cocharacter $\lambda \colon \mathbb{G}_{\mathrm{m}}^n \to G$,
    we let $L_{\lambda}$ be the corresponding Levi subgroup and consider the following diagram
    \[\begin{tikzcd}
	{\mathcal{L}\mathrm{oc}_{L_{\lambda}}(M)} & {\mathcal{L}\mathrm{oc}_{G}(M)} \\
	{\mathrm{Loc}_{L_{\lambda}}(M)} & {\mathrm{Loc}_{G}(M).}
	\arrow[from=1-1, to=1-2]
	\arrow["{p_{\lambda}}"', from=1-1, to=2-1]
	\arrow["p", from=1-2, to=2-2]
	\arrow["{g_{\lambda}}"', from=2-1, to=2-2]
    \end{tikzcd}\]
    Here, the vertical maps are good moduli space morphisms.
    We let $W_{\lambda}$ denote the relative Weyl group of $L_{\lambda}$, which naturally acts on $\mathrm{Loc}_{L_{\lambda}}(M)$.
    Then, \Cref{thm-intro-cohint--1-symplectic} can be written as
    \begin{equation}\label{eq-intro-cohint-3-mfd}
      \bigoplus_{n \geq 0} \ \bigoplus_{\lambda \colon \mathbb{G}_{\mathrm{m}}^n \to G} \left(  g_{\lambda,  *} (\mathcal{BPS}_{\mathrm{Loc}_{L_{\lambda}}(M)}^{(n)} \otimes \mathrm{H}^*(\mathrm{B} \mathbb{G}_\mathrm{m}^n)_{\mathrm{vir}})\right)^{W_{\lambda}}  \cong p_* \varphi_{\mathcal{L}\mathrm{oc}_G(M)}.
    \end{equation}
    Here, the index set of the left-hand side runs over all cocharacters up to conjugation.
    We expect \Cref{eq-intro-cohint-3-mfd} to hold for arbitrary character stacks of $3$-manifolds without the almost orthogonality hypothesis, and for the conjectural Langlands duality isomorphism \Cref{eq-conj-Langlands-intro} to preserve the decomposition.
    This leads to the following conjecture:

\begin{conjecture*}[\Cref{conj-geometric-langlands-3-mfds}]\label{conj-intro-geometric-langlands-3-mfds}
    For a compact spin $3$-manifold $M$ and a reductive group $G$ with $c_G \coloneqq \dim \mathrm{Z} (G)$, there exists a natural isomorphism
        \[
            \mathrm{H}^*_{\mathrm{BPS}^{(c_G)}}(\mathrm{Loc}_G(M)) \cong \mathrm{H}^*_{\mathrm{BPS}^{(c_{G^{\vee}})}}(\mathrm{Loc}_{G^{\vee}}(M))
        \]
        equivariant with respect to the isomorphism $\mathrm{Out}_{\mathrm{symp}}(G) \cong \mathrm{Out}_{\mathrm{symp}}(G^{\vee})$, where $\mathrm{Out}_{\mathrm{symp}}(G)$ denotes the group of outer automorphisms of $G$ preserving a fixed $G$-invariant metric on $\mathfrak{g}$.
\end{conjecture*}

    Once \Cref{thm-intro-cohint--1-symplectic} is established for character stacks of $3$-manifolds, the above conjecture implies the original Langlands duality for the vanishing cycle cohomology of the character stacks.
    We believe that the above conjecture is easier to verify, since the BPS cohomology is finite dimensional.
    For example, when $M = T^3$ and $G = \mathrm{SL}_p$ for a prime $p$, \textcite{kaubrys2024cohomological} proves the Langlands duality conjecture for the vanishing cycle cohomology of character stacks by reducing to the computation of the BPS cohomology.
    Langlands duality for $M = T^3$ with more general gauge groups (including $\mathrm{SL}_n$, $\mathrm{Sp}_{2n}$ and $\mathrm{SO}_{2n + 1}$ for general $n$) will be proved by Hennecart and T.K. \cite{Kinjo_Springer} building on the results of this paper.

    Another source of examples of $3$-manifold invariants satisfying the Langlands duality is given by skein modules, see the survey by \textcite{jordan2024langlands}. 
    A conjectural relation between the skein modules and the vanishing cycle cohomology is explained by \textcite[Conjecture C]{gunningham2023deformation}.

\begin{para}[Cohomological integrality theorem: \texorpdfstring{$0$}{$0$}-shifted symplectic case]
    We now turn our attention to the cohomological integrality theorem for $0$-shifted symplectic stacks.
    Let $\mathcal{Y}$ be a $0$-shifted symplectic stack satisfying assumptions \Crefrange{item1-intro-cohint-goodmoduli}{item4-intro-cohint-gloeq} in \Cref{para-assumptions-intro-cohint}. Define $\mathcal{X} = \mathrm{T}^*[-1] \mathcal{Y}$ and equip it with the natural $(-1)$-shifted symplectic structure and the natural orientation.
    Let $\pi \colon \mathcal{X} \to \mathcal{Y}$ be the natural projection.
    Then the dimensional reduction theorem, proved by Kinjo \cite[Theorem 4.14]{kinjo2022dimensional} based on the work of Davison \cite[Theorem A.1]{davison2017critical}, provides the following isomorphism
    \begin{equation}\label{eq-intro-dimensional-reduction}
    \pi_* \varphi_{\mathcal{X}} \cong \mathbb{L}^{\vdim \mathcal{Y} / 2} \otimes \mathbb{D} \mathbb{Q}_{\mathcal{Y}},
    \end{equation}
    where $\mathbb{D} \mathbb{Q}_{\mathcal{Y}}$ denotes the dualizing complex.
    This isomorphism enables us to use cohomological Donaldson--Thomas theory to study the Borel--Moore homology of $0$-shifted symplectic stacks.

    For each special face $(F, \alpha) \in \mathsf{Face}^{\mathrm{sp}}(\mathcal{Y})$, consider the following diagram
    \[\begin{tikzcd}
        {\mathcal{Y}_{\alpha}} & {\mathcal{Y}} \\
        {Y_{\alpha}} & {Y,}
        \arrow["\mathrm{tot}_{\alpha}", from=1-1, to=1-2]
        \arrow["{p_{\alpha}}"', from=1-1, to=2-1]
        \arrow["p", from=1-2, to=2-2]
        \arrow["{g_{\alpha}}"', from=2-1, to=2-2]
    \end{tikzcd}\]
    where the vertical maps are the good moduli space morphisms.
    Also, consider the projection map
    \[
     \bar{\pi}_{\alpha} \colon X_{\alpha} \to Y_{\alpha},
    \]
    where $X_{\alpha}$ is the good moduli space of $\mathcal{X}_{\alpha}$.
    We set
    \[
     \mathcal{BPS}_{Y_{\alpha}} \coloneqq \mathbb{L}^{- \dim F / 2} \otimes  \bar{\pi}_{\alpha, *}  \mathcal{BPS}_{X_{\alpha}}.  
    \]
    Then, a generalized version of the support lemma  (= \Cref{prop-support-lemma}) extending the result of Davison \cite[Lemma 4.1]{davison_integrality_preprojective} implies that $\mathcal{BPS}_{Y_{\alpha}}$  is a pure mixed Hodge module.
    Combining \Cref{thm-intro-cohint--1-symplectic} with the dimensional reduction theorem \Cref{eq-intro-dimensional-reduction}, we obtain the following:
\end{para}

\begin{theorem}[\Cref{thm-cohint-0-shifted} + \Cref{rmk-cohint-source-0-shifted-symplectic}]\label{thm-intro-cohint-0-shifted}
    There exists an isomorphism of monodromic mixed Hodge complexes on $Y$
    \[
        \bigoplus_{(F, \alpha) \in \mathsf{Face}^{\mathrm{sp}}(\mathcal{Y})}  (g_{\alpha, *}  \mathcal{BPS}_{Y_{\alpha}} \otimes \mathrm{H}^*(\mathrm{B} \mathbb{G}_{\mathrm{m}}^{\dim F}) \otimes \mathrm{sgn}_{\alpha} )^{\mathrm{Aut}(\alpha)}  \cong  \mathbb{L}^{\vdim \mathcal{Y} / 2} \otimes p_* \mathbb{D} \mathbb{Q}_{\mathcal{Y}}
    \]
    where $\mathrm{sgn}_{\alpha}$ denotes the cotangent sign representation of $\mathrm{Aut}(\alpha)$ (see \Cref{para-cotangent-distance}).
    In particular,  we have the following isomorphism of monodromic mixed Hodge complexes on a point:
    \[
        \bigoplus_{(F, \alpha) \in \mathsf{Face}^{\mathrm{sp}}(\mathcal{X})}  ( \mathrm{H}_{\mathrm{BPS}}^*(Y_{\alpha}) \otimes \mathrm{H}^*(\mathrm{B} \mathbb{G}_{\mathrm{m}}^{\dim F}) \otimes \mathrm{sgn}_{\alpha})^{\mathrm{Aut}(\alpha)}  \cong \mathbb{L}^{\vdim \mathcal{Y} / 2} \otimes \mathrm{H}_{- *} ^{\mathrm{BM}} (\mathcal{Y}_{\alpha})
    \]
    where we set $\mathrm{H}_{\mathrm{BPS}}^*(Y_{\alpha}) \cong \mathrm{H}^*(Y_{\alpha}, \mathcal{BPS}_{Y_{\alpha}}  )$.
\end{theorem}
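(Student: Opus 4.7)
The plan is to deduce the theorem from \Cref{thm-intro-cohint--1-symplectic} applied to the auxiliary $(-1)$-shifted symplectic stack $\mathcal{X} \coloneqq \mathrm{T}^*[-1]\mathcal{Y}$, combined with the dimensional reduction isomorphism \Cref{eq-intro-dimensional-reduction} and the generalized support lemma \Cref{prop-support-lemma}. First, I would verify that $\mathcal{X}$, equipped with its natural $(-1)$-shifted symplectic structure and natural orientation, satisfies the hypotheses \Crefrange{item1-intro-cohint-goodmoduli}{item4-intro-cohint-gloeq}. Since $\pi \colon \mathcal{X} \to \mathcal{Y}$ is relatively linear, $\mathcal{X}$ inherits from $\mathcal{Y}$ an affine diagonal, a good moduli space $p_{\mathcal{X}} \colon \mathcal{X} \to X$, quasi-compact graded points, and global equivariant parameters; the map $\pi$ descends to $\bar\pi \colon X \to Y$ with $\bar\pi \circ p_{\mathcal{X}} = p_{\mathcal{Y}} \circ \pi$. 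The cotangent construction also identifies $\mathsf{Face}^{\mathrm{sp}}(\mathcal{X})$ with $\mathsf{Face}^{\mathrm{sp}}(\mathcal{Y})$, and for each $(F,\alpha)$ the stack $\mathcal{X}_\alpha$ identifies with $\mathrm{T}^*[-1]\mathcal{Y}_\alpha$, inducing $\bar\pi_\alpha \colon X_\alpha \to Y_\alpha$ compatible with $g_\alpha$.

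Second, I would apply \Cref{thm-intro-cohint--1-symplectic} to $\mathcal{X}$ and push forward the resulting decomposition along $\bar\pi$. Using the identity $\bar\pi_* p_{\mathcal{X},*} \varphi_{\mathcal{X}} = p_{\mathcal{Y},*} \pi_* \varphi_{\mathcal{X}} \cong \mathbb{L}^{\vdim \mathcal{Y}/2} \otimes p_{\mathcal{Y},*} \mathbb{D}\mathbb{Q}_{\mathcal{Y}}$ (the last step being dimensional reduction), this gives
\[
    \bigoplus_{(F,\alpha) \in \mathsf{Face}^{\mathrm{sp}}(\mathcal{Y})} \bigl( g_{\alpha,*} \bar\pi_{\alpha,*} \mathcal{BPS}_{X_\alpha} \otimes \mathrm{H}^*(\mathrm{B}\mathbb{G}_m^{\dim F})_{\mathrm{vir}} \bigr)^{\mathrm{Aut}(\alpha)} \cong \mathbb{L}^{\vdim \mathcal{Y}/2} \otimes p_{\mathcal{Y},*} \mathbb{D}\mathbb{Q}_{\mathcal{Y}}.
\]
It remains to identify $\bar\pi_{\alpha,*}\mathcal{BPS}_{X_\alpha}$ with the direct image of a pure mixed Hodge module $\mathcal{BPS}_{Y_\alpha}$ on $Y_\alpha$, up to the appropriate twist and sign. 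This is exactly where the generalized support lemma \Cref{prop-support-lemma}, extending \cite[Lemma 4.1]{davison_integrality_preprojective}, enters: applied to the $0$-shifted symplectic stack $\mathcal{Y}_\alpha$, it forces the $(-1)$-shifted BPS sheaf $\mathcal{BPS}_{X_\alpha}$ to be set-theoretically supported on the zero-section image of $Y_\alpha$ inside $X_\alpha$, from which $\mathcal{BPS}_{Y_\alpha}$ is defined. Purity of $\mathcal{BPS}_{Y_\alpha}$ is then inherited from that of $\mathcal{BPS}_{X_\alpha}$.

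Finally, one must reconcile the discrepancy between the virtual factor $\mathrm{H}^*(\mathrm{B}\mathbb{G}_m^{\dim F})_{\mathrm{vir}} = \mathbb{L}^{\dim F/2} \otimes \mathrm{H}^*(\mathrm{B}\mathbb{G}_m^{\dim F})$ appearing after the $(-1)$-shifted decomposition and the non-virtual factor $\mathrm{H}^*(\mathrm{B}\mathbb{G}_m^{\dim F}) \otimes \mathrm{sgn}_\alpha$ stated in the theorem. The main obstacle lies precisely in this bookkeeping: the Lefschetz twist $\mathbb{L}^{\dim F/2}$ and the sign character $\mathrm{sgn}_\alpha$ must be absorbed into the definition of $\mathcal{BPS}_{Y_\alpha}$ by tracking how the $\mathrm{Aut}(\alpha)$-equivariant structure on $\mathrm{T}^*[-1]\mathcal{Y}_\alpha$ differs from that on $\mathcal{Y}_\alpha$ by exactly the cotangent sign representation recalled in \Cref{para-cotangent-distance}; the shift is produced on dimensional reduction between the cotangent fibre weights defining the face and the weights of the symplectic pairing. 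Once these identifications are in place, the $\mathrm{Aut}(\alpha)$-isotypic decomposition of the right-hand side yields the stated isomorphism.
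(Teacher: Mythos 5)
Your overall route is the paper's route: apply the $(-1)$-shifted theorem to $\mathcal{X}=\mathrm{T}^*[-1]\mathcal{Y}$, push down along $\pi$, and use dimensional reduction plus the support lemma to extract $\mathcal{BPS}_{Y_\alpha}$. But the two steps you defer are exactly where your sketch goes wrong. First, the support claim is incorrect: \Cref{prop-support-lemma} combined with \Cref{lem-cotrk-inequality} does \emph{not} force $\mathcal{BPS}_{X_\alpha}$ onto the zero section of $Y_\alpha$ in $X_\alpha$; it forces it onto the image of the closed immersion $\iota_\alpha\colon \mathbb{A}^{\dim F}\times Y_\alpha\hookrightarrow X_\alpha$ cut out by the Lie algebra of the central torus of the face acting on the $(-1)$-shifted cotangent directions (elements of $\mathfrak{g}_y$ in that $\dim F$-dimensional subspace act trivially on $\mathrm{H}^{-1}$ and $\mathrm{H}^0$ of $\mathbb{T}_{\mathcal{Y}_\alpha,y}$, so the cotangent-rank criterion does not exclude them). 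The correct statement, \Cref{thm-BPS-cotangent-product}, is $\mathcal{BPS}_{X_\alpha}\cong\iota_{\alpha,*}(\mathcal{IC}_{\mathbb{A}^{\dim F}}\boxtimes\mathcal{BPS}_{Y_\alpha})$, and the $\mathcal{IC}_{\mathbb{A}^{\dim F}}$ factor is precisely what cancels the $\mathbb{L}^{\dim F/2}$ in $\mathrm{H}^*(\mathrm{B}\mathbb{G}_\mathrm{m}^{\dim F})_{\mathrm{vir}}$ after pushing to $Y_\alpha$. So the discrepancy you single out as ``the main obstacle'' is resolved by the very factor your support claim deletes; with zero-section support the decomposition would come out off by $\mathbb{L}^{\dim F/2}$ (and $\mathbb{A}^{\dim F}$-equivariance of $\mathcal{BPS}_{X_\alpha}$, not the support lemma alone, is what pins down the product form).

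Second, ``purity of $\mathcal{BPS}_{Y_\alpha}$ is inherited from that of $\mathcal{BPS}_{X_\alpha}$'' starts from a false premise: for an oriented $(-1)$-shifted symplectic stack the BPS sheaf is a perverse piece of a pushed-forward vanishing-cycle object and is not pure in general (this is exactly why the $0$-shifted and $(-1)$-shifted cases are stated differently). The argument that works is local: on a critical chart $\mathrm{T}^*[-1]\mathcal{Y}\cong\mathrm{Crit}(s^\vee/G)$ one gets Verdier self-duality of $\mathcal{BPS}_{Y_\alpha}$ and a split injection $\mathcal{BPS}_{Y_\alpha}\hookrightarrow\mathbb{L}^{\vdim\mathcal{Y}_\alpha/2}\otimes p_{\alpha,*}\mathbb{D}\mathbb{Q}_{\mathcal{Y}_\alpha}$ (\Cref{cor-BPS-split-pure}); the target has weights $\geq 0$ by \Cref{thm-decom-good-moduli}, and self-duality then forces weight exactly $0$. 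Two further points you should not leave implicit: weak orthogonality of $\mathrm{T}^*[-1]\mathcal{Y}$ is a hypothesis of the $(-1)$-shifted theorem and is not formal (it needs openness of the orthogonal locus plus the $\mathbb{G}_\mathrm{m}$-contraction to the zero section, as in \Cref{cor--1^-shifted-cotangent-orthogonal}); and the sign $\mathrm{sgn}_\alpha$ arises from comparing the localized orientation $\alpha^\star o^{\mathrm{sta}}$ with the standard orientation of $\mathrm{T}^*[-1]\mathcal{Y}_\alpha$ via \Cref{lem-ori-localize-indep-segment-up-to-sign}, which is a statement about orientations, not about equivariant structures on the stacks themselves.
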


\begin{para}
    
    A special case of \Cref{thm-intro-cohint-0-shifted} was proved by Davison, Hennecart and Schlegel Mejia \cite[Theorem 1.1]{davison2023bps} for the moduli space of objects in $2$-Calabi--Yau categories.
    We note that \cite[Theorem 1.1]{davison2023bps} further provides an explicit formula for the BPS sheaves. An explicit determination of the BPS sheaves for arbitrary almost orthogonal $0$-shifted symplectic stacks would be an interesting challenge, a special case of which will be studied in the forthcoming paper by Hennecart and T.K. \cite{Kinjo_Springer}.  We expect that intersection multiplicities of IC sheaves appearing in BPS sheaves for conical symplectic singularities are related to symplectic duality; see the forthcoming paper of B.D. and McBreen \cite{DMcB2025}.

\end{para}

\begin{para}[Topological mirror symmetry for \texorpdfstring{$G$}{$G$}-Higgs bundles]\label{para-NAHT}
    In non-abelian Hodge theory, we are interested in the cohomology of the moduli space of semistable $G$-Higgs bundles and $G$-local systems on a Riemann surface.
    When $G$ is a group of type A with coprime choice of degree, numerous interesting phenomena have been found in the last two decades, such as the P = W conjecture \cite[]{de2012topology} proved in \cite{maulik2024p, hausel2022p} and the topological mirror symmetry conjecture \cite[Conjecture 5.1]{hausel2003mirror} proved in \cite{maulik2021endoscopic,groechenig2020mirror}.
    However, for other reductive groups (and even for $G = \mathrm{GL}_n$ with non-coprime choice of degree), the moduli space is singular in general, and many things (including the formulation of the conjecture itself) are not known, since the ordinary cohomology does not behave well for these spaces.
   
    Motivated by the recent proof of Kinjo and Koseki \cite[Corollary 5.15]{kinjo2024cohomological} of the cohomological $\chi$-independence for $\mathrm{GL}_n$-Higgs bundles with non-coprime choice of degree, we expect the cohomology of the BPS sheaf on the moduli space to provide a robust framework to generalize celebrated theorems in non-abelian Hodge theory from type A groups to general reductive groups.
    As an example, we explain our proposal of the topological mirror symmetry conjecture for $G$-Higgs bundles for general connected reductive group $G$.
    Let $C$ be a smooth projective curve and $G$ be a reductive group.
    Let $\mathcal{L} \mathcal{H}\mathrm{iggs}_G(C)^{\mathrm{ss}}$ be the loop stack of the moduli stack of semistable $G$-Higgs bundles on $C$, which is $(-1)$-shifted symplectic by the AKSZ formalism \cite[Theorem 2.5]{pantev2013shifted}.
    We let 
    \[
        \mathcal{L} \mathcal{H}\mathrm{iggs}_G(C)^{\mathrm{ss}} \to \mathrm{LHiggs}_G(C)^{\mathrm{ss}}
    \]
    denote the good moduli space. Then we expect the following:
\end{para}

\begin{conjecture*}[\Cref{conj-top-mirror}]
    Set $c_G \coloneqq \dim \mathrm{Z} (G)$.
    Then there exist isomorphisms
    \begingroup
    \setlength{\jot}{7pt}  
    \begin{align*}
        \mathrm{H}^*(\mathcal{LH}\mathrm{iggs}_G(C)^{\mathrm{ss}} , \varphi_{\mathcal{LH}\mathrm{iggs}_G(C)^{\mathrm{ss}}}) &\cong  \mathrm{H}^*(\mathcal{LH}\mathrm{iggs}_{G^{\vee}}(C)^{\mathrm{ss}} , \varphi_{\mathcal{LH}\mathrm{iggs}_{G^{\vee}}(C)^{\mathrm{ss}}}) \\
        \mathrm{H}^*_{\mathrm{BPS}^{(c_G)}}(\mathrm{LHiggs}_G(C)^{\mathrm{ss}}) &\cong  \mathrm{H}^*_{\mathrm{BPS}^{(c_{G^{\vee}})}}(\mathrm{LHiggs}_{G^{\vee}}(C)^{\mathrm{ss}}),
    \end{align*}
    \endgroup
    equivariant with respect to the isomorphism $\mathrm{Out}_{\mathrm{symp}}(G) \cong \mathrm{Out}_{\mathrm{symp}}(G^{\vee})$.
\end{conjecture*}
    As a consequence of \Cref{thm-intro-cohint--1-symplectic}, one can show that the latter isomorphism implies the former isomorphism.
    We expect that the latter isomorphism is easier to verify, since the BPS cohomology is finite-dimensional.
    When $G$ is semisimple, we expect that the above decomposition swaps the connected component decomposition and the weight decomposition.
    Further, we also propose a twisted version of the topological mirror symmetry conjecture generalizing \textcite[Conjecture 5.1]{hausel2003mirror} for type A groups.
    See \Cref{conj-top-mirror-refined} and \Cref{conj-top-mirror-twisted} for detailed discussions.

\begin{para}[Relation to other works]
    While we were preparing this manuscript, Hennecart posted two papers \cite{hennecart2024cohomological,hennecart2024cohomological_rep} on the arXiv establishing a version of the cohomological integrality theorem using an algebraic method, extending the work of Efimov \cite{efimov2012cohomological} on the structure of the cohomological Hall algebras for symmetric quivers.
    In these papers, he found a definition of the BPS sheaves as bounded mixed Hodge complexes on the affine symmetric GIT quotients of smooth varieties and showed that they satisfy the cohomological integrality theorem.
    An advantage of his method is that it works without the orthogonality hypothesis, and has striking applications such as the purity of the Borel--Moore homology of $0$-shifted symplectic stacks with proper good moduli spaces as conjectured by \textcite[Conjecture 4.4]{halpern2015theta} (and extending the results of \textcite{davison2021purity}). 
    On the other hand, our geometric approach has the advantage that we can prove an isomorphism between the BPS sheaf and the intersection complex in the smooth setting,
    which is crucial to extend the cohomological integrality theorem to $(-1)$-shifted and $0$-shifted symplectic stacks.
    Also, we work with general stacks, which might not be written as smooth affine quotients, by building on the formalism of component lattices \cite{component-lattice} established by C.B., A.I.N., T.K. and Halpern-Leistner.
    This level of generality is crucial for applications to the moduli stack of semistable principal $G$-(Higgs) bundles and the $G$-character stacks on $2$- and $3$-manifolds.

    We also note that a generalization of the cohomological integrality for stacks beyond the moduli stack of objects in an abelian category was considered by \textcite{young_cohm}, 
    where he considers the moduli stack of self-dual representations of a quiver $Q$ with an involution $Q \xrightarrow{\sim} Q^{\mathrm{op}}$.
    He formulates a version of the cohomological integrality conjecture \cite[Conjecture B]{young_cohm},
    which can be paraphrased as the statement that the stack of self-dual quiver representations satisfies the cohomological integrality in the sense of \Cref{para-cohint-smoothstack},
    and he proved it for several quivers including affine Dynkin quivers of type $\tilde{A}_1$.
    The orthogonality condition  in \Cref{def-symmetry-intro} for self-dual quivers corresponds to the numerical condition that the number of edges from each orthogonal vertex to each symplectic vertex is even.
    In particular, \Cref{thm-intro-cohint-smooth} implies \cite[Conjecture B]{young_cohm} for self-dual quivers with the above numerical condition.

\end{para}

\subsection*{Acknowledgements}
    C.B. would like to thank Dominic Joyce for related discussions,
    and the Mathematical Institute of the University of Oxford for its support.
    B.D. was supported by a Royal Society University Research Fellowship. A.I.N. is grateful to the Mathematical Institute of the University of Oxford for its support.
    T.K. started this project during his visit to Oxford in 2022. He thanks Dominic Joyce for the hospitality and dedicating time to the weekly discussions related to this project. He also thanks the environment of Oxford which made this collaboration possible.
    He is also grateful to Yukinobu Toda for stressing the importance of Halpern-Leistner's work on the stack of graded/filtered points in enumerative geometry, Yugo Takanashi for patiently teaching him the theory of algebraic groups and Hyeonjun Park for the discussion on the local model of $(-1)$-shifted symplectic stacks.
    T.K. was supported by JSPS KAKENHI Grant Number 23K19007.
    T.P. thanks Yukinobu Toda for useful discussions, and CNRS (grant PEPS JCJC) and FSI Sorbonne Université (grant Tremplins) for providing financial support to visit IPMU and RIMS between 14 October and 2 November 2024. T.P. thanks IPMU and RIMS for their hospitality during this visit. 
    We thank Pavel Safronov for discussions related to the Langlands duality for $3$-manifolds.
    We are also grateful to Lucien Hennecart for his very helpful comments and sharing his note \cite{Hennecart_note} soon after we posted the draft of this paper on the arXiv.
\subsection*{Notations and conventions}
    \
    \begin{itemize}
        \item We let $\mathsf{S}$ denote the $\infty$-category of spaces.
        \item An \emph{algebraic stack} is a $1$-Artin stack locally of finite type over $\mathbb{C}$.  
        \item A \emph{derived algebraic stack} is a derived $1$-Artin stack locally almost of finite presentation over $\mathbb{C}$.
        \item For a derived algebraic stack $\mathcal{X}$, the notation $x\in \mathcal{X}$ means that $x$ is a $\mathbb{C}$-point of $\mathcal{X}$.
        \item We adopt the notion of a good moduli space for derived algebraic stacks as given in \cite[Definition 2.1]{ahlqvist2023good}. As is shown in \cite[Theorem 2.12]{ahlqvist2023good}, a derived algebraic stack $\mathcal{X}$ admits a good moduli space if and only if its classical truncation $\mathcal{X}_{\mathrm{cl}}$ does.
        \item Representations of reductive groups are assumed to be finite dimensional and algebraic.
    \end{itemize}

\section{The component lattice}

In this section, we provide background material
on the stacks of graded and filtered points of an algebraic stack
following \textcite{_HalpernLeistner_Onthestructureofinstabilityinmodulitheory},
and the component lattice of a stack
following \textcite{component-lattice}.
The reader is referred to these two works for more details.

\subsection{Graded and filtered points}

\begin{para}
    \label{assumption-stack-basic}
    Throughout, let~$\mathcal{X}$ be a derived algebraic stack
    such that its classical truncation is quasi-separated, 
    has separated inertia, and has affine stabilizers.
\end{para}

\begin{para}[Graded and filtered points]
    \label{para-grad-filt}
    For an integer $n \geq 0$,
    the (derived) \emph{stack of $\mathbb{Z}^n$-graded points}
    and the (derived) \emph{stack of $\mathbb{Z}^n$-filtered points} of~$\mathcal{X}$
    are defined as the derived mapping stacks
    \begin{align*}
        \Grad^n (\mathcal{X})
        & = \mathrm{Map} ( \mathrm{B} \mathbb{G}_\mathrm{m}^n, \mathcal{X} ) ,
        \\
        \Filt^n (\mathcal{X})
        & = \mathrm{Map} ( \Theta^n, \mathcal{X}) ,
    \end{align*}
    where $\Theta = \mathbb{A}^1 / \mathbb{G}_\mathrm{m}$
    is the quotient stack of~$\mathbb{A}^1$ by the scaling action of~$\mathbb{G}_\mathrm{m}$.

    The stacks $\Grad^n (\mathcal{X})$ and $\Filt^n (\mathcal{X})$
    are again derived algebraic stacks
    satisfying the conditions in \cref{assumption-stack-basic},
    by \textcite[Proposition~1.1.2]{_HalpernLeistner_Onthestructureofinstabilityinmodulitheory} and the discussion after \cite[Lemma~1.2.1]{_HalpernLeistner_Onthestructureofinstabilityinmodulitheory}.

    Note that even if~$\mathcal{X}$ is a classical stack,
    the stacks $\Grad^n (\mathcal{X})$ and $\Filt^n (\mathcal{X})$
    can still have non-trivial derived structure.

    We write $\Grad (\mathcal{X}) = \Grad^1 (\mathcal{X})$
    and $\Filt (\mathcal{X}) = \Filt^1 (\mathcal{X})$,
    and call them the (derived) \emph{stack of graded points}
    and the (derived) \emph{stack of filtered points} of~$\mathcal{X}$, respectively.
\end{para}

\begin{para}[Induced morphisms]
    \label{para-grad-filt-morphisms}
    Consider the morphisms
    \begin{equation*}
        \begin{tikzcd}
            \mathrm{B} \mathbb{G}_\mathrm{m}^n
            \ar[shift left=0.5ex, r, "0"]
            &
            \Theta^n
            \ar[shift left=0.5ex, l, "\mathrm{pr}"]
            &
            \mathrm{pt} \vphantom{^n} ,
            \ar[shift left=0.5ex, l, "1"]
            \ar[shift right=0.5ex, l, "0"']
            \ar[ll, bend right, start anchor=north west, end anchor=north east, looseness=.8]
        \end{tikzcd}
    \end{equation*}
    where the map~$\mathrm{pr}$ is induced by the projection $\mathbb{A}^n \to \mathrm{pt}$,
    and~$1$ denotes the inclusion as the point $(1, \ldots, 1)$.
    These induce morphisms of stacks
    \begin{equation*}
        \begin{tikzcd}
            \Grad^n (\mathcal{X})
            \ar[rr, bend left, start anchor=north east, end anchor=north west, looseness=.8, "\mathrm{tot}"]
            \ar[r, shift right=0.5ex, "\mathrm{sf}"']
            &
            \Filt^n (\mathcal{X})
            \ar[l, shift right=0.5ex, "\mathrm{gr}"']
            \ar[r, shift left=0.5ex, "\mathrm{ev}_0"]
            \ar[r, shift right=0.5ex, "\mathrm{ev}_1"']
            &
            \mathcal{X} \rlap{ ,}
        \end{tikzcd}
    \end{equation*}
    where the notations `$\mathrm{gr}$', `$\mathrm{sf}$', and `$\mathrm{tot}$' stand for
    the \emph{associated graded point},
    the \emph{split filtration},
    and the \emph{total point}, respectively.
\end{para}

\begin{para}[Example:\ Quotient stacks]
    \label[example]{eg-grad-quotient-stack}
    The stacks of graded and filtered points of a quotient stack
    can be described explicitly, following
    \cite[Theorem~1.4.8]{_HalpernLeistner_Onthestructureofinstabilityinmodulitheory}.

    Let~$\mathcal{X} = U / G$ be a quotient stack,
    where~$U$ is a derived algebraic space over~$\mathbb{C}$, acted on by
    an affine algebraic group~$G$ over~$\mathbb{C}$.

    Let $\lambda \colon \mathbb{G}_\mathrm{m}^n \to G$
    be a morphism of algebraic groups.
    Define the \emph{Levi subgroup}
    and the \emph{parabolic subgroup} of~$G$
    associated to~$\lambda$ by
    \begin{align*}
        L_\lambda & =
        \{ g \in G \mid g = \lambda (t) \, g \, \lambda (t)^{-1} \text{ for all } t \} ,
        \\
        P_\lambda & =
        \{ g \in G \mid \lim_{t \to 0} \lambda (t) \, g \, \lambda (t)^{-1} \text{ exists} \} ,
    \end{align*}
    respectively. Define the
    \emph{derived fixed locus} and the \emph{derived attractor} associated to~$\lambda$ by
    \begin{align*}
        U^{\lambda} & =
        \Map^{\mathbb{G}_\mathrm{m}^n} (\mathrm{pt}, U) ,
        \\
        U^{\lambda, +} & =
        \Map^{\mathbb{G}_\mathrm{m}^n} (\mathbb{A}^n, U) ,
    \end{align*}
    where $\Map^{\mathbb{G}_\mathrm{m}^n} (-, -)$
    denotes the $\mathbb{G}_\mathrm{m}^n$-equivariant derived mapping space,
    and~$\mathbb{G}_\mathrm{m}^n$ acts on~$U$ via~$\lambda$,
    and on $\mathbb{A}^n$ by scaling each coordinate.
    These are derived algebraic spaces.

    The $G$-action on~$U$ induces a $P_\lambda$-action on~$U^{\lambda, +}$
    and an $L_\lambda$-action on~$U^{\lambda}$.
    Moreover, we have
    \begin{align*}
        \Grad^n (\mathcal{X}) & \simeq
        \coprod_{\lambda \colon \mathbb{G}_\mathrm{m}^n \to G} U^{\lambda} / L_\lambda ,
        \\
        \Filt^n (\mathcal{X}) & \simeq
        \coprod_{\lambda \colon \mathbb{G}_\mathrm{m}^n \to G} U^{\lambda, +} / P_\lambda ,
    \end{align*}
    where the disjoint unions are taken over all
    conjugacy classes of maps~$\lambda$.
    Equivalently, they are taken over the set $\Lambda_T^n / W$, where
    $T \subset G$ is a maximal torus of~$G$,
    $\Lambda_T = \mathrm{Hom} (\mathbb{G}_\mathrm{m}, T)$
    is the coweight lattice of~$T$,
    and $W = \mathrm{N}_G (T) / \mathrm{Z}_G (T)$
    is the Weyl group of~$G$.
\end{para}

\begin{para}[Coordinate-free notation]
    \label{para-grad-lambda}
    Following \cite[\S3.1.5]{component-lattice},
    we introduce a coordinate-free notation
    for the stack $\Grad^n (\mathcal{X})$.

    For a free $\mathbb{Z}$-module~$\Lambda$ of finite rank,
    let $T_\Lambda = \mathrm{Spec} \, \mathbb{C} [\Lambda^\vee]
    \simeq \mathbb{G}_\mathrm{m}^{\rk \Lambda}$
    be the torus with coweight lattice~$\Lambda$.
    Define the \emph{stack of $\Lambda^\vee$-graded points} of~$\mathcal{X}$ by
    \begin{equation*}
        \Grad^\Lambda (\mathcal{X}) =
        \mathrm{Map} (\mathrm{B} T_\Lambda, \mathcal{X}) .
    \end{equation*}
    This construction is contravariant in~$\Lambda$.
    In particular, we have an isomorphism
    $\Grad^\Lambda (\mathcal{X}) \simeq \Grad^{\rk \Lambda} (\mathcal{X})$
    upon choosing a basis of~$\Lambda$.
\end{para}

\begin{para}[Rational graded points]
    \label{para-rational-graded-points}
    There are also the stacks of \emph{$\mathbb{Q}^n$-graded points}
    of a derived algebraic stack~$\mathcal{X}$,
    as in \cite[\S3.1.6]{component-lattice},
    denoted by $\Grad^n_\mathbb{Q} (\mathcal{X})$.
    For example, if~$\mathcal{X}$ is the moduli stack of objects
    in an abelian category~$\mathcal{A}$,
    then points in~$\Grad^n_\mathbb{Q} (\mathcal{X})$
    correspond to \emph{$\mathbb{Q}^n$-graded objects} in~$\mathcal{A}$,
    that is, families $(x_v)_{v \in \mathbb{Q}^n}$ of objects in~$\mathcal{A}$
    such that $x_v = 0$ for all but finitely many~$v$.

    Precisely, for a $\mathbb{Q}$-vector space~$F$ of finite dimension,
    define the \emph{stack of $F^\vee$-graded points} of~$\mathcal{X}$ by
    \begin{equation*}
        \Grad^F (\mathcal{X}) =
        \operatornamewithlimits{colim}_{\Lambda \subset F} \Grad^\Lambda (\mathcal{X}) ,
    \end{equation*}
    where the colimit is taken over all free
    $\mathbb{Z}$-submodules~$\Lambda \subset F$
    of full rank.
    This construction is contravariant in~$F$.

    In particular, we write $\Grad^n_\mathbb{Q} (\mathcal{X}) = \Grad^{\mathbb{Q}^n} (\mathcal{X})$
    and $\Grad_\mathbb{Q} (\mathcal{X}) = \Grad^{\mathbb{Q}} (\mathcal{X})$.

    This construction does not produce essentially new stacks,
    since $\Grad^n_\mathbb{Q} (\mathcal{X})$ is in fact
    just $\Grad^n (\mathcal{X})$ with each connected component
    duplicated many times.
    This is because all morphisms in the above colimit diagrams
    are open and closed immersions,
    so they induce isomorphisms on each connected component.

    We have an induced morphism
    $\mathrm{tot} \colon \Grad^F (\mathcal{X}) \to \mathcal{X}$,
    defined as the colimit of the morphisms
    $\mathrm{tot} \colon \Grad^\Lambda (\mathcal{X}) \to \mathcal{X}$.
\end{para}

\begin{para}[Cone filtrations]
    \label{para-cone-filtrations}
    There are also coordinate-free and rational versions of the stacks
    $\Filt^n (\mathcal{X})$,
    which we describe now following \cite[\S5.1]{component-lattice}.

    For a commutative monoid~$\Sigma$ which is an \emph{integral cone},
    that is, a polyhedral cone in a lattice~$\mathbb{Z}^n$ for some~$n$,
    one can define the \emph{stack of $\Sigma$-filtered points} of~$\mathcal{X}$
    as a derived mapping stack
    \begin{equation*}
        \Filt^\Sigma (\mathcal{X}) =
        \mathrm{Map} (\Theta_\Sigma, \mathcal{X}) ,
    \end{equation*}
    where $\Theta_\Sigma = R_\Sigma / T_\Sigma$ is a quotient stack,
    with $R_\Sigma = \Spec \mathbb{C} [\Sigma^\vee]$
    and $T_\Sigma = \Spec \mathbb{C} [\Lambda_\Sigma^\vee]$,
    where $\Sigma^\vee = \mathrm{Hom} (\Sigma, \mathbb{N})$
    is the monoid of monoid homomorphisms $\Sigma \to \mathbb{N}$,
    and $\Lambda_\Sigma$ is the groupification of~$\Sigma$,
    and $\Lambda_\Sigma^\vee = \mathrm{Hom} (\Lambda_\Sigma, \mathbb{Z})$.

    The stack~$\Filt^\Sigma (\mathcal{X})$ is again a derived algebraic stack.
    It generalizes both the stacks $\Grad^n (\mathcal{X})$ and $\Filt^n (\mathcal{X})$,
    which are special cases when
    $\Sigma = \mathbb{Z}^n$ and~$\mathbb{N}^n$, respectively.

    This construction is contravariant in~$\Sigma$,
    and we have the induced morphisms
    \begin{equation*}
        \begin{tikzcd}
            \Grad^{\Lambda_\Sigma} (\mathcal{X})
            \ar[r, shift right=0.5ex, "\mathrm{sf}"']
            \ar[rr, bend left, start anchor=north east, end anchor=north west, looseness=.8, "\mathrm{tot}"]
            &
            \Filt^\Sigma (\mathcal{X})
            \ar[l, shift right=0.5ex, "\mathrm{gr}"']
            \ar[r, shift left=0.5ex, "\mathrm{ev}_0"]
            \ar[r, shift right=0.5ex, "\mathrm{ev}_1"']
            &
            \mathcal{X} ,
        \end{tikzcd}
    \end{equation*}
    where $\Lambda_\Sigma$ is the groupification of~$\Sigma$, as above.

    As shown in \cite[Theorem~5.1.4]{component-lattice},
    this construction for general cones~$\Sigma$
    again does not produce essentially new stacks,
    since $\Filt^\Sigma (\mathcal{X})$ is in fact
    an open and closed substack of $\Filt^n (\mathcal{X})$ for some~$n$.
    However, considering general cones is important
    from the coordinate-free point of view,
    as we will see in \Cref{subsec-component-lattice}.

    Now let~$C$ be a \emph{rational cone},
    that is, a monoid isomorphic to a polyhedral cone in a finite-dimensional
    $\mathbb{Q}$-vector space.
    Following \cite[\S5.1.5]{component-lattice},
    define the \emph{stack of $C$-filtered points} of~$\mathcal{X}$ as a colimit
    \begin{equation*}
        \Filt^C (\mathcal{X}) =
        \operatornamewithlimits{colim}_{\Sigma \subset C} \Filt^\Sigma (\mathcal{X}) ,
    \end{equation*}
    with the colimit taken over all integral cones~$\Sigma \subset C$
    such that $C = \Sigma \otimes_\mathbb{N} \mathbb{Q}_{\geq 0}$.

    This construction is contravariant in~$C$,
    and we have the induced morphisms
    \begin{equation}\label{eq-gr-filt-diagram}
        \begin{tikzcd}
            \Grad^{F_C} (\mathcal{X})
            \ar[r, shift right=0.5ex, "\mathrm{sf}"']
            \ar[rr, bend left, start anchor=north east, end anchor=north west, looseness=.8, "\mathrm{tot}"]
            &
            \Filt^C (\mathcal{X})
            \ar[l, shift right=0.5ex, "\mathrm{gr}"']
            \ar[r, shift left=0.5ex, "\mathrm{ev}_0"]
            \ar[r, shift right=0.5ex, "\mathrm{ev}_1"']
            &
            \mathcal{X} ,
        \end{tikzcd}
    \end{equation}
    where $F_C$ is the groupification of~$C$,
    seen as a $\mathbb{Q}$-vector space.

    We also denote
    $\Filt^n_\mathbb{Q} (\mathcal{X}) = \Filt^{(\mathbb{Q}_{\geq 0})^n} (\mathcal{X})$
    and $\Filt_\mathbb{Q} (\mathcal{X}) = \Filt^{\mathbb{Q}_{\geq 0}} (\mathcal{X})$.

    Again, the colimit defining $\Filt^C (\mathcal{X})$
    only involves open and closed immersions,
    and each connected component of $\Filt^C (\mathcal{X})$
    is isomorphic to one in $\Filt^\Sigma (\mathcal{X})$ for some~$\Sigma$,
    and hence one in $\Filt^n (\mathcal{X})$ for some~$n$.
    In particular, every component of $\Filt_\mathbb{Q}^n (\mathcal{X})$
    is isomorphic to one in $\Filt^n (\mathcal{X})$.
\end{para}

\subsection{The component lattice}
\label{subsec-component-lattice}

\begin{para}
    In this section, we define the \emph{component lattice}
    of a derived algebraic stack, following
    \cite{component-lattice}.
    It is the set of connected components of $\Grad (\mathcal{X})$,
    equipped with extra structure that encodes
    useful information about the enumerative geometry of the stack.
\end{para}

\begin{para}[Formal lattices]
    \label{para-formal-lattice}
    Let~$R$ be a commutative ring, which we will only consider to be
    either~$\mathbb{Z}$ or~$\mathbb{Q}$.

    Following \cite[\S2.1]{component-lattice},
    define a \emph{formal $R$-lattice} to be a functor
    \begin{equation*}
        X \colon \mathsf{Lat} (R)^\mathrm{op}
        \longrightarrow \mathsf{Set} ,
    \end{equation*}
    where $\mathsf{Lat} (R)$ is the category of
    finitely generated free $R$-modules, or \emph{$R$-lattices}.

    The \emph{underlying set} of such a formal $R$-lattice
    is the set $|X| = X (R)$.

    For example, every $R$-module is a formal $R$-lattice,
    by considering its Yoneda embedding.
    Also, we are allowed to take arbitrary limits and colimits
    of formal $R$-lattices.
\end{para}

\begin{para}[Faces and cones]
    Let $X$ be a formal $\mathbb{Q}$-lattice.

    As in \cite[\S2.1]{component-lattice},
    define the category of \emph{faces} of~$X$,
    denoted by $\mathsf{Face} (X)$, as follows:

    \begin{itemize}
        \item
            An object is a pair $(F, \alpha)$,
            where $F$ is a finite-dimensional $\mathbb{Q}$-vector space,
            and $\alpha \in X (F)$, or equivalently,
            $\alpha$ is a morphism of formal $\mathbb{Q}$-lattices
            $\alpha \colon F \to X$.

        \item
            A morphism $(F, \alpha) \to (F', \alpha')$
            is a $\mathbb{Q}$-linear map $f \colon F \to F'$,
            such that $\alpha = \alpha' \circ f$.
    \end{itemize}
    Such a face $\alpha \colon F \to X$ is called \emph{non-degenerate},
    if it does not factor through a lower-dimensional face.
    Denote by $\mathsf{Face}^\mathrm{nd} (X) \subset \mathsf{Face} (X)$
    the full subcategory of non-degenerate faces.

    Define the category of \emph{cones} of~$X$,
    denoted by $\mathsf{Cone} (X)$, as follows:

    \begin{itemize}
        \item
            An object is a triple $(F, \alpha, \sigma)$,
            where $(F, \alpha) \in \mathsf{Face} (X)$,
            and $\sigma \subset F$ is a polyhedral cone of full dimension.

        \item
            A morphism $(F, \alpha, \sigma) \to (F', \alpha', \sigma')$
            is a morphism of faces $f \colon (F, \alpha) \to (F', \alpha')$,
            such that $f (\sigma) \subset \sigma'$.
    \end{itemize}
    We often abbreviate $(F, \alpha, \sigma)$ as~$\sigma$.
    The \emph{span} of~$\sigma$ is the face $(F, \alpha)$.
    Such a cone is \emph{non-degenerate}
    if its span is a non-degenerate face.
    Denote by $\mathsf{Cone}^\mathrm{nd} (X) \subset \mathsf{Cone} (X)$
    the full subcategory of non-degenerate cones.

    Note that in \cite{component-lattice},
    cones are denoted by $(C, \sigma)$ rather than~$(F, \alpha, \sigma)$,
    where~$C$ is the underlying monoid of the cone,
    and~$\sigma$ there denotes the map from~$C$ to~$X$.
    We use the notation $(F, \alpha, \sigma)$
    since it is more convenient for the purposes of this paper,
    and the two notions of cones are equivalent.
\end{para}

\begin{para}[The component lattice]\label{para-component-lattice}
    Now, let~$\mathcal{X}$ be a derived algebraic stack over~$\mathbb{C}$,
    as in \Cref{assumption-stack-basic}.

    Following \cite[\S3.2]{component-lattice},
    define the \emph{component lattice} of~$\mathcal{X}$
    as the formal $\mathbb{Z}$-lattice~$\mathrm{CL} (\mathcal{X})$ given by
    \begin{equation*}
        \mathrm{CL} (\mathcal{X}) (\Lambda) =
        \pi_0 (\Grad^\Lambda (\mathcal{X}))
    \end{equation*}
    for all free $\mathbb{Z}$-modules~$\Lambda$ of finite rank,
    where $\pi_0$ denotes taking the set of connected components.

    The \emph{rational component lattice} of~$\mathcal{X}$
    is the formal $\mathbb{Q}$-lattice~$\mathrm{CL}_\mathbb{Q} (\mathcal{X})$ defined by
    \begin{equation*}
        \mathrm{CL}_\mathbb{Q} (\mathcal{X}) (F) =
        \pi_0 (\Grad^F (\mathcal{X}))
    \end{equation*}
    for all finite-dimensional $\mathbb{Q}$-vector spaces~$F$.
    This is also the rationalization of~$\mathrm{CL} (\mathcal{X})$
    in the sense of \cite[\S2.1.8]{component-lattice}.

    Since we have a natural isomorphism
    $\Grad^\Lambda (\mathcal{X}_\mathrm{cl})_\mathrm{cl} \simeq
    \Grad^\Lambda (\mathcal{X})_\mathrm{cl}$
    of classical truncations for any~$\Lambda$,
    and similarly for $\Grad^F (-)$,
    the component lattice of a derived algebraic stack~$\mathcal{X}$
    only depends on its classical truncation~$\mathcal{X}_\mathrm{cl}$.

    By \cite[Lemma~5.1.11]{component-lattice},
    using the notations above,
    for any integral cone $\Sigma \subset \Lambda$ of full rank,
    or any rational cone $C \subset F$ of full dimension,
    the morphisms
    $\mathrm{gr} \colon \Filt^\Sigma (\mathcal{X}) \to \Grad^\Lambda (\mathcal{X})$
    and
    $\mathrm{gr} \colon \Filt^C (\mathcal{X}) \to \Grad^F (\mathcal{X})$
    induce isomorphisms on connected components,
    so that we also have
    $\mathrm{CL} (\mathcal{X}) (\Lambda) \simeq \pi_0 (\Filt^\Sigma (\mathcal{X}))$
    and
    $\mathrm{CL}_\mathbb{Q} (\mathcal{X}) (F) \simeq \pi_0 (\Filt^C (\mathcal{X}))$.

    We introduce shorthand notations
    \begin{alignat*}{2}
        \mathsf{Face} (\mathcal{X})
        & = \mathsf{Face} (\mathrm{CL}_\mathbb{Q} (\mathcal{X})) ,
        & \qquad
        \mathsf{Face}^\mathrm{nd} (\mathcal{X})
        & = \mathsf{Face}^\mathrm{nd} (\mathrm{CL}_\mathbb{Q} (\mathcal{X})) ,
        \\
        \mathsf{Cone} (\mathcal{X})
        & = \mathsf{Cone} (\mathrm{CL}_\mathbb{Q} (\mathcal{X})) ,
        & \qquad
        \mathsf{Cone}^\mathrm{nd} (\mathcal{X})
        & = \mathsf{Cone}^\mathrm{nd} (\mathrm{CL}_\mathbb{Q} (\mathcal{X})) .
    \end{alignat*}
\end{para}

\begin{para}[The notations $\mathcal{X}_\alpha$ and $\mathcal{X}_\sigma^+$]
    \label{para-notation-x-alpha}
    For a face $(F, \alpha) \in \mathsf{Face} (\mathcal{X})$,
    and a cone $\sigma \subset F$ of full dimension,
    we define stacks
    \begin{equation*}
        \mathcal{X}_\alpha \subset \mathrm{Grad}^F (\mathcal{X}) \ ,
        \qquad
        \mathcal{X}_\sigma^+ \subset \mathrm{Filt}^C (\mathcal{X})
    \end{equation*}
    as connected components corresponding to the element
    $\alpha \in \pi_0 (\mathrm{Grad}^F (\mathcal{X}))
    \simeq \pi_0 (\mathrm{Filt}^C (\mathcal{X}))$,
    where~$C$ is the underlying monoid of~$\sigma$.
    We have the induced morphisms
    \begin{equation*}
        \begin{tikzcd}[column sep=3em]
            \mathcal{X}_\alpha
            \ar[rr, bend left, start anchor=north east, end anchor=north west, looseness=.8, "{\smash[t]{\mathrm{tot}_\alpha}}"]
            \ar[r, shift right=0.5ex, "\mathrm{sf}_\sigma"']
            &
            \mathcal{X}_\sigma^+
            \ar[l, shift right=0.5ex, "\mathrm{gr}_\sigma"']
            \ar[r, shift left=0.5ex, "\mathrm{ev}_{0, \sigma}"]
            \ar[r, shift right=0.5ex, "\mathrm{ev}_{1, \sigma}"']
            &
            \mathcal{X} ,
        \end{tikzcd}
    \end{equation*}
    as restrictions of the morphisms \Cref{eq-gr-filt-diagram}.

    For a stack~$\mathcal{Y}$ defined over~$\mathcal{X}$,
    we also write
    \begin{equation*}
        \mathcal{Y}_\alpha \subset \Grad^F (\mathcal{Y}) ,
        \qquad
        \mathcal{Y}_\sigma^+ \subset \Filt^C (\mathcal{Y}) ,
    \end{equation*}
    for the preimages of $\mathcal{X}_\alpha$ and $\mathcal{X}_\sigma^+$,
    respectively, under the induced morphisms
    $\Grad^F (\mathcal{Y}) \to \Grad^F (\mathcal{X})$
    and $\Filt^C (\mathcal{Y}) \to \Filt^C (\mathcal{X})$.
\end{para}

\begin{para}[The notation $X_\alpha$]
    Assume for now that $\mathcal{X}$ admits a (derived) good moduli space $p \colon \mathcal{X} \to X$ in the sense of \textcite[Definition 2.1]{ahlqvist2023good} and that $\mathcal{X}$ has an affine diagonal.
    Then, by \cite[Lemma 2.6.7]{IbanezNunez_stratificationsgoodmodulistacks}, for a face $(F, \alpha) \in \mathsf{Face}(\mathcal{X})$ with $\mathcal{X}_{\alpha} \to \mathcal{X}$ quasi-compact,
    the stack $\mathcal{X}_{\alpha}$ admits a good moduli space $p_{\alpha} \colon \mathcal{X}_{\alpha} \to X_{\alpha}$.
\end{para}

\begin{para}[Example:\ Quotient stacks]\label{para-quotient-component-arrangement}
    Let~$G$ be an affine algebraic group over~$\mathbb{C}$.
    By the explicit description in \Cref{eg-grad-quotient-stack},
    the integral and rational component lattices of $\mathrm{B} G$ are given by
    \begin{align*}
        \mathrm{CL} (\mathrm{B} G)
        & \simeq \Lambda_T / W ,
        \\
        \mathrm{CL}_\mathbb{Q} (\mathrm{B} G)
        & \simeq (\Lambda_T \otimes \mathbb{Q}) / W ,
    \end{align*}
    where $T \subset G$ is a maximal torus of~$G$,
    and $\Lambda_T$ is the coweight lattice of~$T$,
    and~$W$ is the Weyl group.
    The quotient is taken as a colimit in the category of formal lattices.

    Now let~$U$ be an algebraic space over~$\mathbb{C}$,
    acted on by~$G$, and consider the quotient stack $\mathcal{Y} = U / G$.
    Then, for a face $\alpha \in \mathsf{Face} (\mathrm{B} G)$
    and a cone $\sigma \in \mathsf{Cone} (\mathrm{B} G)$,
    using the notations~$\mathcal{Y}_\alpha$ and~$\mathcal{Y}_\sigma^+$
    as at the end of \Cref{para-notation-x-alpha}
    for the projection $U / G \to \mathrm{B} G$, we have
    \begin{align*}
        \mathcal{Y}_\alpha
        & \simeq U^{\alpha} / L_\alpha ,
        \\
        \mathcal{Y}_\sigma^+
        & \simeq U^{\sigma, +} / P_\sigma ,
    \end{align*}
    where $L_\alpha$ and $U^{\alpha}$ are the fixed loci,
     $P_\sigma$ and $U^{\sigma, +}$ are the attractor loci,
    generalizing the corresponding notions in \Cref{eg-grad-quotient-stack}.
    See \cite[Example~5.1.8]{component-lattice} for more details.
\end{para}

\begin{para}[Finite quotients]\label{para-graded-filtered-finite-quotient}
    For later use, we will give a description of the stack of graded points and filtered points for a finite quotient of a derived algebraic stack.
    Let~$\mathcal{X}$ be a derived algebraic stack over~$\mathbb{C}$ as in \Cref{assumption-stack-basic} acted on by a finite group $\Gamma$.
    Let $F$ be a $\mathbb{Q}$-lattice and $C$ be a rational cone.
    Then we have the isomorphisms
    \begin{equation}\label{eq-grad-filt-finite-quotient}
     \Grad^F(\mathcal{X} / \Gamma ) \cong  \Grad^F(\mathcal{X}) / \Gamma, \quad          \Filt^{C}(\mathcal{X} / \Gamma ) \cong  \Filt^{C}(\mathcal{X}) / \Gamma.
    \end{equation}
    In particular, we have an isomorphism of formal lattices
    \[
    \mathrm{CL}_{\mathbb{Q}}(\mathcal{X} / \Gamma) \cong  \mathrm{CL}_{\mathbb{Q}}(\mathcal{X}) / \Gamma .
    \]

    To see this, consider the Cartesian square
    \begin{equation*}
        \begin{tikzcd}
            \mathcal{X} \ar[r] \ar[d] \ar[dr, phantom, "\lrcorner", very near start] & \mathcal{X}/\Gamma \ar[d] \\
            \mathrm{pt} \ar[r] & \mathrm{pt}/\Gamma,
        \end{tikzcd}
    \end{equation*}
    which witnesses a $\Gamma$-action on $\cX$ with quotient $\cX/\Gamma$. Applying $\Grad^F$, we get a Cartesian square

        \begin{equation*}
            \begin{tikzcd}
                \Grad^F(\mathcal{X}) \ar[r] \ar[d] \ar[dr, phantom, "\lrcorner", very near start] & \Grad^F(\mathcal{X}/\Gamma) \ar[d] & \\
                \mathrm{pt} \ar[r] & \mathrm{pt}/\Gamma ,
            \end{tikzcd}
        \end{equation*}
    by \cite[Corollary~1.3.17]{_HalpernLeistner_Onthestructureofinstabilityinmodulitheory}, witnessing a $\Gamma$-action on $\Grad^F(\cX)$ with quotient $\Grad^F(\cX)/\Gamma\cong \Grad^F(\cX/\Gamma)$.
    By using \cite[Lemma 5.2.7]{component-lattice}, we obtain an isomorphism $\Filt^C(\cX)/\Gamma\cong \Filt^C(\cX/\Gamma)$.

    We now give a component-wise description of $\mathrm{Grad}^F(\mathcal{X} / \Gamma)$.
    Let $(F, \alpha) \in \mathsf{Face}(\mathcal{X})$ be a face and $(F, \bar{\alpha}) \in \mathsf{Face}(\mathcal{X} / \Gamma)$ be its image.
    Let $\sigma = (F, \alpha, \sigma) \in \mathsf{Cone}(\mathcal{X})$ be a cone,
    and let $\bar{\sigma} = (F, \bar{\alpha}, \sigma) \in \mathsf{Cone}(\mathcal{X} / \Gamma)$.
    Let $\Gamma_{\alpha} \subset \Gamma$ denote the subset that fixes the component ${\mathcal{X}}_{\alpha} \subset \Grad^F({\mathcal{X}})$.
    Then the equivalence \Cref{eq-grad-filt-finite-quotient} implies the following isomorphisms
    \[
     (\mathcal{X} /\Gamma)_{\bar{\alpha}}   \cong {\mathcal{X}}_{{\alpha}} / \Gamma_{{\alpha}},  \quad        (\mathcal{X} /\Gamma)_{\bar{\sigma}}^+   \cong {\mathcal{X}}_{{\sigma}}^+ / \Gamma_{{\alpha}}.
    \]
    In particular, we obtain the following commutative diagram:
    \begin{equation}\label{eq-Induction-diagram-for-finite-quotient}
\begin{tikzcd}
	{\mathcal{X} _{\alpha}} & {\mathcal{X}_{\sigma}^+} & {\mathcal{X}} \\
	{\mathcal{X}_{\alpha} / \Gamma_{\alpha}} & {\mathcal{X}_{\sigma}^+ / \Gamma_{{\alpha}}} & {\mathcal{X} / \Gamma} \\
	{(\mathcal{X} / \Gamma)_{\bar{\alpha}}} & {(\mathcal{X} /\Gamma)_{\bar{\sigma}}} & {\mathcal{X} / \Gamma.}
	\arrow[from=1-1, to=2-1]
	\arrow["{{\gr_{{\sigma}}}}"', from=1-2, to=1-1]
	\arrow["{{\ev_{1, \sigma}}}", from=1-2, to=1-3]
	\arrow[from=1-2, to=2-2]
	\arrow[from=1-3, to=2-3]
	\arrow["\cong"', from=2-1, to=3-1]
	\arrow[from=2-2, to=2-1]
	\arrow[from=2-2, to=2-3]
	\arrow["\cong"', from=2-2, to=3-2]
	\arrow[Rightarrow, no head, from=2-3, to=3-3]
	\arrow["{{\gr_{\bar{\sigma}}}}"', from=3-2, to=3-1]
	\arrow["{{\ev_{1, \bar{\sigma}}}}", from=3-2, to=3-3]
\end{tikzcd}
\end{equation}
\end{para}

\begin{para}[Base change over good moduli spaces]\label{para-graded-filtered-etale-cover}
    Let $\mathcal{X}$ be a derived algebraic stack over $\mathbb{C}$ as in \Cref{assumption-stack-basic} admitting a good moduli space $p \colon \mathcal{X} \to X$.
    Let $\eta_{\GIT} \colon Y \to X$ be a quasi-separated and \'etale morphism from a derived algebraic space and set $\mathcal{Y} = Y \times_{X} \mathcal{X}$ and let $\eta \colon \mathcal{Y} \to \mathcal{X}$ be the base change of $\eta_{\GIT}$.
    Then it follows from \cite[Corollary 1.3.17]{_HalpernLeistner_Onthestructureofinstabilityinmodulitheory} and \cite[Lemma 5.2.7]{component-lattice} that the following diagrams are Cartesian for any $\mathbb{Q}$-lattice $F$ and a rational cone $C$:
\begin{equation}\label{eq-grad-filt-etale-cartesian}
\begin{tikzcd}
	{\mathrm{Grad}^F(\mathcal{Y})} & {\mathcal{Y}} & Y \\
	{\mathrm{Grad}^F(\mathcal{X})} & {\mathcal{X}} & X,
	\arrow["{\mathrm{tot}}", from=1-1, to=1-2]
	\arrow[from=1-1, to=2-1]
	\arrow["\lrcorner"{anchor=center, pos=0.125}, draw=none, from=1-1, to=2-2]
	\arrow[from=1-2, to=1-3]
	\arrow[from=1-2, to=2-2]
	\arrow["\lrcorner"{anchor=center, pos=0.125}, draw=none, from=1-2, to=2-3]
	\arrow[from=1-3, to=2-3]
	\arrow["{\mathrm{tot}}", from=2-1, to=2-2]
	\arrow[from=2-2, to=2-3]
\end{tikzcd}
    \quad
\begin{tikzcd}
	{\mathrm{Filt}^C(\mathcal{Y})} & {\mathcal{Y}} & Y \\
	{\mathrm{Filt}^C(\mathcal{X})} & {\mathcal{X}} & {X.}
	\arrow["{{\ev_1}}", from=1-1, to=1-2]
	\arrow[from=1-1, to=2-1]
	\arrow["\lrcorner"{anchor=center, pos=0.125}, draw=none, from=1-1, to=2-2]
	\arrow[from=1-2, to=1-3]
	\arrow[from=1-2, to=2-2]
	\arrow["\lrcorner"{anchor=center, pos=0.125}, draw=none, from=1-2, to=2-3]
	\arrow[from=1-3, to=2-3]
	\arrow["{{\ev_1}}", from=2-1, to=2-2]
	\arrow[from=2-2, to=2-3]
\end{tikzcd}
\end{equation}

\end{para}

\subsection{The constancy and finiteness theorems}\label{subsec-constancy}

\begin{para}
    We summarize the main results of \cite{component-lattice}.
    Though the authors work with classical algebraic stacks in loc.~cit., 
    the main results, as explained below, can be generalized to derived algebraic stacks locally finitely presented over $\mathbb{C}$ using the same proofs.
\end{para}

\begin{para}[Special faces]\label{para-special-faces}
    Let~$\mathcal{X}$ be a derived algebraic stack over~$\mathbb{C}$,
    as in \Cref{assumption-stack-basic}.

    As in \cite[\S4.1]{component-lattice},
    a \emph{special face} of~$\mathcal{X}$ is a non-degenerate face
    $\alpha \colon F \to \mathrm{CL}_\mathbb{Q} (\mathcal{X})$
    which is maximal in preserving the stack~$\mathcal{X}_\alpha$,
    in the sense that for any morphism $\alpha \to \alpha'$
    in $\mathsf{Face}^\mathrm{nd} (\mathcal{X})$,
    if the induced morphism $\mathcal{X}_{\alpha'} \to \mathcal{X}_\alpha$
    is an isomorphism, then $\alpha \to \alpha'$ is an isomorphism.

    Let $\mathsf{Face}^\mathrm{sp} (\mathcal{X})
    \subset \mathsf{Face}^\mathrm{nd} (\mathcal{X})$
    be the full subcategory of special faces.
    By \cite[Theorem~4.1.5]{component-lattice},
    the inclusion
    $\mathsf{Face}^\mathrm{sp} (\mathcal{X}) \hookrightarrow \mathsf{Face} (\mathcal{X})$
    admits a left adjoint
    \begin{equation*}
        (-)^\mathrm{sp} \colon \mathsf{Face} (\mathcal{X})
        \longrightarrow \mathsf{Face}^\mathrm{sp} (\mathcal{X}) ,
    \end{equation*}
    called the \emph{special face closure} functor.
    It sends a face~$\alpha$ to, roughly speaking,
    the minimal special face~$\alpha^\mathrm{sp}$ containing~$\alpha$.
    The adjunction unit gives a canonical morphism
    $\alpha \to \alpha^\mathrm{sp}$, which induces an isomorphism
    $\mathcal{X}_{\alpha^\mathrm{sp}} \simeq \mathcal{X}_\alpha$.
\end{para}

\begin{para}[Central rank]\label{para-central-rank}
    Let~$\mathcal{X}$ be as above, and assume for now that it is connected.
    It is shown in \cite[\S 4.2.4]{component-lattice} that there exists an initial object
    \[
        (F_{\mathrm{ce}}, \alpha_{\mathrm{ce}}) \in \mathsf{Face}^\mathrm{sp}(\mathcal{X})
    \] 
    called the \emph{maximal central face}. 
    Further, it satisfies the property $\mathcal{X}_{\alpha_{\mathrm{ce}}} \cong \mathcal{X}$.
    The central rank of $\mathcal{X}$ is a non-negative integer defined by
    \[
     \crk \mathcal{X} \coloneqq \dim F_{\mathrm{ce}}.    
    \]
    Equivalently, the central rank of~$\mathcal{X}$
    is the maximal dimension of a torus~$T$
    such that $\mathrm{B} T$ admits an action on~$\mathcal{X}$
    that does not factor through a lower-dimensional torus.

    For a special face $(F, \alpha) \in \mathsf{Face}^{\mathrm{sp}}(\mathcal{X})$, we have an equality
    \begin{equation}\label{eq-special-face-central-rank}
        \dim F =  \crk \mathcal{X}_{\alpha}.
    \end{equation}
\end{para}

\begin{para}[The cotangent arrangement]\label{para-cotangent-arrangement}
    Let~$\mathcal{X}$ be as above,
    and let $\alpha \colon F \to \mathrm{CL}_\mathbb{Q} (\mathcal{X})$ be a face.
    Consider the complex
    $\mathbb{L}_\alpha = \mathrm{tot}_\alpha^* (\mathbb{L}_\mathcal{X})$
    on the stack~$\mathcal{X}_\alpha$,
    where $\mathbb{L}_\mathcal{X}$ is the cotangent complex of~$\mathcal{X}$.
    It admits a canonical $F^\vee$-grading.

    Define the set of \emph{cotangent weights} of~$\mathcal{X}$ in~$\alpha$
    to be the subset
    \begin{equation*}
        W^- (\mathcal{X}, \alpha) =
        \bigl\{
            \lambda \in F^\vee \bigm|
            (\mathbb{L}_\alpha)_\lambda \not\simeq 0
        \bigr\}
        \subset F^\vee ,
    \end{equation*}
    where $(-)_\lambda$ denotes taking the $\lambda$-graded piece.

    We say that~$\mathcal{X}$ has \emph{finite cotangent weights}
    if the set $W^- (\mathcal{X}, \alpha)$ is finite for all faces~$\alpha$.
    For example, this is the case if $\mathcal{X}$ has a perfect cotangent complex and all the stacks~$\mathcal{X}_\alpha$
    are quasi-compact, a condition which we discuss
    in \Cref{para-quasi-compact-graded-points} below.
    In this case, the \emph{cotangent arrangement} of~$\mathcal{X}$ at~$\alpha$
    is the hyperplane arrangement in~$F$
    consisting of the hyperplanes dual to the
    non-zero elements of~$W^- (\mathcal{X}, \alpha)$.
\end{para}

\begin{para}[The constancy theorem]\label{para-constancy-thm}
    \label{para-constancy-theorem}
    Let $\mathcal{X}$ be as above and assume further that $\mathcal{X}$ has finite cotangent weights.
    A particular case of the \emph{constancy theorem}
    \cite[Theorem~6.1.2]{component-lattice} states that, for any point
    $\lambda \in |\mathrm{CL}_\mathbb{Q} (\mathcal{X})|$,
    the isomorphism types of the components
    $\mathcal{X}_\lambda \subset \Grad_\mathbb{Q} (\mathcal{X})$
    and $\mathcal{X}_\lambda^+ \subset \Filt_\mathbb{Q} (\mathcal{X})$
    corresponding to~$\lambda$
    only depend on the special face closure $(F, \alpha)$ of the face
    $\mathbb{Q} \cdot \lambda$,
    together with the chamber $\sigma \subset F$ in the cotangent arrangement
    whose interior contains the point~$\lambda$, which always exists.
    Moreover, we have isomorphisms
    \[
        \mathcal{X}_{\alpha} \overset{\simeq}{\longrightarrow} \mathcal{X}_{\lambda} , \qquad
        \mathcal{X}_{\sigma}^+ \overset{\simeq}{\longrightarrow} \mathcal{X}_{\lambda}^+ .
    \]
    The former equivalence is nothing but the definition of the special face closure, 
    and the latter equivalence is a part of the constancy theorem.
\end{para}

\begin{para}[Example:\ Linear quotient stacks]
    \label[example]{para-ex-linear-quotient}
    Consider the stack $\mathcal{X} = V / G$,
    where~$G$ is a connected affine algebraic group over~$\mathbb{C}$,
    and~$V$ is a $G$-representation.
    We describe the special faces of~$\mathcal{X}$
    following \cite[Example~4.1.6]{component-lattice}.

    In this case, we have
    $\mathrm{CL}_\mathbb{Q} (\mathcal{X}) \simeq
    (\Lambda_T \otimes \mathbb{Q}) / W$,
    where~$T \subset G$ is a maximal torus,
    $\Lambda_T$~is the cocharacter lattice of~$T$,
    and~$W$ is the Weyl group of~$G$.
    Write also $\Lambda^T = \Lambda_T^\vee$
    for the character lattice of~$T$.

    Consider the hyperplane arrangement~$\Phi_{V / G}$
    in $\Lambda_T \otimes \mathbb{Q}$
    given by the following two types of hyperplanes:

    \begin{itemize}
        \item
            Hyperplanes dual to non-zero weights~$w \in \Lambda^T$ in~$V$.

        \item
            Hyperplanes dual to the roots~$r \in \Lambda^T$ of~$G$.
    \end{itemize}
    Then, the special faces of~$\mathcal{X}$
    are precisely the images of
    intersections of hyperplanes in~$\Phi_{V / G}$
    under the projection
    $\Lambda_T \otimes \mathbb{Q} \to (\Lambda_T \otimes \mathbb{Q}) / W$,
    and the cotangent arrangement on such a special face
    is given by hyperplanes which are restrictions of hyperplanes in~$\Phi_{V / G}$.
    The maximal central face $(F_{\mathrm{ce}}, \alpha_{\mathrm{ce}})$ from \Cref{para-central-rank} corresponds to the intersection of all hyperplanes in~$\Phi_{V / G}$.
    In particular, the corresponding torus $T_{\alpha_\mathrm{ce}} \subset G$
    is the maximal subtorus of the centre $\mathrm{Z} (G)$ which acts trivially on $V$.

    The constancy theorem in this case
    is the statement that for a rational cocharacter
    $\lambda \in \Lambda_T \otimes \mathbb{Q}$,
    the stacks $V^\lambda / L_\lambda$ and~$V^{\lambda, +} / P_\lambda$
    do not change if we move~$\lambda$
    inside a chamber in an intersection of hyperplanes in~$\Phi_{V / G}$,
    with notations as in \Cref{eg-grad-quotient-stack}.
\end{para}

\begin{para}[Stacks with quasi-compact graded points]
    \label{para-quasi-compact-graded-points}
    We now introduce a very mild finiteness condition for the stack~$\mathcal{X}$,
    following \cite[\S6.2]{component-lattice}.

    We say that~$\mathcal{X}$ has \emph{quasi-compact graded points},
    if for any face $(F, \alpha) \in \mathsf{Face}(\mathcal{X})$,
    the morphism $\mathcal{X}_\alpha \to \mathcal{X}$ is quasi-compact.
    By the discussion in \Cref{eg-grad-quotient-stack}, if the classical truncation $\mathcal{X}_{\mathrm{cl}}$ is of the form $U / G$ for a quasi-separated algebraic space $U$ which is finite type over $\mathbb{C}$ and an affine algebraic group $G$,
    the stack $\mathcal{X}$ has quasi-compact graded points.
\end{para}

\begin{para}[The finiteness theorem]\label{para-finiteness-theorem}
    The \emph{finiteness theorem} in
    \cite[Theorem~6.2.3]{component-lattice}
    states that if~$\mathcal{X}$ is quasi-compact,
    has quasi-compact graded points
    and has a perfect cotangent complex,
    then~$\mathcal{X}$ only has finitely many
    special faces.
    In particular, all the possible stacks
    $\mathcal{X}_\alpha$ 
    can only take finitely many isomorphism classes.
    Further, it is shown that the number of possible isomorphism classes for $\mathcal{X}_{\sigma}^+$ is also finite.
\end{para}

\begin{para}[Local finiteness]
    There is also a weaker finiteness result,
    \cite[Theorem~6.2.5]{component-lattice},
    which states that if~$\mathcal{X}$ has quasi-compact graded points,
    then the component lattice~$\mathrm{CL}_\mathbb{Q} (\mathcal{X})$
    is \emph{locally finite}, meaning that
    for any faces $\alpha, \beta \in \mathsf{Face} (\mathcal{X})$,
    with~$\alpha$ non-degenerate, the set of morphisms
    $\mathrm{Hom} (\beta, \alpha)$ is finite.

    In particular, if~$\mathcal{X}$ has quasi-compact graded points,
    then it has \emph{finite Weyl groups},
    meaning that every non-degenerate face~$\alpha$
    has a finite automorphism group in $\mathsf{Face} (\mathcal{X})$.
\end{para}

\subsection{Linear moduli stacks}\label{ssec-linear-moduli}

\begin{para}
    In this section, we introduce the notion of \emph{linear moduli stacks},
    following \textcite[\S7.1]{component-lattice}.
    These are algebraic stacks that behave like
    moduli stacks of objects in abelian categories,
    such as the moduli stack of representations of a quiver,
    or the stack of coherent sheaves on a projective scheme.
\end{para}

\begin{para}[Linear moduli stacks]\label{para-linear-moduli}
    A \emph{linear moduli stack} is a classical algebraic stack~$\mathcal{M}$
    as in \Cref{assumption-stack-basic},
    together with the following additional structures:

    \begin{itemize}
        \item
            A commutative monoid structure
            $\oplus \colon \mathcal{M} \times \mathcal{M} \to \mathcal{M}$,
            with unit~$0 \colon \Spec \mathbb{C} \hookrightarrow \mathcal{M}$
            an open and closed immersion.

        \item
            A $\mathrm{B} \mathbb{G}_\mathrm{m}$-action
            $\odot \colon \mathrm{B} \mathbb{G}_\mathrm{m} \times \mathcal{M} \to \mathcal{M}$
            compatible with the monoid structure.
    \end{itemize}
    Note that these structures come with extra coherence data.
    We require the following additional property:

    \begin{itemize}
        \item
            There is an isomorphism
            \begin{equation}
                \label{eq-lms-grad}
                \coprod_{\gamma \colon \mathbb{Z} \to \pi_0 (\mathcal{M})} {}
                \prod_{n \in \mathrm{supp} (\gamma)}
                \mathcal{M}_{\gamma (n)}
                \overset{\sim}{\longrightarrow} \mathrm{Grad} (\mathcal{M})_\mathrm{cl} ,
            \end{equation}
            where~$\gamma$ runs through maps of sets
            $\mathbb{Z} \to \pi_0 (\mathcal{M})$ such that
            $\mathrm{supp} (\gamma) = \mathbb{Z} \setminus \gamma^{-1} (0)$
            is finite,
            and the morphism is defined by the composition
            \begin{equation*}
                \mathrm{B} \mathbb{G}_\mathrm{m} \times
                \prod_{n \in \mathrm{supp} (\gamma)} \mathcal{M}_{\gamma (n)}
                \overset{(-)^n}{\longrightarrow}
                \prod_{n \in \mathrm{supp} (\gamma)} {}
                (\mathrm{B} \mathbb{G}_\mathrm{m} \times \mathcal{M}_{\gamma (n)})
                \overset{\odot}{\longrightarrow}
                \prod_{n \in \mathrm{supp} (\gamma)} \mathcal{M}_{\gamma (n)}
                \overset{\oplus}{\longrightarrow}
                \mathcal{M}
            \end{equation*}
            on the component corresponding to~$\gamma$,
            where the first morphism is given by the
            $n$-th power map $(-)^n \colon \mathrm{B} \mathbb{G}_\mathrm{m} \to \mathrm{B} \mathbb{G}_\mathrm{m}$
            on the factor corresponding to~$\mathcal{M}_{\gamma (n)}$.
    \end{itemize}
    One could think of~\cref{eq-lms-grad}
    roughly as an isomorphism $\mathrm{Grad} (\mathcal{M})_{\mathrm{cl}} \simeq \mathcal{M}^\mathbb{Z}$,
    where we only consider components of~$\mathcal{M}^\mathbb{Z}$
    involving finitely many non-zero classes in~$\pi_0 (\mathcal{M})$.

    In this case, the set $\pi_0 (\mathcal{M})$
    has a commutative monoid structure~$+$
    induced by~$\oplus$.

    For a finitely generated free $\mathbb{Z}$-module~$\Lambda$,
    and a finite-dimensional $\mathbb{Q}$-vector space~$F$,
    we also have similar isomorphisms
    \begin{align}
        \label{eq-lms-grad-n}
        \coprod_{\gamma \colon \Lambda^\vee \to \pi_0 (\mathcal{M})} {}
        \prod_{\lambda \in \mathrm{supp} (\gamma)}
        \mathcal{M}_{\gamma (\lambda)}
        & \overset{\sim}{\longrightarrow} \Grad^\Lambda (\mathcal{M})_\mathrm{cl} ,
        \\
        \label{eq-lms-grad-q-n}
        \coprod_{\gamma \colon F^\vee \to \pi_0 (\mathcal{M})} {}
        \prod_{\lambda \in \mathrm{supp} (\gamma)}
        \mathcal{M}_{\gamma (\lambda)}
        & \overset{\sim}{\longrightarrow} \Grad^F (\mathcal{M})_\mathrm{cl} ,
    \end{align}
    where~$\gamma$ is assumed of finite support in both cases.
    In particular, $\mathcal{M}$ has quasi-compact graded points as long as the direct sum map $\oplus$ is quasi-compact.

    For a choice of components $\gamma_1, \ldots, \gamma_n \in \pi_0(\mathcal{M})$, we define a face $\alpha(\gamma_1, \ldots, \gamma_n) \colon \mathbb{Q}^n \to \mathrm{CL}_\mathbb{Q} (\mathcal{M})$ to be the one which, under the equivalence \Cref{eq-lms-grad-q-n}, corresponds to the map $(\mathbb{Q}^n)^\vee \to \pi_0 (\mathcal{M})$ given by
    \[
     v \longmapsto 
     \begin{cases}
         \gamma_i, & \text{if $v = e_i$}, \\
         0, & \text{otherwise},
     \end{cases}    
    \]
    where $e_i \in (\mathbb{Q}^{n})^\vee$ denotes the $i$-th standard basis vector.
\end{para}

\begin{para}[Examples]\label{para-examples-linear}
    Following \cite[Examples~7.1.3]{component-lattice},
    we list here some examples of linear moduli stacks.

    \begin{enumerate}
        \item
            Let~$\mathcal{A}$ be a $\mathbb{C}$-linear abelian category
            which is locally noetherian and cocomplete,
            in the sense of
            \textcite[\S7]{_Alper_Existenceofmodulispacesforalgebraicstacks}.
            Consider the moduli stack~$\mathcal{M}_\mathcal{A}$
            of finitely presented objects in~$\mathcal{A}$
            in the sense of
            \textcite{artin-zhang-2001-hilbert}
            and \cite[\S7]{_Alper_Existenceofmodulispacesforalgebraicstacks}.
            Then, if~$\mathcal{M}_\mathcal{A}$ is an algebraic stack
            locally of finite presentation over~$\mathbb{C}$,
            it is a linear moduli stack over~$\mathbb{C}$.

        \item \label{item-linear-dg}
            Let~$\mathcal{C}$ be a $\mathbb{C}$-linear dg-category
            of finite type, in the sense of
            \textcite[Definition~2.4]{toen2007moduli}.
            Consider the moduli stack~$\mathcal{M}_\mathcal{C}$
            of right proper objects in~$\mathcal{C}$,
            as in \cite[\S3]{toen2007moduli}.
            If we are given an open substack
            $\mathcal{M} \subset \mathcal{M}_\mathcal{C}$,
            closed under direct sums and direct summands,
            such that it contains the zero object as an open and closed substack,
            and its classical truncation~$\mathcal{M}_\mathrm{cl}$
            is a $1$-stack that is quasi-separated and has affine stabilizers,
            then~$\mathcal{M}_\mathrm{cl}$ is a linear moduli stack over~$\mathbb{C}$.

        \item
            Let~$A$ be a finitely generated associative $\mathbb{C}$-algebra.
            Consider the moduli stack~$\mathcal{M}_A$
            of representations of~$A$,
            defined by the moduli functor
            \begin{equation*}
                \mathcal{M}_A (R) = \bigl(
                    \text{left $(A \otimes_{\mathbb{C}} R)$-modules,
                    flat and finitely presented over~$R$}
                \bigr)
            \end{equation*}
            for commutative $\mathbb{C}$-algebras~$R$.
            Then~$\mathcal{M}_A$ is a linear moduli stack over~$\mathbb{C}$.
            In particular, the moduli stack of representations
            of a (finite) quiver, possibly with relations, over~$\mathbb{C}$,
            is a linear moduli stack over~$\mathbb{C}$.
    \end{enumerate}
\end{para}

\begin{para}[Special faces]\label{para-special-linear}
    For a linear moduli stack~$\mathcal{M}$,
    the isomorphism~\cref{eq-lms-grad-q-n}
    implies that the faces $F \to \mathrm{CL}_\mathbb{Q} (\mathcal{M})$
    correspond to maps $F^\vee \to \pi_0 (\mathcal{M})$ of finite support.
    Such a face is non-degenerate if and only if
    the support of the latter map spans~$F^\vee$.

    It follows from~\cref{eq-lms-grad-n}
    that if $\alpha \colon F \to \mathrm{CL}_\mathbb{Q} (\mathcal{M})$
    is a special face, then the corresponding map
    $F^\vee \to \pi_0 (\mathcal{M})$
    is supported precisely on a basis of~$F^\vee$.
    However, the converse need not be true,
    that is, faces of this form need not be special faces.  For example,
    if for some $\gamma_1, \gamma_2 \in \pi_0 (\mathcal{M})$,
    the morphism $\oplus \colon \mathcal{M}_{\gamma_1} \times \mathcal{M}_{\gamma_2}
    \to \mathcal{M}_{\gamma_1 + \gamma_2}$
    is an isomorphism,
    then the one-dimensional face
    corresponding to the map $\mathbb{Q} \to \pi_0 (\mathcal{M})$
    sending $1$ to $\gamma_1 + \gamma_2$
    and everything else to~$0$
    will not be special.

    In particular, as in \cite[\S7.1.4]{component-lattice},
    all linear moduli stacks have finite cotangent weights.
\end{para}

\begin{para}[Stacks of filtrations]\label{para-stack-of-filtrations}
    For a linear moduli stack~$\mathcal{M}$,
    the constancy theorem described in \Cref{para-constancy-theorem}
    implies that for classes $\gamma_1, \dotsc, \gamma_n \in \pi_0 (\mathcal{M})$,
    there is a canonically defined stack
    $\mathcal{M}_{\gamma_1, \dotsc, \gamma_n}^+$
    of filtrations whose graded pieces are of classes
    $\gamma_1, \dotsc, \gamma_n$, in that order.

    This is defined as a connected component in $\Filt (\mathcal{M})_\mathrm{cl}$
    corresponding to a map $\mathbb{Z} \to \pi_0 (\mathcal{M})$
    whose non-zero values are $\gamma_n, \dotsc, \gamma_1$ in that order,
    where the zero terms are omitted.
    The constancy theorem implies that this stack
    is canonically defined, depending on the order of the classes,
    but not on the precise gradings.

\end{para}

\begin{para}[Derived linear moduli stacks]\label{para-derived-linear-moduli-stack}
    One can define a derived version of a linear moduli stack in an analogous manner.
    Namely, a \emph{derived linear moduli stack} is a derived algebraic stack $\mathcal{M}$ equipped with an $E_{\infty}$-monoid structure and a compatible $\mathrm{B} \mathbb{G}_m$-action such that the morphism \Cref{eq-lms-grad}, without taking the classical truncation, is an isomorphism of derived algebraic stacks.
    The linear moduli stacks discussed in \Cref{item-linear-dg} of \Cref{para-examples-linear} naturally upgrade to derived linear moduli stacks.
\end{para}

\begin{para}[Euler pairing]\label{para-euler-pairing}
    Let $\mathcal{M}$ be a derived linear moduli stack which is locally finitely presented over $\mathbb{C}$.
    For each $\gamma_1, \gamma_2 \in \pi_0(\mathcal{M})$, we let $\gr_{\gamma_1, \gamma_2} \colon \mathcal{M}_{\gamma_1, \gamma_2}^{+} \to \mathcal{M}_{\gamma_1} \times \mathcal{M}_{\gamma_2}$ be the restriction of the map $\mathrm{gr} \colon \Filt(\mathcal{M}) \to \Grad(\mathcal{M})$.
    Consider a map
    \[
    \chi_{\mathcal{M}}(-, -) \colon \pi_0(\mathcal{M}) \times  \pi_0(\mathcal{M}) \to \mathbb{Z}, \quad (\gamma_1, \gamma_2) \mapsto - {\vdim \mathrm{gr}_{\gamma_2, \gamma_1}}.
    \]
    We say that $\mathcal{M}$ admits an \emph{Euler pairing} if the map $\chi_{\mathcal{M}}(-, -)$ satisfies the following conditions:
    \begin{itemize}
        \item $\chi_{\mathcal{M}}(-, -)$ is bilinear.
        \item For each $\gamma \in \pi_0(\mathcal{M})$, we have an equality
        \[
         \vdim \mathcal{M}_{\gamma} = - \chi_{\mathcal{M}}(\gamma, \gamma).    
        \]
    \end{itemize}

    If $\mathcal{M}$ is a derived linear moduli stack defined as an open substack of the moduli stack of objects in a finite type dg-category, it is clear that $\mathcal{M}$ admits an Euler pairing.
\end{para}

\section{Shifted symplectic structures}

\stepcounter{subsection}

\begin{para}
    Here we briefly recall the notion of shifted symplectic structures introduced by \textcite{pantev2013shifted}.
\end{para}

\begin{para}
    Let $\mathcal{X}$ be a derived Artin stack locally finitely presented over $\mathbb{C}$. 
    We define the space of $n$-shifted $p$-forms on $\mathcal{X}$ by
    \[
    \mathcal{A}^p(\mathcal{X}, n) \coloneqq |\Gamma(\mathcal{X}, (\wedge^p \mathbb{L}_{\mathcal{X}})[n])| \in \mathsf{S}    
    \]
    where $|-|$ denotes the geometric realization. As explained in \cite[\S 1.2]{pantev2013shifted}, one can also define the space of closed $n$-shifted $p$-forms $ \mathcal{A}^{p, \mathrm{cl}}(\mathcal{X}, n)$ on $\mathcal{X}$ together with a forgetful map
    \[
        \mathcal{A}^{p, \mathrm{cl}}(\mathcal{X}, n) \to \mathcal{A}^p(\mathcal{X}, n),
    \]
    which we do not repeat the definition of here. A morphism between derived Artin stacks $f \colon \mathcal{X} \to \mathcal{Y}$ naturally induces pullback maps
    \[
        f^{\star} \colon \mathcal{A}^p(\mathcal{Y}, n) \to \mathcal{A}^p(\mathcal{X}, n), \quad f^{\star} \colon \mathcal{A}^{p, \mathrm{cl}}(\mathcal{Y}, n) \to \mathcal{A}^{p, \mathrm{cl}}(\mathcal{X}, n).
    \]

    An $n$-shifted symplectic structure on $\mathcal{X}$ is an $n$-shifted closed $2$-form $\omega_{\mathcal{X}} \in \mathcal{A}^{2, \mathrm{cl}}(\mathcal{X}, n)$ whose underlying $n$-shifted $2$-form induces an equivalence
    \[
     \Phi \colon \mathbb{T}_{\mathcal{X}} \simeq \mathbb{L}_{\mathcal{X}}[n].    
    \]
    One can easily check the following equivalence of maps
    \[
        \Phi \simeq  (-1)^{n(n-1)/2 + 1} \cdot \Phi^{\vee}[n].
    \]
    See \cite[Lemma 0.3, Remark 0.4]{symplecticsign} for details of the proof and the sign convention we are using. 
\end{para}

\begin{para}[Examples of shifted symplectic stacks]\label{para-example-shifted-symplectic}
    We list some examples of shifted symplectic stacks which are related with this paper.
    We start with the $(-1)$-shifted case:

    \begin{enumerate}
        \item \label{item-example-symplectic-critical}
            Let $\mathcal{Y}$ be a derived Artin stack locally finitely presented over $\mathbb{C}$ and $f \colon \mathcal{Y} \to \mathbb{A}^1$ be a function.
              We let 
              \[
                \mathrm{Crit}(f) \coloneqq \mathcal{Y} \times_{0, \mathrm{T}^* \mathcal{Y}, df}   \mathcal{Y} 
              \]
              be the derived critical locus of $f$.
              Then it is shown by Calaque \cite[Theorem 2.22]{calaque2019shifted} that $\mathrm{Crit}(f)$ is naturally equipped with a $(-1)$-shifted symplectic structure which we call the \emph{standard $(-1)$-shifted symplectic structure}.
              When the function $f$ is zero, we have
              \[
                \mathrm{Crit}(0) \simeq \mathrm{T}^*[-1]{\mathcal{Y}}\coloneqq \mathrm{Tot}_{\mathcal{Y}}(\mathbb{L}_{\mathcal{Y}}[-1]),
              \]
              hence $\mathrm{T}^*[-1]{\mathcal{Y}}$ is equipped with a standard $(-1)$-shifted symplectic structure.

              For later purposes, we explicitly write down the $(-1)$-shifted symplectic structure on $\mathrm{T}^*[-1]{\mathcal{Y}}$ following \cite[\S 2.1]{calaque2019shifted}.
              Let $\pi \colon \mathrm{T}^*[-1]{\mathcal{Y}} \to \mathcal{Y}$ be the projection and $s_{\mathrm{taut}} \in \Gamma(\mathrm{T}^*[-1]{\mathcal{Y}}, \pi^* \mathbb{L}_{\mathcal{Y}}[-1])$ be the tautological section.
              We let $\lambda_{\mathrm{T}^*[-1]{\mathcal{Y}}} \in \mathcal{A}^1(\mathrm{T}^*[-1]{\mathcal{Y}}, -1)$ be the image of $s_{\mathrm{taut}}$.
              Then the $(-1)$-shifted symplectic structure on $\mathrm{T}^*[-1]{\mathcal{Y}}$ is given by $d_{\mathrm{dR}} \lambda_{\mathrm{T}^*[-1]{\mathcal{Y}}}$.

        \item \label{item-example-symplectic-3CY}
            Let $\mathcal{C}$ be a finite type left $3$-Calabi--Yau dg-category over $\mathbb{C}$; see \cite[Definition 3.5]{brav2019relative} for the definition.
              Then it is shown by \textcite[Theorem 5.5]{brav2021relative} that the moduli stack $\mathcal{M}_{\mathcal{C}}$ of objects in $\mathcal{C}$ is $(-1)$-shifted symplectic.
              In particular, for a smooth Calabi--Yau threefold $X$, the moduli stack $\mathcal{M}_X$ of compactly supported coherent sheaves on $X$ is $(-1)$-shifted symplectic.

        \item \label{item-example-symplectic-3mfd}
            Let $M$ be a compact oriented real $3$-manifold and $G$ be a reductive group over~$\mathbb{C}$.
             Let $M_{\mathrm{B}}$ be the constant derived stack of the homotopy type of $M$.
             The derived character stack $\mathcal{L}\mathrm{oc}_G(M)$, which is the derived moduli stack of $G$-local systems on $M$, is defined as the mapping stack
             \[
                \mathcal{L}\mathrm{oc}_G(M) \coloneqq \mathrm{Map}(M_{\mathrm{B}}, \mathrm{B} G).
             \]
            It is shown by \textcite[Corollary 2.6]{pantev2013shifted} that $\mathcal{L}\mathrm{oc}_G(M)$ is equipped with a $(-1)$-shifted symplectic structure.
    \end{enumerate}

    Now we list examples of $0$-shifted symplectic stacks studied in the literature:

    \begin{enumerate}[resume]
        \item \label{item-example-symplectic-unshiftedcotangent}
        Let $\mathcal{Y}$ be a derived Artin stack locally finitely presented over $\mathbb{C}$.
        Then its cotangent stack $\mathrm{T}^*{\mathcal{Y}} \coloneqq \mathrm{Tot}_{\mathcal{Y}}(\mathbb{L}_\mathcal{Y})$ is $0$-shifted symplectic.
                
        \item \label{item-example-symplectic-G-Higgs}
        Let $C$ be a smooth projective curve and $G$ be a reductive group. We let $\mathcal{B}\mathrm{un}_G(C) \coloneqq \mathrm{Map}(C, \mathrm{B} G)$ be the moduli stack of principal $G$-bundles on $C$.
        We define the moduli stack of $G$-Higgs bundles on $C$ to be the cotangent stack
        \[
        \mathcal{H}\mathrm{iggs}_G(C) \coloneqq   \mathrm{T}^*   \mathcal{B}\mathrm{un}_G(C).
        \]
        A point in $\mathcal{H}\mathrm{iggs}_G(C)$ corresponds to a $G$-Higgs bundle on $C$, i.e., a pair $(E, \phi)$ of a principal $G$-bundle $E$ on $C$ and a section $\phi \in \Gamma(C, \mathrm{Ad}(E) \otimes K_C )$ where $\mathrm{Ad}(E) $ denotes the adjoint bundle. 
        Since $\mathcal{H}\mathrm{iggs}_G(C) $ is defined as the cotangent stack, it is equipped with a $0$-shifted symplectic structure.

        \item \label{item-example-symplectic-2CY}
            Let $\mathcal{C}$ be a finite type left $2$-Calabi--Yau dg-category over $\mathbb{C}$.
         Then it is shown by \textcite[Theorem 5.5]{brav2021relative} that the moduli stack $\mathcal{M}_{\mathcal{C}}$ of objects in $\mathcal{C}$ is $0$-shifted symplectic.
        In particular, for a K3 surface $S$, the moduli stack $\mathcal{M}_S$ of coherent sheaves on $S$ is $0$-shifted symplectic.

        \item \label{item-example-symplectic-2mfd}
            Let $\Sigma$ be a compact oriented real $2$-manifold and $G$ be a reductive group over~$\mathbb{C}$. Then the derived character stack of $G$-local systems on $\Sigma$ 
        \[
            \mathcal{L}\mathrm{oc}_G(\Sigma) \coloneqq \mathrm{Map}(\Sigma_{\mathrm{B}}, \mathrm{B} G)
         \] 
         is equipped with a $0$-shifted symplectic structure by \cite[Corollary 2.6]{pantev2013shifted}.
    \end{enumerate}

\end{para}

\begin{para}[Darboux theorem]
    As we have seen in \Cref{item-example-symplectic-critical} of \Cref{para-example-shifted-symplectic}, the derived critical locus of a function on a derived algebraic stack is $(-1)$-shifted symplectic.
    The Darboux theorem, proved by \textcite{brav2019darboux} for derived schemes and \textcite{ben2015darboux} for derived algebraic stacks, roughly says that the converse is true locally.
    Here we will state a combined version of the local structure theorem for stacks admitting a good moduli space due to \textcite[Theorem 4.12]{_Alper_ALunaetaleslicetheoremforalgebraicstacks} and the Darboux theorem. This combined version follows from the result proved by the Kinjo, Park and Safronov \cite[Proposition 3.13]{Kinjo_Park_Safronov_CoHA} which builds on Park's result \cite[Theorem B]{park2024shifted}, together with Luna's fundamental lemma \cite[Proposition 4.13]{_Alper_ALunaetaleslicetheoremforalgebraicstacks}.
\end{para}

\begin{theorem}\label{thm-Darboux}
 Let $\mathcal{X}$ be a $(-1)$-shifted symplectic stack having affine diagonal admitting a good moduli space $\mathcal{X} \to X$.
 Then for any closed point $x \in \mathcal{X}$ with stabilizer group $G_x$, there exist a smooth affine scheme $U$ acted on by $G_x$ with a fixed point $u$, a regular function $f / G_x$ on $U / G_x$, and a Cartesian diagram
\[\begin{tikzcd}
	{\mathrm{Crit}(f/G_x)} & {\mathcal{X}} \\
	{\mathrm{Crit}(f/G_x)_{\GIT}} & X.
	\arrow["\eta", from=1-1, to=1-2]
	\arrow[from=1-1, to=2-1]
	\arrow["\lrcorner"{anchor=center, pos=0.125}, draw=none, from=1-1, to=2-2]
	\arrow[from=1-2, to=2-2]
	\arrow[from=2-1, to=2-2]
\end{tikzcd}\]
Here the vertical maps are the (derived) good moduli space morphisms in the sense of \textcite[Definition 2.1]{ahlqvist2023good} and the horizontal maps are \'etale.
Further, $\eta$ is a symplectomorphism with $\eta(u) = x$ inducing the identity map of the stabilizer group at $u$.
\end{theorem}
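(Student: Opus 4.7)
The plan is to combine two well-studied local structure results: Alper's Luna étale slice theorem for stacks with good moduli spaces, and the equivariant Darboux theorem for $(-1)$-shifted symplectic stacks. Neither ingredient alone gives the required statement --- the first produces a quotient chart $[W/G_x]$ but ignores the symplectic data, while the second produces a derived critical locus model but not the compatibility with the good moduli space --- so the content of the proof is to interleave them carefully.

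My first step would be to apply Alper's Luna étale slice theorem \cite[Theorem 4.12]{_Alper_ALunaetaleslicetheoremforalgebraicstacks} to the good moduli space morphism $\mathcal{X} \to X$ at the closed point $x$. Since $\mathcal{X}$ has affine diagonal and admits a good moduli space, $G_x$ is linearly reductive, so the theorem applies and yields an étale representable morphism from a quotient stack $[W/G_x]$ with a $G_x$-fixed point mapping to $x$, together with an étale morphism on the good moduli spaces making the resulting square Cartesian. To upgrade this to the form asserted in the theorem, I would then invoke the equivariant shifted Darboux theorem in the form proved by Park \cite[Theorem B]{park2024shifted}: pulling back the $(-1)$-shifted symplectic form to the Luna chart and working equivariantly with respect to the linearly reductive group $G_x$, after further étale shrinking one obtains a $G_x$-equivariant symplectomorphism from $\mathrm{Crit}(f/G_x)$ for some smooth affine $G_x$-scheme $U$ with a fixed point $u$ and a $G_x$-invariant regular function $f$. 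The combination of these two steps is exactly what is packaged by \cite[Proposition 3.13]{Kinjo_Park_Safronov_CoHA}.

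The main obstacle is to ensure that the refined étale chart $\mathrm{Crit}(f/G_x) \to \mathcal{X}$ still fits into a Cartesian diagram of good moduli spaces with étale bottom arrow, since we have passed through further étale shrinking and an identification with a critical locus which a priori could disturb the strong étaleness established in the first step. For this I would appeal to Luna's fundamental lemma \cite[Proposition 4.13]{_Alper_ALunaetaleslicetheoremforalgebraicstacks}: an étale morphism between stacks with good moduli spaces that sends a closed point to a closed point and induces an isomorphism on stabilizer groups is automatically strongly étale, so the induced map on good moduli spaces is étale and the resulting square is Cartesian. Because Park's construction arranges $\eta(u) = x$ and the identity on stabilizers at $u$, this hypothesis is satisfied, and the theorem follows by assembling the pieces.
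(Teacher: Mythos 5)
Your proposal is correct and follows essentially the same route the paper takes: the paper derives this theorem by citing \cite[Proposition 3.13]{Kinjo_Park_Safronov_CoHA} (which packages Alper's slice theorem with Park's equivariant Darboux theorem \cite[Theorem B]{park2024shifted}) together with Luna's fundamental lemma \cite[Proposition 4.13]{_Alper_ALunaetaleslicetheoremforalgebraicstacks} to get the Cartesian square of good moduli spaces. Your write-up simply makes explicit the interleaving of these ingredients that the paper leaves implicit.
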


\begin{para}[Localization of shifted symplectic structures]\label{para-localize-shifted-symplectic-structure}
    Let $(\mathcal{X}, \omega_{\mathcal{X}})$ be an $n$-shifted symplectic stack and $(F, \alpha) \in \mathsf{Face}(\mathcal{X})$ be a face.
    Then using that the duality map $\mathrm{tot}_{\alpha}^* \mathbb{T}_{\mathcal{X}} \simeq \mathrm{tot}_{\alpha}^* \mathbb{L}_{\mathcal{X}}[n]$ preserves the $F^{\vee}$-grading, one sees that the pair
    \[
     (\mathcal{X}_{\alpha}, \mathrm{tot}_{\alpha}^{\star} \omega_{\mathcal{X}})    
    \]
    is an $n$-shifted symplectic stack.

    Now let $\mathcal{Y}$ be a derived Artin stack locally finitely presented over $\mathbb{C}$ and $f$ be a function on $\mathcal{Y}$.
    For a face $(F, \alpha) \in \mathsf{Face}(\mathcal{Y})$, set $f_{\alpha} \coloneqq f \circ \mathrm{tot}_{\alpha}$. Then we have an equivalence of $(-1)$-shifted symplectic stacks
    \begin{equation}\label{eq-localize-standard-symplectic}
     (\mathrm{Crit}(f_{\alpha}), \omega_{\mathrm{Crit}(f_{\alpha})}) \simeq  (\mathrm{Crit}(f)_{\alpha}, \mathrm{tot}_{\alpha}^\star \omega_{\mathrm{Crit}(f)})
    \end{equation}
    where $\omega_{\mathrm{Crit}(f_{\alpha})}$ and $\omega_{\mathrm{Crit}(f)}$ are the standard $(-1)$-shifted symplectic structures recalled in \Cref{item-example-symplectic-critical} of \Cref{para-example-shifted-symplectic}.
    This is proved in \cite[Lemma 5.15]{Kinjo_Park_Safronov_CoHA}.
\end{para}

\begin{para}[Shifted symplectic linear moduli stacks]
    We will introduce the notion of shifted symplectic structure on a derived linear moduli stack introduced in \Cref{para-derived-linear-moduli-stack}.
    For an integer $n$, an \emph{$n$-shifted symplectic linear moduli stack} is given by the following data:
    \begin{itemize}
        \item A derived linear moduli stack $\mathcal{M}$.
        \item An $n$-shifted symplectic structure $\omega$ on $\mathcal{M}$ such that there exists a homotopy $\omega \boxplus \omega \sim \oplus^{\star} \omega$.
    \end{itemize}
    We note that this is not a ``correct'' definition from a higher-categorical perspective, since we do not care about the coherence of the chosen homotopy.
    Nevertheless, for our applications which are $1$-categorical, this definition will suffice.

    Let $\mathcal{M}$ be a derived linear moduli stack defined as an open substack of the moduli stack of objects in a finite type left $n$-Calabi--Yau dg-category.
    Then it follows as in \cite[Proposition 8.29]{Kinjo_Park_Safronov_CoHA} that $\mathcal{M}$ is a $(2-n)$-shifted symplectic linear moduli stack.
    
\end{para}

\begin{para}[Moment map for \texorpdfstring{$\mathrm{B} T$}{\mathrm{B}T}-actions]\label{para-moment-map}
    
    Let $T$ be a torus and $(\mathcal{Y}, \omega_{\mathcal{Y}})$ be a $0$-shifted symplectic derived Artin stack.
    In this section, we will prove that any $\mathrm{B} T$ action on $\mathcal{X}$ admits a moment map.
    We thank Hyeonjun Park for his help on the content of this paragraph.

    For a $(-1)$-shifted function $H \in \Gamma (\mathcal{Y}, \mathcal{O}_{\mathcal{Y}}[-1])$, we define the associated Hamiltonian vector field 
    $X_H \in \Gamma(\mathcal{Y}, \mathbb{T}_{\mathcal{Y}}[-1])$ to be the one satisfying the following equation of $(-1)$-shifted $1$-forms:
    \[
        \iota_{X_{H}} \omega_{\mathcal{Y}} = d_{\mathrm{dR}} H
    \]
    where $\iota_{X_{H}}$ denotes the contraction map with respect to $X_{H}$ and $d_{\mathrm{dR}}$ denotes the de Rham differential.
    Assume that we are given a $\mathrm{B}T$-action on $\mathcal{Y}$ and a vector $\xi \in \mathrm{Lie}(T)$.
    We define the $(-1)$-shifted vector field $\rho(\xi) \in \Gamma(\mathcal{Y}, \mathbb{T}_{\mathcal{Y}[-1]})$ to be the image of $\xi$ under the following map:
    \[
        \mathrm{Lie}(T) \cong \Gamma(\mathrm{B} T, \mathbb{T}_{\mathrm{B} T}[-1]) \hookrightarrow
        \Gamma(\mathrm{B} T \times \mathcal{Y}, \mathbb{T}_{\mathrm{B} T \times \mathcal{Y}}[-1])
        \to  \Gamma(\mathcal{Y}, \mathbb{T}_{\mathcal{Y}}[-1])
    \]
    where the inclusion is given by $v \mapsto (v, 0)$ and the final map is induced by the action map.
    A moment map $\mu$ is a map
    \[
     \mu \colon \mathcal{Y} \to \mathrm{Lie}(T)^{\vee}[-1]
    \] 
    such that for any $\xi \in \mathrm{Lie}(T)$, the following identity holds:
    \[
     \rho(\xi) = X_{\mu(\xi, -)}.    
    \]

\end{para}

\begin{proposition}\label{prop-moment}
    We adopt the notation from the last paragraph. 
    Then there exists a moment map for any $\mathrm{B} T$-action on $\mathcal{Y}$.
\end{proposition}

\begin{proof}
    It is enough to show that, for any $\xi \in \mathrm{Lie}(T)$, 
    the $(-1)$-shifted $1$-form $\iota_{\rho(\xi)} \omega_{\mathcal{Y}}$ is the de Rham differential of some $(-1)$-shifted function $H_{\xi}$.
    Indeed, for a chosen basis $\xi_1, \ldots, \xi_n$ of $\mathrm{Lie}(T)$, the map $\sum_i H_{\xi_i} \cdot \xi_i^{\vee}$ satisfies the condition for the moment map.
    To prove the existence of such $H_{\xi}$,
    using \cite[Proposition 6.1.1]{park2024shifted}, it is enough to show that $\iota_{\rho(\xi)} \omega_{\mathcal{Y}}$ admits a closed structure.
    We will prove this by constructing a contraction map at the level of graded mixed complexes.

    Firstly, note that there is an isomorphism of mixed complexes
    \begin{equation}\label{eq-BT-Kunneth}
     \mathrm{DR}(\mathrm{B} T \times \mathcal{Y}) \cong \mathrm{DR}(\mathrm{B} T) \otimes \mathrm{DR}(\mathcal{Y}),
    \end{equation}
    where $\mathrm{DR}(-)$ denotes the de Rham complex as a graded mixed complex (see \cite[\S 1.2]{pantev2013shifted}).
    To see this, since there is a natural map from the right to left, it is enough to prove that this map induces an isomorphism on the underlying graded complexes without a mixed structure.
    Namely, using \cite[Remark 2.4.4]{calaque2017shifted}, we are reduced to proving the following isomorphism for each $p, q \in \mathbb{Z}_{\geq 0}$
    \[
     \Gamma(\mathrm{B}T \times \mathcal{Y}, \wedge^p \mathbb{L}_{\mathrm{B}T} \boxtimes \wedge^q \mathbb{L}_{\mathcal{Y}}) \cong \Gamma(\mathrm{B}T , \wedge^p \mathbb{L}_{\mathrm{B}T} ) \otimes \Gamma(\mathcal{Y}, \wedge^q \mathbb{L}_{\mathcal{Y}}).
    \]
    This formally follows from the projection formula, the base change formula and the finite dimensionality of $\Gamma(\mathrm{B}T , \wedge^p \mathbb{L}_{\mathrm{B}T} )$.
    See \cite[Lemma A.7]{bae2022counting} for an analogous discussion.

    Next, as is shown by \cite[Classifying stacks]{pantev2013shifted}, there exists a natural isomorphism of mixed graded complexes
    \[
        \mathrm{DR}(\mathrm{B} T) \cong \bigoplus_i \mathrm{Sym}^i(\mathrm{Lie}(T)^{\vee})(-i)
    \]
    where the right-hand side is equipped with the trivial mixed structure
    and $(-i)$ denotes the shift of the mixed grading.
    In particular, there exists a projection map of graded mixed complexes
    \begin{equation}\label{eq-BT-proj}
        \mathrm{DR}(\mathrm{B} T) \to \mathrm{Lie}(T)^{\vee}(-1).
    \end{equation}

    Now consider the following composition 
    \[
    \iota^{\mathrm{mixed}} \colon \mathrm{DR}(\mathcal{Y}) \to \mathrm{DR}(\mathrm{B}T \times \mathcal{Y}) \xrightarrow[\cong]{\eqref{eq-BT-Kunneth}} \mathrm{DR}(\mathrm{B} T) \otimes \mathrm{DR}(\mathcal{Y}) \xrightarrow{\eqref{eq-BT-proj}}  \mathrm{Lie}(T)^{\vee} \otimes \mathrm{DR}(\mathcal{Y})(-1)
    \]
    where the first map is induced by the action map.
    For an element $\xi \in \mathrm{Lie}(T)$, we define a map
    \[
     \iota_{\rho(\xi)}^{\mathrm{mixed}} \colon   \mathrm{DR}(\mathcal{Y}) \to \mathrm{DR}(\mathcal{Y})(-1)
    \]
    to be the composite of $\iota^{\mathrm{mixed}}$ and evaluation at $\xi$.
    By passing to the negative cyclic complexes, we obtain a map 
    \[
    \iota_{\rho(\xi)}^{\mathrm{cl}} \colon \mathcal{A}^{p, \mathrm{cl}}(\mathcal{X}, n) \to \mathcal{A}^{p-1, \mathrm{cl}}(\mathcal{X}, n-1)  
    \]
    which is compatible with $\iota_{\rho(\xi)}$ under the forgetful map to $\mathcal{A}^{-}(\mathcal{X}, -)$.
    In particular, $\iota_{\rho(\xi)} \omega_{\mathcal{Y}}$ lifts to a closed form as desired.
\end{proof}

\begin{para}
    We keep the notations from \Cref{para-moment-map} and fix a $\mathrm{B}T$-action on $\mathcal{Y}$.
    By taking the $(-1)$-shifted tangent, we obtain an action of $\mathrm{Lie}(T) / T$ on $\mathrm{T}[-1]\mathcal{Y}$.
    Since we have $\mathrm{T}[-1] \mathcal{Y} \cong \mathrm{T}^*[-1] \mathcal{Y} $, we obtain an action of $\mathrm{Lie}(T) $ on $\mathrm{T}^*[-1] \mathcal{Y}$,
    where $\mathrm{Lie}(T)$ is equipped with the additive group structure.
    For an element $\xi \in \mathrm{Lie}(T)$, we let 
    \[
     a_{\xi} \colon  \mathrm{T}^*[-1] \mathcal{Y} \cong \mathrm{T}^*[-1] \mathcal{Y}
    \]
    be the action map.
\end{para}

\begin{lemma}\label{lem-symplectic-preservation}
    The map $a_{\xi}$ preserves the standard $(-1)$-shifted symplectic structure.
\end{lemma}

\begin{proof}
    Recall from \Cref{item-example-symplectic-critical} in \Cref{para-example-shifted-symplectic} that the $(-1)$-shifted symplectic structure on $\mathrm{T}^*[-1]{\mathcal{Y}}$ is given by
    \[
        \omega_{\mathrm{T}^*[-1]{\mathcal{Y}}} = d_{\mathrm{dR}} \lambda_{\mathrm{T}^*[-1]{\mathcal{Y}}}
    \]
    where $\lambda_{\mathrm{T}^*[-1]{\mathcal{Y}}}$ is the image of the tautological section $s_{\mathrm{taut}}$ for $\mathbb{L}_{\mathcal{Y}}[-1]$.
    By the construction of the map $a_{\xi}$, we have the identity
    \[
    a_{\xi}^* s_{\mathrm{taut}} = s_{\mathrm{taut}} + \pi^* \iota_{\xi} \omega_{\mathcal{Y}}.
    \]
    In particular, it is enough to prove $d_{\mathrm{dR}} (\iota_{\xi} \omega_{\mathcal{Y}}) = 0$ as a $(-1)$-shifted closed $2$-form.
    However, this follows from the fact that $\iota_{\xi} \omega_{\mathcal{Y}}$ upgrades to an exact form as we have seen in \Cref{prop-moment}.
\end{proof}

\section{Symmetric stacks}

In this section, we will introduce the notion of symmetric stacks, which corresponds to the symmetry condition of quivers.
The symmetry condition is necessary for the cohomological integrality theorem.

\subsection{Symmetric representations}

\begin{para}
    Here, we will study some basic properties of symmetric representations.

\end{para}

\begin{definition}
    Let $G$ be a reductive group and $V$ be a representation of $G$.
    \begin{enumerate}
        \item  $V$ is \textit{symmetric} if there exists an isomorphism $V \cong V^{\vee}$ as $G$-representations.
        \item  $V$ is \emph{orthogonal} (resp.~\emph{symplectic}) if there exists a $G$-invariant non-degenerate symmetric bilinear form (resp.~symplectic form) on $V$.
    \end{enumerate}
\end{definition}

\begin{example}\label{ex-adjoint-invariant-metric}
    Let $G$ be a reductive group and equip its Lie algebra $\mathfrak{g}$ with the adjoint representation.
    Then the Killing form defines a non-degenerate symmetric bilinear form on $\mathfrak{g} / \mathfrak{z}$ where $\mathfrak{z} \subset \mathfrak{g}$ denotes the centre of the Lie algebra.
    On the other hand,  the adjoint action of $G$ on $\mathfrak{z}$ factors through $\Gamma = G / G^{\circ}$ where $G^{\circ} \subset G$ denotes the neutral component.
    Further, the $\Gamma$-action on $\mathfrak{z}$ is induced by a $\Gamma$-action on the torus $\mathrm{Z}(G^{\circ})^{\circ}$.
    In particular, $\mathfrak{z}$ admits an integral form as a $\Gamma$-representation, therefore it admits a $G$-invariant non-degenerate symmetric bilinear form.
    Using the isomorphism of $G$-representations $\mathfrak{g} \cong \mathfrak{g} / \mathfrak{z} \oplus \mathfrak{z}$,
    we conclude that $\mathfrak{g}$ is an orthogonal $G$-representation.
\end{example}

\begin{para}
    Let $V$ be a symmetric representation of a reductive group $G$. 
    By considering the irreducible decomposition of $V$, we see that there is a unique decomposition
    \[
    V \cong \left(\bigoplus_{i \in I} U_i^{a_i}\right) \oplus    \left(\bigoplus_{j \in J} V_j^{b_j}\right) \oplus   \left(\bigoplus_{k \in K} (W_k \oplus W_{k}^{\vee})^{c_k}\right)   
    \]
    where $\{ U_i \}_{i \in I}$ (resp.~$\{V_j\}_{j \in J}$, $\{W_k\}_{k \in K}$) are mutually distinct orthogonal (resp.~symplectic, non-symmetric) irreducible representations.
    The representation $V$ is orthogonal if and only if all $b_j$ are even numbers.
    With this observation, we conclude the following two-out-of-three property:
\end{para}

\begin{lemma}\label{lemma-two-out-of-three}
    Let $G$ be a reductive group and $V_1$ and $V_2$ be $G$-representations.
    If $V_1$ and $V_1 \oplus V_2$ are symmetric, then $V_2$ is also symmetric. The same statement holds for orthogonality.
\end{lemma}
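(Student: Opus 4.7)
The plan is to reduce both assertions to a multiplicity-counting argument using the isotypic decomposition of representations of the reductive group $G$. The preceding paragraph already identifies the relevant invariants, namely the multiplicities of orthogonal, symplectic, and non-self-dual irreducibles; I want to exploit the fact that these multiplicities are additive under direct sums.

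First I would recall that every $G$-representation $V$ decomposes uniquely as $\bigoplus_i U_i^{m_V(U_i)}$, where $U_i$ ranges over isomorphism classes of irreducibles and $m_V(U_i) \in \mathbb{Z}_{\geq 0}$ denotes the multiplicity. Pairing irreducibles by duality $U \mapsto U^\vee$, one sees that $V$ is symmetric precisely when $m_V(U) = m_V(U^\vee)$ for every irreducible $U$ (self-dual $U$ imposing no nontrivial condition). Since multiplicities are additive, $m_{V_2}(U) = m_{V_1 \oplus V_2}(U) - m_{V_1}(U)$, and subtracting the symmetry identities for $V_1$ and $V_1 \oplus V_2$ gives the one for $V_2$. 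This proves the symmetric assertion.

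For the orthogonal assertion, I would invoke the characterisation recalled just before the lemma: $V$ is orthogonal if and only if it is symmetric and, in addition, every symplectic self-dual irreducible $V_j$ occurs in $V$ with even multiplicity. The symmetric part of the hypothesis on $V_1$ and $V_1 \oplus V_2$ gives symmetry of $V_2$ by the previous paragraph. For each symplectic irreducible $V_j$, the multiplicity $m_{V_2}(V_j) = m_{V_1 \oplus V_2}(V_j) - m_{V_1}(V_j)$ is a difference of two even integers, hence even. Thus $V_2$ is orthogonal.

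I do not anticipate any real obstacle: the lemma is a straightforward bookkeeping consequence of the irreducible decomposition and the classification of self-dual irreducibles into orthogonal and symplectic types. The only point worth double-checking is that the orthogonal/symplectic dichotomy for a self-dual irreducible is indeed a property of the isomorphism class and not of the chosen bilinear form, which follows from Schur's lemma (the space of invariant bilinear forms on an irreducible is at most one-dimensional, and is either entirely symmetric or entirely alternating).
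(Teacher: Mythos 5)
Your proof is correct and follows essentially the same route as the paper: the paper derives the lemma directly from the unique decomposition of a symmetric representation into orthogonal, symplectic, and dual-paired non-self-dual irreducibles, together with the criterion that orthogonality is equivalent to all symplectic multiplicities being even, which is exactly the multiplicity bookkeeping you carry out. Your closing remark about the well-definedness of the orthogonal/symplectic dichotomy via Schur's lemma is a sound (and implicit in the paper's) justification.
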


\begin{para}
    We say that a virtual representation $a \in \mathrm{K}_0(\mathsf{Rep}(G))$ is symmetric (resp.~orthogonal) if there exists a presentation $a = [V_1] - [V_2]$ for symmetric (resp.~orthogonal) representations $V_1$ and $V_2$.
    Clearly, the symmetric (resp.~orthogonal) elements form a subgroup of $\mathrm{K}_0(\mathsf{Rep}(G))$. 
    For a $G$-representation $V$, an element $[V]$ is symmetric (resp.~orthogonal) if and only if $V$ is symmetric (resp.~orthogonal): this follows from Lemma \ref{lemma-two-out-of-three}.
    We will say that an object $V \in \mathsf{D}^\mathrm{b}(\mathsf{Rep}(G))$ is symmetric (resp.~orthogonal) if the corresponding virtual representation 
    \[
    [V] \coloneqq \sum_i (-1)^i [\mathrm{H}^i(V)] \in \mathrm{K}_0(\mathsf{Rep}(G))    
    \]
    is symmetric (resp.~orthogonal).
\end{para}

\begin{para}\label{para-symmetry-weights}
    Now assume that $G$ is a connected reductive group and $T \subset G$ is a maximal torus. 
    Since the isomorphism class of a representation is determined by its characters, a representation $V$ is symmetric if and only if 
    $\dim V_{\gamma} = \dim V_{- \gamma}$ holds for any character $\gamma \in \mathrm{Hom}(T, \mathbb{G}_\mathrm{m})$.

\end{para}

\begin{para}
    There is a criterion for identifying an irreducible symmetric representation to be orthogonal, which we briefly explain now following \cite[Lemma 79]{steinberg2016lectures}.
    Throughout the paragraph, we assume $G$ to be semisimple and connected.
    Let $\{ h_{\alpha} \}_{\alpha \in \Phi^+}$ be a set of positive coroots and define an element $h \coloneqq \prod_{\alpha \in \Phi^+} h_{\alpha}(-1)$
    where we regard $h_{\alpha}$ as a morphism $\mathbb{G}_{\mathrm{m}} \to G$.
    Then it is shown in loc.cit.\ that $h$ is contained the centre of $G$ and the equality $h^{2} = e$ holds.
    An irreducible symmetric representation $V$ is orthogonal if and only if $h$ acts on $V$ by the identity map.
    As explained in \cite[Observation after Lemma 79]{steinberg2016lectures}, the equality $h = e$ holds in the following cases:
    \begin{itemize}
        \item If $G$ has a trivial centre.
        \item If $G$ is either of type $A_{2n}$, $E_6$, $E_8$, $F_4$ or $G_2$.
    \end{itemize}
    In particular, for these groups, symmetric representations are automatically orthogonal.
\end{para}

\subsection{Symmetric stacks}\label{ssec-symmetric-stacks}

\begin{para}

Here we will introduce  various versions of the notion of symmetric stacks.

\end{para}

\begin{definition}\label{def-symmetry}
    Let $\mathcal{X}$ be a derived algebraic stack with affine diagonal.
    Assume further that $\mathcal{X}$ admits a good moduli space $X$. Take a point $x \in \mathcal{X}$ with reductive stabilizer group $G_x$, and let $\mathrm{T}_{\mathcal{X}, x} \coloneqq \mathrm{H}^0(\mathbb{T}_{\mathcal{X}, x})$ be the tangent space. 
    \begin{itemize}
        \item $\mathcal{X}$ is \emph{symmetric} at $x$ if $\mathrm{T}_{\mathcal{X}, x}$ is symmetric as a $G_x$-representation.
         \item $\mathcal{X}$ is \emph{orthogonal} at $x$ if $\mathrm{T}_{\mathcal{X}, x}$ is orthogonal as a $G_x$-representation.
        \item $\mathcal{X}$ is \emph{almost symmetric} (resp.~\emph{almost orthogonal}) at $x$ if $\mathrm{T}_{\mathcal{X}, x}$ is symmetric (resp.~orthogonal) as a $G_x^{\circ}$-representation, where $G_x^{\circ} \subset G_x$ denotes the neutral component.
        \item $\mathcal{X}$ is called \emph{symmetric} if it is symmetric at all closed points; we define other symmetry properties for stacks analogously. Recall that stabilizer groups at closed points are reductive by \cite[Proposition 12.14]{_Alper_GoodmodulispacesforArtinstacks}.
    \end{itemize}
\end{definition}

\begin{para}\label{para-symmetric-weights-stacks}
    By \Cref{para-symmetry-weights}, an algebraic stack $\mathcal{X}$ with a good moduli space is almost symmetric if and only if, for any graded point $\lambda \colon \mathrm{B} \mathbb{G}_\mathrm{m} \to \mathcal{X}$ underlying a closed point $x$,
    the object $\lambda^* \mathrm{T}_{\mathcal{X}, x}$ regarded as a $\mathbb{G}_\mathrm{m}$-representation is symmetric.

 We now see some examples of symmetric stacks. 
 The following lemma will be useful to prove symmetry properties of stacks:
\end{para}

\begin{lemma}\label{lemma-smooth-symplectic-symmetric}
    Let $\mathcal{X}$ be a derived algebraic stack with affine diagonal.
    Assume further that $\mathcal{X}$ admits a good moduli space $X$, and take a point $x \in \mathcal{X}$ with reductive stabilizer group.
    \begin{enumerate}
        \item Assume that $\mathcal{X}$ is either smooth, $0$-shifted symplectic or $(-1)$-shifted symplectic. Then $\mathcal{X}$ is symmetric or almost symmetric at $x$ if and only if the virtual $G_x$-representation $[\mathbb{T}_{\mathcal{X}, x}]$ is.
        \item Assume that $\mathcal{X}$ is either smooth or $0$-shifted symplectic. Then $\mathcal{X}$ is orthogonal or almost orthogonal at $x$ if and only if the virtual $G_x$-representation $[\mathbb{T}_{\mathcal{X}, x}]$ is.
    \end{enumerate}
\end{lemma}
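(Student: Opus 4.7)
The plan is to compute the class $[\mathbb{T}_{\mathcal{X},x}] - [\mathrm{T}_{\mathcal{X},x}]$ in $\mathrm{K}_0(\mathsf{Rep}(G_x))$ (and similarly in $\mathrm{K}_0(\mathsf{Rep}(G_x^\circ))$ for the weak versions), and to show that in the smooth and $0$-shifted symplectic cases this difference is itself orthogonal, so that the two-out-of-three property of \Cref{lemma-two-out-of-three} reduces each equivalence to the tautological fact that a representation $V$ is symmetric (resp.\ orthogonal) if and only if its class $[V]$ is. The $(-1)$-shifted symplectic case, appearing only in~(i), requires a slightly different endgame because the analogous difference is antisymmetric rather than orthogonal.

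The common input is that $\mathbb{T}_{\mathcal{X},x}$ is a perfect complex of $G_x$-representations with $\mathrm{H}^{-1}(\mathbb{T}_{\mathcal{X},x}) \simeq \mathfrak{g}_x$ (from the stacky structure of a $1$-Artin stack with stabilizer $G_x$) and $\mathrm{H}^0(\mathbb{T}_{\mathcal{X},x}) \simeq \mathrm{T}_{\mathcal{X},x}$, together with the key observation from \Cref{ex-adjoint-invariant-metric} that $\mathfrak{g}_x$ is orthogonal as a $G_x$-representation, hence also as a $G_x^\circ$-representation. Since $\mathcal{X}$ is a $1$-Artin derived stack, $\mathbb{L}_{\mathcal{X},x}$ has cohomological amplitude bounded above by $1$, so the shifted-symplectic equivalence $\mathbb{T}_{\mathcal{X}} \simeq \mathbb{L}_{\mathcal{X}}[n]$ combined with $\mathbb{T}_{\mathcal{X},x}$ being in degrees $\geq -1$ pins down the amplitude of $\mathbb{T}_{\mathcal{X},x}$ to $[-1, 1-n]$. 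At the closed point, fiberwise dualization of perfect complexes satisfies $\mathrm{H}^i(\mathbb{T}_{\mathcal{X},x}^\vee) \simeq \mathrm{H}^{-i}(\mathbb{T}_{\mathcal{X},x})^\vee$, and combining this with the shifted duality determines the missing cohomology sheaves.

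When $\mathcal{X}$ is smooth, $\mathbb{T}_{\mathcal{X},x}$ has amplitude $[-1,0]$, so $[\mathbb{T}_{\mathcal{X},x}] - [\mathrm{T}_{\mathcal{X},x}] = -[\mathfrak{g}_x]$, which is orthogonal, and \Cref{lemma-two-out-of-three} closes both (i) and (ii). When $\mathcal{X}$ is $0$-shifted symplectic, the amplitude is $[-1,1]$ with $\mathrm{H}^1(\mathbb{T}_{\mathcal{X},x}) \simeq \mathfrak{g}_x^\vee$, giving
\[
    [\mathbb{T}_{\mathcal{X},x}] - [\mathrm{T}_{\mathcal{X},x}] = -[\mathfrak{g}_x] - [\mathfrak{g}_x^\vee],
\]
which is again orthogonal (using $\mathfrak{g}_x \simeq \mathfrak{g}_x^\vee$), and \Cref{lemma-two-out-of-three} applies as before.

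For $\mathcal{X}$ $(-1)$-shifted symplectic, the amplitude is $[-1,2]$ with $\mathrm{H}^1(\mathbb{T}_{\mathcal{X},x}) \simeq \mathrm{T}_{\mathcal{X},x}^\vee$ and $\mathrm{H}^2(\mathbb{T}_{\mathcal{X},x}) \simeq \mathfrak{g}_x^\vee$, so cancelling $[\mathfrak{g}_x] = [\mathfrak{g}_x^\vee]$ yields $[\mathbb{T}_{\mathcal{X},x}] = [\mathrm{T}_{\mathcal{X},x}] - [\mathrm{T}_{\mathcal{X},x}^\vee]$. Since $\mathrm{K}_0(\mathsf{Rep}(G_x))$ is the free abelian group on isomorphism classes of irreducibles with duality acting by $\rho \mapsto \rho^\vee$, a virtual representation $a$ is symmetric if and only if $a = a^\vee$ in $\mathrm{K}_0$ (by the same argument establishing the identification for single representations preceding \Cref{lemma-two-out-of-three}); here $[\mathbb{T}_{\mathcal{X},x}]^\vee = -[\mathbb{T}_{\mathcal{X},x}]$, so $[\mathbb{T}_{\mathcal{X},x}]$ is symmetric if and only if it vanishes, equivalently $[\mathrm{T}_{\mathcal{X},x}] = [\mathrm{T}_{\mathcal{X},x}^\vee]$, which is precisely symmetry of $\mathrm{T}_{\mathcal{X},x}$. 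The main technical step is securing the amplitude bounds on $\mathbb{T}_{\mathcal{X},x}$ in the symplectic settings; these follow from the $1$-Artin hypothesis combined with the shifted-symplectic self-duality, and in the $(-1)$-shifted case can alternatively be read off from the local Darboux model provided by \Cref{thm-Darboux}.
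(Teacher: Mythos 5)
Your proposal is correct and follows essentially the same route as the paper's proof: identify the cohomology of $\mathbb{T}_{\mathcal{X},x}$ in each amplitude range via the shifted-symplectic self-duality, observe that the difference $[\mathbb{T}_{\mathcal{X},x}]-[\mathrm{T}_{\mathcal{X},x}]$ is built from the orthogonal representation $\mathfrak{g}_x$ (reducing to \Cref{lemma-two-out-of-three}) in the smooth and $0$-shifted cases, and in the $(-1)$-shifted case use that $[\mathbb{T}_{\mathcal{X},x}]=h-h^{\vee}$ is symmetric iff $h$ is. You spell out the amplitude bounds and the $a=a^{\vee}$ characterization more explicitly than the paper does, but the argument is the same.
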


\begin{proof}
    We first deal with the smooth case.
    Note that the tangent complex $\mathbb{T}_{\mathcal{X}, x}$ has amplitude $[-1, 0]$.
    Further, the isomorphism $\mathrm{H}^{-1}(\mathbb{T}_{\mathcal{X}, x}) \cong \mathrm{Lie}(G_x)$ together with \Cref{ex-adjoint-invariant-metric} implies that $\mathrm{H}^{-1}(\mathbb{T}_{\mathcal{X}, x})$ is orthogonal.
    Hence we conclude.

    Now assume that $\mathcal{X}$ is $0$-shifted symplectic.
    Then $\mathbb{T}_{\mathcal{X}, x}$ has amplitude $[-1, 1]$ and we have isomorphisms $\mathrm{H}^{1}(\mathbb{T}_{\mathcal{X}, x})^{\vee}  \cong \mathrm{H}^{- 1}(\mathbb{T}_{\mathcal{X}, x}) \cong \mathrm{Lie}(G_x) $, which imply the desired statement.

    Finally we consider the case when $\mathcal{X}$ is $(-1)$-shifted symplectic.
    Set $h \coloneqq [\mathrm{T}_{\mathcal{X}, x}] \in \mathrm{K}_0(\mathsf{Rep}(G_x))$. 
    By repeating the discussion above, we conclude that $\mathbb{T}_{\mathcal{X}, x}$ is symmetric if and only if $h - h^{\vee}$ is.
    Further, $h - h^{\vee}$ being symmetric is equivalent to $h$ being symmetric, hence we conclude.
\end{proof}

\begin{lemma}\label{lem-quotient-symmetric}
    Let $V$ be a symmetric representation of a reductive group $G$.
    Then the stack $V / G$ is symmetric at all points with reductive stabilizer. A similar statement holds for orthogonality, almost symmetricity and almost orthogonality.
\end{lemma}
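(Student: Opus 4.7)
The plan is to reduce all four statements to a direct computation of the tangent complex via \Cref{lemma-smooth-symplectic-symmetric}. First, I would note that $V/G$ is a smooth stack, and that any closed point $x \in V/G$ is represented by a vector $v \in V$ whose $G$-orbit is closed; by Matsushima's theorem the stabilizer $G_v$ is then reductive.

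Next, I would recall that the tangent complex of $V/G$ at such a point is the two-term complex $\mathfrak{g} \to V$ in degrees $-1$ and $0$, whose differential is the infinitesimal action $\xi \mapsto \xi \cdot v$; this is a $G_v$-equivariant complex, and consequently in $\mathrm{K}_0(\mathsf{Rep}(G_v))$ we have the equality
\[
    [\mathbb{T}_{V/G, x}] = [V] - [\mathfrak{g}].
\]
Since $V/G$ is smooth, \Cref{lemma-smooth-symplectic-symmetric} reduces each of the four symmetry assertions to the statement that this virtual $G_v$-representation (or the corresponding $G_v^\circ$-representation, for the weak versions) is symmetric or orthogonal. Because the classes of symmetric (resp.~orthogonal) representations form subgroups of $\mathrm{K}_0$, it suffices to treat $[V]$ and $[\mathfrak{g}]$ separately.

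The representation $V$ is symmetric (resp.~orthogonal) as a $G$-representation by hypothesis, and a $G$-equivariant isomorphism $V \cong V^\vee$ (resp.~a $G$-invariant non-degenerate symmetric bilinear form on $V$) restricts along $G_v^\circ \hookrightarrow G_v \hookrightarrow G$ to give the same property for these subgroups. Likewise, \Cref{ex-adjoint-invariant-metric} produces a $G$-invariant non-degenerate symmetric bilinear form on $\mathfrak{g}$, so $[\mathfrak{g}]$ is orthogonal, and in particular symmetric, as a $G_v$- and as a $G_v^\circ$-representation. Combining these yields all four cases simultaneously. I do not expect any substantive obstacle: the lemma is essentially a repackaging of \Cref{lemma-smooth-symplectic-symmetric}, \Cref{lemma-two-out-of-three}, and \Cref{ex-adjoint-invariant-metric}, and the only minor point to verify is that restriction from $G$ to the (possibly disconnected) reductive subgroup $G_v$ preserves the existence of an invariant pairing, which is immediate.
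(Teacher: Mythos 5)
Your proposal is correct and follows essentially the same route as the paper: both compute $[\mathbb{T}_{V/G,x}] = [V] - [\mathfrak{g}]$ (the paper via the fibre sequence $V \to \mathbb{T}_{V/G} \to p^*\mathbb{T}_{\mathrm{B}G}$, you via the explicit two-term complex $\mathfrak{g} \to V$), invoke \Cref{ex-adjoint-invariant-metric} for the orthogonality of $\mathfrak{g}$, and conclude with \Cref{lemma-smooth-symplectic-symmetric}. Your extra remark that invariant pairings restrict along $G_v^\circ \hookrightarrow G_v \hookrightarrow G$ is a correct and harmless elaboration of a point the paper leaves implicit.
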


\begin{proof}
    Let $x \in V / G$ be a point with reductive stabilizer and $p \colon V / G \to \mathrm{B} G$ be the projection.
    The fibre sequence $V \otimes \mathcal{O}_{V / G} \to  \mathbb{T}_{V / G } \to p^* \mathbb{T}_{\mathrm{B} G}$ induces a fibre sequence of $G_x$-representations $V \to \mathbb{T}_{V / G, x} \to \mathfrak{g}[1]$.
    Since $\mathfrak{g}$ is an orthogonal $G_x$-representation as we have seen in Example \ref{ex-adjoint-invariant-metric},
    the virtual representation $[\mathbb{T}_{V / G, x}]$ is symmetric.
    Then the statement follows from Lemma \ref{lemma-smooth-symplectic-symmetric}.
    
    The proof for the orthogonality, almost symmetricity and almost orthogonality are iden\-tical.
\end{proof}

\begin{corollary}\label{cor-symmetric-open}
    Let $\mathcal{U}$ be a smooth algebraic stack having affine diagonal admitting a good moduli space  $U$.
    If $\mathcal{U}$ is (almost) symmetric at a point $x\in\mathcal{U}$ with reductive stabilizer, then $\mathcal U$ is (almost) symmetric at all points with reductive stabilizer in some open neighborhood of $x$. In particular, if $\mathcal{U}$ is (almost) symmetric, then it is (almost) symmetric at all points with reductive stabilizer.

    The same statement holds for (almost) orthogonality.
\end{corollary}

\begin{proof}
    By the local structure theorem of stacks due to \textcite[Theorem 1.2]{_Alper_ALunaetaleslicetheoremforalgebraicstacks} and Luna's fundamental lemma \cite[Proposition 4.13]{_Alper_ALunaetaleslicetheoremforalgebraicstacks}, we may assume that $\mathcal{U} = V / G$ for a reductive group $G$ and a $G$-representation $V$ and that the origin $0 \in V / G$ is (almost) symmetric or orthogonal.
    Then the statement follows from Lemma \ref{lem-quotient-symmetric}.
\end{proof}

\begin{para}

    We now prove that the symmetricity property is inherited by the critical locus:

\end{para}

\begin{lemma}
    Let $\mathcal{U}$ be a smooth algebraic stack having affine diagonal admitting a good moduli space  $U$ and $f \colon \mathcal{U} \to \mathbb{A}^1$ be a regular function.
    Assume that $\mathcal{U}$ is (almost) symmetric (resp.~(almost) orthogonal).
    Then the derived critical locus $\mathcal{X} = \mathrm{Crit}(f)$ is also (almost) symmetric (resp.~(almost) orthogonal).
\end{lemma}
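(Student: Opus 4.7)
The plan is to work at an arbitrary closed point $x \in \mathcal{X}$ and verify that the relevant property is inherited by $\mathrm{T}_{\mathcal{X}, x}$ from the corresponding property of $\mathrm{T}_{\mathcal{U}, x}$. Note that $\mathcal{X}$ is $(-1)$-shifted symplectic by \Cref{item-example-symplectic-critical} of \Cref{para-example-shifted-symplectic}; it has affine diagonal as a derived fibre product of stacks with affine diagonal; and its classical truncation is a closed substack of $\mathcal{U}$, so it admits a good moduli space. Hence the symmetry criterion of \Cref{lemma-smooth-symplectic-symmetric}(i) is available for $\mathcal{X}$, and the stabilizer of $x$ in $\mathcal{X}$ agrees with that of $x$ in $\mathcal{U}$.

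For the (weak) symmetry part, the standard computation of the tangent complex of a derived fibre product, combined with the zero-section identification $\mathbb{T}_{\mathrm{T}^* \mathcal{U}, x} \simeq \mathbb{T}_{\mathcal{U}, x} \oplus \mathbb{L}_{\mathcal{U}, x}$ at the critical point $x$, yields the identity
\[
[\mathbb{T}_{\mathcal{X}, x}] = [\mathbb{T}_{\mathcal{U}, x}] - [\mathbb{T}_{\mathcal{U}, x}]^\vee
\]
in $\mathrm{K}_0(\mathsf{Rep}(G_x))$. Any symmetric virtual representation is self-dual in $\mathrm{K}_0$, which is immediate from the definition, so \Cref{lemma-smooth-symplectic-symmetric}(i) applied to $\mathcal{U}$ gives that $[\mathbb{T}_{\mathcal{U}, x}]$ is self-dual as a virtual $G_x$- (resp.~$G_x^\circ$-) representation. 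Consequently $[\mathbb{T}_{\mathcal{X}, x}] = 0$, which is trivially symmetric; a second application of \Cref{lemma-smooth-symplectic-symmetric}(i), this time to the $(-1)$-shifted symplectic stack $\mathcal{X}$, concludes that $\mathcal{X}$ is (weakly) symmetric at $x$.

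The (weak) orthogonality part requires a direct argument, because \Cref{lemma-smooth-symplectic-symmetric}(ii) is not stated for $(-1)$-shifted symplectic stacks. Writing $V = \mathrm{T}_{\mathcal{U}, x}$, the description of $\mathcal{X}_{\mathrm{cl}}$ as the zero locus of $df$ inside $\mathcal{U}$ identifies $\mathrm{T}_{\mathcal{X}, x}$ with the kernel of the Hessian $H \colon V \to V^\vee$ of $f$ at $x$, which is a $G_x$-equivariant symmetric bilinear form on $V$. Choose a $G_x$- (resp.~$G_x^\circ$-) stable linear complement $W$ of $\ker H$ inside $V$, which exists by reductivity. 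The restriction $H|_W$ is non-degenerate: if $w \in W$ satisfies $H(w, W) = 0$, then also $H(w, \ker H) = 0$ by symmetry of $H$, whence $w \in \ker H \cap W = 0$. Thus $W$ carries the structure of an orthogonal $G_x$- (resp.~$G_x^\circ$-) representation via $H|_W$, and the two-out-of-three property (\Cref{lemma-two-out-of-three}) applied to $V \cong \ker H \oplus W$ together with the (weak) orthogonality of $V$ forces $\ker H = \mathrm{T}_{\mathcal{X}, x}$ to be orthogonal as well. The only non-formal ingredient in the entire argument is this non-degeneracy verification for $H|_W$; every other step is a direct application of the lemmas preceding the statement.
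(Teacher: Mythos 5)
Your proof is correct and for the orthogonality case it is essentially the paper's argument: identify $\mathrm{T}_{\mathcal{X},x}$ with the kernel of the $G_x$-invariant Hessian on $\mathrm{T}_{\mathcal{U},x}$, split off a complement on which the Hessian is non-degenerate (hence orthogonal), and conclude by the two-out-of-three property of \Cref{lemma-two-out-of-three}. For the symmetry case you take a harmless shortcut the paper doesn't spell out — computing $[\mathbb{T}_{\mathcal{X},x}]=[\mathbb{T}_{\mathcal{U},x}]-[\mathbb{T}_{\mathcal{U},x}]^{\vee}=0$ and invoking \Cref{lemma-smooth-symplectic-symmetric} — whereas the paper just asserts the Hessian argument adapts; both are fine.
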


\begin{proof}
    We prove this only for the orthogonality property: the proofs for the other properties are similar.
    Let $x \in \mathcal{X}$ be a closed point and $G_x$ be the stabilizer group.
    It is enough to show that the $G_x$-representation $\mathrm{H}^{0}(\mathbb{T}_{\mathcal{X}, x}) $ is orthogonal.
    By using \cite[Theorem 1.2 and Proposition 4.13]{_Alper_ALunaetaleslicetheoremforalgebraicstacks},
    we may assume $\mathcal{U} = W/G_x$ for some smooth affine scheme $W$ with a fixed point $\tilde{x} \in W$ which maps to $x$.
    Further, we may assume that $\mathrm{T}_{W, \tilde{x}}$ is an orthogonal $G_{{x}}$-representation.
    The tangent complex $\mathbb{T}_{\mathcal{X}, x}$ is given by
    \[
        [\mathfrak{g}_x \xrightarrow{0} \mathrm{T}_{W, \tilde{x}} \xrightarrow{\mathrm{Hess}_{\tilde{f}}} \Omega_{W, \tilde{x}} \xrightarrow{0} \mathfrak{g}_x^{\vee}]
    \]
    as a complex of $G_x$-representations.
    Here $\tilde{f}$ is the pullback of $f$ to $W$ and $\mathrm{Hess}_{\tilde{f}}$ denotes the Hessian of $\tilde{f}$.
    Since the Hessian is a $G_x$-invariant symmetric bilinear from on $\mathrm{T}_{W, x}$, the image of the map $\mathrm{Hess}_{\tilde{f}}$ is orthogonal.
    Using the decomposition of $G_x$-representations
    \[
        \mathrm{T}_{W, \tilde{x}} \cong  \mathrm{Im}(\mathrm{Hess}_{\tilde{f}}) \oplus \mathrm{H}^{0}(\mathbb{T}_{\mathcal{X}, x})
    \]
    together with \Cref{lemma-two-out-of-three}, we conclude that $\mathrm{H}^{0}(\mathbb{T}_{\mathcal{X}, x})$ is an orthogonal $G_x$-representation as desired.
\end{proof}

Arguing as in the proof of Corollary \ref{cor-symmetric-open} using \Cref{thm-Darboux}, we obtain the following claim:

\begin{corollary}\label{cor-symmetric-open--1-symplectic}
    Let $\mathcal{X}$ be derived algebraic stack having affine diagonal admitting a good moduli space  $X$. Assume that $\mathcal{X}$ is equipped with a $(-1)$-shifted symplectic structure.
    Then the set of almost orthogonal points in $\mathcal{X}$ forms an open subset of $\mathcal{X}$.
    Similar statements hold for symmetricity, almost symmetricity and orthogonality.
\end{corollary}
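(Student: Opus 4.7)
The plan is to mimic the proof of \Cref{cor-symmetric-open}, replacing the Luna slice theorem used there with the Darboux theorem \Cref{thm-Darboux}. I will explain the argument for weak orthogonality in detail; the cases of symmetry, weak symmetry, and orthogonality are entirely analogous, each invoking the corresponding case of \Cref{lemma-two-out-of-three} and \Cref{cor-symmetric-open}.

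First I fix a closed point $x \in \mathcal{X}$ at which $\mathcal{X}$ is weakly orthogonal. By \Cref{thm-Darboux}, there exist a smooth affine $G_x$-scheme $U$ with a $G_x$-fixed point $u$, a regular function $f/G_x$ on $U/G_x$, and an \'etale morphism $\eta \colon \mathrm{Crit}(f/G_x) \to \mathcal{X}$ sending $u$ to $x$ and inducing the identity on stabilizer groups at $u$, fitting into a Cartesian square whose vertical arrows are the good moduli space morphisms.

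The key step is to show that weak orthogonality of $\mathcal{X}$ at $x$ forces weak orthogonality of the smooth stack $U/G_x$ at $u$. I plan to extract this from the proof of the preceding critical locus lemma: at the $G_x$-fixed point $u$ one has a $G_x$-equivariant splitting
\[
\mathrm{T}_{U, u} \cong \ker(\mathrm{Hess}_{\tilde{f}}) \oplus \mathrm{Im}(\mathrm{Hess}_{\tilde{f}}),
\]
where $\tilde{f}$ denotes the pullback of $f$ to $U$, with $\mathrm{Im}(\mathrm{Hess}_{\tilde{f}})$ orthogonal (via the non-degenerate form induced by the Hessian) and $\ker(\mathrm{Hess}_{\tilde{f}}) \cong \mathrm{H}^0(\mathbb{T}_{\mathrm{Crit}(f/G_x), u}) \cong \mathrm{T}_{\mathcal{X}, x}$. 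The hypothesis gives orthogonality of the latter as a $G_x^\circ$-representation, and then \Cref{lemma-two-out-of-three} yields orthogonality of $\mathrm{T}_{U, u}$ as a $G_x^\circ$-representation, i.e.\ weak orthogonality of $U/G_x$ at $u$.

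Applying \Cref{cor-symmetric-open} then produces an open substack $\mathcal{V} \subset U/G_x$ containing $u$ at which $U/G_x$ is weakly orthogonal. The preceding critical locus lemma, applied to $\mathcal{V}$, implies that $\mathrm{Crit}(f|_\mathcal{V})$---an open substack of $\mathrm{Crit}(f/G_x)$ containing $u$---is weakly orthogonal. Since $\eta$ is \'etale and the Darboux square identifies good moduli spaces locally, closed points of $\mathrm{Crit}(f|_\mathcal{V})$ correspond bijectively, with isomorphic stabilizers and tangent representations, to closed points of the open substack $\eta(\mathrm{Crit}(f|_\mathcal{V})) \subset \mathcal{X}$, so every closed point of this open neighborhood of $x$ is weakly orthogonal. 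The main obstacle is the last bookkeeping step: verifying that tangent-space representations and stabilizers match across $\eta$ at \emph{nearby} closed points, not just at $u$; this should follow formally from the Cartesian square over the good moduli spaces in \Cref{thm-Darboux} together with the standard fact that representable \'etale morphisms induce isomorphisms on stabilizers and tangent spaces at corresponding points.
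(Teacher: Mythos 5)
Your proposal is correct and is essentially the argument the paper intends: the paper proves this corollary by the one-line remark ``arguing as in the proof of \Cref{cor-symmetric-open} using \Cref{thm-Darboux}'', i.e.\ precisely your reduction via the Darboux chart, the Hessian splitting of $\mathrm{T}_{U,u}$ to transfer weak orthogonality back to the smooth stack $U/G_x$, \Cref{cor-symmetric-open}, and the critical-locus lemma. The bookkeeping step you flag at the end is indeed handled by the strong (good-moduli-space-Cartesian) étaleness of $\eta$ in \Cref{thm-Darboux}, so there is no gap.
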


\begin{corollary}\label{cor--1^-shifted-cotangent-orthogonal}
Let $\mathcal{Y}$ be a derived algebraic stack having affine diagonal admitting a good moduli space.
Assume that $\mathcal{Y}$ is $0$-shifted symplectic and almost orthogonal.
Then the $(-1)$-shifted cotangent stack $\mathrm{T}^*_{\mathcal{Y}}[-1]$ is almost orthogonal.
Similar statements hold for symmetricity, almost symmetricity and orthogonality.
\end{corollary}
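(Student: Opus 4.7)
The plan is to combine a direct tangent computation along the zero section with the openness of the weakly orthogonal locus from \Cref{cor-symmetric-open--1-symplectic} and the $\mathbb{G}_m$-action on $\mathcal{X} = \mathrm{T}^*_{\mathcal{Y}}[-1]$ that scales the fibres of the projection $\pi \colon \mathcal{X} \to \mathcal{Y}$. By \textcite{pantev2013shifted}, $\mathcal{X}$ carries a canonical $(-1)$-shifted symplectic structure, and it admits a good moduli space inherited (étale-locally) from that of $\mathcal{Y}$. So \Cref{cor-symmetric-open--1-symplectic} applies, yielding an open substack $U \subseteq \mathcal{X}$ of weakly orthogonal points. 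Since $\mathbb{G}_m$ acts by automorphisms, $U$ is $\mathbb{G}_m$-invariant; moreover the $\mathbb{A}^1$-extension of the action contracts any $x \in \mathcal{X}$ to $\sigma(\pi(x)) \in \sigma(\mathcal{Y})$ at $t = 0$, so by openness it suffices to show $\sigma(\mathcal{Y}) \subseteq U$, and in turn this reduces to checking weak orthogonality at each closed $\sigma(y) = (y, 0)$ for $y \in \mathcal{Y}$ closed.

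At such a point, with stabilizer $G_y$, the zero section splits the fibre sequence $\pi^* \mathbb{L}_\mathcal{Y}[-1] \to \mathbb{T}_\mathcal{X} \to \pi^* \mathbb{T}_\mathcal{Y}$ (since $\pi \circ \sigma = \id_\mathcal{Y}$), giving
\[ \mathbb{T}_{\mathcal{X}, (y, 0)} \simeq \mathbb{T}_{\mathcal{Y}, y} \oplus \mathbb{L}_{\mathcal{Y}, y}[-1] \]
as $G_y$-equivariant complexes. Using quasi-smoothness of $\mathcal{Y}$, so that $\mathbb{T}_{\mathcal{Y}, y}$ has amplitude $[-1, 1]$, taking $H^0$ yields
\[ \mathrm{T}_{\mathcal{X}, (y, 0)} = \mathrm{T}_{\mathcal{Y}, y} \oplus \mathrm{Obs}_{\mathcal{Y}, y}^\vee, \]
where $\mathrm{Obs}_{\mathcal{Y}, y} = H^1 \mathbb{T}_{\mathcal{Y}, y}$. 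The first summand is orthogonal as a $G_y^\circ$-representation by the weak orthogonality of $\mathcal{Y}$.

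The remaining step, and the main obstacle, is to show $\mathrm{Obs}_{\mathcal{Y}, y}^\vee$ is also orthogonal, since the definition of weakly orthogonal only directly controls $H^0 \mathbb{T}_{\mathcal{Y}, y}$. I would reduce to a local Darboux presentation: étale-locally $\mathcal{Y} \cong Z/G_y$, where $Z = \mathrm{fib}(V \xrightarrow{s} E)$ is the derived zero locus of a $G_y$-equivariant section $s$ of a $G_y$-equivariant vector bundle $E$ on a smooth affine $V$ with $G_y$-fixed point $v \mapsto y$; then $\mathrm{T}_{\mathcal{Y}, y} = \ker(ds_v)$ and $\mathrm{Obs}_{\mathcal{Y}, y} = \mathrm{coker}(ds_v)$. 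Choosing the slice so that $\mathrm{T}_{V, v}$ carries a $G_y^\circ$-invariant orthogonal structure compatible with the given one on $\ker(ds_v)$, the transpose of $ds_v$ realises the cokernel as an orthogonal subquotient of the ambient, giving orthogonality of $\mathrm{Obs}_{\mathcal{Y}, y}$, hence of its dual. Combining, $\sigma(y) \in U$ for every closed $y \in \mathcal{Y}$; by openness and $\mathbb{G}_m$-invariance of $U$ this then extends to $\mathcal{X}$, proving weak orthogonality. The analogous arguments, replacing ``orthogonal'' by ``symmetric'' or ``weakly symmetric'' and the corresponding form invariance by $G_y$-invariance rather than $G_y^\circ$-invariance, yield the three remaining cases.
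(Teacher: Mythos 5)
Your overall strategy is the same as the paper's: compute $\mathbb{T}_{\mathrm{T}^*[-1]\mathcal{Y},\,0(y)}\simeq\mathbb{T}_{\mathcal{Y},y}\oplus\mathbb{L}_{\mathcal{Y},y}[-1]$ along the zero section, then spread out using the openness of the weakly orthogonal locus (\Cref{cor-symmetric-open--1-symplectic}) and the contracting $\mathbb{G}_{\mathrm{m}}$-action. You are also right that the entire content sits in the orthogonality of the second summand $\mathrm{H}^{-1}(\mathbb{L}_{\mathcal{Y},y})\cong\mathrm{H}^{1}(\mathbb{T}_{\mathcal{Y},y})^{\vee}$, which \Cref{def-symmetry} does not directly control. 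However, your proposed resolution of that step fails. In a presentation $\mathcal{Y}\simeq s^{-1}(0)^{\mathrm{der}}/G_y$ with $s\in\Gamma(V,E)$ and $v$ a $G_y$-fixed point, the transpose $ds_v^{\vee}\colon E_v^{\vee}\to T_{V,v}^{\vee}\cong T_{V,v}$ only exhibits $\operatorname{im}(ds_v)^{\vee}$ as a subspace of $T_{V,v}$; the obstruction space $\operatorname{coker}(ds_v)\cong\mathrm{H}^1(\mathbb{T}_{\mathcal{Y},y})$ is a quotient of the fibre $E_v$, which is an arbitrary $G_y$-representation invisible to the hypothesis that $\mathrm{H}^0(\mathbb{T}_{\mathcal{Y},y})=\ker(ds_v)$ be orthogonal. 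Nor can you ``choose the slice'' to orthogonalize $E_v$: the cokernel is intrinsic to $\mathcal{Y}$, so no choice of presentation changes it.

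This gap cannot be closed in the stated generality. Let $V=\mathbb{A}^2$ with $\mathbb{G}_{\mathrm{m}}$ acting with weights $(1,-1)$ on the coordinates $(z,w)$, let $E$ be the trivial line bundle of weight $2$, and let $s=z^2$. Then $\mathcal{Y}=s^{-1}(0)^{\mathrm{der}}/\mathbb{G}_{\mathrm{m}}$ is quasi-smooth with affine diagonal and a good moduli space, and its unique closed point is the origin, where $\mathrm{H}^0(\mathbb{T}_{\mathcal{Y}})$ has weights $\{1,-1\}$; so $\mathcal{Y}$ is weakly orthogonal. But $\mathrm{H}^1(\mathbb{T}_{\mathcal{Y}})$ has weight $2$ there, so $\mathrm{H}^0(\mathbb{T}_{\mathrm{T}^*[-1]\mathcal{Y}})$ at the origin of the zero section has weights $\{1,-1,-2\}$, which by \Cref{para-symmetry-weights} is not even weakly symmetric. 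Some input beyond weak orthogonality of $\mathcal{Y}$ is therefore required to control $\mathrm{H}^1(\mathbb{T}_{\mathcal{Y},y})$. In the paper's actual application $\mathcal{Y}$ carries a $0$-shifted symplectic structure, so that $\mathrm{H}^1(\mathbb{T}_{\mathcal{Y},y})\cong\mathrm{H}^{-1}(\mathbb{T}_{\mathcal{Y},y})^{\vee}\cong\mathfrak{g}_y^{\vee}$ is orthogonal by \Cref{ex-adjoint-invariant-metric}; under that additional hypothesis both your argument and the conclusion go through.
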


\begin{proof}
    We prove the statement only for the almost orthogonality: the same argument works for other properties.
    First, since $\mathrm{T}^*_{\mathcal{Y}}[-1]$ is affine over $\mathcal{Y}$, it also has affine diagonal and admits a good moduli space by \cite[Lemma 4.14]{_Alper_GoodmodulispacesforArtinstacks}.
    Let $0_{\mathcal{Y}} \colon \mathcal{Y} \hookrightarrow \mathrm{T}^*_{\mathcal{Y}}[-1]$ be the zero section and take a closed point $y \in \mathcal{Y}$.
    Note that we have the decomposition
    \[
     \mathrm{T}_{\mathrm{T}^*_{\mathcal{Y}}[-1], 0_{\mathcal{Y}}(y) } \cong \mathrm{T}_{\mathcal{Y}, y} \oplus \mathrm{H}^{-1}( \mathbb{L}_{\mathcal{Y}, y}) \cong \mathrm{T}_{\mathcal{Y}, y} \oplus \mathrm{H}^{-1}( \mathbb{T}_{\mathcal{Y}, y}),
    \]
    where we used the fact that $\mathcal{Y}$ is $0$-shifted symplectic for the latter isomorphism.
    Since $\mathrm{H}^{-1}( \mathbb{T}_{\mathcal{Y}, y})$ is an orthogonal $G_y$-representation as shown in \Cref{ex-adjoint-invariant-metric}, we see that $\mathrm{T}^*_{\mathcal{Y}}[-1]$ is almost orthogonal at $0_{\mathcal{Y}}(y)$.
    Using Corollary \ref{cor-symmetric-open--1-symplectic}, we can take an open subset $\mathcal{U} \subset \mathrm{T}^*_{\mathcal{Y}}[-1]$ whose closed points are exactly the almost orthogonal closed points in $\mathcal{X}$.
    Note that $\mathcal{U}$ is closed under the scaling $\mathbb{G}_\mathrm{m}$-action and contains the zero section.
    Therefore by using \cite[Lemma 1.3.5]{_HalpernLeistner_Onthestructureofinstabilityinmodulitheory}, we conclude $\mathcal{U} = \mathrm{T}^*_{\mathcal{Y}}[-1]$.
\end{proof}

\subsection{Examples of symmetric stacks in moduli theory}

\begin{para}[Stack of semistable $G$-bundles]
    We will now move on to moduli-theoretic examples of symmetric stacks.
    Let $C$ be a smooth projective curve and $G$ be a connected reductive group.
    Let $\mathcal{B}\mathrm{un}_G(C)$ be the moduli stack of $G$-bundles on $C$ and $\mathcal{B}\mathrm{un}_G(C)^{\mathrm{ss}} \subset \mathcal{B}\mathrm{un}_G(C)$ be the open substack consisting of semistable bundles: see \textcite[Definition 1.1]{ramanathan-stable} for the definition.
    The following statement is a direct consequence of \cite[Lemma 2.1]{ramanathan-stable}: For a semistable $G$-bundle $E$ on $C$, its reduction $E_L$ to a Levi subgroup $L \subset G$ and
    a character $\chi \colon L \to \mathbb{G}_{\mathrm{m}}$ with $\chi |_{Z(G)^{\circ}} =1$, the following equality holds:
    \begin{equation}\label{eq-semistable-reductin-deg-0}
        \deg (E_L(\chi)) = 0
    \end{equation}
    where $E_L(\chi)$ is the line bundle associated with the induction of $E_{L}$ along the map $\chi$.
    
    We will show that $\mathcal{B}\mathrm{un}_G(C)^{\mathrm{ss}} $ is almost orthogonal. 
    To prove this, we need the following lemma:
\end{para}

\begin{lemma}\label{lemma-equivariant-Riemann-Roch}
    Let $E$ be a polystable principal $G$-bundle on $C$ and $\mathrm{Aut}(E)^{\circ} \subset \mathrm{Aut}(E)$ be the neutral component. Then the following equality of virtual $\mathrm{Aut}(E)^{\circ}$-representations holds:
    \[
     [\mathbb{T}_{\mathcal{B}\mathrm{un}_G(C), [E]}] =  (g(C) - 1) [\mathfrak{g}]    
    \]
    where $\mathfrak{g}$ denotes the restriction of the adjoint representation of $G$ to $\mathrm{Aut}(E)^{\circ}$.
\end{lemma}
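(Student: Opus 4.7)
The plan is to establish the identity by a weight-refined Riemann--Roch argument on $C$. Since $\mathcal{B}\mathrm{un}_G(C) = \mathrm{Map}(C, \mathrm{B} G)$, the tangent complex at $E$ is $R\Gamma(C, \mathrm{ad}(E))[1]$ with its natural $\mathrm{Aut}(E)^\circ$-action, so the statement is equivalent to the identity $[R\Gamma(C, \mathrm{ad}(E))] = (1 - g(C))\,[\mathfrak{g}]$ in $\mathrm{K}_0(\mathsf{Rep}(\mathrm{Aut}(E)^\circ))$. Because $\mathrm{Aut}(E)^\circ$ is connected reductive (as $E$ is polystable), a virtual representation is determined by its characters on a maximal torus, and hence by its restrictions along all one-parameter subgroups $\lambda \colon \mathbb{G}_\mathrm{m} \to \mathrm{Aut}(E)^\circ$. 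It therefore suffices to verify the identity after pullback along each such $\lambda$.

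Fixing such a $\lambda$, the description of graded points of mapping stacks (see \Cref{eg-grad-quotient-stack} and \Cref{para-quotient-component-arrangement}) identifies the associated morphism $\mathrm{B}\mathbb{G}_\mathrm{m} \to \mathcal{B}\mathrm{un}_G(C)$ with a reduction $E_{L_\lambda}$ of $E$ to the Levi subgroup $L_\lambda \subset G$ determined by $\lambda$. Decomposing the adjoint representation into $\lambda$-weight spaces $\mathfrak{g} = \bigoplus_{n \in \mathbb{Z}} \mathfrak{g}(n)$, each $\mathfrak{g}(n)$ is an $L_\lambda$-submodule (since $\lambda$ is central in $L_\lambda$), and this induces a $\mathbb{G}_\mathrm{m}$-equivariant decomposition
\[
\mathrm{ad}(E) = \bigoplus_{n \in \mathbb{Z}} \mathrm{ad}(E)(n), \qquad \mathrm{ad}(E)(n) \coloneqq E_{L_\lambda} \times^{L_\lambda} \mathfrak{g}(n),
\]
of vector bundles on $C$, with each summand of pure $\lambda$-weight $n$.

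The crucial step is the degree-zero vanishing $\deg \mathrm{ad}(E)(n) = 0$ for every $n$. This degree equals $\deg E_{L_\lambda}(\det \mathfrak{g}(n))$ for the determinant character $\det \mathfrak{g}(n) \colon L_\lambda \to \mathbb{G}_\mathrm{m}$. Since $G$ is connected, $Z(G)^\circ$ acts trivially on $\mathfrak{g}$, hence on $\mathfrak{g}(n)$, so $\det \mathfrak{g}(n)|_{Z(G)^\circ} = 1$; invoking \Cref{eq-semistable-reductin-deg-0}, which applies because polystability implies semistability, then forces the vanishing. With this in hand, ordinary Riemann--Roch applied summand-wise gives $\chi(C, \mathrm{ad}(E)(n)) = (1 - g(C))\dim \mathfrak{g}(n)$, and summing over $n$ yields the identity $[R\Gamma(C, \mathrm{ad}(E))] = (1 - g(C))\,[\mathfrak{g}]$ in $\mathrm{K}_0(\mathsf{Rep}(\mathbb{G}_\mathrm{m}))$ after pullback along $\lambda$. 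Letting $\lambda$ vary and accounting for the sign from the cohomological shift $[1]$ in the tangent complex then delivers the stated formula. The substantive ingredient is the degree-zero vanishing of the weight components, which is where semistability of $E$ enters through \Cref{eq-semistable-reductin-deg-0}; everything else is formal.
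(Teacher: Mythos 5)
Your proof is correct, and all the essential inputs coincide with the paper's: reduction to one-parameter subgroups of the connected reductive group $\mathrm{Aut}(E)^{\circ}$, the identification of the $\lambda$-weight pieces of $R\Gamma(C,\mathrm{Ad}(E))$ with $R\Gamma(C, E_{L_\lambda}\times^{L_\lambda}\mathfrak{g}^\lambda_n)$, and the degree-zero vanishing of these weight subbundles via \Cref{eq-semistable-reductin-deg-0}. The difference is organizational. You apply Riemann--Roch directly to each degree-zero weight piece and read off $\chi = (1-g(C))\dim\mathfrak{g}(n)$, which immediately gives the answer. The paper instead proves the more general twisted identity $[R\Gamma(C,\mathrm{Ad}(E)\otimes\mathcal{L})] = (\deg\mathcal{L}+1-g(C))[\mathfrak{g}]$ by a three-step argument: a difference formula in $\deg\mathcal{L}$, self-duality of the virtual representation (where the same one-parameter-subgroup computation appears, but is used only to conclude $\chi_i=\chi_{-i}$), and Serre duality combined with the invariant metric on $\mathfrak{g}$. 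Your route is shorter for the lemma as stated; the paper's detour buys the twisted statement \Cref{eq:refined-Riemann-Roch}, which it reuses verbatim for $\mathcal{L}$-twisted Higgs bundles in \Cref{lemma-higgs-orthogonally-symmetric}. Note, though, that your direct computation also generalizes with no extra effort: twisting by $\mathcal{L}$ shifts each weight piece's degree to $\deg\mathcal{L}\cdot\dim\mathfrak{g}(n)$, and Riemann--Roch again gives the twisted formula.
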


\begin{proof}
    We will prove this by repeating the argument used to prove the Riemann--Roch theorem.

    First, note that we have an isomorphism
    \begin{equation*}
        \mathbb{T}_{\mathcal{B}\mathrm{un}_G(C), [E]} \cong R \Gamma(C, \mathrm{Ad}(E))[1]
    \end{equation*}
    where $\mathrm{Ad}(E)$ is the adjoint vector bundle.
    Let $\mathcal{L} \in \mathrm{Pic}(C)$ be a line bundle. 
    We will prove an equality of virtual $\mathrm{Aut}(E)^{\circ}$-representations
    \begin{equation}\label{eq:refined-Riemann-Roch}
        [R \Gamma(C, \mathrm{Ad}(E) \otimes \mathcal{L})] \cong  (\deg(\mathcal{L}) + 1 - g(C) ) [\mathfrak{g}]
    \end{equation}
    which specializes to the lemma by taking $\mathcal{L} = \mathcal{O}_C$.

    Firstly, we claim an equality
    \begin{equation}\label{eq:Riemann-Roch-difference}
        [R \Gamma(C, \mathrm{Ad}(E) \otimes \mathcal{L}_1)] - [R \Gamma(C, \mathrm{Ad}(E) \otimes \mathcal{L}_2)] = (\deg (\mathcal{L}_1) - \deg (\mathcal{L}_2 )) [\mathfrak{g}]
    \end{equation}
    for line bundles $\mathcal{L}_1, \mathcal{L}_2 \in \mathrm{Pic}(C)$.
    It is enough to prove the statement in case $\mathcal{L}_1 = \mathcal{L}_2(D)$ for some effective divisor $D$ of degree one.
    In this case, the statement follows from the fibre sequence 
    \[
        R \Gamma(C, \mathrm{Ad}(E) \otimes \mathcal{L}_2) \to R \Gamma(C, \mathrm{Ad}(E) \otimes \mathcal{L}_2(D)) \to \mathfrak{g}.  
    \]

    Next, we prove that the virtual $\mathrm{Aut}(E)^{\circ}$-representation $[R \Gamma(C, \mathrm{Ad}(E) \otimes \mathcal{L})]$ is symmetric, i.e., the following identity holds:
    \begin{equation}\label{eq-Bun_G-tangent-symmetric-bundle}
        [R \Gamma(C, \mathrm{Ad}(E) \otimes \mathcal{L})] = [R \Gamma(C, \mathrm{Ad}(E) \otimes \mathcal{L})^{\vee}].
    \end{equation}
    To prove this, using \Cref{eq:Riemann-Roch-difference}, we may assume $\mathcal{L} = \mathcal{O}_C$.
    By the discussion in \Cref{para-symmetry-weights}, it is enough to prove the identity after restricting to one-parameter subgroups $\bar{\lambda} \colon \mathbb{G}_\mathrm{m} \to \mathrm{Aut}(E)^{\circ}$.
    Consider the composition $\lambda \colon \mathbb{G}_\mathrm{m} \to \mathrm{Aut}(E)^{\circ} \to G$ and let $E_{L_{\lambda}}$ be the  $L_{\lambda}-$reduction of $E$ corresponding to $\bar{\lambda}$, where $L_{\lambda} \subset G$ denotes the Levi subgroup corresponding to $\lambda$.
    Then the $i$-th degree part of the tangent space $\mathbb{T}_{\mathcal{B}\mathrm{un}_G(C), [E]} \simeq R \Gamma(C, E \times^{G} \mathfrak{g})[1]$ is given by 
    \[
        R \Gamma(C, E_{L_{\lambda}} \times^{L_{\lambda}} \mathfrak{g}^{\lambda}_{i})[1]
    \]
    where $\mathfrak{g}^{\lambda}_{i}$ denotes the $i$-th graded piece of $\mathfrak{g}$ with respect to the action given by $\lambda$. 
    Since $E$ is semistable, we have
    \[
        \deg(E_{L_{\lambda}} \times^{L_{\lambda}} \mathfrak{g}^{\lambda}_{i}) = \deg( \det(E_{L_{\lambda}} \times^{L_{\lambda}} \mathfrak{g}^{\lambda}_{i})) = \deg(E_{L_{\lambda}} \times^{L_{\lambda}} \det(\mathfrak{g}^{\lambda}_{i})) = 0    
    \]
    for any $i$, where we used \Cref{eq-semistable-reductin-deg-0} for the last equality. In particular, By the Riemann--Roch theorem and the equality $\dim \mathfrak{g}^{\lambda}_{i} = \dim \mathfrak{g}^{\lambda}_{- i}$, we have
    \[
     \chi(R \Gamma(C, E_{L_{\lambda}} \times^{L_{\lambda}} \mathfrak{g}^{\lambda}_{i})) = \chi( R \Gamma(C, E_{L_{\lambda}} \times^{L_{\lambda}} \mathfrak{g}^{\lambda}_{ - i}))  
    \]
    for each $i$. Hence we conclude the identity \Cref{eq-Bun_G-tangent-symmetric-bundle}.

    Next we claim an equality
    \begin{equation}\label{eq-riemann-roch-duality}
        [R \Gamma(C, \mathrm{Ad}(E) \otimes \mathcal{L})] = - [R \Gamma(C, \mathrm{Ad}(E) \otimes \mathcal{L}^{\vee} \otimes \omega_C)].   
    \end{equation}
    This follows from the $\mathrm{Aut}(E)$-equivariant isomorphism $\mathrm{Ad}(E) \cong \mathrm{Ad}(E)^{\vee}$ induced by the $G$-invariant symmetric bilinear form on $\mathfrak{g}$ together with Serre duality and \Cref{eq-Bun_G-tangent-symmetric-bundle}.
    The identity \Cref{eq:refined-Riemann-Roch} follows immediately from the identities \Cref{eq:Riemann-Roch-difference} and \Cref{eq-riemann-roch-duality}.
\end{proof}

The following statement is an immediate consequence of \Cref{lemma-equivariant-Riemann-Roch} and \Cref{ex-adjoint-invariant-metric}.

\begin{corollary}\label{cor-bung-is-orthogonally-symmetric}
    The stack $\mathcal{B}\mathrm{un}_G(C)^{\mathrm{ss}} $ is almost orthogonal.
\end{corollary}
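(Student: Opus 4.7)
The plan is to verify weak orthogonality point-wise using the criterion of \Cref{lemma-smooth-symplectic-symmetric}. Since $\mathcal{B}\mathrm{un}_G(C)^{\mathrm{ss}}$ is smooth, it suffices to show that at every closed point the virtual tangent representation is orthogonal as an element of $\mathrm{K}_0(\mathsf{Rep}(\mathrm{Aut}(E)^{\circ}))$, i.e.\ under the neutral component of the stabilizer.

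First, I would recall that the closed points of $\mathcal{B}\mathrm{un}_G(C)^{\mathrm{ss}}$ are precisely the polystable $G$-bundles $E$, whose automorphism groups $\mathrm{Aut}(E)$ are reductive. So fix such a closed point $[E]$. By \Cref{lemma-equivariant-Riemann-Roch}, there is an equality of virtual $\mathrm{Aut}(E)^{\circ}$-representations
\[
    [\mathbb{T}_{\mathcal{B}\mathrm{un}_G(C), [E]}] = (g(C)-1)\,[\mathfrak{g}],
\]
where $\mathfrak{g}$ denotes the adjoint representation of $G$ restricted along $\mathrm{Aut}(E)^{\circ} \hookrightarrow G$.

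Next, by \Cref{ex-adjoint-invariant-metric}, the adjoint representation $\mathfrak{g}$ of the reductive group $G$ is orthogonal. Restricting the $G$-invariant non-degenerate symmetric bilinear form on $\mathfrak{g}$ along the closed immersion $\mathrm{Aut}(E)^{\circ} \hookrightarrow G$ yields an $\mathrm{Aut}(E)^{\circ}$-invariant non-degenerate symmetric bilinear form, so $\mathfrak{g}$ is orthogonal as an $\mathrm{Aut}(E)^{\circ}$-representation. Since the orthogonal virtual representations form a subgroup of $\mathrm{K}_0(\mathsf{Rep}(\mathrm{Aut}(E)^{\circ}))$, the virtual representation $(g(C)-1)\,[\mathfrak{g}]$ is orthogonal regardless of the sign of $g(C)-1$.

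Finally, combining this with \Cref{lemma-smooth-symplectic-symmetric} applied to the smooth stack $\mathcal{B}\mathrm{un}_G(C)^{\mathrm{ss}}$, we conclude weak orthogonality at $[E]$; as $[E]$ was arbitrary, the corollary follows. There is no substantive obstacle: all the content sits in \Cref{lemma-equivariant-Riemann-Roch} (the Riemann--Roch-style computation of the virtual tangent) and \Cref{ex-adjoint-invariant-metric} (orthogonality of the adjoint), and this corollary is the bookkeeping assembly of these inputs through the general criterion.
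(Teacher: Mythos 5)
Your proof is correct and follows exactly the paper's route: the paper states the corollary as an immediate consequence of \Cref{lemma-equivariant-Riemann-Roch} and \Cref{ex-adjoint-invariant-metric}, mediated by the criterion of \Cref{lemma-smooth-symplectic-symmetric}, which is precisely the assembly you carry out. Your explicit remark that the orthogonal classes form a subgroup of $\mathrm{K}_0(\mathsf{Rep}(\mathrm{Aut}(E)^{\circ}))$, so the sign of $g(C)-1$ is irrelevant, is a correct and worthwhile detail that the paper leaves implicit.
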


\begin{para}[Stack of semistable twisted $G$-Higgs bundles]
    Let $C$ be a smooth projective curve, $G$ be a connected reductive group and $\mathcal{L}$ be a line bundle on $C$.
    An \emph{$\mathcal{L}$-twisted $G$-Higgs bundle} is a pair of a principal $G$-bundle $E$ on $C$ and a section $\phi \in \Gamma(C, \mathrm{Ad}(E) \otimes \mathcal{L})$.
    We let 
    \[
        \mathcal{H}\mathrm{iggs}^{\mathcal{L}}_G(C)
    \]
     denote the derived moduli stack of $\mathcal{L}$-twisted $G$-Higgs bundles. 
     When $\mathcal{L} = \omega_C$, one recovers the definition of the moduli stack of Higgs bundles $ \mathcal{H}\mathrm{iggs}_G(C)$ recalled in \Cref{item-example-symplectic-G-Higgs} of \Cref{para-example-shifted-symplectic}.
     We can define the notion of semistability of an $\mathcal{L}$-twisted $G$-Higgs bundle analogously to the semistability of a principal $G$-bundle.
     There exists an open substack 
     \[
        \mathcal{H}\mathrm{iggs}^{\mathcal{L}}_G(C)^{\mathrm{ss}} \subset \mathcal{H}\mathrm{iggs}^{\mathcal{L}}_G(C)
    \]
     consisting of the semistable objects.
     It is shown in \cite[Proposition 2.8.1.2]{schmitt2008geometric} that $\mathcal{H}\mathrm{iggs}^{\mathcal{L}}_G(C)^{\mathrm{ss}} $ admits a good moduli space.
\end{para}

\begin{lemma}\label{lemma-higgs-orthogonally-symmetric}
    Let $[(E, \phi)] \in \mathcal{H}\mathrm{iggs}^{\mathcal{L}}_G(C)^{\mathrm{ss}}$ be a closed point. Then the virtual $\mathrm{Aut}(E, \phi)^{\circ}$-representation
    $[\mathbb{T}_{\mathcal{H}\mathrm{iggs}^{\mathcal{L}}_G(C)^{\mathrm{ss}}, [(E, \phi)]}]$ is orthogonal.
\end{lemma}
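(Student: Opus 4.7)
The plan is to compute the class $[\mathbb{T}_{\mathcal{H}\mathrm{iggs}^{\mathcal{L}}_G(C), [(E,\phi)]}]$ in the Grothendieck group of representations of $\mathrm{Aut}(E,\phi)^{\circ}$ via a Higgs analogue of the equivariant Riemann--Roch argument of \Cref{lemma-equivariant-Riemann-Roch}, and then to identify it with an integer multiple of $[\mathfrak{g}]$, which is orthogonal by \Cref{ex-adjoint-invariant-metric}.

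First I would use the standard identification of the tangent complex of the Higgs stack with the hypercohomology of the deformation complex, producing a distinguished triangle
\[
R\Gamma(C,\mathrm{Ad}(E))[1] \longrightarrow \mathbb{T}_{\mathcal{H}\mathrm{iggs}^{\mathcal{L}}_G(C), [(E,\phi)]} \longrightarrow R\Gamma(C,\mathrm{Ad}(E)\otimes\mathcal{L})
\]
of $\mathrm{Aut}(E,\phi)^{\circ}$-equivariant complexes, so that
\[
[\mathbb{T}_{\mathcal{H}\mathrm{iggs}^{\mathcal{L}}_G(C), [(E,\phi)]}] = [R\Gamma(C,\mathrm{Ad}(E)\otimes\mathcal{L})] - [R\Gamma(C,\mathrm{Ad}(E))]
\]
in $\mathrm{K}_0(\mathsf{Rep}(\mathrm{Aut}(E,\phi)^{\circ}))$.

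The heart of the argument is to establish, for any line bundle $\mathcal{L}'$ on $C$, the identity
\[
[R\Gamma(C,\mathrm{Ad}(E)\otimes\mathcal{L}')] = (\deg\mathcal{L}' + 1 - g(C))\,[\mathfrak{g}]
\]
as virtual $\mathrm{Aut}(E,\phi)^{\circ}$-representations, mirroring \Cref{eq:refined-Riemann-Roch}. I would follow the three steps of the proof of \Cref{lemma-equivariant-Riemann-Roch}: the formal ``difference identity'' \Cref{eq:Riemann-Roch-difference} holds verbatim, so it reduces matters to a single $\mathcal{L}'$; then the symmetry statement \Cref{eq-Bun_G-tangent-symmetric-bundle} and the Serre duality identity \Cref{eq-riemann-roch-duality} collapse the problem to checking, for each cocharacter $\bar{\lambda}\colon\mathbb{G}_\mathrm{m}\to\mathrm{Aut}(E,\phi)^{\circ}$, the degree vanishing \Cref{eq-semistable-reductin-deg-0} on the associated Levi reduction. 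The crucial point is that any such $\bar{\lambda}$ preserves $\phi$, so the composition $\lambda\colon\mathbb{G}_\mathrm{m}\to G$ yields a Higgs $L_\lambda$-reduction $(E_{L_\lambda},\phi_{L_\lambda})$ of $(E,\phi)$; applying Higgs-semistability of the pair simultaneously to $\lambda$ and $-\lambda$ then forces the desired degree to vanish for every character of $L_\lambda$ trivial on $\mathrm{Z}(G)^{\circ}$, upgrading \Cref{eq-semistable-reductin-deg-0} to the Higgs setting.

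Combining the two formulas yields $[\mathbb{T}_{\mathcal{H}\mathrm{iggs}^{\mathcal{L}}_G(C), [(E,\phi)]}] = \deg(\mathcal{L})\,[\mathfrak{g}]$, which is orthogonal by \Cref{ex-adjoint-invariant-metric} and whose restriction to the subgroup $\mathrm{Aut}(E,\phi)^{\circ}\subset G$ remains orthogonal. The main obstacle is replacing the $G$-semistability of $E$ used in \Cref{lemma-equivariant-Riemann-Roch} by the strictly weaker Higgs-semistability of $(E,\phi)$: this is available only because the one-parameter subgroups appearing in the argument lie in $\mathrm{Aut}(E,\phi)^{\circ}$ rather than merely in $\mathrm{Aut}(E)^{\circ}$ and so automatically produce $\phi$-compatible reductions, which is precisely what makes Ramanathan's degree identity applicable in the Higgs-semistable regime.
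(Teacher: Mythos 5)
Your proposal is correct and follows essentially the same route as the paper: the fibre sequence of the projection $\mathcal{H}\mathrm{iggs}^{\mathcal{L}}_G(C) \to \mathcal{B}\mathrm{un}_G(C)$ reduces the claim to the Higgs analogue of \Cref{eq:refined-Riemann-Roch}, which the paper likewise obtains by ``repeating the same argument using the semistability of $(E,\phi)$''; you have simply spelled out the key point, namely that the relevant one-parameter subgroups lie in $\mathrm{Aut}(E,\phi)^{\circ}$ and therefore yield $\phi$-compatible Levi reductions to which Higgs-semistability applies. (The only cosmetic discrepancy is the ordering of the terms in your distinguished triangle, which is immaterial at the level of $\mathrm{K}_0$.)
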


\begin{proof}
    Let $p \colon \mathcal{H}\mathrm{iggs}^{\mathcal{L}}_G(C) \to \mathcal{B}\mathrm{un}_{G}(C)$ be the projection.
    Then the natural map induced on the tangent complexes provides the following fibre sequence of $\mathrm{Aut}(E, \phi)$-representations
    \[
        R \Gamma(C, \mathrm{Ad}(E) \otimes \mathcal{L}) \to \mathbb{T}_{\mathcal{H}\mathrm{iggs}^{\mathcal{L}}_G(C), [(E, \phi)]} \to R \Gamma(C, \mathrm{Ad}(E))[1].
    \]
    We see that the equality \Cref{eq:refined-Riemann-Roch} at the level of virtual $\mathrm{Aut}(E, \phi)^{\circ}$-representations holds by repeating the same argument using the semistability of $(E, \phi)$.
    Then using Example \ref{ex-adjoint-invariant-metric}, we obtain the desired claim.
\end{proof}

\begin{para}
    As we have seen in \Cref{item-example-symplectic-G-Higgs} of \Cref{para-example-shifted-symplectic}, the derived  stack $\mathcal{H}\mathrm{iggs}_G(C) = \mathcal{H}\mathrm{iggs}^{\omega_C}_G(C)$ is $0$-shifted symplectic.
    If the inequality $\deg \mathcal{L} > 2g(C) - 2$ holds, one can show that $\mathcal{H}\mathrm{iggs}^{\mathcal{L}}_G(C)^{\mathrm{ss}}$ is smooth.
    See \cite[Proposition 5.5]{herrero2023automorphisms} for the proof when $\mathrm{H}^0(C, \mathcal{L} \otimes \omega_C^{-1}) > 0$: a similar deformation theory argument can be used to prove the general case.
    By combining these facts and Lemma \ref{lemma-smooth-symplectic-symmetric}, we obtain the following corollary:
\end{para}

\begin{corollary}\label{cor-higgsG-is-orthogonally-symmetric}
    Assume $\mathcal{L} = \omega_C$ or $\deg(\mathcal{L}) > 2g(C) - 2$. Then the stack $\mathcal{H}\mathrm{iggs}^{\mathcal{L}}_G(C)^{\mathrm{ss}}$ is almost orthogonal. 
\end{corollary}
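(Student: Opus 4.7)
The statement should follow directly by combining \Cref{lemma-higgs-orthogonally-symmetric} with the criterion of \Cref{lemma-smooth-symplectic-symmetric}(ii). The plan is thus to verify, case by case, that the hypotheses of \Cref{lemma-smooth-symplectic-symmetric}(ii) are met, and then invoke it.

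First I would check the common structural assumptions needed to apply \Cref{def-symmetry} and \Cref{lemma-smooth-symplectic-symmetric}: namely that $\mathcal{H}\mathrm{iggs}^{\mathcal{L}}_G(C)^{\mathrm{ss}}$ has affine diagonal and admits a good moduli space. The existence of a good moduli space is cited above from \cite[Proposition~2.8.1.2]{schmitt2008geometric}, and affineness of the diagonal is inherited from $\mathcal{B}\mathrm{un}_G(C)$ via the affine projection $\mathcal{H}\mathrm{iggs}^{\mathcal{L}}_G(C) \to \mathcal{B}\mathrm{un}_G(C)$.

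Next I would split into the two cases. In the case $\mathcal{L} = \omega_C$, the stack $\mathcal{H}\mathrm{iggs}_G(C)$ is $0$-shifted symplectic as recalled in \Cref{item-example-symplectic-G-Higgs} of \Cref{para-example-shifted-symplectic}, and the open substack $\mathcal{H}\mathrm{iggs}_G(C)^{\mathrm{ss}}$ inherits this structure. In the case $\deg(\mathcal{L}) > 2g(C) - 2$, the stack $\mathcal{H}\mathrm{iggs}^{\mathcal{L}}_G(C)^{\mathrm{ss}}$ is smooth by the cited deformation-theoretic computation generalising \cite[Proposition 5.5]{herrero2023automorphisms}. In either situation the second assertion of \Cref{lemma-smooth-symplectic-symmetric} applies, reducing weak orthogonality of $\mathcal{H}\mathrm{iggs}^{\mathcal{L}}_G(C)^{\mathrm{ss}}$ at a closed point $[(E,\phi)]$ to orthogonality of the virtual $\mathrm{Aut}(E,\phi)^{\circ}$-representation $[\mathbb{T}_{\mathcal{H}\mathrm{iggs}^{\mathcal{L}}_G(C)^{\mathrm{ss}}, [(E,\phi)]}]$.

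Finally, this latter orthogonality is precisely the content of \Cref{lemma-higgs-orthogonally-symmetric}, so the corollary follows. There is no real obstacle here: the two hypotheses of \Cref{lemma-smooth-symplectic-symmetric}(ii) — either smoothness or $0$-shifted symplectic — are arranged to cover exactly the two cases in the statement, and the orthogonality of the virtual tangent representation has already been verified. The only mildly subtle point is to make sure that we are invoking the $(\mathrm{ii})$ (orthogonal) part of \Cref{lemma-smooth-symplectic-symmetric} rather than the $(-1)$-shifted symplectic case, which is excluded from $(\mathrm{ii})$; this is fine since we never use the $(-1)$-shifted setting for Higgs bundles.
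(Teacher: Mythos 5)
Your proposal is correct and is essentially the paper's own argument: the paper likewise deduces the corollary by noting that $\mathcal{H}\mathrm{iggs}^{\omega_C}_G(C)^{\mathrm{ss}}$ is $0$-shifted symplectic while $\mathcal{H}\mathrm{iggs}^{\mathcal{L}}_G(C)^{\mathrm{ss}}$ is smooth for $\deg(\mathcal{L}) > 2g(C)-2$, and then combines \Cref{lemma-smooth-symplectic-symmetric} with the orthogonality of the virtual tangent representation established in \Cref{lemma-higgs-orthogonally-symmetric}. Your extra care about the structural hypotheses (affine diagonal, good moduli space) and about invoking the correct clause of \Cref{lemma-smooth-symplectic-symmetric} is sound but does not change the route.
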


\begin{para}[Character stacks of a compact oriented manifold]

    Let $M$ be a compact oriented $n$-manifold and $G$ be a reductive group. 
    Consider the derived character stack $\mathcal{L}\mathrm{oc}_G(M)$ of $G$-local systems on $M$ recalled in \Cref{item-example-symplectic-3mfd} of \Cref{para-example-shifted-symplectic}.
    Since points in $\mathcal{L}\mathrm{oc}_G(M)$ correspond to $G$-local systems on $M$, we have an equivalence
    \[
        \mathcal{L}\mathrm{oc}_G(M)_{\mathrm{cl}} \cong \mathrm{Hom}(\pi_1(M), G)  / G   
    \]
    where the action is given by the adjoint. In particular, $\mathcal{L}\mathrm{oc}_G(M)_{\mathrm{cl}}$ admits a good moduli space.

\end{para}

\begin{lemma}\label{lem-equiv-index-for-manifolds}
    Let $[\mathcal{K}] \in \mathcal{L}\mathrm{oc}_G(M)$ be a closed point. Then there exists an identity of virtual $\mathrm{Aut}(\mathcal{K})^{\circ}$-representations
    \[
    [\mathbb{T}_{\mathcal{L}\mathrm{oc}_G(M), [\mathcal{K}]}] =  - \chi(M) \cdot [\mathfrak{g}].
    \]
\end{lemma}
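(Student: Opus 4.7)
The plan is to identify the tangent complex, compute its equivariant Euler characteristic using a cellular model of $M$, and then apply a virtual-representation shift.

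\medskip

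\textbf{Step 1: tangent complex.} Since $\mathcal{L}\mathrm{oc}_G(M) = \mathrm{Map}(M_{\mathrm{B}}, \mathrm{B} G)$, the standard computation of the tangent complex of a mapping stack gives an equivalence
\[
    \mathbb{T}_{\mathcal{L}\mathrm{oc}_G(M), [\mathcal{K}]} \simeq R\Gamma(M_{\mathrm{B}}, \mathcal{K}^{\ast} \mathbb{T}_{\mathrm{B} G}) \simeq R\Gamma(M, \mathrm{ad}(\mathcal{K}))[1],
\]
where $\mathrm{ad}(\mathcal{K})$ is the local system on $M$ associated to $\mathcal{K}$ via the adjoint representation $G \to \mathrm{GL}(\mathfrak{g})$. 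This equivalence is manifestly $\mathrm{Aut}(\mathcal{K})$-equivariant.

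\medskip

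\textbf{Step 2: equivariant structure of $\mathrm{ad}(\mathcal{K})$.} Fix a basepoint of each connected component of $M$ and let $\rho \colon \pi_1(M) \to G$ denote the monodromy of $\mathcal{K}$. Then $\mathrm{Aut}(\mathcal{K})$ is identified with $Z_G(\mathrm{Im}\,\rho)$, and its adjoint action on the fibres of $\mathrm{ad}(\mathcal{K})$ commutes with the monodromy action of $\pi_1(M)$, since the former takes values in centralizers of the latter. Consequently, $\mathrm{ad}(\mathcal{K})$ is naturally a local system of $\mathrm{Aut}(\mathcal{K})^{\circ}$-representations on $M$, and parallel transport along any path in $M$ is $\mathrm{Aut}(\mathcal{K})^{\circ}$-equivariant.

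\medskip

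\textbf{Step 3: cellular computation of the Euler characteristic.} Choose a finite CW decomposition of $M$ (for instance via a smooth triangulation) with $c_i$ cells in dimension $i$. The associated cellular cochain complex $C^{\bullet}(M; \mathrm{ad}(\mathcal{K}))$ computes $R\Gamma(M, \mathrm{ad}(\mathcal{K}))$. By Step 2, after picking a lift of each cell to the universal cover, one has an isomorphism of $\mathrm{Aut}(\mathcal{K})^{\circ}$-representations $C^i(M; \mathrm{ad}(\mathcal{K})) \cong \mathfrak{g}^{\oplus c_i}$, and the cellular differentials are $\mathrm{Aut}(\mathcal{K})^{\circ}$-equivariant. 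Passing to the alternating sum in the Grothendieck group $\mathrm{K}_0(\mathsf{Rep}(\mathrm{Aut}(\mathcal{K})^{\circ}))$ gives
\[
    [R\Gamma(M, \mathrm{ad}(\mathcal{K}))] = \sum_i (-1)^i c_i \cdot [\mathfrak{g}] = \chi(M) \cdot [\mathfrak{g}].
\]
Combining this with Step 1 and the identity $[V[1]] = -[V]$ for a complex of representations yields
\[
    [\mathbb{T}_{\mathcal{L}\mathrm{oc}_G(M), [\mathcal{K}]}] = -[R\Gamma(M, \mathrm{ad}(\mathcal{K}))] = -\chi(M) \cdot [\mathfrak{g}],
\]
as claimed.

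\medskip

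\textbf{Main obstacle.} The only subtle point is justifying the $\mathrm{Aut}(\mathcal{K})^{\circ}$-equivariant trivialization of each cochain group in Step 3: one must check that the commuting actions of the monodromy and of $\mathrm{Aut}(\mathcal{K})^{\circ}$ allow us to choose a cellular model whose terms are literally $\mathfrak{g}^{\oplus c_i}$ as $\mathrm{Aut}(\mathcal{K})^{\circ}$-representations. Once this is set up, the Euler characteristic argument is formal. Alternatively, one can decompose $\mathrm{ad}(\mathcal{K}) \cong \bigoplus_V V \boxtimes \mathcal{L}_V$ as an $\mathrm{Aut}(\mathcal{K})^{\circ}$-equivariant local system (indexed by irreducibles $V$ of $\mathrm{Aut}(\mathcal{K})^{\circ}$), use $\chi(R\Gamma(M, \mathcal{L}_V)) = \chi(M) \cdot \rank \mathcal{L}_V$ for each ordinary local system $\mathcal{L}_V$, and reassemble to obtain the same conclusion.
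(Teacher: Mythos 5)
Your proof is correct, and it rests on the same topological input as the paper's: the Euler characteristic of a local system on a finite CW complex depends only on its rank. The difference is in how the equivariance is handled. The paper does not build an equivariant cellular model; instead it invokes the earlier observation (\Cref{para-symmetry-weights}) that a virtual representation of the connected reductive group $\mathrm{Aut}(\mathcal{K})^{\circ}$ is determined by its restriction to one-parameter subgroups $\bar\lambda\colon \mathbb{G}_\mathrm{m}\to \mathrm{Aut}(\mathcal{K})^{\circ}$, decomposes $\mathbb{T}_{\mathcal{L}\mathrm{oc}_G(M),[\mathcal{K}]}\simeq R\Gamma(M,\mathcal{K}\times^G\mathfrak{g})[1]$ into its $\lambda$-weight pieces $R\Gamma(M,\mathcal{K}_{L_\lambda}\times^{L_\lambda}\mathfrak{g}^\lambda_i)[1]$, and applies the rank-only statement to each ordinary local system $\mathcal{K}_{L_\lambda}\times^{L_\lambda}\mathfrak{g}^\lambda_i$ separately. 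This is exactly the ``alternative'' you sketch in your final paragraph (with weight spaces in place of isotypic components), and it sidesteps the one point you flag as delicate, namely the $\mathrm{Aut}(\mathcal{K})^{\circ}$-equivariant trivialization of the cochain groups. Your main route does close that point correctly: since $\mathrm{Aut}(\mathcal{K})=Z_G(\operatorname{im}\rho)$ commutes with the monodromy, the pulled-back local system on the universal cover carries commuting actions, so choosing lifts of the cells identifies each $C^i(M;\mathrm{ad}(\mathcal{K}))$ with $\mathfrak{g}^{\oplus c_i}$ equivariantly and the alternating sum in $\mathrm{K}_0$ is legitimate. The payoff of your version is that it proves the identity directly for the full group without the reduction to tori; the paper's version is shorter because it reuses a reduction principle already set up for the other symmetry lemmas.
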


\begin{proof}
    Since $\mathrm{Aut}(\mathcal{K})^{\circ}$ is a connected reductive group, by the discussion in \Cref{para-symmetry-weights}, it is enough to prove the identity after restricting to one-parameter subgroups $\bar{\lambda} \colon \mathbb{G}_\mathrm{m} \to \mathrm{Aut}(\mathcal{K})^{\circ}$.
    Consider the composition $\lambda \colon \mathbb{G}_\mathrm{m} \to \mathrm{Aut}(\mathcal{K})^{\circ} \to G$ and let $\mathcal{K}_{L_{\lambda}}$ be the  $L_{\lambda}-$reduction of $\mathcal{K}$ corresponding to $\bar{\lambda}$, where $L_{\lambda} \subset G$ denotes the corresponding subgroup.
    Then the $i$-th degree part of the tangent space $\mathbb{T}_{\mathcal{L}\mathrm{oc}_G(M), [\mathcal{K}]} \simeq R \Gamma(M, \mathcal{K} \times^{G} \mathfrak{g})[1]$ is given by 
    \[
        R \Gamma(M, \mathcal{K}_{L_{\lambda}} \times^{L_{\lambda}} \mathfrak{g}^{\lambda}_{i})[1]
  \]
  where $\mathfrak{g}^{\lambda}_{i}$ denotes the $i$-th graded piece of $\mathfrak{g}$ with respect to the action given by $\lambda$. 
  Since the Euler characteristic of a local system on a finite CW-complex depends only on the rank, we have an equality
  \[
  \rank  R \Gamma(M, \mathcal{K}_{L_{\lambda}} \times^{L_{\lambda}} \mathfrak{g}^{\lambda}_{i})[1]  = - \chi(M) \cdot \dim \mathfrak{g}^{\lambda}_{i}
  \]
  as desired.
\end{proof}

The following corollary is a direct consequence of \Cref{lem-equiv-index-for-manifolds} and \Cref{lemma-smooth-symplectic-symmetric}:

\begin{corollary}\label{cor-character-stack-symmetric}
    \begin{enumerate}
        \item     Let $\Sigma$ be a compact oriented $2$-manifold. Then the derived character stack $\mathcal{L}\mathrm{oc}_G(\Sigma)$ is almost orthogonal.
        \item   Let $M$ be a compact oriented $3$-manifold. Then the derived character stack $\mathcal{L}\mathrm{oc}_G(M)$ is almost symmetric.
    \end{enumerate}
\end{corollary}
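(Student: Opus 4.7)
The plan is to combine \Cref{lem-equiv-index-for-manifolds} with \Cref{lemma-smooth-symplectic-symmetric} in each case, applied at an arbitrary closed point $[\mathcal{K}]$ of the character stack. Both character stacks have affine diagonal (inherited from $\mathrm{B} G$ via the mapping stack construction) and admit a good moduli space, as already observed in the paragraph preceding the corollary, so the hypotheses of \Cref{lemma-smooth-symplectic-symmetric} are satisfied.

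For part~(i), I would invoke the $0$-shifted symplectic structure on $\mathcal{L}\mathrm{oc}_G(\Sigma)$ from \Cref{item-example-symplectic-2mfd} of \Cref{para-example-shifted-symplectic}. Part~(ii) of \Cref{lemma-smooth-symplectic-symmetric} then reduces weak orthogonality at $[\mathcal{K}]$ to orthogonality of the virtual $\mathrm{Aut}(\mathcal{K})^{\circ}$-representation $[\mathbb{T}_{\mathcal{L}\mathrm{oc}_G(\Sigma), [\mathcal{K}]}]$, which \Cref{lem-equiv-index-for-manifolds} identifies with $-\chi(\Sigma) \cdot [\mathfrak{g}]$. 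Since $[\mathfrak{g}]$ is orthogonal as an $\mathrm{Aut}(\mathcal{K})^{\circ}$-representation by \Cref{ex-adjoint-invariant-metric} (restricting along the homomorphism $\mathrm{Aut}(\mathcal{K})^{\circ} \to G$), and orthogonal classes form a subgroup of $\mathrm{K}_0(\mathsf{Rep}(\mathrm{Aut}(\mathcal{K})^{\circ}))$, any integer multiple is again orthogonal, which finishes this part.

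For part~(ii), the stack $\mathcal{L}\mathrm{oc}_G(M)$ is $(-1)$-shifted symplectic by \Cref{item-example-symplectic-3mfd} of \Cref{para-example-shifted-symplectic}, so I would apply part~(i) of \Cref{lemma-smooth-symplectic-symmetric}, which reduces weak symmetry to symmetry of the virtual tangent class. Again \Cref{lem-equiv-index-for-manifolds} gives $[\mathbb{T}_{\mathcal{L}\mathrm{oc}_G(M),[\mathcal{K}]}] = -\chi(M) \cdot [\mathfrak{g}]$, and the key topological input is that a closed oriented odd-dimensional manifold has vanishing Euler characteristic, so $\chi(M) = 0$ by Poincar\'e duality. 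The virtual tangent class therefore vanishes, which is trivially symmetric.

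I do not expect a serious obstacle: once \Cref{lem-equiv-index-for-manifolds} is in hand, this is essentially bookkeeping. The only subtlety worth flagging is the asymmetry between the two conclusions, which traces back to the fact that \Cref{lemma-smooth-symplectic-symmetric}(ii) is not stated for $(-1)$-shifted symplectic stacks. This prevents upgrading weak symmetry to weak orthogonality in the three-manifold case along this route, and explains the differing phrasing of the two parts.
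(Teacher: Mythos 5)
Your proposal is correct and follows exactly the route the paper intends: the paper derives the corollary as a direct consequence of \Cref{lem-equiv-index-for-manifolds} and \Cref{lemma-smooth-symplectic-symmetric}, using the $0$-shifted (resp.\ $(-1)$-shifted) symplectic structure in each case, and your identification of why only weak symmetry is obtained in the $3$-manifold case (part (2) of \Cref{lemma-smooth-symplectic-symmetric} excludes the $(-1)$-shifted case) is precisely the right point. The only minor remark is that the appeal to $\chi(M)=0$ is not strictly needed, since $-\chi(M)\cdot[\mathfrak{g}]$ is symmetric for any integer multiple because $[\mathfrak{g}]$ is self-dual.
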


\begin{para}\label{para-examples-almost-orthogonal-character-stacks}

We do not know whether $\mathcal{L}\mathrm{oc}_G(M)$ for a general $3$-manifold $M$ is almost orthogonal or not.
On the other hand, we can prove the almost orthogonality in the following cases:

\begin{itemize}
    \item $M$ is of the form $\Sigma \times S^1$ for a compact oriented $2$-manifold $\Sigma$, or more generally, the mapping torus $\Sigma_{\phi}$ associated with a finite order automorphism $\phi \colon \Sigma \xrightarrow[]{\cong} \Sigma$. This will be proved in \Cref{cor-sigma-s1-orthogonal}.
    \item $G$ is either $\mathrm{GL}_n$, $\mathrm{SL}_n$.  This will be proved in \Cref{cor-GL-character-stack-orthogonally-symmetric}.
\end{itemize}

\end{para}

\begin{para}[Orthogonality for loop stacks]
    Let $\mathcal{Y}$ be an algebraic stack with affine diagonal admitting a good moduli space $p \colon \mathcal{Y} \to Y$.
    We will discuss the orthogonality of the loop stack
    \[
     \mathcal{L}\mathcal{Y} \coloneqq \mathcal{Y} \times_{\mathcal{Y} \times \mathcal{Y}} \mathcal{Y}.
    \]  
     For this, we need some lemmas:

\end{para}

\begin{lemma}\label{lem-loop-sends-closed-points}
    We adopt the notation from the last paragraph and let $\pi \colon \mathcal{L} \mathcal{Y} \to \mathcal{Y}$ be the projection.
    Then $\mathcal{L}\mathcal{Y}$ admits a good moduli space and the map $\pi$ sends closed points to closed points.
\end{lemma}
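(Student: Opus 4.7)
The plan has two parts: first, the existence of the good moduli space, and second, the closed-point preservation, which is the substantive claim.

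For the existence of the good moduli space, I would observe that $\pi$ sits in the Cartesian square
\[
\begin{tikzcd}
\mathcal{L}\mathcal{Y} \ar[r, "\pi"] \ar[d] & \mathcal{Y} \ar[d, "\Delta_{\mathcal{Y}}"] \\
\mathcal{Y} \ar[r, "\Delta_{\mathcal{Y}}"] & \mathcal{Y} \times \mathcal{Y}
\end{tikzcd}
\]
Since $\mathcal{Y}$ has affine diagonal by hypothesis, $\pi$ is affine. Combining this with the good moduli space morphism $p \colon \mathcal{Y} \to Y$ and \cite[Lemma 4.14]{_Alper_GoodmodulispacesforArtinstacks}, which asserts that an affine morphism of stacks inherits a good moduli space from its target, we conclude that $\mathcal{L}\mathcal{Y}$ admits a good moduli space.

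For the closed-point preservation, I would argue by contrapositive: assume that $y = \pi(x)$ is not closed in $\mathcal{Y}$, and show that $x = (y, \alpha)$ is not closed in $\mathcal{L}\mathcal{Y}$. Let $y_0 \in \overline{\{y\}}$ be the unique closed point in the closure of $y$. Applying the local structure theorem \cite[Theorem 4.12]{_Alper_ALunaetaleslicetheoremforalgebraicstacks} at $y_0$, together with the compatibility of inertia with strongly étale neighbourhoods (an analogue of the Cartesian diagrams of \cref{para-graded-filtered-etale-cover} for $\Delta_{\mathcal{Y}}$), we may reduce to the situation where $\mathcal{Y} = [W/G]$ for $W$ an affine scheme and $G = G_{y_0}$ reductive, with $y_0$ the image of a $G$-fixed point of $W$ and $y$ the image of some $w \in W$ whose orbit $Gw$ is not closed. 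The element $\alpha$ lifts to an element of $\mathrm{Stab}_G(w) \subset G$, and $x$ corresponds to the $G$-orbit of $(w, \alpha)$ in
\[
Z = \{(w', g') \in W \times G : g' w' = w'\}
\]
under the action $h \cdot (w', g') = (hw', hg'h^{-1})$. The task reduces to showing that this $G$-orbit is not closed in $Z$.

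This is where the core input enters: by the Kempf--Rousseau theorem, since $Gw$ is not closed in $W$ there exists a one-parameter subgroup $\lambda \colon \mathbb{G}_{\mathrm{m}} \to G$, optimal with respect to $w$, such that the limit $w_0 := \lim_{t \to 0} \lambda(t) \cdot w$ exists and lies in $\overline{Gw} \setminus Gw$, and crucially with the property that $\mathrm{Stab}_G(w) \subset P_\lambda$, where $P_\lambda$ is the parabolic subgroup attached to $\lambda$. Since $\alpha \in \mathrm{Stab}_G(w) \subset P_\lambda$, the conjugation limit $\alpha_0 := \lim_{t \to 0} \lambda(t) \alpha \lambda(t)^{-1}$ exists in the Levi $L_\lambda$, and therefore
\[
(w_0, \alpha_0) = \lim_{t \to 0} \lambda(t) \cdot (w, \alpha) \in \overline{G \cdot (w, \alpha)} \setminus G \cdot (w, \alpha),
\]
the last non-membership following because $w_0 \notin Gw$. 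This contradicts the closedness of the $G$-orbit of $(w, \alpha)$, and hence of $x$. The main obstacle is precisely the containment $\mathrm{Stab}_G(w) \subset P_\lambda$ for an optimal $\lambda$, which is the content of Kempf's instability theorem; every other step is a bookkeeping argument about inertia stacks and local structure.
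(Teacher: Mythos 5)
Your proof is correct, and while the first half (affineness of $\pi$ plus \cite[Lemma 4.14]{_Alper_GoodmodulispacesforArtinstacks}) and the reduction to a quotient stack $[W/G]$ via the local structure theorem coincide with the paper's argument, the key step is genuinely different. The paper argues in the forward direction: writing a closed point of $\mathcal{L}\mathcal{Y}$ as a pair $(v,h)$ with $h\in G_v$, it observes that $h$ is central in the reductive stabilizer $G_{(v,h)}$ and hence semisimple, that closedness of $G\cdot(v,h)$ forces closedness of $G_h\cdot v$, and then invokes Luna's theorem \cite[Corollaire 1, Remarques 1]{luna1975adherences} that a closed orbit under the reductive subgroup $G_h$ has closed $G$-orbit. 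You instead argue the contrapositive via Kempf--Rousseau theory: the containment $\mathrm{Stab}_G(w)\subset P_\lambda$ for an optimal destabilizing $\lambda$ lets you conjugate $\alpha$ to a limit $\alpha_0$ and exhibit an explicit degeneration $(w_0,\alpha_0)$ of $(w,\alpha)$ outside its orbit. Both inputs are correct and standard over $\mathbb{C}$; your route is more constructive in that it exhibits the destabilizing one-parameter subgroup, while the paper's route is shorter given that \cite{luna1975adherences} is already in its toolkit. One practical remark: the paper's proof establishes \emph{en route} that the automorphism $h$ at a closed point of $\mathcal{L}\mathcal{Y}$ is semisimple, a fact that the proof of \Cref{prop-loop-weakly-orthogonal} explicitly refers back to; your argument does not produce this byproduct, so that semisimplicity would need to be recorded separately (it follows immediately from $h$ being central in the reductive group $G_{(v,h)}$, independently of which proof of the lemma one uses). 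Also, your phrase ``the unique closed point in the closure of $y$'' should read ``the unique closed point of the fiber of the good moduli space containing $y$'' --- the closure of $y$ may contain many closed points --- but any closed point in $\overline{\{y\}}$ in fact works for the reduction, so this is cosmetic.
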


\begin{proof}
    Since $\mathcal{L}\mathcal{Y}$ is affine over $\mathcal{Y}$, it admits a good moduli space by \cite[Lemma 4.14]{_Alper_GoodmodulispacesforArtinstacks}.
    To show that $\pi$ sends closed points to closed points, by using the local structure theorem for stacks by \textcite[Theorem 4.12]{_Alper_ALunaetaleslicetheoremforalgebraicstacks},
    we may assume that $\mathcal{Y}$ is of the form $V / G$ for an affine scheme $V$ and a reductive group $G$.
    In this case,  there is a closed embedding
    \[
        \mathcal{L}\mathcal{Y}  \hookrightarrow (V \times G) / G
    \]
    where $G$ acts on itself by conjugation.
    The image of this embedding is identified with pairs $(v, h)$ with $h \in G_v$, where $G_v$ denotes the stabilizer group at $v$.
    Take such a pair $(v, h) \in V \times G$ having a closed $G$-orbit. It is enough to show that $v$ has a closed $G$-orbit.
    Since $h$ is contained in the centre of a reductive group $G_{(v, h)}$, $h$ is a semisimple element.
    Note that $G \cdot (v, h)$ being closed  in $V \times G$ implies that $G_h \cdot v$ is closed in $V$.
    Therefore by using \cite[Corollaire 1, Remarques 1]{luna1975adherences}, we conclude the closedness of $G \cdot v$.
\end{proof}

\begin{lemma}\label{lemma-semisimple-automorphism}
    Let $G$ be an algebraic group and $V$ be a $G$-representation equipped with a $G$-invariant non-degenerate symmetric bilinear form $q$.
    Let $h \in G$ be a semisimple element.
    Then the restriction $q |_{V^{h}}$ is non-degenerate.
\end{lemma}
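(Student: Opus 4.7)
The plan is to exploit the eigenspace decomposition of $V$ with respect to the semisimple element $h$. Since we work over $\mathbb{C}$ and $h$ is semisimple, the action of $h$ on $V$ is diagonalisable, giving a decomposition
\[
V = \bigoplus_{\lambda \in \mathbb{C}^\times} V_\lambda ,
\]
where $V_\lambda$ denotes the $\lambda$-eigenspace of $h$. Note that $V^h = V_1$.

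First, I would record the key compatibility: the form $q$ pairs $V_\lambda$ with $V_\mu$ trivially unless $\lambda \mu = 1$. Indeed, for $v \in V_\lambda$ and $w \in V_\mu$, the $G$-invariance of $q$ applied to $h \in G$ gives
\[
q(v, w) = q(h v, h w) = \lambda \mu \cdot q(v, w) ,
\]
which forces $q(v, w) = 0$ whenever $\lambda \mu \neq 1$. In particular, taking $\lambda = 1$ and $\mu \neq 1$, we deduce that $V^h = V_1$ is orthogonal to the complement $\bigoplus_{\mu \neq 1} V_\mu$.

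Then I would conclude non-degeneracy of $q|_{V^h}$ as follows. Suppose $v \in V^h$ satisfies $q(v, w) = 0$ for all $w \in V^h$. By the previous orthogonality observation, we also have $q(v, w) = 0$ for all $w \in V_\mu$ with $\mu \neq 1$. Thus $q(v, -) \equiv 0$ on all of $V$, and non-degeneracy of $q$ on $V$ forces $v = 0$.

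I do not expect any serious obstacle here; the entire argument rests on the standard fact that an invariant bilinear form pairs eigenspaces of a semisimple operator only in mutually inverse characters, and diagonalisability over $\mathbb{C}$ is automatic.
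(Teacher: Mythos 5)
Your proof is correct and is exactly the argument the paper has in mind: the paper's proof consists of the single sentence ``This is an immediate consequence of the eigenspace decomposition of $V$,'' and your write-up supplies precisely the details behind that remark (invariance forces $q(V_\lambda, V_\mu)=0$ unless $\lambda\mu=1$, so $V^h=V_1$ is orthogonal to the remaining eigenspaces and inherits non-degeneracy). Nothing further is needed.
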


\begin{proof}
    This is an immediate consequence of the eigenspace decomposition of $V$.
\end{proof}

\begin{proposition}\label{prop-loop-almost-orthogonal}
    We adopt the notation from \Cref{lem-loop-sends-closed-points}.
    Assume further that $\mathcal{Y}$ is almost orthogonal.
    Then the stack $\mathcal{L}\mathcal{Y}$ is also almost orthogonal.
\end{proposition}

\begin{proof}
    Take a closed point $\tilde{y} \in \mathcal{L} \mathcal{Y}$ which maps to a point $y \in \mathcal{Y}$ which is closed by \Cref{lem-loop-sends-closed-points}.
    Let $h \in G_y$ be an element corresponding to $\tilde{y}$ under the identification of $\pi^{-1}(\mathrm{B} G_y) \cong G_{y} / G_{y}$.
    Then the proof of \Cref{lem-loop-sends-closed-points} implies that $h$ is a semisimple element.
    Consider the following long exact sequence
\[\begin{tikzcd}
	0 &[-18pt] {\mathrm{H}^{-1}(\mathbb{T}_{\mathcal{L}\mathcal{Y}, \tilde{y}})} &[-15pt] {\mathrm{H}^{-1}(\mathbb{T}_{\mathcal{Y}, y})} &[-15pt] {\mathrm{H}^{-1}(\mathbb{T}_{\mathcal{Y}, y})} &[-15pt] {\mathrm{H}^{0}(\mathbb{T}_{\mathcal{L}\mathcal{Y}, \tilde{y}})} &[-15pt] {\mathrm{H}^{0}(\mathbb{T}_{\mathcal{Y}, y})} &[-12pt] {\mathrm{H}^{0}(\mathbb{T}_{\mathcal{Y}, y})} \\
	0 & {\mathfrak{g}_{\tilde{y}}} & {\mathfrak{g}_{y}} & {\mathfrak{g}_{y}}
	\arrow[from=1-1, to=1-2]
	\arrow[from=1-2, to=1-3]
	\arrow["\cong", from=1-2, to=2-2]
	\arrow["\delta^{-1}", from=1-3, to=1-4]
	\arrow["\cong", from=1-3, to=2-3]
	\arrow[from=1-4, to=1-5]
	\arrow["\cong", from=1-4, to=2-4]
	\arrow["{\pi_*}", from=1-5, to=1-6]
	\arrow["\delta^{0}", shorten <=4pt, from=1-6, to=1-7]
	\arrow[from=2-1, to=2-2]
	\arrow[from=2-2, to=2-3]
	\arrow[from=2-3, to=2-4]
\end{tikzcd}\]
    where $\delta^{-1}$ and $\delta^{0}$ denote the boundary maps, which are identified with the action of $h$.
    The almost orthogonality of  ${\mathrm{H}^{-1}(\mathbb{T}_{\mathcal{L}\mathcal{Y}, \tilde{y}})}$ is a consequence of \Cref{lemma-semisimple-automorphism}.
    Since ${\mathrm{H}^{-1}(\mathbb{T}_{\mathcal{L}\mathcal{Y}, \tilde{y}})}$ and ${\mathrm{H}^{-1}(\mathbb{T}_{\mathcal{Y}, {y}})}$ are almost orthogonal, we see that $\coker(\delta^{-1})$ is almost orthogonal.
    Since $\ker(\delta^{0})$ is almost orthogonal by \Cref{lemma-semisimple-automorphism}, we conclude the almost orthogonality of ${\mathrm{H}^{0}(\mathbb{T}_{\mathcal{L}\mathcal{Y}, \tilde{y}})}$.
\end{proof}

\begin{corollary}\label{cor-twisted-loop-almost-orthogonal}
    We adopt the notation from \Cref{prop-loop-almost-orthogonal} and let $\phi \colon \mathcal{Y} \to \mathcal{Y}$ be a finite order automorphism.
    Consider the twisted loop stack $\mathcal{L}_{\phi} \mathcal{Y} \coloneqq \mathcal{Y}_{\Delta,\mathcal{Y} \times \mathcal{Y}, \Gamma_{\phi}} \mathcal{Y}$, where $\Delta$ is the diagonal map and $\Gamma_{\phi}$ is the graph map associated with $\phi$.
    Then $\mathcal{L}_{\phi} \mathcal{Y} $ is also almost orthogonal.
\end{corollary}

\begin{proof}
    Let $n$ be the order of $\phi$ and consider the corresponding $\mu_n$-action on $\mathcal{Y}$.
    Then we have a natural closed and open embedding
    \[
        (\mathcal{L}_{\phi} \mathcal{Y}) /\mu_n \hookrightarrow \mathcal{L}(\mathcal{Y} / \mu_n).
    \]
    Since $\mathcal{L}(\mathcal{Y} / \mu_n)$ is almost orthogonal by \Cref{prop-loop-almost-orthogonal},
    we concluded that $\mathcal{L}_{\phi} \mathcal{Y}$ is also almost orthogonal.
\end{proof}

\begin{corollary}\label{cor-sigma-s1-orthogonal}
    Let $\Sigma$ be a compact oriented $2$-manifold, $\phi \colon \Sigma \xrightarrow[]{\cong} \Sigma$ be a finite order automorphism and $G$ be a reductive group.
    Let $\Sigma_{\phi}$ be the mapping torus associated with $\phi$.
    Then the derived character stack $\mathcal{L}\mathrm{oc}_G(\Sigma_{\phi})$ is almost orthogonal.
    In particular, the derived stack $\mathcal{L}\mathrm{oc}_G(\Sigma \times S^1)$ is almost orthogonal.
\end{corollary}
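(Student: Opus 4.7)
The plan is to realise $\mathcal{L}\mathrm{oc}_G(\Sigma_\phi)$ as a twisted loop stack of $\mathcal{L}\mathrm{oc}_G(\Sigma)$ and then invoke \Cref{cor-twisted-loop-weakly-orthogonal}. The key topological input is that the mapping torus $\Sigma_\phi$ is the homotopy colimit of the diagram $\Sigma \xleftarrow{\mathrm{id} \sqcup \phi} \Sigma \sqcup \Sigma \xrightarrow{\mathrm{id} \sqcup \mathrm{id}} \Sigma$; equivalently, it is the homotopy coequaliser of $\mathrm{id}, \phi \colon \Sigma \rightrightarrows \Sigma$. Since $\mathrm{Map}(-, \mathrm{B} G)$ sends homotopy colimits of spaces to homotopy limits of derived stacks, this gives an equivalence
\[
    \mathcal{L}\mathrm{oc}_G(\Sigma_\phi) \simeq \mathcal{L}\mathrm{oc}_G(\Sigma) \times_{\mathcal{L}\mathrm{oc}_G(\Sigma) \times \mathcal{L}\mathrm{oc}_G(\Sigma)} \mathcal{L}\mathrm{oc}_G(\Sigma),
\]
where the left map is the diagonal and the right map is the graph of the automorphism $\phi^* \colon \mathcal{L}\mathrm{oc}_G(\Sigma) \to \mathcal{L}\mathrm{oc}_G(\Sigma)$ induced by $\phi$. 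By definition, this pullback is precisely the twisted loop stack $\mathcal{L}_{\phi^*}\mathcal{L}\mathrm{oc}_G(\Sigma)$.

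Next, I would verify that $\mathcal{Y} = \mathcal{L}\mathrm{oc}_G(\Sigma)$ satisfies the hypotheses of \Cref{cor-twisted-loop-weakly-orthogonal} with respect to $\phi^*$. Weak orthogonality of $\mathcal{L}\mathrm{oc}_G(\Sigma)$ is exactly \Cref{cor-character-stack-symmetric}(i). The stack has affine diagonal and admits a good moduli space: this was noted in the paragraph preceding \Cref{lem-equiv-index-for-manifolds} via the identification $\mathcal{L}\mathrm{oc}_G(\Sigma)_{\mathrm{cl}} \cong \mathrm{Hom}(\pi_1 \Sigma, G)/G$. Finally, $\phi^*$ is of finite order because $\phi$ is. Applying \Cref{cor-twisted-loop-weakly-orthogonal} then yields the weak orthogonality of $\mathcal{L}\mathrm{oc}_G(\Sigma_\phi)$.

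For the last statement, $\Sigma \times S^1$ is the mapping torus of $\mathrm{id}_\Sigma$, which is trivially of finite order, so weak orthogonality of $\mathcal{L}\mathrm{oc}_G(\Sigma \times S^1)$ follows as a particular case. I do not anticipate a serious obstacle here: the substantive work has already been done in \Cref{prop-loop-weakly-orthogonal} and \Cref{cor-twisted-loop-weakly-orthogonal}, and what remains is the essentially formal identification of the character stack of a mapping torus with a twisted self-intersection of the character stack of the fibre, which reduces to the standard pushout description of $\Sigma_\phi$.
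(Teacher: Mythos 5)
Your proposal is correct and follows exactly the paper's route: the paper also deduces the corollary by identifying $\mathcal{L}\mathrm{oc}_G(\Sigma_\phi)$ with the twisted loop stack $\mathcal{L}_{\phi^*}\mathcal{L}\mathrm{oc}_G(\Sigma)$ and combining \Cref{cor-character-stack-symmetric} with \Cref{cor-twisted-loop-weakly-orthogonal}. You have merely spelled out the homotopy-pushout identification that the paper treats as immediate.
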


\begin{proof}
    This is an immediate consequence of \Cref{cor-character-stack-symmetric} and \Cref{cor-twisted-loop-almost-orthogonal}.
\end{proof}

\begin{para}[Orthogonality for linear moduli stacks]\label{para-orthogonal-linear-moduli}
    Let $\mathcal{C}$ be a $\mathbb{C}$-linear dg-category of finite type, $\mathcal{M}_{\mathcal{C}}$ be the moduli stack of objects in $\mathcal{C}$ in the sense of \textcite{toen2007moduli} and $\mathcal{M} \subset \mathcal{M}_{\mathcal{C}}$ be an open substack which is $1$-Artin with affine diagonal admitting a good moduli space $p \colon \mathcal{M} \to M$.
    Assume further that $\mathcal{M}$ is closed under direct sums and direct summands,
    and that it contains the zero object as an open and closed substack.
    As we have seen in \Cref{para-examples-linear}, the stack $\mathcal{M}$ is a linear moduli stack: see \Cref{para-linear-moduli} and \Cref{para-derived-linear-moduli-stack} for the definition.
    We will also assume that there exists an abelian subcategory $\mathcal{A} \subset \mathcal{C}$ such that $\mathcal{M}$ parametrizes the objects in $\mathcal{A}$.
    
    We claim that the condition
    \begin{equation}\label{eq-symmetry-ext1}
     \dim \Hom(E, F[1]) = \dim \Hom(F, E[1])
    \end{equation}
    for any $E, F \in \mathcal{A}$ corresponding to closed points implies the orthogonality for $\mathcal{M}$.
    First note that any closed point in $\mathcal{M}$ is represented by an object
    \[
     E = E_1^{ \oplus m_1} \oplus  \cdots  \oplus  E_l^{ \oplus m_l}
    \]
    for some pairwise distinct simple objects $E_i \in \mathcal{A}$. Since $E_i$ is simple, there exists an isomorphism
    \[
    \mathrm{Aut}(E) \cong \prod_i \mathrm{GL}_{m_i}.    
    \]
    The representation of $\mathrm{Aut}(E)$ on the tangent space $\mathrm{H}^0(\mathbb{T}_{[E]} \mathcal{M})$ is modeled by the tangent space at the origin of the quiver moduli space associated with the Ext-quiver $Q_{E_{\bullet}}$, i.e., the set of the vertices is $\{ 1, \ldots, l\}$ and the number of edges from $i$ to $j$ is given by $\dim \mathrm{Hom}(E_i, E_j[1])$, with the dimension vector $(m_1, \ldots, m_l)$.
    The dual representation of $\mathrm{H}^0(\mathbb{T}_{[E]} \mathcal{M})$ is modeled by quiver moduli  associated with the opposite quiver $Q_{E_{\bullet}}^{\mathrm{op}}$ with the same dimension vector.
    Therefore the equality \Cref{eq-symmetry-ext1} implies that $Q_{E_{\bullet}}$ is symmetric.
    It is clear that the quiver moduli space for a symmetric quiver is orthogonal at the origin, hence we obtain the desired claim.

    As a consequence of the above discussion together with \Cref{cor-character-stack-symmetric}, we obtain the following claim:

 \end{para}

 \begin{corollary}\label{cor-GL-character-stack-orthogonally-symmetric}
    For a compact oriented $3$-manifold $M$,
    the character stack $\mathcal{L} \mathrm{oc}_G(M)$ is almost orthogonal for $G = \mathrm{GL}_n$ and $G = \mathrm{SL}_n$.
 \end{corollary}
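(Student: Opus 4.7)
The strategy is to apply the orthogonality criterion for linear moduli stacks from \Cref{para-orthogonal-linear-moduli} to the $\mathrm{GL}_n$ case, and then reduce the $\mathrm{SL}_n$ case to it via a two-out-of-three argument using the trace decomposition of $\mathfrak{gl}_n$.

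For $G = \mathrm{GL}_n$, I would first view the disjoint union $\coprod_{n \geq 0} \mathcal{L}\mathrm{oc}_{\mathrm{GL}_n}(M)$ as an open substack of the derived moduli stack of objects in the dg-category $\mathcal{C} = \mathrm{Perf}(M_\mathrm{B})$ of perfect local systems on $M$, parametrizing those concentrated in a single cohomological degree. By \Cref{item-linear-dg} of \Cref{para-examples-linear}, this is a derived linear moduli stack whose underlying abelian subcategory $\mathcal{A} \subset \mathcal{C}$ is the category of finite-dimensional $\pi_1(M)$-representations. Closed points thus correspond to semisimple $\pi_1(M)$-representations, and by \Cref{para-orthogonal-linear-moduli} it suffices to verify
\[
    \dim \Hom_{\mathcal{C}}(E, F[1]) = \dim \Hom_{\mathcal{C}}(F, E[1])
\]
for all semisimple representations $E, F$.

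Writing $h^i(E, F) \coloneqq \dim H^i(M, \mathcal{H}\mathrm{om}(E, F))$, Poincaré duality on the closed oriented $3$-manifold $M$, together with the natural identification $\mathcal{H}\mathrm{om}(E, F)^\vee \cong \mathcal{H}\mathrm{om}(F, E)$, yields $h^i(E, F) = h^{3 - i}(F, E)$. Since $\chi(M) = 0$ for any closed orientable $3$-manifold, the Euler identity $h^0 - h^1 + h^2 - h^3 = 0$ combined with these duality relations rearranges into
\[
    h^1(E, F) - h^1(F, E) = h^0(E, F) - h^0(F, E).
\]
For semisimple $E = \bigoplus_i E_i^{m_i}$ and $F = \bigoplus_j F_j^{n_j}$, Schur's lemma gives $h^0(E, F) = \sum_{E_i \cong F_j} m_i n_j = h^0(F, E)$, so $h^1(E, F) = h^1(F, E)$ and the criterion is satisfied.

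For $G = \mathrm{SL}_n$, the short exact sequence $1 \to \mathrm{SL}_n \to \mathrm{GL}_n \xrightarrow{\det} \mathrm{GL}_1 \to 1$ realizes $\mathcal{L}\mathrm{oc}_{\mathrm{SL}_n}(M)$ as the derived fibre of the determinant map over the trivial local system. At a closed point $\mathcal{K}$, the split decomposition $\mathfrak{gl}_n \cong \mathfrak{sl}_n \oplus \mathbb{C}$ of $\mathrm{GL}_n$-representations induces a splitting
\[
    \mathbb{T}_{\mathcal{L}\mathrm{oc}_{\mathrm{GL}_n}(M), \mathcal{K}} \simeq \mathbb{T}_{\mathcal{L}\mathrm{oc}_{\mathrm{SL}_n}(M), \mathcal{K}} \oplus R\Gamma(M, \mathcal{O}_M)[1]
\]
of $\mathrm{Aut}_{\mathrm{SL}_n}(\mathcal{K})^\circ$-representations. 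The second summand carries trivial action and is thus orthogonal; the left-hand side is orthogonal by the $\mathrm{GL}_n$ step, after restricting the $\mathrm{Aut}_{\mathrm{GL}_n}(\mathcal{K})^\circ$-action to the subgroup $\mathrm{Aut}_{\mathrm{SL}_n}(\mathcal{K})^\circ$. Applying \Cref{lemma-two-out-of-three} yields orthogonality of the $\mathrm{SL}_n$-tangent space.

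The main obstacle is the identification of $\coprod_n \mathcal{L}\mathrm{oc}_{\mathrm{GL}_n}(M)$ as an open substack of the moduli stack of objects in $\mathrm{Perf}(M_\mathrm{B})$ with the correct linear structure; in particular, the Ext groups entering \Cref{para-orthogonal-linear-moduli} are the dg-Ext groups $H^\bullet(M, \mathcal{H}\mathrm{om}(-, -))$ rather than group-cohomological Ext's of $\pi_1(M)$, which differ when $M$ is not aspherical but are exactly the ones governed by Poincaré duality on $M$.
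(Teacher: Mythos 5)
Your proof is correct and follows essentially the same route as the paper: for $\mathrm{GL}_n$ the paper also reduces to the criterion of \Cref{para-orthogonal-linear-moduli} (it gets the Ext-symmetry from \Cref{cor-character-stack-symmetric}, i.e.\ from $\chi(M)=0$ via \Cref{lem-equiv-index-for-manifolds}, where you re-derive it directly from Poincar\'e duality and Schur's lemma), and for $\mathrm{SL}_n$ it uses exactly the splitting $\mathfrak{gl}_n\cong\mathfrak{sl}_n\oplus\mathfrak{z}$ together with \Cref{lemma-two-out-of-three}. The one step you gloss over is that the $\mathrm{GL}_n$-local system induced from a semisimple $\mathrm{SL}_n$-local system is again semisimple, so that the corresponding point of $\mathcal{L}\mathrm{oc}_{\mathrm{GL}_n}(M)$ is closed and the $\mathrm{GL}_n$ case applies there; the paper cites \cite[Lemma 3.6.7]{arinkin2020stack} for this.
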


 \begin{proof}
    For $G = \mathrm{GL}_n$, the statement follows from \Cref{cor-character-stack-symmetric} together with the above discussion.
    To prove the statement for $\mathrm{SL}_n$, take a semisimple $\mathrm{SL}_n$-local system $\mathcal{L}$ on $M$.
    The tangent space at the point $[\mathcal{L}] \in \mathcal{L}\mathrm{oc}_{\mathrm{SL}_n}(M)$ is isomorphic to 
    $\mathrm{H}^1(M, \mathcal{L} \times^{\mathrm{SL}_n} \mathfrak{sl}_n)$.
    Since $\mathcal{L} \times^{\mathrm{SL}_n} \mathrm{GL}_n$ is a semisimple $\mathrm{GL}_n$-local system (see e.g. \cite[Lemma 3.6.7]{arinkin2020stack}),
    the $\mathrm{Aut}(\mathcal{L})$-representation $\mathrm{H}^1(M, \mathcal{L} \times^{\mathrm{SL}_n} \mathfrak{gl}_n)$ is almost orthogonal.
    Note that there exists a decomposition of $\mathrm{Aut}(\mathcal{L})$-representations
    \[
        \mathrm{H}^1(M, \mathcal{L} \times^{\mathrm{SL}_n} \mathfrak{gl}_n) \cong   \mathrm{H}^1(M, \mathcal{L} \times^{\mathrm{SL}_n} \mathfrak{sl}_n) \oplus \mathrm{H}^1(M, \mathcal{L} \times^{\mathrm{SL}_n} \mathfrak{z})
    \]
    where $\mathfrak{z} \subset \mathfrak{gl}_n$ is the centre. Since the latter summand is a trivial $\mathrm{Aut}(\mathcal{L})$-representation, 
    we conclude the almost orthogonality of $\mathrm{H}^1(M, \mathcal{L} \times^{\mathrm{SL}_n} \mathfrak{sl}_n)$ as desired.
 \end{proof}

 \begin{corollary}
    Let $S$ be smooth surface and $H$ be an ample divisor. 
    Assume either of the following conditions hold:
    \begin{itemize}
        \item $K_S$ is trivial.
        \item $H \cdot K_S < 0$ and $H$ is generic, i.e., for any $H$-semistable sheaves $E$ and $F$ with the same reduced Hilbert polynomial, the equality $\chi(E, F) = \chi(F, E)$ holds.
    \end{itemize}
    Then the moduli stack $\mathcal{M}^{H\textnormal{-}\mathrm{ss}}_S$ of $H$-semistable sheaves on $S$ is orthogonal.
 \end{corollary}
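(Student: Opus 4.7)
The plan is to apply the criterion from \Cref{para-orthogonal-linear-moduli}: once one checks that $\mathcal{M}^{H\textnormal{-}\mathrm{ss}}_S$ fits into that framework as an open substack of the derived moduli stack of perfect complexes on $S$ arising from a dg-enhancement of $\mathrm{Coh}(S)$ (standard for moduli of Gieseker-semistable sheaves on a polarised smooth projective surface, so I will take it for granted), orthogonality at each closed point reduces to showing that the Ext-quiver of the simple summands of the corresponding polystable sheaf is symmetric. Explicitly, if $E = \bigoplus_i E_i^{\oplus m_i}$ is a closed point, with the $E_i$ pairwise non-isomorphic $H$-stable sheaves sharing a single reduced Hilbert polynomial $p$, then I need
\[
    \dim \Ext^1(E_i, E_j) = \dim \Ext^1(E_j, E_i)
\]
for all $i, j$.

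In case (1), Serre duality on $S$ yields $\Ext^1(E_i, E_j) \cong \Ext^1(E_j, E_i \otimes K_S)^{\vee}$, and the triviality of $K_S$ gives the required equality at once.

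In case (2), I will first establish $\Ext^2(E_i, E_j) = 0$. By Serre duality this space is dual to $\Hom(E_j, E_i \otimes K_S)$. Since $H \cdot K_S < 0$, twisting by $K_S$ strictly lowers the reduced Hilbert polynomial, so $p_{E_i \otimes K_S} < p_{E_i} = p = p_{E_j}$; the image of any nonzero such morphism would then have reduced Hilbert polynomial simultaneously $\geq p_{E_j}$ (as a quotient of the semistable $E_j$) and $\leq p_{E_i \otimes K_S}$ (as a subsheaf of the semistable $E_i \otimes K_S$), which is impossible. Hence
\[
    \chi(E_i, E_j) = \dim \Hom(E_i, E_j) - \dim \Ext^1(E_i, E_j),
\]
and analogously with $i, j$ swapped. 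The genericity hypothesis gives $\chi(E_i, E_j) = \chi(E_j, E_i)$, while $H$-stability of $E_i$ and $E_j$ forces $\dim \Hom(E_i, E_j) = \dim \Hom(E_j, E_i) \in \{0,1\}$, the value depending only on whether $E_i \cong E_j$. Subtracting the two Euler characteristic expressions yields the desired equality of $\Ext^1$ dimensions. The only nontrivial step is the semistability argument for the $\Ext^2$ vanishing; the rest is routine manipulation of Serre duality and the Euler pairing fed into the linear-moduli orthogonality criterion.
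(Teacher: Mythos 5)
Your proposal follows the paper's proof essentially verbatim: reduce via \Cref{para-orthogonal-linear-moduli} to the symmetry of $\dim\Ext^1$ between $H$-stable sheaves with the same reduced Hilbert polynomial, conclude by Serre duality when $K_S$ is trivial, and otherwise combine $\Ext^2$-vanishing with the genericity of $H$ and the symmetry of $\Hom$ between stable sheaves. The one point you miss is that your $\Ext^2$-vanishing step fails for $0$-dimensional sheaves: tensoring by $K_S$ does not change the reduced Hilbert polynomial of a sheaf with $0$-dimensional support, and indeed $\Ext^2(\mathbb{C}_x,\mathbb{C}_x)\cong\Hom(\mathbb{C}_x,\mathbb{C}_x\otimes K_S)^{\vee}\neq 0$, so the identity $\chi(E,F)=\dim\Hom(E,F)-\dim\Ext^1(E,F)$ is false there. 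The paper disposes of this case first (the $\Ext^1$-symmetry for skyscraper sheaves is immediate) and only then runs the argument you give for sheaves of positive-dimensional support; with that one-line reduction added, your proof matches the paper's.
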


 \begin{proof}
    By the discussion in \Cref{para-orthogonal-linear-moduli}, it is enough to prove the equality
    \[
     \dim \Ext^1(E, F) = \dim \Ext^1(F, E)    
    \]
    for $H$-stable sheaves $E$, $F$ on $S$ with the same reduced Hilbert polynomial.
    Since the statement is obvious for $0$-dimensional sheaves, we will assume that $E$ and $F$ have positive-dimensional supports.
    When $K_S$ is trivial, this is a consequence of Serre duality.
    If $H \cdot K_S < 0$ and $H$ is generic, the statement follows from the equality $\chi(E, F) = \chi(F, E)$ together with the equality $\dim \Hom (E, F) = \dim \Hom (F, E)$ and the vanishing 
    $\Ext^2(E, F) \cong 0 \cong \Ext^2(F, E)$, which is a consequence of Serre duality and the condition $H \cdot K_S < 0$. 
 \end{proof}

\subsection{Cotangent distance of chambers}

\begin{para}
    Let $\mathcal{X}$ be a derived algebraic stack locally finitely presented over $\mathbb{C}$ having finite cotangent weights: see \Cref{para-cotangent-arrangement} for the definition.  Take a face $(F, \alpha) \in \mathsf{Face}(\mathcal{X})$ and chambers $\sigma, \sigma' \subset F$ with respect to the cotangent arrangement.
    Under a certain symmetry assumption, we will introduce a cotangent distance
    \[
    d(\sigma, \sigma') \in \mathbb{Z} /2 \mathbb{Z}    
    \]
    which counts the number of walls between $\sigma$ and $\sigma'$ in the cotangent arrangement.
    As we will see in \Cref{para-supercommutative-smooth}, this parity controls the supercommutativity relation of the cohomological Hall induction.
\end{para}

\begin{para}[Numerically symmetric stacks]
Let $\mathcal{X}$ be a derived algebraic stack locally finitely presented over $\mathbb{C}$. We introduce the following numerical version of the symmetricity condition: 
\end{para}

\begin{definition}
    A derived algebraic stack $\mathcal{X}$ is \emph{numerically symmetric} if for any $\alpha \in \pi_0(\mathrm{Filt}(\mathcal{X}))$, the following equality holds:
    \[
     \vdim \mathcal{X}_{\alpha}^+ = \vdim \mathcal{X}_{- \alpha}^+.    
    \]
\end{definition}

\begin{para}\label{para-numerically-symmetric}
    We summarize the properties of the numerical symmetricity condition that will be used later:
    \begin{enumerate}
        \item \label{item-numerical-vs-almost}
              Assume that $\mathcal{X}$ has affine diagonal and admits a good moduli space.
              Assume further that $\mathcal{X}$ is either smooth, $0$-shifted symplectic or $(-1)$-shifted symplectic.
              Then the almost symmetricity condition is equivalent to the numerical symmetricity. This is an immediate consequence of Lemma \ref{lemma-smooth-symplectic-symmetric} together with the discussion in \Cref{para-symmetric-weights-stacks}.

        \item Assume that $\mathcal{X}$ is a numerically symmetric $(-1)$-shifted symplectic stack. Then for a face $(F, \alpha) \in \mathsf{Face}(\mathcal{X})$ and a cone $\sigma \subset F$, we have
        \begin{equation}\label{eq-numerical-symmetric-implies-zero-dimension}
         \vdim \mathcal{X}_{\sigma}^+ = 0.    
        \end{equation}
        To see this, let $(\mathrm{tot}_{\alpha}^* \mathbb{T}_{\mathcal{X}})^{\sigma, +}\subset \mathrm{tot}_{\alpha}^* \mathbb{T}_{\mathcal{X}}$ denote the positive part with respect to the cone $\sigma$.
        Then we have an identity $\vdim \mathcal{X}_{\sigma}^+ = \rank (\mathrm{tot}_{\alpha}^* \mathbb{T}_{\mathcal{X}})^{\sigma, +}$. Since the existence of the $(-1)$-shifted symplectic structure implies an identity 
        \[
            \rank (\mathrm{tot}_{\alpha}^* \mathbb{T}_{\mathcal{X}})^{\sigma, +} = - \rank (\mathrm{tot}_{\alpha}^* \mathbb{T}_{\mathcal{X}})^{\sigma, -},
        \]
             we obtain the desired claim.

        \item Let $\mathcal{Y}$ be a numerically symmetric derived algebraic stack. Let $f \colon \mathcal{Y} \to \mathbb{A}^1$ be a function and set $\mathcal{X} = \mathrm{Crit}(f)$.
        Then $\mathcal{X}$ is also numerically symmetric. To see this, take $\alpha \in  \pi_0(\mathrm{Filt}(\mathcal{X}))$ and let $\iota_{\alpha} \colon \mathcal{X}_{\alpha} \to \mathcal{Y}_{\alpha}$ be the natural map.
        Then we have the following fibre sequence
        \[
         \iota_{\alpha}^* (\mathrm{tot}_{\alpha}^* \mathbb{L}_{\mathcal{Y}})^+  \to (\mathrm{tot}_{\alpha}^* \mathbb{L}_{\mathcal{X}})^+  \to \iota_{\alpha}^* (\mathrm{tot}_{\alpha}^* \mathbb{T}_{\mathcal{Y}})^+[1] 
         \]
         hence an identity 
         \[
         \vdim \mathcal{X}_{\alpha}^+ =    \vdim \mathcal{Y}_{\alpha}^+ - \vdim \mathcal{Y}_{- \alpha}^+  = 0
         \]
         which implies the numerical symmetricity for $\mathcal{X}$.
    \end{enumerate}
\end{para}   

\begin{para}[Cotangent distance]\label{para-cotangent-distance}
    Let $\mathcal{X}$ be a numerically symmetric derived algebraic stack with finite cotangent weights
     and $(F, \alpha) \in \mathsf{Face}(\mathcal{X})$ be an $n$-dimensional face.
     For chambers $\sigma, \sigma' \subset F$ with respect to the cotangent arrangement, we let 
     \[
      (\mathrm{tot}_{\alpha}^* \mathbb{L}_{\mathcal{X}})^{(\sigma, \sigma'), +, -} \subset    \mathrm{tot}_{\alpha}^* \mathbb{L}_{\mathcal{X}}
     \]
    be the direct summand which has positive weights with respect to the cone $\sigma$ and negative weights with respect to $\sigma'$.
    We define the \emph{cotangent distance} $d(\sigma, \sigma')$ between $\sigma$ and $\sigma'$ by
    \[
     d(\sigma, \sigma') \coloneqq \rank    (\mathrm{tot}_{\alpha}^* \mathbb{L}_{\mathcal{X}})^{(\sigma, \sigma'), +, -} \bmod 2 \in \mathbb{Z} / 2 \mathbb{Z}.
    \]
    Roughly, $d(\sigma, \sigma')$ counts the parity of the number of walls between $\sigma$ and $\sigma'$ in the cotangent arrangement.
    By definition, the distance function has the following properties:
    \begin{itemize}
        \item  $d(\sigma, \sigma) = 0$.
        \item  $d(\sigma, \sigma') = d(\sigma', \sigma)$.
        \item  $d(\sigma, \sigma'') = d(\sigma, \sigma') + d(\sigma', \sigma'')$.
        \item For $g \in \mathrm{Aut}(\alpha)$, we have $d(\sigma, \sigma') = d(g(\sigma), g(\sigma'))$.
    \end{itemize}

    Now define a \emph{cotangent sign representation}
    \[
    \mathrm{sgn}_{\alpha} \colon \mathrm{Aut}(\alpha) \to \mathbb{Z} / 2 \mathbb{Z}
    \]
    as follows: we fix a chamber $\sigma_0 \subset F$ with respect to the cotangent arrangement and set
    \begin{equation}\label{eq-sign-def}
        \mathrm{sgn}_{\alpha}(g) \coloneqq d(\sigma_0, g(\sigma_0)).
    \end{equation}
    By using the properties of the cotangent distance functions, we see that $\mathrm{sgn}_{\alpha}$ defines a group morphism and it does not depend on the choice of $\sigma_0$.
\end{para}

\section{Mixed Hodge modules}

\subsection{Mixed Hodge modules}\label{para-mixed-Hodge-modules}

\begin{para}
    We will recall M. Saito's theory of mixed Hodge modules introduced in \cite{saito-1990-mixed-hodge-modules} and its extension to algebraic stacks due to Tubach \cite{tubach-2024-mixed-hodge-modules-on-stacks}.
    For an algebraic stack $\mathcal{X}$, one can associate an $\infty$-category of mixed Hodge complexes denoted by $\Dh(\mathcal{X})$ and its subcategory $\Dhc(\mathcal{X})$ consisting of objects with constructible cohomology.
    We let $\Dhc^{(+)}(\mathcal{X}) \subset \Dhc(\mathcal{X})$ (resp.~$\Dhc^{(-)}(\mathcal{X}) \subset \Dhc(\mathcal{X})$) denote the subcategory consisting of objects whose restriction to any quasi-compact open substack is bounded below (resp.~above).
    We list minimal properties and notation for mixed Hodge modules that we use in this paper: 
    \begin{itemize}
        \item The category $\Dhc(\mathcal{X})$ is equipped with a t-structure called the \emph{perverse t-structure}. The heart with respect to this t-structure is denoted by $\mathsf{MHM}(\mathcal{X})$.
              The $n$-th cohomology with respect to the perverse t-structure is denoted by ${}^{\mathrm{p}} \mathcal{H}^n(-)$.
              
         \item The category $\mathsf{MHM}(\Spec \mathbb{C})$ is the category of polarizable mixed Hodge structures.

        \item There exists a forgetful functor
            \[
                \mathrm{rat} \colon \Dh(\mathcal{X}) \to \mathsf{D}(\mathcal{X})    
            \]
            to the derived category of sheaves of $\mathbb{Q}$-vector spaces with respect to the analytic topology.
            It restricts to the following functors to the constructible derived category and to the category of perverse sheaves:
            \[
                \mathrm{rat} \colon \Dhc(\mathcal{X}) \to \Dc(\mathcal{X}), \quad \quad \mathrm{rat} \colon \mathsf{MHM}(\mathcal{X}) \to \mathsf{Perv}(\mathcal{X}).
            \]
        
        \item The category $\Dhc(\mathcal{X})$ is equipped with another t-structure called the \emph{ordinary t-structure}, whose heart is denoted by $\mathsf{MHM}_{\mathsf{ord}}(\mathcal{X})$.
              The functor $\mathrm{rat} \colon \Dhc(\mathcal{X}) \to \Dc(\mathcal{X})$ restricts to a functor
              \[
                \mathrm{rat} \colon   \mathsf{MHM}_{\mathsf{ord}}(\mathcal{X}) \to \mathsf{Sh}(\mathcal{X}, \mathbb{Q})
              \]
              where $\mathrm{Sh}(\mathcal{X}, \mathbb{Q})$ denotes the abelian category of sheaves of $\mathbb{Q}$-vector spaces over $\mathcal{X}$ with respect to the analytic topology.

        \item The derived categories of mixed Hodge modules on algebraic stacks are equipped with the six-functor formalism in the sense of Mann (see e.g. \cite[\S 2]{scholze-2024-six-functor-formalism} for the definition).
              The forgetful map $\mathrm{rat}$ is compatible with the six-functor formalism.

        \item For a morphism $f \colon \mathcal{X} \to \mathcal{Y}$ between algebraic stacks, the functor $f^*$ restricts to the functor
              \[
               f^* \colon \Dhc(\mathcal{Y}) \to \Dhc(\mathcal{X}).  
              \]
              If $f$ is of finite type, the functor $f_*$ restricts to the functor
              \[
                f_* \colon \Dhc^{(+)}(\mathcal{X}) \to \Dhc^{(+)}(\mathcal{Y}).  
               \]

        \item Any object $M \in \mathsf{MHM}(\mathcal{X})$ is equipped with a bounded increasing filtration
            \[
                0 \subset \cdots \subset W_{i}M \subset W_{i + 1} M \subset \cdots \subset M
            \]
            called the \textit{weight filtration}. We set $\gr_i^{W} M \coloneqq W_{i}M / W_{i -1} M$. For $n \in \mathbb{Z}$, 
            we say that $M$ has weight~$\leq n$ (resp.~$\geq n$) if $\gr_i^{W} M = 0$ for all $i > n$ (resp.~$i < n$).

        \item For an object $M \in \Dhc(\mathcal{X})$, we say that $M$ has weight~$\leq n$ (resp.~$\geq n$) if ${}^{\mathrm{p}} \mathcal{H}^{i}(M)$ has weight~$\leq i + n$ (resp.~$\geq i + n$) for all $i \in \mathbb{Z}$.
              We say that $M$ is pure of weight~$n$ if $M$ has weight $\leq n$ and $\geq n$.

        \item For an algebraic stack $\mathcal{X}$, let $\mathbb{D}\mathbb{Q}_{\mathcal{X}} \coloneqq (\mathcal{X} \to \mathrm{pt})^! \mathbb{Q}_{\mathrm{pt}}$ be the dualizing complex and  $\mathbb{D} \coloneqq \mathcal{H}\mathrm{om}(-, \mathbb{D}\mathbb{Q}_{\mathcal{X}})$ be the Verdier duality functor. Then $\mathbb{D}$ swaps weights, namely, the following equivalence holds for $M \in \Dhc(\mathcal{X})$:
             \[
             \text{$M$ has weight $\leq n$}   \Longleftrightarrow \text{$\mathbb{D}M$ has weight $\geq -n$}.
             \]

             \item Let $f \colon \mathcal{X} \to \mathcal{Y}$ be a morphism between algebraic stacks. Then the functor $f^!$ does not decrease the weight, i.e., for an object $M \in \Dhc(\mathcal{Y})$ of weight $\geq n$, the complex $f^! M$ is of weight $\geq n$.
             If $\mathcal{X}$ has affine stabilizers and $f$ is of finite type, the functor $f_*$ does not decrease the weight.
        
        \item Assume that $\mathcal{X}$ has affine stabilizers. We let 
        \[
        \mathsf{HM}(\mathcal{X}) \subset \mathsf{MHM}(\mathcal{X})  
        \]
        denote the full subcategory consisting of mixed Hodge modules which are pure of weight $0$. Then $\mathsf{HM}(\mathcal{X})$ is a semi-simple category.
    \end{itemize}

\end{para}

\begin{para}[Pushforward along good moduli space morphisms]
    The following result proved recently by Kinjo \cite[Theorem 1.1]{Kinjo_Decomp} will be repeatedly used throughout the paper:
\end{para}

\begin{theorem}\label{thm-decom-good-moduli}
    Let $\mathcal{X}$ be an algebraic stack with affine diagonal admitting a good moduli space $p \colon \mathcal{X} \to X$.
    Then the functor $p_*$ preserves the weights. In other words, for an object $M \in \Dhc(\mathcal{X})$ which is pure of weight $n$, $p_* M$ is also pure of weight $n$.
\end{theorem}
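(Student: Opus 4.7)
The plan is to reduce the assertion to a linear quotient stack via étale-local considerations on $X$, then to pass from a reductive group to its maximal torus, and finally to treat the torus case by induction using the Białynicki--Birula decomposition. Observe first that the ``weight $\geq n$'' half of the conclusion is automatic: by the properties recalled in \Cref{para-mixed-Hodge-modules}, for any finite-type morphism from a stack with affine stabilizers, $p_*$ does not decrease weights. The content is therefore the opposite inequality, that $p_* M$ has weight $\leq n$.

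I would first reduce to the case $\mathcal{X} = [U/G]$ with $G$ reductive, $U$ an affine scheme, and $X = U \git G$. Since the weight filtration and the pushforward $p_*$ both commute with smooth base change on $X$, and purity can be checked étale-locally on the target, Alper's étale slice theorem (\cite[Theorem~4.12]{_Alper_ALunaetaleslicetheoremforalgebraicstacks}, already used in \Cref{thm-Darboux}) reduces us to this form. Then, fixing a maximal torus $T \subset G$, factor $p$ as $[U/T] \xrightarrow{\pi} [U/G] \xrightarrow{p} X$. Since $\pi$ is a $G/T$-bundle, it is proper and smooth, with fibers whose cohomology is of pure Tate type, so Saito's decomposition theorem for proper morphisms yields a splitting $\pi_* \pi^* M \cong \bigoplus_i M(-i)[-2i]$ (with appropriate multiplicities), in which $M$ appears as a direct summand. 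Consequently $p_* M$ is a direct summand of $(p\circ\pi)_* (\pi^* M)$, and since $\pi^*$ preserves purity, it suffices to treat the torus case $[U/T] \to U \git T$. Inducting on $\dim T$ by filtering $T$ by codimension-one subtori, this further reduces to $T = \mathbb{G}_\mathrm{m}$.

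For $\mathbb{G}_\mathrm{m}$ acting on an affine scheme $U$, I would stratify $U$ by the Białynicki--Birula decomposition: the fixed locus $U^0$, the attractor and repeller loci $U^\pm$ contracting properly onto $U^0$, and the locus of closed non-trivial orbits. Over this last locus, $[U/\mathbb{G}_\mathrm{m}]$ is a $\mu_n$-gerbe over its good moduli space, so $p_*$ preserves weights tautologically. Over the image of $U^0$ in $X$, one uses the excision triangles associated to the open-closed stratification together with Saito's decomposition theorem applied to the proper Białynicki--Birula contractions $U^\pm \to U^0$ to reduce weight preservation to the case of $[U^0/\mathbb{G}_\mathrm{m}] = U^0 \times \mathrm{B}\mathbb{G}_\mathrm{m}$; there, weight preservation follows from the Künneth formula and the explicit purity of $\mathrm{H}^*(\mathrm{B}\mathbb{G}_\mathrm{m})$ in the perverse convention.

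The main obstacle is this last step, specifically handling the unstable attractor and repeller loci $[U^\pm/\mathbb{G}_\mathrm{m}]$: these are not proper over $X$, so their individual pushforwards are not weight-preserving, and the correct weight bounds only emerge after combining contributions via the gluing triangles. Managing this bookkeeping across the Kirwan-type stratification of $U$ by stabilizer type, and carefully tracking the interplay between the attractor structure and the weights coming from $U^0$, is the technically delicate core of the argument.
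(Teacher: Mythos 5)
This theorem is not proved in the paper: it is quoted verbatim from Kinjo's decomposition-theorem paper (\cite[Theorem 1.1]{Kinjo_Decomp}), so your argument is necessarily an independent one, and it has a genuine gap at the reduction from $G$ to its maximal torus. First, $\pi \colon [U/T] \to [U/G]$ is \emph{not} proper: the fibre $G/T$ is an affine-space bundle over the flag variety $G/B$ (e.g.\ $\mathrm{SL}_2/T$ is the total space of a line bundle over $\mathbb{P}^1$), so the decomposition theorem for proper morphisms is not available. The splitting $\pi_*\pi^*M \cong \bigoplus_i M(-i)[-2i]^{\oplus b_i}$ does still hold, by the projection formula and Leray--Hirsch, since $\mathrm{H}^*(G/T)\cong\mathrm{H}^*(G/B)$ is pure Tate and generated by Chern classes of line bundles living on $[U/T]$ --- but the conclusion you draw from it is empty. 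The composite $p\circ\pi\colon [U/T]\to U\git G$ is not the good moduli space morphism of $[U/T]$: it factors as $[U/T]\xrightarrow{r} U\git T\xrightarrow{q} U\git G$, where $q$ is a non-finite, non-proper morphism of affine varieties (for $G=\mathrm{SL}_2$ acting on $\mathfrak{sl}_2$ it is a map $\mathbb{A}^2\to\mathbb{A}^1$), and pushforward along such a map does not preserve weights in general. So purity of $r_*\pi^*M$ --- the ``torus case'' you reduce to --- gives no control over $(p\circ\pi)_*\pi^*M$. Indeed, by the projection formula $(p\circ\pi)_*\pi^*M\cong\bigoplus_i p_*M(-i)[-2i]^{\oplus b_i}$, so the purity of $(p\circ\pi)_*\pi^*M$ is \emph{literally equivalent} to the purity of $p_*M$: the direct-summand trick returns you to the statement you set out to prove.

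Even granting a reduction to tori, the $\mathbb{G}_\mathrm{m}$ case --- which you rightly identify as the core --- is not carried out, and the strategy you sketch meets a standard circularity. The excision triangles attached to the Bia\l{}ynicki--Birula stratification control $p_!$ of the locally closed strata; for the non-proper attractor and repeller strata $[U^\pm/\mathbb{G}_\mathrm{m}]$, $p_!$ gives only upper weight bounds and $p_*$ only lower ones (cf.\ the properties listed in \Cref{para-mixed-Hodge-modules}), and assembling these into the two-sided bound for $p_*M$ on the whole stack requires the gluing triangles to degenerate --- which is essentially the purity statement itself. As written, then, both the group-to-torus reduction and the base case are unproved, so the proposal does not establish the theorem.
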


\subsection{Monodromic mixed Hodge modules}\label{MMHM-section}

\begin{para}
    We will introduce a monodromic extension of the category of mixed Hodge modules. 
    An advantage of working with this larger category is that one has a square root of the Lefschetz Hodge structure $\mathbb{L} = \mathrm{H}^*_{\mathrm{c}}(\mathbb{A}^1)$.  Note also that the Thom--Sebastiani theorem \Cref{eq-local-thom-sebastiani} holds only after taking the monodromy action into account.

    Firstly, we will introduce the $\infty$-category of monodromic mixed Hodge complexes on algebraic stacks following \textcite[\S 2.9]{brav2012symmetries}.
    We let 
    \[
        \mathbb{Q}(1) \coloneqq \mathrm{H}_\mathrm{c}^2(\mathbb{A}^1)^{\vee} \in \mathsf{MHM}(\Spec \mathbb{C})
    \] 
    be the Tate Hodge structure.
    Define a commutative algebra object $R \in \mathrm{CAlg}(\Dh(\Spec \mathbb{C}))$ by the free commutative algebra object
    \[
        R \coloneqq \mathrm{Free}_{\mathrm{comm}}( \mathbb{Q}(1)).
    \]
    Let $\mathcal{X}$ be an algebraic stack and $a_{\mathcal{X}} \colon \mathcal{X} \to \Spec \mathbb{C}$ be the constant map.
    We set
    \[
        R_{\mathcal{X}} \coloneqq a_{\mathcal{X}}^* R \in \mathsf{CAlg}(\Dh(\mathcal{X}))
    \]
    and let $\mathsf{Mod}_{R_{\mathcal{X}}}(\Dhc(\mathcal{X}))$ denote the $\infty$-category of $R_{\mathcal{X}}$-modules in $\Dhc(\mathcal{X})$.
    Explicitly, an object in $\mathsf{Mod}_{R_{\mathcal{X}}}(\Dhc(\mathcal{X}))$ is given by a pair $(M, N)$ 
    of an object $M \in \Dhc(\mathcal{X})$ and a morphism $N \colon M \to M(-1)$, where we write $M(-1) \coloneqq M \otimes \mathbb{Q}(1)^{\vee}$.

    We define the $\infty$-category of unipotently monodromic mixed Hodge complexes on $\mathcal{X}$ to be the full subcategory
    \[
    \Dhc^{\mathsf{umon}}(\mathcal{X}) \subset   \mathsf{Mod}_{R_{\mathcal{X}}}(\Dhc(\mathcal{X}))
    \]
    consisting of pairs $(M, N)$ with $N$ locally nilpotent on each perverse cohomology.

    Consider the group $\hat{\mu} \coloneqq \lim \mu_n$ and let it act trivially on $\mathcal{X}$.
    Define the $\infty$-category of monodromic mixed Hodge complexes on $\mathcal{X}$ to be the full subcategory of $\hat{\mu}$-equivariant objects
    \[
        \Dhc^{\mathsf{mon}}(\mathcal{X}) \subset (\Dhc^{\mathsf{umon}}(\mathcal{X}))^{\hat{\mu}}
    \]
    consisting of objects such that the action of $\hat{\mu}$ on each cohomology sheaf locally factors through $\mu_n$ for some $n$.
    We define $\infty$-categories
    \[
        \mathsf{MHM}^{\mathsf{mon}}(\mathcal{X}), \quad \mathsf{MHM}_{\mathsf{ord}}^{\mathsf{mon}}(\mathcal{X}),\quad \Dhc^{\mathsf{b}, \mathsf{mon}}(\mathcal{X})
    \]
    to be the full subcategories of $\Dhc^{\mathsf{mon}}(\mathcal{X})$ consisting of objects whose underlying mixed Hodge complexes belong to $\mathsf{MHM}(\mathcal{X}), \mathsf{MHM}_{\mathsf{ord}}(\mathcal{X}), \Dhc^{\mathsf{b}}(\mathcal{X})$ respectively.
    
    For a quasi-compact stack, the category $\mathsf{MHM}^{\mathsf{mon}}(\mathcal{X})$ has a concrete description:
    Namely, an object in $\mathsf{MHM}^{\mathsf{mon}}(\mathcal{X})$ is given by a tuple $(M, N, T_s)$
    of an object $M \in \mathsf{MHM}(\mathcal{X})$, morphisms $N \colon M \to M(-1)$ and $T_s \colon M \to M$, where $N$ is nilpotent and some power of $T_s$ is the identity map,  such that the equality $N \circ T_s = T_s \circ N$ holds.
    A similar description holds true for $ \mathsf{MHM}_{\mathsf{ord}}^{\mathsf{mon}}(\mathcal{X})$.

    There exist natural functors given by equipping mixed Hodge complexes with the trivial monodromy operators:
    \[
        \Dhc(\mathcal{X}) \to \Dhc^{\mathsf{mon}}(\mathcal{X}), \quad \mathsf{MHM}(\mathcal{X}) \hookrightarrow \mathsf{MHM}^{\mathsf{mon}}(\mathcal{X}), \quad \mathsf{MHM}_{\mathsf{ord}}(\mathcal{X}) \hookrightarrow \mathsf{MHM}_{\mathsf{ord}}^{\mathsf{mon}}(\mathcal{X}).
    \]
    It is obvious from the description that the latter two functors are fully faithful.
    A mixed Hodge complex is regarded as a monodromic mixed Hodge complex using this functor.
    Also, we can consider the monodromy-forgetting functors:
    \begin{equation}\label{eq-forgetting}
         \Dhc^{\mathsf{mon}}(\mathcal{X}) \to \Dhc(\mathcal{X}), \quad \mathsf{MHM}^{\mathsf{mon}}(\mathcal{X}) \to \mathsf{MHM}(\mathcal{X}), \quad \mathsf{MHM}_{\mathsf{ord}}^{\mathsf{mon}}(\mathcal{X}) \to \mathsf{MHM}_{\mathsf{ord}}(\mathcal{X}).
    \end{equation}
    The latter two functors are faithful.

    Assume that we are given a morphism $f \colon \mathcal{X} \to \mathcal{Y}$ of algebraic stacks.
    Then the four functors $f^*$, $f_*$, $f^!$ and $f_!$ between the $\infty$-category of mixed Hodge complexes recalled in \Cref{para-mixed-Hodge-modules} upgrade to functors for monodromic mixed Hodge complexes in an obvious manner.
    The definition of the tensor product for monodromic mixed Hodge complexes is defined by modifying the Hodge structure using the monodromy operator; this will be explained in \Cref{para-monodromic-tensor}.

\end{para}

\begin{para}[Monodromic vanishing cycle functor]
        Here we explain the definition of the monodromic vanishing cycle functor associated with a function $f \colon \mathcal{X} \to \mathbb{A}^1$.

        Firstly we define the unipotent nearby cycle functor.
        Consider the following diagram:
\[\begin{tikzcd}
	{\mathcal{X}_{\neq 0}} & {\mathcal{X}} & {\mathcal{X}_0} \\
	{\mathbb{G}_\mathrm{m}} & {\mathbb{A}^1} & {\{  0\}.}
	\arrow["j", hook, from=1-1, to=1-2]
	\arrow["{f_{\neq 0}}"', from=1-1, to=2-1]
	\arrow["\lrcorner"{anchor=center, pos=0.125}, draw=none, from=1-1, to=2-2]
	\arrow["f"', from=1-2, to=2-2]
	\arrow["i"', hook', from=1-3, to=1-2]
	\arrow["\lrcorner"{anchor=center, pos=0.125, rotate=-90}, draw=none, from=1-3, to=2-2]
	\arrow["{f_0}"', from=1-3, to=2-3]
	\arrow["{\bar{j}}", hook, from=2-1, to=2-2]
	\arrow["{\bar{i}}"', hook', from=2-3, to=2-2]
\end{tikzcd}\]
Consider the functor
\[
i^* j_* \colon \Dh(\mathcal{X}_{\neq 0}) \to \Dh(\mathcal{X}_{ 0}).
\]
Note that this functor is lax-monoidal.
This in particular shows that $i^* j_* \mathbb{Q}_{\mathcal{X}_{\neq 0}}$ carries an algebra structure and the above functor upgrades to the following functor
\[
 \Dh(\mathcal{X}_{\neq 0}) \to \mathsf{Mod}_{i^* j_* \mathbb{Q}_{\mathcal{X}_{\neq 0}}} (\Dh(\mathcal{X}_{ 0})).
\]
The Beck--Chevalley map induces a map
\[
\mathrm{H}^*(\mathbb{G}_\mathrm{m}) \otimes \mathbb{Q}_{\mathcal{X}_0} \cong f_0^* \bar{i}^* \bar{j}_* \mathbb{Q}_{\mathbb{G}_\mathrm{m}} \to i^* j_* \mathbb{Q}_{\mathcal{X}_{\neq 0}}.
\]
Note that both sides are equipped with commutative algebra structures.
Since the Beck--Chevalley map is given by the composition of the unit and counit map, and the unit and counit maps of the symmetric monoidal adjunction are morphisms of commutative algebra objects (see \cite[Remark 2.5.5.2]{sag}),
this map upgrades to a morphism of commutative algebra objects.
In particular, we obtain a functor
\[
 (i^* j_*)^{\mathrm{enh}} \colon \Dh(\mathcal{X}_{\neq 0}) \to \mathsf{Mod}_{\mathrm{H}^*(\mathbb{G}_\mathrm{m}) \otimes \mathbb{Q}_{\mathcal{X}_0}} (\Dh(\mathcal{X}_{ 0})) \simeq \mathsf{Mod}_{\mathbb{Q}[t^1]} (\Dh(\mathcal{X}_{ 0}))
\]
where $t^1$ is a formal variable of degree one and weight two.
The (non-monodromic) unipotent nearby cycle functor
\[
\psi^{\mathrm{uni}, \textnormal{non-mon}}_{f} \colon \Dh(\mathcal{X}_{\neq 0}) \to \Dh(\mathcal{X}_{ 0})
\]
is given by
\[
\mathcal{F} \mapsto (i^* j_*)^{\mathrm{enh}}(\mathcal{F}) \otimes_{\mathbb{Q}[t^1]}  \mathbb{Q}_{\mathcal{X}_{ 0}}
\]
where the $\mathbb{Q}[t^1]$-module structure on  $\mathbb{Q}_{\mathcal{X}_{ 0}}$ is given by the augmentation $\mathbb{Q}[t^1] \to \mathbb{Q}$.
Let $\mathbb{Q}[x^0]$ be the free commutative algebra generated by a formal variable $x^0$ of degree zero and weight $-2$.
The isomorphism 
$\mathrm{Ext}^*_{\mathbb{Q}[x^0]}(\mathbb{Q}, \mathbb{Q}) \cong \mathbb{Q}[t^1]$ equips
$\mathbb{Q}$ with the structure of $\mathbb{Q}[x^0]$-$\mathbb{Q}[t^1]$-bimodule.
In particular, the unipotent nearby cycle functor upgrades to the functor
\[
\psi^{\mathrm{uni}}_{f} \colon \Dh(\mathcal{X}_{\neq 0}) \to \mathsf{Mod}_{\mathbb{Q}[x^0]} ( \Dh(\mathcal{X}_{ 0})).
\]
By abuse of notation, the functor $\psi_f \circ j^*$ will be also denoted as $\psi_f$.
It follows that for any object $\mathcal{F} \in \Dh(\mathcal{X}_{\neq 0})$, the complex $\psi_{f}^{\mathrm{uni}}(\mathcal{F})$ is equipped with a monodromy operator 
\[
N \colon \psi^{\mathrm{uni}, \textnormal{non-mon}}_{f}(\mathcal{F}) \to \psi^{\mathrm{uni}, \textnormal{non-mon}}_{f}(\mathcal{F}(-1))
\]
corresponding to the $\mathbb{Q}[x^0]$-module structure. 
It is shown in \cite[Proposition 3.29]{tubach-2024-mixed-hodge-modules-on-stacks} that this construction coincides with Saito's definition of the unipotent nearby cycle functor \cite{saito-1990-mixed-hodge-modules}.
In particular, for $\mathcal{F} \in \mathsf{MHM}(\mathcal{X})$, the complex $\psi^{\mathrm{uni}, \textnormal{non-mon}}_{f}(\mathcal{F})$ is  contained in $\mathsf{MHM}(\mathcal{X}_0)$ and the operator $N$ is locally nilpotent for weight reasons.

Now we explain the definition of the full nearby cycle functor following \cite[Definition 3.31]{tubach-2024-mixed-hodge-modules-on-stacks}.
For a positive integer $n$, consider the following Cartesian diagram:
\[\begin{tikzcd}
	{\mathcal{X}_n} & {\mathcal{X}} \\
	{\mathbb{A}^1} & {\mathbb{A}^1}
	\arrow["{e_n}", from=1-1, to=1-2]
	\arrow["{f_n}"', from=1-1, to=2-1]
	\arrow["\lrcorner"{anchor=center, pos=0.125}, draw=none, from=1-1, to=2-2]
	\arrow["f", from=1-2, to=2-2]
	\arrow["{\pi_n}", from=2-1, to=2-2]
\end{tikzcd}\]
where $\pi_n$ is given by $z \mapsto z^n$. For a monodromic mixed Hodge complex $\cF \in \Dh(\mathcal{X})$, we define the full nearby cycle by
\[
\psi_{f}^{\textnormal{non-mon}}(\mathcal{F}) \coloneqq \colim_n \psi_{f_n}^{\mathrm{uni}, \textnormal{non-mon}}(e_n^* \mathcal{F}).  
\]
The $\mu_n$-action on $\mathcal{X}_n$ defines a $\mu_n$-action on $\psi_{f_n}^{\mathrm{uni}, \textnormal{non-mon}}(e_n^* \mathcal{F})$ commuting with the nilpotent monodromy operator.
It follows from \cite[Proposition 3.33]{tubach-2024-mixed-hodge-modules-on-stacks} that the functor $\psi_{f}^{\textnormal{non-mon}}$ preserves constructible objects.
Therefore we obtain a functor 
\[
    \psi_{f} \colon \Dhc(\mathcal{X}) \to \Dhc^{\mathsf{mon}}(\mathcal{X}_0)
\]
which we call the full nearby cycle functor.
Let $\iota \colon \mathcal{X}_0 \hookrightarrow \mathcal{X}$ be the closed immersion.
Then by construction, there exists a natural transform
\[
 \iota^* \to \psi_{f}.    
\]
We define the vanishing cycle functor 
\[
    \varphi_f \colon \mathsf{MHM}(\mathcal{X})  \to \mathsf{MHM}^{\mathsf{mon}}(\mathcal{X}_0)   
\]
to be the cofibre of the map $\iota^* \to \psi_{f}$.
This is a refinement of the vanishing cycle functor studied e.g. in \cite[\S 8.6]{kashiwara2013sheaves}.
        
        We list the minimal properties of the vanishing cycle functor that we use later.
        \begin{itemize}
            \item If $\mathcal{X}$ is smooth, the support of the complex $\varphi_f(\mathbb{Q}_{\mathcal{X}}[\dim \mathcal{X}])$ is contained in the critical locus $\mathrm{Crit}(f)$.
            
            \item Let $h \colon \mathcal{X} \to \mathcal{Y}$ be a morphism between algebraic stacks and $f \colon \mathcal{Y} \to \mathbb{A}^1$ be a regular function.
                Let $h_0 \colon \mathcal{X}_0 \to \mathcal{Y}_0$ be the restriction of $h$.
                  Then there exist natural transformations
                  \begin{align}
                    \begin{aligned}\label{eq-van-functorial}
                        h_{0, !} \circ \varphi_{f \circ h} & \to \varphi_f \circ h_!, &
                        h_0^* \circ \varphi_f & \to \varphi_{f \circ h} \circ h^*, \\
                        \varphi_f \circ h_* & \to h_{0, *} \circ \varphi_{f \circ h}, &
                        \varphi_{f \circ h} \circ h^! & \to h_0^! \circ \varphi_f. 
                    \end{aligned}
                  \end{align}
                  The maps on the left are invertible if $h$ is proper and representable. The maps on the right are invertible if $h$ is smooth.
            
            \item The vanishing cycle functor commutes with Verdier dual; namely, there exists an equivalence of functors
                  \begin{equation}\label{eq-van-self-dual}
                   \mathbb{D} \circ \varphi_{f} \simeq \varphi_{f} \circ \mathbb{D}.  
                  \end{equation}

            \item Let $\mathcal{X}$ be an algebraic stack with affine diagonal admitting a good moduli space morphism $p \colon \mathcal{X} \to X$.
                  Let $f$ be a function on $X$. Then the natural map
                  \begin{equation}\label{eq-vanishing-good-moduli}
                    \varphi_f \circ p_* \to  p_{0, *} \circ \varphi_{f \circ p}
                  \end{equation}
                  is invertible. This is proved in \cite[Corollary A.2]{Kinjo_Decomp}.

            \item (Dimensional reduction) 
                  Let $E$ be a vector bundle on an algebraic variety $X$ and $\pi \colon E^{\vee} \to X$ be the projection.
                  Take a section $s \in \Gamma(X, E)$, let $Z \subset X$ be the zero locus of $s$ and $s^{\vee} \colon E^{\vee} \to \mathbb{A}^1$ be the corresponding cosection.
                  Then there exists a natural equivalence of functors
                  \begin{equation}\label{eq-local-dimensional-reduction}
                  \mathbb{L}^{\rank E} \otimes  \pi_{!} \varphi_{s^{\vee}} \pi^*  \simeq (Z \hookrightarrow X)_! (Z \hookrightarrow X)^*.  
        \end{equation}
                  In particular, the monodromy operator on the left-hand side is trivial.
                  This is proved in \cite[Theorem A.1]{davison2017critical}.
        \end{itemize}
    \end{para}

    \begin{para}[Geometric description of monodromic mixed Hodge complexes]\label{geo-MHM-para}
        Here we will give an alternative definition of the $\infty$-category of monodromic mixed Hodge complexes following \cite[\S 2]{_Davison_Meinhardt_CoDT} and sketch an argument to show that the two definitions are equivalent.
        We define two full subcategories $\mathcal{B}_{\mathcal{X}}, \mathcal{C}_{\mathcal{X}} \subset \Dhc(\mathcal{X} \times \mathbb{A}^1)$ as follows:
        \begin{itemize}
            \item Objects in $\mathcal{B}_{\mathcal{X}}$ are those $\mathcal{F} \in \Dhc(\mathcal{X} \times \mathbb{A}^1)$ such that for each $x \in \mathcal{X}$ and an integer $n$, 
                the restriction ${}^\mathrm{p}\mathcal{H}^n(\mathcal{F} |_{\{ x \} \times \mathbb{A}^1})$ is constructible with respect to the stratification $\mathbb{A}^1 = (\mathbb{A}^1 \setminus \{ 0 \}) \amalg \{ 0 \}$.
            \item $\mathcal{C}_{\mathcal{X}}$ is the essential image of the pullback functor $\Dhc(\mathcal{X}) \to \Dhc(\mathcal{X} \times \mathbb{A}^1)$. 
        \end{itemize}
      We define the $\infty$-category of geometrically monodromic mixed Hodge complexes by the quotient
      \[
        \Dhc^{\mathsf{geom\textnormal{-}mon}}(\mathcal{X}) \coloneqq \mathcal{B}_{\mathcal{X}} / \mathcal{C}_{\mathcal{X}}.
      \]
      Let $t \colon \mathcal{X} \times \mathbb{A}^1 \to \mathbb{A}^1$ be the projection. Then the monodromic nearby cycle functor $\psi_t$ induces a functor
      \[
        \Dhc^{\mathsf{geom\textnormal{-}mon}}(\mathcal{X}) \to \Dhc^{\mathsf{mon}}(\mathcal{X}).
      \]
     We claim that this is an equivalence of $\infty$-categories. To see this, since this functor is compatible with smooth pullback by construction,
     we may assume that $\mathcal{X}$ is an affine variety. By choosing an embedding into a smooth variety, we may further assume that $\mathcal{X}$ is smooth.
    Then the claim follows from \cite[Theorem 0.2]{saito-2022-monodromic-mixed-Hodge} and the construction of the equivalence therein.

    \end{para}

    \begin{para}[Monodromic tensor product]\label{para-monodromic-tensor}
        We now introduce a monoidal structure on the category $\Dhc^{\mathsf{mon}}(\mathcal{X})$.
        For this, we will use the geometric definition of the $\infty$-category of monodromic mixed Hodge complexes from \Cref{geo-MHM-para}.
        Let $\mathcal{X}$  and $\mathcal{Y}$ be algebraic stacks and take objects $M \in \Dhc^{\mathsf{mon}}(\mathcal{X}) $ and $N \in \Dhc^{\mathsf{mon}}(\mathcal{Y}) $ and denote by $\tilde{M}$ and $\tilde{N}$ the corresponding mixed Hodge complexes on $\mathcal{X} \times \mathbb{A}^1$ and $\mathcal{Y} \times \mathbb{A}^1$ respectively.
        Let $+ \colon \mathbb{A}^1 \times \mathbb{A}^1 \to \mathbb{A}^1$ be the addition map.
        We define the monodromic exterior tensor product of $\tilde{M}$ and $\tilde{N}$  by
        \[
         \tilde{M} \boxtimes^{\mathrm{mon}} \tilde{N} \coloneqq (\id_{\mathcal{X} \times \mathcal{Y}} \times +)_* (\mathrm{pr}_{1, 3}^* \tilde{M} \boxtimes \mathrm{pr}_{2, 4}^* \tilde{N}) \in \Dhc^{\mathsf{geom\textnormal{-}mon}}(\mathcal{X} \times \mathcal{Y}).    
        \]
        We define the monodromic tensor product of $M$ and $N$ to be the one corresponding to $\tilde{M} \boxtimes^{\mathrm{mon}} \tilde{N}$.
        When $\mathcal{X} = \mathcal{Y}$, we set
        \[
        M \otimes^{\mathrm{mon}} N \coloneqq \Delta^*(M \boxtimes^{\mathrm{mon}} N) \in \Dhc^{\mathsf{mon}}(\mathcal{X}).    
        \]
        If there is no possibility of confusion, we simply denote~$\boxtimes^{\mathsf{mon}}$ by~$\boxtimes$ and~$\otimes^{\mathsf{mon}}$ by~$\otimes$.
    \end{para}

    \begin{para}[Thom--Sebastiani theorem]
        Let $\mathcal{X}$ and $\mathcal{Y}$ be algebraic stacks, $f$ and $g$ be regular functions on $\mathcal{X}$ and $\mathcal{Y}$ respectively and $f \boxplus g$ denote the function on $\mathcal{X} \times \mathcal{Y}$ given by $(x, y) \mapsto f(x) + g(y)$.
        For $M \in \Dhc(\mathcal{X})$ and $N \in \Dhc(\mathcal{Y})$, the Thom--Sebastiani theorem provides an isomorphism of monodromic mixed Hodge complexes:
        \begin{equation}\label{eq-local-thom-sebastiani}
         \varphi_{f}(M) \boxtimes^{\mathrm{mon}} \varphi_g(N) \cong \varphi_{f \boxplus g}(M \boxtimes N)|_{\mathcal{X}_0 \times \mathcal{Y}_0}.
        \end{equation}
        This is proved for schemes in Saito's unpublished manuscript \cite{Saito-2010-thom-sebastiani}. See \cite[Appendix A]{brav2012symmetries} for the comparison of Saito's Thom--Sebastiani isomorphism and the Thom--Sebastiani isomorphism at the constructible level given by Massey \cite{masts}.
        One can extend Saito's Thom--Sebastiani isomorphism to stacks by smooth descent.

    \end{para}

    \begin{para}[Half Lefschetz twist]\label{para-half-Lefshetz-twist}
        We will construct the square root of the Lefschetz Hodge structure using the monodromic vanishing cycle functor.
        Consider the function $z^2$ on $\mathbb{A}^1$.
        We set
        \[
            \mathbb{L}^{1/2} \coloneqq \varphi_{z^2}(\mathbb{Q}_{\mathbb{A}^1}) \in \Dhc(\mathrm{pt}). 
        \]
        By using the Thom--Sebastiani theorem \Cref{eq-local-thom-sebastiani} combined with the dimensional reduction isomorphism \Cref{eq-local-dimensional-reduction}, we obtain an isomorphism
        \[
            (\mathbb{L}^{1/2})^{\otimes 2} \cong \varphi_{z^2 \boxplus w^2}(\mathbb{Q}_{\mathbb{A}^2})|_{(0, 0)} = \varphi_{(z + w \sqrt{-1})(z - w \sqrt{-1})}(\mathbb{Q}_{\mathbb{A}^2})|_{(0, 0)} \cong \mathbb{L}. 
        \]
        This isomorphism in particular shows that $\mathbb{L}^{1/2}$ is an invertible object.
        For an integer $k \in \mathbb{Z}$, we set
        \[
        \mathbb{L}^{k/2} \coloneqq (\mathbb{L}^{1/2})^{\otimes k}, \quad \mathbb{Q}(k/2) \coloneqq \mathrm{H}^{-k}(\mathbb{L}^{-k /2}).   
        \]

        Let $\mathcal{X}$ be an equidimensional algebraic stack of dimension $d$ and $j \colon \mathcal{X}_{\mathrm{sm}} \hookrightarrow \mathcal{X}$ be the inclusion from the smooth locus.
        We let $\mathcal{IC}(\mathbb{Q}_{\mathcal{X}_{\mathrm{sm}} }) \in \Dhc(\mathcal{X})$ be the intermediate extension of $\mathbb{Q}_{\mathcal{X}_{\mathrm{sm}}}$.
        We define the intersection complex of $\mathcal{X}$ by 
        \[
          \mathcal{IC}_{\mathcal{X}} \coloneqq \mathbb{L}^{- d / 2} \otimes \mathcal{IC}(\mathbb{Q}_{\mathcal{X}_{\mathrm{sm}} }) \in \mathsf{MMHM}(\mathcal{X}).
        \]
        If $\mathcal{X}=\coprod_{\mathcal{X}'\in\pi_0(\mathcal{X})}\mathcal{X}'$ is a disjoint union of equidimensional algebraic stacks we define $\mathcal{IC}_{\mathcal{X}}\coloneqq \bigoplus_{\mathcal{X}'\in\pi_0(\mathcal{X})} \mathcal{IC}_{\mathcal{X}'}$.
    \end{para}

\section{Donaldson--Thomas mixed Hodge modules}

For a $(-1)$-shifted symplectic stack $\mathcal{X}$ equipped with an orientation, \textcite{brav2012symmetries,ben2015darboux} introduced a certain monodromic mixed Hodge module on $\mathcal{X}$ categorifying the Donaldson--Thomas invariant.
We will briefly recall its construction and prove some properties  which will be used in the later parts of the paper.

\subsection{Orientations}

\begin{para}
    We will discuss orientations for $(-1)$-shifted symplectic stacks.
    These are topological data on a $(-1)$-shifted symplectic stack $\mathcal{X}$ which form an $\mathrm{H}^1(\mathcal{X}, \mu_2)$-torsor (up to a choice of grading) if they exist, and they are necessary for defining the Donaldson--Thomas mixed Hodge modules.
\end{para}

\begin{para}[Determinant functor]
    We will briefly recall the basic properties of the determinant functor of perfect complexes.
    See \textcite{knudsen1976projectivity} for the original reference and \cite[\S 2]{Kinjo_Park_Safronov_CoHA} for a modern treatment.

    Let $\mathcal{X}$ be a derived algebraic stack. 
    Let $\mathrm{Pic}^{\mathrm{gr}}(\mathcal{X})$ be the groupoid of line bundles on $\mathcal{X}$ equipped with locally constant $\mathbb{Z}/ 2 \mathbb{Z}$-gradings.
    We define a symmetric monoidal structure on $\mathrm{Pic}^{\mathrm{gr}}(\mathcal{X})$  as follows:
    \begin{itemize}
        \item For $(\mathcal{L}_1, \alpha_1), (\mathcal{L}_2, \alpha_2) \in \mathrm{Pic}^{\mathrm{gr}}(\mathcal{X})$, we define the monoidal product by
        \[
         (\mathcal{L}_1, \alpha_1) \otimes (\mathcal{L}_2, \alpha_2) \coloneqq (\mathcal{L}_1 \otimes \mathcal{L}_2, \alpha_1 + \alpha_2).
        \]
        \item For $(\mathcal{L}_1, \alpha_1), (\mathcal{L}_2, \alpha_2) \in \mathrm{Pic}^{\mathrm{gr}}(\mathcal{X})$, we define the symmetrizer
        \[
            (\mathcal{L}_1, \alpha_1) \otimes (\mathcal{L}_2, \alpha_2) \cong (\mathcal{L}_2, \alpha_2) \otimes (\mathcal{L}_1, \alpha_1)
        \]
        to be the natural swapping isomorphism of the ungraded line bundles multiplied by  $(-1)^{\alpha_1 \alpha_2}$.
    \end{itemize}
    We warn the reader that the categorical dimension of an object $(\mathcal{L}, \alpha) \in \mathrm{Pic}^{\mathrm{gr}}(\mathcal{X})$, which is defined as the composition
    \[
     \mathcal{O}_{\mathcal{X}} \to (\mathcal{L}, \alpha) \otimes (\mathcal{L}, \alpha) ^{\vee} \cong (\mathcal{L}, \alpha) ^{\vee} \otimes (\mathcal{L}, \alpha)  \to \mathcal{O}_{\mathcal{X}},    
    \]
    is given by $(-1)^{\alpha}$. As a result, particular care must be taken with sign computations when working with graded line bundles.
    If there is no risk of confusion, a graded line bundle $(\mathcal{L}, \alpha)$ will simply be denoted by $\mathcal{L}$.

    We let $\mathsf{Perf}(\mathcal{X})^{\simeq} \subset \mathsf{Perf}(\mathcal{X})$ be the maximal $\infty$-groupoid inside the $\infty$-category of perfect complexes on $\mathcal{X}$.
    We define a symmetric monoidal structure $\oplus$ on $\mathsf{Perf}(\mathcal{X})^{\simeq}$ induced from the Cartesian symmetric monoidal structure on $\mathsf{Perf}(\mathcal{X})$.
    Then the determinant functor is defined as a symmetric monoidal functor
    \[
     \det \colon  (\mathsf{Perf}(\mathcal{X})^{\simeq}, \oplus)  \to (\mathrm{Pic}^{\mathrm{gr}}(\mathcal{X}), \otimes), \quad E \mapsto (\det(E), \rank E).
    \]
    We record the minimal properties of the determinant functor which will be used later in this paper: see \cite[\S 2]{Kinjo_Park_Safronov_CoHA} for details.
    \begin{itemize}
        \item For a fibre sequence $\Delta \colon E_1 \to E_2 \to E_3$ in $\mathsf{Perf}(\mathcal{X})$, there exists an isomorphism
        \begin{equation}\label{eq-det-fibre-seq}
           i(\Delta) \colon \det(E_1) \otimes \det(E_3) \cong \det(E_2).
        \end{equation}
        In particular, there exist natural isomorphisms
        \begin{equation}\label{eq-det-shift}
        \theta_E \colon \det(E[1]) \cong \det(E)^{\vee}. 
        \end{equation}

        \item For $E \in \mathsf{Perf}(\mathcal{X})$, there exists a natural isomorphism
        \begin{equation}\label{eq-det-dual}
            \iota_E\colon \det(E^{\vee}) \cong \det(E)^{\vee}.
        \end{equation}
        Further, the following diagram commutes:
    \begin{equation}\label{eq-det-double-dual}
        \begin{tikzcd}
	{\det(E)} && {\det(E)} \\
	{\det(E^{\vee \vee})} & {\det(E^{\vee})^{\vee}} & {\det(E)^{\vee \vee}.}
	\arrow["{(-1)^{\rank E} \cdot \id}", from=1-1, to=1-3]
	\arrow[from=1-1, to=2-1]
	\arrow[from=1-3, to=2-3]
	\arrow["{\iota_{E^{\vee}}}"', from=2-1, to=2-2]
	\arrow["{\iota_{E}^{\vee}}", from=2-3, to=2-2]
\end{tikzcd}
\end{equation}
    This is proved in \cite[(2.15)]{Kinjo_Park_Safronov_CoHA}.

        \item For a fibre sequence $\Delta \colon E_1 \to E_2 \to E_3$ in $\mathsf{Perf}(\mathcal{X})$, the following diagram commutes:
        \begin{equation}\label{eq-det-shift-fibre-seq}
            \begin{tikzcd}
            {\det(E_1[1]) \otimes \det(E_3[1])} & {\det(E_2[1])} \\
            {\det(E_1)^{\vee} \otimes \det(E_3)^{\vee}} \\
            {(\det(E_3) \otimes \det (E_1))^{\vee}} & {\det(E_{2})^{\vee}.}
            \arrow["{i(\Delta[1])}", from=1-1, to=1-2]
            \arrow["{\theta_{E_1} \otimes \theta_{E_3}}"', from=1-1, to=2-1]
            \arrow["{\theta_{E_2}}", from=1-2, to=3-2]
            \arrow[from=2-1, to=3-1]
            \arrow["{i(\Delta)^{\vee}}", from=3-2, to=3-1]
        \end{tikzcd}
    \end{equation}
    This is proved in \cite[Corollary 2.5]{Kinjo_Park_Safronov_CoHA}. Also, the following diagram commutes:
\begin{equation}\label{eq-det-dual-fibreseq}
    \begin{tikzcd}
	{\det(E_3^{\vee}) \otimes \det(E_1^{\vee})} & {\det(E_2^{\vee})} \\
	{\det(E_3)^{\vee} \otimes \det(E_1)^{\vee}} \\
	{(\det(E_1) \otimes \det(E_3))^{\vee}} & {\det(E_2)^{\vee}.}
	\arrow["{i(\Delta^\vee{})}", from=1-1, to=1-2]
	\arrow["{\iota_{E_3} \otimes \iota_{E_1}}"', from=1-1, to=2-1]
	\arrow["{\iota_{E_2}}", from=1-2, to=3-2]
	\arrow[from=2-1, to=3-1]
	\arrow["{i(\Delta)^{\vee}}", from=3-2, to=3-1]
\end{tikzcd}
\end{equation}
This is proved in \cite[(2.16)]{Kinjo_Park_Safronov_CoHA}.

\item For $E \in \mathsf{Perf}(\mathcal{X})$, the following diagram commutes:
\begin{equation}\label{eq-det-dual-shift}
    \begin{tikzcd}
	{\det(E^{\vee}[-1])} & {\det(E[1]^{\vee})} & {\det(E[1])^{\vee}} \\
	{\det(E^{\vee})^{\vee}} && {\det(E)^{\vee\vee}.}
	\arrow[from=1-1, to=1-2]
	\arrow["{(\theta_{E^{\vee}[-1]})^{\vee}}"', from=1-1, to=2-1]
	\arrow["{\iota_{E[1]}}", from=1-2, to=1-3]
	\arrow["{(\theta_{E}^{\vee})^{-1}}", from=1-3, to=2-3]
	\arrow["{(\iota_{E}^{\vee})^{-1}}"', from=2-1, to=2-3]
\end{tikzcd}\end{equation}
Here, for the left vertical map, we identify $\det(E^{\vee}[-1])$ with its double dual.
This is proved in \cite[(2.18)]{Kinjo_Park_Safronov_CoHA}.
See \cite[Remark 0.4]{symplecticsign} for the choice of the equivalence $E^{\vee}[-1] \simeq E[1]^{\vee}$.
\end{itemize}
    
\end{para}

\begin{para}[Orientations for \texorpdfstring{$(-1)$}{TEXT}-shifted symplectic stacks]\label{para-orientation-def}
    Let $\mathcal{X}$ be a derived algebraic stack locally finitely presented over $\mathbb{C}$. We define the canonical bundle of $\mathcal{X}$ by
    \[
     K_{\mathcal{X}} \coloneqq \det(\mathbb{L}_{\mathcal{X}}) \in \mathrm{Pic}^{\mathrm{gr}} (\mathcal{X}).  
    \]

    Now assume that $\mathcal{X}$ is a $(-1)$-shifted symplectic stack. Then an \emph{orientation} for $\mathcal{X}$ is a pair of a graded line bundle $\mathcal{L} \in \mathrm{Pic}^{\mathrm{gr}} (\mathcal{X})$ with an isomorphism
    \[
    o \colon \mathcal{L}^{\otimes 2} \cong K_{\mathcal{X}}.    
    \]
    If there is no risk of confusion, we denote a pair $(\mathcal{L}, o)$ simply by $o$.
\end{para}

\begin{para}[Standard orientation for critical loci]\label{para-standard-ori}
    Let $\mathcal{Y}$ be  derived algebraic stack locally finitely presented over $\mathbb{C}$ and $f \colon \mathcal{Y} \to \mathbb{A}^1$ be a regular function.
    Set $\mathcal{X} = \mathrm{Crit}(f)$ and equip it with the standard $(-1)$-shifted symplectic structure $\omega_{\mathcal{X}}$.
    Let $\iota \colon \mathcal{X} \to \mathcal{Y}$ be the canonical map. Then there exists a fibre sequence
    \[
    \Delta \colon \iota^* \mathbb{L}_{\mathcal{Y}}  \to  \mathbb{L}_{\mathcal{X }} \to \iota^* \mathbb{T}_{\mathcal{Y}}[1]
    \]
    where the former map is the canonical one and the latter map is given by the composition
    \[
        \mathbb{L}_{\mathcal{X }} \xrightarrow[\simeq]{(\cdot \omega_{\mathcal{X}}[1])^{-1}} \mathbb{T}_{\mathcal{X}}[1] \to \iota^* \mathbb{T}_{\mathcal{Y}}[1].
    \]
    By using \Cref{eq-det-fibre-seq}, \Cref{eq-det-shift} and \Cref{eq-det-dual}, we obtain an isomorphism
    \[
   \bar{o}^{\mathrm{sta}} \colon \iota^* K_{\mathcal{Y}}^{\otimes 2} \cong K_{\mathcal{X}}. 
    \]
    The \emph{standard orientation} $o^{\mathrm{sta}}_{\mathcal{X}}$ for $\mathcal{X}$ is defined as the pair $(\iota^* K_{\mathcal{Y}}, (-1)^{\vdim \mathcal{Y}} \cdot \bar{o}^{\mathrm{sta}}_{\mathcal{X}})$. Note that the standard orientation is called the canonical orientation in \cite{Kinjo_Park_Safronov_CoHA}.

\end{para}

\begin{para}\label{para-any-orientation-is-locally-standard}
    Let $\mathcal{X}$ be a $(-1)$-shifted symplectic stack which has affine diagonal and admits a good moduli space $\mathcal{X} \to X$.
    Further assume that $\mathcal{X}$ is equipped with an orientation $o \colon \mathcal{L}^{\otimes 2} \cong K_{\mathcal{X}}$.
    We claim that $o$ is equivalent to the standard orientation for some critical chart \'etale locally over $X$.
    
    To see this, using \Cref{thm-Darboux}, we may assume that $\mathcal{X} = \mathrm{Crit}(f \colon V / G \to \mathbb{A}^1)$ for a reductive group $G$, a smooth affine scheme $V$ acted on by $G$ with a fixed point $v \in V$ and a $G$-invariant function $f$ on $V$.
    Using the orientation $o$ and the standard orientation for $\mathcal{X}$, we see that the line bundle
    \[
     \mathcal{L} |_{\{ v \} / G} \otimes   K_{V/G} |_{\{ v \} / G} ^{\vee}
    \]
    squares to a trivial line bundle on $\mathrm{B} G$. Therefore it corresponds to some sign representation $\rho \colon G \to \mu_2$.
    Consider $\mathbb{A}^1$ equipped with a $G$-representation induced by $\rho$, and let $q$ be a non-degenerate $G$-invariant quadratic function on $\mathbb{A}^1$.
    Then by replacing $(V, f)$ with $(V^+, f^+) \coloneqq (V \times \mathbb{A}^1, f \boxplus q)$, we may assume that $\mathcal{L} |_{\{ v \} / G}$ is isomorphic to $K_{V / G} |_{\{ v \} / G}$ as an ungraded line bundle.
    Further, possibly replacing $(V^+, f^+)$ with $(V^+ \times \mathbb{A}^1, f^+ \boxplus x^2)$ with trivial $G$-action on the $\mathbb{A}^1$-factor, we may assume that the isomorphism preserves the grading.
    By using \cite[Theorem 10.3]{_Alper_GoodmodulispacesforArtinstacks} and \cite[Lemma 6.8]{bellamy2016symplectic}, we see that $\mathcal{L}$ is isomorphic to $K_{V / G}$ around $v$ \'etale locally over the good moduli space.
    Since any function on a good moduli stack admits a square root \'etale locally over the good moduli space, we conclude that there is an isomorphism of orientations over an \'etale cover.

\end{para}

\begin{para}[Localized orientation]\label{para-localized-orientation}
    Let $\mathcal{X}$ be a $(-1)$-shifted symplectic stack and $(F, \alpha) \in \mathsf{Face}^{\mathrm{nd}}(\mathcal{X})$ be a non-degenerate face.
    We will show that, as long as $\mathcal{X}$ is numerically symmetric, an orientation  $o \colon \mathcal{L}^{\otimes 2} \cong K_{\mathcal{X}}$ induces an orientation on $\mathcal{X}_{\alpha}$, which will be denoted by $\alpha^{\star} o$.
    Further, we will show that the orientation $\alpha^{\star} o$ is $\mathrm{Aut}(\alpha)$-equivariant.
\end{para}

\begin{para}\label{para-localized-orientation-continued}
    Let $\mathcal{X}$ be a $(-1)$-shifted symplectic stack with finite cotangent weights.
    Take a non-degenerate face $(F, \alpha) \in \mathsf{Face}^{\mathrm{nd}}(\mathcal{X})$ and a cone $\sigma \subset F$ which is a chamber in the cotangent arrangement.
    We let 
    \[
        (\mathrm{tot}_{\alpha}^* \mathbb{L}_{\mathcal{X}})^{\sigma, +}  \subset \mathrm{tot}_{\alpha}^* \mathbb{L}_{\mathcal{X}}, \quad (\mathrm{tot}_{\alpha}^* \mathbb{L}_{\mathcal{X}})^{\sigma, -}  \subset \mathrm{tot}_{\alpha}^* \mathbb{L}_{\mathcal{X}},
    \]
    denote the direct summands which have positive and negative weights with respect to the cone $\sigma$ respectively.
    The $(-1)$-shifted symplectic structure induces an equivalence
    \[
       \Phi_{\sigma} \colon (\mathrm{tot}_{\alpha}^* \mathbb{L}_{\mathcal{X}})^{\sigma, +, \vee}[1] \simeq  (\mathrm{tot}_{\alpha}^* \mathbb{L}_{\mathcal{X}})^{\sigma, -}.
    \]
    By taking the determinant, we obtain an isomorphism
    \[
    \phi_{\sigma} \colon  \det \left((\mathrm{tot}_{\alpha}^* \mathbb{L}_{\mathcal{X}})^{\sigma, +}  \right) \cong  \det \left((\mathrm{tot}_{\alpha}^* \mathbb{L}_{\mathcal{X}})^{\sigma, -}  \right).
    \]
    The following technical lemma will be important later:
    
\end{para}

\begin{lemma}\label{lem-phi-indep-segment}
    Assume that $\mathcal{X}$ is numerically symmetric.
    Let $- \sigma$ be the cone opposite to $\sigma$. Then under the identification $(\mathrm{tot}_{\alpha}^* \mathbb{L}_{\mathcal{X}})^{(-\sigma), +} \simeq (\mathrm{tot}_{\alpha}^* \mathbb{L}_{\mathcal{X}})^{\sigma, -}$ and $(\mathrm{tot}_{\alpha}^* \mathbb{L}_{\mathcal{X}})^{(-\sigma), -} \simeq (\mathrm{tot}_{\alpha}^* \mathbb{L}_{\mathcal{X}})^{\sigma, +}$,
    we have $\phi_{- \sigma} = \phi_{\sigma}^{-1}$.
\end{lemma}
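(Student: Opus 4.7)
The plan is to trace both $\phi_\sigma$ and $\phi_{-\sigma}$ back to complementary half-space pieces of a single $F^\vee$-graded self-dual equivalence induced by the symplectic form, and to show that this self-duality is exactly what makes the two determinants inverse. Write $L\coloneqq\mathrm{tot}_\alpha^{*}\mathbb{L}_\mathcal{X}$ on $\mathcal{X}_\alpha$, and let $\Psi\coloneqq\Phi[1]\colon L^\vee[1]\xrightarrow{\sim} L$ be the equivalence induced by the $(-1)$-shifted symplectic form; it is $F^\vee$-graded. The sign convention from the shifted symplectic section, specialised to $n=-1$, yields $\Psi\simeq\Psi^\vee[1]$ because $(-1)^{n(n-1)/2+1}=1$.

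Decompose $\Psi=\bigoplus_\lambda\Psi_\lambda$ with $\Psi_\lambda\colon L_{-\lambda}^\vee[1]\xrightarrow{\sim} L_\lambda$. By construction, $\Phi_\sigma$ is obtained by summing those $\Psi_\lambda$ with $\lambda$ negative on the interior of $\sigma$, while $\Phi_{-\sigma}$ sums those with $\lambda$ positive there. The identity $\Psi\simeq\Psi^\vee[1]$ in weight~$\lambda$ reads $\Psi_\lambda\simeq\Psi_{-\lambda}^\vee[1]$; summing over the complementary half-space therefore gives
\[
\Phi_{-\sigma}\simeq\Phi_\sigma^\vee[1]
\]
as equivalences $(L^{\sigma,-})^\vee[1]\xrightarrow{\sim}L^{\sigma,+}$, under the identifications $(\mathrm{tot}_\alpha^*\mathbb{L}_\mathcal{X})^{(-\sigma),+}\simeq(\mathrm{tot}_\alpha^*\mathbb{L}_\mathcal{X})^{\sigma,-}$ and $(\mathrm{tot}_\alpha^*\mathbb{L}_\mathcal{X})^{(-\sigma),-}\simeq(\mathrm{tot}_\alpha^*\mathbb{L}_\mathcal{X})^{\sigma,+}$ in the statement.

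Next, take determinants. Applying \eqref{eq-det-shift} and \eqref{eq-det-dual}, and tracking compatibility through the commutative diagrams \eqref{eq-det-shift-fibre-seq}, \eqref{eq-det-dual-fibreseq}, \eqref{eq-det-double-dual}, and \eqref{eq-det-dual-shift}, one rewrites $\det(\Phi_\sigma^\vee[1])$ as a composition differing from $\det(\Phi_\sigma)^{-1}$ only by a sign arising from the double-dual correction. A careful accounting shows that this sign equals $(-1)^{r_++r_-}$, where $r_\pm\coloneqq\rank L^{\sigma,\pm}$. Under the numerical symmetry hypothesis, identity \eqref{eq-numerical-symmetric-implies-zero-dimension} gives $\vdim\mathcal{X}_\sigma^+=\vdim\mathcal{X}_{-\sigma}^+=0$; combined with the fact that $\mathcal{X}_\alpha$ is itself $(-1)$-shifted symplectic (so $\vdim\mathcal{X}_\alpha=0$ and $\rank (\mathrm{tot}_\alpha^*\mathbb{T}_\mathcal{X})^{\sigma,+}=0$), one deduces $r_+=r_-$, hence $(-1)^{r_++r_-}=1$. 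Therefore $\phi_{-\sigma}=\phi_\sigma^{-1}$.

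The main obstacle will be the sign bookkeeping in the determinant step: one must combine \eqref{eq-det-shift}, \eqref{eq-det-dual}, \eqref{eq-det-double-dual}, and \eqref{eq-det-dual-shift} consistently to isolate exactly the factor $(-1)^{r_++r_-}$ and verify that the numerical-symmetry hypothesis is precisely what kills it, leaving no residual sign.
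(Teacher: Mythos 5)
Your argument follows the same route as the paper's proof: the key input is the self-duality $\Phi_{-\sigma}\simeq\Phi_\sigma^{\vee}[1]$ (which the paper states as $\Phi_\sigma^{\vee}=\Phi_{-\sigma}[-1]$), followed by a determinant diagram chase through \Cref{eq-det-shift}, \Cref{eq-det-dual}, \Cref{eq-det-dual-shift} and \Cref{eq-det-double-dual}, with the numerical symmetry hypothesis killing the residual double-dual sign. One caveat on the sign bookkeeping you deferred: the residual sign the paper isolates is $(-1)^{\rank (\mathrm{tot}_\alpha^*\mathbb{L}_{\mathcal{X}})^{\sigma,+}}$, coming from a single application of \Cref{eq-det-double-dual} to $V_\sigma=(\mathrm{tot}_\alpha^*\mathbb{L}_{\mathcal{X}})^{\sigma,+}$, not $(-1)^{r_++r_-}$. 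Since $r_-=-r_+$ holds unconditionally (the symplectic form gives $(\mathrm{tot}_\alpha^*\mathbb{L}_{\mathcal{X}})^{\sigma,+}\simeq (\mathrm{tot}_\alpha^*\mathbb{L}_{\mathcal{X}})^{\sigma,-,\vee}[1]$), your claimed sign would be identically $1$ and would make the numerical symmetry hypothesis superfluous for this lemma --- a signal that the accounting was not actually carried out. Numerical symmetry is genuinely needed here: it is exactly what forces $r_+=0$ via \Cref{eq-numerical-symmetric-implies-zero-dimension}, which is how the paper's diagram (E) closes.
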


\begin{proof}
    First, by generalities on $(-1)$-shifted symplectic structures, we have $\Phi_{\sigma}^{\vee} = \Phi_{- \sigma}[-1]$: see e.g., \cite[Lemma 0.2, Remark 0.4]{symplecticsign}.
    To simplify the notation, we will write $V_{\sigma} \coloneqq (\mathrm{tot}_{\alpha}^* \mathbb{L}_{\mathcal{X}})^{\sigma, +} $ and $V_{- \sigma} \coloneqq (\mathrm{tot}_{\alpha}^* \mathbb{L}_{\mathcal{X}})^{(-\sigma), +} $.
    Now consider the following diagram:
\[\begin{tikzcd}
	{\det(V_{\sigma})} & {\det(V_{- \sigma}^{\vee}[1])} & {\det(V_{ \sigma}^{\vee}[1]^{\vee}[1])} & {\det(V_{ \sigma}^{\vee\vee})} & {\det(V_{\sigma})} \\
	& {\det(V_{-\sigma})} & {\det(V_{\sigma}^{\vee}[1])} & {\det(V_{ \sigma})^{\vee\vee}} & {\det(V_{\sigma}).}
	\arrow["{\Phi_{\sigma}}"', from=1-1, to=1-2]
	\arrow[""{name=0, anchor=center, inner sep=0}, "{\id }", curve={height=-30pt}, from=1-1, to=1-5]
	\arrow[""{name=1, anchor=center, inner sep=0}, "{{\phi_{\sigma}}}"', curve={height=6pt}, from=1-1, to=2-2]
	\arrow[""{name=2, anchor=center, inner sep=0}, "{\Phi_{-\sigma}}"', from=1-2, to=1-3]
    \arrow["\substack{ \text{\Cref{eq-det-shift}} \\ \text{\Cref{eq-det-dual}} }", from=1-2, to=2-2]
	\arrow[""{name=3, anchor=center, inner sep=0}, from=1-3, to=1-4]
    \arrow["\substack{ \text{\Cref{eq-det-shift}} \\ \text{\Cref{eq-det-dual}} }", from=1-3, to=2-3]
	\arrow[from=1-4, to=2-4]
	\arrow[""{name=4, anchor=center, inner sep=0}, from=1-5, to=1-4]
	\arrow[Rightarrow, no head, from=1-5, to=2-5]
	\arrow[""{name=5, anchor=center, inner sep=0}, "{\Phi_{- \sigma}}"', from=2-2, to=2-3]
	\arrow[""{name=6, anchor=center, inner sep=0}, "{{\phi_{- \sigma}}}"', curve={height=30pt}, from=2-2, to=2-5]
    \arrow[""{name=7, anchor=center, inner sep=0}, "\substack{ \text{\Cref{eq-det-shift}} \\ \text{\Cref{eq-det-dual}} }"', from=2-3, to=2-4]
	\arrow[""{name=8, anchor=center, inner sep=0}, from=2-5, to=2-4]
	\arrow["{{(A)}}"{description}, draw=none, from=0, to=1-3]
	\arrow["{{(B)}}"{description}, draw=none, from=1-2, to=1]
	\arrow["{{(C)}}"{description}, shift left=3, draw=none, from=2, to=5]
	\arrow["{{(D)}}"{description}, draw=none, from=3, to=7]
	\arrow["{{(E)}}"{description}, draw=none, from=4, to=8]
	\arrow["{{(F)}}"{description, pos=0.2}, shift right=2, draw=none, from=2-3, to=6]
\end{tikzcd}\]
The diagram $(A)$ commutes by the identity $\Phi_{\sigma}^{\vee} = \Phi_{- \sigma}[-1]$.
The diagrams $(B)$ and $(F)$ commute by the definition of $\phi_{\sigma}$ and $\phi_{- \sigma}$. The diagram $(C)$ commutes by naturality.
The diagram $(D)$ commutes by using \Cref{eq-det-dual-shift}. The diagram $(E)$ commutes by \Cref{eq-det-double-dual} and the equality ${\rank V_{\sigma}} = 0$ which follows from \Cref{eq-numerical-symmetric-implies-zero-dimension}. Therefore we obtain the desired claim.
\end{proof}

\begin{para}\label{para-localized-orientation-definition}
    We adopt the notation from \Cref{para-localized-orientation-continued}. Assume that we are given an orientation $o \colon \mathcal{L}^{\otimes 2} \cong K_{\mathcal{X}}$.
    Define a localized orientation $\sigma^{\star} o$ for $\mathcal{X}_{\alpha}$ by the composition
    \begin{align*}
       &\left(\mathrm{tot}_{\alpha}^* \mathcal{L} \otimes \det \left((\mathrm{tot}_{\alpha}^* \mathbb{L}_{\mathcal{X}})^{\sigma, +}  \right)^{\vee} \right)^{\otimes 2}  \\
        &\xrightarrow[]{\mathrm{tot}_{\alpha}^*o \otimes \id} \mathrm{tot}_{\alpha}^* K_{\mathcal{X}} \otimes  \det \left((\mathrm{tot}_{\alpha}^* \mathbb{L}_{\mathcal{X}})^{\sigma, +}  \right)^{\vee} \otimes \det \left((\mathrm{tot}_{\alpha}^* \mathbb{L}_{\mathcal{X}})^{\sigma, +}  \right)^{\vee} \\
        &\xrightarrow[]{\id \otimes (\phi_{\sigma}^{\vee})^{-1} \otimes \id} \mathrm{tot}_{\alpha}^* K_{\mathcal{X}} \otimes \det\left((\mathrm{tot}_{\alpha}^* \mathbb{L}_{\mathcal{X}})^{\sigma, -}  \right)^{\vee} \otimes \det \left((\mathrm{tot}_{\alpha}^* \mathbb{L}_{\mathcal{X}})^{\sigma, +}  \right)^{\vee} \\
        & \cong K_{\mathcal{X}_{\alpha}}
    \end{align*}
    where the final isomorphism is constructed using the decomposition
     \[
        \mathrm{tot}_{\alpha}^* \mathbb{L}_{\mathcal{X}} = \mathbb{L}_{\mathcal{X_{\alpha}}} \oplus (\mathrm{tot}_{\alpha}^* \mathbb{L}_{\mathcal{X}})^{\sigma, +} \oplus (\mathrm{tot}_{\alpha}^* \mathbb{L}_{\mathcal{X}})^{\sigma, -}.
    \]
    We now claim that the localized orientation does not depend on the choice of a chamber as long as $\mathcal{X}$ is numerically symmetric:
\end{para}

\begin{lemma}\label{lem-indep-localized-ori-on-segment}
    Let $\mathcal{X}$ be a numerically symmetric $(-1)$-shifted symplectic stack equipped with an orientation $o \colon \mathcal{L}^{\otimes 2} \cong K_{\mathcal{X}}$.
    Let $(F, \alpha) \in \mathsf{Face}^{\mathrm{nd}}(\mathcal{X})$ be a non-degenerate face. Then for cones $\sigma, \sigma' \subset F$ which are chambers in the cotangent arrangement, there exists a natural isomorphism
    \begin{equation}\label{eq-localized-ori-indep-on-segment}
        a_{\sigma, \sigma'} \colon \sigma^{\star} o \cong {\sigma'}^{\star} o
    \end{equation}
     satisfying the following properties:
    \begin{enumerate}
        \item The identity $a_{\sigma, \sigma} = \id$ holds for any $\sigma \subset F$.
        \item The identity $a_{\sigma, \sigma'} = a_{\sigma', \sigma}^{-1}$ holds for any $\sigma, \sigma' \subset F$.
        \item The identity $a_{\sigma', \sigma''} \circ a_{\sigma, \sigma'} = a_{\sigma, \sigma''}$ holds for any $\sigma, \sigma', \sigma'' \subset F$. \label{item-trans-local-ori}
    \end{enumerate}
\end{lemma}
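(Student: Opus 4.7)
The plan is to construct $a_{\sigma, \sigma'}$ directly from a restricted version of the symplectic pairing, and then to verify the three properties. Fix two chambers $\sigma, \sigma' \subset F$ of the cotangent arrangement. Decomposing the cotangent complex at~$\alpha$ according to the four possible combinations of signs of the weights with respect to~$\sigma$ and~$\sigma'$, write
\[
\mathrm{tot}_{\alpha}^* \mathbb{L}_{\mathcal{X}} = \mathbb{L}_{\mathcal{X}_\alpha} \oplus V_{++} \oplus V_{+-} \oplus V_{-+} \oplus V_{--} ,
\]
so that $(\mathrm{tot}_\alpha^* \mathbb{L}_\mathcal{X})^{\sigma, +} = V_{++} \oplus V_{+-}$ and $(\mathrm{tot}_\alpha^* \mathbb{L}_\mathcal{X})^{\sigma', +} = V_{++} \oplus V_{-+}$. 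Since the $(-1)$-shifted symplectic form respects the weight grading, it restricts to a perfect pairing $V_{+-}^\vee[1] \simeq V_{-+}$. Applying the construction of~$\phi$ in \Cref{para-localized-orientation-continued} to this summand produces an isomorphism $\det(V_{+-}) \cong \det(V_{-+})$; tensoring with the identity on $\det(V_{++})$ gives an isomorphism $\det(V_\sigma) \cong \det(V_{\sigma'})$, which unfolds to the desired isomorphism $a_{\sigma, \sigma'}$ of orientations by the definitions in \Cref{para-localized-orientation-definition}.

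Properties~(i) and~(ii) should then follow without difficulty. Property~(i) holds because $V_{+-} = V_{-+} = 0$ when $\sigma = \sigma'$, so the restricted pairing is trivial. For~(ii), the summand $V_{+-} \oplus V_{-+}$ is itself a numerically symmetric $(-1)$-shifted self-dual complex (the rank on either side is forced to vanish by the identification $V_{+-}^\vee[1] \simeq V_{-+}$), and the argument of \Cref{lem-phi-indep-segment} applied to it yields the required relation after the obvious relabeling.

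The main work lies in the cocycle identity~(iii). Given three chambers $\sigma, \sigma', \sigma''$, refine the decomposition to eight pieces $V_{\epsilon_1 \epsilon_2 \epsilon_3}$ indexed by the signs of the weights relative to the three chambers. Each of $a_{\sigma, \sigma'}$, $a_{\sigma', \sigma''}$, $a_{\sigma, \sigma''}$ decomposes accordingly, a piece contributing non-trivially to a given~$a$ exactly when the corresponding pair of signs disagrees. Pieces with $\epsilon_1 = \epsilon_3$ contribute trivially to $a_{\sigma, \sigma''}$; when additionally $\epsilon_2 \ne \epsilon_1$, the contribution to $a_{\sigma', \sigma''} \circ a_{\sigma, \sigma'}$ is a restricted pairing followed by its inverse, which is the identity by~\Cref{lem-phi-indep-segment}. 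The remaining sign patterns are expected to match tautologically. The main obstacle will be tracking the Koszul-style signs from the $\mathbb{Z}/2\mathbb{Z}$-grading on $\mathrm{Pic}^{\mathrm{gr}}$ and from the determinant compatibilities \Cref{eq-det-fibre-seq,eq-det-shift,eq-det-dual,eq-det-double-dual,eq-det-shift-fibre-seq,eq-det-dual-fibreseq,eq-det-dual-shift}; the numerical symmetry hypothesis forces every relevant $V_{\epsilon_1 \epsilon_2 \epsilon_3}$ to have rank zero, which should ensure these signs cancel.
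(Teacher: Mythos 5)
Your construction of $a_{\sigma,\sigma'}$ coincides with the paper's: restrict the symplectic pairing to the mixed-sign summands $V_{+-}$ and $V_{-+}$, take determinants, and tensor with the identity on $\det(V_{++})$. Your treatment of properties (i)--(iii) also follows the paper's route; the paper is itself terse on (iii), deferring to the argument of \Cref{lem-phi-indep-segment}, and your eight-piece refinement is a reasonable way to carry that out.

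The gap is in the sentence ``which unfolds to the desired isomorphism $a_{\sigma,\sigma'}$ of orientations by the definitions''. An orientation $\sigma^{\star}o$ is not merely the graded line bundle $\mathrm{tot}_{\alpha}^{*}\mathcal{L}\otimes\det\bigl((\mathrm{tot}_{\alpha}^{*}\mathbb{L}_{\mathcal{X}})^{\sigma,+}\bigr)^{\vee}$ but that line bundle together with a specified isomorphism of its square with $K_{\mathcal{X}_{\alpha}}$, assembled from $\mathrm{tot}_{\alpha}^{*}o$ and $\phi_{\sigma}$. To conclude that $a_{\sigma,\sigma'}$ is an isomorphism of \emph{orientations} you must check that $a_{\sigma,\sigma'}^{\otimes 2}$ intertwines the two squaring isomorphisms, and this is where essentially all of the work in the paper's proof lies: after unwinding, the verification reduces to comparing $\phi_{\sigma,\sigma'}^{+,-}\otimes\phi_{\sigma,\sigma'}^{+,-}$ with the composite of $\mathrm{id}\otimes\phi_{\sigma,\sigma'}^{+,-}$, the symmetrizer of $\mathrm{Pic}^{\mathrm{gr}}$, and $\mathrm{id}\otimes\phi_{\sigma',\sigma}^{+,-}$, and then, via the identity $\phi_{\sigma',\sigma}^{+,-}=(\phi_{\sigma,\sigma'}^{+,-})^{-1}$, to the claim that the swap on $\det(V_{\sigma,\sigma'}^{+,-})^{\otimes 2}$ is the identity. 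That last claim is exactly where numerical symmetry enters, since the symmetrizer in $\mathrm{Pic}^{\mathrm{gr}}$ carries the sign $(-1)^{\alpha_1\alpha_2}$ and one needs $\rank V_{\sigma,\sigma'}^{+,-}=0$. You do make the rank-zero observation, but only at the end and only in connection with (iii); the place it is genuinely needed is already in the construction of $a_{\sigma,\sigma'}$ itself. Without this check the lemma is not established: for a summand of odd rank the same swap would contribute a sign $-1$ and the candidate map would fail to be a morphism of orientations.
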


\begin{proof}
    Let $\sigma, \sigma' \subset F$ be chambers in the cotangent arrangement.
    To simplify the notation, we set 
    \[
        V_{\sigma}^{+} \coloneqq (\mathrm{tot}_{\alpha}^* \mathbb{L}_{\mathcal{X}})^{\sigma, +} , \quad V_{\sigma}^{-} \coloneqq (\mathrm{tot}_{\alpha}^* \mathbb{L}_{\mathcal{X}})^{\sigma, -}
    \]
    and similarly for $\sigma'$.
    Define a direct summand
    \begin{equation}\label{eq-notation_V_sigma_sigma'}
      V_{\sigma, \sigma'}^{+, +} \coloneqq  (\mathrm{tot}_{\alpha}^* \mathbb{L}_{\mathcal{X}})^{(\sigma, \sigma'), +, +} \subset    \mathrm{tot}_{\alpha}^* \mathbb{L}_{\mathcal{X}}
    \end{equation}
    which has positive weights with respect to $\sigma$ and $\sigma'$, and other direct summands corresponding to the pair of signs $(+, -)$, $(-, +)$ and $(-, -)$ in a similar manner.
    By definition, we have natural isomorphisms
    \begin{align*}
        V_{\sigma}^+ \cong V_{\sigma, \sigma'}^{+, +} \oplus V_{\sigma, \sigma'}^{+, -}, \\
        V_{\sigma'}^{+} \cong V_{\sigma, \sigma'}^{+, +} \oplus V_{\sigma, \sigma'}^{-, +}.
    \end{align*}
    The $(-1)$-shifted symplectic structure on $\mathcal{X}$ induces a natural equivalence
    \[
       \Phi_{\sigma, \sigma'}^{+, -} \colon  V_{\sigma, \sigma'}^{+, -, \vee} [1] \simeq    V_{\sigma, \sigma'}^{-, +}
    \]
    which induces an isomorphism
    \begin{equation}\label{eq-phi_sigma_sigma'_def}
    \phi_{\sigma, \sigma'}^{+, -} \colon \det \left(  V_{\sigma, \sigma'}^{+, -} \right)  \cong \det\left(  V_{\sigma, \sigma'}^{-, +} \right)
    \end{equation}
    hence an isomorphism
    \begin{equation}\label{eq-a_sigma_sigma'}
     a_{\sigma, \sigma'} \colon  \det ( V_{\sigma}^+ )  \cong \det ( V_{\sigma'}^+).
    \end{equation}
    We claim that it induces an isomorphism $\sigma^{\star} o \cong {\sigma'}^{\star} o$.
    To prove this, it is enough to show that the middle rectangle of the following diagram commutes:
\[\begin{tikzcd}
	\begin{array}{c} \substack{\displaystyle\det(V_{\sigma, \sigma'}^{+, +}) \otimes \det(V_{\sigma, \sigma'}^{+, -})  \\ \displaystyle \otimes \det(V_{\sigma, \sigma'}^{+, +}) \otimes \det(V_{\sigma, \sigma'}^{+, -}) } \end{array} && \begin{array}{c} \substack{\displaystyle\det(V_{\sigma, \sigma'}^{+, +}) \otimes \det(V_{\sigma, \sigma'}^{-, +})  \\ \displaystyle \otimes \det(V_{\sigma, \sigma'}^{+, +}) \otimes \det(V_{\sigma, \sigma'}^{-, +}) } \end{array} \\
	{\det(V_{\sigma}^+)^{\otimes 2}} && {\det(V_{\sigma'}^+)^{\otimes 2}} \\
	{\det(V_{\sigma}^+) \otimes \det(V_{\sigma}^-)} & {\mathrm{tot}_{\alpha}^* K_{\mathcal{X}} \otimes K_{\mathcal{X}_{\alpha}}^{\vee}} & {\det(V_{\sigma'}^+) \otimes \det(V_{\sigma'}^-)} \\
	\begin{array}{c} \substack{\displaystyle\det(V_{\sigma, \sigma'}^{+, +}) \otimes \det(V_{\sigma, \sigma'}^{+, -})  \\ \displaystyle \otimes \det(V_{\sigma, \sigma'}^{-, +}) \otimes \det(V_{\sigma, \sigma'}^{-, -}) } \end{array} && \begin{array}{c} \substack{\displaystyle\det(V_{\sigma, \sigma'}^{+, +}) \otimes \det(V_{\sigma, \sigma'}^{-, +})  \\ \displaystyle \otimes \det(V_{\sigma, \sigma'}^{+, -}) \otimes \det(V_{\sigma, \sigma'}^{-, -}). } \end{array}
	\arrow[""{name=0, anchor=center, inner sep=0}, "{\id \otimes \phi_{\sigma, \sigma'}^{+, -} \otimes \id \otimes \phi_{\sigma, \sigma'}^{+, -} }", from=1-1, to=1-3]
	\arrow[from=2-1, to=1-1]
	\arrow[""{name=1, anchor=center, inner sep=0}, "{a_{\sigma, \sigma'}^{\otimes 2}}"', from=2-1, to=2-3]
	\arrow["{\id \otimes \phi_{\sigma}}", from=2-1, to=3-1]
	\arrow[from=2-3, to=1-3]
	\arrow["{\id \otimes \phi_{\sigma'}}"', from=2-3, to=3-3]
	\arrow[from=3-1, to=3-2]
	\arrow[from=3-1, to=4-1]
	\arrow[from=3-3, to=3-2]
	\arrow[from=3-3, to=4-3]
	\arrow[""{name=2, anchor=center, inner sep=0}, "{\id \otimes \mathrm{swap} \otimes \id}"', from=4-1, to=4-3]
	\arrow["{(A)}"{description}, draw=none, from=0, to=1]
	\arrow["{(B)}"{description}, draw=none, from=3-2, to=2]
\end{tikzcd}\]
Note that the square $(A)$ commutes by the definition of the map $a_{\sigma, \sigma'}$ and the diagram $(B)$ obviously commutes.
Therefore it is enough to prove that the outer square commutes. Equivalently, it is enough to show that the following diagram commutes:
\[\begin{tikzcd}
	{\det(V_{\sigma, \sigma'}^{+, -})   \otimes \det(V_{\sigma, \sigma'}^{+, -})} &[2cm] {\det(V_{\sigma, \sigma'}^{-, +})   \otimes \det(V_{\sigma, \sigma'}^{-, +})} \\
	{\det(V_{\sigma, \sigma'}^{+, -})   \otimes \det(V_{\sigma, \sigma'}^{-, +})} & {\det(V_{\sigma, \sigma'}^{-, +})   \otimes \det(V_{\sigma, \sigma'}^{+, -}).}
	\arrow["{\phi_{\sigma, \sigma'}^{+, -} \otimes \phi_{\sigma, \sigma'}^{+, -} }", from=1-1, to=1-2]
	\arrow["{\id \otimes \phi_{\sigma, \sigma'}^{+, -}}"', from=1-1, to=2-1]
	\arrow["{\id \otimes \phi_{\sigma', \sigma}^{+, -}}", from=1-2, to=2-2]
	\arrow["{\mathrm{swap}}"', from=2-1, to=2-2]
\end{tikzcd}\]
Firstly, arguing as in the proof of \Cref{lem-phi-indep-segment}, one obtains an identity
\begin{equation}\label{eq-Phi_sigma_sigma'equalPhi_sigma'sigma}
    \Phi_{\sigma, \sigma'} = \Phi_{\sigma', \sigma}^{\vee}[1]
\end{equation}
and hence an identity
$\phi_{\sigma', \sigma}^{+, -} = (\phi_{\sigma, \sigma'}^{+, -})^{-1}$.
Therefore it is enough to show that the swapping isomorphism
\[
 \mathrm{swap} \colon \det(V_{\sigma, \sigma'}^{+, -})   \otimes \det(V_{\sigma, \sigma'}^{+, -}) \cong \det(V_{\sigma, \sigma'}^{+, -})   \otimes \det(V_{\sigma, \sigma'}^{+, -})    
\]
is the identity map. This follows from $\rank V_{\sigma, \sigma'}^{+, -} = 0$ which is a consequence of the assumption that $\mathcal{X}$ is numerically symmetric.

The identity $a_{\sigma, \sigma'} = a_{\sigma', \sigma}^{-1}$ is a straightforward consequence of the identity $\phi_{\sigma', \sigma}^{+, -} = (\phi_{\sigma, \sigma'}^{+, -})^{-1}$.
The identity $a_{\sigma', \sigma''} \circ a_{\sigma, \sigma'} = a_{\sigma, \sigma''}$  can be proved analogously to \Cref{lem-phi-indep-segment}.
\end{proof}

\begin{para}[Equivariant structure on localized orientation]\label{para-localize-orientation-is-equivariant}
    Let $\mathcal{X}$ be a numerically symmetric $(-1)$-shifted symplectic stack equipped with an orientation $o$.
    For a non-degenerate face $(F, \alpha) \in \mathsf{Face}^{\mathrm{nd}}(\mathcal{X})$, we define an orientation $\alpha^{\star} o$ for $\mathcal{X}_{\alpha}$ to be $\sigma^{\star} o$ for a cone $\sigma \subset F$ which is a chamber in the cotangent arrangement.
    By Lemma \ref{lem-indep-localized-ori-on-segment}, the orientation $\alpha^{\star} o$ does not depend on the choice of a chamber up to a unique isomorphism.
    
    For an element $g \in \mathrm{Aut}(\alpha)$, let $\rho_g \colon \mathcal{X}_{\alpha} \xrightarrow[]{\cong} \mathcal{X}_{\alpha}$ be the isomorphism induced by $g$.
    Then, for a chamber $\sigma \subset F$, we have a natural isomorphism $\rho_g^{\star} (g(\sigma)^{\star} o) \cong \sigma^{\star} o$, which can be rewritten as $\rho_g^{\star} (\alpha^{\star} o) \cong \alpha^{\star} o$.
    Further, these isomorphisms respect the product structure in $\mathrm{Aut}(\alpha)$ by \Cref{item-trans-local-ori} in \Cref{lem-indep-localized-ori-on-segment}.
    In particular, $\alpha^{\star} o$ is naturally equipped with an $\mathrm{Aut}(\alpha)$-equivariant structure.
\end{para}

\begin{para}[Localizing the standard orientation]\label{para-locallize-can-ori}
    Let $\mathcal{Y}$ be a quasi-smooth derived algebraic stack and $f \colon \mathcal{Y} \to \mathbb{A}^1$ be a function.
    Form the critical locus $\mathcal{X} \coloneqq \mathrm{Crit}(f)$  and equip it with the standard $(-1)$-shifted symplectic structure and orientation $o_{\mathcal{X}}^{\mathrm{sta}} \colon \iota^* K_{\mathcal{Y}}^{\otimes 2} \cong K_{\mathcal{X}}$ introduced in \Cref{para-standard-ori}, where $\iota \colon \mathcal{X} \to \mathcal{Y}$ denotes the canonical map.
    Let $(F, \alpha) \in \mathsf{Face}^{\mathrm{nd}}(\mathcal{Y})$ be a non-degenerate face and $\sigma \subset F$ be a chamber in the cotangent arrangement.
    Set $f_{\alpha} \coloneqq f \circ \mathrm{tot}_{\alpha}$. As we have seen in \Cref{para-localized-orientation}, we have a natural equivalence of $(-1)$-shifted symplectic stacks
    \[
     \mathcal{X}_{\alpha} \cong \mathrm{Crit}(f_{\alpha})    
    \]
    and we define the standard orientation $o_{\mathcal{X}_{\alpha}}^{\mathrm{sta}} \colon \iota_{\alpha}^* K_{\mathcal{Y}_{\alpha}}^{\otimes 2} \cong K_{\mathcal{X}_{\alpha}}$ using this critical chart description.
    Now consider the following fibre sequence
    \begin{equation}\label{eq-critical-fibre-seq}
    (\mathrm{tot}_{\alpha}^* \iota^* \mathbb{L}_{\mathcal{Y}})^{\sigma, +}  \to  (\mathrm{tot}_{\alpha}^* \mathbb{L}_{\mathcal{X}})^{\sigma, +} \to   (\mathrm{tot}_{\alpha}^* \iota^* \mathbb{L}_{\mathcal{Y}})^{\sigma, -, \vee}[1]
    \end{equation}
    where the latter map is given by the composition 
    \[
        (\mathrm{tot}_{\alpha}^* \mathbb{L}_{\mathcal{X}})^{\sigma, +} \simeq (\mathrm{tot}_{\alpha}^* \mathbb{L}_{\mathcal{X}})^{\sigma, -, \vee}[1] \to  (\mathrm{tot}_{\alpha}^* \iota^* \mathbb{L}_{\mathcal{Y}})^{\sigma, -, \vee}[1].    
    \]
    It induces an isomorphism
    \begin{equation}\label{eq-det-crit-pm}
    b_{\sigma} \colon \det( (\mathrm{tot}_{\alpha}^* \mathbb{L}_{\mathcal{X}})^{\sigma, +}) \cong  \iota_{\alpha}^*  \det\left( (\mathrm{tot}_{\alpha}^*  \mathbb{L}_{\mathcal{Y}})^{\sigma, \pm}  \right) 
    \end{equation}
    where we set $(\mathrm{tot}_{\alpha}^*  \mathbb{L}_{\mathcal{Y}})^{\sigma, \pm} \coloneqq (\mathrm{tot}_{\alpha}^*  \mathbb{L}_{\mathcal{Y}})^{\sigma, +} \oplus (\mathrm{tot}_{\alpha}^*  \mathbb{L}_{\mathcal{Y}})^{\sigma, -}$.
    This induces an isomorphism
    \[
     \mathrm{tot}_{\alpha}^* \iota^* K_{\mathcal{Y}} \otimes \det( (\mathrm{tot}_{\alpha}^* \mathbb{L}_{\mathcal{X}})^{\sigma, +})^{\vee} \cong \iota_{\alpha}^* K_{\mathcal{Y}_{\alpha}}.
    \]
    One can show that this isomorphism induces an isomorphism of orientations
    \begin{equation}\label{eq-localize-standard-orientaion}
    c_{\sigma} \colon \sigma^{\star} o^{\mathrm{sta}}_{\mathcal{X}} \cong     o^{\mathrm{sta}}_{\mathcal{X}_{\alpha}},
    \end{equation}
    see \cite[(6.75)]{Kinjo_Park_Safronov_CoHA} for a related statement.

    The following technical lemma will be important for a supercommutativity property of the cohomological Hall induction in \Cref{para-supercommutative-1-shifted-symplectic}:

\end{para}

\begin{lemma}\label{lem-ori-localize-indep-segment-up-to-sign}
    We adopt the notation from \Cref{para-locallize-can-ori} and assume further that $\mathcal{Y}$ is numerically symmetric.
    Then for any choice of chambers $\sigma, \sigma' \subset F$ in the cotangent arrangement, the following diagram commutes:
\begin{equation}\label{eq-ori-localize-indep-segment-up-to-sign}
    \begin{tikzcd}
    {\sigma ^{\star} o_{\mathcal{X}}^{\mathrm{sta}}} & {o^{\mathrm{sta}}_{\mathcal{X}_{\alpha}}} \\
    {{\sigma'} ^{\star} o_{\mathcal{X}}^{\mathrm{sta}}} & {o^{\mathrm{sta}}_{\mathcal{X}_{\alpha}}.}
    \arrow["{c_{\sigma}}", from=1-1, to=1-2]
    \arrow["{(-1)^{d(\sigma, \sigma')} \cdot a_{\sigma, \sigma'}}"', from=1-1, to=2-1]
    \arrow[Rightarrow, no head, from=1-2, to=2-2]
    \arrow["{c_{\sigma'}}"', from=2-1, to=2-2]
\end{tikzcd}
\end{equation}

\end{lemma}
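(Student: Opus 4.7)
The plan is to expand both compositions in \eqref{eq-ori-localize-indep-segment-up-to-sign} explicitly using the joint $(\sigma,\sigma')$-weight decomposition of $\mathrm{tot}_{\alpha}^*\mathbb{L}_{\mathcal{Y}}$ and $\mathrm{tot}_{\alpha}^*\mathbb{L}_{\mathcal{X}}$, and to extract the sign $(-1)^{d(\sigma,\sigma')}$ as a Koszul reshuffling sign, with the numerical symmetry of $\mathcal{Y}$ supplying the key rank equality. For $\epsilon,\epsilon'\in\{+,-\}$, write $L^{\epsilon,\epsilon'} \coloneqq (\mathrm{tot}_{\alpha}^*\mathbb{L}_{\mathcal{Y}})^{(\sigma,\sigma'),\epsilon,\epsilon'}$ and $M^{\epsilon,\epsilon'} \coloneqq (\mathrm{tot}_{\alpha}^*\mathbb{L}_{\mathcal{X}})^{(\sigma,\sigma'),\epsilon,\epsilon'}$, so that $(\mathrm{tot}_{\alpha}^*\mathbb{L}_{\mathcal{Y}})^{\sigma,+} = L^{+,+} \oplus L^{+,-}$ and $(\mathrm{tot}_{\alpha}^*\mathbb{L}_{\mathcal{Y}})^{\sigma',+} = L^{+,+} \oplus L^{-,+}$, and analogously for $M$.

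Next, I would restrict the fibre sequence \eqref{eq-critical-fibre-seq} to each quadrant to obtain fibre sequences $L^{\epsilon,\epsilon'} \to M^{\epsilon,\epsilon'} \to L^{-\epsilon,-\epsilon',\vee}[1]$, and, through \eqref{eq-det-shift} and \eqref{eq-det-dual}, quadrant-wise canonical isomorphisms $\det M^{\epsilon,\epsilon'} \cong \iota_{\alpha}^*(\det L^{\epsilon,\epsilon'} \otimes \det L^{-\epsilon,-\epsilon'})$. Factoring both $b_\sigma$ and $b_{\sigma'}$ (the isomorphism \eqref{eq-det-crit-pm}) through these would show that $c_\sigma$ collects the four $\det L^{\epsilon,\epsilon'}$ tensor factors in the order prescribed by the $\sigma$-decomposition $L^{\sigma,+} \oplus L^{\sigma,-}$, whereas $c_{\sigma'}\circ a_{\sigma,\sigma'}$ first applies $\phi_{\sigma,\sigma'}^{+,-}$ of \eqref{eq-phi_sigma_sigma'_def} to swap $\det M^{+,-}$ with $\det M^{-,+}$ using the symplectic pairing on $\mathcal{X}$, and then collects the factors in the $\sigma'$-order.

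Tracing both paths into the common refinement $\iota_{\alpha}^*\bigl(\det L^{+,+} \otimes \det L^{+,-} \otimes \det L^{-,+} \otimes \det L^{-,-}\bigr)^{\otimes 2}$ then reduces the comparison to a Koszul swap of $\det L^{+,-}$ with $\det L^{-,+}$, producing the sign $(-1)^{\mathrm{rank}(L^{+,-})\cdot\mathrm{rank}(L^{-,+})}$. Applying the numerical symmetry of $\mathcal{Y}$ to both $\sigma$ and $\sigma'$ forces $\mathrm{rank}(L^{+,-}) = \mathrm{rank}(L^{-,+})$, and the swap sign collapses to $(-1)^{\mathrm{rank}(L^{+,-})} = (-1)^{d(\sigma,\sigma')}$, matching the claim since $d(\sigma,\sigma')$ is $\mathrm{rank}(L^{+,-}) \bmod 2$ with respect to $\mathcal{Y}$'s cotangent arrangement.

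The main technical obstacle lies in verifying the quadrant-wise compatibility asserted in the previous paragraphs: specifically, that the map $\phi_{\sigma,\sigma'}^{+,-}$ on $\mathcal{X}$, built from the $(-1)$-shifted symplectic self-duality of $\mathbb{L}_{\mathcal{X}}$, corresponds under the quadrant-wise fibre sequences to the identity on the $L$-side, and that the remaining quadrant contributions ($\det M^{+,+}$ and $\det M^{-,-}$) are identified between the two paths without introducing extra signs. This amounts to a delicate diagram chase combining the determinantal identities \eqref{eq-det-shift-fibre-seq}, \eqref{eq-det-dual-fibreseq}, \eqref{eq-det-double-dual} and \eqref{eq-det-dual-shift}, the symplectic identity $\Phi_{\sigma,\sigma'} = \Phi_{\sigma',\sigma}^{\vee}[1]$ (a variant of \eqref{eq-Phi_sigma_sigma'equalPhi_sigma'sigma} already exploited in Lemma \ref{lem-phi-indep-segment}), and the $(-1)^{\vdim\mathcal{Y}}$ twist in the definition of the standard orientation from \Cref{para-standard-ori}. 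The expectation is that all these auxiliary signs cancel, so that $(-1)^{d(\sigma,\sigma')}$ is the sole surviving discrepancy.
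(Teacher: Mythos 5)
Your outline reproduces the architecture of the paper's own proof: reduce to the underlying line bundles, decompose $\mathrm{tot}_{\alpha}^*\mathbb{L}_{\mathcal{Y}}$ and $\mathrm{tot}_{\alpha}^*\mathbb{L}_{\mathcal{X}}$ into the four $(\sigma,\sigma')$-quadrants, restrict the fibre sequence \Cref{eq-critical-fibre-seq} quadrant-wise (your $L^{\epsilon,\epsilon'}\to M^{\epsilon,\epsilon'}\to L^{-\epsilon,-\epsilon',\vee}[1]$ are exactly the sequences $\Delta,\Delta'$ in the paper), strip off the common $(+,+)$ and $(-,-)$ contributions, and use numerical symmetry to get $\rank L^{+,-}=\rank L^{-,+}$ and the identification $d(\sigma,\sigma')=\rank L^{+,-}\bmod 2$. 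All of that is correct and is precisely how the paper sets things up, reducing to the commutativity of \Cref{eq-localized-can-ori-equiv}.

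The gap is that the lemma's entire content is the sign, and your proposal does not actually compute it: the last paragraph correctly enumerates every compatibility that must be checked (\Cref{eq-det-shift-fibre-seq}, \Cref{eq-det-dual-fibreseq}, \Cref{eq-det-double-dual}, \Cref{eq-det-dual-shift}, the identity $\Phi_{\sigma,\sigma'}^{+,-}=\Phi_{\sigma',\sigma}^{-,+}$) and then declares ``the expectation is that all these auxiliary signs cancel.'' That expectation is the theorem. Moreover, the mechanism you name --- a Koszul reshuffle of the graded lines $\det L^{+,-}$ and $\det L^{-,+}$, contributing $(-1)^{\rank L^{+,-}\cdot\rank L^{-,+}}$ --- is not where the sign actually comes from in the chase. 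In the paper's diagram \Cref{eq-localized-can-ori-expanded}, the symplectic identity matches the fibre sequence $\Delta^{\vee}[1]$ with $\Delta'$, and the residual sign is produced by the composite $\det(W^{+,-})\cong\det(W^{+,-,\vee}[1]^{\vee}[1])\xrightarrow{\eta}\det(W^{+,-})$, which equals $(-1)^{\rank W^{+,-}}$ by \Cref{eq-det-double-dual} and \Cref{eq-det-dual-shift}. Your candidate sign $(-1)^{\rank L^{+,-}\cdot\rank L^{-,+}}$ happens to have the same parity as $(-1)^{\rank L^{+,-}}$ only because of the rank equality, so agreeing in parity with one plausible-looking swap is not evidence that the other determinantal identities contribute trivially; each of the squares $(A)$--$(E)$ in the paper's expanded diagram has to be checked, and $(B)$ and $(D)$ in particular are where \Cref{eq-det-shift-fibre-seq}, \Cref{eq-det-dual-fibreseq} and the symplectic matching of the two quadrant sequences do real work. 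Until that chase is carried out, the proof is incomplete.
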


\begin{proof}
    It is enough to prove that the following diagram of the underlying line bundles commute:
\[\begin{tikzcd}
	{\mathrm{tot}_{\alpha}^*  \iota^*K_{\mathcal{Y}} \otimes \det(\mathrm{tot}_{\alpha}^* \mathbb{L}_{\mathcal{X}}^{\sigma+})^{\vee}} & {\mathrm{tot}_{\alpha}^*  \iota^*K_{\mathcal{Y}} \otimes \iota_{\alpha}^* \det(\mathrm{tot}_{\alpha}^* \mathbb{L}_{\mathcal{Y}}^{\sigma \pm}) ^{\vee}} & {\iota_{\alpha}^*K_{\mathcal{Y}_{\alpha}}} \\
	{\mathrm{tot}_{\alpha}^*  \iota^*K_{\mathcal{Y}} \otimes \det(\mathrm{tot}_{\alpha}^* \mathbb{L}_{\mathcal{X}}^{\sigma'+}) ^{\vee}} & {\mathrm{tot}_{\alpha}^*  \iota^*K_{\mathcal{Y}} \otimes \iota_{\alpha}^* \det(\mathrm{tot}_{\alpha}^* \mathbb{L}_{\mathcal{Y}}^{\sigma \pm}) ^{\vee}} & {\iota_{\alpha}^*K_{\mathcal{Y}_{\alpha}}.}
	\arrow["\text{\Cref{eq-det-crit-pm}}", from=1-1, to=1-2]
	\arrow["{(-1)^{d(\sigma, \sigma')}\cdot \text{\Cref{eq-a_sigma_sigma'}}}"', from=1-1, to=2-1]
	\arrow[from=1-2, to=1-3]
	\arrow[Rightarrow, no head, from=1-3, to=2-3]
	\arrow["\text{\Cref{eq-det-crit-pm}}"', from=2-1, to=2-2]
	\arrow[from=2-2, to=2-3]
\end{tikzcd}\]
This is equivalent to proving the commutativity of the following diagram:
\begin{equation}\label{eq-sigma-indep-canonical}
    \begin{tikzcd}
	{\det(\mathrm{tot}_{\alpha}^* \mathbb{L}_{\mathcal{X}}^{\sigma+})} & {\iota_{\alpha}^* \det(\mathrm{tot}_{\alpha}^* \mathbb{L}_{\mathcal{Y}}^{\sigma \pm})} \\
	{\det(\mathrm{tot}_{\alpha}^* \mathbb{L}_{\mathcal{X}}^{\sigma'+})} & {\iota_{\alpha}^* \det(\mathrm{tot}_{\alpha}^* \mathbb{L}_{\mathcal{Y}}^{\sigma \pm}).}
	\arrow["\text{\Cref{eq-det-crit-pm}}", from=1-1, to=1-2]
	\arrow["{(-1)^{d(\sigma, \sigma')}\cdot \text{\Cref{eq-a_sigma_sigma'}}}"', from=1-1, to=2-1]
	\arrow[Rightarrow, no head, from=1-2, to=2-2]
	\arrow["\text{\Cref{eq-det-crit-pm}}"', from=2-1, to=2-2]
\end{tikzcd}
\end{equation}
Now to simplify the notation, as in \Cref{eq-notation_V_sigma_sigma'}, we set
\[
 V_{\sigma, \sigma'}^{+, +} \coloneqq  (\mathrm{tot}_{\alpha}^* \mathbb{L}_{\mathcal{X}})^{(\sigma, \sigma'), +, +}, \quad W_{\sigma, \sigma'}^{+, +} \coloneqq  (\mathrm{tot}_{\alpha}^* \mathbb{L}_{\mathcal{Y}})^{(\sigma, \sigma'), +, +}
\]
and similarly for other choice of signs.
Consider the following fibre sequences defined similarly to \Cref{eq-critical-fibre-seq}:
\begin{align*}
    \Delta &\colon W_{\sigma, \sigma'}^{+, -} \to V_{\sigma, \sigma'}^{+, -} \to W_{\sigma, \sigma'}^{-, +, \vee}[1], \\
    \Delta' &\colon W_{\sigma, \sigma'}^{-, +} \to V_{\sigma, \sigma'}^{-, +} \to W_{\sigma, \sigma'}^{+, -, \vee}[1].
\end{align*}
These fibre sequences induce the following isomorphisms:
\begin{align*}
    b_{\sigma, \sigma'}^{+, -} &\colon \det(V_{\sigma, \sigma'}^{+, -}) \cong \iota_{\alpha}^* \det(W_{\sigma, \sigma'}^{+, -}) \otimes \iota_{\alpha}^* \det(W_{\sigma, \sigma'}^{-, +}), \\
    b_{\sigma, \sigma'}^{-, +} &\colon \det(V_{\sigma, \sigma'}^{-, +}) \cong \iota_{\alpha}^* \det(W_{\sigma, \sigma'}^{-, +}) \otimes \iota_{\alpha}^* \det(W_{\sigma, \sigma'}^{+, -}).
\end{align*}
By the construction of the map \Cref{eq-a_sigma_sigma'}, the commutativity of the diagram \Cref{eq-sigma-indep-canonical} is equivalent to the commutativity of the following diagram:
\begin{equation}\label{eq-localized-can-ori-equiv}
\begin{tikzcd}
	{\det(V_{\sigma, \sigma'}^{+, -})} & {\iota_{\alpha}^*  \det(W_{\sigma, \sigma'}^{+, -}) \otimes \iota_{\alpha}^* \det(W_{\sigma, \sigma'}^{-, +})} \\
	{\det(V_{\sigma, \sigma'}^{-, +})} & {\iota_{\alpha}^*  \det(W_{\sigma, \sigma'}^{-, +}) \otimes \iota_{\alpha}^*  \det(W_{\sigma, \sigma'}^{+, -})}
	\arrow["{b_{\sigma, \sigma'}^{+, -}}", from=1-1, to=1-2]
	\arrow[" \phi_{\sigma, \sigma'}^{+, -}"', from=1-1, to=2-1]
	\arrow["(-1)^{d(\sigma, \sigma')} \cdot{\mathrm{swap}}", from=1-2, to=2-2]
	\arrow["{b_{\sigma, \sigma'}^{-, +}}"', from=2-1, to=2-2]
\end{tikzcd}
\end{equation}
where the map $\phi_{\sigma, \sigma'}^{+, -}$ is defined in \Cref{eq-phi_sigma_sigma'_def}.
To prove this, consider the following diagram:
\begin{equation}\label{eq-localized-can-ori-expanded}
    \begin{tikzcd}
	& {\iota_{\alpha}^* \det(W_{\sigma, \sigma'}^{+, -}) \otimes \iota_{\alpha}^* \det(W_{\sigma, \sigma'}^{-, +})} \\
	{\det(V_{\sigma, \sigma'}^{+, -})} & {\iota_{\alpha}^* \det(W_{\sigma, \sigma'}^{+, -}) \otimes \iota_{\alpha}^* \det(W_{\sigma, \sigma'}^{-, +, \vee}[1])} \\
	{\det(V_{\sigma, \sigma'}^{+, -, \vee}[1])} & {\iota_{\alpha}^* \det(W_{\sigma, \sigma'}^{-, +, \vee}[1]^{\vee}[1]) \otimes \iota_{\alpha}^* \det(W_{\sigma, \sigma'}^{+, -, \vee}[1])  } \\
	{\det(V_{\sigma, \sigma'}^{-, +})} & {\iota_{\alpha}^* \det(W_{\sigma, \sigma'}^{-, +}) \otimes \iota_{\alpha}^* \det(W_{\sigma, \sigma'}^{+, -, \vee}[1])  } \\
	& {\iota_{\alpha}^* \det(W_{\sigma, \sigma'}^{-, +}) \otimes \iota_{\alpha}^* \det(W_{\sigma, \sigma'}^{+, -})}
	\arrow[from=1-2, to=2-2]
	\arrow[""{name=0, anchor=center, inner sep=0}, "{b_{\sigma, \sigma'}^{+, -}}", from=2-1, to=1-2]
	\arrow[""{name=1, anchor=center, inner sep=0}, "{i(\Delta)}", from=2-1, to=2-2]
	\arrow[from=2-1, to=3-1]
	\arrow[""{name=2, anchor=center, inner sep=0}, "{\phi_{\sigma, \sigma'}^{+, -}}"', curve={height=80pt}, from=2-1, to=4-1]
	\arrow["{\mathrm{swap}}", from=2-2, to=3-2]
	\arrow[""{name=3, anchor=center, inner sep=0}, "{i(\Delta^{\vee}[1])}", from=3-1, to=3-2]
	\arrow["\gamma", from=3-1, to=4-1]
	\arrow["\eta \otimes \id", from=3-2, to=4-2]
	\arrow[""{name=4, anchor=center, inner sep=0}, "{i(\Delta')}"', from=4-1, to=4-2]
	\arrow[""{name=5, anchor=center, inner sep=0}, "{b_{\sigma, \sigma'}^{-, +}}"', from=4-1, to=5-2]
	\arrow[from=4-2, to=5-2]
	\arrow["{(A)}"{description}, draw=none, from=2-2, to=0]
	\arrow["{(B)}"{description, pos=0.7}, draw=none, from=3, to=1]
	\arrow["{(C)}"{description}, draw=none, from=3-1, to=2]
	\arrow["{(D)}"{description}, draw=none, from=4, to=3]
	\arrow["{(E)}"{description}, draw=none, from=4-2, to=5]
\end{tikzcd}
\end{equation}
where the map $\gamma$ is induced by the $(-1)$-shifted symplectic structure, the map $\eta$ is induced by the isomorphism
\[
    W_{\sigma, \sigma'}^{-, +} \cong W_{\sigma, \sigma'}^{-, +, \vee \vee} \cong  W_{\sigma, \sigma'}^{-, +, \vee}[1]^{\vee}[1]
\]
and other morphisms are induced by \Cref{eq-det-shift} and \Cref{eq-det-dual}.
The diagrams $(A)$, $(C)$ and $(E)$ commute by definition. The diagram $(B)$ commutes by \Cref{eq-det-shift-fibre-seq} and \Cref{eq-det-dual-fibreseq}.
The diagram $(D)$ commutes since we have the following equivalence of fibre sequences:
\[
    \begin{tikzcd}
	{\Delta^{\vee}[1] \colon} &[-30pt] { W_{\sigma, \sigma'}^{-, +, \vee}[1]^{\vee}[1]} & {V_{\sigma, \sigma'}^{+, -, \vee}[1]} & {W_{\sigma, \sigma'}^{+, -, \vee}[1]} \\
	{\Delta': } & {W_{\sigma, \sigma'}^{-, +}} & {V_{\sigma, \sigma'}^{+, -}} & {W_{\sigma, \sigma'}^{+, -, \vee}[1]}
	\arrow[from=1-2, to=1-3]
	\arrow["\eta", from=1-2, to=2-2]
	\arrow[from=1-3, to=1-4]
	\arrow[from=1-3, to=2-3]
	\arrow[Rightarrow, no head, from=1-4, to=2-4]
	\arrow["\gamma", from=2-2, to=2-3]
	\arrow[from=2-3, to=2-4]
\end{tikzcd}
\]
which exists thanks to the identity $\Phi_{\sigma, \sigma'}^{+, -} = \Phi_{\sigma', \sigma}^{-, +}$ established in \Cref{eq-Phi_sigma_sigma'equalPhi_sigma'sigma}.
Now we claim that the composition of the right vertical maps in \Cref{eq-localized-can-ori-expanded} is given by $(-1)^{d(\sigma, \sigma')}$.
To see this, it is enough to show that the composition
\[
\det(W_{\sigma, \sigma'}^{+, -}) \cong     \det(W_{\sigma, \sigma'}^{+, -, \vee}[1]^{\vee}[1]) \xrightarrow[\cong]{\eta} \det(W_{\sigma, \sigma'}^{+, -}) 
\]
is given by the multiplication by $(-1)^{d(\sigma, \sigma')}$. This follows from \Cref{eq-det-double-dual}, \Cref{eq-det-dual-shift} and the identity $d(\sigma, \sigma') = \rank W^{+, -}_{\sigma, \sigma'}$.
Therefore we conclude that the diagram \Cref{eq-localized-can-ori-equiv} commutes as desired.
\end{proof}

\subsection{Donaldson--Thomas mixed Hodge modules}

\begin{para}
    For an oriented $(-1)$-shifted symplectic stack $(\mathcal{X}, \omega_{\mathcal{X}}, o)$,  \textcite[\S 4]{ben2015darboux} constructed a monodromic mixed Hodge module
    \[
        \varphi_{\mathcal{X}}  = \varphi_{\mathcal{X}, \omega_{\mathcal{X}}, o} \in \mathsf{MMHM}(\mathcal{X})    
    \]
    called the \emph{Donaldson--Thomas mixed Hodge module}, based on the construction for $(-1)$-shifted symplectic schemes by \textcite[\S 6]{brav2012symmetries}.
    We do not repeat the construction here, but we list basic properties that will be used later:
    \begin{itemize}
        \item Let $\mathcal{U}$ be a smooth algebraic stack and $f \colon \mathcal{U} \to \mathbb{A}^1$ be a regular function. Set $\mathcal{X} = \mathrm{Crit}(f)$ and equip it with the standard $(-1)$-shifted symplectic structure $\omega_{\mathcal{X}}$ and the standard orientation $o^{\mathrm{sta}}_{\mathcal{X}}$.
              Then there exists a natural isomorphism
              \begin{equation}\label{eq-DT-Hodge-standard}
                \varphi_{\mathcal{X}, \omega_{\mathcal{X}}, o_{\mathcal{X}}^{\mathrm{sta}}} \cong \varphi_{f} (\mathcal{IC}_{\mathcal{U}}).
              \end{equation}
              For schemes, this is an immediate consequence of the definition. For stacks, this is proved in \cite[Lemma 7.21]{Kinjo_Park_Safronov_CoHA} at the level of perverse sheaves, and the same argument works for monodromic mixed Hodge modules.

        \item Let  $(\mathcal{X}_2, \omega_{\mathcal{X}_2, o_{\mathcal{X}_2}})$ be an oriented $(-1)$-shifted symplectic stack and $\eta \colon \mathcal{X}_1 \to \mathcal{X}_2$ be an \'etale cover.
              Set $\omega_{\mathcal{X}_1} \coloneqq \eta^{\star} \omega_{\mathcal{X}_2}$. The orientation $o_{\mathcal{X}_2}$ naturally induces an orientation $o_{\mathcal{X}_1}$ on $(\mathcal{X}_1, \omega_{\mathcal{X}_1})$.
              Then there exists a natural isomorphism
              \begin{equation}\label{eq-dt-etale-cover}
                \varphi_{\mathcal{X}_1, \omega_{\mathcal{X}_1}, o_{\mathcal{X}_1}} \cong \eta^* \varphi_{\mathcal{X}_2, \omega_{\mathcal{X}_2}, o_{\mathcal{X}_2}}.
              \end{equation}

        \item Let $(\mathcal{X}, \omega_{\mathcal{X}}, o_{\mathcal{X}})$ and  $(\mathcal{Y}, \omega_{\mathcal{Y}}, o_{\mathcal{Y}})$ be $(-1)$-shifted symplectic stacks.
              Then there exists a natural isomorphism
              \begin{equation}\label{eq-Kunneth}
                \varphi_{\mathcal{X} \times \mathcal{Y}, \omega_{\mathcal{X}} \boxplus \omega_{\mathcal{Y}}, o_{\mathcal{X}} \boxtimes o_{\mathcal{Y}}} \cong \varphi_{\mathcal{X}, \omega_{\mathcal{X}}, o_{\mathcal{X}}} \boxtimes \varphi_{\mathcal{Y}, \omega_{\mathcal{Y}}, o_{\mathcal{Y}}}
              \end{equation}
              which we call the Thom--Sebastiani isomorphism.
              This is proved in \cite[Corollary 4.4]{Kinjo_Park_Safronov_CoHA} at the level of perverse sheaves, and the same argument works for monodromic mixed Hodge modules.
              One can easily check that the Thom--Sebastiani isomorphism does not depend on the order of the product decomposition: namely, the following diagram commutes:
\[\begin{tikzcd}
	{\varphi_{\mathcal{X} \times \mathcal{Y}, \omega_{\mathcal{X}} \boxplus \omega_{\mathcal{Y}}, o_{\mathcal{X}} \boxtimes o_{\mathcal{Y}}}  } & {\varphi_{\mathcal{X}, \omega_{\mathcal{X}}, o_{\mathcal{X}}} \boxtimes \varphi_{\mathcal{Y}, \omega_{\mathcal{Y}}, o_{\mathcal{Y}}}} \\
	{\mathrm{sw}^* \varphi_{\mathcal{Y} \times \mathcal{X}, \omega_{\mathcal{Y}} \boxplus \omega_{\mathcal{X}}, o_{\mathcal{Y}} \boxtimes o_{\mathcal{X}}}  } & {\mathrm{sw}^* (\varphi_{\mathcal{Y}, \omega_{\mathcal{Y}}, o_{\mathcal{Y}}} \boxtimes \varphi_{\mathcal{X}, \omega_{\mathcal{X}}, o_{\mathcal{X}}})}
	\arrow["\cong"', from=1-1, to=1-2]
	\arrow["\text{\Cref{eq-Kunneth}}", from=1-1, to=1-2]
	\arrow["\cong"', from=1-1, to=2-1]
	\arrow["\cong", from=1-2, to=2-2]
	\arrow["\cong"', from=2-1, to=2-2]
	\arrow["\text{\Cref{eq-Kunneth}}", from=2-1, to=2-2]
\end{tikzcd}\]
            where $\mathrm{sw} \colon \mathcal{X} \times \mathcal{Y} \cong \mathcal{Y} \times \mathcal{X}$ denotes the swapping isomorphism.

        \item Let $\mathcal{Y}$ be a quasi-smooth derived algebraic stack. Set $\mathcal{X} = \mathrm{T}^*[-1] \mathcal{Y}$  and equip it with the standard $(-1)$-shifted symplectic structure $\omega_{\mathcal{X}}$ and the standard orientation $o^{\mathrm{sta}}_{\mathcal{X}}$.
              Let $\pi \colon \mathcal{X} \to \mathcal{Y}$ be the canonical projection. Then there exists a natural isomorphism
              \begin{equation}\label{eq-dimensional-reduction}
                \pi_* \varphi_{\mathcal{X}} \cong \mathbb{L}^{\vdim \mathcal{Y}/2} \otimes \mathbb{D}\mathbb{Q}_{\mathcal{Y}}
              \end{equation}
              called the \emph{dimensional reduction isomorphism}.
              This is proved in \cite[Theorem 4.14]{kinjo2022dimensional} at the level of constructible complexes and in \cite[Proposition 5.3]{kinjo2024cohomological} at the level of the mixed Hodge complexes.
            The statement for monodromic mixed Hodge complexes follows from the faithfulness of the monodromy-forgetting functor on the heart of the ordinary t-structure \Cref{eq-forgetting} together with the fact that the statement holds true locally \Cref{eq-local-dimensional-reduction}.
    \end{itemize}
\end{para}

\begin{remark}[Independence of the grading of the orientation]
    Let $\mathrm{pt}$ be a point equipped with the trivial $(-1)$-shifted symplectic structure $\omega_{\mathrm{pt}}$.
    Let $o^{\mathrm{triv}}_{\mathrm{odd}}$ be the trivial orientation for the point with the odd grading.
    Then by \Cref{eq-DT-Hodge-standard}, there exists an isomorphism
    \[
    c \colon \varphi_{\mathrm{pt}, \omega_{\mathrm{pt}}, o^{\mathrm{triv}}_{\mathrm{odd}}} \cong \mathbb{L}^{-1/2} \otimes \varphi_{z^2}(\mathbb{Q}_{\mathbb{A}^1}) \cong \mathbb{Q}_{\mathrm{pt}}.    
    \]

    Now let $(\mathcal{X}, \omega_{\mathcal{X}}, o_{\mathrm{even}})$ be an oriented $(-1)$-shifted symplectic stack with an even grading.
    Let $o_{\mathrm{odd}}$ be the orientation for $\mathcal{X}$ whose underlying line bundle and the isomorphism is the same as $o_{\mathrm{even}}$ but the grading is taken to be odd.
    Then we have an isomorphism of orientations $o_{\mathrm{odd}} \cong o_{\mathrm{even}} \boxtimes o^{\mathrm{triv}}_{\mathrm{odd}}$, therefore we have an isomorphism
    \[
     \varphi_{\mathcal{X}, \omega_{\mathcal{X}}, o_{\mathrm{odd}}} \xrightarrow[\cong]{\text{\Cref{eq-Kunneth}}} \varphi_{\mathcal{X}, \omega_{\mathcal{X}}, o_{\mathrm{even}}} \boxtimes   \varphi_{\mathrm{pt}, \omega_{\mathrm{pt}}, o^{\mathrm{triv}}_{\mathrm{odd}}} \xrightarrow[\cong]{ \id \otimes c} \varphi_{\mathcal{X}, \omega_{\mathcal{X}}, o_{\mathrm{even}}}.
    \]
    In particular, the DT mixed Hodge module does not depend on the choice of the grading of the orientation.
    Nevertheless, it is convenient to work with the graded orientation: for example, one can check that the following diagram commutes only up to a sign $(-1)$:
\[\begin{tikzcd}
	{\varphi_{\mathrm{pt}, \omega_{}^{\mathrm{triv}}, o_{\mathrm{odd}}^{\mathrm{triv}}} \boxtimes \varphi_{\mathrm{pt}, \omega_{}^{\mathrm{triv}}, o_{\mathrm{odd}}^{\mathrm{triv}}}  } & {\varphi_{\mathrm{pt}, \omega_{}^{\mathrm{triv}}, o_{\mathrm{even}}^{\mathrm{triv}}} } \\
	{\mathbb{Q}_{\mathrm{pt}} \boxtimes \mathbb{Q}_{\mathrm{pt}}} & {\mathbb{Q}_{\mathrm{pt}}.}
	\arrow[from=1-1, to=1-2]
	\arrow["{c \boxtimes c}"', from=1-1, to=2-1]
	\arrow["\cong", from=1-2, to=2-2]
	\arrow[from=2-1, to=2-2]
\end{tikzcd}\]
    Therefore we will not fix the trivialization of the perverse sheaf $\varphi_{\mathrm{pt}, \omega_{}^{\mathrm{triv}}, o_{\mathrm{odd}}^{\mathrm{triv}}}$ to reduce the number of the signs appearing in the paper.
\end{remark}

\begin{para}[Integral isomorphism]\label{para-integral-isomorphism}
 Here we recall the integral isomorphism of the Donaldson--Thomas mixed Hodge modules established by Kinjo, Park and Safronov \cite[Corollary 7.19]{Kinjo_Park_Safronov_CoHA}.
 This will be used in the construction of the cohomological Hall induction for $(-1)$-shifted symplectic stacks in \Cref{para-cohi--1-shifted-symplectic}.

 Let $(\mathcal{X}, \omega_{\mathcal{X}}, o)$ be an oriented $(-1)$-shifted symplectic stack which is quasi-separated and has affine stabilizers. Assume further that $\mathcal{X}$ has quasi-compact connected components and quasi-compact graded points.
 Take a non-degenerate face $(F, \alpha) \in \mathsf{Face}^{\mathrm{nd}}(\mathcal{X})$ and a chamber $\sigma \subset F$ with respect to the cotangent arrangement.
 Equip $\mathcal{X}_{\alpha}$ with the localized $(-1)$-shifted symplectic structure $\mathrm{tot}_{\alpha}^{\star} \omega_{\mathcal{X}}$ introduced in \Cref{para-localize-shifted-symplectic-structure} and the localized orientation $\sigma^{\star} o$ introduced in \Cref{para-localized-orientation}. 
 Consider the following correspondence:
\[\begin{tikzcd}
	& {\mathcal{X}_{\sigma}^+} \\
	{\mathcal{X}_{\alpha}} && {\mathcal{X}.}
	\arrow["{\gr_{\sigma}}"', from=1-2, to=2-1]
	\arrow["{\ev_{1, \sigma}}", from=1-2, to=2-3]
\end{tikzcd}\]
Then there exists an isomorphism 
\begin{equation}\label{eq-integral-isomorphism}
   \zeta_{\sigma} \colon \mathbb{L}^{- {\vdim \mathcal{X}}_{\sigma}^+/2} \otimes \varphi_{\mathcal{X}_{\alpha}} \cong \gr_{\sigma, *} \ev_{1, \sigma}^! \varphi_{\mathcal{X}}
\end{equation}
which we call the \emph{integral isomorphism} following \textcite[\S 7.8]{_Kontsevich_Soibelman_CoHA}.
The existence of this isomorphism at the level of perverse sheaves is a consequence of \cite[Corollary 7.19]{Kinjo_Park_Safronov_CoHA} together with the constancy theorem \Cref{para-constancy-theorem}.
The same proof works at the level of monodromic mixed Hodge modules.

We list properties of the integral isomorphism that will be used later. All stacks will be assumed to be quasi-separated, have affine stabilizers and have finite cotangent weights.
\begin{itemize}
    \item Let $(\mathcal{X}_1, \omega_{\mathcal{X}_1}, o_1)$ and $(\mathcal{X}_2, \omega_{\mathcal{X}_2}, o_2)$ be two oriented $(-1)$-shifted symplectic stacks. 
          Take non-degenerate faces $(F_i, \alpha_i) \in \mathsf{Face}^{\mathrm{nd}}(\mathcal{X}_i)$ and chambers $\sigma_i \subset F_i$  with respect to the cotangent arrangement for $i= 1,2$.
          Then the following diagram commutes:
\[\begin{tikzcd}
	\mathbb{L}^{(-{\vdim \mathcal{X}_{\sigma_1}^+} - \vdim \mathcal{X}_{\sigma_2}^+) /2} \otimes (\varphi_{\mathcal{X}_{1, \alpha_1}} \boxtimes \varphi_{\mathcal{X}_{2, \alpha_2}})  & {\gr_{\sigma_1, *} \mathrm{ev}_{1, \sigma_1}^! \varphi_{\mathcal{X}_{1}} \boxtimes \gr_{\sigma_2, *} \mathrm{ev}_{2, \sigma_2}^! \varphi_{\mathcal{X}_{2}} } \\
	{\mathbb{L}^{-{\vdim (\mathcal{X}_{1} \times \mathcal{X}_2)_{\sigma_1 \times \sigma_2}^+} / 2} \otimes \varphi_{(\mathcal{X}_{1} \times \mathcal{X}_2)_{\alpha_1 \times \alpha_2}}  } & {\gr_{\sigma_1 \times \sigma_2, *} \mathrm{ev}_{1, \sigma_1 \times \sigma_2}^! \varphi_{\mathcal{X}_1 \times \mathcal{X}_2}.}
	\arrow["{\zeta_{\sigma_1} \boxtimes \zeta_{\sigma_2}}", from=1-1, to=1-2]
	\arrow["\text{\Cref{eq-Kunneth}}"', from=1-1, to=2-1]
	\arrow["\text{\Cref{eq-Kunneth}}", from=1-2, to=2-2]
	\arrow["{\zeta_{\sigma_1 \times \sigma_2}}"', from=2-1, to=2-2]
\end{tikzcd}\]
    This is proved in \cite[Corollary 7.19]{Kinjo_Park_Safronov_CoHA}.

    \item Let $\mathcal{U}$ be a smooth algebraic stack. Let $f \colon \mathcal{U} \to \mathbb{A}^1$ be a regular function and set $\mathcal{X} = \mathrm{Crit}(f)$ and equip it with the standard $(-1)$-shifted symplectic structure and orientation.
          Let $(F, \alpha) \in \mathsf{Face}^{\mathrm{nd}}(\mathcal{U})$ be a non-degenerate face and $\sigma \subset F$ be a chamber with respect to the cotangent arrangement.
          Set $f_{\alpha} = f \circ \mathrm{tot}_{\alpha}$.
          Equip $\mathcal{X}_{\alpha}$ with the localized $(-1)$-shifted symplectic structure and orientation.
          Then the following diagram commutes:
\begin{equation}\label{eq-integral-isom-critical}
    \begin{tikzcd}
	{\mathbb{L}^{- {\dim \mathcal{X}_{\sigma}^+}/2}  \otimes\varphi_{\mathcal{X}_{\alpha}}} &[-65pt] &[-65pt] {\gr_{\sigma, *} \mathrm{ev}_{1, \sigma}^! \varphi_{\mathcal{X}_{}} } \\
	{\mathbb{L}^{ -{\dim \mathcal{X}_{\sigma}^+}/2} \otimes \varphi_{f_{\alpha}}(\mathcal{IC}_{\mathcal{U}_{\alpha}})} && { \gr_{\sigma, *} \mathrm{ev}_{1, \sigma}^! \varphi_{f}(\mathcal{IC}_{\mathcal{U}})} \\
	& {\varphi_{f_{\alpha}}(\gr_{\sigma, *} \mathrm{ev}_{1, \sigma}^!\mathcal{IC}_{\mathcal{U}}).}
	\arrow["{\zeta_{\sigma}}", from=1-1, to=1-3]
	\arrow["\text{\Cref{eq-DT-Hodge-standard}}", from=1-1, to=2-1]
	\arrow["\text{\Cref{eq-DT-Hodge-standard}}"', from=1-3, to=2-3]
	\arrow["\cong"', from=2-1, to=3-2]
	\arrow["\text{\Cref{eq-van-functorial}}"', from=3-2, to=2-3]
\end{tikzcd}\end{equation}
Here for the left vertical map, we used the identification of the $(-1)$-shifted symplectic structures \Cref{eq-localize-standard-symplectic} and the orientations \Cref{eq-localize-standard-orientaion}.
This is proved in \cite[Proposition 7.22]{Kinjo_Park_Safronov_CoHA}.

\item  Let  $(\mathcal{X}_2, \omega_{\mathcal{X}_2}, o_{\mathcal{X}_2})$ be an oriented $(-1)$-shifted symplectic stack and $\eta \colon \mathcal{X}_1 \to \mathcal{X}_2$ be an \'etale cover.
        Set $\omega_{\mathcal{X}_1} \coloneqq \eta^{\star} \omega_{\mathcal{X}_2}$ and equip $\mathcal{X}_1$ with the orientation $o_{\mathcal{X}_1}$ induced from $o_{\mathcal{X}_2}$.
       Let $(F, \alpha_2) \in \mathsf{Face}^{\mathrm{nd}}(\mathcal{X}_2)$ be a non-degenerate face and $(F, \alpha_1) \in \mathsf{Face}^{\mathrm{nd}}(\mathcal{X}_1)$ be its lift.
       Take a chamber $\sigma_2 \subset F$ with respect to the cotangent arrangement of $\mathcal{X}_2$.
       We denote by $\sigma_1$ the same cone regarded as an object in $\mathsf{Cone}(\mathcal{X}_1)$.
       Let $\eta_{\alpha} \colon \mathcal{X}_{1, \alpha_1} \to \mathcal{X}_{2, \alpha_2}$ be the natural morphism.
       Then the following diagram commutes:
\begin{equation}\label{eq-integral-isom-etale}
    \begin{tikzcd}
	{   \mathbb{L}^{- {\vdim \mathcal{X}_{1, \sigma_1}^+/2}} \otimes \varphi_{\mathcal{X}_{1, \alpha_1}} } & {\gr_{\sigma_1, *} \ev_{1, \sigma_1}^! \varphi_{\mathcal{X}_1}} \\
	{   \mathbb{L}^{- {\vdim \mathcal{X}_{2, \sigma_2}^+}/2} \otimes \eta_{\alpha}^* \varphi_{\mathcal{X}_{2, \alpha_2}} } & {\gr_{\sigma_1, *} \ev_{1, \sigma_1}^! \eta^* \varphi_{\mathcal{X}_2}.}
	\arrow["\cong"', from=1-1, to=1-2]
	\arrow["{\zeta_{\sigma_1} }", from=1-1, to=1-2]
	\arrow["\cong", from=1-1, to=2-1]
	\arrow["\cong", from=1-2, to=2-2]
	\arrow["{\zeta_{\sigma_2} }", from=2-1, to=2-2]
	\arrow[from=2-1, to=2-2]
	\arrow["\cong"', from=2-1, to=2-2]
\end{tikzcd}
\end{equation}
    This follows from the construction of the map $\zeta_{\sigma}$: see the first diagram in the proof of \cite[Theorem 7.16]{Kinjo_Park_Safronov_CoHA}.

\end{itemize}

\end{para}

\section{BPS sheaves}

In this section, we will introduce BPS sheaves on good moduli spaces of smooth stacks and oriented $(-1)$-shifted symplectic stacks under the almost symmetricity condition (Definition \ref{def-symmetry}).
Our definition of BPS sheaves is a generalization of the definition appearing in \cite[Definition 2.11]{toda_gopakumar_wall_crossing}.
We will prove that the BPS sheaf on a smooth stack is either zero or the intersection complex, generalizing the result of \textcite[Theorem 1.1]{Meinhardt_Reineke_DT_vs_intersection_cohomology}.
We will also prove the support lemma for the BPS sheaves associated with oriented $(-1)$-shifted symplectic stacks, generalizing the result of Davison \cite[Lemma 4.1]{davison_integrality_preprojective}.

\subsection{Definition of the BPS sheaf}

Let $\mathcal{U}$ be an almost symmetric smooth algebraic stack having affine diagonal and quasi-compact graded points.
Assume also that $\mathcal{U}$ admits a good moduli space $p \colon \mathcal{U} \to U$.
For a non-degenerate face $(F, \alpha) \in  \mathsf{Face}^{\mathrm{nd}}(\mathcal{U})$, 
let $p_{\alpha} \colon \mathcal{U}_{\alpha} \to U_{\alpha}$ be the good moduli space morphism.
Define the BPS sheaf by
\[
\mathcal{BPS}^{\alpha}_{U} \coloneqq {}^{\mathrm{p}} \cH^{0} (\mathbb{L}^{- {\dim F} /2} \otimes p_{\alpha, *} \mathcal{IC}_{\mathcal{U}_{\alpha}}) \in \mathsf{MMHM}(U_{\alpha}).
\]
Similarly, for an oriented almost symmetric $(-1)$-shifted symplectic stack $\mathcal{X}$ having affine diagonal and quasi-compact graded points which admits a good moduli space $p \colon \mathcal{X} \to X$, with a non-degenerate face $(F, \alpha) \in  \mathsf{Face}^{\mathrm{nd}}(\mathcal{X})$, we let $p_{\alpha} \colon \mathcal{X}_{\alpha} \to X_{\alpha}$ denote the good moduli space morphism and define the BPS sheaf by
\[
\mathcal{BPS}^{\alpha}_{X} \coloneqq {}^{\mathrm{p}} \cH^{0} (\mathbb{L}^{- {\dim F} /2} \otimes p_{\alpha, *} \varphi_{\mathcal{X_{\alpha}}}) \in \mathsf{MMHM}(X_{\alpha}).
\]
Here $\mathcal{X_{\alpha}}$ is equipped with the localized orientation introduced in \Cref{para-localized-orientation}.
As we will see later in \Cref{cor:small_decom_thm_smooth_stack} and \Cref{prop-support-lemma}, the BPS sheaf is the lowest possibly non-vanishing perverse cohomology.

We also define the $n$-th BPS sheaf on $\mathcal{U}$ and $\mathcal{X}$ by
\[
    \mathcal{BPS}_{U}^{(n)} \coloneqq {}^{\mathrm{p}} \cH^{0} (\mathbb{L}^{- {n} /2} \otimes p_{*} \mathcal{IC}_{\mathcal{U}}) \in \mathsf{MMHM}(U), \quad      \mathcal{BPS}^{(n)}_{X} \coloneqq {}^{\mathrm{p}} \cH^{0} (\mathbb{L}^{- {n} /2} \otimes p_{*} \mathcal{\varphi}_{\mathcal{X}})  \in \mathsf{MMHM}(X).
\]
When $\mathcal{U}$ and $\mathcal{X}$ are connected, we set
\[
    \mathcal{BPS}_{U} = \mathcal{BPS}_{U}^{(\crk \mathcal{U})}, \quad \mathcal{BPS}_{X} = \mathcal{BPS}_{X}^{(\crk \mathcal{X})}.
\]
Clearly, we have isomorphisms
\[
    \mathcal{BPS}_{U} = \mathcal{BPS}_{U}^{\alpha_{\mathrm{ce}}}, \quad \mathcal{BPS}_{U} = \mathcal{BPS}_{X}^{\alpha_{\mathrm{ce}}}
\]
where $\alpha_{\mathrm{ce}}$ denotes the maximal central face recalled in \Cref{para-central-rank}.

\subsection{Smallness of the good moduli morphism}

    Let $G$ be a reductive algebraic group and $V$ be a representation of $G$.
    Assume that the quotient stack $V / G$ is almost symmetric.
    By \Cref{lem-quotient-symmetric}, this is equivalent to the existence of an isomorphism of $G^{\circ}$-representations $V \cong V^{\vee}$, where $G^{\circ} \subset G$ denotes the neutral component.
    Let $p \colon V / G \to V \GIT G$ be the good moduli space morphism.
    We will prove the following statement:

\begin{proposition}\label{prop:virtual_small}
    There exists a stratification by connected locally closed subsets $\{ S_{\xi}\}_{\xi }$ of $V \GIT G$ with the following property:
    Set $\mathcal{S}_{\xi} \coloneqq p^{-1}(S_{\xi})$ and let $p_{\xi} \coloneqq \mathcal{S}_{\xi} \to S_{\xi}$ denote the restriction of $p$.
    Then the following conditions hold:
    \begin{enumerate}
        \item The map $p_{\xi}$ is an \'etale-locally trivial fibration. \label{item:trivial_stratification}
        \item For each $s_{\xi} \in S_{\xi}$, the inequality
        \begin{equation}\label{eq-virtual-small-symmrep}
            \dim S_{\xi} +  2 \cdot \dim p^{-1}(s_{\xi}) \leq \dim V / G  
        \end{equation}
        holds. Further, equality holds if and only if $S_{\xi}$ is an open stratum and $\mathcal{S}_{\xi}$ has finite stabilizers. \label{item:small_stratification}
    \end{enumerate}
\end{proposition}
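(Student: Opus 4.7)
The plan is to take $\{S_\xi\}$ to be the Luna stratification of the good moduli space $V \GIT G$: for each $G$-conjugacy class $\xi = (H)$ of reductive subgroups arising as stabilizers of closed $G$-orbits in $V$, the stratum $S_{(H)}$ consists of points whose closed $G$-orbit has stabilizer conjugate to $H$. Part \ref{item:trivial_stratification} will then follow from the Luna étale slice theorem \cite[Proposition~4.13]{_Alper_ALunaetaleslicetheoremforalgebraicstacks}: for any $s = p(v) \in S_{(H)}$ with $G_v = H$, there is an étale neighbourhood of $s$ in $V \GIT G$ of the form $N \GIT H$, where $N = V/(\mathfrak{g}/\mathfrak{h})$ is the slice representation, and under this chart the restriction of $p$ to the preimage of $S_{(H)}$ becomes the projection $[\mathcal{S}_N/H] \to N^H$ for a suitable $H$-saturated open $\mathcal{S}_N \subseteq N$, whose fiber is the null stack $[\mathcal{N}_{N,H}/H]$.

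For part \ref{item:small_stratification}, I would use the slice chart to evaluate all three dimensions in \eqref{eq-virtual-small-symmrep}: $\dim V/G = \dim N - \dim H$, and $\dim p^{-1}(s) = \dim \mathcal{N}_{N,H} - \dim H = \dim N - \dim(N \GIT H) - \dim H$ using the standard identity $\dim \mathcal{N}_{N,H} = \dim N - \dim(N \GIT H)$. The stratum dimension satisfies $\dim S_{(H)} = \dim V^H - \dim(N_G(H)/H) = \dim N^H$, since the intersection of the Luna slice with $V^H$ has codimension $\dim(N_G(H)/H)$ in $V^H$. Substituting, the required inequality reduces to the purely representation-theoretic estimate
\begin{equation*}
    \dim N^H + \dim N - 2 \dim(N \GIT H) \leq \dim H.
\end{equation*}
The weak symmetry assumption on $V/G$, combined with the orthogonality of the adjoint representation (Example \ref{ex-adjoint-invariant-metric}) and the two-out-of-three property (Lemma \ref{lemma-two-out-of-three}) applied to the decomposition $V \cong \mathfrak{g}/\mathfrak{h} \oplus N$ of $H^\circ$-representations, implies that $N$ is itself a symmetric $H^\circ$-representation.

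The main obstacle, which I expect to be the principal difficulty, is establishing this inequality purely from the symmetry of $N$. I would reduce to $H$ connected by noting that all terms are invariant under replacing $H$ by $H^\circ$, then fix a maximal torus $T \subset H$. For the sub-case $H = T$, the argument is direct: if $\chi_1, \ldots, \chi_n$ are the $T$-weights of $N$ counted with multiplicity and $r$ is the rank of their $\mathbb{Q}$-span, then symmetry pairs the non-zero weights via $\chi \mapsto -\chi$, forcing $\dim N - \dim N^T \geq 2r$, so $\dim N^T + \dim N - 2 \dim(N \GIT T) = (\dim N^T - \dim N) + 2r \leq 0 \leq \dim T$. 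To pass from $T$ to a general $H$, my plan is to combine the Hilbert--Mumford description $\mathcal{N}_{N,H} = H \cdot \mathcal{N}_{N,T}$ with the parabolic formula $\dim (H \cdot N^{\lambda,+}) = \dim N^{\lambda,+} + \dim(H/P_\lambda)$, exploiting the $\pm$-symmetry $\dim N^{\lambda,+} = \dim N^{\lambda,-}$ together with the analogous symmetry of $\mathfrak{h}$ under $\lambda$ to obtain $\dim H - \dim P_\lambda = \tfrac{1}{2}(\dim H - \dim H^\lambda)$; a careful optimization over $\lambda$ should then yield the required bound on $2\dim \mathcal{N}_{N,H}$.

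Finally, for the equality analysis, $\dim S_\xi + 2 \dim p^{-1}(s_\xi) = \dim V/G$ forces equality in the reduced inequality, which in the torus case requires both $\dim N - \dim N^T = 2r$ (so the non-zero $T$-weights split into exactly $r$ pairs $\{\chi, -\chi\}$ of multiplicity one) and $\dim H = 0$. The second condition exactly means that $\mathcal{S}_\xi$ has finite stabilizers; combined with the observation that an open Luna stratum corresponds to the minimal (generic) stabilizer among closed orbits, this matches the ``iff'' characterization in \ref{item:small_stratification}.
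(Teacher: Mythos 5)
Your choice of stratification and the treatment of part (i) match the paper's, and your final ``main obstacle'' paragraph contains the right ingredients (Hilbert--Mumford, attracting sets, the $\pm$-symmetry of weights). However, the reduction you build everything on contains a genuine error: the identity $\dim \mathcal{N}_{N,H} = \dim N - \dim(N \GIT H)$ is false. The quotient map $N \to N \GIT H$ is not equidimensional, and the nullcone (the fibre over the image of the origin) is in general strictly larger than the generic fibre. For example, take $H = \mathbb{G}_{\mathrm{m}}$ acting on $N = \mathbb{C}^4$ with weights $(1,1,-1,-1)$ --- an orthogonal, hence symmetric, representation. Then $N \GIT H$ is the $3$-dimensional quadric cone, so $\dim N - \dim(N\GIT H) = 1$, but the nullcone is the union of two coordinate $2$-planes and has dimension $2$. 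Consequently your formula $\dim p^{-1}(s) = \dim N - \dim(N\GIT H) - \dim H$ underestimates the fibre dimension, and the ``purely representation-theoretic estimate'' $\dim N^H + \dim N - 2\dim(N\GIT H) \leq \dim H$ that you reduce to is strictly weaker than the inequality you need to prove (in the example it reads $-2 \leq 1$, while the actual content of the proposition there is $2 \leq 3$). The torus computation you give therefore proves the wrong statement.

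The fix is to bound $\dim \mathcal{N}_{N,H}$ directly rather than computing it, which is what the paper does: by the Hilbert--Mumford criterion every point of the nullcone lies in some attractor $V^{\lambda}_{>0}$ for a one-parameter subgroup $\lambda$ of a fixed maximal torus, the maps $G \times^{P_\lambda} V^{\lambda}_{>0} \to V^{\mathrm{nilp}}$ take only finitely many isomorphism types as $\lambda$ ranges over the chambers of the cocharacter lattice, and joint surjectivity then gives $\dim V^{\mathrm{nilp}} \leq \dim V^{\lambda}_{>0} + \dim G - \dim P_\lambda$ for a single $\lambda$. Weak symmetry gives $2\dim V^{\lambda}_{>0} \leq \dim V$, and $2\dim P_\lambda \geq \dim G$ always holds, yielding $2\dim p^{-1}(0) \leq \dim V/G$ at the origin; the general stratum is then handled exactly as you propose, by passing to the slice representation and splitting off the trivial summand of dimension $\dim S_\xi$. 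Your equality analysis also needs to be rerouted through this argument: equality forces $V^{\lambda}_{>0} = 0$ for the relevant $\lambda$ and $\dim G = \dim P_\lambda = \dim L_\lambda$, which (after removing the trivial summand) forces the slice representation to vanish and the stabilizer to be finite, matching the stated characterization.
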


\begin{proof}
    The existence of a stratification of $V \GIT G$ with the condition \ref{item:trivial_stratification} is a general statement for an affine GIT quotient of a smooth variety proved in \cite[\S 3]{_Luna_Slicesetales}
    as a consequence of the \'etale slice theorem.

    We now prove \Cref{eq-virtual-small-symmrep}.
    We first prove a weaker inequality
    \begin{equation}\label{eq:dim_estimate_nilp_locus}
    2 \cdot \dim p^{-1}(0) \leq \dim V / G    
    \end{equation}
    where $0 \in V \GIT G$ is the image of the origin of $V$.
    Let $V^{\mathrm{nilp}} \subset V$ denote the inverse image of the map $V \to V \GIT G$ at $0$.
    By the fundamental theorem in GIT \cite[\S 2.1]{mumford1994geometric} (see \cite[Theorem 6.13]{hoskins_lecture} for the version that we will use),
    for each $x \in V^{\mathrm{nilp}}$, there exists a $1$-parameter subgroup $\lambda_x \colon \mathbb{G}_\mathrm{m} \to G$ such that 
    $\lim_{t \to 0} \lambda_{x}(t) \cdot x \to 0$.
    In particular, if we let $T \subset G$ be the maximal torus, the family of action maps
    \begin{equation}\label{eq:contracting_action_maps_jointly_surjective}
    \coprod_{\lambda \colon  \mathbb{G}_\mathrm{m} \to T} \Bigl\{G \times V^{\lambda}_{> 0} \to V^{\mathrm{nilp}} \Bigr\}    
    \end{equation}
    is jointly surjective. Let $P_{\lambda} \subset G$ be the parabolic subgroup associated with $\lambda$.
    Then the action maps \Cref{eq:contracting_action_maps_jointly_surjective} factor through maps
    \begin{equation}\label{eq:contracting_action_maps_jointly_surjective_parabolic}
        \coprod_{\lambda \colon  \mathbb{G}_\mathrm{m} \to T} \Bigl\{G \times^{P_{\lambda}} V^{\lambda}_{> 0} \to V^{\mathrm{nilp}} \Bigr\}.    
    \end{equation}
    Note that there exists a decomposition of the underlying set of the cocharacter lattice 
    \[
        \Hom(\mathbb{G}_\mathrm{m}, T) = K_1 \amalg \cdots \amalg K_l
    \] 
    such that the assignments $\lambda \mapsto V^{\lambda}_{> 0}$ and $\lambda \mapsto P_{\lambda}$ are constant over each factor.
    In particular, the surjectivity of the map \Cref{eq:contracting_action_maps_jointly_surjective_parabolic} implies the inequality
    \[
        \dim V^{\mathrm{nilp}} \leq \dim V^{\lambda}_{> 0} + \dim G - \dim P_{\lambda}
    \]
    for some cocharacter $\lambda$. Since $V$ is a symmetric $G^{\circ}$-representation,
    we obtain the inequality
    \[
        2 \cdot \dim V^{\lambda}_{> 0} \leq \dim V.
        \] 
  In particular, we have
    \[
    2 \cdot \dim p^{-1}(0) =  2 \cdot (\dim V^{\mathrm{nilp}} - \dim G) \leq 2 \cdot (\dim V^{\lambda}_{> 0} - \dim P_{\lambda}) \leq \dim V / G  
    \]
    as desired. By the proof, we see that equality holds if and only if $V$ is the $0$-dimensional vector space and $G$ is a finite group.

    We now prove that the inequality \Cref{eq:dim_estimate_nilp_locus} implies \ref{item:small_stratification}.
    Fix a stratification $\{ S_{\xi }\}_{\xi}$ of $V \GIT G$ satisfying \ref{item:trivial_stratification}, set $d_{\xi} \coloneqq \dim S_{\xi}$ and take a closed point $\tilde{s}_{\xi} \in \mathcal{S}_{\xi}$.
    Let $G_{\xi}$ be the stabilizer group of $\tilde{s}_{\xi}$ and set $V_{\xi} \coloneqq \mathrm{H}^0(\mathbb{T}_{V / G, \tilde{s}_{\xi}} )$.
    By Luna's \'etale slice theorem, we may find the following diagram:
    \[
    \begin{tikzcd}
        {V_{\xi} / G_{\xi}} \arrow{d}
        & {\mathcal{U}} \arrow[l, "\eta"']  \arrow{d} \arrow[r, "\iota"]
        & {V / G} \arrow{d} \\
        {V_{\xi} \GIT G_{\xi}}
        & {U} \arrow[l, "\bar{\eta}"'] \arrow[r, "\bar{\iota}"]
        & {V \GIT G}
    \end{tikzcd}
    \]
    where the two squares are Cartesian, the rightmost horizontal maps are open inclusions and the leftmost horizontal maps are \'etale,
    such that there exists a closed point $u \in \mathcal{U}$ with $\iota(u) \sim \tilde{s}_{\xi}$ and $\eta(u) \sim 0$.
    The condition \ref{item:trivial_stratification} implies a decomposition of $G_{\xi}$-representations
     $V_{\xi} = \mathbb{C}^{d_{\xi}} \oplus V_{\xi}'$ where $\mathbb{C}^{d_{\xi}}$ is the trivial representation.
     Then by applying the inequality \Cref{eq:dim_estimate_nilp_locus} for the stack $V_{\xi}'/G_{\xi}$,
     we obtain the desired statement.
\end{proof}

\begin{remark}
    Hennecart informed us that he has also obtained a similar result in \cite[Theorem 1.1]{Hennecart_note},  under the assumption that the map $p \colon V /G \to V \GIT G$ is generically quasi-finite on the target.
\end{remark}

\begin{para}

The property in Proposition \ref{prop:virtual_small} is nothing but a stacky version of smallness of a morphism (see e.g.~\cite[Remark 4.2.4]{deCataldo_Migliorini_decomp} in the case of morphisms of schemes).
The smallness of morphisms for stacks was first studied by \textcite[\S 5.1]{Meinhardt_Reineke_DT_vs_intersection_cohomology} where they proved a special case of Proposition \ref{prop:virtual_small} when $V / G$ is the moduli stack of representations of a quiver.
Similarly to the case of schemes, the smallness property implies a strong constraint for the summands of the decomposition theorem, as we will see below:
\end{para}

\begin{corollary}\label{cor:small_decom_thm_smooth_stack}
    Let $\mathcal{U}$ be an almost symmetric smooth algebraic stack having affine diagonal admitting a good moduli space $p \colon \mathcal{U} \to U$. 
    For a non-degenerate face $(F, \alpha) \in  \mathsf{Face}^{\mathrm{nd}}(\mathcal{U})$, we have the following properties:
    \begin{enumerate}
        \item For $i < \dim F$, we have ${}^{\mathrm{p}} \mathcal{H}^i(p_{\alpha, *} \mathcal{IC}_{\mathcal{U}_{\alpha}}) = 0$.
        \item  Assume that there exists a non-empty open substack of $\mathcal{U}_{\alpha}$ whose stabilizer groups at closed points contain the torus $\mathrm{B} \mathbb{G}_{\mathrm{m}}^{\dim F}$ as a subgroup of finite index.
              Then the natural map $\mathbb{Q}_{U_{\alpha}} \to p_{\alpha, *} \mathbb{Q}_{\mathcal{U}_{\alpha}}$ induces an isomorphism
              \[
                 \mathcal{IC}_{U_{\alpha}} \cong \mathcal{BPS}^{\alpha}_{U}.
              \]
              If not, $\mathcal{BPS}^{\alpha}_{U}$ is zero.
    \end{enumerate}
\end{corollary}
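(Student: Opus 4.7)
The plan is to combine the dimension bound of \Cref{prop:virtual_small} with a rigidification argument that separates the $\mathrm{B}\mathbb{G}_\mathrm{m}^{\dim F}$-gerbe contribution from the residual good moduli morphism. First I would use Alper's \'etale slice theorem together with the base-change property of $\Grad^F$ in \Cref{para-graded-filtered-etale-cover} to reduce, \'etale-locally on $U_\alpha$, to the case $\mathcal{U}_\alpha \simeq V/G$ with $V$ a smooth affine $G$-scheme and $G$ a reductive group. Since $(F,\alpha)$ is a face of $\mathcal{U}$, every closed point of $\mathcal{U}_\alpha$ has stabilizer containing a rank-$\dim F$ torus, and, possibly after a finite cover, one may arrange that $G$ itself contains a central subtorus $T_0 \simeq \mathbb{G}_\mathrm{m}^{\dim F}$ acting trivially on $V$. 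Weak symmetry of $\mathcal{U}$ transfers to $V/G$: the tangent $\mathbb{T}_{\mathcal{U}_\alpha}$ is the $F^\vee$-graded zero piece of $\mathrm{tot}_\alpha^* \mathbb{T}_\mathcal{U}$, and the pointwise symmetry of weights from \Cref{para-symmetric-weights-stacks} restricts to this piece.

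Next, the trivial $T_0$-action on $V$ produces a rigidification $\pi \colon V/G \to V/G^{\mathrm{rig}}$ with $G^{\mathrm{rig}} = G/T_0$, which is a $\mathrm{B} T_0$-gerbe, and factors $p_\alpha$ as $p_\alpha = q \circ \pi$ for the induced good moduli morphism $q \colon V/G^{\mathrm{rig}} \to V \git G = U_\alpha$. A direct gerbe computation gives
\[
\pi_*\mathcal{IC}_{V/G} \simeq \mathbb{L}^{\dim F/2} \otimes \mathrm{H}^*(\mathrm{B} T_0) \otimes \mathcal{IC}_{V/G^{\mathrm{rig}}},
\]
and the stack $V/G^{\mathrm{rig}}$ inherits the weak symmetry of $V/G$, so \Cref{prop:virtual_small} applies to it with the analogous stratification bound. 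Consequently
\[
p_{\alpha, *} \mathcal{IC}_{\mathcal{U}_\alpha} \simeq \mathbb{L}^{\dim F/2} \otimes \mathrm{H}^*(\mathrm{B} T_0) \otimes q_* \mathcal{IC}_{V/G^{\mathrm{rig}}},
\]
and by Kinjo's purity theorem \Cref{thm-decom-good-moduli} the complex $q_* \mathcal{IC}_{V/G^{\mathrm{rig}}}$ is pure, hence a direct sum of shifted Tate-twisted IC-sheaves on closures of strata.

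For part~(i), I would compute stalks at closed points of $U_\alpha$ via proper base change: over a stratum $S_\xi$ with generic closed-fiber stabilizer $G_\xi \supseteq T_0$, the stalk of $p_{\alpha,*}\mathcal{IC}_{\mathcal{U}_\alpha}$ is (up to Tate) $\mathbb{L}^{-\dim V/G / 2} \otimes \mathrm{H}^*(\mathrm{B} G_\xi)$. Matching cohomological degrees to the perverse-sheaf normalization of $\mathcal{IC}_{\overline{S_\xi}}$ shows that every IC-summand supported on $\overline{S_\xi}$ can appear only in perverse degree $\geq \dim F$; the required inequality ultimately reduces to $\dim G_\xi \geq \dim F$ combined with the codimension bound of \Cref{prop:virtual_small}. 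For part~(ii) I would distinguish the two cases. If the assumption holds, the generic stabilizer of $V/G^{\mathrm{rig}}$ is finite, the equality clause of \Cref{prop:virtual_small} forces~$q$ to be small in the classical sense, so $q_*\mathcal{IC}_{V/G^{\mathrm{rig}}} \simeq \mathcal{IC}_{U_\alpha}$, and combining with the displayed formula yields $\mathcal{BPS}^\alpha_U \simeq \mathcal{IC}_{U_\alpha}$ with the isomorphism induced by the unit map. If the assumption fails, the generic stabilizer of $V/G^{\mathrm{rig}}$ has positive dimension $d' > 0$, and the same stalk analysis shows that $\mathcal{IC}_{U_\alpha}$ appears in $q_*\mathcal{IC}_{V/G^{\mathrm{rig}}}$ only in perverse degrees $\geq d'$, whence $\mathcal{BPS}^\alpha_U = 0$ after the $\mathbb{L}^{-\dim F/2}$-twist; contributions from deeper strata are treated analogously using the larger stabilizer dimensions there.

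The main obstacle is making the rigidification step rigorous in the full generality of the statement: the subtorus $T_0 \subset G$ and the $\mathrm{B} T_0$-gerbe structure are naturally defined only \'etale-locally on $U_\alpha$, and when $G$ is disconnected one only has centrality of $T_0$ inside the neutral component. One must therefore perform the perverse-degree computation on an \'etale cover and then descend the identification $\mathcal{BPS}^\alpha_U \simeq \mathcal{IC}_{U_\alpha}$ using semisimplicity of pure monodromic mixed Hodge modules on the good moduli space, together with a careful comparison of the resulting gerbes.
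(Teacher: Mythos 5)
Your overall strategy coincides with the paper's: reduce \'etale-locally on $U_\alpha$ to a quotient $V/G$ via the slice theorem, split off the contribution of the rank-$\dim F$ torus so that $p_{\alpha,*}\mathcal{IC}_{\mathcal{U}_\alpha}$ becomes $\mathbb{L}^{\dim F/2}\otimes\mathrm{H}^*(\mathrm{B}\mathbb{G}_\mathrm{m}^{\dim F})\otimes q_*\mathcal{IC}$ for the residual good moduli morphism $q$, and then combine \Cref{prop:virtual_small} with the purity of \Cref{thm-decom-good-moduli}. The paper realizes the splitting exactly as you propose, by exhibiting classes in $\mathrm{H}^2(V^F/L_F)$ restricting to generators of $\mathrm{H}^2(\mathrm{B}T_F)$ (using that $T_F\to L_F^{\mathrm{ab}}$ has finite kernel), so your worry about globalizing the gerbe structure is moot: everything is done \'etale-locally and no descent of the rigidification is needed.

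The one genuine gap is in how you propose to extract part~(i). You want to ``compute stalks at closed points of $U_\alpha$ via proper base change''; but $p_\alpha$ (and $q$) are good moduli morphisms, not proper, so proper base change is unavailable (the stalk computation must instead go through the contraction lemma), and, more importantly, stalks ($\iota_\xi^*$) control the vanishing of the \emph{high} perverse cohomologies, whereas part~(i) asserts vanishing of ${}^{\mathrm{p}}\mathcal{H}^i$ for $i<\dim F$, i.e.\ a \emph{lower} bound on perverse amplitude. This is governed by the costalks $\iota_\xi^!$, and one cannot pass from the stalk estimate to the costalk estimate by Verdier duality here, because $q_*\mathcal{IC}$ is not self-dual ($p_{\alpha,*}\mathcal{IC}_{\mathcal{U}_\alpha}$ has unbounded positive perverse amplitude coming from $\mathrm{H}^*(\mathrm{B}\mathbb{G}_\mathrm{m}^{\dim F})$). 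The paper's proof instead bounds $\iota_\xi^!\mathcal{IC}_{V^F/\bar L_F}$ from below in cohomological degree by $\dim V^F/\bar L_F - 2\dim \bar p_F^{-1}(s_\xi)$ and feeds in the inequality of \Cref{prop:virtual_small} to get $\mathrm{H}^i(\iota_\xi^!\,\cdot\,)=0$ for $i<d_\xi$, with strictness off the open stratum; this single costalk estimate yields both the perverse-degree vanishing in~(i) and, together with purity and semisimplicity, the exclusion of IC-summands with smaller support needed for~(ii). Your fallback of identifying summands from stalks after invoking purity can be made to work only by a downward induction over strata (subtracting the stalk contributions of IC-summands on larger strata), which you would need to carry out explicitly; as written, the degree-matching argument does not establish the claimed lower bound.
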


\begin{proof}
    Since the statement is local, by using \cite[Theorem 4.12]{_Alper_ALunaetaleslicetheoremforalgebraicstacks}, one may assume $\mathcal{X} = V / G$,
    where $G$ is a reductive group and $V$ is a $G$-representation.
    Let $L_F \subset G$ be the Levi subgroup corresponding to $F$, $T_F \subset L_F$ be the torus whose Lie algebra is the image of the natural inclusion of $F \otimes_{\mathbb{Q}} \mathbb{C}$ and set $V^{F} \coloneqq V^{T_F}$.
    Set $\bar{L}_F \coloneqq L_{F} /T_{F}$ and let $\pi \colon V^{F} / L_{F} \to V^{F} / \bar{L}_F$ be the canonical projection.
    We claim that there is an isomorphism
    \[
        \pi_* \mathcal{IC}_{V^{F}/L_F} \cong   \mathcal{IC}_{V^{F}/\bar{L}_F} \otimes \mathrm{H}^*(\mathrm{B} T_F) \otimes \mathbb{L}^{\dim F/2}.
   \]
   To see this, since the composite $T_F \to L_{F} \to L_{F}^{\mathrm{ab}}$ has finite kernel,
   we may take cohomology classes $\alpha_1, \ldots, \alpha_{\dim F} \in \mathrm{H}^2(V^{F}/L_{F})$ such that the restriction to $\mathrm{B} T_F$ generates the rational cohomology $\mathrm{H}^2(\mathrm{B} T_F)$.
   Using these cohomology classes, one can construct a natural morphism
   \[
       \mathcal{IC}_{V^{F}/\bar{L}_{F}} \otimes \mathrm{H}^*(\mathrm{B} T_F) \otimes \mathbb{L}^{\dim F/2} \to \pi_* \mathcal{IC}_{V^{F}/L_{F}}.
   \]
   One sees that this morphism is an isomorphism using the following Cartesian diagram:
   \[
   \begin{tikzcd}
       {V \times \mathrm{B} T_F}
       \arrow{r} \arrow{d}
       & {V}
       \arrow{d} \\
       {V/L_{F}}
       \arrow{r}
       & {V/\bar{L}_{F}.}
   \end{tikzcd}    
   \]
   Now let $\bar{p}_{F} \colon V^{F} / \bar{L}_{F} \to V^{F} \GIT \bar{L}_{F}$ be the good moduli space morphism.
   By the above discussion, it is enough to prove the following isomorphism:
   \[
    {}^{\mathrm{p}} \mathcal{H}^0(\bar{p}_{F, *} \mathcal{IC}_{V^{F} / \bar{L}_{F}} ) \cong 
    \begin{cases} 
        \mathcal{IC}_{V^{F} \GIT L_{F}}  & \text{if $\bar{p}_F$ is generically quasi-finite,}\\ 
         0 & \text{otherwise,}    \end{cases}
   \]
   and the vanishing ${}^{\mathrm{p}} \mathcal{H}^i(\bar{p}_{F, *} \mathcal{IC}_{V^{F} / \bar{L}_{F}} ) = 0 $ for $i < 0$.
   Here, a generically quasi-finite morphism is defined as a morphism that is quasi-finite generically on the target.

   Take a stratification $\{ S_{\xi} \}_{\xi}$ of $V_{F} \GIT \bar{L}_F$ as in Proposition \ref{prop:virtual_small}, set $d_{\xi} \coloneqq \dim S_{\xi}$ and take $s_{\xi} \in S_{\xi}$.
   Let $\iota_{\xi} \colon {\bar{p}_F}^{-1}(s_{\xi}) \hookrightarrow V^{F}/\bar{L}_F$ be the natural inclusion.
   Using \cite[Theorem 3.2.9]{Dimca_sheaves}, we see that the complex $\iota_{\xi}^! \mathcal{IC}_{V^{F}/\bar{L}_F}$ is concentrated in cohomological degrees
   \[
       \left[\dim V^{F}/\bar{L}_{F}  - 2 \cdot \dim {\bar{p}_F}^{-1}(s_{\xi}) , \infty\right)
   \]
   with respect to the ordinary t-structure.
   By Proposition \ref{prop:virtual_small} \ref{item:small_stratification}, we obtain the vanishing
   \begin{equation}\label{eq:vanishing_costalk}
   \mathrm{H}^i ({\bar{p}_F}^{-1}(s_{\xi}), \iota_{\xi}^! \mathcal{IC}_{V^{F}/\bar{L}_F} )  = 0
   \end{equation}
   for $i < d_{\xi}$, and the same vanishing for $i = d_{\xi}$ unless $S_{\xi}$ is open and $\mathcal{S}_{\xi}$ has finite stabilizers.
   In particular, we deduce the statement on the vanishing of the perverse cohomology.
   
   Assume now that the open stratum $\mathcal{S}_{\xi}$ has finite stabilizers.
   Then we clearly have an isomorphism
   \[
       {}^{\mathrm{p}} \cH^0(\bar{p}_{F, *} \mathcal{IC}_{V^{F}/\bar{L}_F}) |_{S_{\xi}} \cong \mathcal{IC}_{V ^{F}\GIT \bar{L}_F} |_{S_{\xi}}.   
   \]
   By \Cref{thm-decom-good-moduli},  we see that the complex ${}^{\mathrm{p}} \cH^0(\bar{p}_{F, *}  \mathcal{IC}_{V^{F}/\bar{L}_F})$ is pure.
   In particular, ${}^{\mathrm{p}} \cH^0(\bar{p}_{F, *}  \mathcal{IC}_{V^{F}/\bar{L}_F})$ is a direct sum of intersection complexes associated with variation of Hodge structures on subvarieties.
   However, the vanishing \Cref{eq:vanishing_costalk} implies that the complex ${}^{\mathrm{p}} \cH^0(\bar{p}_{F, *}  \mathcal{IC}_{V^{F}/\bar{L}_F})$ cannot have a direct summand with smaller support, hence we conclude.
\end{proof}

\begin{corollary}\label{cor-non-special-vanishing}
    We adopt the notation from \Cref{cor:small_decom_thm_smooth_stack}. Then for a non-degenerate face $(F, \alpha) \in  \mathsf{Face}^{\mathrm{nd}}(\mathcal{U})$, if $\alpha$ is not special (see \Cref{para-special-faces}), we have $\mathcal{BPS}^{\alpha}_{U} = 0$.
\end{corollary}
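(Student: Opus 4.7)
The plan is to deduce this directly from \Cref{cor:small_decom_thm_smooth_stack}(ii): since the non-zero alternative of that dichotomy only holds when some non-empty open substack of $\mathcal{U}_\alpha$ has closed-point stabilizers containing $\mathbb{G}_\mathrm{m}^{\dim F}$ (embedded via $\alpha$) as a finite-index subgroup, it suffices to rule out this finite-index condition at every closed point of $\mathcal{U}_\alpha$.

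The main input will be the special face closure recalled in \Cref{para-special-faces}. Because $\alpha$ is non-degenerate but not special, the unit morphism $\alpha \to \alpha^{\mathrm{sp}}$ in $\mathsf{Face}^{\mathrm{nd}}(\mathcal{U})$ is not an isomorphism, yet induces an equivalence $\mathcal{U}_{\alpha^{\mathrm{sp}}} \xrightarrow{\sim} \mathcal{U}_\alpha$. First I would observe that, since both $\alpha$ and $\alpha^{\mathrm{sp}}$ are non-degenerate, the underlying $\mathbb{Q}$-linear map $F \to F^{\mathrm{sp}}$ is injective, and because $\alpha \to \alpha^{\mathrm{sp}}$ is not an isomorphism it is a strict inclusion; hence $\dim F < \dim F^{\mathrm{sp}}$.

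Next I would translate this strict inequality into stabilizer data. A closed point $x \in \mathcal{U}_\alpha$ corresponds, after choosing an integral lift of $\alpha$, to a pair $(y, \lambda)$ consisting of a closed point $y \in \mathcal{U}$ together with a cocharacter $\lambda \colon \mathbb{G}_\mathrm{m}^{\dim F} \to \Aut(y)$; non-degeneracy of $\alpha$ forces $\lambda$ to have finite kernel, so its image $T_\alpha \subset \Aut(y)$ is a subtorus of dimension $\dim F$. Via the equivalence $\mathcal{U}_{\alpha^{\mathrm{sp}}} \simeq \mathcal{U}_\alpha$, the same $x$ is witnessed by a compatible cocharacter $\lambda^{\mathrm{sp}} \colon \mathbb{G}_\mathrm{m}^{\dim F^{\mathrm{sp}}} \to \Aut(y)$ whose image $T_{\alpha^{\mathrm{sp}}}$ is a subtorus of strictly larger dimension $\dim F^{\mathrm{sp}}$ containing $T_\alpha$. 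The stabilizer of $x$ in $\mathcal{U}_\alpha$ is the centralizer $Z_{\Aut(y)}(T_\alpha)$, and since tori are abelian this centralizer already contains the strictly larger torus $T_{\alpha^{\mathrm{sp}}}$; hence $T_\alpha$ has infinite index in the stabilizer of every closed point of $\mathcal{U}_\alpha$, and \Cref{cor:small_decom_thm_smooth_stack}(ii) forces $\mathcal{BPS}^{\alpha}_{U} = 0$.

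The only step demanding any care is the compatibility of the integral lifts of $\alpha$ and $\alpha^{\mathrm{sp}}$, so that $T_\alpha$ genuinely sits inside $T_{\alpha^{\mathrm{sp}}}$ rather than merely mapping to it with finite kernel; this is automatic from the adjunction defining the special face closure together with the equivalence $\mathcal{U}_{\alpha^{\mathrm{sp}}} \simeq \mathcal{U}_\alpha$, and should pose no genuine obstacle.
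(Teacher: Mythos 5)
Your proof is correct and follows exactly the route the paper intends: the corollary is stated without proof precisely because it is read off from \Cref{cor:small_decom_thm_smooth_stack}(ii), using that a non-special non-degenerate face has special closure of strictly larger dimension with $\mathcal{U}_{\alpha^{\mathrm{sp}}}\simeq\mathcal{U}_\alpha$, so every stabilizer contains a torus strictly larger than $T_\alpha$ and the finite-index condition fails everywhere. Your write-up just supplies the details the authors left implicit.
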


\begin{para}
    We note that \textcite[Theorem 1.1]{Meinhardt_Reineke_DT_vs_intersection_cohomology} proved a statement analogous to Corollary \ref{cor:small_decom_thm_smooth_stack} for certain moduli stacks of quiver representations.
    However, their definition of the BPS sheaf (which is defined in the Grothendieck ring of monodromic mixed Hodge modules, and which they call the Donaldson--Thomas function) is different from ours. Therefore Corollary \ref{cor:small_decom_thm_smooth_stack} itself is not a generalization of their theorem.
    However, we will prove the cohomological integrality theorem with our definition of the BPS sheaf under the orthogonality assumption later in \Cref{thm-cohint-smooth}, which eventually shows that Meinhardt and Reineke's definition of the BPS sheaf is equivalent to ours.  Hence we reprove their result.
\end{para}

\begin{corollary}\label{cor-BPS-split-pure}
    We adopt the notation from Corollary \ref{cor:small_decom_thm_smooth_stack}. Let $f \colon \mathcal{U} \to \mathbb{A}^1$ be a regular function and set $\mathcal{X} = \mathrm{Crit}(f)$ and equip it with the standard $(-1)$-shifted symplectic structure and the standard orientation.
    Let $p_{\mathcal{X}, \alpha} \colon \mathcal{X}_{\alpha} \to X_{\alpha}$ be the good moduli space morphism.
    Set $\mathcal{BPS}_{X}^{\alpha}  \coloneqq \bigoplus_{(F, \tilde{\alpha}) \in \mathsf{Face}(\mathcal{X})} \mathcal{BPS}_{X}^{\tilde{\alpha}} $ where the direct sum runs over all faces lifting the face $(F, \alpha)$ of $\mathcal{U}$.
    Then the following statements hold:
    \begin{enumerate}
        \item The BPS sheaf is Verdier self-dual: $\mathbb{D} \mathcal{BPS}_{X}^{\alpha} \cong \mathcal{BPS}_{X}^{\alpha}$.
        \item The natural map $\mathbb{L}^{\dim F/2} \otimes \mathcal{BPS}_X^{\alpha} \to p_{\mathcal{X}, \alpha, *}\varphi_{\mathcal{X}, \alpha}$ is a split injection.
    \end{enumerate}
\end{corollary}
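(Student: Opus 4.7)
Both claims follow from a single purity statement: for each lift $(F, \tilde\alpha) \in \mathsf{Face}(\mathcal{X})$ of $(F, \alpha)$, the complex $\mathbb{L}^{-\dim F/2} \otimes p_{\mathcal{X},\tilde\alpha,*}\varphi_{\mathcal{X}_{\tilde\alpha}}$ on $X_{\tilde\alpha}$ is pure of weight $0$. Granting this, Saito's semisimplicity theorem for pure Hodge modules yields a canonical splitting
\[
\mathbb{L}^{-\dim F/2} \otimes p_{\mathcal{X},\tilde\alpha,*}\varphi_{\mathcal{X}_{\tilde\alpha}} \;\cong\; \bigoplus_{i \in \mathbb{Z}} {}^{\mathrm{p}}\mathcal{H}^i\bigl(\mathbb{L}^{-\dim F/2} \otimes p_{\mathcal{X},\tilde\alpha,*}\varphi_{\mathcal{X}_{\tilde\alpha}}\bigr)[-i],
\]
whose $i=0$ summand is by definition $\mathcal{BPS}_X^{\tilde\alpha}$. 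Tensoring by $\mathbb{L}^{\dim F/2}$ and summing over lifts $\tilde\alpha$ gives the split injection of part (ii).

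For part (i), each $\mathcal{BPS}_X^{\tilde\alpha}$ inherits purity of weight $0$, and hence $\mathbb{D}\mathcal{BPS}_X^{\tilde\alpha}$ is again pure of weight $0$. To produce a canonical isomorphism rather than merely abstract identification, I would combine the Verdier self-duality of $\varphi_{\mathcal{X}_{\tilde\alpha}}$ (obtained from the self-duality of $\mathcal{IC}_{\mathcal{U}_{\tilde\alpha}}$ combined with \Cref{eq-van-self-dual}, via the local identification \Cref{eq-DT-Hodge-standard}) with the above decomposition: Verdier-dualizing the splitting identifies $\mathbb{D}\mathcal{BPS}_X^{\tilde\alpha}$ with the perverse cohomology of the dual complex, and the resulting self-duality of the entire pushforward matches up the degree-zero summands canonically.

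To prove the purity of $\varphi_{\mathcal{X}_{\tilde\alpha}}$ on $\mathcal{X}_{\tilde\alpha}$, I would reduce by \'etale descent (\Cref{eq-dt-etale-cover}) to the case of a critical chart equipped with the standard orientation: by \Cref{para-any-orientation-is-locally-standard}, the localized orientation $\tilde\alpha^{\star} o_{\mathcal{X}}^{\mathrm{sta}}$ is \'etale-locally on $X_{\tilde\alpha}$ equivalent to the standard orientation of some critical chart (after possibly adjoining a non-degenerate quadratic function to absorb a sign representation). Then \Cref{eq-DT-Hodge-standard} identifies $\varphi_{\mathcal{X}_{\tilde\alpha}}$ \'etale-locally with $\varphi_{f_{\tilde\alpha}}(\mathcal{IC}_{\mathcal{U}_{\tilde\alpha}})$. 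Since $\mathcal{IC}_{\mathcal{U}_{\tilde\alpha}}$ is pure of weight $0$ by definition and Saito's theorem guarantees that the vanishing cycle of a pure Hodge module on a smooth stack is a pure monodromic mixed Hodge module of the same weight, this establishes \'etale-local purity; descent completes the argument on $\mathcal{X}_{\tilde\alpha}$. Applying \Cref{thm-decom-good-moduli} then propagates purity along the good moduli space morphism $p_{\mathcal{X},\tilde\alpha}$.

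The main obstacle I anticipate is tracking the weight normalization carefully enough to confirm that $\mathbb{L}^{-\dim F/2}$ is precisely the twist required to land in weight $0$ rather than some other offset. This hinges on the observation that the canonical action of $\mathbb{G}_\mathrm{m}^{\dim F}$ on $\mathcal{U}_{\tilde\alpha}$ makes $\mathcal{U}_{\tilde\alpha} \to \mathcal{U}_{\tilde\alpha}/\mathrm{B}\mathbb{G}_\mathrm{m}^{\dim F}$ a gerbe contributing exactly a Lefschetz factor of $\mathbb{L}^{\dim F/2}$ to the weight count, so the $\mathbb{L}^{-\dim F/2}$ in the definition of $\mathcal{BPS}_X^{\tilde\alpha}$ compensates and delivers weight $0$ --- the same normalization phenomenon underlying the smooth case analysed in \Cref{cor:small_decom_thm_smooth_stack}.
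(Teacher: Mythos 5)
Your argument has a genuine gap at its foundation: the claim that ``Saito's theorem guarantees that the vanishing cycle of a pure Hodge module on a smooth stack is a pure monodromic mixed Hodge module of the same weight'' is false, and with it the purity of $\mathbb{L}^{-\dim F/2}\otimes p_{\mathcal{X},\tilde{\alpha},*}\varphi_{\mathcal{X}_{\tilde{\alpha}}}$ on which both of your parts rest. Saito's theory only gives one-sided weight estimates for $\psi_f$ and $\varphi_f$; the vanishing cycle complex of an intersection complex along a general function carries a nontrivial weight filtration already for isolated hypersurface singularities, and the purity of $p_*\varphi_{\mathcal{X}}$ is precisely the kind of statement that is a deep conjecture in this subject (cf.\ the discussion of Halpern-Leistner's purity conjecture in the introduction). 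If your purity claim held, $p_{\mathcal{X},\alpha,*}\varphi_{\mathcal{X},\alpha}$ would be semisimple and most of the BPS-sheaf machinery of this section would be unnecessary. Consequently your derivation of the split injection in (ii) from semisimplicity of the pushforward, and your identification of $\mathbb{D}\mathcal{BPS}_X^{\tilde{\alpha}}$ via the decomposition of the dual complex, both collapse.

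The correct route --- the one the paper takes --- uses purity only where it is actually available, namely on the smooth stack: by \Cref{thm-decom-good-moduli} the complex $p_{\alpha,*}\mathcal{IC}_{\mathcal{U}_{\alpha}}$ is pure, so $\mathbb{L}^{\dim F/2}\otimes\mathcal{BPS}_U^{\alpha}\to p_{\alpha,*}\mathcal{IC}_{\mathcal{U}_{\alpha}}$ is a split injection. One then applies the functor $\varphi_{\bar{f}_{\alpha}}$ to this split injection (functors preserve split injections, so purity of the target is irrelevant) and uses \Cref{eq-vanishing-good-moduli} together with the global identification $\bigoplus_{\tilde{\alpha}}\varphi_{\mathcal{X}_{\tilde{\alpha}}}\cong\varphi_{f_{\alpha}}(\mathcal{IC}_{\mathcal{U}_{\alpha}})$ coming from \Cref{eq-DT-Hodge-standard} and \Cref{eq-localize-standard-orientaion} (no \'etale-local reduction via \Cref{para-any-orientation-is-locally-standard} is needed here, since the orientation is standard by hypothesis) to recognize the result as the map in (ii). The same identifications, combined with \Cref{cor:small_decom_thm_smooth_stack} and the perverse t-exactness of $\varphi_{\bar{f}_{\alpha}}$, show that $\mathcal{BPS}_X^{\alpha}$ is either $0$ or $\varphi_{\bar{f}_{\alpha}}(\mathcal{IC}_{U_{\alpha}})$, and (i) then follows from the commutation of the vanishing cycle functor with Verdier duality \Cref{eq-van-self-dual} applied to the self-dual object $\mathcal{IC}_{U_{\alpha}}$, not from any purity of $p_{\mathcal{X},\alpha,*}\varphi_{\mathcal{X},\alpha}$.
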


\begin{proof}
    Set $f_{\alpha} \coloneqq f \circ \mathrm{tot}_{\alpha}$ and let $\bar{f}_{\alpha} \colon U_{\alpha} \to \mathbb{A}^1$ be the induced morphism.
    Then it follows from \Cref{eq-localize-standard-orientaion} and \Cref{eq-DT-Hodge-standard} that $\bigoplus_{(F, \tilde{\alpha})} \varphi_{\mathcal{X}_{\tilde{\alpha}}}$ is isomorphic to $\varphi_{f_{\alpha}}(\mathcal{IC}_{\mathcal{U}_{\alpha}})$, where the direct sum runs over faces lifting $\alpha$.
    Therefore by combining Corollary \ref{cor:small_decom_thm_smooth_stack} and \Cref{eq-vanishing-good-moduli}, we see that $\mathcal{BPS}_X^{\alpha}$ is either zero or isomorphic to $\varphi_{\bar{f}_{\alpha}}(\mathcal{IC}_{U_{\alpha}})$.
    Then the first statement follows from the Verdier self-duality of the vanishing cycle functor \Cref{eq-van-self-dual}.
    The latter statement follows from the fact that the natural map
    \[
        \mathbb{L}^{\dim F/2} \otimes \mathcal{BPS}_U^{\alpha} \to p_{ \alpha, *}\mathcal{IC}_{\mathcal{U}, \alpha} 
    \]
    is a split injection, which is a consequence of the purity of $p_{\alpha, *} \mathcal{IC}_{\mathcal{U}_{\alpha}}$ explained in \Cref{thm-decom-good-moduli}.
\end{proof}

\begin{para}
As a consequence of Corollary \ref{cor:small_decom_thm_smooth_stack}, we can prove a generalization of the support lemma \cite[Lemma 4.1]{davison_integrality_preprojective} established in [loc. cit.] for particular moduli stacks of Jacobi algebra representations.   
For this, we will introduce the notion of the cotangent rank.
Let $\mathcal{X}$ be a  derived algebraic stack locally finitely presented over $\mathbb{C}$ and $x \in \mathcal{X}$ be a closed point.
Let $G_x$ be the stabilizer group at $x$.
Then the cotangent rank 
\[
    \mathrm{cotrk}_{x}(\mathcal{X}) \in \mathbb{Z}_{\geq 0}
\] 
is the dimension of the maximal subtorus $T \subset G_x$ such that $T$ acts trivially on the vector spaces $\mathrm{H}^{-1}(\mathbb{T}_{\mathcal{X}, x})$ and $\mathrm{H}^{0}(\mathbb{T}_{\mathcal{X}, x})$.
\end{para}

\begin{proposition}\label{prop-support-lemma}
Let $\mathcal{X}$ be an almost symmetric oriented $(-1)$-shifted symplectic stack having affine diagonal admitting a good moduli space $p \colon \mathcal{X} \to X$.
Let $(F, \alpha)  \in \mathsf{Face}^\mathrm{nd} (\mathcal{X})$ be a non-degenerate face.
Then for each $i < \dim F$, we have ${}^{\mathrm{p}} \mathcal{H}^{i}(p_{\alpha, *} \varphi_{\mathcal{X}_{\alpha}}) = 0$.
Furthermore, for $x \in \mathcal{X}_{\alpha}$ a closed point satisfying $\mathrm{cotrk}_{x}(\mathcal{X}_{\alpha}) > \dim F$,
we have $\mathcal{BPS}^{\alpha}_{X} |_{p_{\alpha}(x)} = 0$.
\end{proposition}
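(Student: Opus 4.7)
The plan is to reduce the statement for oriented $(-1)$-shifted symplectic stacks to the smooth case established in \Cref{cor:small_decom_thm_smooth_stack}, by applying the Darboux theorem (\Cref{thm-Darboux}) to present $\mathcal{X}_\alpha$ \'etale-locally as a critical locus, and then transferring the perverse-cohomological vanishing through the vanishing cycle functor.

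First I would observe that $\mathcal{X}_\alpha$ is itself an oriented $(-1)$-shifted symplectic stack (by \Cref{para-localize-shifted-symplectic-structure} and \Cref{para-localize-orientation-is-equivariant}) and weakly symmetric: the tangent $\mathrm{T}_{\mathcal{X}_\alpha, x}$ identifies with the $\alpha(T_F)$-invariants of $\mathrm{T}_{\mathcal{X}, x}$, and the weight symmetry $\dim V_\gamma = \dim V_{-\gamma}$ characterizing weak symmetricity (see \Cref{para-symmetry-weights}) passes to the $\alpha(T_F)$-invariant subrepresentation. Applying \Cref{thm-Darboux} to $\mathcal{X}_\alpha$, combined with the discussion in \Cref{para-any-orientation-is-locally-standard}, gives \'etale-locally on $X_\alpha$ an isomorphism $\mathcal{X}_\alpha \simeq \mathrm{Crit}(f \colon \mathcal{U} \to \mathbb{A}^1)$ of oriented $(-1)$-shifted symplectic stacks with $\mathcal{U}$ smooth and the orientation the standard one. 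The chart $\mathcal{U}$ can be chosen weakly symmetric: at the central point, the Luna slice identifies $\mathrm{T}_{\mathcal{U}, x}$ with $\mathrm{T}_{\mathcal{X}_\alpha, x} \oplus M$, where $M$ carries the non-degenerate part of $\mathrm{Hess}_f$ (hence is orthogonal), so $\mathrm{T}_{\mathcal{U}, x}$ is weakly symmetric by \Cref{lemma-two-out-of-three}, and then $\mathcal{U}$ is weakly symmetric by \Cref{lem-quotient-symmetric}. Since every closed point of $\mathcal{X}_\alpha = \mathrm{Map}(\mathrm{B} \mathbb{G}_\mathrm{m}^{\dim F}, \mathcal{X})$ has $\alpha(T_F)$ as a central subtorus of its stabilizer, the chart inherits a face $(F, \beta) \in \mathsf{Face}(\mathcal{U})$ with $\mathcal{U}_\beta \simeq \mathcal{U}$.

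Combining \Cref{eq-DT-Hodge-standard} with \Cref{eq-vanishing-good-moduli} then yields an isomorphism
\[
p_{\alpha, *}\,\varphi_{\mathcal{X}_\alpha} \simeq \varphi_{\bar f}\bigl(p_{\mathcal{U}, *}\,\mathcal{IC}_{\mathcal{U}}\bigr),
\]
where $\bar f$ is the induced function on the good moduli space of $\mathcal{U}$. Since the vanishing cycle functor is perverse $t$-exact up to a fixed shift, it commutes with taking perverse cohomology; hence the vanishing ${}^{\mathrm{p}}\mathcal{H}^i(p_{\mathcal{U}, *}\,\mathcal{IC}_{\mathcal{U}}) = 0$ for $i < \dim F$ provided by \Cref{cor:small_decom_thm_smooth_stack}(i) applied to $(\mathcal{U}, \beta)$ transfers to the analogous vanishing for $p_{\alpha, *}\,\varphi_{\mathcal{X}_\alpha}$, giving the first assertion.

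For the second assertion, the hypothesis $\mathrm{cotrk}_x(\mathcal{X}_\alpha) > \dim F$ supplies a subtorus $T' \subset Z_{G_x}(\alpha(T_F))$ of dimension strictly larger than $\dim F$ that is central (as it acts trivially on $\mathrm{H}^{-1}(\mathbb{T}_{\mathcal{X}_\alpha, x}) = \mathrm{Lie}(Z_{G_x}(\alpha(T_F)))$), contains $\alpha(T_F)$, and acts trivially on $\mathrm{T}_{\mathcal{X}_\alpha, x}$. A standard Luna-slice argument shows that a torus in the stabilizer of a fixed point acting trivially on the tangent acts trivially on an open neighbourhood, so $T'$ acts trivially on an open neighbourhood of $x$ in $\mathcal{X}_\alpha$. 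This yields a non-degenerate face $(F', \alpha') \in \mathsf{Face}^{\mathrm{nd}}(\mathcal{X})$ refining $\alpha$, with $\dim F' > \dim F$ and a local isomorphism $\mathcal{X}_{\alpha'} \simeq \mathcal{X}_\alpha$ around $x$. Applying the first assertion to $\alpha'$ locally at $p_\alpha(x)$ gives vanishing of ${}^{\mathrm{p}}\mathcal{H}^i(p_{\alpha, *}\,\varphi_{\mathcal{X}_\alpha})$ at $p_\alpha(x)$ for all $i < \dim F'$; in particular the perverse cohomology in degree $\dim F$ vanishes there, giving $\mathcal{BPS}^\alpha_X|_{p_\alpha(x)} = 0$. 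The main obstacle I anticipate is arranging the Darboux chart so that $\mathcal{U}$ is weakly symmetric at \emph{every} closed point rather than only at the central one; while the Hessian decomposition makes this transparent at $x$, propagating it to nearby closed points requires invoking the stability of weak symmetricity under passage from a reductive group to the Levi subgroups arising from Luna slices at those points.
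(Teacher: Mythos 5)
Your overall strategy is the same as the paper's: both assertions are reduced, via the Darboux theorem \Cref{thm-Darboux} and \Cref{para-any-orientation-is-locally-standard}, to the smallness statement \Cref{cor:small_decom_thm_smooth_stack} for a weakly symmetric smooth chart, using \Cref{eq-DT-Hodge-standard}, \Cref{eq-vanishing-good-moduli} and the perverse $t$-exactness of $\varphi$. For the second assertion the paper stays local (it linearizes the chart and identifies $\mathrm{cotrk}_x$ with $\crk$ of the linear model, then applies the corollary to the maximal central face), whereas you promote the torus $T'$ to a face $(F',\alpha')$ of $\mathcal{X}$ and reapply the first assertion; this is a legitimate repackaging of the same idea.

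The one step whose justification does not hold as written is: ``a standard Luna-slice argument shows that a torus in the stabilizer of a fixed point acting trivially on the tangent acts trivially on an open neighbourhood, so $T'$ acts trivially on an open neighbourhood of $x$ in $\mathcal{X}_\alpha$.'' The Luna slice theorem applies to the smooth ambient chart $\mathcal{U}=U/G_x$, and there the relevant tangent space is $\mathrm{T}_{U,u}\cong \mathrm{H}^0(\mathbb{T}_{\mathcal{X}_\alpha,x})\oplus \mathrm{Im}(\mathrm{Hess}_f)$; the hypothesis only gives that $T'$ acts trivially on the first summand, and in general $T'$ acts non-trivially on $\mathrm{Im}(\mathrm{Hess}_f)$ (e.g.\ $f$ a non-degenerate invariant quadratic form on a non-trivial $T'$-representation). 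So the slice argument on the ambient does not yield triviality of the $T'$-action near $x$. The conclusion is still correct, but it requires either taking a \emph{minimal} Darboux chart (one with $\mathrm{Hess}_f(u)=0$, so that $\mathrm{T}_{U,u}=\mathrm{H}^0(\mathbb{T}_{\mathcal{X}_\alpha,x})$), or an equivariant Morse-lemma/implicit-function argument showing that $\mathrm{Crit}(f)$ is contained in $U^{T'}/G_x$ near $u$ and agrees with $\mathrm{Crit}(f|_{U^{T'}})$ there. The same issue underlies your claim in the first assertion that the chart inherits a face $(F,\beta)$ with $\mathcal{U}_\beta\simeq\mathcal{U}$: without minimality one must replace $\mathcal{U}$ by $\mathcal{U}_\beta=U^{T_F}/G_x$ and check $\mathrm{Crit}(f)=\mathrm{Crit}(f|_{U^{T_F}})$ near $x$. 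By contrast, the obstacle you single out at the end (weak symmetry of the chart at all closed points rather than only the central one) is the easy part: the weakly symmetric locus is open by \Cref{cor-symmetric-open}, so one simply shrinks the chart to a saturated neighbourhood of the closed point.
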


\begin{proof}
    We will only prove the latter statement, since the former statement can be proved in an analogous manner.
    It is enough to prove the statement for the maximal central face $(F, \alpha) = (F_{\mathrm{ce}}, \alpha_{\mathrm{ce}})$ introduced in \Cref{para-central-rank}.
    By using \Cref{thm-Darboux}, we may assume that there exists a smooth affine scheme $V$ acted on by a reductive group $G$ and  a function $f \colon V/G \to \mathbb{A}^1$ such that  $\mathcal{X} = \mathrm{Crit}(f)$ and $G$ fixes  a point $x \in \mathrm{Crit}(f)$.
    Also, by the discussion in \Cref{para-any-orientation-is-locally-standard}, we may assume that $o$ is the standard orientation.
    In particular, we may assume $\varphi_{\mathcal{X}} \cong \varphi_f(\mathcal{IC}_{V / G})$ by \Cref{eq-DT-Hodge-standard}.
    Let $\tilde{p} \colon V / G \to V \GIT G$ be the good moduli space morphism.
    Using \Cref{eq-vanishing-good-moduli}, we obtain an isomorphism 
    $p_* \varphi_{\mathcal{X}} \cong \varphi_{f} (\tilde{p}_* \mathcal{IC}_{V / G})$.
    Therefore it is enough to prove the vanishing of ${}^{\mathrm{p}} \mathcal{H}^{\dim F}(\tilde{p}_* \mathcal{IC}_{V / G})$.
    For this, using the \'etale slice theorem, we may assume that $V$ is an affine space isomorphic to $\mathrm{T}_{V/G, x} = \mathrm{H}^0(\mathbb{T}_{V/G, x})$ and $x \in V /G$ is the origin. 
    Further, by possibly replacing $G$ by its neutral component $G^{\circ}$, we may assume that $G$ is connected.
    Using the description of the maximal central face of $V / G$ in \Cref{para-ex-linear-quotient}, we obtain the equality
    $\mathrm{cotrk}_{x}(\mathcal{X}) = \crk V / G$ hence an inequality $\dim F < \crk V / G$.
    Then the claim follows from Corollary \ref{cor:small_decom_thm_smooth_stack} applied to the maximal central face.
\end{proof}

Since we have $\mathrm{cotrk}_x(\mathcal X)\geq \mathrm{crk}(\mathcal X)$ for any point $x \in \mathcal{X}$, the above proposition implies:

\begin{corollary}\label{cor-non-special-vanishing--1shifted symplectic}
    We adopt the notation from \Cref{prop-support-lemma}. Then for a non-degenerate face $(F, \alpha) \in  \mathsf{Face}^{\mathrm{nd}}(\mathcal{X})$, if $\alpha$ is not special, we have $\mathcal{BPS}^{\alpha}_{X} = 0$.
\end{corollary}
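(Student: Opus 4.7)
The plan is to reduce the vanishing to the already-established support lemma \Cref{prop-support-lemma} by producing, at every closed point of $\mathcal{X}_\alpha$, enough cotangent rank to exceed $\dim F$. The input that makes this work is the special face closure functor from \Cref{para-special-faces} together with the identification $\crk \mathcal{X}_\alpha = \dim F^{\mathrm{sp}}$ coming from \Cref{eq-special-face-central-rank}.

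First I would assume $(F,\alpha)$ is non-degenerate but not special and form its special face closure $(F^{\mathrm{sp}}, \alpha^{\mathrm{sp}})$. By construction, the adjunction unit $\alpha \to \alpha^{\mathrm{sp}}$ is not an isomorphism but induces an isomorphism $\mathcal{X}_{\alpha^{\mathrm{sp}}} \cong \mathcal{X}_\alpha$; hence we have a strict inequality $\dim F < \dim F^{\mathrm{sp}}$. Working one connected component of $\mathcal{X}_\alpha$ at a time (the BPS sheaf is defined component-wise), \Cref{eq-special-face-central-rank} applied to the special face $\alpha^{\mathrm{sp}}$ gives
\[
\crk \mathcal{X}_\alpha \;=\; \crk \mathcal{X}_{\alpha^{\mathrm{sp}}} \;=\; \dim F^{\mathrm{sp}} \;>\; \dim F.
\]

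Next I would argue that at every closed point $x \in \mathcal{X}_\alpha$ one has $\mathrm{cotrk}_x(\mathcal{X}_\alpha) \geq \crk \mathcal{X}_\alpha$. Indeed, the maximal central face $(F_{\mathrm{ce}}, \alpha_{\mathrm{ce}}^{\mathcal{X}_\alpha})$ of (a connected component of) $\mathcal{X}_\alpha$ comes with an integral lift $\mathrm{B}\mathbb{G}_\mathrm{m}^{\crk \mathcal{X}_\alpha} \to \mathcal{X}_\alpha$ for which the associated stack of graded points is $\mathcal{X}_\alpha$ itself. Evaluating at the closed point $x$ realizes this torus inside the neutral component $G_x^{\circ}$ of the stabilizer, and the isomorphism $(\mathcal{X}_\alpha)_{\alpha_{\mathrm{ce}}^{\mathcal{X}_\alpha}} \cong \mathcal{X}_\alpha$ forces the cotangent weights of $\alpha_{\mathrm{ce}}^{\mathcal{X}_\alpha}$ to vanish, so the torus acts trivially on $\mathrm{H}^{-1}(\mathbb{T}_{\mathcal{X}_\alpha, x})$ and $\mathrm{H}^{0}(\mathbb{T}_{\mathcal{X}_\alpha, x})$. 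This precisely says $\mathrm{cotrk}_x(\mathcal{X}_\alpha) \geq \dim F_{\mathrm{ce}} = \crk \mathcal{X}_\alpha$. Combined with the previous paragraph, $\mathrm{cotrk}_x(\mathcal{X}_\alpha) > \dim F$.

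Finally, since every point $y \in X_\alpha$ is the image under the good moduli map $p_\alpha$ of a unique closed point $x \in \mathcal{X}_\alpha$, \Cref{prop-support-lemma} yields $\mathcal{BPS}^{\alpha}_X|_y = 0$ for every $y \in X_\alpha$, and therefore $\mathcal{BPS}^{\alpha}_X = 0$. The only non-formal step is the comparison $\mathrm{cotrk}_x(\mathcal{X}_\alpha) \geq \crk \mathcal{X}_\alpha$; this is essentially unpacking the definition of the maximal central face, but it is the step that needs the most care (especially in that one should verify the claim that a graded point of $\mathcal{X}_\alpha$ whose associated graded is $\mathcal{X}_\alpha$ itself forces the corresponding torus to act trivially on the cohomologies of the tangent complex at every closed point, rather than just on some dense open substack).
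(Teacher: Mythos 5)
Your argument is correct, and it fills in a proof that the paper itself leaves implicit (the corollary is stated without proof). All three ingredients check out: non-degeneracy of $\alpha$ makes the linear map underlying the unit $\alpha \to \alpha^{\mathrm{sp}}$ injective, so non-speciality forces $\dim F < \dim F^{\mathrm{sp}} = \crk \mathcal{X}_{\alpha}$ by \Cref{eq-special-face-central-rank}; the maximal central face of $\mathcal{X}_{\alpha}$ does give, at every closed point, a torus of dimension $\crk \mathcal{X}_{\alpha}$ in the stabilizer acting with weight zero on the entire tangent complex (since $\mathbb{L}_{(\mathcal{X}_{\alpha})_{\alpha_{\mathrm{ce}}}}$ is the weight-zero summand of $\mathrm{tot}_{\alpha_{\mathrm{ce}}}^{*}\mathbb{L}_{\mathcal{X}_{\alpha}}$ and $\mathrm{tot}_{\alpha_{\mathrm{ce}}}$ is an isomorphism, so every nonzero-weight summand vanishes globally, not just generically — this settles the point you flag as needing care); and vanishing of the stalk of $\mathcal{BPS}^{\alpha}_{X}$ at the image of every closed point suffices. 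One remark on the route: since the corollary is the $(-1)$-shifted analogue of \Cref{cor-non-special-vanishing}, the argument the authors most plausibly had in mind is to apply the \emph{first} assertion of \Cref{prop-support-lemma} to the special closure $\alpha^{\mathrm{sp}}$, which gives ${}^{\mathrm{p}}\mathcal{H}^{i}(p_{\alpha^{\mathrm{sp}},*}\varphi_{\mathcal{X}_{\alpha^{\mathrm{sp}}}})=0$ for $i<\dim F^{\mathrm{sp}}$, hence in particular for $i=\dim F$, and then transport this across the isomorphism $\mathcal{X}_{\alpha^{\mathrm{sp}}}\cong\mathcal{X}_{\alpha}$. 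That route is shorter but requires matching the localized orientations $\alpha^{\star}o$ and $(\alpha^{\mathrm{sp}})^{\star}o$ under this isomorphism; your route via the pointwise cotangent-rank criterion works entirely with $\alpha$ and its own orientation and so avoids that comparison, at the cost of the (correct) verification that $\mathrm{cotrk}_{x}(\mathcal{X}_{\alpha})\geq\crk\mathcal{X}_{\alpha}$ at every closed point.
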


\begin{para}[Support lemma for \texorpdfstring{$(-1)$}{(-1)}-shifted cotangent stacks]
    We will apply \Cref{prop-support-lemma} for $(-1)$-shifted cotangent stacks of $0$-shifted symplectic stacks.
    For this, we need the following lemma, which can be proved in a similar manner to \Cref{lem-loop-sends-closed-points}:
\end{para}

\begin{lemma}
    Let $\mathcal{Y}$ be a derived algebraic stack with affine diagonal, locally finitely presented over $\mathbb{C}$ and admitting a good moduli space $p \colon \mathcal{Y} \to Y$.
    Set $\mathcal{X} \coloneqq \mathrm{T}[-1]\mathcal{Y}$ and let $\pi \colon \mathcal{X} \to \mathcal{Y}$ be the projection.
    Then the map $\pi$ sends closed points to closed points.
\end{lemma}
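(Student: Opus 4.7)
The plan is to adapt the strategy of \Cref{lem-loop-sends-closed-points} nearly verbatim. First, since $\pi\colon \mathcal{X} = \mathrm{T}[-1]\mathcal{Y} \to \mathcal{Y}$ is affine, being the total space of a perfect complex on $\mathcal{Y}$, the existence of a good moduli space for $\mathcal{X}$ follows from \cite[Lemma~4.14]{_Alper_GoodmodulispacesforArtinstacks}. To verify that $\pi$ sends closed points to closed points, I would reduce to a local model using the Alper--Hall--Rydh local structure theorem \cite[Theorem~4.12]{_Alper_ALunaetaleslicetheoremforalgebraicstacks}: around any closed point of $\mathcal{Y}$, \'etale-locally we may assume $\mathcal{Y} \simeq V/G$ for a reductive group $G$ acting on an affine scheme $V$.

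In this local model, I would identify the classical truncation of $\mathcal{X}$ with a closed substack of $(V \times \mathfrak{g})/G$, where $G$ acts via its given action on $V$ together with the adjoint action on $\mathfrak{g}$. Concretely, the image consists of pairs $(v, X)$ satisfying $X \cdot v = 0$---equivalently, $X \in \mathfrak{g}_v$, the Lie algebra of the stabilizer $G_v$. Under this identification, a closed point $\tilde{y} \in \mathcal{X}$ corresponds to a pair $(v, X)$ whose $G$-orbit $G \cdot (v, X) = \{(gv, \mathrm{Ad}(g) X) : g \in G\}$ is closed in $V \times \mathfrak{g}$, and the task reduces to showing that $G \cdot v$ is closed in $V$.

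The conclusion then mirrors the loop stack case. Since $G \cdot (v, X)$ is closed, the stabilizer $G_{(v, X)} = G_v \cap Z_G(X)$ is reductive, and $X$ lies in the centre of its Lie algebra (as $G_{(v, X)} \subseteq Z_G(X)$), so $X \in \mathfrak{g}$ is semisimple. Under the projection $V \times \mathfrak{g} \to \mathfrak{g}$, the closed orbit $G \cdot (v, X)$ maps onto the adjoint orbit $G \cdot X$, which is closed because $X$ is semisimple; intersecting with the fibre $V \times \{X\}$ shows that $Z_G(X) \cdot v$ is closed in $V$. Finally, Luna's adherence lemma \cite[Corollaire~1, Remarques~1]{luna1975adherences} applied to the semisimple element $X$ upgrades this to the closedness of $G \cdot v$, completing the proof. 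The only delicate point is verifying the explicit description of the classical truncation of $\mathrm{T}[-1](V/G)$ in the possibly derived setting of $\mathcal{Y}$; once this is established, the remaining orbit-theoretic argument is a direct translation of the loop stack proof.
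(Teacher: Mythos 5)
Your proposal is correct and is precisely the argument the paper intends: the paper omits the proof, remarking only that it "can be proved in a similar manner to" the loop-stack lemma, and your adaptation — replacing $(V\times G)/G$ with conjugation by $(V\times\mathfrak g)/G$ with the adjoint action, deducing semisimplicity of $X$ from its centrality in the Lie algebra of the reductive stabilizer $G_{(v,X)}$, and invoking Luna's adherence result for the semisimple isotropy element — is exactly that translation. The identification of the classical truncation with $\{(v,X): X\in\mathfrak g_v\}/G$ that you flag as the delicate point is also the one the paper itself uses (e.g. $\pi^{-1}(\mathrm B G_y)_{\mathrm{cl}}\cong\mathfrak g_y/G_y$ in the subsequent lemma), so there is no gap.
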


The following lemma is crucial for the support lemma for $(-1)$-shifted cotangent stacks:

\begin{lemma}\label{lem-cotrk-inequality}
    Let $\mathcal{Y}$ be a derived algebraic stack locally finitely presented over $\mathbb{C}$ and set $\mathcal{X} \coloneqq \mathrm{T}[-1]\mathcal{Y}$.
    Let $\pi \colon \mathcal{X} \to \mathcal{Y}$ be the natural projection, and $y \in \mathcal{Y}$ and $x \in \pi^{-1}(y)$ be closed points.
    Let $G_y$ be the stabilizer group at $y$ and $\mathfrak{g}_y$ be its Lie algebra.
    Assume that $G_y$ is reductive, and under the equivalence $\pi^{-1}(\mathrm{B} G_y)_{\mathrm{cl}} \cong \mathfrak{g}_y / G_y$, $x$ acts non-trivially on $\mathrm{H}^{-1}(\mathbb{T}_{\mathcal{Y}, y})$ or $\mathrm{H}^{0}(\mathbb{T}_{\mathcal{Y}, y})$ under the Lie algebra action of $\mathfrak{g}_y$ on them.
    Then the following inequality holds:
    \[
        \mathrm{cotrk}_y(\mathcal{Y}) <  \mathrm{cotrk}_x(\mathcal{X})      .
    \]
\end{lemma}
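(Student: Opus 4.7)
The strategy is to construct an explicit subtorus $T' \subset G_x$ that strictly contains any subtorus $T_y \subset G_y$ realising $\mathrm{cotrk}_y(\mathcal{Y})$, yet still acts trivially on $\mathrm{H}^{-1}(\mathbb{T}_{\mathcal{X}, x})$ and $\mathrm{H}^0(\mathbb{T}_{\mathcal{X}, x})$. Since $x$ is a closed point of the residual gerbe $\pi^{-1}(\mathrm{B} G_y)_{\mathrm{cl}} \cong \mathfrak{g}_y / G_y$ and $G_y$ is reductive, $x$ corresponds to a closed adjoint orbit, hence to a semisimple element $\xi \in \mathfrak{g}_y$ whose centraliser $G_x = Z_{G_y}(\xi)$ is again reductive.

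The first step is to read off $\mathrm{H}^{-1}$ and $\mathrm{H}^0$ of $\mathbb{T}_{\mathcal{X}, x}$ as $G_x$-representations. Since $\mathcal{X}$ is the total space of the $[-1]$-shifted tangent of $\mathcal{Y}$, the projection $\pi$ has relative tangent $\mathbb{T}_\pi \simeq \pi^{\ast}\mathbb{T}_{\mathcal{Y}}[-1]$, giving a fibre sequence $\pi^{\ast}\mathbb{T}_{\mathcal{Y}, y}[-1] \to \mathbb{T}_{\mathcal{X}, x} \to \pi^{\ast}\mathbb{T}_{\mathcal{Y}, y}$ whose connecting map $\mathrm{H}^n(\mathbb{T}_{\mathcal{Y}, y}) \to \mathrm{H}^n(\mathbb{T}_{\mathcal{Y}, y})$ is the Lie algebra action of $\xi$. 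The identification of this connecting map is the main technical point, and can be carried out by reducing via \cite[Theorem 4.12]{_Alper_ALunaetaleslicetheoremforalgebraicstacks} to the local model $\mathcal{Y} = U / G_y$, where $\mathcal{X}$ becomes an explicit linear quotient stack and the computation reduces to the prototype $\mathcal{Y} = \mathrm{B} G_y$, for which the tangent of $\mathfrak{g}_y / G_y$ at $\xi$ is the two-term complex $[\mathfrak{g}_y \xrightarrow{\ad_\xi} \mathfrak{g}_y]$. The long exact sequence then yields $\mathrm{H}^{-1}(\mathbb{T}_{\mathcal{X}, x}) \cong \mathfrak{g}_x$ together with a short exact sequence
\[
0 \to \coker(\ad_\xi \colon \mathfrak{g}_y \to \mathfrak{g}_y) \to \mathrm{H}^0(\mathbb{T}_{\mathcal{X}, x}) \to \ker\bigl(\xi \cdot (-) \colon \mathrm{H}^0(\mathbb{T}_{\mathcal{Y}, y}) \to \mathrm{H}^0(\mathbb{T}_{\mathcal{Y}, y})\bigr) \to 0,
\]
with $\coker(\ad_\xi) \cong \mathfrak{g}_x$ via the $\ad_\xi$-eigenspace splitting $\mathfrak{g}_y = \ker \oplus \im$.

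Now let $T_y \subset G_y$ be a maximal subtorus realising $\mathrm{cotrk}_y(\mathcal{Y})$, and let $S_\xi \subset G_y$ be the smallest algebraic subtorus whose Lie algebra contains $\xi$ (well-defined by semisimplicity of $\xi$). Triviality of the adjoint action of $T_y$ on $\mathfrak{g}_y$ gives $T_y \subset Z(G_y^{\circ})^{\circ} \subset G_x$; similarly $S_\xi$ centralises $\xi$, so $S_\xi \subset G_x$, and since every element of $G_x$ centralises $\xi$ it centralises $S_\xi$, placing $S_\xi \subset Z(G_x)^{\circ}$. Define $T' \coloneqq T_y \cdot S_\xi \subset Z(G_x)^{\circ}$, again a subtorus. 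Then $T'$ acts trivially on $\mathrm{H}^{-1}(\mathbb{T}_{\mathcal{X}, x})$ and $\mathrm{H}^0(\mathbb{T}_{\mathcal{X}, x})$: the factor $T_y$ does so because it already acts trivially on all of $\mathrm{H}^{-1}(\mathbb{T}_{\mathcal{Y}, y})$ and $\mathrm{H}^0(\mathbb{T}_{\mathcal{Y}, y})$, hence on every sub- and quotient module appearing in the long exact sequence; and the factor $S_\xi$ acts as the exponential of the semisimple operators $\ad_\xi$ and $\xi \cdot (-)$, which vanish on the relevant kernels and, by the eigenspace splitting above, also on the relevant cokernel.

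Finally, the strict inequality $\dim T' > \dim T_y$ amounts to $S_\xi \not\subset T_y$. Since $T_y$ acts trivially on $\mathrm{H}^{-1}(\mathbb{T}_{\mathcal{Y}, y})$ and $\mathrm{H}^0(\mathbb{T}_{\mathcal{Y}, y})$, so does $\Lie(T_y)$ by differentiation; the hypothesis that $\xi$ acts non-trivially on at least one of these forces $\xi \notin \Lie(T_y)$, so $S_\xi \not\subset T_y$ and $\dim T' \geq \dim T_y + 1$. Consequently $\mathrm{cotrk}_x(\mathcal{X}) \geq \dim T' > \mathrm{cotrk}_y(\mathcal{Y})$. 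The main obstacle is the first step, namely the precise identification of the connecting map with the Lie algebra action of $\xi$; all remaining ingredients are direct consequences of the semisimplicity of $\xi$ and the reductivity of $G_y$ and $G_x$.
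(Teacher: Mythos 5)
Your proposal is correct and its core mechanism is the same as the paper's: run the long exact sequence of the fibre sequence $\pi^{\ast}\mathbb{T}_{\mathcal{Y}}[-1]\to\mathbb{T}_{\mathcal{X}}\to\pi^{\ast}\mathbb{T}_{\mathcal{Y}}$, and enlarge the trivializing torus $T_y$ by the algebraic torus generated by the one-parameter subgroup $\exp(z\xi)$ (your $S_\xi$ is exactly the paper's $\tilde{T}'$ modulo the paper's unnecessary detour of conjugating $\xi$ into a common Cartan — your observation that $T_y$ centralizes $\xi$ outright is cleaner). The genuine divergence is in how triviality of the action on $\mathrm{H}^0(\mathbb{T}_{\mathcal{X},x})$ is established. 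You route everything through the identification of the connecting maps $\delta^{-1},\delta^0$ with $\ad_\xi$ and $\xi\cdot(-)$, which you correctly flag as the technical crux and only sketch; note that the reduction via the local structure theorem needs hypotheses (affine diagonal, linearly reductive stabilizers on a neighbourhood) that are not in the statement of the lemma, even if they hold in all the paper's applications. The paper avoids identifying $\delta^0$ entirely: it shows by a direct square-zero-extension argument (a tangent vector of $\mathcal{X}=\mathrm{T}[-1]\mathcal{Y}$ at $x$ is a map $\Spec k[\epsilon,\eta]/\epsilon^2\to\mathcal{Y}$ whose restriction to $\Spec k[\eta]$ is $\xi$) that $\xi$ acts trivially on $\im(\pi_*)=\ker(\delta^0)$, and it gets $\coker(\delta^{-1})\cong\mathfrak{g}_x$ as a $T'$-representation purely from exactness plus semisimplicity of $\mathrm{Rep}(T')$, with no formula for $\delta^{-1}$ needed. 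So the paper's argument is more self-contained on exactly the point you identify as your main obstacle; if you keep your route, you should either supply the connecting-map identification in detail or substitute the paper's deformation-theoretic shortcut for the $\mathrm{H}^0$-part.
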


\begin{proof}
    Consider the following fibre sequence
    \begin{equation}\label{eq-fibre-seq--1-shifted-tangent}
     \pi^* \mathbb{T}_{\mathcal{Y}}[-1] \to \mathbb{T}_{\mathcal{X}} \to   \pi^* \mathbb{T}_{\mathcal{Y}}.
    \end{equation}
    Let $\tilde{x} \in \mathfrak{g}_y$ be a lift of the point $x$ under the identification $\pi^{-1}(\mathrm{B} G_y)_{\mathrm{cl}} \cong \mathfrak{g}_y / G_y$.
    The point $x$ being closed implies that $\tilde{x}$ is a semisimple element.
    Let $T' \subset G_{y}$ be the  torus which is maximal among those acting trivially on $\mathrm{H}^{-1}(\mathbb{T}_{\mathcal{Y}, y})$ and $\mathrm{H}^{0}(\mathbb{T}_{\mathcal{Y}, y})$.
    Since $\tilde{x}$ is semisimple, by possibly replacing $\tilde{x}$ with a conjugate, we may assume that $\mathrm{Lie}(T')$ and  $\tilde{x}$ are contained in the same Cartan subalgebra.
    Consider a torus $\tilde{T}'$ which is generated by $T'$ and $\exp(z \cdot \tilde{x})$ for $z \in \mathbb{C}$.
    Then $\tilde{T}'$ is contained in the stabilizer group of $x$, and $\tilde{T}'$ acts trivially on $\mathfrak{g}_{x}$.
    By assumption, we have an inequality
    \[
        \mathrm{cotrk}_y(\mathcal{Y}) = \dim T' < \dim \tilde{T}'.
    \]
    We will show that $\tilde{T}'$ acts trivially on ${\mathrm{H}^{-1}(\mathbb{T}_{\mathcal{X}, x})}$ and ${\mathrm{H}^{0}(\mathbb{T}_{\mathcal{X}, x})}$, which implies the statement, since we have $\dim \tilde{T}' \leq \mathrm{cotrk}_x(\mathcal{X})$.

    Consider the following diagram
    \[\begin{tikzcd}
        0 & {\mathrm{H}^{-1}(\mathbb{T}_{\mathcal{X}, x})} & {\mathrm{H}^{-1}(\mathbb{T}_{\mathcal{Y}, y})} & {\mathrm{H}^{-1}(\mathbb{T}_{\mathcal{Y}, y})} & {\mathrm{H}^{0}(\mathbb{T}_{\mathcal{X}, x})} & {\mathrm{H}^{0}(\mathbb{T}_{\mathcal{Y}, y})} \\
        0 & {\mathfrak{g}_{x}} & {\mathfrak{g}_{y}} & {\mathfrak{g}_{y}}
        \arrow[from=1-1, to=1-2]
        \arrow[from=1-2, to=1-3]
        \arrow["\cong", from=1-2, to=2-2]
        \arrow["\delta", from=1-3, to=1-4]
        \arrow["\cong", from=1-3, to=2-3]
        \arrow[from=1-4, to=1-5]
        \arrow["\cong", from=1-4, to=2-4]
        \arrow["\pi_*", from=1-5, to=1-6]
        \arrow[from=2-1, to=2-2]
        \arrow[from=2-2, to=2-3]
        \arrow[from=2-3, to=2-4]
    \end{tikzcd}\]
    where the upper sequence is the long exact sequence corresponding to the fibre sequence \Cref{eq-fibre-seq--1-shifted-tangent} and $\delta$ is the boundary map.
    This diagram shows that $\coker(\delta)$ is isomorphic to $\mathfrak{g}_{x}$ as a $\tilde{T}'$-representation.
    In particular, it is enough to show that $\tilde{T}'$ acts trivially on the image of the map $\pi_* \colon \mathrm{H}^0(\mathbb{T}_{\mathcal{X}, x}) \to \mathrm{H}^0(\mathbb{T}_{\mathcal{Y}, y})$.
    To see this, take a map
    \[
    t \colon \Spec k[\epsilon] / \epsilon^2 \to \mathcal{X}    
    \]
    representing a section $[t] \in \mathrm{H}^0(\mathbb{T}_{\mathcal{X}, x}) $.
    Using the definition $\mathcal{X} = \mathrm{T}[-1] \mathcal{Y}$ and the universal property of the tangent complex, we see that the map $t$ corresponds to a map
    \[
    \tilde{t} \colon \Spec k[\epsilon, \eta] / \epsilon^2 \to \mathcal{Y}    
    \]
    with $\deg(\eta) = 1$ and the restriction of $\tilde{t}$ to  $\Spec k[\eta]$ corresponds to $\tilde{x}$.
    This implies that the action of $\tilde{x}$ on $\pi_*[t]$ is trivial. Therefore $\tilde{T}'$ acts trivially on the image of $\pi_*$ as desired.
\end{proof}

\begin{para}
    We will apply the above lemma for a $0$-shifted symplectic stack $\mathcal{Y}$ using the identification $\mathrm{T}[-1]\mathcal{Y} \cong \mathrm{T}^*[-1]\mathcal{Y}$.
    Assume that $\mathcal{Y}$ has affine diagonal and admits a good moduli space $p \colon \mathcal{Y} \to Y$.
    Set $\mathcal{X} \coloneqq \mathrm{T}^*[-1]\mathcal{Y}$ and let $\tilde{p} \colon \mathcal{X} \to X$ be the good moduli space.
    Since the map $\mathrm{Grad}^n(\mathcal{X}) \to \mathrm{Grad}^n(\mathcal{Y})$ is an $\mathbb{A}^1$-deformation retract, we have $\mathrm{CL}_{\mathbb{Q}}(\mathcal{X}) \cong \mathrm{CL}_{\mathbb{Q}}(\mathcal{Y}) $, hence $\mathsf{Face}(\mathcal{X}) \cong \mathsf{Face}(\mathcal{Y})$.
    For a non-degenerate face $(F, \alpha) \in \mathsf{Face}(\mathcal{X}) \cong \mathsf{Face}(\mathcal{Y})$, the $\mathrm{B} \mathbb{G}_{\mathrm{m}}^{\dim F}$-action on $\mathcal{Y}_{\alpha}$ induces a map $\mathcal{O}_{{\mathcal{Y}}_{\alpha}}^{\oplus \dim F} \to \mathbb{T}_{\mathcal{Y}_{\alpha}}[-1]$,
    which defines a closed immersion $\mathbb{A}^{\dim F} \times \mathcal{Y}_{\alpha} \hookrightarrow \mathcal{X}_{\alpha}$.
    By passing to the good moduli space, we obtain a closed immersion
    \[
     \iota_{\alpha} \colon \mathbb{A}^{\dim F} \times Y_{\alpha} \hookrightarrow X_{\alpha}.    
    \]
\end{para}
The following is our generalization of the support lemma.
\begin{theorem}\label{thm-BPS-cotangent-product}
    There exist an $\mathrm{Aut}(\alpha)$-equivariant monodromic mixed Hodge module 
    \[
        \mathcal{BPS}_{Y}^{\alpha} \in \mathsf{MMHM}(Y_{\alpha})
    \]
    and an isomorphism
    \begin{equation}\label{eq-BPS-cotangent-product}
     \mathcal{BPS}_{X}^{\alpha} \cong \iota_{\alpha, *}(\mathcal{IC}_{\mathbb{A}^{\dim F}} \boxtimes \mathcal{BPS}_{Y}^{\alpha} ).    
    \end{equation}
    Furthermore, $\mathcal{BPS}_{Y}^{\alpha}$ is pure and monodromy-free, i.e., it is contained in the subcategory $ \mathsf{MHM}(Y_{\alpha}) \subset  \mathsf{MMHM}(Y_{\alpha}) $.
\end{theorem}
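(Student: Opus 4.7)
\begin{para}
The plan is first to reduce to the case where $\alpha$ is special via \Cref{cor-non-special-vanishing--1shifted symplectic}: if $\alpha$ is not special then $\mathcal{BPS}_X^{\alpha} = 0$, and one takes $\mathcal{BPS}_Y^{\alpha} = 0$. Assuming $\alpha$ is special, \Cref{eq-special-face-central-rank} gives $\dim F = \crk \mathcal{X}_{\alpha} = \crk \mathcal{Y}_{\alpha}$, and the isomorphism $\mathrm{Grad}^F(\mathrm{T}^*[-1]\mathcal{Y}) \simeq \mathrm{T}^*[-1] \mathrm{Grad}^F (\mathcal{Y})$ (compatible with the symplectic and orientation data by \Cref{para-localize-shifted-symplectic-structure,para-locallize-can-ori}) identifies $\mathcal{X}_{\alpha} \simeq \mathrm{T}^*[-1]\mathcal{Y}_{\alpha}$ with projection $\pi_{\alpha} \colon \mathcal{X}_{\alpha} \to \mathcal{Y}_{\alpha}$. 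The two key external inputs are then the dimensional reduction isomorphism \Cref{eq-dimensional-reduction}
\[
\pi_{\alpha,*}\,\varphi_{\mathcal{X}_{\alpha}} \cong \mathbb{L}^{\vdim \mathcal{Y}_{\alpha}/2} \otimes \mathbb{D}\mathbb{Q}_{\mathcal{Y}_{\alpha}},
\]
together with \Cref{thm-decom-good-moduli}, the support lemma \Cref{prop-support-lemma}, and \Cref{lem-cotrk-inequality}.
\end{para}

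\begin{para}
The next step is to pin down the support of $\mathcal{BPS}_X^{\alpha}$. Combining \Cref{prop-support-lemma} with \Cref{lem-cotrk-inequality}, and using that the central $\mathbb{G}_\mathrm{m}^{\dim F}$ always acts trivially so that $\mathrm{cotrk}_y(\mathcal{Y}_{\alpha}) \geq \dim F$, one obtains: for the stalk of $\mathcal{BPS}_X^{\alpha}$ at $\tilde{p}_{\alpha}(x)$ to be nonzero (where $y = \pi_{\alpha}(x)$), one needs $\mathrm{cotrk}_y(\mathcal{Y}_{\alpha}) = \dim F$ and the semisimple element of $\mathfrak{g}_y$ corresponding to $x$, via the classical identification of the fibre $\pi_{\alpha}^{-1}(y) \cong \mathfrak{g}_y/G_y$, must act trivially on the whole tangent complex of $\mathcal{Y}_{\alpha}$ at $y$. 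This trivially-acting locus is exactly the Lie algebra of the maximal central subtorus of $G_y$ acting trivially, whose image in the good moduli space is the fibre of $\iota_{\alpha}$ over $y$; hence $\operatorname{supp}(\mathcal{BPS}_X^{\alpha}) \subseteq \iota_{\alpha}(\mathbb{A}^{\dim F} \times Y_{\alpha})$. Meanwhile, pushing the dimensional reduction formula forward to $Y_{\alpha}$ and invoking \Cref{thm-decom-good-moduli} shows that
\[
\bar{\pi}_{\alpha,*} \tilde{p}_{\alpha,*}\,\varphi_{\mathcal{X}_{\alpha}} \cong \mathbb{L}^{\vdim \mathcal{Y}_{\alpha}/2} \otimes p_{\mathcal{Y},\alpha,*}\,\mathbb{D}\mathbb{Q}_{\mathcal{Y}_{\alpha}}
\]
is pure and monodromy-free. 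Since on the support of $\mathcal{BPS}_X^{\alpha}$ the morphism $\bar{\pi}_{\alpha}$ restricts, via $\iota_{\alpha}$, to the trivial projection $\mathbb{A}^{\dim F} \times Y_{\alpha} \to Y_{\alpha}$, this purity descends to $\mathcal{BPS}_X^{\alpha}$ itself.
\end{para}

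\begin{para}
The final step is to extract the product structure on the image of $\iota_{\alpha}$. I would use the fibrewise scaling $\mathbb{G}_\mathrm{m}$-action on the $\mathrm{T}^*[-1]$-fibres of $\pi_{\alpha}$: this rescales the $(-1)$-shifted symplectic form by a weight and, together with a compatible rescaling of the localised orientation, renders $\varphi_{\mathcal{X}_{\alpha}}$ and hence $\mathcal{BPS}_X^{\alpha}$ monodromic with respect to the induced action on $X_{\alpha}$. Under $\iota_{\alpha}$, this action is precisely weight-one scaling on the $\mathbb{A}^{\dim F}$ factor, so $\iota_{\alpha}^*\,\mathcal{BPS}_X^{\alpha}$ is a $\mathbb{G}_\mathrm{m}$-monodromic perverse sheaf on $\mathbb{A}^{\dim F} \times Y_{\alpha}$; by the standard K\"unneth decomposition for scaling-equivariant perverse sheaves on a trivial affine bundle, it is canonically of the form $\mathcal{IC}_{\mathbb{A}^{\dim F}} \boxtimes G$ for a uniquely determined perverse sheaf $G$ on $Y_{\alpha}$, and one defines $\mathcal{BPS}_Y^{\alpha} \coloneqq G$. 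The isomorphism \Cref{eq-BPS-cotangent-product} is then built in by construction. The $\mathrm{Aut}(\alpha)$-equivariance is inherited from that of $\mathcal{BPS}_X^{\alpha}$ (using the $\mathrm{Aut}(\alpha)$-equivariance of the localised orientation, \Cref{para-localize-orientation-is-equivariant}), and purity plus monodromy-freeness of $\mathcal{BPS}_Y^{\alpha}$ follow from those of $\mathcal{BPS}_X^{\alpha}$ by restricting along the zero section $\{0\} \times Y_{\alpha} \hookrightarrow \mathbb{A}^{\dim F} \times Y_{\alpha}$. The main obstacle is the support analysis in the middle paragraph: one must identify precisely which closed points of $\pi_{\alpha}^{-1}(y) \cong \mathfrak{g}_y/G_y$ can support $\mathcal{BPS}_X^{\alpha}$ with the image of $\iota_{\alpha}$ above $y$, and one must verify that the scaling $\mathbb{G}_\mathrm{m}$-action on $\mathcal{X}_{\alpha}$ really restricts under $\iota_{\alpha}$ to standard scaling, rather than to a non-trivial twist.
\end{para}
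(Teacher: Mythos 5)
Your support analysis is essentially the paper's: combining \Cref{prop-support-lemma} with \Cref{lem-cotrk-inequality} to show that any closed point of $\mathcal{X}_{\alpha}$ whose image misses $\iota_{\alpha}(\mathbb{A}^{\dim F}\times Y_{\alpha})$ has cotangent rank strictly larger than $\dim F$. But the two steps surrounding it each contain a genuine gap.

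First, the mechanism you use to extract the product structure does not work. A $\mathbb{G}_{\mathrm{m}}$-monodromic (scaling-equivariant) perverse sheaf on $\mathbb{A}^{\dim F}\times Y_{\alpha}$ is \emph{not} automatically of the form $\mathcal{IC}_{\mathbb{A}^{\dim F}}\boxtimes G$: intermediate extensions of Kummer-type local systems on $\mathbb{G}_{\mathrm{m}}^{\dim F}\times Y_{\alpha}$, or objects supported on $\{0\}\times Y_{\alpha}$, are scaling-monodromic but not constant along the affine factor, so the ``standard K\"unneth decomposition'' you invoke is false as stated. The paper instead uses that $\mathcal{BPS}_{X}^{\alpha}$ is equivariant for the natural \emph{additive} (translation) $\mathbb{A}^{\dim F}$-action on $X_{\alpha}$ induced by the central copy of $\mathbb{A}^{\dim F}$ in the $(-1)$-shifted tangent directions; translation-equivariance of a perverse sheaf on $\mathbb{A}^{\dim F}\times Y_{\alpha}$ does force constancy along the $\mathbb{A}^{\dim F}$-factor. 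You should replace your scaling argument by this translation-equivariance.

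Second, your purity argument breaks down. You assert that $\mathbb{L}^{\vdim\mathcal{Y}_{\alpha}/2}\otimes p_{\mathcal{Y},\alpha,*}\mathbb{D}\mathbb{Q}_{\mathcal{Y}_{\alpha}}$ is ``pure and monodromy-free''; since $\mathcal{Y}_{\alpha}$ is in general singular, $\mathbb{D}\mathbb{Q}_{\mathcal{Y}_{\alpha}}$ is only of weight $\geq 0$, and \Cref{thm-decom-good-moduli} only preserves this bound --- it does not produce purity. The paper's proof obtains purity by first establishing Verdier self-duality of $\mathcal{BPS}_{Y}^{\alpha}$ together with a \emph{split} injection into the weight-$\geq 0$ complex above (weight $\geq 0$ plus self-duality forces weight exactly $0$). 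That self-duality in turn requires passing to a local critical-chart presentation of $\mathrm{T}^*[-1]\mathcal{Y}$ as $\mathrm{Crit}(s^{\vee}/G)$ and comparing the two standard orientations (using \Cref{cor-BPS-split-pure} and the local classification of orientations via $\mathrm{H}^1(-,\mu_2)$), a step your proposal omits entirely. Monodromy-freeness likewise comes from the evenness of $\vdim\mathcal{Y}_{\alpha}$ in that argument, not from restriction to a zero section.
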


\begin{proof}
    We first prove the former statement without $\mathrm{Aut}(\alpha)$-equivariance.
    To prove the $\mathbb{A}^{\dim F}$-equivariance of $\mathcal{BPS}_{X}^{\alpha}$,
    using \Cref{lemma-A^n-equivariance}, it is enough to prove that, for each $\xi \in \mathbb{A}^{\dim F}$, the action map
    $a_{\xi} \colon \mathcal{X}_{\alpha} \cong \mathcal{X}_{\alpha}$ preserves the $(-1)$-shifted symplectic structure.
    This is already proved in Lemma \ref{lem-symplectic-preservation}.
    Therefore it is enough to show that the support of $\mathcal{BPS}_{X}^{\alpha}$ is contained in the image of $\iota_{\alpha}$.
    Let $x \in \mathcal{X}_{\alpha}$ be a closed point such that $\tilde{p}_{\alpha}(x)$ is not contained in the image of $\iota_{\alpha}$ and set $y = \pi_{\alpha}(x) \in \mathcal{Y}_{\alpha}$.
    Then by using \Cref{prop-support-lemma}, it is enough to prove $\mathrm{cotrk}_{x}(\mathcal{X}_{\alpha}) > \dim F$.
    Using \Cref{lem-cotrk-inequality}, we are reduced to proving either $\mathrm{cotrk}_{y}(\mathcal{Y}_{\alpha}) > \dim F$ or $x$ acts non-trivially on $\mathrm{H}^{-1}(\mathbb{T}_{\mathcal{Y}_{\alpha}, y})$ or $\mathrm{H}^{0}(\mathbb{T}_{\mathcal{Y}_{\alpha}, y})$ under the identification  $\pi_{\alpha}^{-1}(y)_{\mathrm{cl}} \cong \mathfrak{g}_y / G_y$.
    If $\mathrm{cotrk}_{y}(\mathcal{Y}_{\alpha}) = \dim F$ holds, then any element in $\mathfrak{g}_{y}$ not contained in $\mathbb{A}^{\dim F}$ acts non-trivially on $\mathrm{H}^{-1}(\mathbb{T}_{\mathcal{Y}_{\alpha}, y})$ or $\mathrm{H}^{0}(\mathbb{T}_{\mathcal{Y}_{\alpha}, y})$. Therefore we obtain the desired claim.

    Now we will prove that there exists a natural way to equip $\mathcal{BPS}_{Y}^{\alpha}$ with an $\mathrm{Aut}(\alpha)$-equivariant structure which upgrades \Cref{eq-BPS-cotangent-product} to an isomorphism of $\mathrm{Aut}(\alpha)$-equivariant monodromic mixed Hodges modules.
    Take an element $\gamma \in \mathrm{Aut(\alpha)}$. Then we have a natural isomorphism
    \[
        \Hom(\gamma^* (\mathcal{IC}_{\mathbb{A}^{\dim F}} \boxtimes \mathcal{BPS}_{Y}^{\alpha}), \mathcal{IC}_{\mathbb{A}^{\dim F}} \boxtimes \mathcal{BPS}_{Y}^{\alpha}) \cong \Hom(\gamma^*\mathcal{BPS}_{Y}^{\alpha}, \mathcal{BPS}_{Y}^{\alpha}).
    \]
    This shows that the sets of $\mathrm{Aut}(\alpha)$-equivariant structures on $\mathcal{IC}_{\mathbb{A}^{\dim F}} \boxtimes \mathcal{BPS}_{Y}^{\alpha}$ and those on $\mathcal{BPS}_{Y}^{\alpha}$ are isomorphic.
    Therefore the $\mathrm{Aut(\alpha)}$-equivariant structure on  $\mathcal{BPS}_{X}^{\alpha}$ induces an $\mathrm{Aut}(\alpha)$-equivariant structure on $\mathcal{BPS}_{Y}^{\alpha}$.

    We now show that $\mathcal{BPS}_{Y}^{\alpha}$ is pure. Since purity can be checked locally, using \cite[Lemma 4.1.8]{park2024shifted}, we may assume that 
    there exists a reductive group $G$ acting on an smooth affine scheme $U$ with a fixed point $u \in U$, a $G$-equivariant vector bundle $E$ on $U$ and a $G$-equivariant section $s \in \Gamma(U, E)$ with $s(u) = 0$ such that 
    $\mathcal{Y} \cong s^{-1}(0) / G $ holds. Let $s^{\vee} \colon E^{\vee} \to \mathbb{A}^1$ be the cosection corresponding to $s$.
    Then we have an equivalence of $(-1)$-shifted symplectic stacks
    \[
     \mathrm{Crit}(s^{\vee} / G) \cong \mathrm{T}^*[-1] \mathcal{Y}.   
    \]
    Let $o_1$ and $o_2$ be the standard orientations defined using the above two critical locus descriptions.
    It is clear that the underlying line bundles restricted to $\{ u \} / G $ are isomorphic.
    Therefore by using \cite[Theorem 10.3]{_Alper_GoodmodulispacesforArtinstacks} and \cite[Lemma 6.8]{bellamy2016symplectic}, we may assume that $o_1$ and $o_2$ are isomorphic over a saturated Zariski neighborhood of $u$ in $\mathcal{X}$.
    Using the fact that orientations are classified by the topological data $\mathrm{H}^1(-, \mu_2)$, by possibly shrinking $\mathcal{Y}$, we may assume that $o_1$ and $o_2$ are isomorphic.
    Therefore by using \Cref{cor-BPS-split-pure} together with the dimensional reduction isomorphism \Cref{eq-dimensional-reduction}, we may assume that $\mathcal{BPS}_{Y}^{\alpha}$ is Verdier self-dual and there exists a split injection
    \[
     \mathcal{BPS}_{Y}^{\alpha} \hookrightarrow \mathbb{L}^{\vdim \mathcal{Y}_{\alpha} / 2} \otimes  p_{\alpha, *} \mathbb{D} \mathbb{Q}_{\mathcal{Y}_{\alpha}}.    
    \]
    Existence of this embedding implies that $\mathcal{BPS}_{Y}^{\alpha} $ has weight $\geq 0$.
    Since  $\mathcal{BPS}_{Y}^{\alpha}$ is Verdier self-dual, it must be pure of weight zero.
    Also, since $\vdim \mathcal{Y}_{\alpha}$ is an even number, as $\mathcal{Y}_{\alpha}$ is a $0$-shifted symplectic stack,
    the above embedding implies that $\mathcal{BPS}_{Y}^{\alpha}$ is monodromy-free.
\end{proof}

\begin{lemma}\label{lemma-A^n-equivariance}
    Let $\mathcal{X}$ be an Artin stack with an $\mathbb{A}^n$-action and $\mathcal{F} \in \Dhc^{(b)}(\mathcal{X})$ be a monodromic mixed Hodge complex.
    For each $\xi \in \mathbb{A}^n$, we let $a_{\xi} \colon \mathcal{X} \cong \mathcal{X}$ be the action map.
    Assume that there exists a local system $\mathcal{L}_{\xi}$ on $\mathcal{X}$ for each $\xi$ such that 
    $a_{\xi}^* \mathcal{F} \cong \mathcal{F} \otimes \mathcal{L}_{\xi}$ holds.
    Then $\mathcal{F}$ is equivariant with respect to the $\mathbb{A}^n$-action.
\end{lemma}

\begin{proof}
    Let $p \colon \mathcal{X} \to \mathcal{X} / \mathbb{A}^n$ be the quotient map.
    It is enough to show that the unit map
    \[
    \mathcal{F} \to p^! p_! \mathcal{F}    
    \]
    is an isomorphism. In particular, it is enough to prove the statement at the level of constructible complexes.
    Also, using the base change theorem, we may assume $\mathcal{X} = \mathbb{A}^n$.
    Further, by presenting $\mathcal{F}$ as an iterated extension of shifted constructible sheaves,
    we may assume that $\mathcal{F}$ is contained in the heart of the standard t-structure.
    In this case, the assumption implies that $\mathcal{F}$ is locally constant.
    Since $\mathbb{A}^n$ is simply connected, we obtain $\mathcal{F} \cong \mathbb{Q}_{\mathbb{A}^n}^d$ for some $d$ as desired.
\end{proof}

\subsection{Wall-crossing formula}

\begin{para}
    In this section, we will prove a version of the wall-crossing formula for the BPS sheaves, which can be regarded as a generalization of the wall-crossing formula for Gopakumar--Vafa invariants proved by \textcite[Theorem 5.7]{toda_gopakumar_wall_crossing}.
\end{para}

\begin{para}[Semistable points]
    Here, we recall the notion of semistable points for stacks following \cite[\S 4.1]{halpern2020derived} which is needed to discuss the wall-crossing formula.
    
    Let $\mathcal{X}$ be a derived algebraic stack with affine diagonal admitting a good moduli space $\mathcal{X} \to X$.
    Take a line bundle $\mathcal{L} \in \mathrm{Pic}(\mathcal{X})$.  We say that a point $x \in \mathcal{X}$ is \emph{unstable} if there exists a map $f \colon \mathbb{A}^1 / \mathbb{G}_\mathrm{m} \to \mathcal{X}$ with $f(1) \simeq x$ such that $f^* \mathcal{L} |_{\mathrm{B} \mathbb{G}_{\mathrm{m}}}$ has a negative weight,
    and $x$ is called \emph{semistable} otherwise.
    When $\mathcal{X}$ is of the form $\Spec A / G$ for a reductive group $G$, this recovers the notion of semistable points in classical geometric invariant theory.
    
    It is shown in \cite[Theorem 4.1.3]{halpern2020derived} that the semistable locus forms an open substack $\mathcal{X}^{\mathrm{ss}} \subset \mathcal{X}$ and that $\mathcal{X}^{\mathrm{ss}}$ admits a good moduli space $p^{\mathrm{ss}} \colon \mathcal{X}^{\mathrm{ss}} \to X^{\mathrm{ss}}$ with $X^{\mathrm{ss}}$ projective over $X$.
    In the next few paragraphs, we will compare the BPS sheaves on $X^{\mathrm{ss}}$ and $X$ when $\mathcal{X}$ is almost symmetric and $\mathcal{X}$ is either smooth, $(-1)$-shifted symplectic or $0$-shifted symplectic.
\end{para}

\begin{para}[Wall-crossing formula: smooth case]\label{para-wall-crossing-smooth}
    Let $\mathcal{U}$ be a connected almost symmetric smooth algebraic stack having affine diagonal and admitting a good moduli space $p \colon \mathcal{U} \to U$.
    Let $T$ be the torus which is maximal among those tori such that $\mathrm{B} T$ admits a non-degenerate action on $\mathcal{U}$.
    Such a $\mathrm{B} T$-action corresponds to the maximal non-degenerate face $(F_{\mathrm{ce}}, \alpha_{\mathrm{ce}})$ of $\mathcal{U}$ recalled in \Cref{para-central-rank}. 
    Let $\hat{\alpha}_{\mathrm{ce}}$ denote the lift of $\alpha_{\mathrm{ce}}$ to $\mathcal{U}^{\mathrm{ss}}$, if $\mathcal{U}^{\mathrm{ss}}$ is non-empty.
    Take a line bundle $\mathcal{L}$ on $\mathcal{U}$ pulled back from $\mathcal{U} / \mathrm{B} T$ and let $\mathcal{U}^{\mathrm{ss}}$ be the semistable locus with respect to $\mathcal{L}$.
    Consider the following commutative diagram
\[\begin{tikzcd}
	{\mathcal{U}^{\mathrm{ss}}} & {\mathcal{U}} \\
	{U^{\mathrm{ss}}} & {U}
	\arrow["\iota", from=1-1, to=1-2]
	\arrow["{p^{\mathrm{ss}}}"', from=1-1, to=2-1]
	\arrow["p", from=1-2, to=2-2]
	\arrow["q"', from=2-1, to=2-2]
\end{tikzcd}\]
    where the vertical maps are good moduli space morphisms, $\iota$ is an open immersion and $q$ is a projective morphism.
    Since numerical symmetricity is inherited by open substacks, $\mathcal{U}^{\mathrm{ss}}$ is numerically symmetric and therefore almost symmetric by the discussion in \Cref{item-numerical-vs-almost} of \Cref{para-numerically-symmetric}.

    We have the following statement which can be regarded as a wall-crossing formula for BPS sheaves, as it shows that the cohomology of the BPS sheaf of the semistable locus does not depend on the choice of a stability condition satisfying the above hypotheses:
\end{para}

\begin{proposition}\label{prop-smooth-wall-crossing}
    We adopt the notations from the last paragraph. Then there exists a natural isomorphism
    \begin{equation}\label{eq-smooth-wall-crossing}
      \mathcal{BPS}_{U} \cong q_{*} \mathcal{BPS}_{U^{\mathrm{ss}}}^{\hat{\alpha}_{\mathrm{ce}}}
    \end{equation}
    if $\mathcal{U}^{\mathrm{ss}}$ is not the empty set. Otherwise, we have $\mathcal{BPS}_{U}  \cong 0$.
\end{proposition}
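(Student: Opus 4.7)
The plan is to split the proof according to whether $\mathcal{U}^{\mathrm{ss}}$ is empty, and to reduce the nontrivial case to showing that the projective morphism $q\colon U^{\mathrm{ss}} \to U$ is small and birational in the sense of intersection cohomology.

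First assume $\mathcal{U}^{\mathrm{ss}} = \emptyset$; I want to show $\mathcal{BPS}_U = 0$. Were this to fail, then by \Cref{cor:small_decom_thm_smooth_stack} applied at $\alpha_{\mathrm{ce}}$ some non-empty open substack of $\mathcal{U}_{\alpha_{\mathrm{ce}}} = \mathcal{U}$ would consist of closed points $x$ whose stabilizer $G_x$ contains $T$ as a subgroup of finite index; in particular $G_x^\circ = T$. Any filtered point $f\colon \mathbb{A}^1/\mathbb{G}_\mathrm{m} \to \mathcal{U}$ with $f(1) \simeq x$ corresponds \'etale-locally to a cocharacter $\lambda\colon \mathbb{G}_\mathrm{m} \to G_x$ together with a degeneration direction, and $\lambda$ is forced to factor through $G_x^\circ = T$ for connectedness reasons. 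Since $\mathcal{L}$ is pulled back from $\mathcal{U}/\mathrm{B}T$ its $T$-weight is identically zero, so $f^*\mathcal{L}|_{\mathrm{B}\mathbb{G}_\mathrm{m}}$ carries weight zero and cannot destabilize $x$. Hence $x$ is semistable, contradicting $\mathcal{U}^{\mathrm{ss}} = \emptyset$.

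Next assume $\mathcal{U}^{\mathrm{ss}} \neq \emptyset$. The lift $\hat\alpha_{\mathrm{ce}}$ is well-defined as the maximal central face of $\mathcal{U}^{\mathrm{ss}}$, because the $\mathrm{B}T$-action restricts non-degenerately to the open substack $\mathcal{U}^{\mathrm{ss}}$. Applying \Cref{cor:small_decom_thm_smooth_stack} to both $\alpha_{\mathrm{ce}}$ on $\mathcal{U}$ and $\hat\alpha_{\mathrm{ce}}$ on $\mathcal{U}^{\mathrm{ss}}$, each of $\mathcal{BPS}_U$ and $\mathcal{BPS}_{U^{\mathrm{ss}}}^{\hat\alpha_{\mathrm{ce}}}$ is either the corresponding intersection complex or zero, and the dichotomy is governed by the same condition on the generic closed-orbit stabilizer, since $\mathcal{U}^{\mathrm{ss}}$ is open and dense in the irreducible smooth stack $\mathcal{U}$. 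The vanishing case gives the claimed isomorphism trivially, so it remains to prove $q_* \mathcal{IC}_{U^{\mathrm{ss}}} \cong \mathcal{IC}_U$ under the assumption that both sheaves are the respective intersection complexes.

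To finish I would verify that $q$ is a small, birational projective morphism. Birationality follows from the argument of the first case: a generic closed orbit of $\mathcal{U}$ has stabilizer a finite extension of $T$ and is therefore semistable, so $q$ restricts to an isomorphism over a dense open $U_0 \subseteq U$. For smallness, meaning $2\dim q^{-1}(s) < \operatorname{codim}_U S$ for every $s$ in a non-open stratum $S$, I would work \'etale-locally on $U$ using the Luna slice theorem \cite[Theorem 4.12]{_Alper_ALunaetaleslicetheoremforalgebraicstacks} to reduce to a model $V/G$ with $V$ a weakly symmetric $G$-representation. In this model the fibre $q^{-1}(s)$ is a subvariety of the stacky fibre $p^{-1}(s)$ of the good moduli morphism $p\colon V/G \to V \GIT G$, so $\dim q^{-1}(s) \leq \dim p^{-1}(s)$, and the strict inequality of \Cref{prop:virtual_small} (which applies precisely because $S$ is not the open stratum with finite stabilizers) yields the required bound. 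The main obstacle will be this local-to-global smallness verification, in particular the technical handling of the central torus $T$, which I would absorb by passing to $\mathcal{U}/\mathrm{B}T$ (whose good moduli space, semistable locus and line bundle $\mathcal{L}$ are unchanged) so that the equality case of \Cref{prop:virtual_small} is attained only on the open stratum. Once smallness is established, the decomposition theorem for the projective morphism $q$, combined with the purity of $\mathcal{IC}_{U^{\mathrm{ss}}}$, forces all summands of $q_*\mathcal{IC}_{U^{\mathrm{ss}}}$ other than $\mathcal{IC}_U$ to vanish, giving the natural isomorphism of the statement.
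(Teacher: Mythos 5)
Your overall strategy (split on emptiness of $\mathcal{U}^{\mathrm{ss}}$, reduce to $q_*\mathcal{IC}_{U^{\mathrm{ss}}}\cong\mathcal{IC}_U$ via the smallness phenomenon of \Cref{prop:virtual_small}) is the right one, but the step where you verify smallness of $q$ directly contains a genuine gap. You claim $\dim q^{-1}(s)\le\dim p^{-1}(s)$ because "$q^{-1}(s)$ is a subvariety of the stacky fibre". It is not: $q^{-1}(s)$ is the \emph{good moduli space} of the open substack $p^{-1}(s)\cap\mathcal{U}^{\mathrm{ss}}$ of the stacky fibre, and passing to a good moduli space \emph{increases} dimension by the dimension of the generic stabilizer along the locus of closed points (e.g.\ $\mathrm{B}G$ has dimension $-\dim G$ while its good moduli space is a $0$-dimensional point). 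Over a non-open stratum the polystable points of the semistable fibre may well have stabilizers of dimension strictly larger than $\dim T$, so even after absorbing the central torus by passing to $\mathcal{U}/\mathrm{B}T$, the inequality of \Cref{prop:virtual_small} does not transfer to the fibres of $q$ without an extra argument bounding these stabilizer dimensions. Establishing literal smallness of the variety-level map $q$ is precisely the difficulty the paper's proof is engineered to avoid: it instead observes that the stacky composite $q\circ p^{\mathrm{ss}}=p\circ\iota$ is small in the sense of \Cref{prop:virtual_small} (a direct restriction of that proposition to the open substack $\mathcal{U}^{\mathrm{ss}}$), deduces from this that ${}^{\mathrm{p}}\mathcal{H}^0(\mathbb{L}^{-n/2}\otimes q_*p^{\mathrm{ss}}_*\mathcal{IC}_{\mathcal{U}^{\mathrm{ss}}})\cong\mathcal{IC}_U$ with vanishing negative perverse cohomology, and then extracts the statement about $q_*\mathcal{IC}_{U^{\mathrm{ss}}}$ using that the latter is a direct summand of this complex (via the split injection $\mathbb{L}^{n/2}\otimes\mathcal{IC}_{U^{\mathrm{ss}}}\hookrightarrow p^{\mathrm{ss}}_*\mathcal{IC}_{\mathcal{U}^{\mathrm{ss}}}$ coming from the purity of \Cref{thm-decom-good-moduli}), together with the Verdier self-duality of $q_*\mathcal{IC}_{U^{\mathrm{ss}}}$ and the generic finiteness of $q$. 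You should replace your direct smallness verification with this zig-zag argument, or else supply the missing control on stabilizers of polystable points in the fibres of $q^{\mathrm{ss}}$.

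A smaller but real defect is in the empty case: a filtered point $f$ with $f(1)\simeq x$ is \emph{not} classified by a cocharacter of $G_x$. In a local presentation $V/G$ it corresponds to a cocharacter of $G$ whose limit on a lift of $x$ exists, and this cocharacter need not fix $x$ (already for $\mathbb{A}^1/\mathbb{G}_{\mathrm{m}}$ at the point $1$). The correct justification, as in the paper, is \cite[Lemma 1.3.14]{_HalpernLeistner_Onthestructureofinstabilityinmodulitheory}: at a point where $\bar p\colon\mathcal{U}/\mathrm{B}T\to U$ is finite, every filtered point of $\mathcal{U}/\mathrm{B}T$ is constant, so every filtered point of $\mathcal{U}$ based at $x$ factors through $\mathrm{B}T\times\{x\}$, where $\mathcal{L}$ has weight zero. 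Your conclusion in this case is correct, but the mechanism you give for it is not.
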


\begin{proof}
    Assume first that the map $\bar{p} \colon \mathcal{U} / \mathrm{B} T  \to U$ is not generically quasi-finite.
    In this case, there exists an open subset $\mathcal{V} \subset \mathcal{U} / \mathrm{B} T$ of points whose stabilizers are positive dimensional.
    Therefore either the map $\bar{p}^{\mathrm{ss}}$ is not generically quasi-finite or $\mathcal{U}^{\mathrm{ss}}$ is the empty set.
    In particular,  \Cref{cor:small_decom_thm_smooth_stack} implies that the statement holds in this case.

    Now assume that the map $\bar{p}$ is generically quasi-finite.
    We first claim that $\mathcal{U}^{\mathrm{ss}}$ is non-empty.
    To see this, let $x \in \mathcal{U}$ be a point such that the map $\bar{p}$ is finite at the image $\bar{x} \in \mathcal{U} / \mathrm{B} T$.
    Note that by \cite[Lemma 1.3.14]{_HalpernLeistner_Onthestructureofinstabilityinmodulitheory} any filtered point $\bar{f} \colon \mathbb{A}^1 / \mathbb{G}_{\mathrm{m}} \to \mathcal{U} / \mathrm{B} T$ with $\bar{f}(1) \simeq \bar{x}$ factors through the constant map $\mathbb{A}^1 / \mathbb{G}_{\mathrm{m}} \to \mathrm{pt}$.
    Therefore any filtered point $f \colon \mathbb{A}^1 / \mathbb{G}_{\mathrm{m}} \to \mathcal{U}$  with ${f}(1) \simeq {x}$ factors  through $\mathrm{B} T \times \{ x \}$.
    Since the restriction of $\mathcal{L}$ to $\mathrm{B} T \times \{ x \}$ is trivial, we conclude that $x$ is semistable, hence $\mathcal{U}^{\mathrm{ss}}$ is non-empty.
    By \Cref{cor:small_decom_thm_smooth_stack}, we have isomorphisms
    \begin{equation*}
        \mathcal{BPS}_{U} \cong \mathcal{IC}_U, \quad  \mathcal{BPS}_{U^{\mathrm{ss}}}^{\hat{\alpha}_{\mathrm{ce}}} \cong \mathcal{IC}_{U^{\mathrm{ss}}}.
    \end{equation*}
    Set $n \coloneqq \crk \mathcal{X} = \dim T$.
    Consider the following zig-zag diagram:
    \begin{align}\label{eq-zig-zag-smooth}
        \begin{aligned}
        \mathcal{IC}_{U} \cong {}^{\mathrm{p}} \mathcal{H}^0(\mathbb{L}^{- n / 2} \otimes p_* \mathcal{IC}_{\mathcal{U}}) \to \mathcal{H}^0(\mathbb{L}^{- n / 2} \otimes q_* p^{\mathrm{ss}}_* \mathcal{IC}_{\mathcal{U}^{\mathrm{ss}}}) 
        &\leftarrow {}^{\mathrm{p}} \mathcal{H}^0(q_* {}^{\mathrm{p}}  \mathcal{H}^0(\mathbb{L}^{- n / 2} \otimes p^{\mathrm{ss}}_* \mathcal{IC}_{\mathcal{U}^{\mathrm{ss}}})) \\
        & \cong  {}^{\mathrm{p}} \mathcal{H}^0(q_*   \mathcal{IC}_{U^{\mathrm{ss}}}).
        \end{aligned}
    \end{align}
    We claim that the two maps are isomorphisms and $q_*   \mathcal{IC}_{U^{\mathrm{ss}}}$ is concentrated in the heart of the perverse t-structure, which implies the desired isomorphism.
    Since the map $q \circ p^{\mathrm{ss}} = p \circ \iota$ is small, the proof of \Cref{cor:small_decom_thm_smooth_stack} implies
    \[
    {}^{\mathrm{p}} \mathcal{H}^0(\mathbb{L}^{- n /2} \otimes q_* p^{\mathrm{ss}}_*  \mathcal{IC}_{\mathcal{U}^{\mathrm{ss}}}) \cong \mathcal{IC}_U, \quad {}^{\mathrm{p}} \mathcal{H}^i (\mathbb{L}^{- n/2} \otimes q_* p^{\mathrm{ss}}_*  \mathcal{IC}_{\mathcal{U}^{\mathrm{ss}}}) \cong 0 \ (i < 0).
    \]
    The first isomorphism implies that the forward map in \Cref{eq-zig-zag-smooth} is an isomorphism.
    Since the backward map in \Cref{eq-zig-zag-smooth} is a split injection, the latter isomorphism implies that the complex $q_* \mathcal{IC}_{U^{\mathrm{ss}}} $ is concentrated in non-negative perverse degrees.
    The Verdier self-duality of $q_* \mathcal{IC}_{U^{\mathrm{ss}}} $  implies that it is in fact contained in the heart of the perverse t-structure.
    Since $q$ is generically quasi-finite, ${}^{\mathrm{p}} \mathcal{H}^0(q_*   \mathcal{IC}_{U^{\mathrm{ss}}})$ is non-zero, hence the backward map in \Cref{eq-zig-zag-smooth} is also an isomorphism.
\end{proof}

\begin{para}[Wall-crossing formula: \texorpdfstring{($-1)$}{$(-1)$}-shifted symplectic case]
    Let $\mathcal{X}$ be a connected almost symmetric  $(-1)$-shifted symplectic stack having affine diagonal and admitting a good moduli space $p \colon \mathcal{X} \to X$. 
    Take a torus $T$, a line bundle $\mathcal{L}$ on $\mathcal{X}$ and a face $\hat{\alpha}_{\mathrm{ce}}$ for $\mathcal{X}^{\mathrm{ss}}$ as in  \Cref{para-wall-crossing-smooth}, and
    consider the following commutative diagram
\[\begin{tikzcd}
	{\mathcal{X}^{\mathrm{ss}}} & {\mathcal{X}} \\
	{X^{\mathrm{ss}}} & {X.}
	\arrow["\iota", from=1-1, to=1-2]
	\arrow["{p^{\mathrm{ss}}}"', from=1-1, to=2-1]
	\arrow["p", from=1-2, to=2-2]
	\arrow["q"', from=2-1, to=2-2]
\end{tikzcd}\]
    Since numerical symmetricity is inherited by open substacks, $\mathcal{X}^{\mathrm{ss}}$ is numerically symmetric and therefore almost symmetric by the discussion in \Cref{item-numerical-vs-almost} of \Cref{para-numerically-symmetric}.   
    We have the following wall-crossing formula for BPS sheaves:
\end{para}

\begin{proposition}\label{prop--1-symplectic-wall-crossing}
    We adopt the notations from the last paragraph. Then there exists a natural isomorphism
    \begin{equation}\label{eq--1-symplectic-wall-crossing}
      \mathcal{BPS}_{X} \cong q_{*} \mathcal{BPS}^{\hat{\alpha}_{\mathrm{ce}}}_{X^{\mathrm{ss}}}
    \end{equation}
    if $\mathcal{X}^{\mathrm{ss}}$ is not the empty set. Otherwise, we have $\mathcal{BPS}_{X}  \cong 0$.
\end{proposition}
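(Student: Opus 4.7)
The plan is to reduce the statement étale-locally on $X$ to the smooth wall-crossing formula (Proposition \ref{prop-smooth-wall-crossing}) via the critical-locus description of $(-1)$-shifted symplectic stacks (Theorem \ref{thm-Darboux}).

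First I would define a candidate comparison globally. From $\iota^*\varphi_{\mathcal{X}} \simeq \varphi_{\mathcal{X}^{\mathrm{ss}}}$ and the adjunction unit $\mathrm{id} \to \iota_*\iota^*$ we obtain a morphism $p_*\varphi_{\mathcal{X}} \to q_*p^{\mathrm{ss}}_*\varphi_{\mathcal{X}^{\mathrm{ss}}}$; the pushforward $p^{\mathrm{ss}}_*\varphi_{\mathcal{X}^{\mathrm{ss}}}$ is pure by Theorem \ref{thm-decom-good-moduli} (which applies via the Darboux reduction below together with \eqref{eq-DT-Hodge-standard}), so the decomposition theorem yields a natural splitting $q_*\mathcal{BPS}^{\hat{\alpha}_{\mathrm{ce}}}_{X^{\mathrm{ss}}} \hookrightarrow {}^{\mathrm{p}}\mathcal{H}^0(\mathbb{L}^{-n/2}\otimes q_*p^{\mathrm{ss}}_*\varphi_{\mathcal{X}^{\mathrm{ss}}})$ with $n=\crk\mathcal{X}$. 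Assembling these gives the zig-zag
\[
    \mathcal{BPS}_X \longrightarrow {}^{\mathrm{p}}\mathcal{H}^0\bigl(\mathbb{L}^{-n/2}\otimes q_*p^{\mathrm{ss}}_*\varphi_{\mathcal{X}^{\mathrm{ss}}}\bigr) \longleftarrow q_*\mathcal{BPS}_{X^{\mathrm{ss}}}^{\hat{\alpha}_{\mathrm{ce}}},
\]
and the claim is that both arrows are isomorphisms.

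I would check this étale-locally on $X$. By Theorem \ref{thm-Darboux}, after an étale cover we may write $\mathcal{X} \simeq \mathrm{Crit}(f\colon \mathcal{U}\to \mathbb{A}^1)$ for a smooth weakly symmetric stack $\mathcal{U}$. Under the standard orientation, \eqref{eq-DT-Hodge-standard} and \eqref{eq-vanishing-good-moduli} give $p_*\varphi_{\mathcal{X}} \simeq \varphi_{\bar f}(p_{\mathcal{U},*}\mathcal{IC}_{\mathcal{U}})$, where $\bar f\colon U\to \mathbb{A}^1$ is the induced function. Since $\varphi_{\bar f}$ is $t$-exact for the perverse $t$-structure, it commutes with the perverse truncation, yielding $\mathcal{BPS}_X \simeq \varphi_{\bar f}(\mathcal{BPS}_U)$; an analogous identity holds on the semistable side. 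Applying $\varphi_{\bar f}$ to the smooth wall-crossing formula $\mathcal{BPS}_U \simeq q_*\mathcal{BPS}_{U^{\mathrm{ss}}}^{\hat{\alpha}_{\mathrm{ce}}}$ (Proposition \ref{prop-smooth-wall-crossing}) and commuting $\varphi_{\bar f}$ past $q_*$ via the proper base change \eqref{eq-van-functorial} produces the desired étale-local isomorphism, which coincides with the global map above by naturality. Note that the smooth wall-crossing already handles the possible degenerate sub-cases ($\mathcal{U}^{\mathrm{ss}}$ empty or $\bar p$ not generically finite) uniformly, automatically giving $\mathcal{BPS}_X = 0 = q_*\mathcal{BPS}_{X^{\mathrm{ss}}}^{\hat{\alpha}_{\mathrm{ce}}}$ whenever the local smooth BPS sheaf vanishes.

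The main obstacle is an equivariant upgrade of Theorem \ref{thm-Darboux}: the standard statement gives a critical chart respecting the stabilizer at a closed point but does not a priori respect the extra $\mathrm{B}T$-action and the line bundle $\mathcal{L}$, so one has to ensure that the semistable locus is preserved by the chart. I plan to resolve this by running the Luna slice step with the enlarged group $G_x\cdot T$ (using that $T$ acts on $\mathcal{X}$ centrally, so commutes with every stabilizer $G_x$), and by absorbing $\mathcal{L}$ into the character data via the argument of \Cref{para-any-orientation-is-locally-standard}. A secondary subtlety is matching the standard orientation of the critical chart with the localized orientation $\hat{\alpha}_{\mathrm{ce}}^\star o$ on $\mathcal{X}^{\mathrm{ss}}$; Lemma \ref{lem-ori-localize-indep-segment-up-to-sign} controls the discrepancy by a sign determined by the cotangent distance, which vanishes for $\hat{\alpha}_{\mathrm{ce}}$ since the cotangent arrangement on the central face $F_{\mathrm{ce}}$ has no walls.
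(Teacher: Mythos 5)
Your overall strategy coincides with the paper's: the same zig-zag through ${}^{\mathrm{p}}\mathcal{H}^0(\mathbb{L}^{-n/2}\otimes q_*p^{\mathrm{ss}}_*\varphi_{\mathcal{X}^{\mathrm{ss}}})$, an étale-local reduction to a critical chart via \Cref{thm-Darboux} and \Cref{para-any-orientation-is-locally-standard}, arranging the chart to carry the $\mathrm{B}T$-action and the line bundle $\mathcal{L}$, and then applying $\varphi_{\bar f}$ to \Cref{prop-smooth-wall-crossing}.

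There is, however, one genuine gap in the local step. The identification $\mathcal{BPS}_X\simeq\varphi_{\bar f}(\mathcal{BPS}_U)$ presupposes that the central rank of the local smooth model $\mathcal{U}=V_i/G_i$ equals $n=\crk\mathcal{X}$: the left-hand side extracts the perverse degree determined by the \emph{global} central rank of $\mathcal{X}$, whereas $\mathcal{BPS}_U$ is defined using $\crk\mathcal{U}$, and near a closed point with large stabilizer the Darboux chart may admit a non-degenerate action of a torus $T'$ strictly containing $T$, so that $\crk\mathcal{U}>n$. In that situation \Cref{prop-smooth-wall-crossing} applies only with $T'$ in place of $T$ and yields an isomorphism in the wrong perverse degree; your remark about ``degenerate sub-cases'' ($\mathcal{U}^{\mathrm{ss}}$ empty or $\bar p$ not generically finite) does not cover this, since $\mathcal{BPS}_{V_i\GIT G_i}$ itself need not vanish there. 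The paper handles this as a separate case: when such a $T'$ exists, \Cref{prop-support-lemma} (equivalently, the vanishing ${}^{\mathrm{p}}\mathcal{H}^i=0$ for $i<\crk\mathcal{U}$ from \Cref{cor:small_decom_thm_smooth_stack}) forces both $\mathcal{BPS}_X$ and $\mathcal{BPS}^{\hat{\alpha}_{\mathrm{ce}}}_{X^{\mathrm{ss}}}$ to vanish on that chart, and only in the complementary case, where $T$ is also maximal for the chart, does one run the argument you describe. Inserting this dichotomy would make your proof complete.
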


\begin{proof}
    Set $n = \crk \mathcal{X} = \dim T$.
    Assume first that $\mathcal{X}^{\mathrm{ss}}$ is non-empty.
    Consider the following zig-zag diagram:
    \begin{align*}
        \mathcal{BPS}_{X} = {}^{\mathrm{p}} \mathcal{H}^0(\mathbb{L}^{- n / 2} \otimes p_* \varphi_{\mathcal{X}}) \to \mathcal{H}^0(\mathbb{L}^{- n / 2} \otimes q_* p^{\mathrm{ss}}_* \varphi_{\mathcal{X}^{\mathrm{ss}}}) 
        &\leftarrow {}^{\mathrm{p}} \mathcal{H}^0(q_* {}^{\mathrm{p}}  \mathcal{H}^0(\mathbb{L}^{- n / 2} \otimes p^{\mathrm{ss}}_* \varphi_{\mathcal{X}^{\mathrm{ss}}})) \\
        & =  {}^{\mathrm{p}} \mathcal{H}^0(q_*   \mathcal{BPS}^{\hat{\alpha}_{\mathrm{ce}}}_{X^{\mathrm{ss}}}).
    \end{align*}
    We claim that these morphisms are isomorphisms and $q_* \mathcal{BPS}^{\hat{\alpha}_{\mathrm{ce}}}_{X^{\mathrm{ss}}}$ is contained in the heart of the perverse t-structure.
    Note that this claim can be checked \'etale locally on $X$.
    Using \Cref{thm-Darboux} and the discussion in \Cref{para-any-orientation-is-locally-standard}, we may find jointly surjective orientation-preserving strongly \'etale symplectomorphisms
    \[
     \mathrm{Crit}   (f_i / G_i) \to \mathcal{X} 
    \]
    where $G_i$ is a reductive group acting on a connected smooth scheme $V_i$ such that $V_i / G_i$ is almost symmetric and $f_i$ is a $G_i$-invariant regular function on $V_i$.
    By possibly refining the \'etale cover and using \cite[Theorem 10.3]{_Alper_GoodmodulispacesforArtinstacks} and \cite[Lemma 6.8]{bellamy2016symplectic}, we may assume that the restriction $\mathcal{L} |_{\mathrm{Crit}   (f_i / G_i)}$ is pulled back from $\mathrm{B} G_i$.
    Also, we may assume that the $\mathrm{B} T$ action on $\mathrm{Crit}   (f_i / G_i)$ extends to a $\mathrm{B} T$-action on $V_i / G_i$.

    Assume first that there exists a torus $T'$ which contains $T$ as a proper subtorus and $\mathrm{B} T'$ admits a non-degenerate action on $V_i / G_i$. In this case, \Cref{prop-support-lemma} implies the vanishing 
    \[
        \mathcal{BPS}_{X} |_{\mathrm{Crit}(f)_{\GIT}} = 0 , \quad \mathcal{BPS}^{\hat{\alpha}_{\mathrm{ce}}}_{X^{\mathrm{ss}}} |_{\mathrm{Crit}(f)_{\GIT}^{\mathrm{ss}}} = 0
    \]
    hence we obtain the desired claim.
    Therefore we may assume that $T$ is the torus which is maximal among those tori such that $\mathrm{B} T$ admits a non-trivial action on $V_i / G_i$.
    We let $\mathcal{L}_i \in \mathrm{Pic}(V_i / G_i)$ be the line bundle which restricts to $\mathcal{L} |_{\mathrm{Crit}   (f_i / G_i)}$ and consider the following diagram 
    \[\begin{tikzcd}
        {V_i^{\mathrm{ss}} / G_i} & {V_i / G_i} \\
        {V_i^{\mathrm{ss}} \GIT G_i} & {V_i \GIT G_i }
        \arrow["\tilde{\iota}_i", from=1-1, to=1-2]
        \arrow["{\tilde{p}_i^{\mathrm{ss}}}"', from=1-1, to=2-1]
        \arrow["\tilde{p}_i", from=1-2, to=2-2]
        \arrow["\tilde{q}_i"', from=2-1, to=2-2]
    \end{tikzcd}\]
    where the semistable locus is with respect to the line bundle $\mathcal{L}_i$ and the vertical maps are the good moduli space morphisms.
    \Cref{prop-smooth-wall-crossing} implies a natural isomorphism 
    \begin{equation}\label{eq-wall-crossing-Vi}
        \mathcal{BPS}_{V_i \GIT G_i} \cong  \tilde{q}_{i, *} \mathcal{BPS}^{\hat{\alpha}_{\mathrm{ce}}}_{V^{\mathrm{ss}}_i \GIT G_i}.
    \end{equation}
    Let $\bar{f}_i \colon V_i \GIT G_i \to \mathbb{A}^1$ be the function induced from $f_i$.
    By applying the vanishing cycle functor $\varphi_{\bar{f}_i}$ to the isomorphism \Cref{eq-wall-crossing-Vi}, we obtain the desired statement.

    If $\mathcal{X}^{\mathrm{ss}}$ is an empty set, a similar argument implies $\mathcal{BPS}_{X} \cong 0$ as desired.
\end{proof}

\begin{para}[Wall-crossing formula: \texorpdfstring{$0$}{$0$}-shifted symplectic case]
    Let $\mathcal{Y}$ be a connected almost symmetric $0$-shifted symplectic stack having affine diagonal and admitting a good moduli space $p \colon \mathcal{Y} \to Y$. 
    Take a torus $T$, a line bundle $\mathcal{L}$ on $\mathcal{Y}$ and a face $\hat{\alpha}_{\mathrm{ce}}$ as in  \Cref{para-wall-crossing-smooth}, and
    consider the following commutative diagram
    \[\begin{tikzcd}
	{\mathcal{Y}^{\mathrm{ss}}} & {\mathcal{Y}} \\
	{Y^{\mathrm{ss}}} & {Y.}
	\arrow["\iota", from=1-1, to=1-2]
	\arrow["{p^{\mathrm{ss}}}"', from=1-1, to=2-1]
	\arrow["p", from=1-2, to=2-2]
	\arrow["q"', from=2-1, to=2-2]
\end{tikzcd}\]
    Since numerical symmetricity is inherited by open substacks, $\mathcal{Y}^{\mathrm{ss}}$ is numerically symmetric and therefore almost symmetric by the discussion in \Cref{item-numerical-vs-almost} of \Cref{para-numerically-symmetric}.
    The following wall-crossing formula is an immediate consequence of \Cref{prop--1-symplectic-wall-crossing} and the definition of the BPS sheaf for $0$-shifted symplectic stacks:
\end{para}

\begin{proposition}
    We adopt the notations from the last paragraph. Then there exists a natural isomorphism
    \begin{equation}
      \mathcal{BPS}_{Y} \cong q_{*} \mathcal{BPS}_{Y^{\mathrm{ss}}}^{\hat{\alpha}_{\mathrm{ce}}}
    \end{equation}
    if $\mathcal{Y}^{\mathrm{ss}}$ is not the empty set. Otherwise, we have $\mathcal{BPS}_{Y}  \cong 0$.
\end{proposition}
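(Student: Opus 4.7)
The plan is to reduce the claim to the already-proved $(-1)$-shifted wall-crossing formula (\Cref{prop--1-symplectic-wall-crossing}) applied to the $(-1)$-shifted cotangent stack $\mathcal{X} \coloneqq \mathrm{T}^*[-1]\mathcal{Y}$, and then extract the assertion for $\mathcal{BPS}_Y$ using the product decomposition from \Cref{thm-BPS-cotangent-product}.

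First I would set up $\mathcal{X}$: equip it with its standard $(-1)$-shifted symplectic structure and standard orientation. Since $\mathcal{Y}$ is weakly symmetric, the virtual tangent $[\mathbb{T}_{\mathcal{X}, x}] = [\mathbb{T}_{\mathcal{Y}, y}] - [\mathbb{T}_{\mathcal{Y}, y}]^{\vee}$ is weakly symmetric, so $\mathcal{X}$ itself is weakly symmetric by \Cref{lemma-smooth-symplectic-symmetric}. The projection $\pi \colon \mathcal{X} \to \mathcal{Y}$ induces an identification $\mathrm{CL}_{\mathbb{Q}}(\mathcal{X}) \cong \mathrm{CL}_{\mathbb{Q}}(\mathcal{Y})$ matching the maximal central faces. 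The pulled-back line bundle $\pi^{*}\mathcal{L}$ provides a stability datum for $\mathcal{X}$; because any filtration $\mathbb{A}^{1}/\mathbb{G}_{\mathrm{m}} \to \mathcal{X}$ pairs with $\pi^{*}\mathcal{L}$ through its image under $\pi$, the semistable locus is $\mathcal{X}^{\mathrm{ss}} = \pi^{-1}(\mathcal{Y}^{\mathrm{ss}})$.

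Next I would record the resulting diagram of good moduli spaces. Since $\pi$ is affine, the good moduli space morphism $\mathcal{X} \to X$ is affine over $\mathcal{Y} \to Y$, and compatibility of good moduli spaces with the saturated open inclusion $\mathcal{Y}^{\mathrm{ss}} \hookrightarrow \mathcal{Y}$ yields the Cartesian square
\[
\begin{tikzcd}
X^{\mathrm{ss}} \ar[r, "q^X"] \ar[d] & X \ar[d] \\
Y^{\mathrm{ss}} \ar[r, "q^Y"] & Y,
\end{tikzcd}
\]
with $q^{X}$ projective as the base change of $q^{Y}$. Writing $n = \crk \mathcal{Y}$ and applying \Cref{thm-BPS-cotangent-product} to $\mathcal{Y}$ and to its weakly symmetric open substack $\mathcal{Y}^{\mathrm{ss}}$ gives closed immersions $\iota \colon \mathbb{A}^{n} \times Y \hookrightarrow X$ and $\iota^{\mathrm{ss}} \colon \mathbb{A}^{n} \times Y^{\mathrm{ss}} \hookrightarrow X^{\mathrm{ss}}$ and natural isomorphisms
\begin{align*}
\mathcal{BPS}_{X} &\cong \iota_{*}\bigl(\mathcal{IC}_{\mathbb{A}^{n}} \boxtimes \mathcal{BPS}_{Y}\bigr), \\
\mathcal{BPS}^{\hat{\alpha}_{\mathrm{ce}}}_{X^{\mathrm{ss}}} &\cong \iota^{\mathrm{ss}}_{*}\bigl(\mathcal{IC}_{\mathbb{A}^{n}} \boxtimes \mathcal{BPS}^{\hat{\alpha}_{\mathrm{ce}}}_{Y^{\mathrm{ss}}}\bigr).
\end{align*}
Both closed immersions arise from the same zero-section construction on the $\mathrm{B}\mathbb{G}_{\mathrm{m}}^{n}$-fixed locus for the maximal central face, so $q^{X} \circ \iota^{\mathrm{ss}} = \iota \circ (\mathrm{id}_{\mathbb{A}^{n}} \times q^{Y})$.

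Finally I would apply \Cref{prop--1-symplectic-wall-crossing} to $\mathcal{X}$ to get $\mathcal{BPS}_{X} \cong q^{X}_{*} \mathcal{BPS}^{\hat{\alpha}_{\mathrm{ce}}}_{X^{\mathrm{ss}}}$, substitute the two product descriptions, and invoke proper base change along the Cartesian square to obtain
\[
\iota_{*}\bigl(\mathcal{IC}_{\mathbb{A}^{n}} \boxtimes \mathcal{BPS}_{Y}\bigr) \cong \iota_{*}\bigl(\mathcal{IC}_{\mathbb{A}^{n}} \boxtimes q^{Y}_{*} \mathcal{BPS}^{\hat{\alpha}_{\mathrm{ce}}}_{Y^{\mathrm{ss}}}\bigr).
\]
Full faithfulness of $\iota_{*}$ followed by restriction to $\{0\} \times Y$ (or the K\"unneth formula on the $\mathbb{A}^{n}$-factor) yields the desired isomorphism $\mathcal{BPS}_{Y} \cong q^{Y}_{*} \mathcal{BPS}^{\hat{\alpha}_{\mathrm{ce}}}_{Y^{\mathrm{ss}}}$. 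The vanishing case proceeds identically: if $\mathcal{Y}^{\mathrm{ss}} = \emptyset$ then $\mathcal{X}^{\mathrm{ss}} = \emptyset$, hence $\mathcal{BPS}_{X} \cong 0$, and the product decomposition forces $\mathcal{BPS}_{Y} \cong 0$. The one step that requires some care is the compatibility of $\iota$ and $\iota^{\mathrm{ss}}$ with $q^{X}$, but this is immediate from both sections coming from the common $\mathrm{B}\mathbb{G}_{\mathrm{m}}^{n}$-action associated with the maximal central face of $\mathcal{Y}$.
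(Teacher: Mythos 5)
Your overall strategy --- pass to $\mathcal{X}=\mathrm{T}^*[-1]\mathcal{Y}$, apply \Cref{prop--1-symplectic-wall-crossing} there, and descend through the product decomposition of \Cref{thm-BPS-cotangent-product} --- is exactly the reduction the paper has in mind (the paper records no argument beyond declaring the statement an immediate consequence of these two ingredients), and your bookkeeping with the closed immersions $\iota$, $\iota^{\mathrm{ss}}$, the identity $q^{X}\circ\iota^{\mathrm{ss}}=\iota\circ(\mathrm{id}_{\mathbb{A}^n}\times q^{Y})$, and the K\"unneth/full-faithfulness step at the end is fine.

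The one step I would not accept as written is the identification $\mathcal{X}^{\mathrm{ss}}(\pi^{*}\mathcal{L})=\pi^{-1}(\mathcal{Y}^{\mathrm{ss}}(\mathcal{L}))$. Your justification --- that a filtration of $\mathcal{X}$ pairs with $\pi^{*}\mathcal{L}$ through its image in $\mathcal{Y}$ --- shows only that a destabilizing filtration of $x$ pushes down to one of $\pi(x)$, i.e.\ the inclusion $\pi^{-1}(\mathcal{Y}^{\mathrm{ss}})\subseteq\mathcal{X}^{\mathrm{ss}}$. What your argument actually needs is the reverse inclusion: a point of $\mathcal{X}$ lying over an unstable point of $\mathcal{Y}$ must itself be unstable. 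That requires \emph{lifting} a destabilizing filtration $g$ of $y=\pi(x)$ to a filtration of $x$; since $\Filt(\mathrm{T}^*[-1]\mathcal{Y})\simeq\mathrm{T}^*[-1]\Filt(\mathcal{Y})$, the fibre of $\ev_1$ over $g$ only sees the non-negative-weight part of $\mathrm{H}^{-1}(\mathbb{T}_{\mathcal{Y},y})$ with respect to $g$, and a given point of $\pi^{-1}(y)$ need not lie in that subspace for the particular $g$ chosen. So you need an additional argument --- either producing, for each point of $\pi^{-1}(y)$, some destabilizing filtration of $y$ whose associated non-negative part contains it, or showing that the possible excess $\mathcal{X}^{\mathrm{ss}}\setminus\pi^{-1}(\mathcal{Y}^{\mathrm{ss}})$ does not support $\mathcal{BPS}^{\hat{\alpha}_{\mathrm{ce}}}_{X^{\mathrm{ss}}}$. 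Without this, the open substack to which \Cref{prop--1-symplectic-wall-crossing} applies could be strictly larger than $\mathrm{T}^*[-1]\mathcal{Y}^{\mathrm{ss}}$, it would no longer be a shifted cotangent stack, \Cref{thm-BPS-cotangent-product} would not apply to it, and your final substitution would not go through.
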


\section{Cohomological Hall induction (CoHI)}

\subsection{Cohomological Hall induction (CoHI)}

\begin{para}
    The aim of this section is to introduce the cohomological Hall induction (CoHI for short) for smooth stacks, $(-1)$-shifted symplectic stacks and quasi-smooth derived algebraic stacks.
    The cohomological Hall induction is a generalization of the cohomological Hall algebra multiplication studied in \cite{_Kontsevich_Soibelman_CoHA,_Davison_Meinhardt_CoDT,Kapranov_Vasserot_CoHA,Kinjo_Park_Safronov_CoHA, PortaSala},
    and can be applied for stacks which do not necessarily arise as a moduli stack of objects in an abelian category.
\end{para}

\begin{para}[Assumptions]\label{para-Cohi-assumptions}
    Throughout this section, we work with derived algebraic stacks with the following assumptions:
    \begin{enumerate}
        \item $\mathcal{X}$ has affine diagonal and admits a good moduli space $p \colon \mathcal{X} \to X$. \label{item-cohI-goodmoduli}
        \item $\mathcal{X}$ has quasi-compact connected components and quasi-compact graded points. In particular, $\mathcal{X}$ has finite cotangent weights. \label{item-cohI-qcgr}
    \end{enumerate}
\end{para}

\begin{para}[Cohomological Hall induction for smooth stacks]\label{para-Cohi-for-smooth stacks}
    Let $\mathcal{U}$ be a smooth algebraic stack satisfying assumptions \Cref{item-cohI-goodmoduli} and \Cref{item-cohI-qcgr} in \Cref{para-Cohi-assumptions}.
    Take a non-degenerate face $(F, \alpha) \in \mathsf{Face}^{\mathrm{nd}}(\mathcal{U})$  and  a chamber $\sigma \subset F$ with respect to the cotangent arrangement.
    Since $\mathcal{U}$ is $\Theta$-reductive by \cite[Proposition 3.21]{_Alper_Existenceofmodulispacesforalgebraicstacks}, the constancy theorem \Cref{para-constancy-theorem} implies that the evaluation map $\mathrm{ev}_{1, \sigma} \colon \mathcal{U}_{\sigma}^+ \to \mathcal{U}$ is a proper morphism.
    Consider the following diagram:
    \begin{equation}\label{eq-smooth-cohi-diagram}
        \begin{tikzcd}
        & {\mathcal{U}_{\sigma}^+} \\
        {\mathcal{U}_{\alpha}} && {\mathcal{U}} \\
        {U_{\alpha}} && U.
        \arrow["{\mathrm{gr}_{\sigma}}"', from=1-2, to=2-1]
        \arrow["{\mathrm{ev}_{1, \sigma}}", from=1-2, to=2-3]
        \arrow["{p_{\alpha}}", from=2-1, to=3-1]
        \arrow["p"', from=2-3, to=3-3]
        \arrow["{g_{\alpha}}", from=3-1, to=3-3]
    \end{tikzcd}
\end{equation}
        Here the morphism $g_{\alpha}$ is induced by the natural map $\mathcal{U}_{\alpha} \to \mathcal{U}$.
        This diagram commutes by \cite[Lemma 2.2.2]{IbanezNunez_stratificationsgoodmodulistacks}.

        One can show that the map $g_{\alpha}$ in \Cref{eq-smooth-cohi-diagram} is a finite morphism. This is proved in \cite[Th\'eor\`eme]{luna1975adherences} when $\mathcal{U}$ is of the form $V/ G$ for a reductive group $G$ and an affine scheme $V$.
        The general case follows from the local structure theorem for stacks \cite[Theorem 4.12]{_Alper_ALunaetaleslicetheoremforalgebraicstacks} together with the assumption on the quasi-compact graded points.
        See \cite{IbanezNunez2024FinitenessGMS} for a more conceptual and direct proof of the finiteness of $g_{\alpha}$.

        Set $d_{\sigma} \coloneqq \dim \mathcal{U}_{\sigma}^+ - \dim \mathcal{U}$.
        We define the \emph{relative cohomological Hall induction} map by the composition
        \begin{equation}\label{eq-cohi-relative-smooth}
        *^{\mathcal{H}\mathrm{all}}_{\sigma} \colon g_{\alpha, *} p_{\alpha, *} \mathbb{Q}_{\mathcal{U}_{\alpha}}  \cong     g_{\alpha, *} p_{\alpha, *} \gr_{\sigma, *}  \mathbb{Q}_{\mathcal{U}_{\sigma}^+}  \cong p_* \ev_{1, \sigma, *} \mathbb{Q}_{\mathcal{U}_{\sigma}^+} \to \mathbb{L}^{d_{\sigma}} \otimes p_* \mathbb{Q}_{\mathcal{U}}
        \end{equation}
        where the first morphism is constructed via adjunction, and is an isomorphism due to the fact that $\gr_{\sigma}$ is an $\mathbb{A}^1$-deformation retract (as proved in \cite[Lemma 1.3.8]{_HalpernLeistner_Onthestructureofinstabilityinmodulitheory}) and the last map is the integration map.
        By taking global sections, we obtain the \emph{absolute cohomological Hall induction} map
        \[
            *^{\mathrm{Hall}}_{\sigma} \colon \mathrm{H}^*(\mathcal{U}_{\alpha}) \to \mathbb{L}^{d_{\sigma}} \otimes \mathrm{H}^{*}( \mathcal{U}).    
        \] 
        If $\mathcal{U}$ is almost symmetric, we have an equality 
        $\dim \mathcal{U}_{\alpha} - \dim \mathcal{U} = 2 \cdot (\dim \mathcal{U}_{\sigma}^+ - \dim \mathcal{U})$.
        In particular, the relative cohomological Hall induction induces a morphism
        \begin{equation}\label{eq-cohi-smooth-symmetric}
            *^{\mathcal{H}\mathrm{all}}_{\sigma} \colon g_{\alpha, *} p_{\alpha, *}  \mathcal{IC}_{\mathcal{U}_{\alpha}} \to   p_* \mathcal{IC}_{\mathcal{U}}.  
        \end{equation}
\end{para}

\begin{para}[Cohomological Hall induction for a finite quotient]\label{para-cohi-finite-quotient}
   Let $\tilde{\mathcal{U}}$ be a smooth stack satisfying assumptions \Cref{item-cohI-goodmoduli} and \Cref{item-cohI-qcgr} in \Cref{para-Cohi-assumptions}.
   Assume that a finite group $\Gamma$ acts on $\mathcal{U}$  and set $\mathcal{U} = \tilde{\mathcal{U}} / \Gamma$.
   We will relate the cohomological Hall induction for $\tilde{\mathcal{U}}$ and $\mathcal{U}$.
   Let $\tilde{p} \colon \tilde{\mathcal{U}} \to \tilde{U}$ be the good moduli space morphism. Then we have $U \cong \tilde{U} \GIT \Gamma$.
   Take a non-degenerate face $(F, \alpha) \in \mathsf{Face}^{\mathrm{nd}}(\mathcal{U})$ and its lift $(F, \tilde{\alpha}) \in \mathsf{Face}^{\mathrm{nd}}(\tilde{\mathcal{U}})$.
   We also take a chamber $\sigma \subset F$ with respect to the cotangent arrangement for $\mathcal{U}$ and denote by $\tilde{\sigma}$ the same cone regarded as an object in $\mathsf{Cone}(\tilde{\mathcal{U}})$.
   Consider the following diagram:
\[
    \begin{tikzcd}
	{\tilde{U}_{\tilde{\alpha}}} & {\tilde{U}} \\
	{U_{\alpha}} & U.
	\arrow["{g_{\tilde{\alpha}}}", from=1-1, to=1-2]
	\arrow["{r_{\alpha}}"', from=1-1, to=2-1]
	\arrow["r", from=1-2, to=2-2]
	\arrow["{g_{\alpha}}", from=2-1, to=2-2]
\end{tikzcd}
\]
Using the commutativity of the diagram \Cref{eq-Induction-diagram-for-finite-quotient} and the isomorphism 
\[
  \tilde{\mathcal{U}} \times_{\mathcal{U}} \mathcal{U}_{\sigma}^+  \cong \coprod_{\gamma \in \Gamma / \Gamma_{\alpha}} \tilde{\mathcal{U}}_{\gamma(\tilde{\sigma})}^+,   
\]
with $\Gamma_{\alpha}$ as defined in \Cref{para-graded-filtered-finite-quotient} we obtain the following commutative diagram
\begin{equation}\label{eq-cohi-finite-quotient-compatible}
    \begin{tikzcd}
	{g_{\alpha, *} p_{\alpha, *} \mathbb{Q_{\mathcal{U}_{\alpha}}}} &&[30pt] {\mathbb{L}^{d_{\sigma}} \otimes p_* \mathbb{Q}_{\mathcal{U}}} \\
	{\bigoplus_{\gamma \in \Gamma / \Gamma_{\alpha}} r_* g_{\gamma(\tilde{\alpha}), *}  p_{\gamma(\tilde{\alpha}), *} \mathbb{Q_{\tilde{\mathcal{U}}_{\gamma(\tilde{\alpha})}}}} && {\mathbb{L}^{d_{\sigma}} \otimes  r_* \tilde{p}_* \mathbb{Q}_{\tilde{\mathcal{U}}}.}
	\arrow["{*^{\mathcal{H}\mathrm{all}}_{\sigma}}", from=1-1, to=1-3]
	\arrow[from=1-1, to=2-1]
	\arrow[from=1-3, to=2-3]
	\arrow["{\bigoplus_{\gamma \in \Gamma / \Gamma_{\alpha}} \left( *^{\mathcal{H}\mathrm{all}}_{\gamma(\tilde{\sigma})} \right)}"', from=2-1, to=2-3]
\end{tikzcd}
\end{equation}
By taking the cohomology, we obtain an identity
\begin{equation}\label{eq-CoHI-finite-quotient}
    *^{\mathrm{Hall}}_{\sigma}(a) = \frac{1}{|\Gamma_{\alpha}|} \cdot \sum_{\gamma \in \Gamma} *^{\mathrm{Hall}}_{\gamma(\tilde{\sigma})}(\rho_{\gamma}(a))
\end{equation}
for an element $a \in \mathrm{H}^*(\mathcal{U}_{\alpha}) \cong \mathrm{H}^*( \tilde{\mathcal{U}}_{\tilde{\alpha}})^{\Gamma_{\alpha}} \subset \mathrm{H}^*( \tilde{\mathcal{U}}_{\tilde{\alpha}})$, where $\rho_{\gamma}$ denotes the natural isomorphism $\mathrm{H}^*( \tilde{\mathcal{U}}_{\tilde{\alpha}}) \cong \mathrm{H}^*( \tilde{\mathcal{U}}_{\gamma(\tilde{\alpha})})$ induced by $\gamma \in \Gamma$.
\end{para}

\begin{para}[Cohomological Hall induction for an \'etale cover]\label{para-cohi-etale-cover}
    Let ${\mathcal{U}}$ be a smooth stack satisfying assumptions \Cref{item-cohI-goodmoduli} and \Cref{item-cohI-qcgr} in \Cref{para-Cohi-assumptions}.
    Take an \'etale cover $\eta_{\GIT} \colon V \to U$ from an algebraic space and set $\mathcal{V} \coloneqq V \times_U \mathcal{U}$ and let $\eta \colon \mathcal{V} \to \mathcal{U}$ be the base change of $\eta_{\GIT}$.
    We will show that the cohomological Hall induction for $\mathcal{V}$ is compatible with the cohomological Hall induction for $\mathcal{U}$.

    Let $(F, \alpha) \in \mathsf{Face}^{\mathrm{nd}}(\mathcal{U})$ be a non-degenerate $n$-dimensional face and $\sigma \subset F$ be a chamber with respect to the cotangent arrangement.
    Let $\{ \tilde{\alpha}_1, \ldots, \tilde{\alpha}_l \}$ be the set of faces for $\mathcal{V}$ that lift $\alpha$, and $\tilde{\sigma}_i$ denote the element $(F, \tilde{\alpha}_i, \sigma) \in \mathsf{Cone}(\mathcal{V})$.
    Then we have isomorphisms
    \[
    \mathcal{V}_{\alpha} \cong \coprod_{i} \mathcal{V}_{\tilde{\alpha}_i}, \quad  \mathcal{V}_{\sigma}^+ \cong  \coprod_i  \mathcal{V}_{\tilde{\sigma}_i}^+.    
    \]

    By using the Cartesian diagram \Cref{eq-grad-filt-etale-cartesian}, we see that the following diagrams are Cartesian:
\begin{equation}\label{eq-component-Cartesian}
    \begin{tikzcd}
	{\mathcal{V}_{\alpha}} & {\mathcal{V}} & V \\
	{\mathcal{U}_{\alpha}} & {\mathcal{U}} & U,
	\arrow[ from=1-1, to=1-2]
	\arrow["{\eta_{\alpha}}", from=1-1, to=2-1]
	\arrow["\tilde{p}",from=1-2, to=1-3]
	\arrow["\eta", from=1-2, to=2-2]
	\arrow["{\eta_{\GIT}}", from=1-3, to=2-3]
	\arrow[from=2-1, to=2-2]
	\arrow["p",from=2-2, to=2-3]
\end{tikzcd}    \quad
\begin{tikzcd}
	{\mathcal{V}_{\sigma}^+} & {\mathcal{V}} & V \\
	{\mathcal{U}_{\sigma}^+} & {\mathcal{U}} & U.
	\arrow[from=1-1, to=1-2]
	\arrow["{\eta_{\sigma}}", from=1-1, to=2-1]
	\arrow["\tilde{p}", from=1-2, to=1-3]
	\arrow["\eta", from=1-2, to=2-2]
	\arrow["{\eta_{\GIT}}", from=1-3, to=2-3]
	\arrow[from=2-1, to=2-2]
	\arrow["p",from=2-2, to=2-3]
\end{tikzcd}
\end{equation}
Using the Cartesian property of the left diagram, we see that the following diagram is also Cartesian:
\[\begin{tikzcd}
	{\mathcal{V}_{\alpha}} & {U_{\alpha} \times_{U} V} \\
	{\mathcal{U}_{\alpha}} & {U_{\alpha}.}
	\arrow["{\tilde{p}_{\alpha}}", from=1-1, to=1-2]
	\arrow["{\eta_{\alpha}}", from=1-1, to=2-1]
	\arrow[from=1-2, to=2-2]
	\arrow["{p_{\alpha}}", from=2-1, to=2-2]
\end{tikzcd}\]
In particular, using \cite[Proposition 4.7]{_Alper_GoodmodulispacesforArtinstacks}, we see that $V_{\alpha} \coloneqq U_{\alpha} \times_{U} V$ is the good moduli space for $\mathcal{V}_{\alpha}$.
Now consider the following diagram:
\[\begin{tikzcd}
	& {\mathcal{V}_{\sigma}^+} \\
	{\mathcal{V}_{\alpha}} && {\mathcal{V}} &[40pt] & {\mathcal{U}_{\sigma}^+} \\
	{V_{\alpha}} && V & {\mathcal{U}_{\alpha}} && {\mathcal{U}} \\
	&&& {U_{\alpha}} && {U.}
	\arrow["{\tilde{\gr}_{\sigma}}"', from=1-2, to=2-1]
	\arrow["{\tilde{\ev}_{1, \sigma}}"{pos=0.8}, from=1-2, to=2-3]
	\arrow["{\eta_{\sigma}^+}", from=1-2, to=2-5]
	\arrow["{\tilde{p}_{\alpha}}",  from=2-1, to=3-1]
	\arrow["{\tilde{p}}"{pos=0.25}, from=2-3, to=3-3]
    \arrow["\eta"{pos=0.3}, from=2-3, to=3-6]
	\arrow["{\gr_{\sigma}}"{pos=0.9}, crossing over,  from=2-5, to=3-4]
	\arrow["{\ev_{1, \sigma}}", from=2-5, to=3-6]
	\arrow["{\tilde{g}_{\alpha}}", from=3-1, to=3-3]
	\arrow["{\eta_{\GIT, \alpha}}"', from=3-1, to=4-4]
	\arrow["{\eta_{\GIT}}"{pos=0.7}, from=3-3, to=4-6]
	\arrow["{p_{\alpha}}"'{pos=0.75}, crossing over, from=3-4, to=4-4]
	\arrow["p", from=3-6, to=4-6]
	\arrow["{g_{\alpha}}"', from=4-4, to=4-6]
    \arrow["{\eta_{\alpha}}"{pos=0.3}, crossing over, from=2-1, to=3-4]
\end{tikzcd}\]
By the construction of the cohomological Hall induction map together with the Cartesian properties of the diagrams \Cref{eq-component-Cartesian},
we see that the following diagram commutes:
\begin{equation}\label{eq-cohi-etale-cover}
    \begin{tikzcd}
        {\eta_{\GIT}^* g_{\alpha, *} p_{\alpha, *} \mathbb{Q}_{\mathcal{U}_{\alpha}}} && {\mathbb{L}^{d_{\sigma}} \otimes \eta_{\GIT} ^* p_{*} \mathbb{Q}_{\mathcal{U}}.} \\
        {\tilde{g}_{\alpha, *} \tilde{p}_{\alpha, *} \mathbb{Q}_{\mathcal{V}_{\alpha}}} \\
        {\bigoplus_{i} \tilde{g}_{\tilde{\alpha}_i, *} \tilde{p}_{\tilde{\alpha}_i, *} \mathbb{Q}_{\mathcal{V}_{\tilde{\alpha}_i}}} && {\mathbb{L}^{d_{\sigma}} \otimes  \tilde{p}_* \mathbb{Q}_{\mathcal{V}}.}
        \arrow["{\eta_{\GIT}^* \left(*^{\mathcal{H}\mathrm{all}}_{\sigma} \right)}", from=1-1, to=1-3]
        \arrow["\cong"', from=1-1, to=2-1]
        \arrow["\cong", from=1-3, to=3-3]
        \arrow["\cong"', from=2-1, to=3-1]
        \arrow["{\sum_{i} \left( *^{\mathcal{H}\mathrm{all}}_{\tilde{\sigma}_i} \right)}"', from=3-1, to=3-3]
    \end{tikzcd}
\end{equation}
\end{para}

\begin{para}[Cohomological Hall induction for \texorpdfstring{$(-1)$}{TEXT}-shifted symplectic stacks]\label{para-cohi--1-shifted-symplectic}
    The cohomological Hall algebra for $3$-Calabi--Yau dg-categories was introduced recently by \textcite{Kinjo_Park_Safronov_CoHA} based on the integral isomorphism \Cref{eq-integral-isomorphism}.
    We will explain that the same argument can be used to construct the cohomological Hall induction for $(-1)$-shifted symplectic stacks.

    Let $(\mathcal{X}, \omega_{\mathcal{X}}, o)$ be an oriented $(-1)$-shifted symplectic stack satisfying assumptions \Cref{item-cohI-goodmoduli} and \Cref{item-cohI-qcgr} in \Cref{para-Cohi-assumptions}.
    Take a non-degenerate face $(F, \alpha) \in \mathsf{Face}^{\mathrm{nd}}(\mathcal{X})$  and  a chamber $\sigma \subset F$ with respect to the cotangent arrangement.
    Consider the following diagram:
    \begin{equation}
        \begin{tikzcd}
        & {\mathcal{X}_{\sigma}^+} \\
        {\mathcal{X}_{\alpha}} && {\mathcal{X}} \\
        {X_{\alpha}} && X. 
        \arrow["{\mathrm{gr}_{\sigma}}"', from=1-2, to=2-1]
        \arrow["{\mathrm{ev}_{1, \sigma}}", from=1-2, to=2-3]
        \arrow["{p_{\alpha}}", from=2-1, to=3-1]
        \arrow["p"', from=2-3, to=3-3]
        \arrow["{g_{\alpha}}", from=3-1, to=3-3]
    \end{tikzcd}
\end{equation}
    We equip $\mathcal{X}_{\alpha}$ with the localized orientation $\sigma^{\star} o$ introduced in \Cref{para-localized-orientation-definition}.
    By taking the adjoint of the integral isomorphism \Cref{eq-integral-isomorphism} and pushing down to the good moduli space $X$, we obtain the relative cohomological Hall induction map
    \[
        *^{\mathcal{H}\mathrm{all}}_{\sigma}  \colon   \mathbb{L}^{- \vdim \mathcal{X}_{\sigma}^+ / 2} \otimes g_{\alpha, *} p_{\alpha, *}   \varphi_{\mathcal{X}_{\alpha}} \to p_* \varphi_{\mathcal{X}}.
    \]
    If $\mathcal{X}$ is further assumed to be almost symmetric, we have an identity $\vdim \mathcal{X}_{\sigma}^+ = 0$ by \Cref{eq-numerical-symmetric-implies-zero-dimension}, hence the cohomological Hall induction can be written as
    \begin{equation}\label{eq-critical-CoHI}
        *^{\mathcal{H}\mathrm{all}}_{\sigma}  \colon  g_{\alpha, *} p_{\alpha, *}   \varphi_{\mathcal{X}_{\alpha}} \to p_* \varphi_{\mathcal{X}}.    
    \end{equation}

    Now take an \'etale map $\eta_{\GIT} \colon Y \to X$ from an algebraic space and set $\mathcal{Y} \coloneqq \mathcal{X} \times_{X} Y$ and let $\eta \colon \mathcal{Y} \to \mathcal{X}$ be the base change.
    Then $\mathcal{Y}$ is naturally equipped with an oriented $(-1)$-shifted symplectic structure.
    As in \Cref{para-cohi-etale-cover}, consider the following diagram:
\[\begin{tikzcd}
	& {\mathcal{Y}_{\sigma}^+} \\
	{\mathcal{Y}_{\alpha}} && {\mathcal{Y}} &[40pt] & {\mathcal{X}_{\sigma}^+} \\
	{Y_{\alpha}} && Y & {\mathcal{X}_{\alpha}} && {\mathcal{X}} \\
	&&& {X_{\alpha}} && {X.}
	\arrow["{\tilde{\gr}_{\sigma}}"', from=1-2, to=2-1]
	\arrow["{\tilde{\ev}_{1, \sigma}}"{pos=0.8}, from=1-2, to=2-3]
	\arrow["{\eta_{\sigma}^+}", from=1-2, to=2-5]
	\arrow["{\tilde{p}_{\alpha}}",  from=2-1, to=3-1]
	\arrow["{\tilde{p}}"{pos=0.25}, from=2-3, to=3-3]
    \arrow["\eta"{pos=0.3}, from=2-3, to=3-6]
	\arrow["{\gr_{\sigma}}"{pos=0.9}, crossing over,  from=2-5, to=3-4]
	\arrow["{\ev_{1, \sigma}}", from=2-5, to=3-6]
	\arrow["{\tilde{g}_{\alpha}}", from=3-1, to=3-3]
	\arrow["{\eta_{\GIT, \alpha}}"', from=3-1, to=4-4]
	\arrow["{\eta_{\GIT}}"{pos=0.7}, from=3-3, to=4-6]
	\arrow["{p_{\alpha}}"'{pos=0.75}, crossing over, from=3-4, to=4-4]
	\arrow["p", from=3-6, to=4-6]
	\arrow["{g_{\alpha}}"', from=4-4, to=4-6]
    \arrow["{\eta_{\alpha}}"{pos=0.3}, crossing over, from=2-1, to=3-4]
\end{tikzcd}\]
Let $\{ \tilde{\alpha}_1, \ldots, \tilde{\alpha}_l \}$ be the set of faces for $\mathcal{Y}$ that lift $\alpha$ and $\tilde{\sigma}_i \subset F$ be the lift of $\sigma$ corresponding to $\tilde{\alpha}_i$.
Then we have isomorphisms
\[
\mathcal{Y}_{\alpha} \cong \coprod_{i} \mathcal{Y}_{\tilde{\alpha}_i}, \quad   \mathcal{Y}_{\sigma}^+ \cong \coprod_i \mathcal{Y}_{\tilde{\sigma}_i}^+.    
\]
It follows from the commutativity of the diagram \Cref{eq-integral-isom-etale} that the following diagram commutes:
\begin{equation}\label{eq-cohi-etale-cover-critical}
    \begin{tikzcd}
	{\eta_{\GIT}^* g_{\alpha, *} p_{\alpha, *} \varphi_{\mathcal{X}_{\alpha}}} && {\mathbb{L}^{\vdim \mathcal{X}_{\sigma}^+ / 2} \otimes \eta_{\GIT} ^* p_{*} \varphi_{\mathcal{X}}.} \\
	{\tilde{g}_{\alpha, *} \tilde{p}_{\alpha, *} \varphi_{\mathcal{Y}_{\alpha}}} \\
	{\bigoplus_{i} \tilde{g}_{\tilde{\alpha}_i, *} \tilde{p}_{\tilde{\alpha}_i, *} \varphi_{\mathcal{Y}_{\tilde{\alpha}_i}}} && {\mathbb{L}^{\vdim \mathcal{X}_{\sigma}^+ / 2} \otimes  \tilde{p}_* \varphi_{\mathcal{Y}}.}
	\arrow["{{\eta_{\GIT}^* \left(*^{\mathcal{H}\mathrm{all}}_{\sigma} \right)}}", from=1-1, to=1-3]
	\arrow["\text{\Cref{eq-dt-etale-cover}}"', "\cong", from=1-1, to=2-1]
	\arrow["\cong", "\text{\Cref{eq-dt-etale-cover}}"', from=1-3, to=3-3]
	\arrow["\cong"', from=2-1, to=3-1]
	\arrow["{{\sum_{i} \left( *^{\mathcal{H}\mathrm{all}}_{\tilde{\sigma}_i} \right)}}"', from=3-1, to=3-3]
\end{tikzcd}
\end{equation}

\end{para}

\begin{para}[Cohomological Hall induction for critical loci]\label{para-cohi-critical}
   Let $\mathcal{U}$ be a smooth stack  satisfying assumptions \Cref{item-cohI-goodmoduli} and \Cref{item-cohI-qcgr} in \Cref{para-Cohi-assumptions}.
   Take a non-degenerate face $(F, \alpha) \in \mathsf{Face}^{\mathrm{nd}}(\mathcal{U})$  and  a chamber $\sigma \subset F$ with respect to the cotangent arrangement.
   Let $f \colon \mathcal{U} \to \mathbb{A}^1$ be a regular function and set $f_{\alpha} \coloneqq f \circ \mathrm{tot}_{\alpha}$.
   By the universal property of the good moduli space, the maps $f$ and $f_{\alpha}$ descend to functions $\bar{f} \colon U \to \mathbb{A}^1$ and $\bar{f}_{\alpha} \colon U_{\alpha} \to \mathbb{A}^1$.
   Consider the following map
   \[
    \varphi_{\bar{f}}(*^{\mathcal{H}\mathrm{all}}_{\sigma}) \colon \varphi_{\bar{f}}(g_{\alpha, *} p_{\alpha, *} \mathbb{Q}_{\mathcal{U}_{\alpha}})   \to \mathbb{L}^{d_{\sigma}} \otimes \varphi_{\bar{f}}(p_* \mathbb{Q}_{\mathcal{U}}).
   \]
   Since $g_{\alpha}$ is a finite morphism and the pushforward maps along good moduli space morphisms commute with the vanishing cycle functor as shown in \Cref{eq-vanishing-good-moduli},
   this map is identified with the map
   \begin{equation}\label{eq-CoHI-global-critical-locus}
    g_{\alpha, *} p_{\alpha, *} \varphi_{f_{\alpha}}( \mathbb{Q}_{\mathcal{U}_{\alpha}}) \to \mathbb{L}^{d_{\sigma}} \otimes p_* \varphi_{f}(\mathbb{Q}_{\mathcal{U}}).
   \end{equation}
   Now set $\mathcal{X} \coloneqq \mathrm{Crit}(f)$ and $\mathcal{X}_{\alpha} \coloneqq \mathrm{Crit}(f_{\alpha})$ and equip them with the standard orientations.
   Then it follows from the commutativity of the diagram \Cref{eq-integral-isom-critical} that the following diagram commutes:
\begin{equation}\label{eq-cohi-critical}
    \begin{tikzcd}
	{\mathbb{L}^{(- \vdim \mathcal{X}_{\sigma}^+ - \dim \mathcal{U}_{\alpha} )/ 2} \otimes g_{\alpha, *} p_{\alpha, *} \varphi_{f_{\alpha}}( \mathbb{Q}_{\mathcal{U}_{\alpha}})} & {\mathbb{L}^{- \dim \mathcal{U} /2} \otimes p_* \varphi_{f}(\mathbb{Q}_{\mathcal{U}})} \\
	{\mathbb{L}^{- \vdim \mathcal{X}_{\sigma}^+ / 2} \otimes g_{\alpha, *} p_{\alpha, *}\varphi_{\mathcal{X}_{\alpha}}} & {p_{*} \varphi_{\mathcal{X}}.}
	\arrow["\text{\Cref{eq-CoHI-global-critical-locus}}", from=1-1, to=1-2]
	\arrow["\cong", "\text{\Cref{eq-DT-Hodge-standard}}"', from=1-1, to=2-1]
	\arrow["\cong", "\text{\Cref{eq-DT-Hodge-standard}}"', from=1-2, to=2-2]
	\arrow["\text{\Cref{eq-critical-CoHI}}"', from=2-1, to=2-2]
\end{tikzcd}\end{equation}

\end{para}

\begin{para}[Cohomological Hall induction for quasi-smooth derived algebraic stacks]\label{para-cohi-for-quasi-smooth}
    We now introduce the cohomological Hall induction for quasi-smooth derived algebraic stacks.
    Let $\mathcal{Y}$ be a quasi-smooth derived algebraic stack satisfying assumptions \Cref{item-cohI-goodmoduli} and \Cref{item-cohI-qcgr} in \Cref{para-Cohi-assumptions}.
    Set $\mathcal{X} \coloneqq \mathrm{T}^*[-1]\mathcal{Y}$ and equip it with the standard orientation.
    It is clear that $\mathcal{X}$ also satisfies the assumptions \Cref{item-cohI-goodmoduli} and \Cref{item-cohI-qcgr} in \Cref{para-Cohi-assumptions}.
    Take a non-degenerate face $(F, \alpha) \in \mathsf{Face}^{\mathrm{nd}}(\mathcal{Y})$  and  a chamber $\sigma \subset F$ with respect to the cotangent arrangement.
    As is shown in \Cref{eq-localize-standard-symplectic}, there exists a natural equivalence of $(-1)$-shifted symplectic stacks
    \[
     \mathcal{X}_{\alpha} \simeq \mathrm{T}^*[-1]\mathcal{Y}_{\alpha}.    
    \]
    Further, arguing as the proof of \Cref{eq-localize-standard-orientaion}, we see that this equivalence preserves the orientations.
    Let $\pi \colon \mathcal{X}\to \mathcal{Y}$ and $\pi_{\alpha} \colon \mathcal{X}_{\alpha} \to \mathcal{Y}_{\alpha}$ be the projections.
    Then the dimensional reduction theorem \Cref{eq-dimensional-reduction} gives isomorphisms
    \begin{equation}\label{eq-dimensional-reduction-for-grads}
        \pi_* \varphi_{\mathcal{X}} \cong \mathbb{L}^{\vdim \mathcal{Y} / 2} \otimes \mathbb{D}\mathbb{Q}_{\mathcal{Y}}, \quad \pi_{\alpha, *} \varphi_{\mathcal{X}_{\alpha}} \cong \mathbb{L}^{\vdim \mathcal{Y}_{\alpha} / 2} \otimes \mathbb{D}\mathbb{Q}_{\mathcal{Y}_{\alpha}}.
    \end{equation}
    Now consider the following commutative diagram: 
    \begin{equation}
        \begin{tikzcd}
        & {\mathcal{Y}_{\sigma}^+} \\
        {\mathcal{Y}_{\alpha}} && {\mathcal{Y}} \\
        {Y_{\alpha}} && Y.
        \arrow["{\mathrm{gr}_{\sigma}}"', from=1-2, to=2-1]
        \arrow["{\mathrm{ev}_{1, \sigma}}", from=1-2, to=2-3]
        \arrow["{p_{\alpha}}", from=2-1, to=3-1]
        \arrow["p"', from=2-3, to=3-3]
        \arrow["{g_{\alpha}}", from=3-1, to=3-3]
    \end{tikzcd}
\end{equation}
By pushing down the cohomological Hall induction map \Cref{eq-critical-CoHI} on $X$ to $Y$ and using isomorphisms \Cref{eq-dimensional-reduction-for-grads}, we obtain the following relative cohomological Hall induction map
\[
    *^{\mathcal{H}\mathrm{all}}_{\sigma}  \colon   \mathbb{L}^{- \vdim \gr_{\sigma}/ 2} \otimes g_{\alpha, *} p_{\alpha, *} \mathbb{D}\mathbb{Q}_{\mathcal{Y}_{\alpha}}  \to p_* \mathbb{D}\mathbb{Q}_{\mathcal{Y}}.    
\]
By taking the global sections, we obtain the absolute cohomological Hall induction  map
\[
    *^{\mathrm{Hall}}_{\sigma}  \colon   \mathbb{L}^{- \vdim \gr_{\sigma}/ 2} \otimes \mathrm{H}^{\mathrm{BM}}_{-*} (\mathcal{Y}_{\alpha}) \to \mathrm{H}^{\mathrm{BM}}_{-*} (\mathcal{Y}).
\]
\end{para}

\begin{remark}
    There is an alternative way to construct the cohomological Hall induction generalizing the Kapranov--Vasserot cohomological Hall algebra for smooth surfaces, which we briefly explain below.
    By using the quasi-smoothness of $\gr_{\sigma}$ and the purity transform \cite[Remark 3.8]{khan2019virtual}, we obtain a natural morphism
    \[
        \mathbb{L}^{- \vdim \gr_{\sigma}/ 2} \otimes  \gr_{\sigma}^* \mathbb{D} \mathbb{Q}_{\mathcal{Y}_{\alpha}} \to    \mathbb{D} \mathbb{Q}_{\mathcal{Y}_{\sigma}^+}. 
    \]
    By composing this map with the counit map $\ev_{1, \sigma, *} \mathbb{D} \mathbb{Q}_{\mathcal{Y}_{\sigma}^+} \to \mathbb{D} \mathbb{Q}_{\mathcal{Y}}$  and pushing down to $Y$, we obtain a natural map
    \[
        *^{\mathcal{H}\mathrm{all}, \mathrm{KV}}_{\sigma}  \colon   \mathbb{L}^{- \vdim \gr_{\sigma}/ 2} \otimes g_{\alpha, *} p_{\alpha, *} \mathbb{D}\mathbb{Q}_{\mathcal{Y}_{\alpha}}  \to p_* \mathbb{D}\mathbb{Q}_{\mathcal{Y}}.    
    \]
    We expect that $*^{\mathcal{H}\mathrm{all}, \mathrm{KV}}_{\sigma}$ and $*^{\mathcal{H}\mathrm{all}}_{\sigma}$ differ only by some sign.
    This should be an easy consequence of yet another construction of the cohomological Hall induction given by Khan and Kinjo \cite{khan20233d} based on derived microlocal geometry, which is already shown to be compatible with $*^{\mathcal{H}\mathrm{all}, \mathrm{KV}}_{\sigma}$ in \cite[Theorem 4.30]{khan20233d}.
    The compatibility of the cohomological Hall inductions in \cite{khan20233d} and $*^{\mathcal{H}\mathrm{all}}_{\sigma}$ should not be difficult to verify since both of them are constructed using morphisms of monodromic mixed Hodge modules (not complexes) hence can be checked locally: this will be discussed by T.K. in detail elsewhere.
\end{remark}

\subsection{Perversely degenerate CoHI on good moduli spaces}

\begin{para}[Supercommutativity]
    It was observed in \cite[Corollary 6.10]{_Davison_Meinhardt_CoDT} that the cohomological Hall algebras associated with quivers with potentials 
    are equipped with a canonical filtration called the \emph{perverse filtration} as long as the quiver is symmetric,
     and    become supercommutative
    after taking the associated graded with respect to this filtration and modifying the sign of the multiplication. We will prove a generalization of this result for CoHI, which is stated as an independence of the CoHI on the choice of the chamber, up to some sign.
    The sign is controlled by the cotangent distance introduced in \Cref{para-cotangent-distance}.

\end{para}

\begin{para}[Supercommutativity of CoHI: almost symmetric representation]\label{para-supercommutative-absolute-quotient}
    Let $G$ be a reductive group and $V$ be an almost symmetric representation of $V$, i.e., there exists an isomorphism $V \cong V^{\vee}$ as $G^{\circ}$-representations.
    Set $\mathcal{U} = V /G$.
    Let $(F, \alpha) \in \mathsf{Face}^{\mathrm{nd}}(\mathcal{U})$ be a non-degenerate face and $\sigma \subset F$ be a chamber with respect to the cotangent arrangement.
    We will show that the absolute cohomological Hall induction does not depend on the choice of the chamber up to the sign given by the cotangent distance function; namely, for chambers $\sigma, \sigma' \subset F$ the following identity holds:
    \begin{equation}\label{eq-absolute-supercommutative}
        *^{\mathrm{Hall}}_{\sigma} = (-1)^{d(\sigma, \sigma')} \cdot *^{\mathrm{Hall}}_{\sigma'}.
    \end{equation}
    We first deal with the case when $G$ is connected.
    In this case, we can describe the map $*^{\mathrm{Hall}}_{\sigma} $ quite explicitly as follows:
    Let $T \subset G$ be the maximal torus, $T_{\alpha} \subset T$ be the subtorus corresponding to $\alpha$ and $W$ and $W_{\alpha}$ be the Weyl groups for $G$ and $L_{\alpha}$ respectively.
    Then we have natural isomorphisms
    \[
    \mathrm{H}^*(V / G) \cong \mathrm{H}^*(\mathrm{B} T)^{W} \cong \mathbb{Q}[\mathfrak{h}_{\mathbb{Z}}^{\vee}]^{W}, \quad \mathrm{H}^*(V^{T_{\alpha}} / L_{\alpha}) \cong \mathrm{H}^*(\mathrm{B} T)^{W_{\alpha}} \cong \mathbb{Q}[\mathfrak{h}_{\mathbb{Z}}^{\vee}]^{W_{\alpha}}.
    \]
    For each character $\gamma \colon T \to \mathbb{G}_\mathrm{m}$, let $t_{\gamma} \in \mathbb{Q}[\mathfrak{h}_{\mathbb{Z}}^{\vee}]$ be the corresponding monomial.
    Let $\Phi$ and $S_{V}$  be the set (possibly with multiplicity) of roots and $T$-weights of $V$ respectively and $\Phi_{\sigma}^{-} \subset \Phi$ and $S_{V, \sigma}^{-} \subset S$ be the subset which are negative with respect to a cocharacter in $\sigma$.
    Then the cohomological Hall induction $*^{\mathrm{Hall}}_{\sigma} $ is given as follows:
    \begin{equation}\label{eq-CoHI-explicit}
     f \mapsto  \frac{1}{|W_{\alpha}|} \cdot \sum_{w \in W}  w \left( f \cdot \frac{\prod_{\gamma \in S_{V, \sigma}^{-}} t_{\gamma} }{\prod_{\beta \in \Phi_{\sigma}^{-}} t_{\beta}} \right).   
    \end{equation}
    This is a straightforward consequence of the localization formula applied to the partial flag variety $G / P_{\sigma}$: see the proof of \cite[Theorem 2]{_Kontsevich_Soibelman_CoHA} for the argument when $\mathcal{U}$ is the moduli stack of quiver representations.
    Note that $S_V \coprod \Phi$ is the set of weights of the cotangent complex for $\mathcal{X}$.
    This observation together with the formula \Cref{eq-CoHI-explicit} implies the equality \Cref{eq-absolute-supercommutative}.

    Now we discuss the case when $G$ is not necessarily connected.
    Set $\Gamma \coloneqq G /G^{\circ} $. Then $\Gamma$ acts on $\tilde{\mathcal{U}} \coloneqq V /G^{\circ} \cong V / G \times_{\mathrm{B} G} \mathrm{B} G^{\circ}$ and we have an equivalence
    \[
     \mathcal{U} = \tilde{ \mathcal{U} } / \Gamma.    
    \]
     Take a non-degenerate face $(F, \alpha) \in \mathsf{Face}^{\mathrm{nd}}(\mathcal{U})$ and a chamber $\sigma \subset F$ with respect to the cotangent arrangement.
     Take a lift $(F, \tilde{\alpha}) \in \mathsf{Face}(\tilde{\mathcal{U}})$ and set $\tilde{\sigma} \coloneqq (F, \tilde{\alpha}, \sigma) \in \mathsf{Cone}(\tilde{\mathcal{U}})$.
     Using the equalities \Cref{eq-CoHI-finite-quotient} and \Cref{eq-absolute-supercommutative} for $\tilde{\sigma}$ which we have already proven, 
     we obtain the equality \Cref{eq-absolute-supercommutative} for $\sigma$.
\end{para}

\begin{para}[Perverse degeneration of CoHI]\label{para-perv-deg-cohi}
    For a monodromic mixed Hodge complex $M$ on an algebraic stack $\mathcal{X}$, we define its perverse degeneration by
    \[
    {}^{\mathrm{p}} \mathcal{H}(M) \coloneqq  \bigoplus_{i \in \mathbb{Z}}  {}^{\mathrm{p}} \mathcal{H}^i(M) [-i]
    \]
    regarded as an object in the category of $\mathbb{Z}$-graded monodromic mixed Hodge modules.
    A morphism of monodromic mixed Hodge complexes induces a morphism on their perverse degenerations.
    A $\mathbb{Z}$-graded monodromic mixed Hodge module is said to be pure if its $i$-th graded piece is pure of weight $i$.
    A mixed Hodge complex $M$ is pure if and only if its perverse degeneration is.

    Let $\mathcal{U}$ be a smooth stack  satisfying assumptions \Cref{item-cohI-goodmoduli} and \Cref{item-cohI-qcgr} in \Cref{para-Cohi-assumptions}.
    We further assume that $\mathcal{U}$ is almost symmetric.
     For $(F, \alpha) \in \mathsf{Face}^{\mathrm{nd}}(\mathcal{X})$ and a cone $\sigma \subset F$ with respect to the cotangent arrangement, we denote the perverse degeneration of the cohomological Hall induction by
    \[
        *^{\mathrm{p}\mathcal{H}\mathrm{all}}_{\sigma} \colon g_{\alpha, *} {}^{\mathrm{p}} \mathcal{H} ( p_{\alpha, *}  \mathcal{IC}_{\mathcal{U}_{\alpha}}) \to   {}^{\mathrm{p}} \mathcal{H} (p_* \mathcal{IC}_{\mathcal{U}}).  
    \]
    Here, we used the natural commutation of the functors ${}^p \mathcal{H}(-)$ and $g_{\alpha, *}$ which follows from the finiteness of $g_{\alpha}$ proved in \Cref{para-Cohi-for-smooth stacks}.

    Similarly, for an oriented almost symmetric $(-1)$-shifted symplectic stack $\mathcal{X}$ satisfying assumptions \Cref{item-cohI-goodmoduli} and \Cref{item-cohI-qcgr} in \Cref{para-Cohi-assumptions},
    we can define the perversely degenerated CoHI 
    \[
        *^{\mathrm{p}\mathcal{H}\mathrm{all}}_{\sigma} \colon g_{\alpha, *} {}^{\mathrm{p}} \mathcal{H} ( p_{\alpha, *}  \varphi_{\mathcal{X}_{\alpha}}) \to   {}^{\mathrm{p}} \mathcal{H} (p_* \varphi_{\mathcal{X}}).  
    \]
\end{para}

\begin{para}[Supercommutativity of CoHI: smooth case]\label{para-supercommutative-smooth}
    We will show that the perversely degenerated cohomological Hall induction introduced in the last paragraph does not depend on the choice of a chamber up to a sign defined by the cotangent distance function in \Cref{para-cotangent-distance}.

    We adopt the notation from the last paragraph \Cref{para-perv-deg-cohi}.
    Let $\sigma, \sigma' \subset F$ be chambers of a non-degenerate face with respect to the cotangent arrangement.
    Then we claim the identity 
    \begin{equation}\label{eq-supercommutative-deg-relcohi-smooth}
        *^{\mathrm{p}\mathcal{H}\mathrm{all}}_{\sigma} = (-1)^{d(\sigma, \sigma')} \cdot *^{\mathrm{p}\mathcal{H}\mathrm{all}}_{\sigma'}.
    \end{equation}
    By \Cref{thm-decom-good-moduli}, the graded monodromic mixed Hodge modules $g_{\alpha, *} {}^{\mathrm{p}} \mathcal{H} ( p_{\alpha, *}  \mathcal{IC}_{\mathcal{U}_{\alpha}})$ and ${}^{\mathrm{p}} \mathcal{H} (p_* \mathcal{IC}_{\mathcal{U}})  $ are pure.
    Therefore by using \cite[Lemma 6.6]{_Davison_Meinhardt_CoDT}, it is enough to prove the identity at the stalk of each point $x \in U$.
    Using the commutativity of the diagram \Cref{eq-cohi-etale-cover} and the \'etale local structure theorem for smooth good moduli stacks \cite[Theorem 1.2]{_Alper_ALunaetaleslicetheoremforalgebraicstacks},
    we may assume that $\mathcal{U} = V / G$ where $G$ is a reductive group and $V$ is an almost symmetric $G$-representation and $x$ is the origin $0 \in V \GIT G$.
    To prove the identity \Cref{eq-supercommutative-deg-relcohi-smooth}, it is enough to prove the identity before taking the perverse degeneration: namely, it is enough to prove the following identity
    \[
        *^{\mathcal{H}\mathrm{all}}_{\sigma} |_{0} = (-1)^{d(\sigma, \sigma')} \cdot *^{\mathcal{H}\mathrm{all}}_{\sigma'}|_{0}.
    \]
    Note that the graded monodromic mixed Hodge modules $g_{\alpha, *} {}^{\mathrm{p}} \mathcal{H} ( p_{\alpha, *}  \mathcal{IC}_{\mathcal{U}_{\alpha}})$ and ${}^{\mathrm{p}} \mathcal{H} (p_* \mathcal{IC}_{\mathcal{U}})  $  are $\mathbb{G}_\mathrm{m}$-equivariant with respect to the scaling $\mathbb{G}_\mathrm{m}$-action on $V \GIT G$.  Therefore by using the contraction lemma \cite[Proposition 3.7.5]{kashiwara2013sheaves},
    we see that the map $*^{\mathcal{H}\mathrm{all}}_{\sigma} |_{0}$ is identified with the absolute cohomological Hall induction $*^{\mathrm{Hall}}_{\sigma}$.
    Then the desired statement follows from the identity \Cref{eq-absolute-supercommutative}.

\end{para}

\begin{para}[Symmetric CoHI: smooth case]\label{para-symmetrized-cohi}

    We adopt the notation from the paragraph \Cref{para-perv-deg-cohi}.
    By the equality \Cref{eq-supercommutative-deg-relcohi-smooth}, we see that the restriction of the perversely degenerate cohomological Hall induction to the invariant part
    \begin{equation}\label{eq-symmetrized-cohi-component}
        *^{\mathrm{s}\mathcal{H}\mathrm{all}}_{\alpha} \colon  \left( g_{\alpha, *} {}^{\mathrm{p}} \mathcal{H} ( p_{\alpha, *}  \mathcal{IC}_{\mathcal{U}_{\alpha}}) \otimes \mathrm{sgn}_{\alpha} \right)^{\mathrm{Aut}(\alpha)}  \to   {}^{\mathrm{p}} \mathcal{H} (p_* \mathcal{IC}_{\mathcal{U}}),
    \end{equation}
    which we call the symmetric cohomological Hall induction, does not depend on the choice of the chamber $\sigma \subset F$. 
    Here $\mathrm{sgn}_{\alpha}$ is the cotangent sign representation introduced in \Cref{para-cotangent-distance}.

    We will give a component-free description of the symmetric cohomological induction which is convenient for later applications.
    Firstly, let $\Grad_{\mathbb{Q}, \mathrm{nd}}^n(\mathcal{U}) \subset \Grad_{\mathbb{Q}}^n(\mathcal{U}) $ be the open and closed substack consisting of non-degenerate graded points
    and let 
    \[
       p_{n} \colon  \Grad_{\mathbb{Q}, \mathrm{nd}}^n(\mathcal{U}) \to \Grad_{\mathbb{Q}, \mathrm{nd}}^n(\mathcal{U})_{\GIT}    
    \]
    denote the good moduli space morphism. The $\mathrm{GL}_n(\mathbb{Z})$-action on $\mathrm{B} \mathbb{G}_\mathrm{m}^n$ induces a
    $\mathrm{GL}_n(\mathbb{Z})$-action on $\mathrm{Grad}^n(\mathcal{U})$  hence a $\mathrm{GL}_n(\mathbb{Q})$-action on $\Grad_{\mathbb{Q}, \mathrm{nd}}^n(\mathcal{U})$ and $\Grad_{\mathbb{Q}, \mathrm{nd}}^n(\mathcal{U})_{\GIT}$.
    For an $n$-dimensional face $(\mathbb{Q}^n, \alpha) \in \mathsf{Face}^{\mathrm{nd}}(\mathcal{U})$, note that $\mathrm{Aut}(\alpha)$ is the stabilizer group of the component $\alpha \in \pi_0(\Grad_{\mathbb{Q}, \mathrm{nd}}^n(\mathcal{U}))$.
    Therefore the cotangent sign representation for each $\alpha$ corresponds to a $\mathrm{GL}_n(\mathbb{Q})$-equivariant structure on the trivial $\mathbb{Z} / 2 \mathbb{Z}$-local system $\mathrm{sgn}_n$ on $\Grad^n_{\mathbb{Q}, \mathrm{nd}}(\mathcal{U})_{\GIT}$ which we call the cotangent sign local system.
    Now consider the natural map
    \[
        g_n \colon \Grad^n_{\mathbb{Q}, \mathrm{nd}}(\mathcal{U})_{\GIT} \to U
    \]
    and factor it by the maps
    \[
        \Grad^n_{\mathbb{Q}, \mathrm{nd}}(\mathcal{U})_{\GIT} \xrightarrow[]{q_n} \Grad^n_{\mathbb{Q}, \mathrm{nd}}(\mathcal{U})_{\GIT} / \mathrm{GL}_n(\mathbb{Q}) \xrightarrow[]{\bar{g}_n}  U.
    \]
    Note that the map $\bar{g}_n$ is a disjoint union of finite morphisms.
    The $\mathrm{GL}_n(\mathbb{Q})$-equivariant graded monodromic mixed Hodge module ${}^{\mathrm{p}} \mathcal{H}(p_{n, *} \mathcal{IC}_{\Grad^n_{\mathbb{Q}, \mathrm{nd}} (\mathcal{U}) } ) \otimes \mathrm{sgn}_n$ descends to a graded monodromic mixed Hodge module $\mathcal{H}_n$ on  $\Grad^n_{\mathbb{Q}, \mathrm{nd}}(\mathcal{U})_{\GIT} / \mathrm{GL}_n(\mathbb{Q})$.
    We set
    \begin{align*}
       ( g_{n, *} ({}^{\mathrm{p}} \mathcal{H}(p_{n, *} \mathcal{IC}_{\Grad^n_{\mathbb{Q}, \mathrm{nd}} (\mathcal{U}) } ) \otimes \mathrm{sgn}_n))^{\mathrm{GL}_n(\mathbb{Q}), \mathrm{lfin}} \coloneqq \bar{g}_{n, !} \mathcal{H}_n.
    \end{align*}
    The natural map $g_{n, !} \to g_{n, *}$ induces an inclusion
    \[
        ( g_{n, *} ({}^{\mathrm{p}} \mathcal{H}(p_{n, *} \mathcal{IC}_{\Grad^n_{\mathbb{Q}, \mathrm{nd}} (\mathcal{U})} ) \otimes \mathrm{sgn}_n))^{\mathrm{GL}_n(\mathbb{Q}), \mathrm{lfin}} \hookrightarrow ( g_{n, *} ({}^{\mathrm{p}} \mathcal{H}(p_{n, *} \mathcal{IC}_{\Grad^n_{\mathbb{Q}, \mathrm{nd}} (\mathcal{U}) } ) \otimes \mathrm{sgn}_n))^{\mathrm{GL}_n(\mathbb{Q})}.
    \]
    Explicitly, both sides can be written as follows:
    \begin{align*}
        ( g_{n, *} ({}^{\mathrm{p}} \mathcal{H}(p_{n, *} \mathcal{IC}_{\Grad^n_{\mathbb{Q}, \mathrm{nd}} (\mathcal{U}) } ) \otimes \mathrm{sgn}_n))^{\mathrm{GL}_n(\mathbb{Q}), \mathrm{lfin}} &\cong \bigoplus_{\substack{\alpha \colon \mathbb{Q}^n \to \mathrm{CL}_{\mathbb{Q}}(\mathcal{U}) \\ \text{$\alpha$: non-degenerate} }} \left( g_{\alpha, *} {}^{\mathrm{p}} \mathcal{H} ( p_{\alpha, *}  \mathcal{IC}_{\mathcal{U}_{\alpha}}) \otimes \mathrm{sgn}_{\alpha} \right)^{\mathrm{Aut}(\alpha)}, \\
        ( g_{n, *} ({}^{\mathrm{p}} \mathcal{H}(p_{n, *} \mathcal{IC}_{\Grad^n_{\mathbb{Q}, \mathrm{nd}} (\mathcal{U})} ) \otimes \mathrm{sgn}_n))^{\mathrm{GL}_n(\mathbb{Q})} &\cong \prod_{\substack{\alpha \colon \mathbb{Q}^n \to \mathrm{CL}_{\mathbb{Q}}(\mathcal{U}) \\ \text{$\alpha$: non-degenerate} }} \left( g_{\alpha, *} {}^{\mathrm{p}} \mathcal{H} ( p_{\alpha, *}  \mathcal{IC}_{\mathcal{U}_{\alpha}}) \otimes \mathrm{sgn}_{\alpha} \right)^{\mathrm{Aut}(\alpha)}.
    \end{align*}
    By taking the direct sum of the map \Cref{eq-symmetrized-cohi-component}, we obtain a map
    \begin{equation}\label{eq-symmetrized-cohi-without-component}
      *^{\mathrm{s}\mathcal{H}\mathrm{all}}_{n, \mathcal{U}} \colon   ( g_{n, *} ({}^{\mathrm{p}} \mathcal{H}(p_{n, *} \mathcal{IC}_{\Grad^n_{\mathbb{Q}, \mathrm{nd}} (\mathcal{U})} ) \otimes \mathrm{sgn}_n))^{\mathrm{GL}_n(\mathbb{Q}), \mathrm{lfin}} \to  {}^{\mathrm{p}} \mathcal{H} (p_* \mathcal{IC}_{\mathcal{U}})
    \end{equation}
    which we call the symmetric cohomological Hall induction map.    

\end{para}

\begin{para}[Symmetric CoHI for finite quotient]
    We adopt the notation from \Cref{para-cohi-finite-quotient}.
    We will compare the symmetric cohomological Hall induction for $\tilde{\mathcal{U}}$ and $\mathcal{U} = \tilde{\mathcal{U}} / \Gamma$ for a finite group $\Gamma$.
    Consider the following diagram:
\[\begin{tikzcd}
	{\mathrm{Grad}_{\mathbb{Q}, \mathrm{nd}}^{n}(\tilde{\mathcal{U}})_{\GIT} } & {\tilde{U} } \\
	{\mathrm{Grad}_{\mathbb{Q}, \mathrm{nd}}^{n}(\mathcal{U})_{\GIT} } & U.
	\arrow["{\tilde{g}_n}", from=1-1, to=1-2]
	\arrow["{r_n}", from=1-1, to=2-1]
	\arrow["r", from=1-2, to=2-2]
	\arrow["{g_n}"', from=2-1, to=2-2]
\end{tikzcd}\]
The lower horizontal arrow is identified with the affine quotient of the $\Gamma$-action on the upper horizontal arrow.
Now let $\tilde{\mathrm{sgn}}_n$ be the cotangent sign local system on $\mathrm{Grad}_{\mathbb{Q}, \mathrm{nd}}^{n}(\tilde{\mathcal{U}})_{\GIT}$.
Since the map $\tilde{\mathcal{U}} \to \mathcal{U}$ is \'etale, we have a $\mathrm{GL}_n(\mathbb{Q})$-equivariant isomorphism $r_n^* \mathrm{sgn}_n \cong \tilde{\mathrm{sgn}}_n$.
Now using \Cref{eq-cohi-finite-quotient-compatible}, we see that the following diagram commutes:

\begin{equation}\label{eq-finite-quotient-cohi-compatible-symmetrized}
    \begin{tikzcd}
	{   ( g_{n, *} ({}^{\mathrm{p}} \mathcal{H}(p_{n, *} \mathcal{IC}_{\Grad^n_{\mathbb{Q}, \mathrm{nd}} (\mathcal{U})_{\GIT} } ) \otimes \mathrm{sgn}_n))^{\mathrm{GL}_n(\mathbb{Q}), \mathrm{lfin}} } & {{}^{\mathrm{p}} \mathcal{H} (p_* \mathcal{IC}_{\mathcal{U}})} \\
	{   r_* ( \tilde{g}_{n, *} ({}^{\mathrm{p}} \mathcal{H}(\tilde{p}_{n, *} \mathcal{IC}_{\Grad^n_{\mathbb{Q}, \mathrm{nd}} (\tilde{\mathcal{U}})_{\GIT} } ) \otimes \tilde{\mathrm{sgn}}_n))^{\mathrm{GL}_n(\mathbb{Q}), \mathrm{lfin}} } & { r_* {}^{\mathrm{p}} \mathcal{H} (\tilde{p}_* \mathcal{IC}_{\tilde{\mathcal{U}}}).}
	\arrow["{ *^{\mathrm{s}\mathcal{H}\mathrm{all}}_{n, \mathcal{U}}}", from=1-1, to=1-2]
	\arrow[from=1-1, to=2-1]
	\arrow[from=1-2, to=2-2]
	\arrow["{ r_* \left( *^{\mathrm{s}\mathcal{H}\mathrm{all}}_{n, \tilde{\mathcal{U}}} \right)}"', from=2-1, to=2-2]
\end{tikzcd}
\end{equation}
Further, the lower horizontal map is $\Gamma$-equivariant, and the $\Gamma$-invariant part recovers the upper horizontal map.

\end{para}

\begin{para}[Symmetric CoHI for an \'etale cover]\label{para-sym-cohi-etale-cover}

    We adopt the notation from \Cref{para-cohi-etale-cover}.
    We will compare the symmetric cohomological Hall induction for an almost symmetric smooth stack $\mathcal{U}$ and its \'etale cover $\mathcal{V} = \mathcal{U} \times_{U} V$.
    Consider the following diagram:
\[
    \begin{tikzcd}
	{\mathrm{Grad}_{\mathbb{Q}, \mathrm{nd}}^{n}(\mathcal{V})_{\GIT} } & {V} \\
	{\mathrm{Grad}_{\mathbb{Q}, \mathrm{nd}}^{n}(\mathcal{U})_{\GIT} } & {U.}
	\arrow["{\tilde{g}_n}", from=1-1, to=1-2]
	\arrow["{\eta_{n, \GIT}}", from=1-1, to=2-1]
	\arrow["{\eta_{\GIT}}", from=1-2, to=2-2]
	\arrow["{g_n}"', from=2-1, to=2-2]
\end{tikzcd}\]    
By using the Cartesian property of the left diagram in \Cref{eq-grad-filt-etale-cartesian}, we see that this diagram is also Cartesian.
Now let $\tilde{\mathrm{sgn}}_n$ be the cotangent sign local system on $\mathrm{Grad}_{\mathbb{Q}, \mathrm{nd}}^{n}(\mathcal{V})_{\GIT}$.
Since the map $\mathcal{V} \to \mathcal{U}$ is \'etale, we have a $\mathrm{GL}_n(\mathbb{Q})$-equivariant isomorphism $\eta_{n, \GIT}^* \mathrm{sgn}_n \cong \tilde{\mathrm{sgn}}_n$.
Now using \Cref{eq-cohi-etale-cover}, we see that the following diagram commutes:
\begin{equation}\label{eq-etale-cohi-compatible-symmetrized}
    \begin{tikzcd}
	{   \eta_{\GIT}^*  (g_{n, *} ({}^{\mathrm{p}} \mathcal{H}(p_{n, *} \mathcal{IC}_{\Grad^n_{\mathbb{Q}, \mathrm{nd}} (\mathcal{U})_{\GIT} } ) \otimes \mathrm{sgn}_n))^{\mathrm{GL}_n(\mathbb{Q}), \mathrm{lfin}} } &[30pt] { \eta_{\GIT}^* {}^{\mathrm{p}} \mathcal{H} (p_* \mathcal{IC}_{\mathcal{U}})} \\
	{ ( \tilde{g}_{n, *} ({}^{\mathrm{p}} \mathcal{H}(\tilde{p}_{n, *} \mathcal{IC}_{\Grad^n_{\mathbb{Q}, \mathrm{nd}} (\mathcal{V})_{\GIT} } ) \otimes \tilde{\mathrm{sgn}}_n))^{\mathrm{GL}_n(\mathbb{Q}), \mathrm{lfin}} } & {  {}^{\mathrm{p}} \mathcal{H} (\tilde{p}_* \mathcal{IC}_{\mathcal{V}}).}
	\arrow["{{ \eta_{\GIT}^* \left( *^{\mathrm{s}\mathcal{H}\mathrm{all}}_{n, \mathcal{U}} \right)}}", from=1-1, to=1-2]
	\arrow["\cong"', from=1-1, to=2-1]
	\arrow["\cong", from=1-2, to=2-2]
	\arrow["{{  *^{\mathrm{s}\mathcal{H}\mathrm{all}}_{n, \mathcal{V}}}}"', from=2-1, to=2-2]
\end{tikzcd}
\end{equation}

\end{para}

\begin{para}[Supercommutativity of CoHI:  \texorpdfstring{$(-1)$}{TEXT}-shifted symplectic case]\label{para-supercommutative-1-shifted-symplectic}
    Let $(\mathcal{X}, \omega_{\mathcal{X}}, o)$ be an oriented $(-1)$-shifted symplectic stack satisfying assumptions \Cref{item-cohI-goodmoduli} and \Cref{item-cohI-qcgr} in \Cref{para-Cohi-assumptions}.
    Assume further that $\mathcal{X}$ is almost symmetric.
    Let $(F, \alpha) \in \mathsf{Face}^{\mathrm{nd}}(\mathcal{X})$ be a non-degenerate face and $\sigma, \sigma' \subset F$ be chambers with respect to the cotangent arrangement.
    Then we claim the identity  of the perversely degenerate cohomological Hall induction
    \begin{equation}\label{eq-supercommutative-deg-relcohi--1-symplectic}
        *^{\mathrm{p}\mathcal{H}\mathrm{all}}_{\sigma} =  *^{\mathrm{p}\mathcal{H}\mathrm{all}}_{\sigma'}.
    \end{equation}
    Using the commutativity of the diagram \Cref{eq-cohi-etale-cover-critical}, we may replace $\mathcal{X}$ with an \'etale cover over the good moduli space.
    Therefore by \Cref{thm-Darboux}, we may assume that there exists an almost symmetric smooth stack $\mathcal{U}$  satisfying assumptions \Cref{item-cohI-goodmoduli} and \Cref{item-cohI-qcgr} in \Cref{para-Cohi-assumptions} and a regular function $f \colon \mathcal{U} \to \mathbb{A}^1$,
    such that $\mathcal{X} = \mathrm{Crit}(f)$ holds.    
    Further, using the discussion in \Cref{para-any-orientation-is-locally-standard}, we may assume that $o$ is the standard orientation $o^{\mathrm{sta}}$.
    Now let $(F, \tilde{\alpha}) \in \mathsf{Face}^{\mathrm{nd}}(\mathcal{U})$ be the image of $\alpha$ and $\tilde{\sigma}$ and $\tilde{\sigma}'$ be the chambers containing the image of $\sigma$ and $\sigma'$.
    Consider the following diagram:
\[\begin{tikzcd}
	{g_{\alpha, *}\varphi_{f_{\tilde{\alpha}}}(\mathcal{IC}_{\mathcal{U}_{\tilde{\alpha}}})_{\alpha}} &[-15pt] {g_{\alpha, *}\varphi_{\mathcal{X}_{\alpha}, \mathrm{tot_{\alpha}^*\omega_{\mathcal{X}}}, o_{\mathcal{X}_{\alpha}}^{\mathrm{sta}}}} &[-15pt] {g_{\alpha, *}\varphi_{\mathcal{X}_{\alpha}, \mathrm{tot_{\alpha}^*\omega_{\mathcal{X}}}, \sigma^{\star}o_{\mathcal{X}}^{\mathrm{sta}}}} &[-5pt] {\varphi_{\mathcal{X}, \omega_{\mathcal{X}}, o_{\mathcal{X}}^{\mathrm{sta}}}} & {\varphi_{f}(\mathcal{IC}_{\mathcal{U}})} \\
	{g_{\alpha, *}\varphi_{f_{\tilde{\alpha}}}(\mathcal{IC}_{\mathcal{U}_{\tilde{\alpha}}})_{\alpha}} & {g_{\alpha, *}\varphi_{\mathcal{X}_{\alpha}, \mathrm{tot_{\alpha}^*\omega_{\mathcal{X}}}, o_{\mathcal{X}_{\alpha}}^{\mathrm{sta}}}} & {g_{\alpha, *}\varphi_{\mathcal{X}_{\alpha}, \mathrm{tot_{\alpha}^*\omega_{\mathcal{X}}}, {\sigma'}^{\star}o_{\mathcal{X}}^{\mathrm{sta}}}} & {\varphi_{\mathcal{X}, \omega_{\mathcal{X}}, o_{\mathcal{X}}^{\mathrm{sta}}}} & {\varphi_{f}(\mathcal{IC}_{\mathcal{U}}).}
	\arrow["\text{\Cref{eq-DT-Hodge-standard}}", "\cong"', no head, from=1-1, to=1-2]
	\arrow[""{name=0, anchor=center, inner sep=0}, "{\varphi_{f_{\tilde{\alpha}}}(*^{\mathcal{H}\mathrm{all}}_{\tilde{\sigma}})_{\alpha}}", curve={height=-54pt}, from=1-1, to=1-5]
	\arrow[Rightarrow, no head, from=1-1, to=2-1]
	\arrow["\text{\Cref{eq-localize-standard-orientaion}}", ""{name=1, anchor=center, inner sep=0}, "\cong"', from=1-2, to=1-3]
	\arrow[Rightarrow, no head, from=1-2, to=2-2]
	\arrow[""{name=2, anchor=center, inner sep=0}, "{*^{\mathcal{H}\mathrm{all}}_{\sigma}}", from=1-3, to=1-4]
	\arrow["\text{\Cref{eq-localized-ori-indep-on-segment}}", "\cong"', from=1-3, to=2-3]
	\arrow["\text{\Cref{eq-DT-Hodge-standard}}", "\cong"', from=1-4, to=1-5]
	\arrow[Rightarrow, no head, from=1-4, to=2-4]
	\arrow[Rightarrow, no head, from=1-5, to=2-5]
	\arrow["\text{\Cref{eq-DT-Hodge-standard}}", "\cong"', from=2-1, to=2-2]
	\arrow[""{name=3, anchor=center, inner sep=0}, "{\varphi_{f_{\tilde{\alpha}}}(*^{\mathcal{H}\mathrm{all}}_{\tilde{\sigma}'})_{\alpha}}"', curve={height=54pt}, from=2-1, to=2-5]
	\arrow["\text{\Cref{eq-localize-standard-orientaion}}", ""{name=4, anchor=center, inner sep=0}, "\cong"', no head, from=2-2, to=2-3]
	\arrow[""{name=5, anchor=center, inner sep=0}, "{*^{\mathcal{H}\mathrm{all}}_{\sigma'}}", from=2-3, to=2-4]
	\arrow["\text{\Cref{eq-DT-Hodge-standard}}", "\cong"', from=2-4, to=2-5]
	\arrow["{(A)}"{description}, draw=none, from=0, to=1-3]
	\arrow["{(B)}"{description}, draw=none, from=1, to=4]
	\arrow["{(C)}"{description}, draw=none, from=2, to=5]
	\arrow["{(D)}"{description}, draw=none, from=2-3, to=3]
\end{tikzcd}\]
The diagrams $(A)$ and $(D)$ commute by the commutativity of the diagram \Cref{eq-cohi-critical}.
The diagram $(B)$ commutes up to the sign $(-1)^{d(\tilde{\sigma}, \tilde{\sigma}')}$ by Lemma \ref{lem-ori-localize-indep-segment-up-to-sign}.
The outer square of this diagram commutes up to the sign $(-1)^{d(\tilde{\sigma}, \tilde{\sigma}')}$ after taking the perverse degeneration by \Cref{eq-supercommutative-deg-relcohi-smooth}.
Therefore we conclude that the diagram $(C)$ commutes after taking perverse degeneration, hence we obtain the identity \Cref{eq-supercommutative-deg-relcohi--1-symplectic}.
\end{para}

\begin{para}[Symmetric CoHI: \texorpdfstring{$(-1)$}{TEXT}-shifted symplectic case]\label{para-symmetrized-cohi--1-symplectic}
    We adopt the notation from \Cref{para-supercommutative-1-shifted-symplectic}.
    As we have seen in \Cref{para-localize-orientation-is-equivariant}, we can define an $\mathrm{Aut}(\alpha)$-equivariant orientation $\alpha^{\star} o$ on $\mathcal{X}_{\alpha}$.
    In particular, there exists an $\mathrm{Aut}(\alpha)$-equivariant structure on the monodromic mixed Hodge module $\varphi_{\mathcal{X}_{\alpha}, \mathrm{tot}_{\alpha}^* \omega_{\mathcal{X}}, \alpha^{\star}o}$.
    It follows from the equality \Cref{eq-supercommutative-deg-relcohi--1-symplectic} that the restriction of the perversely degenerated cohomological Hall induction map
    \begin{equation}\label{eq-symmetrized-cohi--1-symplectic-component}
        *^{\mathrm{s}\mathcal{H}\mathrm{all}}_{\alpha} \colon  \left( g_{\alpha, *} {}^{\mathrm{p}} \mathcal{H} ( p_{\alpha, *}  \varphi_{\mathcal{X}_{\alpha}}) \right)^{\mathrm{Aut}(\alpha)}  \to   {}^{\mathrm{p}} \mathcal{H} (p_* \varphi_{\mathcal{X}})
    \end{equation}
    does not depend on the choice of the chamber $\sigma \subset F$.

    We will now give a component-free description as in \Cref{para-symmetrized-cohi}.
    Let $n$ be a positive integer.
    It is clear that the $(-1)$-shifted symplectic structure $\mathrm{tot}_n^{\star} \omega_{\mathcal{X}}$ on $\mathrm{Grad}_{\mathbb{Q}}^n(\mathcal{X})$ is $\mathrm{GL}_n(\mathbb{Q})$-equivariant.
    The $\mathrm{Aut}(\alpha)$-equivariant orientations on $\mathcal{X}_{\alpha}$ for each face $\alpha$ assemble to form a $\mathrm{GL}_n(\mathbb{Q})$-equivariant orientation $\mathrm{tot}_n^{\star}o$.
    Let $p_{n} \colon \mathrm{Grad}_{\mathbb{Q}, \mathrm{nd}} ^n(\mathcal{X}) \to \mathrm{Grad}_{\mathbb{Q}, \mathrm{nd}} ^n(\mathcal{X})_{\GIT}$ be the good moduli space morphism
    and $g_{n} \colon \mathrm{Grad}_{\mathbb{Q}, \mathrm{nd}} ^n(\mathcal{X})_{\GIT} \to X$ be the map induced on the good moduli spaces.
    Then the direct sum of the maps \Cref{eq-symmetrized-cohi--1-symplectic-component} gives a map
    \begin{equation}\label{eq-symmetrized-cohi--1-symplectic-without-component}
        *^{\mathrm{s}\mathcal{H}\mathrm{all}}_{n, \mathcal{X}} \colon   ( g_{n, *} {}^{\mathrm{p}} \mathcal{H}(p_{n, *} \varphi_{\Grad^n_{\mathbb{Q}, \mathrm{nd}} (\mathcal{X})}))^{\mathrm{GL}_n(\mathbb{Q}), \mathrm{lfin}} \to  {}^{\mathrm{p}} \mathcal{H} (p_* \varphi_{\mathcal{X}})
      \end{equation}
      which we call the symmetric cohomological Hall induction.

      Since the cohomological Hall induction map is compatible with \'etale pullback, as we have seen in \Cref{eq-cohi-etale-cover-critical},
      the symmetric cohomological Hall induction is also compatible with \'etale pullback.
      Namely, if we are given an \'etale morphism of $(-1)$-shifted symplectic stacks $\eta \colon \mathcal{Y} \to \mathcal{X}$, the following diagram commutes:
\begin{equation}\label{eq-etale-cohi-compatible-symmetrized--1-shifted-symplectic}
    \begin{tikzcd}
	{   \eta_{\GIT}^*  (g_{n, *} ({}^{\mathrm{p}} \mathcal{H}(p_{n, *} \varphi_{\Grad^n_{\mathbb{Q}, \mathrm{nd}} (\mathcal{X})_{\GIT} } )))^{\mathrm{GL}_n(\mathbb{Q}), \mathrm{lfin}} } &[30pt] { \eta_{\GIT}^* {}^{\mathrm{p}} \mathcal{H} (p_* \varphi_{\mathcal{X}})} \\
	{ ( \tilde{g}_{n, *} ({}^{\mathrm{p}} \mathcal{H}(\tilde{p}_{n, *} \varphi_{\Grad^n_{\mathbb{Q}, \mathrm{nd}} (\mathcal{Y})_{\GIT} } )))^{\mathrm{GL}_n(\mathbb{Q}), \mathrm{lfin}} } & {  {}^{\mathrm{p}} \mathcal{H} (\tilde{p}_* \varphi_{\mathcal{Y}}).}
	\arrow["{{ \eta_{\GIT}^* \left( *^{\mathrm{s}\mathcal{H}\mathrm{all}}_{n, \mathcal{X}} \right)}}", from=1-1, to=1-2]
	\arrow["\cong"', from=1-1, to=2-1]
	\arrow["\cong", from=1-2, to=2-2]
	\arrow["{{  *^{\mathrm{s}\mathcal{H}\mathrm{all}}_{n, \mathcal{Y}}}}"', from=2-1, to=2-2]
\end{tikzcd}
\end{equation}
    Here we adopt the notation from \Cref{para-sym-cohi-etale-cover}.
\end{para}

\begin{para}[Symmetric CoHI for critical loci]\label{para-symmetrized-cohi-critical-loci}
    We adopt the notation from \Cref{para-symmetrized-cohi--1-symplectic}.
    Assume further that there exists an almost symmetric smooth stack $\mathcal{U}$ satisfying assumptions \Cref{item-cohI-goodmoduli} and \Cref{item-cohI-qcgr} in \Cref{para-Cohi-assumptions} and a function $f$ on $\mathcal{U}$ such that $\mathcal{X} = \mathrm{Crit}(f)$ as an oriented $(-1)$-shifted symplectic stack.
    Let $\iota \colon \mathcal{X} \to \mathcal{U}$ be the canonical map and $\iota_n \colon \mathrm{Grad}^n_{\mathbb{Q}, \mathrm{nd}}(\mathcal{X}) \to \mathrm{Grad}^n_{\mathbb{Q}, \mathrm{nd}}(\mathcal{U})$ be the induced map.
    Set $f_n \coloneqq f \circ \mathrm{tot}_n$ and let $\bar{f} \colon U \to \mathbb{A}^1$ denote the induced function.
    Then it follows from \Cref{lem-ori-localize-indep-segment-up-to-sign} that there exists an isomorphism of $\mathrm{GL}_n(\mathbb{Q})$-equivariant orientations
    \[
     \mathrm{tot}_{n}^{\star} o^{\mathrm{sta}}_{\mathcal{X}} \cong   o^{\mathrm{sta}}_{\mathrm{Grad}^n_{\mathbb{Q}, \mathrm{nd}}(\mathcal{X}) } \otimes \iota_n^* \mathrm{sgn}_n,
    \]
    which induces a natural isomorphism of  $\mathrm{GL}_n(\mathbb{Q})$-equivariant perverse sheaves
    \[
        \varphi_{\Grad^n(\mathcal{X})} \cong \varphi_{f_n}(\mathcal{IC}_{\mathrm{Grad}^n_{\mathbb{Q}, \mathrm{nd}}(\mathcal{U})}) \otimes \iota_n^* \mathrm{sgn}_n.
    \]
    It follows from the commutativity of the diagram \Cref{eq-cohi-critical} that the following diagram commutes:
\begin{equation}\label{eq-symmetrized-cohi-critical}
    \begin{tikzcd}
	{         \varphi_{\bar{f}} \left(  ( g_{n, *} {}^{\mathrm{p}} \mathcal{H}(p_{n, *} \mathcal{IC}_{\Grad^n_{\mathbb{Q}, \mathrm{nd}} (\mathcal{U})}) \otimes \mathrm{sgn}_n)^{\mathrm{GL}_n(\mathbb{Q}), \mathrm{lfin}} \right) } & {\varphi_{\bar{f}} \left({}^{\mathrm{p}} \mathcal{H} (p_* \mathcal{IC}_{\mathcal{U}}) \right)} \\
	{           ( g_{n, *} {}^{\mathrm{p}} \mathcal{H}(p_{n, *} \varphi_{\Grad^n_{\mathbb{Q}, \mathrm{nd}} (\mathcal{X})}))^{\mathrm{GL}_n(\mathbb{Q}), \mathrm{lfin}} } & {{}^{\mathrm{p}} \mathcal{H} (p_* \varphi_{\mathcal{X}}).}
	\arrow["{\varphi_{\bar{f}}(*^{\mathrm{s}\mathcal{H}\mathrm{all}}_{n, \mathcal{U}}) }", from=1-1, to=1-2]
	\arrow["\cong"', from=1-1, to=2-1]
	\arrow["\cong", from=1-2, to=2-2]
	\arrow["{*^{\mathrm{s}\mathcal{H}\mathrm{all}}_{n, \mathcal{X}} }"', from=2-1, to=2-2]
\end{tikzcd}
\end{equation}
\end{para}

\section{Cohomological integrality}\label{sect-cohint}

In this section, we will state and prove the cohomological integrality theorem for almost orthogonal stacks.
This statement can be regarded as a non-linear and global generalization of the cohomological integrality theorem of \textcite[Theorem A, C]{_Davison_Meinhardt_CoDT} for quivers with potentials and generic stability conditions.

\subsection{Cohomological integrality for smooth stacks: Statement}

\begin{para}[A rational torus]\label{para-rational-torus}
    For a $\mathbb{Z}$-lattice $\Lambda$, we let $T_{\Lambda} \coloneqq \Spec \mathbb{C}[\Lambda^{\vee}]$ be the corresponding torus.
    For an algebraic stack $\mathcal{X}$, giving a $\mathrm{B} T_{\Lambda}$-action on $\mathcal{X}$ is equivalent to defining a section of the map $\mathrm{Grad}^{\Lambda}(\mathcal{X}) \to \mathcal{X}$: see \cite[\S 3.4]{IbanezNunez_stratificationsgoodmodulistacks} for the details.
   
    Now let $F$ be a finite dimensional $\mathbb{Q}$-vector space. Motivated by the above discussion, we define a ``$\mathrm{B} T_F$-action'' on $\mathcal{X}$ to be a section of the map  $\mathrm{Grad}^{F}(\mathcal{X}) \to \mathcal{X}$, though we do not define the object $\mathrm{B} T_F$ itself.
    We say that a $\mathrm{B} T_F$-action is non-degenerate if the corresponding components of $\mathrm{Grad}^{F}(\mathcal{X})$ are non-degenerate.

    If one is given a submodule $\Lambda \subset F$ which is a free $\mathbb{Z}$-module of full rank satisfying $\Lambda \otimes \mathbb{Q} \cong F$, a $\mathrm{B} T_{\Lambda}$-action naturally induces a $\mathrm{B} T_{F}$-action.
    Conversely, if $\mathcal{X}$ is connected and we are given a $\mathrm{B} T_F$-action $\mu$ on $\mathcal{X}$, there exists a free $\mathbb{Z}$-module of full rank satisfying $\Lambda \otimes \mathbb{Q} \cong F$ such that $\mu$ is induced from a $\mathrm{B}T_{\Lambda}$-action on $\mathcal{X}$.
    Further, using \cite[Proposition 1.3.9]{_HalpernLeistner_Onthestructureofinstabilityinmodulitheory}, we may choose such a lift canonically, if one requires the action to be faithful, i.e., there is no non-trivial subgroup $Z \subset T_{\Lambda}$ with the property that $BZ$ acts trivially on $\mathcal{X}$.
    
    For a non-degenerate face $(F, \alpha) \in \mathsf{Face}^{\mathrm{nd}}(\mathcal{X})$, the stack $\mathcal{X}_{\alpha}$ is naturally equipped with a non-degenerate $\mathrm{B} T_F$-action.
    By the discussion above, we may find a canonical $\mathrm{B} \mathbb{G}_{\mathrm{m}}^{\dim F}$-action on $\mathcal{X}_{\alpha}$ lifting it.
    The faithfulness implies that the quotient stack $\mathcal{X}_{\alpha} / \mathrm{B} \mathbb{G}_{\mathrm{m}}^{\dim F}$ is a $1$-Artin stack.

    For a $\mathbb{Z}$-lattice $\Lambda$, there exists a natural isomorphism $\mathrm{H}^*(\mathrm{B} T_{\Lambda}) \cong \mathrm{Sym}(\Lambda^{\vee} \otimes_{\mathbb{Z}} \mathbb{L})$.
    Motivated by this, for a finite dimensional $\mathbb{Q}$-vector space $F$, we set
    \begin{equation*}
     \mathrm{H}^*(\mathrm{B} T_F) \coloneqq     \mathrm{Sym}(F^{\vee} \otimes_{\mathbb{Q}} \mathbb{L}).
    \end{equation*}

    Assume that we are given a $\mathrm{B} T_F$-action $\mu$ on a connected algebraic stack $\mathcal{X}$.
    Take a submodule $\Lambda \subset F$ which is a free $\mathbb{Z}$-module satisfying $\Lambda \otimes \mathbb{Q} \cong F$ such that $\mu$ corresponds to a $\mathrm{B} T_{\Lambda}$-action $\mu_{\Lambda}$ on $\mathcal{X}$,
    and a point $x \in \mathcal{X}$.
    Then we define a map 
    \begin{equation}\label{eq-restrict-rational-torus-action}
     \mathrm{H}^*(\mathcal{X}) \to     \mathrm{H}^*(\mathrm{B} T_F) 
    \end{equation}
    by the composition
    \[
        \mathrm{H}^*(\mathcal{X}) \xrightarrow[]{\mu_{\Lambda}^*} \mathrm{H}^*(\mathrm{B} T_{\Lambda} \times \mathcal{X}) \to \mathrm{H}^*(\mathrm{B} T_{\Lambda}) \cong \mathrm{H}^*(\mathrm{B} T_F)
    \]
    where the second map is given by the restriction to the point $x \in \mathcal{X}$.
    One easily sees that this definition does not depend on the choice of $\Lambda$ and $x \in \mathcal{X}$.

\end{para}

\begin{para}[Global equivariant parameter]
    Let $\mathcal{X}$ be a connected algebraic stack, $F$ be a $d$-dimensional $\mathbb{Q}$-vector space and $\mu$ be a $\mathrm{B} T_F$-action on $\mathcal{X}$.
    A \textit{global equivariant parameter} of $\mathcal{X}$ with respect to the action $\mu$ is defined to be a tuple of line bundles $\boldsymbol{\mathcal{L}}=(\mathcal{L}_1, \cdots, \mathcal{L}_d)$ such that the image of $\{c_1(\mathcal{L}_1), \ldots, c_1(\mathcal{L}_d) \}$ under the map \Cref{eq-restrict-rational-torus-action} forms a basis of $\mathrm{H}^2(\mathrm{B} T_F)$.
    The following lemma ensures the existence of a global equivariant parameter under reasonable assumptions:

\end{para}

\begin{lemma}\label{lem-global-quotient-global-equiv-parameter}
    Let $\mathcal{X}$ be a connected algebraic stack which admits a presentation $\mathcal{X} = X / G$ where $X$ is an algebraic space and $G$ is an affine algebraic group. Let $F$ be a $d$-dimensional $\mathbb{Q}$-vector space.
    Then for a non-degenerate $\mathrm{B} T_F$-action $\mu$  on $\mathcal{X}$, there exists a global equivariant parameter.
\end{lemma}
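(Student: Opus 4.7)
The plan is to reduce the problem to a question about characters of~$G$.
First, I will use the discussion in \Cref{para-rational-torus} to choose a full-rank sublattice
$\Lambda \subset F$ such that~$\mu$ is induced from a $\mathrm{B} T_\Lambda$-action on~$\mathcal{X}$,
which in turn, via the description of graded points of a quotient stack in \Cref{eg-grad-quotient-stack},
corresponds to a cocharacter $\lambda \colon T_\Lambda \to G$ satisfying
$L_\lambda = G$ and $X^\lambda = X$. Equivalently, $\lambda$ factors through the centre
$Z(G) \subset G$ and the induced action of~$T_\Lambda$ on~$X$ is trivial. The non-degeneracy
of~$\mu$ translates into the statement that the associated
map $F \to \mathrm{Lie}(Z(G))$ is injective, so that the image
$T' \coloneqq \lambda(T_\Lambda) \subset Z(G)$ is a subtorus of dimension $d = \dim F$.

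The goal then becomes: find characters $\chi_1,\dotsc,\chi_d \in \mathrm{Hom}(G, \mathbb{G}_\mathrm{m})$
whose restrictions $\chi_i \circ \lambda$ form a basis of $\Lambda^\vee \otimes \mathbb{Q}$; the
corresponding line bundles $\mathcal{L}_i$ on~$\mathcal{X} = X/G$ will then satisfy the requirement,
since by construction the map \Cref{eq-restrict-rational-torus-action} sends the Chern
class of the line bundle attached to a character~$\chi$ to $\chi \circ \lambda$ in
$\mathrm{H}^2(\mathrm{B} T_\Lambda) \cong \mathrm{H}^2(\mathrm{B} T_F)$.
Since $T_\Lambda \to T'$ is an isogeny, it suffices to
show that the restriction map
\begin{equation*}
    \mathrm{res} \colon \mathrm{Hom}(G, \mathbb{G}_\mathrm{m}) \otimes \mathbb{Q}
    \longrightarrow \mathrm{Hom}(T', \mathbb{G}_\mathrm{m}) \otimes \mathbb{Q}
\end{equation*}
is surjective.

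To prove the surjectivity of $\mathrm{res}$, I would first reduce to the reductive case:
characters of~$G$ coincide with those of $G/R_u(G)$ (since $R_u(G)$ has no non-trivial
characters), and the torus $T'$ maps isomorphically into $G/R_u(G)$ as $T' \cap R_u(G)$ is
trivial. Next, for the connected reductive case, I would use that
$\mathrm{Hom}(G^\circ, \mathbb{G}_\mathrm{m}) \otimes \mathbb{Q}$ maps isomorphically to
$\mathrm{Hom}(Z(G^\circ)^\circ, \mathbb{G}_\mathrm{m}) \otimes \mathbb{Q}$ via the isogeny
$Z(G^\circ)^\circ \to G^\circ / [G^\circ, G^\circ]$, from which the surjectivity onto
$\mathrm{Hom}(T', \mathbb{G}_\mathrm{m}) \otimes \mathbb{Q}$ follows as $T' \subset Z(G^\circ)^\circ$ is a
subtorus. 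Finally, for $G$ possibly disconnected, I would note that
$\mathrm{Hom}(G, \mathbb{G}_\mathrm{m}) \otimes \mathbb{Q}$ is the $(G/G^\circ)$-invariant subspace of
$\mathrm{Hom}(G^\circ, \mathbb{G}_\mathrm{m}) \otimes \mathbb{Q}$, and that $G/G^\circ$ acts trivially on
$\mathrm{Hom}(T', \mathbb{G}_\mathrm{m})$ since $T' \subset Z(G)$ is pointwise fixed by conjugation by~$G$.
A straightforward averaging argument over the finite group $G/G^\circ$ then lifts any element
of $\mathrm{Hom}(T', \mathbb{G}_\mathrm{m}) \otimes \mathbb{Q}$ to a $(G/G^\circ)$-invariant character of $G^\circ$,
finishing the proof.

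There is no real obstacle in this argument; the only mild subtlety is the treatment of the
disconnected case, which requires remarking that~$T'$ lies in the strict centre $Z(G)$ (not
merely in $Z(G^\circ)$) precisely to make the averaging step work.
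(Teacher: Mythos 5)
There is a genuine gap at the very first step. A non-degenerate $\mathrm{B} T_F$-action on $\mathcal{X}=X/G$ is a section of $\Grad^\Lambda(\mathcal{X})\to\mathcal{X}$, and what this gives you is an isomorphism of $\mathcal{X}$ with a connected component $X^\lambda/L_\lambda$ of $\Grad^\Lambda(\mathcal{X})$ via $\mathrm{tot}$ --- \emph{not} the equalities $L_\lambda=G$ and $X^\lambda=X$, and not that $\lambda$ factors through $\mathrm{Z}(G)$. The image of $\lambda$ is central only in $L_\lambda$, the centralizer of $\lambda$ in $G$. Concretely, take $G=\mathrm{SL}_2$, $X=\mathrm{SL}_2/T$ with $T$ the maximal torus, so $\mathcal{X}\cong\mathrm{B} T$; the identity cocharacter $\mathbb{G}_\mathrm{m}\to T$ defines a non-degenerate $\mathrm{B}\mathbb{G}_\mathrm{m}$-action whose associated $\lambda$ is not central in $G$, and here $L_\lambda=T$, $X^\lambda=\mathrm{N}(T)/T$. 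This breaks your strategy irreparably as stated: you propose to produce the global equivariant parameter from characters $\chi_i\in\mathrm{Hom}(G,\mathbb{G}_\mathrm{m})$, but in this example $\mathrm{Hom}(\mathrm{SL}_2,\mathbb{G}_\mathrm{m})=0$, while the lemma is nonetheless true --- the required line bundle on $\mathcal{X}\cong\mathrm{B} T$ comes from a character of $T=L_\lambda$ (equivalently, an $\mathrm{SL}_2$-equivariant line bundle on $\mathrm{SL}_2/T$ not pulled back from $\mathrm{B}\mathrm{SL}_2$). So line bundles pulled back from $\mathrm{B} G$ do not suffice; you must change presentation to $\mathcal{X}\cong X^\lambda/L_\lambda$ and use characters of $L_\lambda$.

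The remainder of your character-theoretic argument is essentially sound and would repair the proof once applied to $L_\lambda$ in place of $G$: $\lambda(T_\Lambda)\subset\mathrm{Z}(L_\lambda)$ by definition of $L_\lambda$, characters are insensitive to the unipotent radical, for a connected reductive group the isogeny $\mathrm{Z}(H)^\circ\to H/[H,H]$ gives rational surjectivity of restriction onto characters of any central subtorus, and your averaging argument handles disconnectedness (one does need, as you implicitly use, that a $\Gamma$-invariant character of $H^\circ$ extends to $H$ after multiplying by $|\Gamma|$, which follows since $H^2(\Gamma,\mathbb{C}^\times)$ is killed by $|\Gamma|$). The paper instead sidesteps all of this by first reducing to $G=\GL_n$ (replacing $X$ by $X\times^G\GL_n$), where $L_\lambda$ is a product of general linear groups and the finiteness of the kernel of $T_\Lambda\to L_\lambda\to L_\lambda^{\mathrm{ab}}$ is immediate; it then restricts the resulting classes in $\mathrm{H}^2(\mathrm{B} L_\lambda^{\mathrm{ab}})$ through $\mathrm{H}^2(\mathcal{X})\to\mathrm{H}^2(\mathrm{B} T_\Lambda)$.
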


\begin{proof}
    Without loss of generality, we may assume $G = \GL_n$.
    Take a submodule $\Lambda \subset F$ which is a free $\mathbb{Z}$-module satisfying $\Lambda \otimes_{\mathbb{Z}} \mathbb{Q} \cong F$ such that $\mu$ lifts to a $\mathrm{B} T_{\Lambda}$-action $\mu_{\Lambda}$ on $\mathcal{X}$.
    By the description of the stacks of graded points for quotient stacks explained in  \Cref{eg-grad-quotient-stack}, we see that $\mu_{\Lambda}$ corresponds to a map $\lambda \colon T_{\Lambda} \to \GL_n$ with finite kernel such that 
    $\mathcal{X}$ is isomorphic to the connected component $X^{\lambda} / L_{\lambda}$, where $L_{\lambda}$ is the Levi subgroup of $\GL_n$ associated with $\lambda$.
    Note that the image of $\lambda$ is contained in the center of $L_{\lambda}$.
    Therefore the composition $T_{\Lambda} \xrightarrow[]{\lambda} L_{\lambda} \to L_{\lambda}^{\mathrm{ab}}$ has finite kernel.
    In particular, for each point $x \in \mathcal{X}$, the composite
    \[
    \mathrm{H}^2(\mathrm{B} L_{\lambda}^{\mathrm{ab}}) \to  \mathrm{H}^2(\mathrm{B} L_{\lambda}) \to \mathrm{H}^2( X^{\lambda} / L_{\lambda}) \to  \mathrm{H}^2(\mathcal{X}) \to   \mathrm{H}^2(\mathrm{B} G_x) \to \mathrm{H}^2(\mathrm{B} T_{\Lambda}) 
    \]
    is surjective (note that we are working with the rational cohomology), which clearly implies the existence of the global equivariant parameter.
\end{proof}

\begin{corollary}\label{cor-global-equiv-parameter-smooth}
    Let $\mathcal{U}$ be a connected smooth algebraic stack with affine stabilizers.
    Then, for a non-degenerate $\mathrm{B} T_F$-action $\mu$  on $\mathcal{U}$, there exists a global equivariant parameter.
\end{corollary}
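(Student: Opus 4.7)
The plan is to reduce the corollary to Lemma~\ref{lem-global-quotient-global-equiv-parameter} by presenting $\mathcal{U}$ as a global quotient by $\GL_n$. The key claim is the following: any connected smooth algebraic stack $\mathcal{U}$ with affine stabilizers, locally of finite type over $\mathbb{C}$, admits an isomorphism $\mathcal{U} \cong [X/\GL_n]$ for some algebraic space $X$ and some integer $n \geq 0$. Once this presentation is established, the $\mathrm{B} T_F$-action on $\mathcal{U}$ transports to the presentation, and Lemma~\ref{lem-global-quotient-global-equiv-parameter} produces the required global equivariant parameter directly.

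To prove the key claim, one first produces a faithful finite-rank vector bundle $\mathcal{V}$ on $\mathcal{U}$. At any point $x \in \mathcal{U}$ with stabilizer $G_x$, an embedding $G_x \hookrightarrow \GL_n$ provides a faithful $G_x$-representation, and hence a faithful vector bundle on the residual gerbe $\mathrm{B} G_x$; by Alper's local structure theorem \cite[Theorem~4.12]{_Alper_ALunaetaleslicetheoremforalgebraicstacks} this extends to a vector bundle on an \'etale neighborhood of $x$. For smooth Artin stacks with affine stabilizers, these local vector bundles can be glued and stabilized (by increasing $n$ as needed) into a single global faithful vector bundle on $\mathcal{U}$ -- this is the resolution property for the class of stacks considered. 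The total space of the associated $\GL_n$-frame bundle of $\mathcal{V}$ is then a representable $\GL_n$-torsor $X \to \mathcal{U}$, and $X$ is an algebraic space realizing $\mathcal{U} \cong [X/\GL_n]$.

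Alternatively, one can bypass the construction of a global quotient and argue via a gerbe/Brauer-group argument: by \Cref{para-rational-torus} one may lift $\mu$ to a faithful $\mathrm{B} T_\Lambda$-action for a full-rank lattice $\Lambda \subset F$, making $\pi \colon \mathcal{U} \to \mathcal{V} := \mathcal{U}/\mathrm{B} T_\Lambda$ a $T_\Lambda$-gerbe. For each character $\chi \in \Lambda^\vee$ the pushforward $\pi_*^\chi$ is a $\mathbb{G}_\mathrm{m}$-gerbe $\mathcal{V}_\chi$ on $\mathcal{V}$ whose trivializations are line bundles on $\mathcal{U}$ of $T_\Lambda$-weight $\chi$; if the class of $\mathcal{V}_\chi$ in $\mathrm{H}^2_{\mathrm{fppf}}(\mathcal{V},\mathbb{G}_\mathrm{m})$ is torsion, some positive multiple $n_\chi \chi$ is realized by an actual line bundle $\mathcal{L}_\chi$ on $\mathcal{U}$. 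Taking a basis $\chi_1,\ldots,\chi_d$ of $\Lambda^\vee$ and the corresponding line bundles $\mathcal{L}_i$ gives a global equivariant parameter, since the images of $c_1(\mathcal{L}_i)$ under \Cref{eq-restrict-rational-torus-action} form a $\mathbb{Q}$-basis of $\mathrm{H}^2(\mathrm{B} T_F)$.

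The main obstacle in either approach is the same structural input: either the resolution property for smooth Artin stacks with affine stabilizers (allowing a $\GL_n$-presentation), or equivalently the torsion-ness of the Brauer-type group $\mathrm{H}^2_{\mathrm{fppf}}(\mathcal{V},\mathbb{G}_\mathrm{m})$ for the quotient stack $\mathcal{V}$. Both are nontrivial but standard facts in the stack setting, and once either is granted, the remainder of the argument is a routine consequence of Lemma~\ref{lem-global-quotient-global-equiv-parameter} and \Cref{para-rational-torus}.
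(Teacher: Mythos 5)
There is a genuine gap in both of your routes, and it is the same gap you flag at the end but then dismiss as ``standard'': the structural input you need is not available in this generality. Your first route requires $\mathcal{U} \cong [X/\GL_n]$ globally, i.e.\ the resolution property for an arbitrary connected smooth algebraic stack with affine stabilizers. This is a well-known open problem (and is not expected to hold in general); in particular the step where you ``glue and stabilize'' locally constructed faithful vector bundles into a global one simply does not go through --- vector bundles on \'etale charts of a stack do not glue after stabilization, and by Totaro's theorem the existence of such a bundle is \emph{equivalent} to the resolution property rather than a consequence of local quotient presentations. Note also that your local presentations invoke \cite[Theorem~4.12]{_Alper_ALunaetaleslicetheoremforalgebraicstacks}, which presupposes a good moduli space; the corollary assumes only affine stabilizers, so the stabilizers at general points need not even be reductive. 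Your second route replaces this by the torsion-ness of $\mathrm{H}^2_{\mathrm{fppf}}(\mathcal{V},\mathbb{G}_\mathrm{m})$ for the quotient stack $\mathcal{V}$, which is again not a known fact for Artin stacks in this generality (the internal logic --- torsion class implies a line bundle of weight $n_\chi\chi$, and rational multiples of a basis suffice --- is fine, but the hypothesis is unverified).

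The paper avoids both obstacles with a softer observation: the defining condition of a global equivariant parameter is tested through the map \Cref{eq-restrict-rational-torus-action}, which is independent of the point of $\mathcal{U}$ used, so it is enough to control the first Chern classes \emph{generically}. Concretely, by Kresch \cite[Proposition~3.5.9]{1999kreschChow} a smooth stack with affine stabilizers always contains a \emph{non-empty open} substack of the form $V/\GL_n$ with $V$ quasi-projective; Lemma~\ref{lem-global-quotient-global-equiv-parameter} produces line bundles $\mathcal{L}_1,\dotsc,\mathcal{L}_d$ there; each $\mathcal{L}_i$ extends to a coherent sheaf $F_i$ on all of $\mathcal{U}$ by \cite[Corollaire 15.5]{_Laumon_Champsalgebriques}; smoothness makes $F_i$ perfect, so $\det(F_i)$ is a line bundle on $\mathcal{U}$ restricting to $\mathcal{L}_i$ on the open substack, and these determinants form the desired parameter. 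If you want to salvage your write-up, replace the global quotient claim by this open-substack-plus-extension argument.
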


\begin{proof}
    Using \cite[Proposition 3.5.9]{1999kreschChow}, we may find an open embedding
    \[
     \emptyset \neq V / \mathrm{GL}_n \hookrightarrow \mathcal{U}
    \]
    where $V$ is a quasi-projective smooth variety.
    Using \Cref{lem-global-quotient-global-equiv-parameter}, we may take a global equivariant parameter  $\mathcal{L}_1, \ldots, \mathcal{L}_d \in \mathrm{Pic}(V / G)$ with respect to the restriction of $\mu$ to $V / G$.
    Using \cite[Corollaire 15.5]{_Laumon_Champsalgebriques}, one can extend $\mathcal{L}_i$ to a coherent sheaf $F_i$ on $\mathcal{U}$.
    Since $\mathcal{U}$ is smooth, $F_i$ is a perfect complex.
    Therefore one can define the tuple of line bundles $(\det(F_1), \ldots, \det(F_d))$, which is clearly a global equivariant parameter for $\mathcal{U}$.
\end{proof}

\begin{para}[Global equivariant parameter for stack of objects in a dg-category]\label{para-equiv-parameter-linear}
    Let $\mathcal{C}$ be a finite type dg-category and $\mathcal{M}_{\mathcal{C}}$ be the moduli stack of objects in the sense of \textcite{toen2007moduli}.
    Here we discuss the existence of a global equivariant parameter for $\mathcal{M}_{\mathcal{C}}$ with respect to the natural $\mathrm{B} \mathbb{G}_{\mathrm{m}}$-action.
    Note that the global equivariant parameter was introduced only for $1$-stacks, but the same definition makes sense for higher Artin stacks.

    By definition, a point in $\mathcal{M}_{\mathcal{C}}$ corresponds to a functor $\mathcal{C} \to \mathrm{Perf}_k$.
    We say that a connected substack $\mathcal{M}_{\mathcal{C}}^{\circ} \subset \mathcal{M}_{\mathcal{C}}$ is numerically non-trivial if there exists $[F] \in \mathcal{M}_{\mathcal{C}}^{\circ} $ and $c \in \mathcal{C}$ such that $\rank F(c) \neq 0$.
    For such a $c$, we define a perfect complex $E_c$ on $\mathcal{M}_{\mathcal{C}}^{\circ} $ with the following property
    \[
     E_c |_{[F']} \cong F'(c)
    \]
    for any $[F'] \in  \mathcal{M}_{\mathcal{C}}^{\circ}$.
    Then it is clear from the definition that $\det(E_c)$ defines a global equivariant parameter for $\mathcal{M}_{\mathcal{C}}^{\circ}$.

    We note that the assumption of being numerically non-trivial is a quite mild one.
    For example, it is satisfied for non-zero components of the moduli stack of compactly supported coherent sheaves on a smooth quasi-projective variety.
    Also, it is satisfied for the non-zero component of the moduli stack of Bridgeland semistable objects, whenever we consider a stability condition which factors through the numerical Grothendieck group. 
\end{para}

\begin{para}
The following technical lemma will be useful to control the global equivariant parameters for good moduli stacks locally over good moduli spaces:

\end{para}

\begin{lemma}\label{lemma-global-equiv-parameter-BG}
   Let $G$ be a reductive group acting on an affine scheme $Y$. Assume that there exists a point $y \in Y$ fixed by $G$ whose image in $Y/G$ is denoted by $\bar{y}$.
   Take a line bundle $\mathcal{L}$ on $Y/G$.
   Let $p \colon Y/G \to Y \GIT G$ be the good moduli space morphism and $r \colon Y / G \to \mathrm{B} G$ be the projection map.
   Then there exists a Zariski open neighborhood $p(\bar{y}) \in U \subset Y \GIT G$ and a line bundle $\mathcal{M}$ on $\mathrm{B} G$ such that there exists an isomorphism
   $\mathcal{L} |_{p^{-1}(U)} \cong r^{*} \mathcal{M} |_{p^{-1}(U)}$.

\end{lemma}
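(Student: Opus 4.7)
The plan is to trivialize the line bundle
\[
    \mathcal{N} \coloneqq \mathcal{L} \otimes r^*\mathcal{M}^{-1}
\]
on the preimage $p^{-1}(U)$ of some Zariski open neighborhood $U$ of $p(\bar{y})$, where $\mathcal{M} \coloneqq i^*\mathcal{L}$ and $i \colon \mathrm{B} G \cong \{y\}/G \hookrightarrow Y/G$ is the closed immersion of the fixed-point stack. Since $r \circ i = \id_{\mathrm{B} G}$, we have $i^*\mathcal{N} \cong \mathcal{O}_{\mathrm{B} G}$ canonically, and producing such a trivialization is equivalent to establishing the required isomorphism.

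First I would consider the short exact sequence of quasi-coherent sheaves on $Y/G$
\[
    0 \to I \cdot \mathcal{N} \to \mathcal{N} \to i_*i^*\mathcal{N} \to 0,
\]
where $I \subset \mathcal{O}_{Y/G}$ is the ideal sheaf of $i$. Since $p_*$ is exact on $\QCoh$ for good moduli space morphisms and $p_* i_* i^*\mathcal{N} \cong (p\circ i)_*\mathcal{O}_{\mathrm{B} G} \cong \mathbb{C}_{p(\bar{y})}$, pushing forward yields a surjection $p_*\mathcal{N} \twoheadrightarrow \mathbb{C}_{p(\bar{y})}$. Lifting $1 \in \mathbb{C}_{p(\bar{y})}$ to the stalk $(p_*\mathcal{N})_{p(\bar{y})}$ and spreading out, one obtains an open neighborhood $p(\bar{y}) \in U_0 \subset Y \GIT G$ together with a section $\tilde{s} \in \Gamma(p^{-1}(U_0), \mathcal{N})$ whose restriction along $i$ is the canonical unit trivialization of $i^*\mathcal{N}$.

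Finally, the morphism $\mathcal{O}_{p^{-1}(U_0)} \to \mathcal{N}|_{p^{-1}(U_0)}$ defined by $\tilde{s}$ is an isomorphism at $\bar{y}$, hence on some Zariski open $V \subset p^{-1}(U_0)$ containing $\bar{y}$. Since $\bar{y}$ is the unique closed point of the set-theoretic fibre $p^{-1}(p(\bar{y}))$, the remaining points of the fibre generize $\bar{y}$ and therefore already lie in $V$. Using that good moduli space morphisms send closed substacks to closed subschemes, $p(p^{-1}(U_0) \setminus V)$ is closed in $U_0$ and avoids $p(\bar{y})$, so the set $U \coloneqq U_0 \setminus p(p^{-1}(U_0) \setminus V)$ is the desired saturated open neighborhood on which $\tilde{s}$ trivializes $\mathcal{N}$. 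The main subtlety lies in this final saturation step; an alternative route is to invoke \cite[Theorem 10.3]{_Alper_GoodmodulispacesforArtinstacks} together with \cite[Lemma 6.8]{bellamy2016symplectic}, exactly as in \Cref{para-any-orientation-is-locally-standard}, to directly identify $\mathcal{L}$ and $r^*\mathcal{M}$ on a saturated open neighborhood from the agreement of their restrictions on $\{y\}/G$.
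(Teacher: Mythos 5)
Your argument is correct, and in fact your closing ``alternative route'' is verbatim the paper's entire proof: the paper sets $\mathcal{M} \coloneqq \mathcal{L}|_{\mathrm{B}G_y}$, observes the tautological isomorphism $\phi \colon \mathcal{L}|_{\mathrm{B}G_y} \cong (r^*\mathcal{M})|_{\mathrm{B}G_y}$, and concludes by citing \cite[Lemma 6.8]{bellamy2016symplectic} together with \cite[Theorem 10.3]{_Alper_GoodmodulispacesforArtinstacks}. What you add is a self-contained unwinding of what those two citations accomplish in this situation, and every step of it checks out: $p_*$ is exact on quasi-coherent sheaves because good moduli space morphisms are cohomologically affine; $(p\circ i)_*\mathcal{O}_{\mathrm{B}G}$ is the skyscraper $\mathbb{C}_{p(\bar y)}$ since $G$ is reductive; the non-vanishing locus $V$ of the lifted section contains the entire fibre $p^{-1}(p(\bar y))$ because $\bar y$ is its unique closed point, every other point of the fibre is a generization of $\bar y$, and open sets are stable under generization; and removing the closed set $p(p^{-1}(U_0)\setminus V)$ produces the saturated neighborhood. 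The trade-off is the expected one: the paper's two-line proof delegates the extension-from-the-closed-fibre step to general machinery (which it reuses elsewhere, e.g.\ in \Cref{para-any-orientation-is-locally-standard}), whereas your direct argument makes the mechanism transparent and shows that in this particular case nothing beyond exactness of $p_*$ and the topology of good moduli space fibres is needed.
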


\begin{proof}
    Set $\mathcal{M} \coloneqq \mathcal{L} |_{\mathrm{B} G_y}$. By definition, we have a natural isomorphism $\phi \colon \mathcal{L}|_{\mathrm{B} G_y} \cong (r^* \mathcal{M})|_{\mathrm{B} G_y}$.
    Then the statement follows from \cite[Lemma 6.8]{bellamy2016symplectic} and \cite[Theorem 10.3]{_Alper_GoodmodulispacesforArtinstacks}.
\end{proof}

\begin{para}[Assumptions]\label{para-cohint-assumptions}
    For the rest of \Cref{sect-cohint}, we will work with derived algebraic stacks satisfying the following assumptions:
    \begin{enumerate}
        \item $\mathcal{X}$ has affine diagonal and admits a good moduli space $p \colon \mathcal{X} \to X$. \label{item1-cohint-goodmoduli}
        \item $\mathcal{X}$ has quasi-compact connected components and quasi-compact graded points. In particular, $\mathcal{X}$ has finite cotangent weights. \label{item2-cohint-qcgr}
        \item $\mathcal{X}$ is almost symmetric.  \label{item3-cohint-symmetric}
        \item For each non-degenerate face $(F, \alpha) \in \mathsf{Face}^{\mathrm{nd}}(\mathcal{X})$,
        there exists a global equivariant parameter for $\mathcal{X}_{\alpha}$ with respect to the natural $\mathrm{B} T_F$-action.  \label{item4-cohint-gloeq}
    \end{enumerate}
\end{para}

\begin{para}[Cohomological integrality for smooth stacks]\label{para-cohint-smoothstack}
    We now state the cohomological integrality theorem for smooth stacks.
    Let $\mathcal{U}$ be a smooth stack satisfying \Crefrange{item1-cohint-goodmoduli}{item4-cohint-gloeq} in \Cref{para-cohint-assumptions}.
    For each non-degenerate face $(F, \alpha) \in \mathsf{Face}^{\mathrm{nd}}(\mathcal{U})$ and a choice of a chamber $\sigma \subset F$ with respect to the cotangent arrangement,
    consider the following diagram
\[\begin{tikzcd}
	& {\mathcal{U}_{\sigma}^+} \\
	{\mathcal{U}_{\alpha}} && {\mathcal{U}} \\
	{U_{\alpha}} && U.
	\arrow["{\mathrm{gr}_{\sigma}}"', from=1-2, to=2-1]
	\arrow["{\mathrm{ev}_{1, \sigma}}", from=1-2, to=2-3]
	\arrow["{p_{\alpha}}", from=2-1, to=3-1]
	\arrow["p"', from=2-3, to=3-3]
	\arrow["{g_{\alpha}}", from=3-1, to=3-3]
\end{tikzcd}\]
    By the definition of the BPS sheaf, there exists a natural map
    \begin{equation}\label{eq-bps-map-smooth}
     \mathcal{BPS}^{\alpha}_{U} \otimes \mathbb{L}^{\dim F / 2} \to p_{\alpha, *} \mathcal{IC}_{\mathcal{U}_{\alpha}}.    
    \end{equation}
    Take a global equivariant parameter $\boldsymbol{\mathcal{L}}$ for $\mathcal{U}_{\alpha}$. It induces an $\mathrm{H}^*(\mathrm{B} T_{F})$-action on $\mathcal{IC}_{\mathcal{U}_{\alpha}}$.
    In particular, we can extend the map \Cref{eq-bps-map-smooth} to the following map
    \begin{equation}\label{eq-bps-map-extended-smooth}
        \mathcal{BPS}^{\alpha}_{{U}} \otimes \mathrm{H}^*(\mathrm{B} T_{F})_{\mathrm{vir}}  \to p_{\alpha, *} \mathcal{IC}_{\mathcal{U}_{\alpha}}
     \end{equation}
     where we set $\mathrm{H}^*(\mathrm{B} T_{F})_{\mathrm{vir}}  \coloneqq \mathrm{H}^*(\mathrm{B} T_{F}) \otimes \mathbb{L}^{\dim F / 2}$.
     Using the cohomological Hall induction map introduced in \Cref{eq-cohi-smooth-symmetric}, we obtain a natural map
     \begin{equation}\label{eq-relative-COHI-cohint}
       g_{\alpha, *} p_{\alpha, *} \mathcal{IC}_{\mathcal{U}_{\alpha}} \to p_* \mathcal{IC}_{\mathcal{U}}.
     \end{equation}
     By composing maps \Cref{eq-bps-map-extended-smooth} and \Cref{eq-relative-COHI-cohint}, we obtain the following map
     \begin{equation}\label{eq-CoHI-bps-piece-smooth}
      \mathrm{Ind}_{\alpha, \sigma, \boldsymbol{\mathcal{L}} } \colon g_{\alpha, *} \mathcal{BPS}^{\alpha}_{{U}} \otimes \mathrm{H}^*(\mathrm{B} T_{F})_{\mathrm{vir}} \to p_* \mathcal{IC}_{\mathcal{U}}.
     \end{equation}
     Note that the $\mathrm{Aut}(\alpha)$-action on $F$ induces an $\mathrm{Aut}(\alpha)$-action on $\mathrm{H}^*(\mathrm{B} T_{F})_{\mathrm{vir}}$.
     Consider the following restriction of $\mathrm{Ind}_{\alpha, \sigma, \boldsymbol{\mathcal{L}} } $:
     \begin{equation}\label{eq-CoHI-inv-bps-piece-smooth}
        \mathrm{Ind}_{\alpha, \sigma, \boldsymbol{\mathcal{L}} }^{\mathrm{sym}} \colon (g_{\alpha, *} \mathcal{BPS}^{\alpha}_{{U}} \otimes \mathrm{H}^*(\mathrm{B} T_{F})_{\mathrm{vir}} \otimes \mathrm{sgn}_{\alpha})^{\mathrm{Aut}(\alpha)}  \to p_* \mathcal{IC}_{\mathcal{U}}.
     \end{equation}
     We can now define the notion of a smooth stack satisfying the cohomological integrality theorem:

\end{para}

\begin{definition*}
    Let $\mathcal{U}$ be a smooth stack satisfying \Crefrange{item1-cohint-goodmoduli}{item4-cohint-gloeq} in \Cref{para-cohint-assumptions}.
    We say that $\mathcal{U}$ satisfies the \emph{cohomological integrality theorem} if the map
    \begin{equation}\label{eq-cohint-map}
      \bigoplus_{(F, \alpha) \in \mathsf{Face}^{\mathrm{nd}}(\mathcal{U})}  (g_{\alpha, *}  \mathcal{BPS}^{\alpha}_{{U}} \otimes \mathrm{H}^*(\mathrm{B} T_F)_{\mathrm{vir}} \otimes \mathrm{sgn}_{\alpha})^{\mathrm{Aut}(\alpha)}  \xrightarrow[]{\bigoplus_{\alpha} \mathrm{Ind}_{\alpha, \sigma, \boldsymbol{\mathcal{L}} }^{\mathrm{sym}} } p_* \mathcal{IC}_{\mathcal{U}}
    \end{equation}
    is an isomorphism for any choice of the global equivariant parameter for $\mathcal{U}_{\alpha}$ and choice of a chamber $\sigma \subset F$ with respect to the cotangent arrangement.
    Here the direct sum is over all isomorphism classes of non-degenerate faces and $\mathrm{sgn}_{\alpha}$ is the sign representation of $\mathrm{Aut}(\alpha)$ defined in \Cref{eq-sign-def}.
\end{definition*}

\begin{remark}\label{rmk-cohint-source-smooth}
    By \Cref{cor:small_decom_thm_smooth_stack} and \Cref{cor-non-special-vanishing}, the source of the map \Cref{eq-cohint-map} is identified with the following object:
    \[
        \bigoplus_{(F, \alpha) \in \mathsf{Face}^{\mathrm{sp}}(\mathcal{U})}  (g_{\alpha, *}  \mathcal{IC}_{{U}_{\alpha}}^{\circ} \otimes \mathrm{H}^*(\mathrm{B} T_F)_{\mathrm{vir}} \otimes \mathrm{sgn}_{\alpha})^{\mathrm{Aut}(\alpha)}.
    \]
    Here we set 
    \[
        \mathcal{IC}_{U_{\alpha}}^{\circ} = 
        \begin{cases}
            \mathcal{IC}_{U_{\alpha}}  & \quad \text{if the map $\mathcal{U}_{\alpha} / \mathrm{B} \mathbb{G}_{\mathrm{m}}^{\dim F} \to U_{\alpha}$ is generically quasi-finite.} \\
            0  & \quad \text{otherwise.} 
        \end{cases}
    \]
    See \Cref{para-rational-torus} for the choice of the $\mathrm{B} \mathbb{G}_{\mathrm{m}}^{\dim F}$-action on $\mathcal{U}_{\alpha}$.
    In particular, the source of the map \Cref{eq-cohint-map} is a finite sum over each connected component of $U$, thanks to the finiteness of special faces for stacks satisfying \Cref{item1-cohint-goodmoduli} and \Cref{item2-cohint-qcgr} discussed in \Cref{para-finiteness-theorem}.
\end{remark}

\begin{remark}
    The term ``cohomological integrality'' first appeared in the work of \textcite{_Davison_Meinhardt_CoDT} and it comes from the observation that when $\mathcal{U}$ is the moduli stack of representations of a symmetric quiver, the map \Cref{eq-cohint-map} being an isomorphism implies a certain integrality property of the generalized Donaldson--Thomas invariant of quiver representations introduced by Joyce \cite{joyce2008configurationsIV}: see \cite[\S 6.7]{davison2015donaldson} for more details.
    For general smooth stacks, the map \Cref{eq-cohint-map} being an isomorphism implies a certain integrality property of the motivic invariant which was introduced by C.B., A.I.N. and T.K. in \cite{invariants} generalizing the work of Joyce \cite{joyce2008configurationsIV}. This aspect will be discussed in \cite{wall-crossing}. 
\end{remark}

The following is the main theorem for smooth stacks in this paper:

\begin{theorem}\label{thm-cohint-smooth}
    Let $\mathcal{U}$ be a smooth stack satisfying \Crefrange{item1-cohint-goodmoduli}{item4-cohint-gloeq} in \Cref{para-cohint-assumptions}.
    Assume further that $\mathcal{U}$ is almost orthogonal (see \Cref{def-symmetry}).
    Then $\mathcal{U}$ satisfies the cohomological integrality theorem.
\end{theorem}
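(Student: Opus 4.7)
The plan is to prove \Cref{thm-cohint-smooth} by reducing, via étale descent and the Luna slice theorem, to the case of a weakly orthogonal linear quotient stack $\mathcal{U} = V/G$, and then using the non-degenerate quadratic function supplied by orthogonality to reduce further to an explicit computation on $\mathrm{B} G$ via vanishing cycles.

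First, I would exploit the compatibility of the symmetric cohomological Hall induction with étale pullback (diagram \Cref{eq-etale-cohi-compatible-symmetrized}), together with the étale-local nature of both the intersection complex and the BPS sheaf, to reduce the claim to checking that \Cref{eq-cohint-map} is an isomorphism after an appropriate étale base change on $U$. The local structure theorem \cite[Theorem 4.12]{_Alper_ALunaetaleslicetheoremforalgebraicstacks} combined with Luna's fundamental lemma then brings us to the case $\mathcal{U} = V/G$ with $G$ reductive and $V$ a weakly orthogonal $G$-representation, and it suffices to check the statement at the stalk over the origin $0 \in V \GIT G$. Using the scaling $\mathbb{G}_\mathrm{m}$-action on $V$ and the contraction lemma, this stalk is computed by the corresponding hypercohomology, so the claim becomes an equality of graded vector spaces attached to $V/G$.

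Next, weak orthogonality provides a $G^\circ$-invariant non-degenerate symmetric bilinear form on $V$, and after a harmless adjustment to pass from $G^\circ$ to $G$ (using \Cref{eq-CoHI-finite-quotient} to reduce the disconnected case to the connected one), this yields a $G$-invariant non-degenerate quadratic function $q \colon V \to \mathbb{A}^1$ descending to $\bar{q} \colon V \GIT G \to \mathbb{A}^1$. The derived critical locus $\mathcal{X} = \mathrm{Crit}(q/G)$ has classical truncation $\mathrm{B} G$, is equipped with the standard $(-1)$-shifted symplectic structure and standard orientation, and satisfies $\varphi_\mathcal{X} \cong \varphi_q(\mathcal{IC}_{V/G})$ by \Cref{eq-DT-Hodge-standard}. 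Applying $\varphi_{\bar{q}}$ to the map \Cref{eq-cohint-map} and invoking the compatibility \Cref{eq-symmetrized-cohi-critical} of the symmetric CoHI with critical loci, the question translates into the analogous cohomological integrality statement for the $(-1)$-shifted symplectic stack $\mathrm{B} G$; since $q$ is non-degenerate, $\varphi_{\bar{q}}$ is conservative on the $\mathbb{G}_\mathrm{m}$-equivariant summands supported near the origin, so this translation loses no information. On $\mathrm{B} G$, the faces, special faces, and cotangent arrangement are given by explicit Lie-theoretic data (\Cref{para-quotient-component-arrangement}, \Cref{para-ex-linear-quotient}), and the CoHI admits the explicit localization formula \Cref{eq-CoHI-explicit}, reducing the claim to a direct residue computation in the style of \cite[Theorem 2]{_Kontsevich_Soibelman_CoHA}: the decomposition of $\mathrm{H}^*(\mathrm{B} G)$ as a sum over special faces of Weyl-invariants of $\mathrm{H}^*(\mathrm{B} T_F)_{\mathrm{vir}} \otimes \mathrm{sgn}_\alpha$, glued by parabolic induction from each pseudo-Levi $L_\alpha$.

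The principal technical obstacle will be the careful bookkeeping of signs and Tate twists throughout this reduction: tracking the cotangent sign representation $\mathrm{sgn}_\alpha$ through its interaction with the chamber-dependence of the localized orientation (\Cref{lem-ori-localize-indep-segment-up-to-sign}) and the supercommutativity of the CoHI (\Cref{eq-supercommutative-deg-relcohi-smooth}), and matching the half-Lefschetz twists produced by $\varphi_q$ against those packaged into $\mathrm{H}^*(\mathrm{B} T_F)_{\mathrm{vir}}$. A secondary concern is that the direct sum in \Cref{eq-cohint-map} is a priori infinite; its effective finiteness, as recorded in \Cref{rmk-cohint-source-smooth} via the finiteness theorem \Cref{para-finiteness-theorem}, must be invoked at the correct moment so that the comparison map makes sense term-by-term.
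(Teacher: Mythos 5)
Your overall strategy coincides with the paper's: reduce étale-locally to a weakly orthogonal linear quotient $V/G$, use the invariant non-degenerate quadratic function $q$ and the compatibility \Cref{eq-symmetrized-cohi-critical} of the symmetric CoHI with vanishing cycles to collapse the statement onto $\mathrm{B} G$, and finish with the explicit formula \Cref{eq-CoHI-explicit}. However, there is a genuine gap: you never set up the induction on the maximal dimension of stabilizer groups of closed points, and without it two of your pivotal reductions are unjustified. First, after reaching $\mathcal{U}=V/G$ you assert that ``it suffices to check the statement at the stalk over the origin''; the map \Cref{eq-cohint-map} is a map of complexes on all of $V\git G$, and checking one stalk proves nothing about the rest. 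Second, you assert that applying $\varphi_{\bar q}$ ``loses no information'' because it is conservative on objects supported near the origin — but this requires knowing \emph{a priori} that the cofibre of the CoHI map is a skyscraper at the origin, which is exactly what needs to be proved away from $0$. The paper resolves both points at once: it inducts on the maximal stabilizer dimension, reduces (via \Cref{para-coh-int-finite-quotient} and \Cref{para-coh-int-product}) to $G$ connected and $V$ without trivial summands so that $0$ is the unique closed point of maximal stabilizer dimension, invokes the induction hypothesis to conclude the cofibre $\mathcal{G}$ is supported at $\{0\}$, and only then uses that $\varphi_{\bar q}$ is conservative on such skyscrapers. You should make this induction explicit; the Luna-slice heuristic you gesture at (points away from $0$ look like origins of smaller quotient stacks) is precisely the inductive step, but it needs a well-founded measure and a stated hypothesis.

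Two smaller remarks. In the endgame on $\mathrm{B} G$ the paper does not do the residue computation directly: it passes to $\mathfrak{g}/G$, where the numerator and denominator in \Cref{eq-CoHI-explicit} cancel (the weights of the adjoint representation are the roots), uses the induction hypothesis once more to localize the problem to the centre $\mathfrak{z}/G$, and reduces by $\mathfrak{z}$-equivariance to the classical isomorphism $\mathrm{H}^*(\mathrm{B} T)^W\cong\mathrm{H}^*(\mathrm{B} G)$. Your proposed direct computation — identifying $(\mathrm{H}^*(\mathrm{B} T)\otimes\mathrm{sgn})^W$ with the anti-invariants $\Delta\cdot\mathrm{H}^*(\mathrm{B} T)^W$ and cancelling the discriminant against the denominator $\prod_{\beta\in\Phi^-_\sigma}t_\beta$ — would also work and is arguably more self-contained, but note that the only special face of $\mathrm{B} G$ with non-vanishing BPS sheaf is the one attached to the maximal torus (the groups $L_\alpha$ here are honest Levi subgroups, not pseudo-Levis), so there is no ``sum over special faces'' to glue. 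Finally, your concern about the infinite direct sum is legitimate but already handled by the finiteness theorem as recorded in \Cref{rmk-cohint-source-smooth}.
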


\begin{para}\label{para-non-orthogonal-smooth-case}
    As we have already seen in \Cref{ssec-symmetric-stacks}, most of the almost symmetric stacks which arise as moduli stacks are at least almost orthogonal.
    For example, this theorem can be applied to the moduli stacks of semistable principal $G$-bundles and twisted $G$-Higgs bundles on Riemann surfaces. We will describe the cohomological integrality isomorphism explicitly for these examples in \Cref{ssec-G-Higgs}.
    On the other hand, we expect that cohomological integrality holds for any smooth stack satisfying \Crefrange{item1-cohint-goodmoduli}{item4-cohint-gloeq} in \Cref{para-cohint-assumptions}.

    We note that the cohomological integrality conjecture was also found by Hennecart in \cite[Conjecture 8.14]{hennecart2024cohomological} for affine quotient stacks while we were preparing this manuscript, 
    and he \cite[Theorem 8.7]{hennecart2024cohomological} proved the existence of $\mathrm{Aut}(\alpha)$-equivariant bounded mixed Hodge complexes $\mathcal{BPS}^{\alpha}_{{U}, \mathrm{Hen}}$ on $U_{\alpha}$ such that there exists an isomorphism as in \Cref{eq-cohint-map} for almost symmetric affine quotient stacks.
\end{para}

\begin{para}[Perversely degenerated version]

    We will state an equivalent reformulation of the cohomological integrality theorem using the component-free description of the symmetric cohomological Hall induction \Cref{eq-symmetrized-cohi-without-component}.
    For a non-negative integer $n$, set
    \[
    \mathcal{BPS}^n_{{U}} \coloneqq     \bigoplus_{\substack{ (\mathbb{Q}^n, \alpha) \in \mathsf{Face}^{\mathrm{nd}}(\mathcal{U}) }}  \mathcal{BPS}^{\alpha}_{{U}} 
    \]
    where the direct sum runs over all $n$-dimensional faces with the fixed source $\mathbb{Q}^n$.
    A choice of a global equivariant parameter $\boldsymbol{\mathcal{L}}$ for $\mathrm{Grad}^n_{\mathbb{Q}, \mathrm{nd}}(\mathcal{U})$ determines a natural map
    \[
       \mathcal{BPS}^n_{{U}} \otimes \mathrm{H}^*(\mathrm{B} \mathbb{G}_\mathrm{m}^n)_{\mathrm{vir}} \to {}^{\mathrm{p}} \mathcal{H}( p_{n, *} \mathcal{IC}_{\mathrm{Grad}^n_{\mathbb{Q}, \mathrm{nd}}(\mathcal{U})}). 
    \]
    Then the cohomological integrality for $\mathcal{U}$ with respect to a global equivariant parameter $\boldsymbol{\mathcal{L}}$ is equivalent to the following restriction of the map \Cref{eq-symmetrized-cohi-without-component} being an isomorphism:
    \begin{equation}\label{eq-coh-int-smooth-perversely-degenerate}
       \bigoplus_{n} \left( g_{n, *} ( \mathcal{BPS}^n_{{U}} \otimes \mathrm{H}^*(\mathrm{B} \mathbb{G}_\mathrm{m}^n)_{\mathrm{vir}} \otimes \mathrm{sgn}_n) \right)^{\mathrm{GL}_n(\mathbb{Q}), \mathrm{lfin}} \xrightarrow{\sum_{n} \left( *^{\mathrm{s}\mathcal{H}\mathrm{all}}_{n, \mathcal{U}} \right) } {}^{\mathrm{p}} \mathcal{H}(p_* \mathcal{IC}_{\mathcal{U}}).
    \end{equation}
    In particular, the validity of the cohomological integrality does not depend on the choice of a chamber.
\end{para}

\subsection{Cohomological integrality for smooth stacks: Proof}

\begin{para}
    The aim of this section is to prove \Cref{thm-cohint-smooth}.
    The main idea is to use the vanishing cycle functor with respect to non-degenerate quadratic functions to reduce the statement to the case of $\mathrm{B} G$.
\end{para}

\begin{para}[Cohomological integrality for products]\label{para-coh-int-product}
    Let $\mathcal{U}$ be a smooth stack satisfying assumptions \Crefrange{item1-cohint-goodmoduli}{item4-cohint-gloeq} in \Cref{para-cohint-assumptions} and $S$ be a smooth and separated connected algebraic space.
    It is obvious that the stack $\mathcal{U} \times S$ also satisfies \Crefrange{item1-cohint-goodmoduli}{item4-cohint-gloeq} in \Cref{para-cohint-assumptions}.
    We will show that the cohomological integrality for $\mathcal{U}$ is equivalent to the cohomological integrality for $\mathcal{U} \times S$.

    Firstly note that there are isomorphisms $\Grad^n(\mathcal{U} \times S) \cong \Grad^n(\mathcal{U}) \times S$ and \\$\Filt^n(\mathcal{U} \times S) \cong \Filt^n(\mathcal{U}) \times S$.
    These isomorphisms imply an isomorphism of formal lattices $\mathrm{CL}_{\mathbb{Q}}(\mathcal{U}) \cong \mathrm{CL}_{\mathbb{Q}}(\mathcal{U} \times S)$.
    Therefore we will identify faces and cones for $\mathcal{U}$ and $\mathcal{U} \times S$.
    For a non-degenerate face $(F, \alpha) \in \mathsf{Face}^{\mathrm{nd}}(\mathcal{U})$, it is obvious by definition that the following isomorphism holds:
    \[
     \mathcal{BPS}_{U \times S}^{\alpha} \cong \mathcal{BPS}_{U}^{\alpha}  \boxtimes \mathcal{IC}_{S}.
    \]
    Also, the cotangent sign representations $\mathrm{sgn}_{\alpha}$ for $\mathcal{U}$ and $\mathcal{U} \times S$ are identical.

    Assume first that cohomological integrality holds for $\mathcal{U} \times S$. For a non-degenerate face $(F, \alpha) \in \mathsf{Face}^{\mathrm{nd}}(\mathcal{U})$, take a global equivariant parameter $\boldsymbol{\mathcal{L}}^{\alpha} = (\mathcal{L}_1^{\alpha}, \ldots, \mathcal{L}_n^{\alpha})$ on $\mathcal{U}_{\alpha}$.
    Then $\boldsymbol{\mathcal{L}}_S^{\alpha} \coloneqq (\mathcal{L}_1^{\alpha} \boxtimes \mathcal{O}_S, \ldots, \mathcal{L}_n^{\alpha} \boxtimes \mathcal{O}_S)$ defines a global equivariant parameter for $\mathcal{U}_{\alpha} \times S$.
    The cohomological integrality isomorphism for $\mathcal{U} \times S$ restricted to any point $s \in S$ implies the cohomological integrality for $\mathcal{U}$.

    Conversely, assume that cohomological integrality holds for $\mathcal{U}$. For a non-degenerate face $(F, \alpha) \in \mathsf{Face}^{\mathrm{nd}}(\mathcal{U} \times S)$, take a global equivariant parameter $\tilde{\boldsymbol{\mathcal{L}}}^{\alpha} = (\tilde{\mathcal{L}}_1^{\alpha}, \ldots, \tilde{\mathcal{L}}_n^{\alpha})$ on $\mathcal{U}_{\alpha} \times S$.
    Then for any point $s \in S$, the restriction $\tilde{\boldsymbol{\mathcal{L}}}^{\alpha} |_{s} = (\tilde{\mathcal{L}}_1^{\alpha} |_{s}, \ldots, \tilde{\mathcal{L}}_n^{\alpha}|_{s})$ defines a global equivariant parameter for $\mathcal{U}_{\alpha}$.
    Then the cohomological integrality for $\mathcal{U} \times \{ s \}$ implies that the cohomological integrality map \Cref{eq-cohint-map} for $\mathcal{U} \times S$ is an isomorphism over $U \times \{ s \}$.
    In particular, the cohomological integrality map for $\mathcal{U} \times S$ is an isomorphism.

\end{para}

\begin{para}[Cohomological integrality for finite group quotients]\label{para-coh-int-finite-quotient}
    Let $\tilde{\mathcal{U}}$ be a smooth stack acted on by a finite group $\Gamma$ and  set $\mathcal{U} \coloneqq \tilde{\mathcal{U}} / \Gamma$. 
    Assume that $\tilde{\mathcal{U}}$ and $\mathcal{U}$ satisfy \Crefrange{item1-cohint-goodmoduli}{item4-cohint-gloeq} in \Cref{para-cohint-assumptions}.
    Then cohomological integrality for $\tilde{\mathcal{U}}$ implies cohomological integrality for $\mathcal{U}$.
    To see this, consider the perversely degenerated cohomological integrality map  \Cref{eq-coh-int-smooth-perversely-degenerate} for the stack $\tilde{\mathcal{U}}$:
    \[
        \bigoplus_{n} \left( \tilde{g}_{n, *} ( \mathcal{BPS}^n_{\tilde{{U}}} \otimes \mathrm{H}^*(\mathrm{B} \mathbb{G}_\mathrm{m}^n)_{\mathrm{vir}} \otimes \tilde{\mathrm{sgn}}_n ) \right)^{\mathrm{GL}_n(\mathbb{Q}), \mathrm{lfin}} \xrightarrow{\sum_{n} \left( *^{\mathrm{s}\mathcal{H}\mathrm{all}}_{n, \tilde{ \mathcal{U}}} \right) } {}^{\mathrm{p}} \mathcal{H}(p_* \mathcal{IC}_{\tilde{\mathcal{U}}}).   
    \]
    Here $\tilde{\mathrm{sgn}}_n $ denotes the cotangent sign local system for $\tilde{\mathcal{X}}$ and $\tilde{g}_n \colon \mathrm{Grad}_{\mathbb{Q}, \mathrm{nd}}^{n}(\tilde{\mathcal{U}})_{\GIT} \to \tilde{U}$ is the natural map.
    Then by using the commutative diagram \Cref{eq-cohi-finite-quotient-compatible}, we see that the $\Gamma$-invariant part of the above map after pushing down to $U$ recovers the perversely degenerated cohomological integrality map for $\mathcal{U}$. Hence we conclude.
\end{para}

\begin{para}[Cohomological integrality for \'etale covers]\label{para-coh-int-etale-cover}
    Let $\mathcal{U}$ be a smooth stack satisfying \Crefrange{item1-cohint-goodmoduli}{item4-cohint-gloeq} in \Cref{para-cohint-assumptions}, with good moduli space $p\colon \mathcal{U}\rightarrow U$.
    Take an \'etale cover $V \to U$ from an algebraic space and set $\mathcal{V} \coloneqq V \times_{U} \mathcal{U}$.
    Assume that $\mathcal{V}$ also satisfies \Crefrange{item1-cohint-goodmoduli}{item4-cohint-gloeq} in \Cref{para-cohint-assumptions}.
    Then it follows from the commutativity of the diagram \Cref{eq-etale-cohi-compatible-symmetrized} that cohomological integrality for $\mathcal{V}$ implies cohomological integrality for $\mathcal{U}$.
    Also cohomological integrality for $\mathcal{V}$ with respect to a choice of global equivariant parameters $\boldsymbol{\mathcal{L}}_{\alpha}$ on $\mathcal{V}_{\alpha}$ pulled back from $\mathcal{U}_{\alpha}$ for special faces $\alpha$ follows from cohomological integrality for $\mathcal{U}$.
\end{para}

\begin{para}[Proof of Theorem \ref{thm-cohint-smooth}]

    We now prove Theorem \ref{thm-cohint-smooth}.
    This will be done by induction on the maximal dimension of stabilizer groups of closed points.
    If the stabilizer groups are zero dimensional, then the statement is obvious.

    From now on, we will assume that we have proved the statement for stacks with maximal stabilizer dimension smaller than $d$ and $\mathcal{U}$ has the maximal stabilizer dimension $d$.
    Using the discussion in \Cref{para-coh-int-etale-cover} and the local structure theorem for smooth stacks \cite[Theorem 4.12]{_Alper_ALunaetaleslicetheoremforalgebraicstacks}, we may assume $\mathcal{U} = Y / G$ for a reductive group $G$ and an affine smooth scheme $Y$.
    Further, by using Lemma \ref{lemma-global-equiv-parameter-BG}, we may assume that there exists a point $y \in Y$ fixed by $G$ and the global equivariant parameter for $(Y / G)_{\alpha}$ for a special face $\alpha$ is pulled back from $\mathrm{B} L_{\alpha}$ where $L_{\alpha}$ denotes the Levi subgroup of $G$ corresponding to $\alpha$.
    Then using the discussion in  \Cref{para-coh-int-etale-cover} again together with \cite[Theorem 1.2]{_Alper_ALunaetaleslicetheoremforalgebraicstacks}, we may further reduce to the case $\mathcal{U} = V / G$ for an almost orthogonal representation $V$ of $G$.
    Let $G^{\circ} \subset G$ denote the neutral component and set $\Gamma \coloneqq G / G^{\circ}$.
    Then the natural map $V / G \to \mathrm{B} G \to \mathrm{B} \Gamma$ determines a $\Gamma$-action on $V / G^{\circ}$ with $(V / G^{\circ}) / \Gamma \cong V/G$.
    Using the discussion in \Cref{para-coh-int-finite-quotient}, we may reduce to the case $G = G^{\circ}$, i.e., when $G$ is connected.
    Also by using the discussion in \Cref{para-coh-int-product}, we may assume that $V$ does not contain a trivial representation as a direct summand.
    Therefore the origin $0 \in V / G$ is the unique point with the maximal stabilizer dimension.
    In particular, by the induction hypothesis, we may assume that the cofibre $\mathcal{G}$ of the cohomological integrality map \Cref{eq-cohint-map} is supported on the origin.
    It is enough to prove $\mathcal{G} = 0$.

    Let $q \colon V / G \to \mathbb{A}^1$ be the  quadratic function induced by the orthogonal structure on $V$ and $\bar{q} \colon V \GIT G \to \mathbb{A}^1$ be the induced function. 
    Since $\mathcal{G}$ is supported on the origin, to prove $\mathcal{G} = 0$, it is enough to prove $\varphi_{\bar{q}}(\mathcal{G}) = 0$.
    In other words, it is enough to show that the functor $\varphi_{\bar{q}}$ applied to the map \Cref{eq-cohint-map} is an isomorphism.
    Let $(F, \alpha) \in \mathsf{Face}^{\mathrm{nd}}(\mathcal{U})$ be a non-degenerate face with $\dim F< \rank G$. Set $q_{\alpha} \coloneqq q |_{\mathcal{U_{\alpha}}}$ and $\bar{q}_{\alpha} \coloneqq \bar{q} |_{U_{\alpha}}$. We claim the following vanishing
    \[
        \varphi_{\bar{q}_{\alpha} }(\mathcal{BPS}_{U}^{\alpha}) = 0.
    \]
    By using \Cref{eq-vanishing-good-moduli}, it is enough to prove ${}^{\mathrm{p}} \cH^{\dim \alpha}(p_{\alpha, *} \varphi_{q_{\alpha}}(\mathcal{IC}_{\mathcal{U}_{\alpha}}) ) = 0$.
    Since the critical locus of $q_{\alpha}$ is $\mathrm{B} L_{\alpha}$ where $L_{\alpha}$ is the Levi subgroup corresponding to $\alpha$, we have $\varphi_{q_{\alpha}}(\mathcal{IC}_{\mathcal{U}_{\alpha}}) \cong \mathcal{IC}_{\mathrm{B} L_{\alpha}}$.
    By the inequality $\dim L_{\alpha} > \dim \alpha$, we obtain the desired vanishing.
    Therefore if we let $T \subset G$ be the maximal torus and $W$ be the Weyl group, we are reduced to proving that the following map is an isomorphism:
    \[
       \varphi_{\bar{q}} \left( ( \mathbb{Q}_{\{ 0 \}} \otimes \mathrm{H}^*(\mathrm{B} T)_{\mathrm{vir}} \otimes \mathrm{sgn}_{\alpha_{\mathrm{max}}})^{W} \right)  \xrightarrow[]{ \varphi_{\bar{q}} (\mathrm{Ind}_{\alpha_{\mathrm{max}}, \sigma, \boldsymbol{\mathcal{L}} }^{\mathrm{sym}} )} \varphi_{\bar{q}} \left( p_* \mathcal{IC}_{U}  \right)
    \]
    where $\alpha_{\mathrm{max}}$ denotes the maximal non-degenerate face and $\sigma \subset \mathbb{Q}^{\dim T}$ is a chamber.
    Note that there exists an equivalence of $(-1)$-shifted symplectic stacks
    \[
     \mathrm{Crit}(q) \cong  \mathrm{Crit}(0 \colon \mathrm{B} G \to \mathbb{A}^1). 
    \]
    These critical charts define isomorphic standard orientations, up to the choice of the grading.
    Therefore by using the commutativity of the diagram \Cref{eq-symmetrized-cohi-critical} twice,
    it is enough to prove that the following restriction of the cohomological Hall induction map for $\mathrm{B} G$ is an isomorphism
    \[
        (\mathrm{H}^*(\mathrm{B} T)_{\mathrm{vir}} \otimes \mathrm{sgn}_{\alpha_{\mathrm{max}}'})^{W} \xrightarrow[]{} \mathrm{H}^*(\mathrm{B} G)_{\mathrm{vir}}
    \]
    where $\alpha_{\mathrm{max}}'$ denotes the maximal non-degenerate face for $\mathrm{B} G$. 
    In other words, it is enough to prove the cohomological integrality for $\mathrm{B} G$.
    By a similar vanishing cycle argument, the cohomological integrality for $\mathrm{B} G$ follows from the cohomological integrality for $\mathfrak{g}/G$ which we prove now.
    By \Cref{cor:small_decom_thm_smooth_stack}, we have an isomorphism
    \[
        \mathcal{BPS}_{\mathfrak{g} \GIT G}^{\alpha} \cong \begin{cases}
           \mathcal{IC}_{\mathfrak{h}} & \text{if $\alpha = \alpha''_{\mathrm{max}}$,} \\
           0                        & \text{otherwise.} 
        \end{cases}   
   \]
    where $\alpha''_{\mathrm{max}}$ denotes the maximal non-degenerate face for $\mathfrak{g} / G$. 
    By the induction hypothesis, we may assume that the cohomological integrality map for $\mathfrak{g}/G$ is an isomorphism outside $\mathfrak{z}/G$ where $\mathfrak{z} \subset \mathfrak{g}$ is the centre.
    Note that the cohomological integrality map \Cref{eq-cohint-map} is equivariant with respect to the natural action of the centre $\mathfrak{z}$.
    Since a $\mathfrak{z}$-equivariant perverse sheaf on $\mathfrak{z}$ is constant, it is therefore enough to prove that the cohomological integrality map  for $\mathfrak{g}/G$ induces an isomorphism on global sections.
    In other words, it is enough to show that the cohomological Hall induction map
    \[
    \mathrm{H}^*(\mathfrak{h}/T)^{W} \to    \mathrm{H}^*(\mathfrak{g}/G) 
    \]
    is an isomorphism. By the explicit description of the cohomological Hall induction \Cref{eq-CoHI-explicit} together with the observation that the set of roots for $G$ coincides with the set of non-zero weights for the adjoint representation $\mathfrak{g}$,
    we see that this map is identified with the natural isomorphism $\mathrm{H}^*(\mathrm{B} T)^{W} \cong \mathrm{H}^*(\mathrm{B} G)$, hence the conclusion follows.

\end{para}

\subsection{Cohomological integrality for \texorpdfstring{$(-1)$}{(−1)}-shifted symplectic stacks}

\begin{para}
    Let $\mathcal{X}$ be a $(-1)$-shifted symplectic stack.
    Assume that $\mathcal{X}$ satisfies the conditions \Crefrange{item1-cohint-goodmoduli}{item4-cohint-gloeq} in \Cref{para-cohint-assumptions} and let $p \colon \mathcal{X} \to X$ be the good moduli space morphism.
    We fix an orientation $o$ for $\mathcal{X}$.
    As we have seen in \Cref{para-localize-orientation-is-equivariant}, for each non-degenerate face $(F, \alpha) \in \mathsf{Face}^{\mathrm{nd}}(\mathcal{X})$, 
    the orientation $o$ induces an $\mathrm{Aut}(\alpha)$-equivariant orientation $\alpha^{\star} o$ on $\mathcal{X}_{\alpha}$.
    We will always equip $\mathcal{X}_{\alpha}$ with this orientation.
    The $\mathrm{Aut}(\alpha)$-equivariance of the orientation implies that the perverse sheaves $\varphi_{\mathcal{X}_{\alpha}}$ and $\mathcal{BPS}_{{X}}^{\alpha}$ are also $\mathrm{Aut}(\alpha)$-equivariant.

    Let $\sigma  \subset F$ be a chamber with respect to the cotangent arrangement and $\boldsymbol{\mathcal{L}}$ be a global equivariant parameter for $\mathcal{X}_{\alpha}$.
    By repeating the construction of the map \Cref{eq-CoHI-inv-bps-piece-smooth} using the cohomological Hall induction for $(-1)$-shifted symplectic stacks introduced in \Cref{para-cohi--1-shifted-symplectic}, we obtain a natural map
    \begin{equation}\label{eq-CoHI-inv-bps-piece--1-symplectic}
        \mathrm{Ind}_{\alpha, \sigma, \boldsymbol{\mathcal{L}} }^{\mathrm{sym}} \colon (g_{\alpha, *} \mathcal{BPS}^{\alpha}_{{X}} \otimes \mathrm{H}^*(\mathrm{B} T_{F})_{\mathrm{vir}})^{\mathrm{Aut}(\alpha)}  \to p_* \mathcal{\varphi}_{\mathcal{X}}.
     \end{equation}
\end{para}

\begin{definition*}
    Let $\mathcal{X}$ be an oriented $(-1)$-shifted symplectic stack satisfying \Crefrange{item1-cohint-goodmoduli}{item4-cohint-gloeq} in \Cref{para-cohint-assumptions}.
    We say that $\mathcal{X}$ satisfies the \emph{cohomological integrality theorem} if the map
    \begin{equation}\label{eq-cohiint-map--1-symplectic}
    \bigoplus_{(F, \alpha) \in \mathsf{Face}^{\mathrm{nd}}(\mathcal{X})}  (g_{\alpha, *} \mathcal{BPS}^{\alpha}_{{X}} \otimes \mathrm{H}^*(\mathrm{B} T_{F})_{\mathrm{vir}})^{\mathrm{Aut}(\alpha)}  \xrightarrow[]{\bigoplus_{\alpha} \mathrm{Ind}_{\alpha, \sigma, \boldsymbol{\mathcal{L}} }^{\mathrm{sym}} } p_* \varphi_{\mathcal{X}}
    \end{equation}
    is an isomorphism for any choice of the global equivariant parameter for $\mathcal{X}_{\alpha}$ and choice of a chamber $\sigma \subset F$.
\end{definition*}

\begin{remark}\label{rmk-cohint-source--1-shifted-symplectic}
    By \Cref{cor-non-special-vanishing--1shifted symplectic} and the equality \Cref{eq-special-face-central-rank}, the source of the map \Cref{eq-cohiint-map--1-symplectic} is identified with the following object:
    \[
        \bigoplus_{(F, \alpha) \in \mathsf{Face}^{\mathrm{sp}}(\mathcal{X})}  (g_{\alpha, *} \mathcal{BPS}_{{X}_{\alpha}} \otimes \mathrm{H}^*(\mathrm{B} T_{F})_{\mathrm{vir}})^{\mathrm{Aut}(\alpha)}.   
    \]
    In particular, the source of the map \Cref{eq-cohiint-map--1-symplectic}  is a finite sum over each connected component of $X$, thanks to the finiteness of special faces for stacks satisfying \Cref{item1-cohint-goodmoduli} and \Cref{item2-cohint-qcgr} discussed in \Cref{para-finiteness-theorem}.
\end{remark}

The following is the main theorem for $(-1)$-shifted symplectic stacks in this paper:

\begin{theorem}\label{thm-coh-int--1-shifted-symplectic}
    Let $\mathcal{X}$ be an oriented $(-1)$-shifted symplectic stack satisfying \Crefrange{item1-cohint-goodmoduli}{item4-cohint-gloeq} in \Cref{para-cohint-assumptions}.
    Assume further that $\mathcal{X}$ is almost orthogonal.
    Then $\mathcal{X}$ satisfies the cohomological integrality theorem.
\end{theorem}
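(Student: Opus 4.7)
The plan is to reduce \Cref{thm-coh-int--1-shifted-symplectic} to the smooth case \Cref{thm-cohint-smooth} via the vanishing cycle functor applied to a minimal Darboux chart. Since \Cref{eq-cohiint-map--1-symplectic} is a morphism of monodromic mixed Hodge modules on $X$ whose formation is compatible with étale base change by \Cref{eq-etale-cohi-compatible-symmetrized--1-shifted-symplectic}, I would verify the isomorphism étale-locally on $X$. Fix a closed point $x \in \mathcal{X}$ with image $p(x) \in X$. Using \Cref{thm-Darboux} together with the discussion in \Cref{para-any-orientation-is-locally-standard}, up to passing to an étale neighbourhood of $p(x)$ in $X$ one may assume $\mathcal{X} \simeq \mathrm{Crit}(f \colon V/G_x \to \mathbb{A}^1)$ as oriented $(-1)$-shifted symplectic stacks with their standard symplectic structure and standard orientation, where $V$ is a smooth affine $G_x$-scheme with a $G_x$-fixed origin $0 \in V$ mapping to $x$.

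Next, I would apply the equivariant Morse--Bott lemma. Since $G_x$ is reductive, the Hessian of $f$ at $0$ produces a $G_x$-invariant decomposition $V = V_0 \oplus V_1$ where $V_0 = \ker \mathrm{Hess}_0(f)$ and $\mathrm{Hess}_0(f)$ is non-degenerate on $V_1$. Knörrer periodicity then lets one discard the $V_1$ direction and reduce to a minimal chart $\mathcal{X} \simeq \mathrm{Crit}(f \colon V_0/G_x \to \mathbb{A}^1)$ with $\mathrm{Hess}_0(f) = 0$. In this chart, because $0$ is $G_x$-fixed the action map $\mathfrak{g}_x \to V_0$ vanishes, so the tangent complex
\[
\mathbb{T}_{\mathcal{X}, x} \simeq [\,\mathfrak{g}_x \xrightarrow{0} V_0 \xrightarrow{0} V_0^{\vee} \xrightarrow{0} \mathfrak{g}_x^{\vee}\,]
\]
has $\mathrm{H}^0 = V_0$. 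The weak orthogonality of $\mathcal{X}$ at $x$ in the sense of \Cref{def-symmetry} then forces $V_0$ to be weakly orthogonal as a $G_x^\circ$-representation, and by \Cref{lem-quotient-symmetric} the smooth stack $\mathcal{U} := V_0/G_x$ is weakly orthogonal.

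The stack $\mathcal{U}$ satisfies all hypotheses of \Cref{para-cohint-assumptions}: affine diagonal and quasi-compactness are immediate, and \Cref{lem-global-quotient-global-equiv-parameter} supplies global equivariant parameters. Thus \Cref{thm-cohint-smooth} applies, yielding the cohomological integrality isomorphism for $\mathcal{U}$. To finish, I would apply the vanishing cycle functor $\varphi_{\bar{f}}$ with $\bar{f}$ the induced function on $V_0 \GIT G_x$, invoking the commutative diagram \Cref{eq-symmetrized-cohi-critical} and the orientation comparison \Cref{eq-localize-standard-orientaion} to identify $\varphi_{\bar{f}}$ of the smooth symmetric CoHI with the symmetric CoHI for $\mathcal{X}$. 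This produces the desired isomorphism in the chosen étale chart on $X$, and gluing gives the global statement.

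The main obstacle is the second paragraph: one must ensure that the Morse--Bott reduction genuinely produces a weakly orthogonal ambient stack. The crucial observation is that a \emph{minimal} Darboux chart at a $G_x$-fixed point identifies $V_0$ with $\mathrm{H}^0(\mathbb{T}_{\mathcal{X}, x})$, so weak orthogonality is inherited directly from $\mathcal{X}$; this identification fails at non-fixed points, which is why the reduction to fixed points via the Darboux theorem is essential. A secondary (but routine) technical point is the careful tracking of the standard orientation through Knörrer reduction and étale localization, which proceeds by combining the sign analyses of \Cref{para-standard-ori}--\Cref{lem-ori-localize-indep-segment-up-to-sign} with the comparison in \Cref{para-locallize-can-ori}.
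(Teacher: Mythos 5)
Your proposal follows the paper's strategy: reduce étale-locally over $X$ using the compatibility \Cref{eq-etale-cohi-compatible-symmetrized--1-shifted-symplectic}, pass to a Darboux chart $\mathcal{X}\cong\mathrm{Crit}(f/G_x)$ with standard orientation via \Cref{thm-Darboux} and \Cref{para-any-orientation-is-locally-standard}, and then deduce the result from \Cref{thm-cohint-smooth} by applying the vanishing cycle functor and the commutativity of \Cref{eq-symmetrized-cohi-critical}. Two remarks. First, your Morse--Bott/Knörrer reduction to a minimal chart is an unnecessary detour, and the one step in it you gloss over (splitting $f$ as $f_0\boxplus q$ off the kernel of the Hessian requires an étale or formal coordinate change, not just the $G_x$-invariant splitting $V=V_0\oplus V_1$) is exactly the part that would need care. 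The paper avoids this entirely: by the decomposition $\mathrm{T}_{V,\tilde x}\cong\mathrm{Im}(\mathrm{Hess}_{\tilde f})\oplus\mathrm{H}^0(\mathbb{T}_{\mathcal{X},x})$ and the two-out-of-three property (\Cref{lemma-two-out-of-three}), weak orthogonality of $\mathcal{X}$ at the fixed point already forces the ambient representation $V$ itself to be weakly orthogonal, and one then shrinks using openness (\Cref{cor-symmetric-open}); no modification of the chart is needed. Second, you omit the bookkeeping of global equivariant parameters: since the cohomological integrality map is defined relative to a chosen parameter on each $\mathcal{X}_\alpha$, one must arrange (via \Cref{lemma-global-equiv-parameter-BG}, after possibly shrinking over the good moduli space) that these parameters are restrictions of parameters on $(V/G)_{\tilde\alpha}$, so that \Cref{eq-symmetrized-cohi-critical} identifies the two induction maps with matching inputs. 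This is routine but is a genuine step in the argument rather than something that can be left implicit.
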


\begin{para}
    As in the smooth case, we believe that the cohomological integrality theorem holds for any oriented $(-1)$-shifted symplectic stack satisfying \Cref{item1-cohint-goodmoduli} -- \Cref{item4-cohint-gloeq} in \Cref{para-cohint-assumptions}.
    By the proof of the above theorem, this follows from the cohomological integrality conjecture for smooth stacks satisfying \Cref{item1-cohint-goodmoduli} -- \Cref{item4-cohint-gloeq} in \Cref{para-cohint-assumptions}.
\end{para}

\begin{para}
    We now prove \Cref{thm-coh-int--1-shifted-symplectic}.
    By repeating the discussion in \Cref{para-coh-int-etale-cover} and using the commutativity of the diagram \Cref{eq-etale-cohi-compatible-symmetrized--1-shifted-symplectic}, it is enough to prove cohomological integrality after taking an \'etale cover.
    In particular, by using Theorem \ref{thm-Darboux} and the discussion in \Cref{para-any-orientation-is-locally-standard}, we may assume that there exists a reductive group $G$, a smooth affine scheme $V$ acted on by $G$ such that $V/G$ is almost orthogonal, and a function $f \colon V / G \to \mathbb{A}^1$ such that there exists an equivalence of oriented $(-1)$-shifted symplectic stacks
    \[
    \mathcal{X} \cong \mathrm{Crit}(f/G).
    \]
    Now by using Lemma \ref{lemma-global-equiv-parameter-BG}, we may further assume that the global equivariant parameters for $\mathcal{X}_{\alpha}$ are obtained from the restriction of a global equivariant parameter for $(V/G)_{\tilde{\alpha}}$, where $\tilde{\alpha}$ is the image of $\alpha$ in $\mathsf{Face}(V/G)$.
    Therefore the theorem follows from the cohomological integrality for $V/G$ proved in \Cref{thm-cohint-smooth} together with the commutativity of the diagram \Cref{eq-symmetrized-cohi-critical}.

\end{para}

\subsection{Cohomological integrality for \texorpdfstring{$0$}{0}-shifted symplectic stacks}

\begin{para}
    Let $\mathcal{Y}$ be a $0$-shifted symplectic stack.
    Assume that $\mathcal{Y}$ satisfies the conditions \Crefrange{item1-cohint-goodmoduli}{item4-cohint-gloeq} in \Cref{para-cohint-assumptions} and let $p \colon \mathcal{Y} \to Y$ be the good moduli space morphism.
    Recall that for a non-degenerate face $(F, \alpha) \in \mathsf{Face}^{\mathrm{nd}}(\mathcal{Y})$, we have constructed in \Cref{thm-BPS-cotangent-product} an $\mathrm{Aut}(\alpha)$-equivariant pure monodromic mixed Hodge module
    \[
    \mathcal{BPS}_{{Y}}^{\alpha} \in \mathsf{MMHM}(Y_{\alpha}).    
    \]
    By the dimensional reduction isomorphism \Cref{eq-dimensional-reduction}, we obtain an $\mathrm{Aut}(\alpha)$-equivariant map
    \[
        \mathcal{BPS}_{{Y}}^{\alpha} \to \mathbb{L}^{ \vdim \mathcal{Y}_{\alpha} / 2} \otimes p_{\alpha, *} \mathbb{D}\mathbb{Q}_{\mathcal{Y}_{\alpha}} \otimes \mathrm{sgn}_{\alpha}.
    \]
    The $\mathrm{Aut}(\alpha)$-equivariance follows from \Cref{lem-ori-localize-indep-segment-up-to-sign}.

    Let $\sigma  \subset F$ be a chamber with respect to the cotangent arrangement and $\boldsymbol{\mathcal{L}}$ be a global equivariant parameter for $\mathcal{Y}_{\alpha}$.
    By repeating the construction of the map \Cref{eq-CoHI-inv-bps-piece-smooth} using the cohomological Hall induction for quasi-smooth derived algebraic stacks introduced in \Cref{para-cohi-for-quasi-smooth}, we obtain a natural map
    \begin{equation}\label{eq-CoHI-inv-bps-piece-0-symplectic}
        \mathrm{Ind}_{\alpha, \sigma, \boldsymbol{\mathcal{L}} }^{\mathrm{sym}} \colon (g_{\alpha, *} \mathcal{BPS}^{\alpha}_{Y} \otimes \mathrm{H}^*(\mathrm{B} T_{F}))^{\mathrm{Aut}(\alpha)}  \to \mathbb{L}^{\vdim \mathcal{Y} / 2} \otimes p_* \mathbb{D} \mathbb{Q}_{\mathcal{Y}}
     \end{equation}
     which factors through a complex $(\mathbb{L}^{ \vdim \mathcal{Y}_{\alpha} / 2} \otimes g_{\alpha, *} p_{\alpha, *} \mathbb{D}\mathbb{Q}_{\mathcal{Y}_{\alpha}} \otimes \mathrm{sgn}_{\alpha})^{\mathrm{Aut(\alpha)}}$.

    \end{para}

\begin{definition*}
    Let $\mathcal{Y}$ be a $0$-shifted symplectic stack satisfying \Crefrange{item1-cohint-goodmoduli}{item4-cohint-gloeq} in \Cref{para-cohint-assumptions}.
    We say that $\mathcal{Y}$ satisfies the \emph{cohomological integrality theorem} if the map
    \begin{equation}\label{eq-cohint-map-0-shifted-symplectic}
    \bigoplus_{(F, \alpha) \in \mathsf{Face}^{\mathrm{nd}}(\mathcal{Y})}  (g_{\alpha, *} \mathcal{BPS}^{\alpha}_{Y} \otimes \mathrm{H}^*(\mathrm{B} T_{F}) )^{\mathrm{Aut}(\alpha)}  \xrightarrow[]{\bigoplus_{\alpha} \mathrm{Ind}_{\alpha, \sigma, \boldsymbol{\mathcal{L}} }^{\mathrm{sym}} } \mathbb{L}^{\vdim \mathcal{Y} / 2} \otimes p_* \mathbb{D} \mathbb{Q}_{\mathcal{Y}}
    \end{equation}
    is an isomorphism for any choice of the global equivariant parameter for $\mathcal{Y}_{\alpha}$ and choice of a chamber $\sigma \subset F$ with respect to the cotangent arrangement.
\end{definition*}

\begin{remark}\label{rmk-cohint-source-0-shifted-symplectic}
    By \Cref{rmk-cohint-source--1-shifted-symplectic} and \Cref{lem-ori-localize-indep-segment-up-to-sign}, the source of the map \Cref{eq-cohint-map-0-shifted-symplectic} is identified with the following object
    \[
        \bigoplus_{(F, \alpha) \in \mathsf{Face}^{\mathrm{sp}}(\mathcal{Y})}  (g_{\alpha, *} \mathcal{BPS}_{{Y}_{\alpha}} \otimes \mathrm{H}^*(\mathrm{B} T_{F}) \otimes \mathrm{sgn}_{\alpha})^{\mathrm{Aut}(\alpha)}
    \]
    and it is a finite sum over each connected component of $Y$.

\end{remark}

The following statement is an immediate consequence of \Cref{thm-coh-int--1-shifted-symplectic} together with \Cref{cor--1^-shifted-cotangent-orthogonal}:

\begin{theorem}\label{thm-cohint-0-shifted}
    Let $\mathcal{Y}$ be a $0$-shifted symplectic stack satisfying \Crefrange{item1-cohint-goodmoduli}{item4-cohint-gloeq} in \Cref{para-cohint-assumptions}.
    Assume further that $\mathcal{Y}$ is almost orthogonal.
    Then $\mathcal{Y}$ satisfies the cohomological integrality theorem.
\end{theorem}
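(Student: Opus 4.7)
The strategy is to deduce the theorem from \Cref{thm-coh-int--1-shifted-symplectic} applied to the $(-1)$-shifted cotangent stack $\mathcal{X} := \mathrm{T}^*[-1]\mathcal{Y}$, equipped with its standard $(-1)$-shifted symplectic structure and standard orientation. By \Cref{cor--1^-shifted-cotangent-orthogonal}, $\mathcal{X}$ is weakly orthogonal. The other hypotheses transfer readily: $\mathcal{X}$ is affine over $\mathcal{Y}$, so it has affine diagonal and admits a good moduli space $\tilde{p}\colon \mathcal{X} \to X$ sitting in a commutative square with $p\colon\mathcal{Y}\to Y$ via the projection $\pi\colon \mathcal{X} \to \mathcal{Y}$ and the induced affine map $\bar{\pi}\colon X \to Y$; the projection $\pi$ induces an equivalence $\mathrm{CL}_\mathbb{Q}(\mathcal{X}) \cong \mathrm{CL}_\mathbb{Q}(\mathcal{Y})$ (since the zero section is an $\mathbb{A}^1$-deformation retract) and identifies $\mathcal{X}_\alpha \simeq \mathrm{T}^*[-1]\mathcal{Y}_\alpha$, so the quasi-compactness of graded points and the global equivariant parameters descend from $\mathcal{Y}$ by pullback along $\pi$.

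Applying \Cref{thm-coh-int--1-shifted-symplectic} yields
\[
 \bigoplus_{(F, \alpha) \in \mathsf{Face}^{\mathrm{nd}}(\mathcal{X})}  (g^{X}_{\alpha,*}\, \mathcal{BPS}^{\alpha}_{X} \otimes \mathrm{H}^*(\mathrm{B} T_{F})_{\mathrm{vir}})^{\mathrm{Aut}(\alpha)}  \xrightarrow{\ \cong\ } \tilde{p}_*\,\varphi_{\mathcal{X}}.
\]
Pushing forward by $\bar{\pi}_*$ and using $\bar{\pi}_*\tilde{p}_* = p_*\pi_*$: on the right-hand side the dimensional reduction isomorphism \eqref{eq-dimensional-reduction} produces $\mathbb{L}^{\vdim\mathcal{Y}/2}\otimes p_*\mathbb{D}\mathbb{Q}_{\mathcal{Y}}$, which is the target of \eqref{eq-cohint-map-0-shifted-symplectic}. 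On the left-hand side, \Cref{thm-BPS-cotangent-product} identifies $\mathcal{BPS}^{\alpha}_{X}\cong \iota_{\alpha,*}(\mathcal{IC}_{\mathbb{A}^{\dim F}}\boxtimes \mathcal{BPS}^{\alpha}_{Y})$; the map $\bar{\pi}\circ g^{X}_{\alpha}\circ \iota_{\alpha}\colon \mathbb{A}^{\dim F}\times Y_{\alpha}\to Y$ factors through $g^{Y}_{\alpha}$, and integrating out the $\mathbb{A}^{\dim F}$-factor converts the virtual twist $\mathrm{H}^*(\mathrm{B} T_{F})_{\mathrm{vir}} = \mathbb{L}^{\dim F/2}\otimes \mathrm{H}^*(\mathrm{B} T_{F})$ into the unshifted factor $\mathrm{H}^*(\mathrm{B} T_{F})$ that appears in the $0$-shifted statement.

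The remaining identification is that the pushforward along $\bar{\pi}$ of the $(-1)$-shifted cohomological Hall induction map \eqref{eq-CoHI-inv-bps-piece--1-symplectic} agrees with the $0$-shifted map $\mathrm{Ind}^{\mathrm{sym}}_{\alpha,\sigma,\boldsymbol{\mathcal{L}}}$ of \eqref{eq-CoHI-inv-bps-piece-0-symplectic}, and this is essentially built into the construction in \Cref{para-cohi-for-quasi-smooth} via the dimensional reduction isomorphisms \eqref{eq-dimensional-reduction-for-grads}. The main obstacle is tracking the sign representation: the cotangent sign $\mathrm{sgn}_{\alpha}$ that appears on the source of \eqref{eq-cohint-map-0-shifted-symplectic} is exactly the discrepancy between the localized standard orientation $\alpha^{\star}o^{\mathrm{sta}}_{\mathcal{X}}$ on $\mathcal{X}_{\alpha}$ and the standard orientation $o^{\mathrm{sta}}_{\mathcal{X}_{\alpha}}$ coming from writing $\mathcal{X}_{\alpha} \simeq \mathrm{T}^*[-1]\mathcal{Y}_{\alpha}$, as recorded in \Cref{lem-ori-localize-indep-segment-up-to-sign} and incorporated into the $\mathrm{Aut}(\alpha)$-equivariant structure on $\mathcal{BPS}_{Y}^{\alpha}$ in \Cref{thm-BPS-cotangent-product}. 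Once these identifications are assembled, the cohomological integrality isomorphism for $\mathcal{Y}$ is the $\bar{\pi}_*$-pushforward of that for $\mathcal{X}$.
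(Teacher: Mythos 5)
Your proposal is correct and follows exactly the paper's route: the paper deduces \Cref{thm-cohint-0-shifted} as an immediate consequence of \Cref{thm-coh-int--1-shifted-symplectic} applied to $\mathrm{T}^*[-1]\mathcal{Y}$ together with \Cref{cor--1^-shifted-cotangent-orthogonal}, since the $0$-shifted cohomological integrality map, the BPS sheaf $\mathcal{BPS}^{\alpha}_{Y}$, and the sign representation are all defined precisely by dimensional reduction from the $(-1)$-shifted cotangent stack. You have simply unpacked the definitional chain that the paper leaves implicit.
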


\section{Applications}

In this section, we will describe the cohomological integrality theorems (\Cref{thm-cohint-smooth}, \Cref{thm-coh-int--1-shifted-symplectic} and \Cref{thm-cohint-0-shifted}) explicitly for several stacks of interest.

\subsection{Cohomological integrality for quotient stacks}

\begin{para}[Cohomological integrality for orthogonal representations]
    Let $G$ be a connected reductive algebraic group with the Weyl group $W$ and $V$ be an orthogonal $G$-representation.
    We let $p \colon V / G \to V \GIT G$ be the good moduli space morphism.
    For each $\lambda \colon \mathbb{G}_\mathrm{m}^n \hookrightarrow G$, we let $L_{\lambda}$ be the corresponding Levi subgroup, and denote by $\alpha(\lambda)$ the face for $V / G$ corresponding to $\lambda$.
    Then by using the description of the stack of graded points for quotient stacks explained in \Cref{para-quotient-component-arrangement}, the cohomological integrality theorem for the quotient stack (\Cref{thm-cohint-smooth}) can be written as
    \[
    \bigoplus_{n \in \mathbb{Z}_{\geq 0}} \bigoplus_{\lambda \colon \mathbb{G}_\mathrm{m}^n \hookrightarrow G} g_{\lambda, *} (\mathcal{BPS}_{V \GIT G}^{\alpha(\lambda)} \otimes \mathrm{H}^*(\mathrm{B} \mathbb{G}_\mathrm{m}^n)_{\mathrm{vir}} \otimes \mathrm{sgn}_{\lambda})^{\mathrm{Aut}(\alpha(\lambda))}  \cong p_* \mathcal{IC}_{V / G}
    \]
    where $\lambda$ runs over all inclusions up to conjugation. Here $\mathrm{sgn}_{\lambda}$ is the cotangent sign representation of $\mathrm{Aut}(\alpha(\lambda))$. Further, we have
    \[
         \mathcal{BPS}_{V \GIT G }^{\alpha(\lambda)} \coloneqq 
         \begin{cases}
            \mathcal{IC}_{V^{\lambda} \GIT L_{\lambda} }, & \text{if $(V^{\lambda} / L_{\lambda}) /\mathrm{B} \mathbb{G}_\mathrm{m}^n  \to V^{\lambda} \GIT L_{\lambda}$ is generically quasi-finite,}  \\
            0, & \mathrm{otherwise.}
         \end{cases}
    \]
    We note that the automorphism group $\mathrm{Aut}(\alpha(\lambda))$ is realized as the subgroup of the relative Weyl group $W_G(L_{\lambda})$
    preserving the subspace of the cocharacter lattice corresponding to $\lambda$. The same group was considered in a similar context by \textcite[\S 2]{hennecart2024cohomological}, which is denoted by $\overline{W}_{\lambda}$ there.
\end{para}

\begin{para}[Cohomological integrality for cotangent representations]
    Let $G$ be a connected reductive algebraic group and $V$ be a representation of $G$.
    Let $\mu \colon V \times V^{\vee}  \to \mathfrak{g}^{\vee}$ be the moment map.
    Then we have $\mu^{-1}(0) / G \cong \mathrm{T}^*(V / G)$ and it is equipped with a $0$-shifted symplectic structure.
    Since $V \oplus V^{\vee}$ is an orthogonal $G$-representation, the cohomological integrality theorem (= \Cref{thm-cohint-0-shifted}) holds for the stack $\mu^{-1}(0) / G $.
    For each $\lambda \colon \mathbb{G}_\mathrm{m}^n \hookrightarrow G$, we let $\mu_{\lambda} \colon V^{\lambda} \times V^{\lambda, \vee} \to \mathfrak{l}_{\lambda}^{\vee}$ be the moment map for the $L_{\lambda}$-action on $V^{\lambda}$.
    Let $p \colon \mu^{-1}(0) / G \to \mu^{-1}(0) \GIT G$ be the good moduli space morphism and $g_{\lambda} \colon \mu_{\lambda}^{-1}(0) \GIT L_{\lambda} \to \mu^{-1}(0) \GIT G$ be the natural map.
    We denote by $\alpha(\lambda)$ the face for $\mu^{-1}(0) / G$ corresponding to $\lambda$.
    Then by using the description of the stack of graded points for quotient stacks explained in \Cref{para-quotient-component-arrangement}, the cohomological integrality theorem  (=\Cref{thm-cohint-0-shifted} + \Cref{rmk-cohint-source-0-shifted-symplectic}) can be written as
    \[
        \bigoplus_{n \in \mathbb{Z}_{\geq 0}} \bigoplus_{\substack{\lambda \colon \mathbb{G}_\mathrm{m}^n \hookrightarrow G \\ \text{$\alpha(\lambda)$: special}}} g_{\lambda, *} (\mathcal{BPS}_{\mu_{\lambda}^{-1}(0) \GIT L_{\lambda}} \otimes \mathrm{H}^*(\mathrm{B} \mathbb{G}_\mathrm{m}^n) \otimes \mathrm{sgn_{\alpha(\lambda)}})^{\mathrm{Aut}(\alpha(\lambda))}  \cong \mathbb{L}^{\dim V / G} \otimes p_* \mathbb{D}\mathbb{Q}_{\mu^{-1}(0)/G}
     \]
    where $\lambda$ runs over inclusions up to conjugation.

    It is a very interesting problem to explicitly describe the BPS sheaf $\mathcal{BPS}_{\mu^{-1}(0) \GIT G}$.
    When the stack $V / G$ can be written as a moduli stack of a quiver representations, this is done by Davison, Hennecart and Schlegel Mejia in \cite[Theorem 1.7]{davison2023bps}.
    For $V = \mathfrak{g}$ the adjoint representation of $G$, Hennecart \cite{Hennecart_note} computed the BPS sheaf $\mathcal{BPS}_{\mu^{-1}(0) \GIT G}$ using the result of \textcite{premet2003nilpotent}.
\end{para}

\subsection{Cohomological integrality for linear moduli stacks}

\begin{para}
    We will specialize the discussion on the cohomological integrality theorem in \Cref{sect-cohint} to the case of linear moduli stacks introduced in \Cref{para-linear-moduli}.
    As a consequence, we will prove a generalization of the cohomological integrality theorem for quivers with potentials \cite[Theorem A, Theorem C]{_Davison_Meinhardt_CoDT} to general $3$-Calabi--Yau dg-categories admitting commutative orientation data, an enhancement of usual orientation data that we introduce in \Cref{para-COD}.
\end{para}

\begin{para}[Assumptions]\label{para-assumptions-linear}
    We will work with a linear moduli stack $\mathcal{M}$ with the following assumptions:
    \begin{enumerate}
        \item $\mathcal{M}$ has affine diagonal and admits a good moduli space $p \colon \mathcal{M} \to M$.\label{item1-linear-goodmoduli}
        \item Connected components of $\mathcal{M}$ are quasi-compact. Also, the direct sum map $\oplus \colon  \mathcal{M} \times \mathcal{M} \to \mathcal{M}$ is quasi-compact. In particular, $\mathcal{M}$ has quasi-compact graded points.  \label{item2-linear-qcgr}
        \item $\mathcal{M}$ is almost orthogonal. \label{item3-linear-symmetric}
        \item $\mathcal{M}$  admits a global equivariant parameter $\mathcal{L} \in \mathrm{Pic}(\mathcal{M})$ with respect to the $\mathrm{B} \mathbb{G}_\mathrm{m}$-action. \label{item4-linear-gloeq}
        \item $\mathcal{M}$ admits an Euler pairing: see \Cref{para-euler-pairing} for the definition. \label{item5-euler-pairing}
    \end{enumerate}
\end{para}

\begin{para}[Examples of linear moduli stacks satisfying assumptions]
    We now explain that most almost symmetric linear moduli stacks of interest satisfy these assumptions.
    Let $\mathcal{C}$ be a dg-category of finite type over $\mathbb{C}$, $\mathcal{M}_{\mathcal{C}}$ be the moduli stack of objects in $\mathcal{C}$ and $\mathcal{M} \subset \mathcal{M}_{\mathcal{C}}$ be a $1$-Artin open substack which is closed under direct sums and summands and contains the zero object as an open and closed substack, having quasi-compact connected components and admitting a good moduli space.
    Assume further that there exists an abelian subcategory $\mathcal{A} \subset \mathcal{C}$ such that $\mathcal{M}$ parametrizes the objects in $\mathcal{A}$ and that the equality
    \[
    \dim \mathrm{Hom}(E, F[1]) =  \dim \mathrm{Hom}(F, E[1])    
    \]
    holds for any $E, F \in \mathcal{A}$ corresponding to closed points.
    As we have seen in \Cref{para-examples-linear}, $\mathcal{M}$ is a linear moduli stack as long as $\mathcal{M}$ is quasi-separated and has affine stabilizers.
    We claim that $\mathcal{M}$ automatically satisfies the assumptions \Cref{item1-linear-goodmoduli}, \Cref{item2-linear-qcgr}, \Cref{item3-linear-symmetric} and \Cref{item5-euler-pairing}.
    The condition \Cref{item1-linear-goodmoduli} is proved in \cite[Lemma 5.9]{davison2021purity}.
    The condition \Cref{item2-linear-qcgr} can be proved as follows: take a map $t \colon \Spec A \to  \mathcal{M}$ corresponding to a functor $F_t \colon \mathcal{C} \to \mathsf{Perf}(A)$ and consider the following Cartesian square:
\[\begin{tikzcd}
	{\mathcal{Z}_t} & {\Spec A} \\
	{\mathcal{M} \times \mathcal{M}} & {\mathcal{M}.}
	\arrow[from=1-1, to=1-2]
	\arrow[from=1-1, to=2-1]
	\arrow["\lrcorner"{anchor=center, pos=0.125}, draw=none, from=1-1, to=2-2]
	\arrow["t", from=1-2, to=2-2]
	\arrow["\oplus", from=2-1, to=2-2]
\end{tikzcd}\]
    Since giving a direct summand of $F_t$ is equivalent to giving an idempotent on $F_t$, we can find a closed embedding
    \[
     \mathcal{Z}_t \hookrightarrow \mathrm{Tot}_{\mathcal{M}}(t^* \mathbb{T}_{\mathcal{M}}[-1])
    \]
    which in particular shows that $\mathcal{Z}_t$ is quasi-compact.
    The condition \Cref{item3-linear-symmetric} follows from the discussion in \Cref{para-orthogonal-linear-moduli}.
    The condition \Cref{item5-euler-pairing} is verified in \Cref{para-euler-pairing}.

    By combining the above discussion with the existence result of a global equivariant parameter for the moduli stack of objects in a dg-category which we have discussed in \Cref{para-equiv-parameter-linear},
    we obtain the following claim:
\end{para}

\begin{lemma}
    Let $X$ be a smooth quasi-projective variety and $H$ be an ample divisor and fix a monic polynomial $p$. Assume that for all $H$-semistable sheaves $E$ and $F$ with reduced Hilbert polynomial $p$, the identity 
        $\dim \Ext^1(E, F) = \dim \Ext^1(F, E)$ holds.  Then the moduli stack $\mathcal{M}_{X, p}^{H\textnormal{-}\mathrm{ss}}$ of $H$-semistable sheaves with reduced Hilbert polynomial $p$ satisfies the assumptions \Crefrange{item1-linear-goodmoduli}{item5-euler-pairing} in \Cref{para-assumptions-linear}.

\end{lemma}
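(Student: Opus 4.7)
The plan is to realize $\mathcal{M} \coloneqq \mathcal{M}_{X,p}^{H\textnormal{-}\mathrm{ss}}$ as an open substack of the moduli stack of objects in the finite-type dg-category $\mathrm{Perf}(X)$ (with compact support when $X$ is non-projective) parametrizing objects of the abelian subcategory $\mathcal{A}\subset\mathrm{Coh}(X)$ of $H$-semistable sheaves of reduced Hilbert polynomial $p$, so that the five conditions of \Cref{para-assumptions-linear} reduce to a handful of verifications that are already implicit in the preceding discussion of linear moduli stacks of this form.

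First I would check that $\mathcal{M}$ is a $1$-Artin, quasi-separated stack with affine stabilizers, closed under direct sums and summands, and containing the zero object as an open and closed substack: these are standard since semistability and the slope class are preserved under $\oplus$ and summands. By \Cref{item-linear-dg} of \Cref{para-examples-linear}, $\mathcal{M}$ is then a linear moduli stack, and \Cref{para-euler-pairing} supplies the Euler pairing, giving \Cref{item5-euler-pairing}. The existence of a good moduli space $p\colon \mathcal{M}\to M$ with affine diagonal, as required for \Cref{item1-linear-goodmoduli}, follows from the GIT construction of Simpson and Langer. Connected components of $\mathcal{M}$ are quasi-compact by Simpson's boundedness for semistable sheaves with fixed Hilbert polynomial, and the direct sum map decomposes as morphisms $\mathcal{M}_{\gamma_1}\times\mathcal{M}_{\gamma_2}\to\mathcal{M}_{\gamma_1+\gamma_2}$ between quasi-compact stacks, hence is quasi-compact; this yields \Cref{item2-linear-qcgr}. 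The hypothesis $\dim\mathrm{Ext}^1(E,F)=\dim\mathrm{Ext}^1(F,E)$ at closed points is precisely the input that \Cref{para-orthogonal-linear-moduli} uses to deduce \Cref{item3-linear-symmetric}.

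For \Cref{item4-linear-gloeq}, each non-zero connected component is numerically non-trivial in the sense of \Cref{para-equiv-parameter-linear}: the universal complex on $\mathcal{M}\times X$ restricted to a point $x\in X$ lying in the support of a generic semistable sheaf yields a perfect complex on the component whose rank at that point is non-zero, so its determinant line bundle is a global equivariant parameter. The main obstacle to this plan is the quasi-projective GIT step, namely producing the good moduli space together with the boundedness needed for quasi-compactness of components; once these standard inputs are cited, everything else is formal from the linear moduli stack framework.
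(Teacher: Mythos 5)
Your proposal is correct and follows essentially the same route as the paper: the lemma is obtained by specializing the general discussion of linear moduli stacks arising as open substacks of $\mathcal{M}_{\mathcal{C}}$ for a finite-type dg-category (which handles the affine diagonal, weak orthogonality via \Cref{para-orthogonal-linear-moduli}, and the Euler pairing), combined with the global equivariant parameter construction of \Cref{para-equiv-parameter-linear} and the standard GIT/boundedness inputs for semistable sheaves. The only local difference is your treatment of quasi-compactness of $\oplus$, which you deduce from quasi-compactness of components (implicitly using that a fixed Hilbert polynomial admits only finitely many decompositions into Hilbert polynomials of nonzero semistable summands), whereas the paper proves it directly by embedding the fibre of $\oplus$ into $\mathrm{Tot}_{\mathcal{M}}(t^*\mathbb{T}_{\mathcal{M}}[-1])$ via idempotents; both arguments are valid here.
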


\begin{remark}
    A similar statement holds true for the moduli stack of Bridgeland semistable objects on a smooth projective variety $X$ under some assumptions on the stability condition.
    See \cite[Example 7.29]{_Alper_Existenceofmodulispacesforalgebraicstacks} for the assumptions on the stability conditions to guarantee a well-behaved moduli theory.
\end{remark}

\begin{para}[Smooth case]\label{para-cohint-linear-smooth}
    We will first discuss the cohomological integrality theorem for smooth linear moduli stacks satisfying assumptions \Crefrange{item1-linear-goodmoduli}{item3-linear-symmetric} and \Cref{item5-euler-pairing} in \Cref{para-assumptions-linear}.
    We note that the assumption \Cref{item4-linear-gloeq} is automatic by \Cref{cor-global-equiv-parameter-smooth}.
    Set $\Gamma \coloneqq \pi_0(\mathcal{M})$ and equip it with the monoid structure using the direct sum map $\oplus$.
    As we have seen in \Cref{para-special-linear}, an $n$-dimensional special face $(F, \alpha) \in  \mathsf{Face}^{\mathrm{sp}}(\mathcal{X})$ gives rise to a choice of non-zero components $\gamma_1, \ldots, \gamma_n \in \Gamma$ such that 
    \[
        \mathcal{M}_{\alpha} \cong \mathcal{M}_{\gamma_1} \times \cdots \times \mathcal{M}_{\gamma_n}
    \]
    holds. Conversely, components $\gamma_1, \ldots, \gamma_n \in \Gamma$
    define an $n$-dimensional face $\alpha(\gamma_1, \ldots, \gamma_n)$ with the above property, and it is special if and only if $\crk(\mathcal{M}_{\gamma_i}) = 1$.
    The automorphism of a face $\alpha(\gamma_1, \ldots, \gamma_n)$ corresponds to an automorphism of the multiset $(\gamma_1, \ldots, \gamma_n)$.
    We claim that there exists an $\mathrm{Aut}(\gamma_1, \ldots, \gamma_n)$-equivariant isomorphism
    \begin{align}\label{eq-BPS-decomp}
        \begin{aligned}
        &\mathcal{BPS}_{{M}}^{\alpha(\gamma_1, \ldots, \gamma_n)} \otimes \mathrm{H}^*(\mathrm{B} \mathbb{G}_\mathrm{m}^n)_{\mathrm{vir}} \otimes \mathrm{sgn}_{\gamma_1, \ldots, \gamma_n}  \\
        \cong {} &(\mathcal{BPS}_{{M}_{\gamma_1}}^{(1)} \otimes \mathrm{H}^*(\mathrm{B} \mathbb{G}_\mathrm{m})_{\mathrm{vir}}) \boxtimes \cdots \boxtimes (\mathcal{BPS}_{{M}_{\gamma_n}}^{(1)} \otimes \mathrm{H}^*(\mathrm{B} \mathbb{G}_\mathrm{m})_{\mathrm{vir}}) 
        \end{aligned}
    \end{align}
    where $\mathrm{sgn}_{\gamma_1, \ldots, \gamma_n}$ denotes the cotangent sign representation of $\mathrm{Aut}(\gamma_1, \ldots, \gamma_n)$.
    Existence of an isomorphism without $\mathrm{Aut}(\gamma_1, \ldots, \gamma_n)$-equivariance is obvious from the definition.
    We now explain that the natural isomorphism is  $\mathrm{Aut}(\gamma_1, \ldots, \gamma_n)$-equivariant.
    We only explain the case $n=2$ and when $\gamma_1 = \gamma_2$  holds:  the general case can be deduced in an analogous manner.
    Note that we have $\mathrm{Aut}(\gamma_1, \gamma_1) = \mathbb{Z} / 2 \mathbb{Z}$ and the sign representation $\mathrm{sgn}_{\gamma_1, \gamma_1}$ is given by multiplying by $(-1)^{\chi_{\mathcal{M}}(\gamma_1, \gamma_1)}$ for the non-trivial element.
    Therefore it is enough to show that the swapping isomorphism for $(\mathcal{BPS}_{{M}_{\gamma_1}}^{(1)} \otimes \mathrm{H}^*(\mathrm{B} \mathbb{G}_\mathrm{m})_{\mathrm{vir}}) \boxtimes (\mathcal{BPS}_{{M}_{\gamma_1}}^{(1)} \otimes \mathrm{H}^*(\mathrm{B} \mathbb{G}_\mathrm{m})_{\mathrm{vir}}) $ 
    differs from the natural involution on $\mathcal{BPS}_{{M}}^{\alpha(\gamma_1, \gamma_1)} \otimes \mathrm{H}^*(\mathrm{B} \mathbb{G}_\mathrm{m}^2)_{\mathrm{vir}} $ by the sign $(-1)^{\chi_{\mathcal{M}}(\gamma_1, \gamma_1)}$.
    This is a consequence of the equality $\dim \mathcal{M}_{\gamma} = - \chi_{\mathcal{M}}(\gamma, \gamma)$ together with the following two general statements about signs in homological algebra.
    Firstly, the following diagram commutes only up to sign $(-1)^{nm}$:
\[\begin{tikzcd}
	{\mathbb{L}^{n/2} \otimes \mathbb{L}^{m/2}} & {\mathbb{L}^{m/2} \otimes \mathbb{L}^{n/2}} \\
	{\mathbb{L}^{(n + m) /2}} & {\mathbb{L}^{(n + m) /2}.}
	\arrow["{\mathrm{swap}}", from=1-1, to=1-2]
	\arrow["\cong"', from=1-1, to=1-2]
	\arrow["\cong"', from=1-1, to=2-1]
	\arrow["\cong"', from=1-2, to=2-2]
	\arrow[Rightarrow, no head, from=2-1, to=2-2]
\end{tikzcd}\]
    This can be checked at the level of the underlying dg-vector spaces, where the statement is an immediate consequence of the Koszul sign rule.
    Secondly, for monodromic mixed Hodge complexes $M_i$ on a stack $\mathcal{X}_i$, the following diagram commutes only up to sign $-1$:
\[\begin{tikzcd}
	{{}^{\mathrm{p}}\mathcal{H}^1(M_1) \boxtimes {}^{\mathrm{p}}\mathcal{H}^1(M_2) } & {{}^{\mathrm{p}}\mathcal{H}^1(M_2) \boxtimes {}^{\mathrm{p}}\mathcal{H}^1(M_1) } \\
	{{}^{\mathrm{p}} \mathcal{H}^2(M_1 \boxtimes M_2)} & {{}^{\mathrm{p}} \mathcal{H}^2(M_2 \boxtimes M_1).}
	\arrow["{\mathrm{swap}}", from=1-1, to=1-2]
	\arrow[from=1-1, to=1-2]
	\arrow["\cong"', from=1-1, to=1-2]
	\arrow["\cong"', from=1-1, to=2-1]
	\arrow["\cong"', from=1-2, to=2-2]
	\arrow["\cong"', from=2-1, to=2-2]
	\arrow["{\mathrm{swap}}", from=2-1, to=2-2]
\end{tikzcd}\]
    This follows immediately from the Koszul sign rule.

We will use \Cref{eq-BPS-decomp} to rewrite the cohomological integrality isomorphism.
To do this, we will introduce several notations.
Firstly, define a symmetric monoidal structure $\boxtimes_{\oplus}$ on $\mathsf{D}_{\mathsf{H}}^{\mathsf{mon}, (+)}(M)$ as follows:
    For objects $E, F \in \mathsf{D}_{\mathsf{H}}^{\mathsf{mon}, (+)}(M)$, the monoidal product is defined by
    \[
     E \boxtimes_{\oplus} F \coloneqq \oplus_* (E \boxtimes F).    
    \]
The symmetrizer is the usual unmodified symmetrizer, for which the sign rule is the usual Koszul sign rule.
Define the symmetric product operation with respect to the symmetric monoidal structure by
\[
 \mathrm{Sym}_{\boxplus_{\oplus}} \colon \mathsf{D}_{\mathsf{H}}^{\mathsf{mon}, (+)}(M) \to \mathsf{D}_{\mathsf{H}}^{\mathsf{mon}, (+)}(M),\quad  E \mapsto \bigoplus_n (\underbrace{E \boxtimes_{\oplus} \cdots \boxtimes_{\oplus} E}_{n \text{ copies}})^{S_n}.  
\]
Then by combining the isomorphism \Cref{eq-BPS-decomp} together with the cohomological integrality theorem (= \Cref{thm-cohint-smooth}) and the description of the BPS sheaf (= \Cref{cor:small_decom_thm_smooth_stack}), we obtain the following statement, first proved by \textcite[Theorem 1.1]{meinhardt2015donaldson} with a slightly different formulation:

\end{para}

\begin{theorem}\label{thm-cohint-linear-smooth}
    Let $\mathcal{M}$ be a smooth linear moduli stack satisfying assumptions \Crefrange{item1-linear-goodmoduli}{item3-linear-symmetric} and \Cref{item5-euler-pairing} in \Cref{para-assumptions-linear}.
    Then we have an isomorphism
    \[
     \mathrm{Sym}_{\boxtimes_{\oplus}} \left( \bigotimes_{\gamma \in \mathcal{M} \setminus 0} (\mathcal{BPS}_{{M}_{\gamma}}^{(1)} \otimes \mathrm{H}^*(\mathrm{B} \mathbb{G}_\mathrm{m})_{\mathrm{vir}})   \right)  \cong p_* \mathcal{IC}_{\mathcal{M}}  
    \]
    induced by the cohomological Hall induction. Further, we have a natural isomorphism
    \[
        \mathcal{BPS}_{{M}_{\gamma}}^{(1)} \cong 
     \begin{cases}
        \mathcal{IC}_{M_{\gamma}} & \quad \text{if $\mathcal{M}_{\gamma} / \mathrm{B} \mathbb{G}_\mathrm{m} \to M_{\gamma}$ is generically quasi-finite. } \\
        0 & \quad \text{otherwise}.
     \end{cases}    
    \]
\end{theorem}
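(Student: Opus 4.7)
The plan is to deduce this theorem as a direct combinatorial repackaging of the general cohomological integrality theorem \Cref{thm-cohint-smooth} applied to~$\mathcal{M}$, using the linear structure to identify the indexing set of special faces with the set of finite multisets of nonzero classes in $\Gamma = \pi_0(\mathcal{M})$.

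First, I would verify that all hypotheses of \Cref{thm-cohint-smooth} are in place. Assumptions \Cref{item1-linear-goodmoduli}--\Cref{item3-linear-symmetric} of \Cref{para-assumptions-linear} are precisely conditions \Cref{item1-cohint-goodmoduli}--\Cref{item3-cohint-symmetric} of \Cref{para-cohint-assumptions}, and weak orthogonality is stated in the hypothesis. The remaining condition \Cref{item4-cohint-gloeq} on the existence of a global equivariant parameter on each~$\mathcal{M}_{\alpha}$ is automatic for smooth stacks with affine stabilizers by \Cref{cor-global-equiv-parameter-smooth}. Hence \Cref{thm-cohint-smooth} yields
\[
    \bigoplus_{(F,\alpha) \in \mathsf{Face}^{\mathrm{sp}}(\mathcal{M})} \bigl( g_{\alpha,*} \mathcal{BPS}_{M}^{\alpha} \otimes \mathrm{H}^*(\mathrm{B}T_F)_{\mathrm{vir}} \otimes \mathrm{sgn}_{\alpha} \bigr)^{\mathrm{Aut}(\alpha)} \xrightarrow{\sim} p_* \mathcal{IC}_{\mathcal{M}}.
\]

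Next, I would use \Cref{para-special-linear} together with the discussion in \Cref{para-cohint-linear-smooth}: a special face $(F,\alpha)$ of dimension~$n$ corresponds to an unordered tuple $(\gamma_1,\dots,\gamma_n)$ of nonzero elements of~$\Gamma$ with $\crk(\mathcal{M}_{\gamma_i}) = 1$ for each~$i$, with $\mathrm{Aut}(\alpha)$ the automorphism group of this multiset. Under this correspondence $g_{\alpha}$ becomes the direct sum map $\mathcal{M}_{\gamma_1} \times \cdots \times \mathcal{M}_{\gamma_n} \to \mathcal{M}$, and the isomorphism \Cref{eq-BPS-decomp} gives an $\mathrm{Aut}(\alpha)$-equivariant identification
\[
    \mathcal{BPS}_{M}^{\alpha} \otimes \mathrm{H}^*(\mathrm{B}\mathbb{G}_{\mathrm{m}}^n)_{\mathrm{vir}} \otimes \mathrm{sgn}_{\gamma_1,\dots,\gamma_n} \;\cong\; \boxtimes_{i=1}^{n} \bigl( \mathcal{BPS}_{M,\gamma_i} \otimes \mathrm{H}^*(\mathrm{B}\mathbb{G}_{\mathrm{m}})_{\mathrm{vir}} \bigr),
\]
where the sign twist $\mathrm{sgn}_{\gamma_1,\dots,\gamma_n}$ has already been matched to the Koszul sign rule for $\boxtimes_{\oplus}$ using the identity $\vdim \mathcal{M}_{\gamma} = -\chi_{\mathcal{M}}(\gamma,\gamma)$ granted by the Euler pairing assumption \Cref{item5-euler-pairing}.

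Pushing forward along $\oplus$, grouping the direct sum over isomorphism classes of multisets of length~$n$ into the $S_n$-invariants of the $n$-fold $\boxtimes_{\oplus}$-product, and summing over~$n$, we recognize the left-hand side as $\mathrm{Sym}_{\boxtimes_{\oplus}}(\bigoplus_{\gamma \neq 0} \mathcal{BPS}_{M,\gamma} \otimes \mathrm{H}^*(\mathrm{B}\mathbb{G}_{\mathrm{m}})_{\mathrm{vir}})$, yielding the desired isomorphism. The second assertion about $\mathcal{BPS}_{M,\gamma}$ is immediate from \Cref{cor:small_decom_thm_smooth_stack} applied to the maximal central face of the connected component~$\mathcal{M}_{\gamma}$. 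The main (essentially bookkeeping) obstacle is ensuring the sign conventions in \Cref{eq-BPS-decomp} match the Koszul sign built into $\boxtimes_{\oplus}$; this has already been addressed in \Cref{para-cohint-linear-smooth} via the two sign computations involving $\mathbb{L}^{\bullet/2}$ and perverse cohomology of exterior products, so no new difficulty arises here.
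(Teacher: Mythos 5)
Your proposal is correct and follows essentially the same route as the paper: the paper also obtains \Cref{thm-cohint-linear-smooth} by combining \Cref{thm-cohint-smooth} (with \Cref{item4-cohint-gloeq} supplied by \Cref{cor-global-equiv-parameter-smooth}), the identification of special faces with multisets of components from \Cref{para-special-linear}, the sign-matching isomorphism \Cref{eq-BPS-decomp}, and \Cref{cor:small_decom_thm_smooth_stack} for the description of $\mathcal{BPS}_{M,\gamma}$. No gaps.
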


\begin{para}[Commutative orientation data]\label{para-COD}
    Here we will introduce the notion of \emph{commutative orientation data} for $(-1)$-shifted symplectic linear moduli stacks, which is needed for a formulation of the cohomological integrality theorem close to the one in \cite[Theorem A, Theorem C]{_Davison_Meinhardt_CoDT}.
    This is an orientation for $\mathcal{M}$ in the sense of \Cref{para-orientation-def}, together with an extra structure on compatibility with the direct sum map, itself satisfying a commutativity property.  
    Though an orientation with the same compatibility structure satisfying the associativity property has been studied in the literature e.g. in \cite[Definition 4.6]{joyce2021orientation} where the notion is called strong orientation data, it seems that such orientation data satisfying the commutativity property has not been studied so far.

\end{para}

\begin{definition*}
    Let $\mathcal{M}$ be a numerically symmetric $(-1)$-shifted symplectic linear moduli stack.
    \emph{Commutative orientation data} is given by the following data:
    \begin{itemize}
        \item A choice of orientation $o_{\gamma}$ on $\mathcal{M}_{\gamma}$ for each $\gamma \in \pi_0(\mathcal{M})$.
        \item For each $\gamma_1, \ldots, \gamma_n \in \pi_0(\mathcal{M}_{\gamma}) \setminus 0$ with $\gamma = \gamma_1 + \cdots + \gamma_n$, a choice of an $\mathrm{Aut}(\gamma_1, \ldots, \gamma_n)$-equivariant isomorphism
               \begin{equation}\label{eq-orientation-data-decomp}
                 \alpha(\gamma_1, \ldots, \gamma_n)^{\star} o_{\gamma} \cong o_{\gamma_1} \boxtimes \cdots \boxtimes  o_{\gamma_n}.  
               \end{equation}
               See \Cref{para-cohint-linear-smooth} for the definition of $\alpha(\gamma_1, \ldots, \gamma_n)$ and \Cref{para-localize-orientation-is-equivariant} for the notation $\alpha(\gamma_1, \ldots, \gamma_n)^{\star} o_{\gamma}$.
    \end{itemize}

\end{definition*}

In \cite[Theorem 3.6]{joyce2021orientation}, Joyce and Upmeier constructed an orientation for the moduli stack of coherent sheaves on a smooth projective Calabi--Yau threefold such that isomorphisms \Cref{eq-orientation-data-decomp} exist without $\mathrm{Aut}(\gamma_1, \ldots, \gamma_n)$-equivariance.
We are not sure whether their orientation data upgrades to commutative orientation data.
An example of commutative orientation data comes from the standard orientation for the $(-1)$-shifted cotangent stack:

\begin{lemma}\label{lem-standard-orientation-is-commutative}
    Let $\mathcal{N}$ be a numerically symmetric quasi-smooth linear moduli stack admitting an Euler pairing.
     Set $\mathcal{M} \coloneqq \mathrm{T}^*[-1]\mathcal{N}$.
    Then the standard orientation $o^{\mathrm{sta}}$ on $\mathcal{M}$ naturally upgrades to commutative orientation data.
\end{lemma}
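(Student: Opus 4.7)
The plan is to take $o_\gamma$ to be the standard orientation on $\mathcal{M}_\gamma \simeq \mathrm{Crit}(0 \colon \mathcal{N}_\gamma \to \mathbb{A}^1)$ of \Cref{para-standard-ori}. For each decomposition $\gamma = \gamma_1 + \cdots + \gamma_n$, write $\alpha = \alpha(\gamma_1, \ldots, \gamma_n)$. The linear moduli stack structure of $\mathcal{N}$ yields $\mathcal{N}_\alpha \simeq \mathcal{N}_{\gamma_1} \times \cdots \times \mathcal{N}_{\gamma_n}$, hence $\mathcal{M}_\alpha \simeq \mathcal{M}_{\gamma_1} \times \cdots \times \mathcal{M}_{\gamma_n}$ as $(-1)$-shifted symplectic stacks. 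Under the K\"unneth isomorphism $K_{\mathcal{N}_\alpha} \simeq K_{\mathcal{N}_{\gamma_1}} \boxtimes \cdots \boxtimes K_{\mathcal{N}_{\gamma_n}}$, the determinantal recipe of \Cref{para-standard-ori} produces a tautological isomorphism $o^{\mathrm{sta}}_{\mathcal{M}_\alpha} \simeq o_{\gamma_1} \boxtimes \cdots \boxtimes o_{\gamma_n}$. Fixing a chamber $\sigma \subset F$ in the cotangent arrangement of $\mathcal{N}$, the comparison \eqref{eq-localize-standard-orientaion} supplies $c_\sigma \colon \sigma^\star o_\gamma \cong o^{\mathrm{sta}}_{\mathcal{M}_\alpha}$, and since $\alpha^\star o_\gamma = \sigma^\star o_\gamma$ by definition, composing with the K\"unneth isomorphism yields the candidate map \eqref{eq-orientation-data-decomp}.

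The main content of the lemma is then the verification of $\mathrm{Aut}(\gamma_1, \ldots, \gamma_n)$-equivariance (which equals $\mathrm{Aut}(\alpha)$ by \Cref{para-special-linear}). A permutation $\tau$ acts on the right-hand side by permuting factors with the Koszul sign $(-1)^{N(\tau)}$, where $N(\tau) = \sum_{i<j,\, \tau(i)>\tau(j)} \vdim \mathcal{N}_{\gamma_i} \cdot \vdim \mathcal{N}_{\gamma_j}$ is determined by the gradings on $o_{\gamma_i}$. On the left-hand side, $\tau$ acts through the canonical $\mathrm{Aut}(\alpha)$-equivariant structure on $\alpha^\star o_\gamma$ of \Cref{para-localize-orientation-is-equivariant}, built from the comparison maps $a_{\sigma, \tau(\sigma)}$ of \Cref{lem-indep-localized-ori-on-segment}. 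By \Cref{lem-ori-localize-indep-segment-up-to-sign}, the maps $c_\sigma \circ a_{\sigma, \tau(\sigma)}$ and $c_{\tau(\sigma)}$ differ by a sign $(-1)^{d_\mathcal{N}(\sigma, \tau(\sigma))}$, where $d_\mathcal{N}$ is the cotangent distance for $\mathcal{N}$.

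The hard part will therefore be to show $N(\tau) \equiv d_\mathcal{N}(\sigma, \tau(\sigma)) \pmod 2$. This is a combinatorial identity that one attacks by writing out the K\"unneth decomposition of $\mathrm{tot}_\alpha^* \mathbb{L}_\mathcal{N}$ according to the factorization $\mathcal{N}_\alpha \simeq \prod_i \mathcal{N}_{\gamma_i}$: the graded piece of $T_F$-weight $e_i - e_j$ has rank $-\chi_\mathcal{M}(\gamma_j, \gamma_i)$ via the Euler pairing of \Cref{para-euler-pairing}, whence $d_\mathcal{N}(\sigma, \tau(\sigma))$ can be expressed directly in terms of $\chi_\mathcal{M}$ on those pairs $(i, j)$ that are reversed by $\tau$. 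The identity $\vdim \mathcal{N}_{\gamma_i} = -\chi_\mathcal{M}(\gamma_i, \gamma_i)$ together with the bilinearity of $\chi_\mathcal{M}$ then identifies this quantity modulo $2$ with $N(\tau)$. This sign-bookkeeping is the only delicate step, and is carried out in the spirit of, and using the same determinant-functor axioms \eqref{eq-det-fibre-seq}--\eqref{eq-det-dual-shift} as, the proof of \Cref{lem-ori-localize-indep-segment-up-to-sign}.
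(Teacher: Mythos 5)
Your construction — taking $o_\gamma = o^{\mathrm{sta}}_{\mathcal{M}_\gamma}$, building the isomorphism \Cref{eq-orientation-data-decomp} from $c_\sigma$ of \Cref{eq-localize-standard-orientaion} and the K\"unneth identification, and reducing equivariance to comparing the sign $(-1)^{d_{\mathcal{N}}(\sigma,\tau(\sigma))}$ of \Cref{lem-ori-localize-indep-segment-up-to-sign} with the Koszul sign of the permutation — is exactly the paper's route (the paper writes out only $n=2$ with $\gamma_1=\gamma_2$ and declares the general case analogous, so your general formulation is if anything more explicit).

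The one step that does not work as you justify it is the final identity $N(\tau)\equiv d_{\mathcal{N}}(\sigma,\tau(\sigma))\pmod 2$. Term by term this would require, for each reversed pair $(i,j)$, that $\chi_{\mathcal{N}}(\gamma_i,\gamma_j)\equiv \vdim\mathcal{N}_{\gamma_i}\cdot\vdim\mathcal{N}_{\gamma_j}=\chi_{\mathcal{N}}(\gamma_i,\gamma_i)\,\chi_{\mathcal{N}}(\gamma_j,\gamma_j)\pmod 2$, which is false for $\gamma_i\neq\gamma_j$ in general (e.g.\ a symmetric quiver with one loop at each of two vertices and one arrow in each direction gives $\chi(e_a,e_a)=\chi(e_b,e_b)=0$ but $\chi(e_a,e_b)=-1$); bilinearity of $\chi_{\mathcal{N}}$ together with the diagonal identity $\vdim\mathcal{N}_\gamma=-\chi_{\mathcal{N}}(\gamma,\gamma)$ cannot deliver it. What saves the argument is that you only need the identity for $\tau\in\mathrm{Aut}(\gamma_1,\ldots,\gamma_n)$, i.e.\ for permutations preserving the labels. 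Since both $\tau\mapsto N(\tau)$ and $\tau\mapsto d_{\mathcal{N}}(\sigma,\tau(\sigma))$ are homomorphisms to $\mathbb{Z}/2\mathbb{Z}$ (by the cocycle properties of $d$ in \Cref{para-cotangent-distance}), it suffices to check a transposition of two positions $i<j$ carrying the same class $\gamma_i=\gamma_j$: the pair $(i,j)$ contributes $\chi_{\mathcal{N}}(\gamma_i,\gamma_i)^2\equiv\chi_{\mathcal{N}}(\gamma_i,\gamma_i)$ to both sums, while each intermediate index $k$ contributes the two reversed pairs $(i,k)$ and $(k,j)$, whose contributions to either sum coincide (as $\gamma_i=\gamma_j$) and hence cancel modulo $2$. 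With that correction (and replacing your $\chi_{\mathcal{M}}$ by $\chi_{\mathcal{N}}$, which is the pairing the hypothesis actually provides), the proof is complete and agrees with the paper's.
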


\begin{proof}
    For each $\gamma \in \pi_0(\mathcal{M}) = \pi_0(\mathcal{N})$, we let $o^{\mathrm{sta}}_{\gamma}$ denote the standard orientation on $\mathcal{M}_{\gamma}$.
    We will only prove that there exists a natural $\mathbb{Z} / 2 \mathbb{Z}$-equivariant isomorphism
    \[
        \alpha(\gamma, \gamma)^{\star} o^{\mathrm{sta}}_{2\gamma} \cong o^{\mathrm{sta}}_{\gamma} \boxtimes o^{\mathrm{sta}}_{\gamma}.
    \]
    The existence of an isomorphism \Cref{eq-orientation-data-decomp} in general can be proved in an analogous manner.
    Firstly, note that there are (at most) two chambers $\sigma_1, \sigma_2 \subset \alpha(\gamma, \gamma)$ with respect to the cotangent arrangement.
    By the definition of the cotangent distance \Cref{para-cotangent-distance} and the Euler pairing, we have an identity
    \[
     d(\sigma_1, \sigma_2) = \chi_{\mathcal{N}}(\gamma, \gamma) \bmod 2.    
    \]
    Now consider the following diagram:
\[\begin{tikzcd}
	{\sigma_1^{\star}o^{\mathrm{sta}}_{\mathcal{M}_{2\gamma}}} & {o^{\mathrm{sta}}_{\mathcal{M}_{\gamma} \times \mathcal{M}_{\gamma}}} & {o^{\mathrm{sta}}_{\mathcal{M}_{\gamma}} \boxtimes o^{\mathrm{sta}}_{\mathcal{M}_{\gamma}}} \\
	{\sigma_2^{\star}o^{\mathrm{sta}}_{\mathcal{M}_{2\gamma}}} & {o^{\mathrm{sta}}_{\mathcal{M}_{\gamma} \times \mathcal{M}_{\gamma}}} & {o^{\mathrm{sta}}_{\mathcal{M}_{\gamma}} \boxtimes o^{\mathrm{sta}}_{\mathcal{M}_{\gamma}}.}
	\arrow["\text{\Cref{eq-localize-standard-orientaion}}", from=1-1, to=1-2]
	\arrow["\text{\Cref{eq-localized-ori-indep-on-segment}}"', from=1-1, to=2-1]
	\arrow[from=1-2, to=1-3]
	\arrow["{(-1)^{\chi_{\mathcal{N}}(\gamma, \gamma)} \cdot \id}", from=1-2, to=2-2]
	\arrow["{\mathrm{swap}}", from=1-3, to=2-3]
	\arrow["\text{\Cref{eq-localize-standard-orientaion}}"', from=2-1, to=2-2]
	\arrow[from=2-2, to=2-3]
\end{tikzcd}\]
    The left square commutes by \Cref{lem-ori-localize-indep-segment-up-to-sign} and the right square commutes since the grading of the underlying line bundle of $o^{\mathrm{sta}}_{\mathcal{M}_{\gamma}}$ is given by $(-1)^{\chi_{\mathcal{N}}(\gamma, \gamma)}$.
    The commutativity of the outer square is exactly what we wanted to prove.
\end{proof}

\begin{para}[Cohomological integrality for \texorpdfstring{$(-1)$}{(-1)}-shifted symplectic linear moduli stacks]
    Let $\mathcal{M}$ be a $(-1)$-shifted symplectic linear moduli stack satisfying the assumptions \Crefrange{item1-linear-goodmoduli}{item5-euler-pairing} in \Cref{para-assumptions-linear}.
    Assume further that  $\mathcal{M}$ is equipped with commutative orientation data $\{ o_{\gamma} \}_{\gamma \in \pi_0(\mathcal{M})}$.
    By the Thom--Sebastiani theorem \Cref{eq-Kunneth}, for each $\gamma_1, \ldots, \gamma_n \in \pi_0(\mathcal{M})$, there exists a natural $\mathrm{Aut}(\gamma_1, \ldots, \gamma_n)$-equivariant isomorphism
    \begin{equation}\label{eq-varphi-decomp-equivariantly}
         \varphi_{\mathcal{M}_{\alpha(\gamma_1, \ldots, \gamma_n)} } \cong \varphi_{\mathcal{M}_{\gamma_1}} \boxtimes  \cdots \boxtimes \varphi_{\mathcal{M}_{\gamma_n}} 
    \end{equation}
    under the identification $\mathcal{M}_{\alpha(\gamma_1, \ldots, \gamma_n)} \cong \mathcal{M}_{\gamma_1} \times \cdots \times \mathcal{M}_{\gamma_n}$.
    Combining \Cref{eq-varphi-decomp-equivariantly} with the argument of the proof of the isomorphism \Cref{eq-BPS-decomp}, we see that there exists an $\mathrm{Aut}(\gamma_1, \ldots, \gamma_n)$-equivariant isomorphism
    \[
        \mathcal{BPS}_{{M}}^{\alpha(\gamma_1, \ldots, \gamma_n)} \otimes \mathrm{H}^*(\mathrm{B} \mathbb{G}_\mathrm{m}^n)_{\mathrm{vir}}
        \cong  (\mathcal{BPS}_{{M}_{\gamma_1}}^{(1)} \otimes \mathrm{H}^*(\mathrm{B} \mathbb{G}_\mathrm{m})_{\mathrm{vir}}) \boxtimes \cdots \boxtimes (\mathcal{BPS}_{{M}_{\gamma_n}}^{(1)} \otimes \mathrm{H}^*(\mathrm{B} \mathbb{G}_\mathrm{m})_{\mathrm{vir}}).     
    \]
    
    We will equip the category $\mathsf{D}_{\mathsf{H}}^{\mathsf{mon}, (+)}(M)$ with a symmetric monoidal structure in the same manner as \Cref{para-cohint-linear-smooth}.
    Then the cohomological integrality theorem for $\mathcal{M}$ (= \Cref{thm-coh-int--1-shifted-symplectic}) implies the following theorem:
\end{para}

\begin{theorem}\label{thm-cohint-linear--1-shifted}
    Let $\mathcal{M}$ be a $(-1)$-shifted symplectic linear moduli stack satisfying assumptions \Crefrange{item1-linear-goodmoduli}{item5-euler-pairing} in \Cref{para-assumptions-linear}.
    Assume further that $\mathcal{M}$ is equipped with commutative orientation data.
    Then we have an isomorphism
    \[
     \mathrm{Sym}_{\boxtimes_{\oplus}} \left( \bigotimes_{\gamma \in \pi_0(\mathcal{M}) \setminus 0} (\mathcal{BPS}_{{M}_{\gamma}}^{(1)} \otimes \mathrm{H}^*(\mathrm{B} \mathbb{G}_\mathrm{m})_{\mathrm{vir}})   \right)  \cong p_* \varphi_{\mathcal{M}}  
    \]
    induced by the cohomological Hall induction. 
\end{theorem}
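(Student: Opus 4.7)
The plan is to deduce the theorem from the general $(-1)$-shifted symplectic cohomological integrality theorem \Cref{thm-coh-int--1-shifted-symplectic} applied to $\mathcal{M}$, and then to use the linear moduli structure together with the commutative orientation data to rewrite the summation over special faces as a symmetric product.

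First, assumptions \Crefrange{item1-linear-goodmoduli}{item4-linear-gloeq} in \Cref{para-assumptions-linear} imply all the hypotheses \Crefrange{item1-cohint-goodmoduli}{item4-cohint-gloeq} of \Cref{para-cohint-assumptions}, and $\mathcal{M}$ is weakly orthogonal by \Cref{item3-linear-symmetric}. Applying \Cref{thm-coh-int--1-shifted-symplectic} together with \Cref{rmk-cohint-source--1-shifted-symplectic} yields
\begin{equation*}
\bigoplus_{(F,\alpha) \in \mathsf{Face}^{\mathrm{sp}}(\mathcal{M})} \bigl(g_{\alpha,*}\mathcal{BPS}_{M_\alpha} \otimes \mathrm{H}^*(\mathrm{B}\mathbb{G}_\mathrm{m}^{\dim F})_{\mathrm{vir}}\bigr)^{\mathrm{Aut}(\alpha)} \cong p_*\varphi_{\mathcal{M}},
\end{equation*}
with the map on the left induced by the maps $\mathrm{Ind}^{\mathrm{sym}}_{\alpha,\sigma,\boldsymbol{\mathcal{L}}}$ of \Cref{eq-CoHI-inv-bps-piece--1-symplectic}.

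Second, I would identify the indexing set. By \Cref{eq-lms-grad-q-n} and \Cref{para-special-linear}, isomorphism classes of special $n$-dimensional faces of $\mathcal{M}$ correspond bijectively to multisets $[\gamma_1, \ldots, \gamma_n]$ of non-zero components in $\pi_0(\mathcal{M})$, with $\mathcal{M}_\alpha \cong \prod_{i=1}^n \mathcal{M}_{\gamma_i}$, the morphism $g_\alpha$ the restriction of the $n$-fold direct-sum map, and $\mathrm{Aut}(\alpha) \subset S_n$ the stabilizer of the multiset. Since $\mathcal{BPS}_{M,\gamma} = 0$ unless $\alpha(\gamma)$ is special (by \Cref{cor-non-special-vanishing--1shifted symplectic}), the sum over special faces is unchanged if we extend to all multisets of non-zero $\gamma$. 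Next, I would combine the prescribed $\mathrm{Aut}(\alpha)$-equivariant identification $\alpha^{\star} o \cong o_{\gamma_1} \boxtimes \cdots \boxtimes o_{\gamma_n}$ of the commutative orientation data with the Thom--Sebastiani isomorphism \Cref{eq-Kunneth} to obtain an $\mathrm{Aut}(\alpha)$-equivariant equivalence $\varphi_{\mathcal{M}_\alpha} \cong \varphi_{\mathcal{M}_{\gamma_1}} \boxtimes \cdots \boxtimes \varphi_{\mathcal{M}_{\gamma_n}}$. Pushing down along $p_\alpha$, invoking Künneth on the $\mathrm{H}^*(\mathrm{B}\mathbb{G}_\mathrm{m}^n)_{\mathrm{vir}}$ factor, and extracting the lowest perverse cohomology produces an $\mathrm{Aut}(\alpha)$-equivariant isomorphism
\begin{equation*}
\mathcal{BPS}_{M_\alpha} \otimes \mathrm{H}^*(\mathrm{B}\mathbb{G}_\mathrm{m}^n)_{\mathrm{vir}} \cong (\mathcal{BPS}_{M,\gamma_1} \otimes \mathrm{H}^*(\mathrm{B}\mathbb{G}_\mathrm{m})_{\mathrm{vir}}) \boxtimes \cdots \boxtimes (\mathcal{BPS}_{M,\gamma_n} \otimes \mathrm{H}^*(\mathrm{B}\mathbb{G}_\mathrm{m})_{\mathrm{vir}}),
\end{equation*}
the $(-1)$-shifted symplectic analogue of \Cref{eq-BPS-decomp}. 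Pushing this forward along $g_\alpha$, which factors as $\prod_i M_{\gamma_i} \hookrightarrow M^{\times n} \xrightarrow{\oplus^n} M$, converts the iterated $\boxtimes$ into an iterated $\boxtimes_\oplus$ on $M$. Finally, replacing the sum over multisets plus $\mathrm{Aut}(\alpha)$-invariants by the sum over ordered $n$-tuples plus $S_n$-invariants recognises the left-hand side as $\mathrm{Sym}_{\boxtimes_\oplus}\bigl(\bigoplus_{\gamma \neq 0} \mathcal{BPS}_{M,\gamma} \otimes \mathrm{H}^*(\mathrm{B}\mathbb{G}_\mathrm{m})_{\mathrm{vir}}\bigr)$, giving the claimed formula; compatibility with the cohomological Hall induction follows from the construction.

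The main obstacle is the $\mathrm{Aut}(\alpha)$-equivariance in the step above. In the smooth case \Cref{thm-cohint-linear-smooth} (see \Cref{para-cohint-linear-smooth}) an explicit sign representation $\mathrm{sgn}_{\gamma_1, \ldots, \gamma_n}$ appears to correct the discrepancy between the natural swap on the external $\boxtimes$-product and its counterpart on $\mathcal{BPS}_{M_\alpha} \otimes \mathrm{H}^*(\mathrm{B}\mathbb{G}_\mathrm{m}^n)_{\mathrm{vir}}$, the key sign being $(-1)^{\chi_{\mathcal{M}}(\gamma,\gamma)}$. In the $(-1)$-shifted symplectic integrality isomorphism no such sign twist appears because it has been absorbed into the localized orientation $\alpha^{\star} o$ (cf. \Cref{lem-ori-localize-indep-segment-up-to-sign}), and the role of the \emph{commutativity}---as opposed to mere associativity---of the orientation data is precisely to ensure that the prescribed identification $\alpha^{\star} o \cong o_{\gamma_1} \boxtimes \cdots \boxtimes o_{\gamma_n}$ is compatible with permutations of factors. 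The explicit sign computation then parallels the one in \Cref{lem-standard-orientation-is-commutative}; once this equivariance is secured, the remaining manipulations are formal.
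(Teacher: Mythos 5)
Your proposal is correct and follows essentially the same route as the paper: apply \Cref{thm-coh-int--1-shifted-symplectic} with \Cref{rmk-cohint-source--1-shifted-symplectic}, use the commutative orientation data together with the Thom--Sebastiani isomorphism \Cref{eq-Kunneth} to obtain the $\mathrm{Aut}(\gamma_1,\ldots,\gamma_n)$-equivariant factorization of $\varphi_{\mathcal{M}_\alpha}$ and hence of the BPS sheaves (by the argument of \Cref{eq-BPS-decomp}, now without the sign twist), and reindex the sum over special faces as a symmetric product under $\boxtimes_\oplus$. Your discussion of why the sign representation disappears in the $(-1)$-shifted setting, and of the role played by commutativity rather than mere associativity of the orientation data, matches the paper's intent.
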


\begin{para}[BPS Lie algebra]
    In this paragraph, we will briefly explain that the above theorem can be regarded as a PBW-type statement for a certain Lie algebra, called the BPS Lie algebra after taking the global sections.
    
    \Cref{thm-cohint-linear--1-shifted} implies that the global sections $\mathrm{H}^*(\mathcal{M}_{\gamma}, \varphi_{\mathcal{M}_{\gamma}})$ are equipped with a perverse filtration induced from the perverse t-structure on $M_{\gamma}$;
    \[
     P_{i}  \mathrm{H}^*(\mathcal{M}_{\gamma}, \varphi_{\mathcal{M}_{\gamma}}) \subset \mathrm{H}^*(\mathcal{M}_{\gamma}, \varphi_{\mathcal{M}_{\gamma}}).
    \]
    Using \Cref{thm-cohint-linear--1-shifted} again, one sees that the perverse filtration satisfies the following property
    \begin{align*}
        &P_{0} \mathrm{H}^*(\mathcal{M}_{\gamma}, \varphi_{\mathcal{M}_{\gamma}}) =0, \\
        &P_{1} \mathrm{H}^*(\mathcal{M}_{\gamma}, \varphi_{\mathcal{M}_{\gamma}}) = \mathrm{H}^*_{\mathrm{BPS}^{(1)}}(M_{\gamma}) \coloneqq \mathrm{H}^*(M_{\gamma}, \mathcal{BPS}_{M_{\gamma}}^{(1)}).
    \end{align*}
    By definition, the cohomological Hall algebra multiplication
    \[
      *^{\mathrm{Hall}}_{\gamma_1, \gamma_2} \colon  \mathrm{H}^*(\mathcal{M}_{\gamma_1}, \varphi_{\mathcal{M}_{\gamma_1}}) \otimes \mathrm{H}^*(\mathcal{M}_{\gamma_2}, \varphi_{\mathcal{M}_{\gamma_2}}) \to \mathrm{H}^*(\mathcal{M}_{\gamma_1 + \gamma_2}, \varphi_{\mathcal{M}_{\gamma_1 + \gamma_2}}) 
    \]
    preserves the perverse filtration.
    Further, the supercommutativity of the perversely degenerate cohomological Hall algebra \eqref{eq-supercommutative-deg-relcohi--1-symplectic} implies that the following diagram commutes:
\[\begin{tikzcd}
	{\mathrm{gr}_{P}\mathrm{H}^*(\mathcal{M}_{\gamma_1}, \varphi_{\mathcal{M}_{\gamma_1}}) \otimes \mathrm{gr}_{P}\mathrm{H}^*(\mathcal{M}_{\gamma_2}, \varphi_{\mathcal{M}_{\gamma_2}}) } &[30pt] {\mathrm{gr}_{P}\mathrm{H}^*(\mathcal{M}_{\gamma_1 + \gamma_2}, \varphi_{\mathcal{M}_{\gamma_1 + \gamma_2}})} \\
	{\mathrm{gr}_{P}\mathrm{H}^*(\mathcal{M}_{\gamma_2}, \varphi_{\mathcal{M}_{\gamma_2}}) \otimes \mathrm{gr}_{P}\mathrm{H}^*(\mathcal{M}_{\gamma_1}, \varphi_{\mathcal{M}_{\gamma_1}})} & {\mathrm{gr}_{P}\mathrm{H}^*(\mathcal{M}_{\gamma_1 + \gamma_2}, \varphi_{\mathcal{M}_{\gamma_1 + \gamma_2}}).}
	\arrow["{\mathrm{gr}_P(*^{\mathrm{Hall}}_{\gamma_1, \gamma_2})}", from=1-1, to=1-2]
	\arrow["{\mathrm{swap}}"', from=1-1, to=2-1]
	\arrow[equals, from=1-2, to=2-2]
	\arrow["{\mathrm{gr}_P(*^{\mathrm{Hall}}_{\gamma_2, \gamma_1})}"', from=2-1, to=2-2]
\end{tikzcd}\]
    In particular, the commutator of the cohomological Hall multiplication preserves the first perverse piece, and we obtain a morphism
    \[
        [-, -] \colon \mathrm{H}^*_{\mathrm{BPS}^{(1)}}(M_{\gamma_1}) \otimes \mathrm{H}^*_{\mathrm{BPS}^{(1)}}(M_{\gamma_2}) \to \mathrm{H}^*_{\mathrm{BPS}^{(1)}}(M_{\gamma_1 + \gamma_2}).
    \]
    If the chosen orientation data is assumed to be associative, the cohomological Hall algebra multiplication is associative (see \cite[Corollary 8.8]{Kinjo_Park_Safronov_CoHA}).
    In particular, the above Lie bracket determines a structure of a Lie algebra on $\bigoplus_{\gamma} \mathrm{H}^*_{\mathrm{BPS}^{(1)}}(M_{\gamma})$, which we call the \emph{BPS Lie algebra}.
    \Cref{thm-cohint-linear--1-shifted} can be seen as a PBW-type theorem for the BPS Lie algebra, or, given the $\mathrm{H}^*(\mathrm{B} \mathbb{G}_\mathrm{m})_{\mathrm{vir}}$ factor appearing in the isomorphism, as a PBW-type theorem presenting $p_* \varphi_{\mathcal{M}}$ as half of a Yangian-type quantum group built from the BPS Lie algebra.
\end{para}

\begin{para}[Cohomological integrality for \texorpdfstring{$0$}{0}-shifted symplectic linear moduli stacks]
    Let $\mathcal{M}$ be a $0$-shifted symplectic linear moduli stack satisfying the assumptions \Crefrange{item1-linear-goodmoduli}{item5-euler-pairing} in \Cref{para-assumptions-linear}.
    We equip the category $\mathsf{D}_{\mathsf{H}}^{\mathsf{mon}, (+)}(M)$ with a symmetric monoidal structure in the same manner as \Cref{para-cohint-linear-smooth}.
    Then the cohomological integrality theorem for the $(-1)$-shifted cotangent stack $\mathrm{T}^*[-1]\mathcal{M}$ established in \Cref{thm-cohint-linear--1-shifted} together with \Cref{lem-standard-orientation-is-commutative} implies the following:
\end{para}

\begin{theorem}\label{thm-cohint-linear-0-shifted}
    Let $\mathcal{M}$ be a $0$-shifted symplectic linear moduli stack satisfying assumptions \Crefrange{item1-linear-goodmoduli}{item5-euler-pairing} in \Cref{para-assumptions-linear}.
    Then we have an isomorphism
    \begin{equation}\label{eq-g-bundle-grad}
     \mathrm{Sym}_{\boxtimes_{\oplus}} \left( \bigotimes_{\gamma \in \pi_0(\mathcal{M}) \setminus 0} (\mathcal{BPS}_{{M}, \gamma} \otimes \mathrm{H}^*(\mathrm{B} \mathbb{G}_\mathrm{m}))   \right)  \cong \mathbb{L}^{\vdim \mathcal{M} / 2} \otimes p_* \mathbb{D}\mathbb{Q}_{\mathcal{M}}  
    \end{equation}
    induced by the cohomological Hall induction. 
\end{theorem}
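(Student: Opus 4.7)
The plan is to reduce the theorem to the $(-1)$-shifted linear cohomological integrality theorem (\Cref{thm-cohint-linear--1-shifted}) applied to the $(-1)$-shifted cotangent stack $\mathcal{N} \coloneqq \mathrm{T}^*[-1]\mathcal{M}$, using dimensional reduction to handle the right-hand side and the generalized support lemma (\Cref{thm-BPS-cotangent-product}) to handle the left-hand side.

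First, I would equip $\mathcal{N}$ with its standard $(-1)$-shifted symplectic structure and its standard orientation. By \Cref{lem-standard-orientation-is-commutative}, the standard orientation extends to commutative orientation data on $\mathcal{N}$. Moreover, $\mathcal{N}$ inherits from $\mathcal{M}$ the structure of a linear moduli stack satisfying \Crefrange{item1-linear-goodmoduli}{item5-euler-pairing}: assumptions \ref{item1-linear-goodmoduli}, \ref{item2-linear-qcgr}, \ref{item4-linear-gloeq} and \ref{item5-euler-pairing} transfer from $\mathcal{M}$ because $\pi \colon \mathcal{N} \to \mathcal{M}$ is affine, and weak orthogonality follows from \Cref{cor--1^-shifted-cotangent-orthogonal}. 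Applying \Cref{thm-cohint-linear--1-shifted} to $\mathcal{N}$ therefore yields
\[
\mathrm{Sym}_{\boxtimes_\oplus}\Bigl(\bigotimes_{\gamma\neq 0}(\mathcal{BPS}_{N,\gamma}\otimes\mathrm{H}^*(\mathrm{B}\mathbb{G}_\mathrm{m})_{\mathrm{vir}})\Bigr)\;\cong\;\tilde{p}_*\varphi_{\mathcal{N}},
\]
where $\tilde{p}\colon \mathcal{N}\to N$ is the good moduli space morphism.

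Next I would push both sides forward along the map $\bar{\pi}\colon N\to M$ induced on good moduli spaces by $\pi$. On the right, the dimensional reduction isomorphism \Cref{eq-dimensional-reduction} gives $\bar{\pi}_*\tilde{p}_*\varphi_\mathcal{N} = p_*\pi_*\varphi_\mathcal{N} \cong \mathbb{L}^{\vdim\mathcal{M}/2}\otimes p_*\mathbb{D}\mathbb{Q}_\mathcal{M}$, which is the desired right-hand side. On the left, since $\pi$ is a morphism of linear moduli stacks it commutes with the direct sum maps, so $\bar{\pi}_*$ intertwines $\mathrm{Sym}_{\boxtimes_{\oplus_N}}$ with $\mathrm{Sym}_{\boxtimes_{\oplus_M}}$, reducing the computation to the per-component quantity $\bar{\pi}_{\gamma,*}\mathcal{BPS}_{N,\gamma}$. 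This quantity is supplied by \Cref{thm-BPS-cotangent-product}, which furnishes an isomorphism $\mathcal{BPS}_{N,\gamma} \cong \iota_{\gamma,*}(\mathcal{IC}_{\mathbb{A}^1}\boxtimes\mathcal{BPS}_{M,\gamma})$; since $\bar{\pi}_\gamma\circ\iota_\gamma$ is the projection $\mathbb{A}^1\times M_\gamma \to M_\gamma$, a K\"unneth computation converts $\bar{\pi}_{\gamma,*}\mathcal{BPS}_{N,\gamma}\otimes\mathrm{H}^*(\mathrm{B}\mathbb{G}_\mathrm{m})_{\mathrm{vir}}$ into $\mathcal{BPS}_{M,\gamma}\otimes\mathrm{H}^*(\mathrm{B}\mathbb{G}_\mathrm{m})$, the $\mathbb{L}^{1/2}$ inside $(-)_{\mathrm{vir}}$ cancelling the Tate twist produced by pushing $\mathcal{IC}_{\mathbb{A}^1}$ to a point. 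Reassembling across all $\gamma$ then gives the target isomorphism.

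The main technical obstacle is the careful bookkeeping of cohomological shifts, Tate twists and signs through the argument. In particular, the shift appearing in $\pi_{2,*}(\mathcal{IC}_{\mathbb{A}^1}\boxtimes(-))$ must be reconciled with the Koszul sign rule governing $\mathrm{Sym}_{\boxtimes_\oplus}$ and with the cotangent sign representation $\mathrm{sgn}_\alpha$ that enters (via the commutative orientation data of \Cref{lem-standard-orientation-is-commutative}) when one factorises $\mathcal{BPS}_{N,\alpha(\gamma_1,\ldots,\gamma_n)}$ into a box-product of the single-component BPS sheaves in an $\mathrm{Aut}(\alpha)$-equivariant fashion. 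This is the same species of sign analysis used to establish the $\mathrm{Aut}$-equivariance of \Cref{eq-BPS-decomp} in the $(-1)$-shifted setting and to verify \Cref{lem-standard-orientation-is-commutative}, so once one is willing to grind through the signs the two sides match termwise and the theorem follows.
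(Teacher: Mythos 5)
Your proposal is correct and follows essentially the same route as the paper: the paper derives this theorem directly from \Cref{thm-cohint-linear--1-shifted} applied to $\mathrm{T}^*[-1]\mathcal{M}$ with its standard (commutative, by \Cref{lem-standard-orientation-is-commutative}) orientation, identifying the right-hand side via the dimensional reduction isomorphism \Cref{eq-dimensional-reduction} and the left-hand side via the support lemma \Cref{thm-BPS-cotangent-product}, exactly as you do. The shift/sign bookkeeping you flag is already absorbed into the paper's definitions of $\mathcal{BPS}^{\alpha}_{Y}$ and of the induction map \Cref{eq-CoHI-inv-bps-piece-0-symplectic}, so nothing further is needed.
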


\begin{remark}
    In \cite[Theorem 1.7]{davison2023bps}, Davison, Hennecart and Schlegel Mejia proved the cohomological integrality theorem for the moduli stack of objects in $2$-Calabi--Yau dg-categories based on a different definition of the BPS sheaf described as a certain sheaf-theoretic version of a generalized Kac--Moody Lie algebra.
    \Cref{thm-cohint-linear-0-shifted} implies that these two definitions of BPS sheaves coincide for moduli stacks of objects in $2$-Calabi--Yau dg-categories.

    It would be an interesting problem to determine the BPS sheaves for general $0$-shifted symplectic stacks and to study its interaction with the (Lie algebra-like version of the) cohomological Hall induction.
\end{remark}

\subsection{Cohomological integrality for \texorpdfstring{$G$}{G}-(Higgs) bundles and \texorpdfstring{$G$}{G}-character stacks}\label{ssec-G-Higgs}

\begin{para}[Cohomological integrality for \texorpdfstring{$G$}{G}-bundles on a curve]\label{para-cohint-bundles}
    Let $C$ be a smooth projective curve and $G$ be a connected reductive group.
    We let $\mathcal{B}\mathrm{un}_G(C)^{\mathrm{ss}}$ be the moduli stack of semistable $G$-bundles on $C$.
    The stack  $\mathcal{B}\mathrm{un}_G(C)^{\mathrm{ss}}$ is smooth and almost orthogonal by \Cref{cor-bung-is-orthogonally-symmetric}.
    Also, being a quotient stack, it admits a global equivariant parameter by \Cref{lem-global-quotient-global-equiv-parameter}.
    Therefore the cohomological integrality theorem (= Theorem \ref{thm-cohint-smooth}) holds for $\mathcal{B}\mathrm{un}_G(C)^{\mathrm{ss}}$.

    We now explicitly describe the cohomological integrality theorem for $\mathcal{B}\mathrm{un}_G(C)^{\mathrm{ss}}$.
    Using the description of the stack of graded points for $\mathrm{B} G$ explained in \Cref{para-quotient-component-arrangement}, we obtain
    \[
       \mathrm{Grad}^n( \mathcal{B}\mathrm{un}_G(C)) \cong \coprod_{\lambda \colon \mathbb{G}_\mathrm{m}^n \to G}  \mathcal{B}\mathrm{un}_{L_{\lambda}}(C)
    \]
    where the disjoint union is taken over all maps up to conjugation and $L_{\lambda}$ denotes the corresponding Levi subgroup.
    Now we want to describe the open substack of the right-hand side corresponding to the semistable locus of the left-hand side.
    For this, we will introduce some notation. 
    For a Levi subgroup $L \subset G$,
    we say that an element $d \in \pi_1(L) \cong \pi_0(\mathcal{B}\mathrm{un}_{L}(C))$ is \emph{$G$-admissible} if for any 
    character $\chi \colon L \to \mathbb{G}_\mathrm{m}$ with $\chi |_{\mathrm{Z}(G)^{\circ}} = 1$, the map $\pi_1(L) \xrightarrow[]{\chi} \pi_1(\mathbb{G}_\mathrm{m}) \cong \mathbb{Z}$ sends $d$ to $0$.
    The following statement is proved by \textcite[Lemma 2.14, Lemma 2.15]{fratila2016stack}:

\end{para}

\begin{lemma}\label{lem-induction-semistable}
   Let $E_{L}$ be a principal $L$-bundle and $E_G$ be the induced $G$-bundle.
   Then $E_G$ is semistable if and only if $E_{L}$ is semistable and the degree of $E_{L}$ is $G$-admissible.
\end{lemma}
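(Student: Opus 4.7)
The plan is to use Ramanathan's criterion for semistability of principal $G$-bundles: $E_G$ is semistable if and only if for every parabolic $P \subsetneq G$, every reduction $E_P$ of $E_G$ to $P$, and every character $\chi \colon P \to \mathbb{G}_{\mathrm{m}}$ which is dominant with respect to $P$ and trivial on $\mathrm{Z}(G)^{\circ}$, one has $\deg(E_P(\chi)) \leq 0$. Analogously for $L_{\lambda}$-bundles, with $\mathrm{Z}(L_{\lambda})^{\circ}$ in place of $\mathrm{Z}(G)^{\circ}$.

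For the ``only if'' direction, I would exploit that $L_{\lambda}$ is the common Levi of the two opposite parabolics $P_{\lambda}$ and $P_{\lambda}^{-}$, so that $E_{L_{\lambda}}$ induces reductions of $E_G$ to both of them. Given any character $\chi$ of $L_{\lambda}$ trivial on $\mathrm{Z}(G)^{\circ}$, one of $\chi$ and $-\chi$ is dominant for $P_{\lambda}$ and the other is dominant for $P_{\lambda}^{-}$; Ramanathan's criterion applied to the two reductions sandwiches $\deg(E_{L_{\lambda}}(\chi))$ between non-positive and non-negative values, forcing it to vanish. This gives $G$-admissibility of the degree. For semistability of $E_{L_{\lambda}}$, I would take any parabolic $Q \subset L_{\lambda}$, a reduction $E_Q$ of $E_{L_{\lambda}}$ to $Q$, and a dominant character $\chi$ of $Q$ trivial on $\mathrm{Z}(L_{\lambda})^{\circ}$; extending trivially on the unipotent radical yields a reduction of $E_G$ to the parabolic $Q \cdot U_{P_{\lambda}} \subset G$ and a character which is still dominant and trivial on $\mathrm{Z}(G)^{\circ} \subset \mathrm{Z}(L_{\lambda})^{\circ}$, and Ramanathan's criterion on $E_G$ yields the needed inequality.

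For the ``if'' direction, I would take a parabolic $P \subset G$, a reduction $E_P$ of $E_G$, and a dominant character $\chi$ of $P$ trivial on $\mathrm{Z}(G)^{\circ}$, and reduce to computing $\deg(E_P(\chi))$ in terms of data on $E_{L_{\lambda}}$. The key step is to analyse the relative position of $P$ and $P_{\lambda}$: generically on $C$ the two reductions (to $P$ and $P_{\lambda}$) sit in a prescribed $(P_{\lambda},P)$-orbit of $G$, parametrised by a double coset $W_{L_{\lambda}} \backslash W / W_{L_P}$, and on a finite set of points the position can jump to larger orbits in a way that only decreases the degree. One can then express $\deg(E_P(\chi))$ as a sum of a ``central'' term, which vanishes by $G$-admissibility, and a sum of degrees of line bundles associated to characters of parabolics of $L_{\lambda}$, which are non-positive by semistability of $E_{L_{\lambda}}$.

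The main obstacle will be carrying out the ``if'' direction cleanly: one needs the right framework for the relative-position analysis between $P$ and $P_{\lambda}$ on the curve, and must verify that, for each piece of the resulting decomposition of the degree, the character involved is genuinely dominant on the relevant parabolic of $L_{\lambda}$ so that semistability of $E_{L_{\lambda}}$ applies. A possible shortcut is to invoke Behrend's canonical Harder--Narasimhan reduction of $E_G$ and argue that, under the admissibility assumption, it cannot be a proper reduction without contradicting semistability of $E_{L_{\lambda}}$; but this requires a matching between the HN reduction of the $G$-bundle and of its $L_{\lambda}$-reduction, which is itself the technical heart of Fratila's argument.
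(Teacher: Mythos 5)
The paper does not actually prove this lemma: it is quoted from Fratila (Lemmas~2.14 and~2.15 of \emph{On the stack of semistable $G$-bundles over an elliptic curve}), and the remark following it in the paper records that the proof rests on Schieder's ``deeper reduction'' argument --- precisely the relative-position analysis of the two parabolic reductions that you sketch for the ``if'' direction. So your overall strategy is the intended one. The problem is that the substantive implication is not carried out: the decomposition of $\deg(E_P(\chi))$ into a central term (killed by $G$-admissibility) plus terms controlled by semistability of $E_{L_{\lambda}}$, together with the verification that the characters appearing in those terms are dominant for the relevant parabolics of $L_{\lambda}$, \emph{is} the proof, and you explicitly defer it. As it stands the ``if'' direction is a plan, not an argument.

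There is also a concrete error in the ``only if'' direction. A character $\chi$ of $Q \subset L_{\lambda}$ that is dominant for $Q$ inside $L_{\lambda}$ and trivial on $\mathrm{Z}(L_{\lambda})^{\circ}$ does \emph{not} in general extend to a character of $Q \cdot U_{P_{\lambda}}$ that is dominant for that parabolic of $G$: dominance in $G$ imposes additional positivity against the simple coroots outside $L_{\lambda}$, which $\chi$ need not satisfy (already for $G = \mathrm{GL}_3$, $L_{\lambda} = \mathrm{GL}_1 \times \mathrm{GL}_2$ and $Q$ the Borel of the $\mathrm{GL}_2$ factor, the only such dominant $\chi$ that remains dominant in $G$ is trivial). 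The repair uses the admissibility you have already established at that point: replace $\chi$ by $\chi + N\psi$ with $\psi$ a character of $L_{\lambda}$ trivial on $\mathrm{Z}(G)^{\circ}$ and dominant for $P_{\lambda}$, and $N \gg 0$. This makes the character dominant for $Q \cdot U_{P_{\lambda}}$ while leaving $\deg(E_Q(\chi))$ unchanged, since $\deg(E_{L_{\lambda}}(\psi)) = 0$ by $G$-admissibility of the degree. With that fix the ``only if'' direction is fine; the ``if'' direction still needs the deeper-reduction computation (or the citation to Fratila/Schieder, which is what the paper itself does).
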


\begin{remark}
    The proof is based on a ``deeper reduction'' argument of \textcite[\S 4.2]{schieder2015harder} which can be thought of as a generalization of a common refinement of two given filtrations of a vector bundle.
    One easily sees that a similar statement holds true for (twisted) $G$-Higgs bundles by the same argument.
\end{remark}

\begin{para}
    For each Levi subgroup $L \subset G$, we let $A_G(L) \subset \pi_1(L)$ denote the set of $G$-admissible degrees.
    Then \Cref{lem-induction-semistable} implies that the restriction of the isomorphism \Cref{eq-g-bundle-grad} is given by
    \[
        \mathrm{Grad}^n( \mathcal{B}\mathrm{un}_G(C)^{\mathrm{ss}} ) \cong \coprod_{\lambda \colon \mathbb{G}_\mathrm{m}^n \to G} \ \ \coprod_{d \in A_G(L_{\lambda})}  \mathcal{B}\mathrm{un}_{L_{\lambda}}(C)_d^{\mathrm{ss}}.
    \]
    We have the following statement, which shows that any degree for $G$ admits at most one lift to a $G$-admissible degree for $L$:

\end{para}

\begin{lemma}\label{lem-unique-d-admissible}
    The natural map $\pi_1(L) \to \pi_1(G)$ induces an injection $A_G(L) \hookrightarrow \pi_1(G)$.
\end{lemma}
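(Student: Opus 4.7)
The plan is to reformulate everything in terms of cocharacter and coroot lattices. After conjugating $\lambda$ if necessary, I may assume that $\lambda$ factors through a maximal torus $T \subset G$, so that $T \subset L_{\lambda} \subset G$ and $\pi_1(L_{\lambda}) = X_*(T)/Q^{\vee}(L_{\lambda})$, $\pi_1(G) = X_*(T)/Q^{\vee}(G)$, with $Q^{\vee}(L_{\lambda}) \subset Q^{\vee}(G)$. Given $d \in A_G(\lambda)$ whose image in $\pi_1(G)$ vanishes, I lift $d$ to some $\mu \in Q^{\vee}(G) \subset X_*(T)$; the goal then becomes to show $\mu \in Q^{\vee}(L_{\lambda})$.

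Next I would identify the two relevant sublattices of $X^*(T)$. The characters of $L_{\lambda}$ are exactly $X^*(L_{\lambda}) = Q^{\vee}(L_{\lambda})^{\perp} \cap X^*(T)$, while the characters of $T$ trivial on $Z(G)^{\circ}$ form the saturation $X^*(T) \cap \mathbb{Q} Q(G)$ of the root lattice inside $X^*(T)$, where $Q(G)$ denotes the root lattice. So the $G$-admissibility of $d$ translates to the orthogonality $\langle \chi, \mu \rangle = 0$ for every $\chi \in Q^{\vee}(L_{\lambda})^{\perp} \cap X^*(T) \cap \mathbb{Q} Q(G)$.

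Passing to $\mathbb{Q}$-coefficients, I would compute the annihilator in $X_*(T)_{\mathbb{Q}}$ of the subspace $Q^{\vee}(L_{\lambda})_{\mathbb{Q}}^{\perp} \cap \mathbb{Q} Q(G) \subset X^*(T)_{\mathbb{Q}}$. By the identity $(A \cap B)^{\perp} = A^{\perp} + B^{\perp}$ for subspaces of a finite-dimensional space with a perfect pairing, this annihilator equals $\mathbb{Q} Q^{\vee}(L_{\lambda}) + X_*(Z(G)^{\circ})_{\mathbb{Q}}$. Combining this with the direct sum decomposition $X_*(T)_{\mathbb{Q}} = \mathbb{Q} Q^{\vee}(G) \oplus X_*(Z(G)^{\circ})_{\mathbb{Q}}$ (coming from the isogeny $Z(G)^{\circ} \times [G,G] \to G$) and using that $\mu \in Q^{\vee}(G) \subset \mathbb{Q} Q^{\vee}(G)$, I project onto the first summand to conclude $\mu \in \mathbb{Q} Q^{\vee}(L_{\lambda}) \cap Q^{\vee}(G)$.

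The final step, which I expect to be the key input although its proof is short, is to verify that $Q^{\vee}(L_{\lambda})$ is saturated inside $Q^{\vee}(G)$, so that $\mathbb{Q} Q^{\vee}(L_{\lambda}) \cap Q^{\vee}(G) = Q^{\vee}(L_{\lambda})$ and hence $\mu \in Q^{\vee}(L_{\lambda})$, yielding $d = 0$. This is a classical structural fact about Levi subgroups: one can choose a positive system of roots for $G$ in whose closed Weyl chamber all components of $\lambda$ lie, and then the simple roots of $L_{\lambda}$ (those orthogonal to $\lambda$) form a subset of the simple roots of $G$. Consequently $Q^{\vee}(L_{\lambda})$ is even a $\mathbb{Z}$-direct summand of $Q^{\vee}(G)$, from which saturation is immediate.
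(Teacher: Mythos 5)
Your proof is correct, but it takes a genuinely different route from the paper's. The paper argues topologically: it writes the homotopy exact sequence of the fibration $L_{\lambda} \to G \to G/L_{\lambda}$ to identify $\ker(\pi_1(L_{\lambda}) \to \pi_1(G))$ with $\pi_2(G/L_{\lambda})$, reduces the lemma to injectivity of the natural map $\pi_2(G/L_{\lambda}) \to \mathrm{Hom}(L_{\lambda}/\mathrm{Z}(G)^{\circ}, \mathbb{G}_{\mathrm{m}})^{\vee}$, invokes torsion-freeness of $\pi_2(G/L_{\lambda}) \cong \mathrm{H}_2(G/P_{\lambda})$ to pass to $\mathbb{Q}$-coefficients, and finishes with the identification $\pi_2(G/L_{\lambda})_{\mathbb{Q}} \cong \pi_1(L_{\lambda}/\mathrm{Z}(G)^{\circ})_{\mathbb{Q}}$. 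You instead work entirely inside the (co)character lattices of a maximal torus. The two arguments hinge on the same integral fact — that $Q^{\vee}(G)/Q^{\vee}(L_{\lambda})$, which is exactly $\pi_2(G/L_{\lambda})$, is torsion-free — but you derive it from root combinatorics (simple coroots of a Levi form a subset of a $\mathbb{Z}$-basis of $Q^{\vee}(G)$) rather than from the freeness of $\mathrm{H}_2$ of a flag variety, and your rational step is the same duality computation made explicit via the perfect pairing on $X^*(T)_{\mathbb{Q}} \times X_*(T)_{\mathbb{Q}}$ together with the decomposition $X_*(T)_{\mathbb{Q}} = \mathbb{Q}Q^{\vee}(G) \oplus X_*(\mathrm{Z}(G)^{\circ})_{\mathbb{Q}}$. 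Your version is more elementary and self-contained (no input from the topology of $G/P_{\lambda}$); the paper's is shorter once those topological facts are granted. One small presentational point: since $A_G(\lambda)$ is a subgroup of $\pi_1(L_{\lambda})$ (the admissibility condition is linear), your reduction of injectivity to triviality of $A_G(\lambda) \cap \ker(\pi_1(L_{\lambda}) \to \pi_1(G))$ is valid, but it is worth saying this explicitly.
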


\begin{proof}
    The long exact sequence of the homotopy groups associated with the fibration $L \to G \to G / L$ implies that the following sequence is exact:
    \[
    \{ 1 \} \to \pi_2(G / L) \to \pi_1(L) \to \pi_1(G) \to \{1\} .
    \]
    Here we used the fact that Lie groups have trivial second homotopy groups and the partial flag variety $G / P (\simeq G / L)$ is simply connected.
    Therefore it is enough to show that an element in $\pi_2(G / L)$ is zero if and only if it is killed by some character
    $\chi \colon L \to \mathbb{G}_\mathrm{m}$ with $\chi |_{\mathrm{Z}(G)^{\circ}} = 1$.
    In other words, it is enough to show that the natural map
    \[
        \pi_2(G / L) \to \mathrm{Hom}(L / \mathrm{Z}(G)^{\circ}, \mathbb{G}_{\mathrm{m}} )^{\vee}
    \]
    is an injection. Since $\pi_2(G / L) \cong \pi_2(G / P) \cong \mathrm{H}_2(G / P)$ is torsion free of finite rank, it is enough to show this after rationalization.

    Using the natural isomorphism $\pi_1(\mathrm{Z}(G)^{\circ})_{\mathbb{Q}} \cong \pi_1(G)_{\mathbb{Q}}$,
    we obtain an isomorphism $\pi_2(G / L)_{\mathbb{Q}} \cong \pi_1(L / \mathrm{Z}(G)^{\circ})_{\mathbb{Q}}$.
    Since the group $\pi_1(L / \mathrm{Z}(G)^{\circ})_{\mathbb{Q}}$ is naturally identified with the group $\mathrm{Hom}(L / \mathrm{Z}(G)^{\circ}, \mathbb{G}_{\mathrm{m}} )^{\vee}_{\mathbb{Q}}$,
    we obtain the desired claim.

\end{proof}

\begin{para}

    Let $p \colon \mathcal{B}\mathrm{un}_G(C)^{\mathrm{ss}} \to \mathrm{Bun}_G(C)^{\mathrm{ss}}$ be the good moduli space morphism.
    For each Levi subgroup $L \subset G$ and $d \in A_G(L)$, we let $W_{G}(L) = \mathrm{N}_G(L) / L$ be the corresponding relative Weyl group and 
    \[
        g_{L, d} \colon  \mathrm{Bun}_{L}(C)_{d}^{\mathrm{ss}} \to \mathrm{Bun}_{G}(C)^{\mathrm{ss}}
    \]
    be the natural map.
    \Cref{lem-unique-d-admissible} implies that the $W_{G}(L)$-action preserves the component $\mathrm{Bun}_{L}(C)_{d}^{\mathrm{ss}} \subset \mathrm{Bun}_{L}(C)_{}^{\mathrm{ss}}$.
    Then the cohomological integrality theorem (= \Cref{thm-cohint-smooth} + \Cref{rmk-cohint-source-smooth})  specializes to the following statement, which generalizes the result for $\mathrm{GL}_n$ by \textcite{mozgovoy2015intersection} and \textcite{meinhardt2015donaldson}:

\end{para}

\begin{theorem}\label{thm-cohint-bundles}
    We adopt the notation from the last paragraph.
    Then there exists an isomorphism
    \[
        \bigoplus_{\substack{L \subset G: \\ \textnormal{Levi subgroups}}} \bigoplus_{d \in A_G(L)} (g_{L, d, *} (\mathcal{BPS}^{(c_L)}_{\mathrm{Bun}_{L}(C)_d^{\mathrm{ss}}}\otimes \mathrm{H}^*(\mathrm{B} \mathbb{G}_\mathrm{m}^{c_L})_{\mathrm{vir}} \otimes \mathrm{sgn}_{L}))^{W_{G}(L)} \cong p_* \mathcal{IC}_{\mathcal{B}\mathrm{un}_G(C)^{\mathrm{ss}}}
    \]
    induced by the cohomological Hall induction, where the left-hand side runs over all Levi subgroups up to conjugation.
    Here we set $c_L \coloneqq \dim \mathrm{Z}(L)$ and $\mathrm{sgn}_{L}$ denotes the cotangent sign representation of $W_{G}(L)$: see \Cref{para-cotangent-distance}.
    Further, we have an isomorphism
    \begin{align*}
        & \mathcal{BPS}^{(c_L)}_{\mathrm{Bun}_{L}(C)_d^{\mathrm{ss}}}\\
        & \cong \begin{cases}
            \mathcal{IC}_{\mathrm{Bun}_{L}(C)_d^{\mathrm{ss}}} & \quad \text{if $\mathcal{B}\mathrm{un}_{L}(C)_d^{\mathrm{ss}}/ \mathrm{B} \mathbb{G}_\mathrm{m}^{c_L} \to \mathrm{Bun}_{L}(C)_d^{\mathrm{ss}}$ is generically quasi-finite. } \\
            0 & \quad \text{otherwise}.
         \end{cases}     
     \end{align*}

\end{theorem}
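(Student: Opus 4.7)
The strategy is to deduce the theorem as a direct specialization of Theorem~\ref{thm-cohint-smooth} applied to $\mathcal{U} = \mathcal{B}\mathrm{un}_G(C)^{\mathrm{ss}}$, together with the explicit description of the BPS sheaves provided by Corollary~\ref{cor:small_decom_thm_smooth_stack}. The work splits into (a)~verifying the four hypotheses of Theorem~\ref{thm-cohint-smooth}, (b)~identifying the indexing set of (isomorphism classes of) special faces, and (c)~translating the resulting abstract decomposition into the stated explicit form.

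For (a): $\mathcal{B}\mathrm{un}_G(C)^{\mathrm{ss}}$ has affine diagonal and admits a projective good moduli space $\mathrm{Bun}_G(C)^{\mathrm{ss}}$ by the standard construction; its connected components are quasi-compact, and the quasi-compactness of graded points follows from the explicit description recalled in the paragraph preceding the theorem. Weak orthogonality is exactly Corollary~\ref{cor-bung-is-orthogonally-symmetric}. The existence of a global equivariant parameter on each component $\mathcal{B}\mathrm{un}_{L_{\lambda}}(C)_d^{\mathrm{ss}}$ follows from Corollary~\ref{cor-global-equiv-parameter-smooth}, since $\mathcal{B}\mathrm{un}_{L_{\lambda}}(C)_d^{\mathrm{ss}}$ has affine stabilizers and is locally a quotient stack.

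For (b): Combining the description of graded points for $\mathrm{B}G$ from \Cref{para-quotient-component-arrangement} with Lemma~\ref{lem-induction-semistable} yields
\[
\mathrm{Grad}^n(\mathcal{B}\mathrm{un}_G(C)^{\mathrm{ss}}) \cong \coprod_{\lambda\colon \mathbb{G}_{\mathrm{m}}^n \to G} \ \coprod_{d \in A_G(\lambda)} \mathcal{B}\mathrm{un}_{L_{\lambda}}(C)_d^{\mathrm{ss}},
\]
so faces are parametrized by pairs $(\lambda,d)$ as in \Cref{para-cohint-bundles}, and the special-face criterion established there reduces the indexing to pairs with $\lambda$ defining a special face of $\mathrm{B}G$ and $d \in A_G(\lambda)$. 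The cotangent arrangement on $F \otimes_{\mathbb{Q}} \mathbb{R}$ attached to $\alpha(\lambda,d)$ coincides with that for $\mathrm{B}G$ restricted to the image of $\lambda$, since the remaining cotangent directions come from $R\Gamma(C,\mathrm{Ad}(E_{L_\lambda}))$ whose weights are the roots of $G$; hence $\mathrm{sgn}_{\alpha(\lambda,d)}$ matches $\mathrm{sgn}_\lambda$.

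For (c): I need to identify $\mathrm{Aut}(\alpha(\lambda,d))$ with $W_{\lambda} = \mathrm{N}_G(L_{\lambda})/L_{\lambda}$. The conjugation action of $W_\lambda$ on $L_\lambda$ induces an action on $\pi_1(L_\lambda)$, and the image of any $G$-admissible degree in $\pi_1(G)$ is fixed by $W_\lambda$; by the injectivity established in Lemma~\ref{lem-unique-d-admissible}, $W_\lambda$ therefore fixes $d$ itself, so the $W_\lambda$-action preserves the component $\mathcal{B}\mathrm{un}_{L_\lambda}(C)_d^{\mathrm{ss}}$. Conversely, any automorphism of the face must come from an element of $\mathrm{N}_G(L_\lambda)$, giving $\mathrm{Aut}(\alpha(\lambda,d)) = W_\lambda$. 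Substituting into Theorem~\ref{thm-cohint-smooth} combined with \Cref{rmk-cohint-source-smooth} gives the claimed decomposition, and Corollary~\ref{cor:small_decom_thm_smooth_stack}(ii) applied to $\mathcal{B}\mathrm{un}_{L_\lambda}(C)_d^{\mathrm{ss}}$ with its natural $\mathrm{B}\mathbb{G}_{\mathrm{m}}^n$-action (induced by $\lambda$) yields the explicit identification of $\mathcal{BPS}_{\mathrm{Bun}_{L_\lambda}(C)_d^{\mathrm{ss}}}$. There is no serious obstacle: every ingredient has been proved, and the only mild subtleties are the identification of the Weyl group as $\mathrm{Aut}(\alpha)$ and the verification that the cotangent sign representation $\mathrm{sgn}_{\alpha(\lambda,d)}$ agrees with $\mathrm{sgn}_\lambda$.
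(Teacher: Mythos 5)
Your proposal is correct and takes essentially the same route as the paper: the theorem is obtained by specializing \Cref{thm-cohint-smooth} together with \Cref{rmk-cohint-source-smooth} and \Cref{cor:small_decom_thm_smooth_stack}, after the face/special-face/automorphism bookkeeping via \Cref{lem-induction-semistable} and \Cref{lem-unique-d-admissible}, exactly as set up in \Cref{para-cohint-bundles}. One minor imprecision: the cotangent \emph{arrangement} on the face indeed agrees with that of $\mathrm{B}G$ (same hyperplanes, dual to the roots), but the cotangent sign representation also depends on the ranks of the weight spaces of $\mathbb{L}_{\mathcal{B}\mathrm{un}_G(C)^{\mathrm{ss}}}$, so it is $\bar{\mathrm{sgn}}_{\lambda}^{\,1-g(C)}$ rather than the sign representation of $\mathrm{B}G$ itself (cf.\ \Cref{rmk-sign-rep-bung}) --- harmless here since the theorem's $\mathrm{sgn}_{\lambda}$ is by definition the cotangent sign representation of the face of $\mathcal{B}\mathrm{un}_G(C)^{\mathrm{ss}}$.
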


\begin{remark}\label{rmk-sign-rep-bung}
    The sign representation can be explicitly computed as follows.
    Let $\bar{\mathrm{sgn}}_{L}$ be the cotangent sign representation of $W_{G}(L)$ associated with the stack $\mathrm{B} G$.
    Then the Riemann--Roch theorem implies an isomorphism 
    \[
        \mathrm{sgn}_{L} \cong \bar{\mathrm{sgn}}_{L}^{1 - g(C)}.
    \]
\end{remark}

\begin{remark}[Disconnected groups]
By the results of Fernandez Herrero and A.I.N. \cite{fernandez-herrero-ibanez-nunez-disconnected}, \Cref{thm-cohint-bundles} also holds for disconnected reductive groups $G$, with modifications that we now explain. If $G$ is disconnected, then we should replace $A_G(L)$ by the set $A_G(L)_\bQ$ of \emph{rational degrees} for $L$, in the sense of \cite[\S 4.2.4]{fernandez-herrero-ibanez-nunez-disconnected}, that are adapted for the inclusion $L\to G$ in the sense of \cite[Definition 4.2.9]{fernandez-herrero-ibanez-nunez-disconnected}. For $L\subset G$ a Levi subgroup and $d\in A_G(L)_\bQ$, now $\Bun_L(C)_d^{\mathrm{ss}}$ denotes the stack of semistable $L$-bundles of rational degree $d$, as in \cite[\S 4.2.5]{fernandez-herrero-ibanez-nunez-disconnected}. Then \cite[Theorem 4.4.1]{fernandez-herrero-ibanez-nunez-disconnected} and \cite[Proposition 4.4.8]{fernandez-herrero-ibanez-nunez-disconnected} imply that the special faces of $\Bun_G(C)^{\mathrm{ss}}$ correspond to connected components of $\Bun_L(C)_d^{\mathrm{ss}}$ for Levi subgroups $L\subset G$ considered up to conjugation and adapted rational degrees $d\in A_G(L)_\bQ$. While technically $\Bun_L(C)_d^{\mathrm{ss}}$ may be disconnected, this does not affect \Cref{thm-cohint-bundles}.

A similar extension to disconnected groups holds in the case of Higgs bundles below.
\end{remark}

\begin{remark}
 If $C = \mathbb{P}^1$, the BPS sheaf on $\mathrm{Bun}_G(C)_d^{\mathrm{ss}}$ is zero unless $G$ is a torus.

 If $C$ is an elliptic curve, $\mathcal{BPS}_{\mathrm{Bun}_G(C)_d^{\mathrm{ss}}}$ is non-zero if and only if $G / [G, G] \cong \prod_i \mathrm{PGL}_{r_i}$ and the components of the degree are coprime to the corresponding ranks. This follows from the classification of stable $G$-bundles on an elliptic curve established in \cite[Corollary 4.3]{fratila2016stack}.

 If $g(C) \geq 2$, we have $\mathcal{BPS}_{\mathrm{Bun}_G(C)_d^{\mathrm{ss}}} \cong \mathcal{IC}_{\mathrm{Bun}_G(C)_d^{\mathrm{ss}}}$. This follows from the existence of a stable $G$-bundle for all topological types: see \cite[Remark 5.3]{ramanathan-stable}.
\end{remark}

\begin{para}[Cohomological integrality for twisted \texorpdfstring{$G$}{G}-Higgs bundles on a curve]\label{para-cohint-twisted-Higgs-bundles}
    We will adopt the notation from \Cref{para-cohint-bundles} except that we will use $p$ and $g_{L, d}$ for different morphisms.
    Fix a line bundle $\mathcal{L}$ on $C$ with $\deg \mathcal{L} > 2g(C) - 2$.
    Let $\mathcal{H}\mathrm{iggs}_G^{\mathcal{L}}(C)^{\mathrm{ss}}$ be the moduli stack of semistable $\mathcal{L}$-twisted $G$-Higgs bundles.
    As we have seen in \Cref{cor-higgsG-is-orthogonally-symmetric}, the stack $\mathcal{H}\mathrm{iggs}_G^{\mathcal{L}}(C)^{\mathrm{ss}}$ is smooth and almost orthogonal.
    Also, as a quotient stack, it admits a global equivariant parameter  by \Cref{lem-global-quotient-global-equiv-parameter}.
    Therefore the cohomological integrality theorem (= Theorem \ref{thm-cohint-smooth}) holds for $\mathcal{H}\mathrm{iggs}_G^{\mathcal{L}}(C)^{\mathrm{ss}}$.

    We let $p \colon \mathcal{H}\mathrm{iggs}^{\mathcal{L}}_G(C)^{\mathrm{ss}} \to \mathrm{Higgs}^{\mathcal{L}}_G(C)^{\mathrm{ss}}$ be the good moduli space morphism.
    For each $L \subset G$ and $d \in A_G(L)$, let 
    \[
    g_{L, d} \colon  \mathrm{Higgs}_{L}^{\mathcal{L}}(C)_d^{\mathrm{ss}} \to \mathrm{Higgs}_{G}^{\mathcal{L}}(C)^{\mathrm{ss}}
    \]
    be the natural map.
    The cohomological integrality theorem (= \Cref{thm-cohint-smooth} + \Cref{rmk-cohint-source-smooth}) specializes to the following statement:
\end{para}

\begin{theorem}
    We adopt the notation from the last paragraph.
    Then there exists an isomorphism
    \[
        \bigoplus_{\substack{L \subset G: \\ \textnormal{Levi subgroups}}} \bigoplus_{d \in A_G(L)}  (g_{L, d, *} (\mathcal{BPS}^{(c_L)}_{\mathrm{Higgs}^{\mathcal{L}}_{L}(C)_d^{\mathrm{ss}}} \otimes \mathrm{H}^*(\mathrm{B} \mathbb{G}_\mathrm{m}^{c_L})_{\mathrm{vir}} \otimes \mathrm{sgn}_{L}))^{W_{G}(L)} \cong p_* \mathcal{IC}_{\mathcal{H}\mathrm{iggs}^{\mathcal{L}}_G(C)^{\mathrm{ss}}}
    \]
    induced by the cohomological Hall induction.
    Further, we have an isomorphism
    \begin{align*}
            & \mathcal{BPS}^{(c_L)}_{\mathrm{Higgs}^{\mathcal{L}}_{L}(C)_d^{\mathrm{ss}}}  \\
            & \cong 
            \begin{cases}
                \mathcal{IC}_{\mathrm{Higgs}_{L}^{\mathcal{L}}(C)_d^{\mathrm{ss}}} & \quad \text{if $\mathcal{H}\mathrm{iggs}_{L}^{\mathcal{L}}(C)_d^{\mathrm{ss}} / \mathrm{B}\mathbb{G}_\mathrm{m}^{c_L} \to \mathrm{Higgs}_{L}^{\mathcal{L}}(C)_d^{\mathrm{ss}}$ is generically quasi-finite. } \\
                0 & \quad \text{otherwise}.
            \end{cases}     
    \end{align*}
    \begin{remark}
        We can compute the sign representation as in \Cref{rmk-sign-rep-bung}: We have an isomorphism
        \[
            \mathrm{sgn}_{L} \cong \bar{\mathrm{sgn}}_{L}^{\deg \mathcal{L} + 1 - g(C)}.
        \]

    \end{remark}
\end{theorem}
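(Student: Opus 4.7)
The plan is to transport the proof of \Cref{thm-cohint-bundles} verbatim, replacing $\mathcal{B}\mathrm{un}_G(C)^{\mathrm{ss}}$ by $\mathcal{H}\mathrm{iggs}^{\mathcal{L}}_G(C)^{\mathrm{ss}}$ throughout. The assumption $\deg \mathcal{L} > 2g(C) - 2$ (or $\mathcal{L} = \omega_C$ together with smoothness being handled in the excerpt's twisted case with $\deg \mathcal{L} > 2g(C)-2$) ensures smoothness of the stack, and \Cref{cor-higgsG-is-orthogonally-symmetric} guarantees weak orthogonality. Affine diagonal and the existence of a good moduli space are standard (the latter is referenced in the discussion around \Cref{cor-higgsG-is-orthogonally-symmetric}), and the stack has quasi-compact connected components. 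Since $\mathcal{H}\mathrm{iggs}^{\mathcal{L}}_G(C)^{\mathrm{ss}}$ is a quotient stack, \Cref{lem-global-quotient-global-equiv-parameter} supplies global equivariant parameters on all strata $\mathcal{H}\mathrm{iggs}^{\mathcal{L}}_{L_\lambda}(C)^{\mathrm{ss}}_{d}$. Therefore \Cref{thm-cohint-smooth} (with the refinement of \Cref{rmk-cohint-source-smooth}) applies.

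Next I would identify the stacks of graded points. Because the formation of $\mathcal{H}\mathrm{iggs}^{\mathcal{L}}_G(C) = \mathrm{Map}(C, \mathcal{L}[1]/\mathbb{G}_\mathrm{m}\text{-twisted adjoint quotient stack})$ commutes with taking $\mathrm{Grad}^n$, the component description in \Cref{para-quotient-component-arrangement} yields
\[
\mathrm{Grad}^n(\mathcal{H}\mathrm{iggs}^{\mathcal{L}}_G(C)) \cong \coprod_{\lambda\colon \mathbb{G}_\mathrm{m}^n \to G} \mathcal{H}\mathrm{iggs}^{\mathcal{L}}_{L_\lambda}(C),
\]
the coproduct being taken over conjugacy classes of $\lambda$. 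The semistable locus on the left corresponds, under this identification, to the disjoint union over $G$-admissible degrees $d \in A_G(\lambda)$ of $\mathcal{H}\mathrm{iggs}^{\mathcal{L}}_{L_\lambda}(C)^{\mathrm{ss}}_{d}$: this is the Higgs analogue of \Cref{lem-induction-semistable} and is proved by the same ``deeper reduction'' argument of Schieder referenced in the $G$-bundle case (the only modification is to carry the Higgs field along the reduction, which is automatic since Higgs-semistability is tested on parabolic reductions respecting the Higgs field).

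With this component-wise description in hand, the characterisation of special faces proceeds exactly as in \Cref{para-cohint-bundles}: the face $\alpha(\lambda, d)$ is special if and only if $\lambda$ is special for $\mathrm{B} G$. This uses \Cref{lem-unique-d-admissible} (which depends only on $G$) together with the observation that $\dim \mathcal{H}\mathrm{iggs}^{\mathcal{L}}_{L_\lambda}(C)^{\mathrm{ss}}_d$ determines $\dim L_\lambda$ via Riemann--Roch (here the assumption $\deg \mathcal{L} > 2g(C) - 2$ ensures that this dimension formula is governed by $(\deg \mathcal{L} + 1 - g(C))\dim L_\lambda$, so dimension alone pins down $L_\lambda$; there is no elliptic-curve exceptional case to worry about).

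Finally, the identification of the BPS sheaf follows from \Cref{cor:small_decom_thm_smooth_stack}: the quotient $\mathcal{H}\mathrm{iggs}^{\mathcal{L}}_{L_\lambda}(C)^{\mathrm{ss}}_d / \mathrm{B}\mathbb{G}_\mathrm{m}^n \to \mathrm{Higgs}^{\mathcal{L}}_{L_\lambda}(C)^{\mathrm{ss}}_d$ is generically finite precisely when a stable Higgs bundle exists, in which case $\mathcal{BPS}_{\mathrm{Higgs}^{\mathcal{L}}_{L_\lambda}(C)^{\mathrm{ss}}_d} \cong \mathcal{IC}$, and is zero otherwise. The computation of the cotangent sign representation $\mathrm{sgn}_\lambda$ uses Riemann--Roch applied to $\mathcal{L}$-twisted adjoint bundles, yielding $\mathrm{sgn}_\lambda \cong \bar{\mathrm{sgn}}_\lambda^{\deg \mathcal{L} + 1 - g(C)}$ as stated in the remark, analogously to the $G$-bundle case. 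Substituting these into the isomorphism provided by \Cref{thm-cohint-smooth} yields the claimed formula. None of the steps presents a genuine obstacle: the only slightly subtle point is checking the Higgs version of \Cref{lem-induction-semistable}, but this is well-documented in the literature and proceeds by the same argument.
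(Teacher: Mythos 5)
Your proposal is correct and follows essentially the same route as the paper: the paper likewise verifies the hypotheses of \Cref{thm-cohint-smooth} via \Cref{cor-higgsG-is-orthogonally-symmetric} and \Cref{lem-global-quotient-global-equiv-parameter}, invokes the Higgs analogue of \Cref{lem-induction-semistable} (noted in the remark following that lemma), and characterises the special faces by "arguing as in \Cref{para-cohint-bundles}" before specialising \Cref{thm-cohint-smooth} + \Cref{rmk-cohint-source-smooth}. Your write-up merely makes explicit the compatibility of $\mathrm{Grad}^n$ with the mapping-stack description and the dimension count $\vdim = \deg(\mathcal{L})\cdot\dim L_\lambda$ that removes the genus-one exception, which the paper leaves implicit.
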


\begin{para}[Cohomological integrality for \texorpdfstring{$G$}{G}-Higgs bundles on a curve]\label{para-cohint-Higgs-bundles}
    We will adopt the notation from \Cref{para-cohint-bundles} except that we will use the letter $p$ and $g_{L, d}$ for different morphisms.
    Let $\mathcal{H}\mathrm{iggs}_G^{}(C)^{\mathrm{ss}}$ be the moduli stack of (untwisted) $G$-Higgs bundles.
    Then as we have seen in \Cref{item-example-symplectic-G-Higgs} of \Cref{para-example-shifted-symplectic} and \Cref{cor-higgsG-is-orthogonally-symmetric}, the stack $\mathcal{H}\mathrm{iggs}_G^{}(C)^{\mathrm{ss}}$ is $0$-shifted symplectic and almost orthogonal.
    Also, as a quotient stack, it admits a global equivariant parameter  by \Cref{lem-global-quotient-global-equiv-parameter}.
    Therefore the cohomological integrality theorem (= Theorem \ref{thm-cohint-0-shifted}) holds for $\mathcal{H}\mathrm{iggs}_G(C)^{\mathrm{ss}}$.

    We let $p \colon \mathcal{H}\mathrm{iggs}_G(C)^{\mathrm{ss}} \to \mathrm{Higgs}_G(C)^{\mathrm{ss}}$ be the good moduli space morphism.
    For each Levi subgroup $L \subset G$ and $d \in A_G(L)$, let 
    \[
        g_{L, d} \colon  \mathrm{Higgs}_{L}(C)^{\mathrm{ss}}_d \to  \mathrm{Higgs}_{G}(C)^{\mathrm{ss}}
    \]
    be the natural map.
    Then the cohomological integrality theorem (= \Cref{thm-cohint-0-shifted} + \Cref{rmk-cohint-source-0-shifted-symplectic}) is equivalent to the following:
\end{para}

\begin{theorem}
    We adopt the notation from the last paragraph.
    Then there exists an isomorphism
    \[
        \bigoplus_{\substack{L \subset G: \\ \textnormal{Levi subgroups}}} \bigoplus_{d \in A_G(L)} (g_{L, d, *} (\mathcal{BPS}^{(c_L)}_{\mathrm{Higgs}_{L}(C)_d^{\mathrm{ss}}} \otimes \mathrm{H}^*(\mathrm{B} \mathbb{G}_\mathrm{m}^{c_L}) \otimes \mathrm{sgn}_{L}))^{W_{G}(L)} \cong \mathbb{L}^{   (g - 1) \cdot \dim G } \otimes p_* \mathbb{D}\mathbb{Q}_{\mathcal{H}\mathrm{iggs}_G(C)^{\mathrm{ss}}}
    \]
    induced by the cohomological Hall induction.
\end{theorem}

\begin{remark}
    It is an interesting challenge to determine the BPS sheaf on $\mathrm{Higgs}_{G}(C)_d^{\mathrm{ss}}$.
    For $G = \mathrm{GL}_n$, this was done in \cite[Theorem 1.7]{davison2023bps} by Davison, Hennecart and Schlegel Mejia.
    For other reductive groups, Hennecart and T.K. will compute the BPS sheaf for the moduli space of $G$-Higgs bundles on an elliptic curve in \cite{Kinjo_Springer}.
\end{remark}

\begin{para}[Topological mirror symmetry conjecture]\label{para-topological-mirror-symmetry}
    We adopt the notation from \Cref{para-cohint-Higgs-bundles}. We will discuss a formulation of the topological mirror symmetry conjecture for the moduli space of $G$-Higgs bundles.
    Our discussion is motivated by the conjecture of \textcite[Conjecture 5.1]{hausel2003mirror} on the $\mathrm{SL}_n / \mathrm{PGL}_n$-duality, which was proved in \cite{groechenig2020mirror,maulik2021endoscopic}.
    
    Let $G$ be a reductive group, and denote by $G^{\vee}$ the Langlands dual group of $G$.
    We let 
    \[ 
        \mathcal{L} \mathcal{H}\mathrm{iggs}_G(C)^{\mathrm{ss}} \coloneqq \mathrm{Map}(S^1_{\mathrm{B}},  \mathcal{H}\mathrm{iggs}_G(C)^{\mathrm{ss}})
    \]
    denote the loop stack.
    Since $\mathcal{H}\mathrm{iggs}_G(C)^{\mathrm{ss}}$ is $0$-shifted symplectic, using the AKSZ formalism \cite[Theorem 2.5]{pantev2013shifted}, we see that $\mathcal{L}\mathcal{H}\mathrm{iggs}_G(C)^{\mathrm{ss}}$ is $(-1)$-shifted symplectic.
    Also, one easily sees that $\mathcal{L}\mathcal{H}\mathrm{iggs}_G(C)^{\mathrm{ss}}$ is equipped with a natural orientation, which can be regarded as a multiplicative version of the standard orientation on the $(-1)$-shifted cotangent stack: see \cite[Proposition 5.3]{naef2023torsion}.
    Consider the good moduli space 
    \[
        \mathcal{L} \mathcal{H}\mathrm{iggs}_G(C)^{\mathrm{ss}} \to \mathrm{LHiggs}_G(C)^{\mathrm{ss}}
    \]
    We propose the following formulation of the topological mirror symmetry conjecture:
\end{para}

\begin{conjecture}[Topological mirror symmetry]\label{conj-top-mirror}
    Set $c_G \coloneqq \dim \mathrm{Z}(G)$. Then there exists a natural isomorphism
    \begin{equation}\label{eq-top-mirror}
        \mathrm{H}^*_{\mathrm{BPS}^{(c_G)}} \left(\mathrm{LHiggs}_G(C)^{\mathrm{ss}} \right) \cong \mathrm{H}^*_{\mathrm{BPS}^{(c_{G^{\vee}})}} \left(\mathrm{LHiggs}_{G^{\vee}}(C)^{\mathrm{ss}} \right).
    \end{equation}
   preserving the Hodge structure.
\end{conjecture}

\begin{remark}
    We also expect the following isomorphism between the vanishing cycle cohomology
    \[
        \mathrm{H}^*(\mathcal{L} \mathcal{H}\mathrm{iggs}_G(C)^{\mathrm{ss}}, \varphi_{\mathcal{L} \mathcal{H}\mathrm{iggs}_G(C)^{\mathrm{ss}}}) \cong \mathrm{H}^*(\mathcal{L} \mathcal{H}\mathrm{iggs}_{G^{\vee}}(C)^{\mathrm{ss}}, \varphi_{\mathcal{L} \mathcal{H}\mathrm{iggs}_{G^{\vee}}(C)^{\mathrm{ss}}}).
    \]
    Using the cohomological integrality theorem (= \Cref{thm-coh-int--1-shifted-symplectic}),
    this isomorphism follows from the conjectural isomorphism \Cref{eq-top-mirror} preserving the equivariance with respect to $\mathrm{Out}(G) \cong \mathrm{Out}(G^{\vee})$,
    where $\mathrm{Out}_{\mathrm{symp}}(G)$ denotes the group of outer automorphisms of $G$ preserving a fixed $G$-invariant metric on $\mathfrak{g}$.
    We believe that \Cref{eq-top-mirror} is easier to verify since the BPS cohomology is finite-dimensional.
\end{remark}

\begin{para}[Refinement of the topological mirror symmetry]

    We now explain a refinement of the above conjecture when $G$ is a semisimple group.
    For each $d \in \pi_1(G) \cong \pi_0(\mathcal{H}\mathrm{iggs}_G(C)^{\mathrm{ss}})$, 
    we let $\mathcal{H}\mathrm{iggs}_G(C)_d^{\mathrm{ss}}$ denote the corresponding connected component.
    Then the action of $\mathrm{B}\mathrm{Z}(G)$ on  $\mathcal{H}\mathrm{iggs}_G(C)_d^{\mathrm{ss}}$  induces an action of $\mathcal{L} \mathrm{B}\mathrm{Z}(G)$ on $\mathcal{L}\mathcal{H}\mathrm{iggs}_G(C)_d^{\mathrm{ss}}$.
    In particular,  the group $\mathrm{Z}(G)$ acts on the good moduli space $\mathrm{LHiggs}_G(C)_d^{\mathrm{ss}}$.
    One sees that the $\mathrm{Z}(G)$-action on $\mathrm{LHiggs}_G(C)_d^{\mathrm{ss}}$ induces a $\mathrm{Z}(G)$-action on $ \mathrm{H}^*_{\mathrm{BPS}^{(0)}}\left(\mathrm{LHiggs}_G(C)_d^{\mathrm{ss}} \right) $.
    For each $d' \in \mathrm{Z}(G)^{\vee}$, we let 
    \[
        \mathrm{H}^*_{\mathrm{BPS}^{(0)}}\left(\mathrm{LHiggs}_G(C)_d^{\mathrm{ss}} \right)_{d'}  \subset  \mathrm{H}^*_{\mathrm{BPS}^{(0)}}\left(\mathrm{LHiggs}_G(C)_d^{\mathrm{ss}} \right) 
    \]
    denote the subspace where $\mathrm{Z}(G)$ acts by $d'$. Note that we have a natural isomorphism $\mathrm{Z}(G)^{\vee} \cong \pi_1(G^{\vee})$, where the latter group corresponds to the set of connected components of ${\mathrm{Higgs}}_{G^{\vee}}(C)$.
    Now we can state the refined version of the topological mirror symmetry conjecture:
\end{para}

\begin{conjecture}\label{conj-top-mirror-refined}
    Let $G$ be a semisimple group.
    For $d \in \pi_1(G) \cong \mathrm{Z}(G^{\vee})^{\vee}$ and $d' \in \mathrm{Z}(G)^{\vee} \cong \pi_1(G^{\vee})$, the conjectural isomorphism \eqref{eq-top-mirror} swaps the subspaces
    \[
        \mathrm{H}^*_{\mathrm{BPS}^{(0)}}\left({\mathrm{LHiggs}}_G(C)^{\mathrm{ss}}_{d}\right)_{d'} \cong \mathrm{H}^*_{\mathrm{BPS}^{(0)}}\left({\mathrm{LHiggs}}_{G^{\vee}}(C)^{\mathrm{ss}}_{d'}\right)_{d}.
    \]
\end{conjecture}

\begin{remark}
    The use of the loop stack is motivated by the fact that the topological mirror symmetry conjecture for the $\mathrm{SL}_n / \mathrm{PGL}_n$-dual pair due to \textcite[Conjecture 5.1]{hausel2003mirror} proved in \cite{groechenig2020mirror,maulik2021endoscopic} is formulated using the stringy Hodge numbers.
    They are a variant of Hodge numbers for Deligne--Mumford stacks and defined to be the Hodge number of the inertia stack with some degree shift called fermionic shift.
    We however note that \Cref{conj-top-mirror-refined} does not directly imply \cite[Conjecture 5.1]{hausel2003mirror} since they define the Dolbeault moduli space on the $\mathrm{SL}_n$-side to be inside the moduli space of $\mathrm{GL}_n$-Higgs bundles whose degree is coprime to $n$, in order to avoid the strictly semistable locus.
    We explain a twisted version of \Cref{conj-top-mirror-refined} in \Cref{para-twisted-TMS-conjecture} which generalizes the original conjecture of \textcite[Conjecture 5.1]{hausel2003mirror}.
\end{remark}

\begin{remark}
    It might be natural to ask whether \Cref{conj-top-mirror-refined} holds for the BPS cohomology of the moduli space of semistable $G$-Higgs bundles without passing to the loop stack.
    However, the following numerical computation shows that this is not the case: For an elliptic curve $E$, we have
    \[
     \dim \mathrm{H}^*_{\mathrm{BPS}^{(0)}}({\mathrm{Higgs}}_{\mathrm{SL}_2}(E)^{\mathrm{ss}}) = 4, \quad    
    \]
    and
    \[
        \dim \mathrm{H}^*_{\mathrm{BPS}^{(0)}}({\mathrm{Higgs}}_{\mathrm{PGL}_2}(E)^{\mathrm{ss}}_0) = 1, \quad \dim \mathrm{H}^*_{\mathrm{BPS}^{(0)}}({\mathrm{Higgs}}_{\mathrm{PGL}_2}(E)^{\mathrm{ss}}_1) = 1.      
    \]
    On the other hand, we have
    \[
        \dim \mathrm{H}^*_{\mathrm{BPS}^{(0)}}({\mathrm{LHiggs}}_{\mathrm{SL}_2}(E)^{\mathrm{ss}}) = 8,    
    \]
    and
    \[
           \dim \mathrm{H}^*_{\mathrm{BPS}^{(0)}}({\mathrm{LHiggs}}_{\mathrm{PGL}_2}(E)^{\mathrm{ss}}_0) = 4, \quad \dim \mathrm{H}^*_{\mathrm{BPS}^{(0)}}({\mathrm{LHiggs}}_{\mathrm{PGL}_2}(E)^{\mathrm{ss}}_1) = 4     
    \]
    which provides numerical evidence for \Cref{conj-top-mirror-refined}.
    We will give more heuristic evidence for \Cref{conj-top-mirror-refined} later in \Cref{para-top-mirror-and-langlands}, based on the geometric Langlands duality for $3$-manifolds.

\end{remark}

\begin{para}[Topological mirror symmetry conjecture for the twisted moduli stack]\label{para-twisted-TMS-conjecture}
Here, we explain a twisted generalization of \Cref{conj-top-mirror-refined} which recovers the topological mirror symmetry conjecture of \textcite[Conjecture 5.1]{hausel2003mirror} as a special case.

Let $C$ be a smooth projective curve and $G$ be a semisimple group.
Set $G_{\mathrm{ad}} \coloneqq G / \mathrm{Z}(G)$. Then we have a natural map
\[
 \mathrm{B}  G_{\mathrm{ad}}  \to \mathrm{B}^2 \mathrm{Z}(G).  
\]
By considering the mapping stack from $C$, we obtain a natural map
\[
 \mathcal{B}\mathrm{un}_{G_{\mathrm{ad}} }(C) \to    \mathrm{Map}(C, \mathrm{B}^2 \mathrm{Z}(G)).
\]
In particular, there exists a natural map
\begin{equation}\label{eq-Higgs-boundary}
\mathcal{H}\mathrm{iggs}_{G_{\mathrm{ad}}}(C)^{\mathrm{ss}} \to \mathrm{Map}(C, \mathrm{B}^2 \mathrm{Z}(G)).    
\end{equation}
Note that we have $\pi_0(\mathrm{Map}(C, \mathrm{B}^2 \mathrm{Z}(G))) = \mathrm{H}^2(C, \mathrm{Z}(G))$ and each connected component is isomorphic to every other since it is a group stack.
The neutral component of $\mathrm{Map}(C, \mathrm{B}^2 \mathrm{Z}(G))$ can be computed using the Čech resolution and it is isomorphic to  $\mathrm{B} \mathrm{Map}(C, \mathrm{B} \mathrm{Z}(G))$. 
In particular, the map \Cref{eq-Higgs-boundary} induces the following map
\[
    \mathcal{H}\mathrm{iggs}_{G_{\mathrm{ad}}}(C)^{\mathrm{ss}} \to \mathrm{B} \mathrm{Map}(C, \mathrm{B}\mathrm{Z}(G)).
\]
For each $d \in \pi_1(G_{\mathrm{ad}})$, we define a stack $\mathcal{H}\mathrm{iggs}_{G}(C)^{\mathrm{ss}, \mathrm{twi}}_d$ by the following Cartesian square
\begin{equation}\label{eq-twisted-moduli-stack}
    \begin{tikzcd}
	{\mathcal{H}\mathrm{iggs}_{G}(C)^{\mathrm{ss}, \mathrm{twi}}_d} & {\mathrm{pt}} \\
	{\mathcal{H}\mathrm{iggs}_{G_{\mathrm{ad}}}(C)^{\mathrm{ss}}_d} & {\mathrm{B} \mathrm{Map}(C, \mathrm{B}\mathrm{Z}(G)).}
	\arrow[from=1-1, to=1-2]
	\arrow[from=1-1, to=2-1]
	\arrow["\lrcorner"{anchor=center, pos=0.125}, draw=none, from=1-1, to=2-2]
	\arrow[from=1-2, to=2-2]
	\arrow[from=2-1, to=2-2]
\end{tikzcd}
\end{equation}
The stack $\mathcal{H}\mathrm{iggs}_{G}(C)^{\mathrm{ss}, \mathrm{twi}}_d$ is $0$-shifted symplectic since the vertical maps in the above diagram are \'etale by semisimplicity of $G$.
For $d \in \pi_1(G) \subset \pi_1(G_{\mathrm{ad}})$, we have $\mathcal{H}\mathrm{iggs}_{G}(C)^{\mathrm{ss}, \mathrm{twi}}_d = \mathcal{H}\mathrm{iggs}_{G}(C)^{\mathrm{ss}}_d$. 

We now define a twisted version of the BPS cohomology, generalizing the twisted version of the stringy E-polynomial   \cite[(4.1)]{hausel2003mirror}.
First, by taking the loop stacks in the diagram \Cref{eq-twisted-moduli-stack}, we obtain the following Cartesian square
\begin{equation}\label{eq-twisted-loop-moduli-stack}
    \begin{tikzcd}
	{\mathcal{L}\mathcal{H}\mathrm{iggs}_{G}(C)^{\mathrm{ss}, \mathrm{twi}}_d} & {\mathrm{pt}} \\
	{\mathcal{L}\mathcal{H}\mathrm{iggs}_{G_{\mathrm{ad}}}(C)^{\mathrm{ss}}_d} & {\mathrm{B} \mathrm{Map}(C, \mathrm{B}\mathrm{Z}(G)) \times  \mathrm{B}\mathrm{Z}(G).}
	\arrow[from=1-1, to=1-2]
	\arrow[from=1-1, to=2-1]
	\arrow["\lrcorner"{anchor=center, pos=0.125}, draw=none, from=1-1, to=2-2]
	\arrow[from=1-2, to=2-2]
	\arrow[from=2-1, to=2-2]
\end{tikzcd}
\end{equation}
We set 
\[
    \mathcal{L}_{\mathrm{ad}} \mathcal{H}\mathrm{iggs}_{G}(C)^{\mathrm{ss}, \mathrm{twi}}_d \coloneqq \mathcal{L}\mathcal{H}\mathrm{iggs}_{G_{\mathrm{ad}}}(C)^{\mathrm{ss}}_d \times_{\mathrm{B} \mathrm{Map}(C, \mathrm{B}\mathrm{Z}(G))} \mathrm{pt}.
\]
By repeating the above discussion for the universal cover $\tilde{G} \to G$ instead of $G \to G_{\mathrm{ad}}$, we obtain a natural map
\[
    \mathcal{L} \mathcal{H}\mathrm{iggs}_{G}(C)^{\mathrm{ss}, \mathrm{twi}}_d \to \mathrm{B} \pi_1(G).
\]
We set 
\[
    \mathcal{L}_{\mathrm{sc}} \mathcal{H}\mathrm{iggs}_{G}(C)^{\mathrm{ss}, \mathrm{twi}}_d \coloneqq \mathcal{L}\mathcal{H}\mathrm{iggs}_{G}(C)^{\mathrm{ss}, \mathrm{twi}}_d \times_{ \mathrm{B}\pi_1(G)} \mathrm{pt}.
\]
Now consider the following composition:
\[
   \eta \colon \mathcal{L}_{\mathrm{sc}} \mathcal{H}\mathrm{iggs}_{G}(C)^{\mathrm{ss}, \mathrm{twi}}_d  \to \mathcal{L} \mathcal{H}\mathrm{iggs}_{G}(C)^{\mathrm{ss}, \mathrm{twi}}_d  \to \mathcal{L}_{\mathrm{ad}} \mathcal{H}\mathrm{iggs}_{G}(C)^{\mathrm{ss}, \mathrm{twi}}_d.
\]
The map $\eta$ is an \'etale $\mathrm{Z}(\tilde{G})$-cover.
We claim that this $\mathrm{Z}(\tilde{G})$-cover descends to the good moduli space $\mathrm{L}_{\mathrm{ad}} \mathrm{H}\mathrm{iggs}_{G}(C)^{\mathrm{ss}, \mathrm{twi}}_d$ of $\mathcal{L}_{\mathrm{ad}} \mathcal{H}\mathrm{iggs}_{G}(C)^{\mathrm{ss}, \mathrm{twi}}_d$.
To see this, using \cite[Theorem 10.3]{_Alper_GoodmodulispacesforArtinstacks}, it is enough to show that $\eta_{*} \mathcal{O}_{\mathcal{L}_{\mathrm{sc}} \mathcal{H}\mathrm{iggs}_{G}(C)^{\mathrm{ss}, \mathrm{twi}}_d }$ is acted on trivially by each stabilizer group at a closed point.
Note that the map $\eta$ is classified by the following map
\[
    \mathcal{L}_{\mathrm{ad}} \mathcal{H}\mathrm{iggs}_{G}(C)^{\mathrm{ss}, \mathrm{twi}}_d \to \mathcal{L}\mathcal{H}\mathrm{iggs}_{G_{\mathrm{ad}}}(C)^{\mathrm{ss}}_d \to \mathrm{B} \mathrm{Map}(C, \mathrm{B}\mathrm{Z}(\tilde{G})) \times  \mathrm{B}\mathrm{Z}(\tilde{G}) \to \mathrm{B}\mathrm{Z}(\tilde{G}).
\]
Take a closed point $x \in \mathcal{L}_{\mathrm{ad}} \mathcal{H}\mathrm{iggs}_{G}(C)^{\mathrm{ss}, \mathrm{twi}}_d $ and let $G_x$ be the stabilizer group at $x$.
We need to show that the map
\[
 \mathrm{B} G_x \to     \mathcal{L}_{\mathrm{ad}} \mathcal{H}\mathrm{iggs}_{G}(C)^{\mathrm{ss}, \mathrm{twi}}_d \to \mathcal{L} \mathrm{B} G_{\mathrm{ad}} \to \mathcal{L} \mathrm{B}^2 \mathrm{Z}(\tilde{G}) \cong \mathrm{B}^2 \mathrm{Z}(\tilde{G}) \times \mathrm{B} \mathrm{Z}(\tilde{G}) \to  \mathrm{B} \mathrm{Z}(\tilde{G})
\]
is a trivial map. In particular, it is enough to show that, for each $y \in \mathcal{L} \mathrm{B}G_{\mathrm{ad}}$, the following map is trivial
\[
\mathrm{B} G_{y} \to \mathcal{L} \mathrm{B} G_{\mathrm{ad}}  \to \mathcal{L} \mathrm{B}^2 \mathrm{Z}(\tilde{G}) \cong \mathrm{B}^2 \mathrm{Z}(\tilde{G}) \times \mathrm{B} \mathrm{Z}(\tilde{G}) \to  \mathrm{B} \mathrm{Z}(\tilde{G}).
\]
This follows from the fact that this map classifies the cover $\coprod_{\tilde{y} \in \tilde{G}} \mathrm{B} G_{y} \to \mathrm{B} G_{y}$ where the left-hand side runs over all lifts of $y$.

Consider the following direct sum decomposition
\[
\eta_{*} \mathbb{Q}_{\mathcal{L}_{\mathrm{sc}} \mathcal{H}\mathrm{iggs}_{G}(C)^{\mathrm{ss}, \mathrm{twi}}_d}   = \bigoplus_{d' \in \mathrm{Z}(G)^{\vee}} \tilde{\mathcal{L}}_{d'},
\]
where $\tilde{\mathcal{L}}_{d'}$ is the $d'$-isotypic component, which is a rank one local system.
By the above discussion, $\tilde{\mathcal{L}}_{d'}$ descends to a local system on the good moduli space $\mathrm{L}_{\mathrm{ad}} \mathrm{H}\mathrm{iggs}_{G}(C)^{\mathrm{ss}, \mathrm{twi}}_d$ of $\mathcal{L}_{\mathrm{ad}} \mathcal{H}\mathrm{iggs}_{G}(C)^{\mathrm{ss}, \mathrm{twi}}_d$.
We define the $d'$-twisted BPS cohomology of $\mathrm{L} \mathrm{H}\mathrm{iggs}_{G}(C)^{\mathrm{ss}, \mathrm{twi}}_d$ to be the cohomology
\[
 \mathrm{H}^*_{\mathrm{BPS}^{(0)}}\left(\mathrm{L} \mathrm{H}\mathrm{iggs}_{G}(C)^{\mathrm{ss}, \mathrm{twi}}_d \right)_{d'}   \coloneqq \mathrm{H^*} \left( \mathrm{L}_{\mathrm{ad}} \mathrm{H}\mathrm{iggs}_{G}(C)^{\mathrm{ss}, \mathrm{twi}}_d, \mathcal{BPS}^{(0)}_{\mathrm{L}_{\mathrm{ad}} \mathrm{H}\mathrm{iggs}_{G}(C)^{\mathrm{ss}, \mathrm{twi}}_d} \otimes \mathcal{L}_{d'} \right).
\]
We can now state a twisted generalization of \Cref{conj-top-mirror-refined}, which recovers the conjecture of \textcite[Conjecture 5.1]{hausel2003mirror} (proved in \cite{groechenig2020mirror,maulik2021endoscopic}) when $G = \mathrm{SL}_n$ and $d$ and $d'$ are coprime to $n$:

\end{para}

\begin{conjecture}\label{conj-top-mirror-twisted}
    Let $G$ be a semisimple group.
    For $d \in \pi_1(G_{\mathrm{ad}}) \cong \mathrm{Z}(\widetilde{G^{\vee}})^{\vee}$ and $d' \in \mathrm{Z}(\tilde{G})^{\vee} \cong \pi_1((G^{\vee})_{\mathrm{ad}})$, there exists a natural isomorphism
    \[
        \mathrm{H}^*_{\mathrm{BPS}^{(0)}}\left(\mathrm{L} \mathrm{H}\mathrm{iggs}_{G}(C)^{\mathrm{ss}, \mathrm{twi}}_d \right)_{d'}  \cong  \mathrm{H}^*_{\mathrm{BPS}^{(0)}}\left(\mathrm{L} \mathrm{H}\mathrm{iggs}_{G^{\vee}}(C)^{\mathrm{ss}, \mathrm{twi}}_{d'} \right)_{d}.
    \]
\end{conjecture}

\begin{para}[Cohomological integrality for character stacks of compact oriented \texorpdfstring{$2$}{$2$}-manifolds]
    Let $\Sigma$ be a compact oriented $2$-manifold and $G$ be a connected reductive group.
    We let $\mathcal{L}\mathrm{oc}_G(\Sigma)$ denote the derived character stack of $G$-local systems on  $\Sigma$.
    As we have seen in \Cref{item-example-symplectic-2mfd} of \Cref{para-example-shifted-symplectic} and \Cref{cor-character-stack-symmetric}, the stack $\mathcal{L}\mathrm{oc}_G(\Sigma)$ is $0$-shifted symplectic and almost orthogonal.
    Also, being a quotient stack, it admits a global equivariant parameter  by \Cref{lem-global-quotient-global-equiv-parameter}.
    Therefore the cohomological integrality theorem (= Theorem \ref{thm-cohint-0-shifted}) holds for $\mathcal{L}\mathrm{oc}_G(\Sigma)$.

    We let $p \colon \mathcal{L}\mathrm{oc}_G(\Sigma) \to \mathrm{Loc}_G(\Sigma)$ be the good moduli space morphism.
    For each Levi subgroup $L \subset G$ and a component $d \in \pi_0(\mathcal{L}\mathrm{oc}_L(\Sigma))$, we let     
    $g_{L, d} \colon \mathrm{Loc}_{L}(\Sigma)_d \to  \mathrm{Loc}_{G}(\Sigma)$ be the natural map.
    Then the cohomological integrality theorem (= \Cref{thm-cohint-0-shifted} + \Cref{rmk-cohint-source-0-shifted-symplectic}) is equivalent to the following:
\end{para}

\begin{theorem}
    We adopt the notation from the last paragraph.
    Then there exists an isomorphism
    \begin{align*}
        \bigoplus_{\substack{L \subset G: \\ \textnormal{Levi subgroups}}} \Biggl( \bigoplus_{d \in \pi_0(\mathcal{L}\mathrm{oc}_{L}(\Sigma)) } g_{L, d, *} (\mathcal{BPS}^{(c_L)}_{\mathrm{Loc}_{L}(\Sigma)_d} &\otimes \mathrm{H}^*(\mathrm{B} \mathbb{G}_\mathrm{m}^{c_L}) \otimes \mathrm{sgn}_{L}) \Biggr)^{W_G(L)}  \\
         &\cong \mathbb{L}^{   (g - 1) \cdot \dim G } \otimes p_* \mathbb{D}\mathbb{Q}_{\mathcal{L}\mathrm{oc}_G(\Sigma)}
    \end{align*}
    induced by the cohomological Hall induction.
\end{theorem}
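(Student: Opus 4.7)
The plan is to apply \Cref{thm-cohint-0-shifted} together with \Cref{rmk-cohint-source-0-shifted-symplectic} to $\mathcal{Y} = \mathcal{L}\mathrm{oc}_G(\Sigma)$ and rewrite the resulting decomposition explicitly. The four hypotheses of \Cref{para-cohint-assumptions} must first be checked. The stack is $0$-shifted symplectic by \Cref{item-example-symplectic-2mfd} of \Cref{para-example-shifted-symplectic} and weakly orthogonal by \Cref{cor-character-stack-symmetric}. Since $\pi_1(\Sigma)$ is finitely generated, the classical truncation of $\mathcal{L}\mathrm{oc}_G(\Sigma)$ is the global quotient of the affine finite-type scheme $\mathrm{Hom}(\pi_1(\Sigma), G)$ by the reductive group $G$, so it has affine diagonal, admits a good moduli space, and has quasi-compact connected components and quasi-compact graded points by \Cref{eg-grad-quotient-stack}; global equivariant parameters on each graded stratum exist by \Cref{lem-global-quotient-global-equiv-parameter}.

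Next, the stack of graded points is computed. Since $\mathcal{L}\mathrm{oc}_G(\Sigma) = \mathrm{Map}(\Sigma_{\mathrm{B}}, \mathrm{B}G)$ and internal mapping stacks commute, one has
\[ \mathrm{Grad}^F(\mathcal{L}\mathrm{oc}_G(\Sigma)) \simeq \mathrm{Map}(\Sigma_{\mathrm{B}}, \mathrm{Grad}^F(\mathrm{B}G)) \simeq \coprod_{\lambda} \mathcal{L}\mathrm{oc}_{L_\lambda}(\Sigma), \]
where $\lambda \colon \mathbb{G}_{\mathrm{m}}^{\dim F} \to G$ ranges over conjugacy classes of maps and $L_\lambda$ is the associated Levi subgroup, using \Cref{eg-grad-quotient-stack} for $\mathrm{Grad}^F(\mathrm{B}G)$ and the connectedness of $\Sigma_{\mathrm{B}}$. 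This identifies a face of $\mathcal{L}\mathrm{oc}_G(\Sigma)$ with a pair $\alpha(\lambda, d)$ consisting of a face $[\lambda]$ of $\mathrm{B}G$ and a connected component $d \in \pi_0(\mathcal{L}\mathrm{oc}_{L_\lambda}(\Sigma))$. As recorded in the paragraph preceding the theorem, $\alpha(\lambda, d)$ is special iff $[\lambda]$ is special in $\mathsf{Face}^{\mathrm{sp}}(\mathrm{B}G)$; this is the step requiring separate arguments for $g(\Sigma) \neq 1$ (via dimensional invariants of $\mathcal{L}\mathrm{oc}_{L_\lambda}(\Sigma)_d$ that determine $\dim L_\lambda$) and $g(\Sigma) = 1$ (via reduction to the Higgs bundles case in \Cref{para-cohint-Higgs-bundles} using non-abelian Hodge). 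The automorphism group of such a face is the relative Weyl group $W_\lambda = \mathrm{N}_G(L_\lambda)/L_\lambda$, and the structural data $g_\alpha$, $T_F$ of \Cref{thm-cohint-0-shifted} become $g_{\lambda, d}$ and $\mathbb{G}_{\mathrm{m}}^{\dim F}$ after choosing a basis.

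Finally, by \Cref{lem-equiv-index-for-manifolds} one has $\vdim \mathcal{L}\mathrm{oc}_G(\Sigma) = -\chi(\Sigma) \cdot \dim G = (2g-2) \dim G$, so $\mathbb{L}^{\vdim \mathcal{Y}/2} = \mathbb{L}^{(g-1)\dim G}$, which matches the prefactor in the claimed isomorphism. Substituting these identifications into \Cref{thm-cohint-0-shifted} via \Cref{rmk-cohint-source-0-shifted-symplectic} produces the stated decomposition. No genuine obstacle remains beyond what is already absorbed into \Cref{thm-cohint-0-shifted}; the proof reduces to the bookkeeping above, the main point being the identification of the stack of graded points as a disjoint union of character stacks of Levi subgroups.
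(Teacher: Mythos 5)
Your proposal is correct and follows essentially the same route as the paper: verify the hypotheses of \Cref{para-cohint-assumptions} via \Cref{item-example-symplectic-2mfd} of \Cref{para-example-shifted-symplectic}, \Cref{cor-character-stack-symmetric}, the global-quotient presentation $\mathrm{Hom}(\pi_1(\Sigma),G)/G$, and \Cref{lem-global-quotient-global-equiv-parameter}; identify faces with pairs $(\lambda,d)$ and specialness of $\alpha(\lambda,d)$ with specialness of $[\lambda]$ (with the same genus-dependent case split); and compute $\vdim \mathcal{L}\mathrm{oc}_G(\Sigma)=(2g-2)\dim G$ via \Cref{lem-equiv-index-for-manifolds} to match the Lefschetz prefactor before specializing \Cref{thm-cohint-0-shifted} and \Cref{rmk-cohint-source-0-shifted-symplectic}. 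The paper treats this theorem as a direct instantiation of the general result in exactly this way, so nothing further is needed.
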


\begin{remark}\label{rmk-P=W}
    Let $\Sigma$ be the compact oriented $2$-manifold underlying a Riemann surface $C$.
    Under the non-abelian Hodge correspondence \cite[Lemma 9.14]{simpson1994moduli}, there exists a homeomorphism
    \[
     \mathrm{Higgs}_G(C)^{\mathrm{ss}}_{\mathrm{vc}} \cong_{\mathrm{Homeo}} \mathrm{Loc}_G(\Sigma) 
    \]
    where the left-hand side is the disjoint union of the components with vanishing Chern classes in the rational cohomology.
    We expect that this homeomorphism preserves the (underlying perverse sheaf of the) BPS sheaves: this is true for $G = \mathrm{GL}_n$ as proved by \textcite{davison2023bps} when $g(C) \geq 2$ and by \textcite[Theorem 14.3]{davison2023nonabelian} when $g(C) \leq 1$.
    If this is the case, we can explore a version of the P=W conjecture using the cohomology of the BPS sheaves.

\end{remark}

\begin{para}[Cohomological integrality for character stacks of a \texorpdfstring{$3$}{3}-manifold]\label{para-cohint-3-manifolds}
    Let $M$ be a compact oriented $3$-manifold and $G$ be a connected reductive group.
    We let $\mathcal{L}\mathrm{oc}_G(M)$ denote the moduli stack of local systems on  $M$.
    As we have seen in \Cref{item-example-symplectic-3mfd} of \Cref{para-example-shifted-symplectic}, the stack $\mathcal{L}\mathrm{oc}_G(M)$ is $(-1)$-shifted symplectic.
    Also, being a quotient stack, it admits a global equivariant parameter  by \Cref{lem-global-quotient-global-equiv-parameter}.
    Further, it is shown in \cite[Proposition 3.41]{naef2023torsion} that $\mathcal{L}\mathrm{oc}_G(M)$ is equipped with a canonical orientation.
    Therefore the cohomological integrality theorem (= Theorem \ref{thm-coh-int--1-shifted-symplectic}) holds for $\mathcal{L}\mathrm{oc}_G(M)$, as long as it is almost orthogonal.

    We let $p \colon \mathcal{L}\mathrm{oc}_G(M) \to \mathrm{Loc}_G(M)$ be the good moduli space morphism.    
    For each Levi subgroup $L \subset G$ and component $d \in \pi_0(\mathcal{L}\mathrm{oc}_L(M))$, we let     
    $g_{L, d} \colon \mathrm{Loc}_{L}(M)_d \to  \mathrm{Loc}_{G}(M)$ be the natural map.
    Then the cohomological integrality theorem (= \Cref{thm-coh-int--1-shifted-symplectic} + \Cref{rmk-cohint-source--1-shifted-symplectic}) is equivalent to the following:
\end{para}

\begin{theorem}
    We adopt the notation from the last paragraph.
    Assume that $\mathcal{L}\mathrm{oc}_G(M)$ is almost orthogonal (see \Cref{para-examples-almost-orthogonal-character-stacks} for examples satisfying this).
    Then there exists an isomorphism
    \[
        \bigoplus_{\substack{L \subset G: \\ \textnormal{Levi subgroups}}} \left( \bigoplus_{ \substack{ d \in \pi_0(\mathcal{L}\mathrm{oc}_{L}(M))}} g_{L, d, *} (\mathcal{BPS}^{(c_L)}_{\mathrm{Loc}_{L}(M)_d} \otimes \mathrm{H}^*(\mathrm{B} \mathbb{G}_\mathrm{m}^{c_L})_{\mathrm{vir}})\right)^{W_{G}(L)}  \cong p_* \varphi_{\mathcal{L}\mathrm{oc}_G(M)}
    \]
    induced by the cohomological Hall induction.
\end{theorem}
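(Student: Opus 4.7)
The plan is to deduce this theorem as a direct specialization of Theorem~\ref{thm-coh-int--1-shifted-symplectic} (together with the reformulation of its source given in Remark~\ref{rmk-cohint-source--1-shifted-symplectic}) to the $(-1)$-shifted symplectic stack $\mathcal{X} = \mathcal{L}\mathrm{oc}_G(M)$. The first step is to verify that the four assumptions \Crefrange{item1-cohint-goodmoduli}{item4-cohint-gloeq} of \Cref{para-cohint-assumptions} hold. Writing a finite presentation of $\pi_1(M)$ exhibits $\mathcal{L}\mathrm{oc}_G(M)$ as a quotient stack of an affine scheme by the adjoint $G$-action, which immediately gives affine diagonal and, via \cite[\S4]{_Alper_GoodmodulispacesforArtinstacks}, the existence of a good moduli space (\Cref{item1-cohint-goodmoduli}); quasi-compact graded points also follow from the quotient stack presentation as in \Cref{eg-grad-quotient-stack} (\Cref{item2-cohint-qcgr}); weak orthogonality (\Cref{item3-cohint-symmetric}) is the standing hypothesis; and the existence of global equivariant parameters for all faces (\Cref{item4-cohint-gloeq}) follows from \Cref{lem-global-quotient-global-equiv-parameter}. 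The canonical orientation required to apply Theorem~\ref{thm-coh-int--1-shifted-symplectic} is the one constructed by Naef--Safronov, cited in \Cref{para-cohint-3-manifolds}.

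The second step is to make the indexing by special faces explicit. Using the coordinate-free quotient-stack description in \Cref{para-quotient-component-arrangement} together with the natural isomorphism $\mathrm{Grad}^n(\mathcal{L}\mathrm{oc}_G(M)) \simeq \coprod_{\lambda} \mathcal{L}\mathrm{oc}_{L_\lambda}(M)$ (obtained from $\mathrm{Grad}^n(\mathrm{Map}(M_B, \mathrm{B}G)) \simeq \mathrm{Map}(M_B, \mathrm{Grad}^n(\mathrm{B}G))$), every face is of the form $\alpha(\lambda, d)$ for $\lambda \colon \mathbb{G}_\mathrm{m}^n \to G$ (up to conjugation) and $d \in \pi_0(\mathcal{L}\mathrm{oc}_{L_\lambda}(M))$, with $\mathcal{X}_{\alpha(\lambda, d)} \simeq \mathcal{L}\mathrm{oc}_{L_\lambda}(M)_d$. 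The automorphism group of the underlying cocharacter face $[\lambda]$ in $\mathsf{Face}(\mathrm{B}G)$ is the relative Weyl group $W_\lambda$, and the $W_\lambda$-action on $\pi_0(\mathcal{L}\mathrm{oc}_{L_\lambda}(M))$ gives the $\mathrm{Aut}(\alpha(\lambda, d))$-invariance after assembling the components into a single $W_\lambda$-invariant summand. One readily checks that $\alpha(\lambda, d)$ is special exactly when $[\lambda]$ is special in $\mathsf{Face}(\mathrm{B}G)$: the ``only if'' direction is obvious, and the ``if'' direction follows because any further enlargement of the face would force a larger central torus acting trivially on $\mathrm{tot}_{\alpha(\lambda,d)}^* \mathbb{T}_{\mathcal{X}}$, which by the tangent computation of \Cref{lem-equiv-index-for-manifolds} would already act trivially on the adjoint representation of $L_\lambda$, contradicting maximality of $\lambda$.

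The final step is to plug this face-indexing into the source formula of Remark~\ref{rmk-cohint-source--1-shifted-symplectic}, namely
\[
\bigoplus_{(F, \alpha) \in \mathsf{Face}^{\mathrm{sp}}(\mathcal{X})}  (g_{\alpha, *} \mathcal{BPS}_{{X}_{\alpha}} \otimes \mathrm{H}^*(\mathrm{B} T_{F})_{\mathrm{vir}})^{\mathrm{Aut}(\alpha)},
\]
and reorganize the double sum (first over $[\lambda]$, then over $W_\lambda$-orbits of components $d$) into the desired form. Note that, in contrast with the bundle and Higgs-bundle theorems in \Cref{para-cohint-bundles,para-cohint-twisted-Higgs-bundles}, no explicit sign representation $\mathrm{sgn}_\lambda$ appears: this is because the sign in the $(-1)$-shifted symplectic setting is absorbed into the $\mathrm{Aut}(\alpha)$-equivariant structure on the localized orientation $\alpha^\star o$ (see \Cref{para-localize-orientation-is-equivariant}), which then propagates to a canonical $\mathrm{Aut}(\alpha)$-equivariant structure on $\mathcal{BPS}_{X_\alpha}$ via \Cref{thm-BPS-cotangent-product}.

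I do not expect a substantial obstacle: everything is essentially bookkeeping, translating the coordinate-free formulation of Theorem~\ref{thm-coh-int--1-shifted-symplectic} into the concrete indexing by conjugacy classes of cocharacters and components of Levi character stacks. The one mildly delicate point to check carefully is the identification of special faces, since the constancy theorem and the explicit description of $\mathrm{CL}_\mathbb{Q}(\mathcal{L}\mathrm{oc}_G(M))$ via $\mathrm{CL}_\mathbb{Q}(\mathrm{B}G) \simeq (\Lambda_T \otimes \mathbb{Q})/W$ need to be combined to conclude that the outer direct sum runs precisely over $\mathsf{Face}^{\mathrm{sp}}(\mathrm{B}G)$, as stated.
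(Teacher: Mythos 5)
Your overall route is the same as the paper's: verify assumptions \Crefrange{item1-cohint-goodmoduli}{item4-cohint-gloeq} via the quotient-stack presentation $\mathrm{Hom}(\pi_1(M),G)/G$, invoke the Naef--Safronov orientation, identify the faces of $\mathcal{L}\mathrm{oc}_G(M)$ with pairs $(\lambda,d)$, and specialize \Cref{thm-coh-int--1-shifted-symplectic} together with \Cref{rmk-cohint-source--1-shifted-symplectic}. However, there is one genuine error: your claim that $\alpha(\lambda,d)$ is special \emph{exactly} when $[\lambda]$ is special in $\mathsf{Face}(\mathrm{B}G)$. The paper asserts only the ``only if'' direction, and this is why the theorem's inner sum carries the explicit restriction ``$\alpha(\lambda,d)$: special''. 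Your argument for the converse rests on \Cref{lem-equiv-index-for-manifolds}, but for a $3$-manifold that lemma gives $[\mathbb{T}_{\mathcal{L}\mathrm{oc}_G(M),[\mathcal{K}]}] = -\chi(M)\cdot[\mathfrak{g}] = 0$, so the virtual tangent representation carries no information whatsoever about which tori act trivially on the actual cohomology groups of the tangent complex; this is precisely the point where the dimension/Riemann--Roch arguments used for bundles and for $2$-manifolds (in \Cref{para-cohint-bundles} and the $0$-shifted case) break down.

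The converse is in fact false. Take $G = \mathrm{SL}_2$ and $M = L(3,1)$, so that $\mathcal{L}\mathrm{oc}_{\mathrm{SL}_2}(M)_{\mathrm{cl}} \cong \mathrm{B}\mathrm{SL}_2 \sqcup \mathrm{B}T$, the second component being the conjugacy class of $\mathrm{diag}(\omega,\omega^{-1})$ with $\omega$ a primitive cube root of unity. For $d$ the nontrivial component and $\lambda$ the trivial cocharacter, $[\lambda]$ is special in $\mathsf{Face}(\mathrm{B}\mathrm{SL}_2)$ (it is the intersection of the root hyperplane), but the one-dimensional face corresponding to $T$ and the matching component of $\mathcal{L}\mathrm{oc}_T(M)$ induces an isomorphism $\mathrm{B}T \to \mathrm{B}T$ onto $\mathcal{L}\mathrm{oc}_{\mathrm{SL}_2}(M)_d$, so $\alpha(\lambda,d)$ is not special. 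Since for a non-special face one has $\mathcal{BPS}^{\alpha}_X = 0$ (\Cref{cor-non-special-vanishing--1shifted symplectic}) while the sheaf $\mathcal{BPS}_{\mathrm{Loc}_{L_\lambda}(M)_d}$ appearing in your reorganized sum is computed with respect to the (larger) maximal central face of the connected stack and is generally nonzero, dropping the restriction produces a strictly larger, double-counted left-hand side. The fix is simply to retain the condition ``$\alpha(\lambda,d)$ special'' in the inner sum, as the theorem does, and to prove only the easy ``only if'' direction needed to index the outer sum by $\mathsf{Face}^{\mathrm{sp}}(\mathrm{B}G)$.
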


\begin{para}
    Cohomological integrality theorem for $3$-manifolds was first studied by \textcite{kaubrys2024cohomological}, 
    where he proved the theorem when $M$ is a $3$-torus and $G$ is either $\mathrm{GL}_n$, or $\mathrm{SL}_n$ or $\mathrm{PGL}_n$ for prime $n$, and also explicitly described the BPS sheaves in these three cases.
    We note that the definition of the BPS sheaves in [loc. cit.] is slightly different from ours, in that it is defined over the component containing the trivial local system.
\end{para}

\begin{para}[Geometric Langlands conjecture for \texorpdfstring{$3$}{$3$}-manifolds]
    We will formulate a version of the geometric Langlands conjecture for $3$-manifolds using the BPS cohomology, which we learned from Pavel Safronov.
    We adopt the notation from \Cref{para-cohint-3-manifolds}. Set $c_G \coloneqq \dim \mathrm{Z}(G)$.
    Then we expect the following:
\end{para}

\begin{conjecture}[Safronov]\label{conj-geometric-langlands-3-mfds}
    There exists a natural isomorphism
    \begin{equation}\label{eq-geometric-Langlands-3mfds}
        \mathrm{H}^*_{\mathrm{BPS}^{(c_G)}}(\mathrm{Loc}_G(M)) \cong \mathrm{H}^*_{\mathrm{BPS}^{(c_{G^{\vee}})}}(\mathrm{Loc}_{G^{\vee}}(M)).
    \end{equation}
\end{conjecture}

\begin{remark}
    \Cref{conj-geometric-langlands-3-mfds} was proved for $M = T^3$ and $G = \mathrm{SL}_p$ for prime $p$ by \cite[Theorem 1.5]{kaubrys2024cohomological}.
    In a forthcoming paper by Hennecart and T.K. \cite{Kinjo_Springer}, the authors prove \Cref{conj-geometric-langlands-3-mfds} for $M = T^3$ and $G = \mathrm{SL}_n$ and $\mathrm{Sp}_{2n}$ for any $n$.   
    We also note that the above conjecture for $\mathrm{S^3} / \Gamma$ for a finite subgroup $\Gamma \subset \mathrm{SU}_2$ recovers the recent conjecture due to \textcite[Conjecture 1.2]{kojima2025homomorphisms} on the equality of numbers of conjugacy classes for maps from $\Gamma$ to $G$ and $G^{\vee}$, 
    where they prove the cases including $\Gamma = \mathbb{Z} / n \mathbb{Z}$ for any $G$ and $G =\mathrm{SL}_n, \mathrm{Sp}_{2n}$ for any $\Gamma$.
\end{remark}

\begin{para}[Refinement of the \texorpdfstring{$3d$}{$3d$}-Langlands duality conjecture]
    As in the case of the topological mirror symmetry \Cref{conj-top-mirror-refined}, we can consider a refinement of the Langlands duality for $3$-manifolds when $G$ is semisimple.

    Consider the exact sequence
    \[
     1 \to \pi_1(G) \to \tilde{G}  \to G \to 1   
    \]
    where $\tilde{G}$ denotes the universal cover of $G$.
    It induces natural maps
    \[
     \mathcal{L}\mathrm{oc}_{G}(M) \to    \mathcal{L}\mathrm{oc}_{\mathrm{B} \pi_1(G)}(M) \to \pi_0(\mathcal{L}\mathrm{oc}_{\mathrm{B} \pi_1(G)}(M)) \to \mathrm{H}^2(M, \pi_1(G)). 
    \]
    For $d \in \mathrm{H}^2(M, \pi_1(G))$, we let $\mathcal{L}\mathrm{oc}_{G}(M)_d \subset \mathcal{L}\mathrm{oc}_{G}(M)$ denote the open and closed substack which maps to $d$ under the above map.

    On the other hand, note that the action of $\mathrm{Z}(G)$ on $G$ induces an action of $\mathcal{L}\mathrm{oc}_{\mathrm{Z}(G)}(M)$ on $\mathcal{L}\mathrm{oc}_{G}(M)$.
    By passing to the good moduli space, we obtain an action of $\mathrm{H}^1(M, \mathrm{Z}(G))$ on $\mathcal{L}\mathrm{oc}_{G}(M)$,
    which provides a $\mathrm{H}^1(M, \mathrm{Z}(G))$-equivariant structure on $\mathrm{H}^*_{\mathrm{BPS}}(\mathrm{Loc}_G(M))$.
    For $d' \in \mathrm{H}^1(M, \mathrm{Z}(G))$, we let $\mathrm{H}^*_{\mathrm{BPS}}(\mathrm{Loc}_G(M))_{d'}$ denote the subspace where $\mathrm{H}^1(M, \mathrm{Z}(G))$ acts by $d'$.
    
    Using the isomorphism $Z(G) \cong \pi_1(G^{\vee})^{\vee}$ and $\pi_1(G) \cong \mathrm{Z}(G^{\vee})^{\vee}$  together with the Poincar\'e duality for $M$,
    we obtain natural isomorphisms
    \[
     \mathrm{H}^2(M, \mathrm{Z}(G)) \cong \mathrm{H}^1(M, \pi_1(G^{\vee}))^{\vee}, \quad \mathrm{H}^1(M, \pi_1(G))^{\vee} \cong \mathrm{H}^2(M, \mathrm{Z}(G^{\vee})).
    \]
    Then we have the following refinement of \Cref{conj-geometric-langlands-3-mfds}:
    
\end{para}

\begin{conjecture}\label{conj-geometric-langlands-3-mfds-refined}
    Let $G$ be a semisimple group.
    For $d \in \mathrm{H}^2(M, \mathrm{Z}(G)) \cong \mathrm{H}^1(M, \pi_1(G^{\vee}))^{\vee}$ and $d' \in \mathrm{H}^1(M, \pi_1(G))^{\vee} \cong \mathrm{H}^2(M, \mathrm{Z}(G^{\vee}))$,
    the conjectural isomorphism \Cref{eq-geometric-Langlands-3mfds} swaps the subspaces
    \[
        \mathrm{H}^*_{\mathrm{BPS}^{(0)}}(\mathrm{Loc}_G(M)_d)_{d'} \cong \mathrm{H}^*_{\mathrm{BPS}^{(0)}}(\mathrm{Loc}_{G^{\vee}}(M)_{d'})_d.
    \]
    
\end{conjecture}

\begin{remark}
    It was conjectured by \textcite[Conjecture 1.1]{jordan2024langlands} in his joint work with Ben-Zvi, Gunningham and Safronov, that a similar duality phenomenon holds for the generic skein modules when $G$ is semisimple.
    On the other hand, it was conjectured by \textcite[Conjecture C]{gunningham2023deformation} that the generic skein module is isomorphic to the zeroth cohomology of the DT perverse sheaves on the framed character variety.
    This suggests that the BPS cohomology is related with the skein modules and that \Cref{conj-geometric-langlands-3-mfds} might be equivalent to the duality for a derived version of the generic skein modules. 
\end{remark}

\begin{para}[Relation with topological mirror symmetry]\label{para-top-mirror-and-langlands}
    Let $\Sigma$ be a compact oriented $2$-manifold underlying a Riemann surface $C$ and $G$ be a semisimple group.
    The non-abelian Hodge correspondence \cite[Lemma 9.14]{simpson1994moduli} provides a bijection of $\mathbb{C}$-valued points of the good moduli spaces of the loop stacks
    \[
        \mathrm{LHiggs}_G(C)^{\mathrm{ss}} (\mathbb{C}) \cong_{\mathrm{bij}} \mathrm{Loc}_G(\Sigma \times S^1) (\mathbb{C}).     
    \]
    We expect that this bijection upgrades to a homeomorphism and preserves the (underlying perverse sheaf of the) BPS sheaves.
    If this is the case, the topological mirror symmetry conjecture (= \Cref{conj-top-mirror}) follows from the Langlands duality conjecture of $3$-manifolds (= \Cref{conj-geometric-langlands-3-mfds}) at the level of graded vector spaces.
\end{para}

\newpage
\phantomsection
\addcontentsline{toc}{section}{\refname}
\renewcommand*{\bibfont}{\small}
\sloppy
\printbibliography

\authorinforule

\authorinfo{Chenjing Bu}
    {bu@maths.ox.ac.uk}
    {Mathematical Institute, University of Oxford, Oxford OX2~6GG, United Kingdom}

\authorinfo{Ben Davison}
    {ben.davison@ed.ac.uk}
    {School of Mathematics and Maxwell Institute for Mathematical Sciences, University of Edinburgh, Edinburgh EH9~3FD, United Kingdom}

\authorinfo{Andrés Ibáñez Núñez}
    {andres.ibaneznunez@columbia.edu}
    {Department of Mathematics, Columbia University, New York, NY 10027, USA}

\authorinfo{Tasuki Kinjo}
    {tkinjo@kurims.kyoto-u.ac.jp}
    {Research Institute for Mathematical Sciences, Kyoto University, Kyoto 606-8502, Japan}

\authorinfo{Tudor P\u{a}durariu}
    {padurariu@imj-prg.fr}
    {Sorbonne Université and Université Paris Cité, CNRS, IMJ-PRG, F-75005 Paris, France}

\end{document}